\title[Flagged LLT polynomials and nonsymmetric plethysm] {Flagged LLT
polynomials, nonsymmetric plethysm, and nonsymmetric Macdonald polynomials}
\author[Blasiak]{Jonah Blasiak}
\address[Blasiak]{Dept.\ of Mathematics, Drexel University,
Philadelphia, PA}
\email{jblasiak@gmail.com}
\author[Haiman]{Mark Haiman}
\address[Haiman]{Dept.\ of Mathematics, University of California,
Berkeley, CA}
\email{mhaiman@math.berkeley.edu}
\author[Morse]{Jennifer Morse}
\address[Morse]{Dept.\ of Mathematics, University of Virginia,
Charlottesville, VA}
\email{morsej@virginia.edu}
\author[Pun]{Anna Pun}
\address[Pun]{Dept.\ of Mathematics, CUNY-Baruch College, New York,
NY}
\email{anna.pun@baruch.cuny.edu}
\author[Seelinger]{George H. Seelinger}
\address[Seelinger]{Dept.\ of Mathematics, University of Michigan, Ann
Arbor, MI}
\email{ghseeli@umich.edu}
\thanks{Authors were supported by NSF Grants DMS-2154282 and 2452208
(J.B.), DMS-2154281 and 2452209 (J.M.), DMS-2303175 (G.S.) and
DMS-1929284 (all authors, in residence at MathematicsCollaborate@ICERM)}
\newtheorem{thm}{Theorem}[subsection]
\newtheorem{lemma}[thm]{Lemma}
\newtheorem{prop}[thm]{Proposition}
\newtheorem{cor}[thm]{Corollary}
\newtheorem{conj}[thm]{Conjecture}
\theoremstyle{definition}
\newtheorem{defn}[thm]{Definition}
\theoremstyle{remark}
\newtheorem{example}[thm]{Example}
\newtheorem{remark}[thm]{Remark}
\newcommand{\multisymmetric}{(multi\mbox{-})symmetric }
\newcommand{\NN}{{\mathbb N}}
\newcommand{\QQ}{{\mathbb Q}}
\newcommand{\CC}{{\mathbb C}}
\newcommand{\ZZ}{{\mathbb Z}}
\newcommand{\kk}{{\mathbbm k}}
\newcommand{\aA}{{\mathbf a}}
\newcommand{\bb}{{\mathbf b}}
\newcommand{\pp}{{\mathbf p}}
\newcommand{\qq}{{\mathbf q}}
\newcommand{\rr}{{\mathbf r}}
\newcommand{\sS}{{\mathbf s}}
\newcommand{\xx}{{\mathbf x}}
\newcommand{\zz}{{\mathbf z}}
\newcommand{\fh}{{\mathfrak h}}
\newcommand{\tfh}{{\widetilde{\mathfrak h}}}
\newcommand{\sfrak}{{\mathfrak s}}
\newcommand{\lambold}{{\boldsymbol \lambda }}
\newcommand{\mubold}{{\boldsymbol \mu}}
\newcommand{\nubold}{{\boldsymbol \nu }}
\newcommand{\pibold}{{\boldsymbol \pi }}
\newcommand{\Weyl}{\pibold }
\newcommand{\Dem}{\pi }
\newcommand{\DemAt}{\widehat{\pi}}
\newcommand{\Kfrak}{{\mathfrak K}}
\newcommand{\Rfrak}{{\mathfrak R}}
\newcommand{\Sfrak}{{\mathfrak S}}
\newcommand{\Acal}{{\mathcal A}}
\newcommand{\Dcal}{{\mathcal D}}
\newcommand{\Ecal}{{\mathcal E}}
\newcommand{\Gcal}{{\mathcal G}}
\newcommand{\Hcal}{{\mathcal H}}
\newcommand{\Scal}{{\mathcal S}}
\newcommand{\That}{\widehat{T}}
\newcommand{\bhat}{\widehat{b}}
\newcommand{\mhat}{\widehat{m}}
\newcommand{\uhat}{\widehat{u}}
\newcommand{\vhat}{\widehat{v}}
\newcommand{\sigmahat}{\widehat{\sigma }}
\newcommand{\rrhat}{\widehat{\rr }}
\newcommand{\etahat}{\widehat{\eta }}
\newcommand{\zetahat}{\widehat{\zeta }}
\newcommand{\muhat}{\widehat{\mu }}
\newcommand{\muboldhat}{\widehat{\mubold }}
\newcommand{\nuboldhat}{\widehat{\nubold }}
\newcommand{\Acalhat}{\widehat{\Acal  }}
\newcommand{\FLambda}{\mathcal{F\!L}}
\newcommand{\sgngt}{\succ}
\newcommand{\sgnlt}{\prec}
\newcommand{\sgnle}{\preccurlyeq}
\newcommand{\sgnge}{\succcurlyeq}
\newcommand{\baug}{\bullet }
\newcommand{\aug}{\raisebox{.12ex}{\scalebox{.5}{$\baug $}}}
\newcommand{\Pisf}{{\mathsf \Pi }}
\newcommand{\domino}{\begin{tikzpicture}[scale=.25]
\draw (0,0) grid (1,2);
\node at (0.5,1.5) {\scriptsize $u$};
\end{tikzpicture}}
\newcommand{\smalldomino}{\resizebox{1.66ex}{!}{\domino}}
\newcommand{\flagwave}[1]{\vphantom{1}^{\#}\!\mathsf{#1}}
\DeclareMathOperator{\dg}{dg}
\DeclareMathOperator{\inv}{inv}
\newcommand{\oinv}{\inv _{{\rm o}}}
\DeclareMathOperator{\coinv}{coinv}
\DeclareMathOperator{\maj}{maj}
\DeclareMathOperator{\pol}{pol}
\DeclareMathOperator{\GL}{GL}
\DeclareMathOperator{\SSYT}{SSYT}
\DeclareMathOperator{\FSST}{FST}
\DeclareMathOperator{\Des}{Des}
\DeclareMathOperator{\op}{op}
\DeclareMathOperator{\ev}{ev}
\DeclareMathOperator{\Stab}{Stab}
\newcommand{\hsym}{\widehat{\Scal}}    
\newcommand{\defeq}{\mathrel{\overset{\text{{\rm def}}}{=}}}
\newcommand{\mybar}[1]{\ensuremath{\overline{\raisebox{0pt}[1.4ex][.3ex]{#1}}}}
\newcommand{\mE}{\mathsf H}   
\newcommand{\Jns}{J}           
\newlength{\mylen}
\newcommand{\Xpb}[1]{\tikz[baseline=(s.base)]{\node[inner sep=0pt,
minimum width=\the\mylen] (s) {\(X_{#1}'\)}}^b}
\begin{document}

\begin{abstract}
The plethystic transformation $f[X] \mapsto f[X/(1-t)]$ and LLT
polynomials are central to the theory of symmetric Macdonald
polynomials.  In this work, we introduce and study nonsymmetric
\emph{flagged LLT polynomials}.  We show that these admit both an
algebraic and a combinatorial description, that they Weyl symmetrize
to the usual symmetric LLT polynomials, and we conjecture that they
expand positively in terms of Demazure atoms.  Additionally, we
construct a nonsymmetric plethysm operator $\Pi_{t,x}$ on
$\mathfrak{K}[x_1,\ldots,x_n]$, which serves as an analogue of $f[X]
\mapsto f[X/(1-t)]$.  We prove that $\Pi_{t,x}$ remarkably maps
flagged LLT polynomials defined over a signed alphabet to ones over an
unsigned alphabet.

Our main application of this theory is to formulate a nonsymmetric
version of Macdonald positivity, similar in spirit to conjectures of
Knop and Lapointe, but with several new features.  To do this, we
recast the Haglund--Haiman--Loehr formula for nonsymmetric Macdonald
polynomials $\mathcal{E}_{\mu }(x;q,t)$ as a positive sum of signed flagged
LLT polynomials.  
Then, after applying a suitable stable limit of
$\Pi_{t,x}$ to a stable version of $\mathcal{E}_{\mu }(x;q,t)$,
we obtain modified nonsymmetric
Macdonald polynomials which are positive sums of flagged LLT
polynomials and thus are conjecturally atom positive, strengthening the
Macdonald positivity conjecture.
\end{abstract}
\maketitle

\section{Introduction} \label{s:intro} In the 1980's Macdonald
\cite{Macdonald88} introduced a family of orthogonal polynomials
$P_\mu(X;q,t)$ in the algebra of symmetric functions
$\Lambda_{\QQ(q,t)}(X)$ and conjectured that carefully chosen scalar
multiples $J_\mu = c_\mu P_\mu$ satisfy a remarkable positivity
property.  In terms of plethystically modified versions $H_\mu(X;q,t)
= J_\mu[X/(1-t);q,t]$, Macdonald's conjecture becomes Schur positivity
of $H_\mu$.

Subsequent work revealed that this modified setting is rich in
combinatorics and representation theory.  Early on, Garsia--Haiman
\cite{GarsiaHaiman93} conjectured that $t^{n(\mu)} H_\mu(X;q,t^{-1})$
is the graded Frobenius character of a certain $\Sfrak_n$-submodule of
$\CC[x_{1}, y_{1}, \ldots, x_{n}, y_{n}]$.  This was eventually proven
in \cite{Haiman01} using geometry of Hilbert schemes, thus proving
Macdonald's positivity conjecture.  In the meantime, the program to
establish positivity spawned many new directions in the theory of
symmetric functions, including representation theoretic and
combinatorial studies of the $\nabla$ operator, the elliptic Hall
algebra, and $k$-Schur functions.

Each of these new directions is shaped by the central role played by
LLT polynomials \cite{LasLecTh97}, $t$-analogs of products of skew
Schur functions closely tied to Kazhdan-Lusztig theory.  LLT
polynomials appear in numerous results and conjectures in Macdonald
theory including
\cite{AggBorWhe23, CarlMell18, HaHaLoRU05, HRW18,
KimLeeOh22, LeeOhRho23, LoehWarr08, Mellit16}, where they are the
building blocks of combinatorial formulas for $H_\mu$, $\nabla e_n$,
$\nabla s_\lambda$, and other symmetric functions in a similar vein.
Haglund--Haiman--Loehr \cite{HagHaiLo05} gave a formula expressing
$H_\mu$ as a positive sum of LLT polynomials, which, combined with the
Schur positivity of LLT polynomials \cite{GrojHaim07}, gives a Hilbert
scheme-free proof of Macdonald positivity.

Concurrent with these symmetric function activities, a separate theory
of nonsymmetric Macdonald polynomials \(E_\alpha(x_1,\ldots,x_m;q,t)\)
$(\alpha\in \ZZ^m)$, initiated by Opdam \cite{Opdam}, Macdonald
\cite{Macdonald03}, and Cherednik \cite{Cheredniknsmac}, brought in
new tools from representation theory and topology and took on a life
of its own.  The nonsymmetric setting also offers new insights into
the symmetric one.  For instance, \( E_\alpha \) can be computed via a
recurrence relation \cite{Knop97, Sahiinterpolation} which has no
analog in the symmetric world, and then the \( P_\mu \) can be
recovered by using the Hecke algebra to symmetrize the \( E_\alpha \).

The web of research inspired by the study of \( H_\mu \) and its Schur
positivity, together with the advantages offered by the nonsymmetric
setting, leads to the question: \emph{\textsf{is there a nonsymmetric
analog of \( H_\mu \) that supports a framework equally rich in
combinatorics and representation theory?}} The question is guided by
the schematic diagram: \newcommand{\Toplabel}{f(X) \mapsto f[X/(1-t)]}
\begin{equation}
\label{e modnsmac square}
\begin{tikzcd}[column sep = 2.6cm, row sep = 1.1cm]
 J_{\mu}(X;q,t)
\arrow{r}[swap]{\Toplabel}
&
H_{\mu}(X;q,t)
\\
\arrow{u}{\, \raisebox{-2mm}{\text{\footnotesize{Hecke sym.}}}}
  \Ecal_{\alpha}(x;q,t)   \ar[dashed,r] &
\textbf{\large ?}
\ar[dashed, u, " "]
\end{tikzcd}
\end{equation}
in which the missing corner should be a nonsymmetric counterpart to the
modified Macdonald polynomials with positivity and the potential for
developing surrounding theories similar to those around $H_\mu$.

A significant first step in this direction was taken in 1997 by Knop
\cite{Knop97}, who defined the nonsymmetric integral forms
$\Ecal_\alpha = c_\alpha E_\alpha$ in the lower left corner of
\eqref{e modnsmac square}, analogous to the symmetric integral forms
$J_\mu$, with coefficients in $\ZZ[q,t]$.  As in the symmetric case,
identifying an appropriate integral form is necessary to even
formulate a positivity conjecture.  Building upon this a decade later,
Knop \cite{Knop07} formulated a positivity conjecture for (what we
call) the left stable versions of $\Ecal_\alpha$, involving
Kazhdan-Lusztig theory.  Progress stalled for many years until
Lapointe \cite{Lapointe22} arrived at a different positivity
conjecture inspired by his earlier joint work \cite{BFDLM12a,BFDLM12b}
on Macdonald superpolynomials.  His conjecture involves a left stable
version of $\Ecal_\alpha$ more general than Knop's and found
independently by Goodberry \cite{Goodberry22,Goodberry24}.  In other
related work, Goodberry and Orr \cite{GoodberryOrr} and Bechtloff
Weising and Orr \cite{BWOrr} studied these left stable versions from a
$K$-theoretic viewpoint, and Bechtloff Weising
\cite{BechtloffWeising23} investigated a right stable version of
$\Ecal_\alpha$.

Here, for the first time, we bring the powerful machinery of LLT
polynomials into the nonsymmetric setting. This approach allows us to
complete diagram \eqref{e modnsmac square} with (a stable version of)
the previously undiscovered corner, introducing a \emph{nonsymmetric
plethysm map} for the bottom arrow and \emph{modified $r$-nonsymmetric
Macdonald polynomials} $\mE_{\eta|\lambda}$ for the `\textbf{?}'.  We
introduce flagged LLT polynomials which Weyl symmetrize to ordinary
LLT polynomials, conjecture that they are positive in terms of
Demazure atoms, and show that $\mE_{\eta|\lambda}$ is a positive sum
of flagged LLT polynomials.  This yields a positivity conjecture for
$\mE_{\eta|\lambda}$ which strengthens Macdonald positivity.  These
contributions are highlighted in the summary below, which is divided
into three parts to match the layout of the paper.

\subsection{Flagged symmetric functions and nonsymmetric plethysm (\S
\S \ref{s:flagged-symmetric-functions}--\ref{s:ns-plethysm})}

We develop a theory of nonsymmetric plethysm by first formalizing
ideas related to flagged Schur functions and Demazure characters
implicit in works of Lascoux, Macdonald, Reiner--Shimozono and
others~\cite{Lascoux12, Macdonald91, RS}.  We identify a subspace of
\emph{flagged symmetric functions}, \(\FLambda\), of the algebra of
multi-symmetric functions $\Lambda(X_{1}) \otimes \cdots \otimes
\Lambda(X_{l})$.  It is defined as the linear span of flagged versions
of complete homogeneous symmetric functions, and contains the flagged
skew Schur functions first studied in the 80's and 90's by
Lascoux--Sch\"utzenberger \cite{LascouxSchutz82}, Gessel--Viennot
\cite{GessVien89}, and Wachs \cite{Wachs}.  Our viewpoint begins with
the observation that $\FLambda $ is linearly isomorphic to the
polynomial ring $\Kfrak[x_1, \dots, x_l]$; this is simple yet powerful
because it allows us to define operations on the latter using
plethystic manipulations on $\FLambda$.

These foundations laid, we define the \emph{nonsymmetric plethysm}
operator \(\Pi_{t,(x_1,\dots, x_n)} = \Pi_{t,x}\) on the polynomial
ring $\Kfrak[x_1, \dots, x_n]$ to be the linear map determined by
\begin{multline}\label{e nspleth intro}
  \Pi_{t,x} \Big(
 h_{a_{1}}[x_{1}]\, h_{a_{2}}[x_{2}+(1-t)\, x_{1}]\cdots
h_{a_{n}}[x_{n}+(1-t)(x_{1}+\cdots +x_{n-1})]\Big) \\ =
h_{a_{1}}[x_{1}]\, h_{a_{2}}[x_1+x_{2}]\cdots h_{a_{n}}[x_1+\cdots +x_n].
\end{multline}
We give several other descriptions of \(\Pi_{t,x}\) as well as
formulas for its inverse (\S \S
\ref{ss:PiA-via-inner}--\ref{ss:PiA-recursions}).  We prove that the
action of $\Pi_{t,x}$ on symmetric polynomials converges for large $n$
to the plethystic transformation \(f[X] \mapsto f[X/(1-t)]\) on
symmetric functions.  Better yet, we find formulas for the limit when
the input is symmetric in a growing block of variables.  Of particular
importance for our applications to nonsymmetric Macdonald polynomials,
we show that, restricting to polynomials symmetric in $x_{r+1}, \dots,
x_n$, $\Pi_{t,x}$ converges to an {\em $r$-nonsymmetric plethysm}
operator $\Pisf_r$ on the algebra \(\kk[x_1,\ldots,x_r] \otimes
\Lambda(Y) \) of \emph{\(r\)-nonsymmetric polynomials}, given by
\begin{equation}\label{e:Pir-preview}
\Pisf _{r} \bigl(g(x_{1},\ldots,x_{r})\, h[x_1+\cdots+x_r+Y] \bigr) =
(\Pi _{t,x_1,\dots, x_r} g(x_1, \dots, x_r))\,
h\Big[\frac{(x_1+\cdots+x_r+Y)}{1-t}\Big].
\end{equation}
See Theorem~\ref{thm:Pit-symm} and \eqref{e:lim-conclusions-0} in
Lemma \ref{lem:Pit-vs-Pir}.

\subsection{Flagged LLT polynomials (\S \ref{s:flagged-LLT})}
We define \emph{flagged
LLT polynomials} \(\Gcal_{\nubold,\sigma}\),
which are flagged
symmetric functions that generalize
symmetric LLT polynomials, just as
flagged Schur polynomials
generalize usual symmetric Schur polynomials.
While we give a purely algebraic definition of flagged LLT
polynomials, Theorem~\ref{thm:combinatorial-G-nu-sigma} shows that they
also have an explicit combinatorial formula as a weight generating function
over flagged tableaux, generalizing the usual attacking pair tableau
formula for symmetric LLT polynomials.
In full generality, these two descriptions work for any totally ordered
\emph{signed} alphabet.

Strikingly, flagged LLT polynomials interact well with nonsymmetric
plethysm: for a suitable choice of signed alphabet, \(\Pi_{t,x}\)
sends a signed flagged LLT polynomial, denoted
\(\Gcal^-_{\nubold,\sigma}\), to the unsigned flagged LLT polynomial
\(\Gcal_{\nubold,\sigma}\) with the same indexing data, i.e,
\begin{align}
\label{e Pi on G intro}
\Pi_{t,x}(\Gcal^-_{\nubold,\sigma}) = \Gcal_{\nubold,\sigma}.
\end{align}
See Proposition~\ref{prop:Pit-signed-to-unsigned}.  Additionally, we
propose two positivity conjectures (see \S \ref{ss:atom-positivity})
on the expansion of the \(\Gcal_{\nubold,\sigma}\) into Demazure
atoms, which generalize the known Schur positivity of symmetric LLT
polynomials in two directions.

\subsection{Nonsymmetric Macdonald polynomials (\S \ref{s:ns-Mac-pols})}
The main
application of our results above is
to the theory of nonsymmetric Macdonald polynomials.

We show that the Haglund--Haiman--Loehr formula \cite{HagHaiLo08} for
nonsymmetric Macdonald polynomials $\Ecal _{\alpha}$ can be recast as
a positive sum of signed flagged LLT polynomials
\(\Gcal^-_{\nubold,\sigma}\) for ribbons: specifically, using notation
defined in \S \S \ref{ss:integral-forms}--\ref{ss:ns-HHL}, it reads
\begin{equation}\label{e:ns-HHL intro}
\Ecal _{(\alpha _{1},\ldots,\alpha _{m})}(x;q,t) = t^{n(\alpha _{+})} \sum
_{(\nubold ,\sigma )\in R} \bigl( \prod \nolimits_{\domino }
q^{a(u)+1}\, t^{l(u)} \bigr)\, \Gcal _{\nubold ,\sigma
}^{-}(x_{1},\ldots,x_{m},0,\ldots,0;\, t^{-1})\, .
\end{equation}
We then show how limiting versions of this identity recover and unify
various stabilizations of nonsymmetric Macdonald polynomials
previously studied by Bechtloff Weising \cite{BechtloffWeising23},
Goodberry \cite{Goodberry22, Goodberry24}, Knop~\cite{Knop07}, and
Lapointe~\cite{Lapointe22}.

We introduce \emph{modified $r$-nonsymmetric Macdonald polynomials}
$\mE_{\eta |\lambda}(x,Y;\, q,t)$ ($\eta \in \NN^r$, partition
$\lambda$), defined as the \(\Pisf_r\) images of integral forms $\Jns
_{\eta |\lambda }(x,Y;\, q,t)$ given by rescaling the right stable
nonsymmetric Macdonald polynomials in \cite{BechtloffWeising23}.  The
modified $r$-nonsymmetric polynomials satisfy properties bearing a
striking resemblance to those of $H_\mu$ which made the modified
symmetric theory so fruitful.  We prove that the modified
$r$-nonsymmetric Macdonald polynomials are monomial positive and
conjecture that they are in fact Demazure atom positive.  We show that
$\mE_{\eta |\lambda }$ Weyl symmetrizes to $H_{\mu}$ for $\mu = (\eta
;\lambda )_{+}$ the partition rearrangement of $(\eta;\lambda)$; thus,
our atom positivity conjecture strengthens the Schur positivity of
$H_\mu$ (see \eqref{e:K-lam-mu-classical} and Remark
\ref{rem:K-lam-mu-classical}).  Further, applying a stable limit
version of \eqref{e Pi on G intro} to formula \eqref{e:ns-HHL intro}
yields an expression for $\mE_{\eta |\lambda }$ as a positive sum of
unsigned flagged LLT polynomials \(\Gcal_{\nubold,\sigma}\) (Theorem
\ref{thm:mod-mac}(b)).  This formula for $\mE_{\eta |\lambda}$ shows
that the atom positivity conjecture for modified \(r\)-nonsymmetric
Macdonald polynomials is implied by our atom positivity conjecture for
flagged LLT polynomials.

Our results imply the following commutative diagram (see \eqref{e Weyl
modnsmac2} and \eqref{e Hsym J2}).
\begin{equation}
\begin{tikzcd}[column sep = 2.47cm, row sep = .95cm]
 J_{(\eta;\lambda)_+}(X;q, t) \!
\arrow{r}[swap]{\Toplabel}
&
 \! H_{(\eta;\lambda)_+}(X;q,t)
\\
\arrow{u}{\, \raisebox{-2mm}{\text{\footnotesize{Hecke sym.}}}}
\Jns_{\eta|\lambda}(x,Y;q,t) \!    \ar[r, "\Pisf_r"] &
\! \mE_{\eta|\lambda}(x,Y;q,t)
 \arrow{u}[swap]{\, \raisebox{-2mm}{\text{\footnotesize{Weyl sym.}}}}
\end{tikzcd}
\end{equation}

\section{Preliminaries}
\label{s:prelim}

In this section we fix notation and terminology and summarize various
known results.

\subsection{Partitions and compositions}
\label{ss:partitions-etc}

A {\em weak} (resp.\ {\em strict}) {\em composition} of \(n \in \NN\)
is a finite sequence of nonnegative (resp.\ positive) integers \(\mu =
(\mu_1,\ldots,\mu_k)\) such that \(|\mu| \defeq \mu_1+\cdots+\mu_k =
n\).  We write $\mu ^{\op } \defeq (\mu _{k},\ldots,\mu _{1})$ for the
reversed sequence.

A {\em partition} $\lambda = (\lambda _{1}\geq \cdots\geq \lambda
_{k})$ of $n$ is a weakly decreasing strict composition of $n$.  Its
transpose is denoted by $\lambda ^{*}$.  Sometimes we also allow
partitions to have trailing zeroes.  Either way, the {\em length of
$\lambda $}, denoted $\ell (\lambda )$, means the number of nonzero
parts.  If \(\mu\) is a weak composition, $\mu _{+}$ denotes the
partition obtained by sorting the nonzero parts of $\mu $ into weakly
decreasing order (or all the parts of $\mu $, if allowing partitions
with trailing zeroes).  Of use later will be the number \(n(\lambda)\)
defined by
\begin{equation}\label{e:n(lambda)}
n(\lambda ) = \sum _{i}(i-1)\lambda _{i} = \sum _{i}\binom{\lambda
_{i}^{*}}{2} \,.
\end{equation}

Associated to any strict composition $\rr =(r_{1},\ldots,r_{k})$ of
$n$ is a partition of the set $[n] = \{1,\ldots,n \}$ into intervals
of sizes $r_{1},\ldots,r_{k}$.  We refer to these intervals as {\em
$\rr$-blocks}---e.g.,   for $\rr =(4,1,2)$, the $\rr$-blocks are
\begin{equation}\label{e:r-blocks}
[7]  = \{1,\ldots,4 \} \cup \{5 \} \cup \{6, 7 \}.
\end{equation}

\subsection{Symmetric groups}
\label{ss:Sn}

We write $\Sfrak _{n}$ for the symmetric group of permutations of
$\{1,\ldots,n \}$.  The group $\Sfrak _{n}$ is a Coxeter group with
generators given by the adjacent transpositions $s_{i} = (i
\leftrightarrow i+1)$ for $i=1,\ldots,n-1$.  The {\em length} $\ell
(w)$ of $w\in \Sfrak _{n}$, defined as the length of a reduced (i.e.,
shortest) factorization $w = s_{i_{1}}\cdots s_{i_{l}}$, is also given
by the number of inversions of the permutation $w$:
\begin{equation}\label{e:length=inv}
\ell (w) = \inv (w) \defeq |\{(i<j)\mid w(i)>w(j) \}|.
\end{equation}
We write $w_0$ for the longest permutation $w_0(i) = n+1-i$ in
$\Sfrak_n$.

Given a strict composition $\rr $ of $n$, $\Sfrak _{\rr } \cong
\Sfrak_{r_{1}}\times \cdots \times \Sfrak _{r_{k}}\subseteq \Sfrak
_{n}$ denotes the Young subgroup consisting of permutations that map
every $\rr $-block onto itself; $\Sfrak _{\rr }$ is a Coxeter
subgroup, generated by the $s_{i}$ for which $i$ and $i+1$ are in the
same $\rr $-block.

Given Young subgroups $\Sfrak _{\sS } \subseteq \Sfrak _{\rr}$, every
coset of $\Sfrak_{\sS}$ in $\Sfrak _{\rr}$ contains a unique element
of minimal (resp.\ maximal) length called a minimal (resp.\ maximal)
coset representative.  We let $\Sfrak _{\rr}/ \Sfrak _{\sS }$ denote
the set of minimal representatives of left cosets $w\, \Sfrak_{\sS}$.

The Bruhat order on $\Sfrak_n$ is the transitive closure of the relations
$w < w \, r$ for  $\ell(w) < \ell(w \, r)$ and  $r$ in the set of reflections
$\{s_{ij} = (i \leftrightarrow j) : 1 \le i < j \le n\}$.

\subsection{Symmetric functions}
\label{ss:symmetric-functions}

A {\em symmetric series} is a symmetric formal linear combination
$f(X)$ of monomials in infinitely many variables $X =
x_{1},x_{2},\ldots $, with coefficients in a commutative ring $\Kfrak
$.  A {\em symmetric function} is a symmetric series of bounded
degree.  A {\em multi-symmetric function} (resp.\ {\em series}) is a
function $f(X_{1},\ldots,X_{l})$ of several disjoint sets of variables
$X_{i}$ which is a symmetric function (resp.\ series) in each $X_{i}$.
The algebra of \multisymmetric functions is denoted by $\Lambda
_{\Kfrak }(X)$ or $\Lambda _{\Kfrak }(X_{1}, \ldots, X_{l})$, or just
$\Lambda (X)$ or $\Lambda (X_{1}, \ldots, X_{l})$ if $\Kfrak $ is
understood.  A multi-symmetric function or series can be viewed as a
function of any one set of variables $X_{i}$ with coefficients that
are functions of the other sets $X_{j}$.  Thus, properties of
symmetric functions and series in one set of variables also apply to
each set of variables in a multi-symmetric function or series.

We use Macdonald's notation \cite{Macdonald95} for the elementary,
complete, and power-sum symmetric functions $e_{k}$, $h_{k}$, $p_{k}$,
$e_{\lambda } = e_{\lambda _{1}} e_{\lambda _{2}}\cdots $, $h_{\lambda
} = h_{\lambda _{1}} h_{\lambda _{2}}\cdots $, $p_{\lambda } =
p_{\lambda _{1}} p_{\lambda _{2}}\cdots $, monomial symmetric
functions $m_{\lambda }$, Schur functions $s_{\lambda }$, and skew
Schur functions $s_{\lambda /\mu }$.  The functions $ e_{\lambda }$,
$h_{\lambda }$, $m_{\lambda } $, and $s_{\lambda }$ are graded free
$\Kfrak $-module bases of the algebra of symmetric functions $\Lambda
_{\Kfrak } = \Lambda _{\Kfrak }(X)$, in which the element indexed by a
partition $\lambda $ is homogeneous of degree $|\lambda |$.  If $\QQ
\subseteq \Kfrak $, the $p_{\lambda }$ are also a basis.  To specify
$X$, we write $e_{k}(X)$, $h_{k}(X)$, etc.

The algebra of symmetric functions is a polynomial ring $\Lambda
_{\Kfrak } \cong \Kfrak [h_{1}, h_{2},\ldots ] \cong \Kfrak [e_{1},
e_{2},\ldots ]$ in the $h_{k}$ or $e_{k}$, and, if $\QQ \subseteq
\Kfrak $, also in the $p_{k}$.  The algebra of symmetric series is a
formal power series ring in the same generators.  Using this, we may
identify symmetric functions or series $f(X)$ with polynomials or
power series in functions $e_{k}(X)$, $h_{k}(X)$, or (if $\QQ
\subseteq \Kfrak $) $p_{k}(X)$ of a symbolic {\em formal alphabet} $X$
consisting of unspecified variables.

We make frequent use of the symmetric series
\begin{equation}\label{e:Omega}
\Omega (X)\defeq \sum _{k=0}^{\infty } h_{k}(X) = \exp \bigl( \sum
_{k=1}^{\infty } \frac{p_{k}(X)}{k} \bigr) = \prod _{i}
\frac{1}{1-x_{i}},
\end{equation}
where $X = x_{1}, x_{2}, \ldots $ in the last formula, $h_{0}(X) = 1$
by convention, and the three formulas are equal by
\cite[(2.10)]{Macdonald95}.

\subsection{Plethystic evaluation}
\label{ss:plethysm}

Let $Z$ be an expression constructed from rational numbers and
indeterminates, and define $p_{k}[Z]$ to be the result of substituting
$z\mapsto z^{k}$ for every indeterminate appearing in $Z$.  Assume
that every $p_{k}[Z]$ makes sense as an element of some algebra
$\Rfrak $ over a coefficient ring $\Kfrak $ containing $\QQ $.  The
specific form of $Z$ is flexible, provided only that this last
assumption holds.  In particular, $Z$ could be a symmetric function,
symmetric series, or other formal series over $\QQ $ in a possibly
infinite set of indeterminates in $\Rfrak $, so long as $p_{k}[Z]\in
\Rfrak $ for all $k$.

Since $\Kfrak $ contains $\QQ $, $\Lambda _{\Kfrak } \cong \Kfrak
[p_{1}, p_{2}, \ldots ]$ is a polynomial ring in the power-sums.  For
any $f\in \Lambda _{\Kfrak }$, the {\em plethystic evaluation}
$f[Z]\in \Rfrak $ is defined to be the result of substituting
$p_{k}[Z]$ for $p_{k}$ in the expansion of $f$ as a polynomial in the
$p_{k}$.  In other words, $f\mapsto f[Z]$ is the unique $\Kfrak
$-algebra homomorphism $\Lambda _{\Kfrak }\rightarrow \Rfrak $ that
sends $p_{k}$ to $p_{k}[Z]$.  The plethystic evaluation $f[Z]$ of a
symmetric series $f$, if it exists, is likewise defined as the result
of substituting $p_{k}[Z]$ for $p_{k}$ in the expansion of $f$ as a
power series in the $p_{k}$.

Often the coefficient ring $\Kfrak $ and the target algebra $\Rfrak $
in which $f[Z]$ takes values will be implicit from the context.  For
$Z$ of a suitable form, $f[Z]$ can also be defined without assuming
that $\QQ \subseteq \Kfrak $, as explained in item (iv), below.

We recall some basic properties of plethystic evaluation.

{\it (i) } If $\Rfrak $ contains all symmetric functions of a finite
or infinite set of variables $z_{i}$, then the usual evaluation of
$f\in \Lambda _{\Kfrak }$ in these variables is given by the
plethystic evaluation
\begin{equation}\label{e:pleth=specialize}
f(z_{1},z_{2},\ldots ) = f[z_{1}+z_{2} + \cdots]\, ; \quad \text{in
particular,}\quad f(z_{1},\ldots,z_{n}) = f[z_{1}+\cdots +z_{n}]\, .
\end{equation}
Motivated by this, we use the term {\em plethystic alphabet} for a sum
of variables in the argument of a plethystic evaluation, and
sometimes also for other somewhat more general expressions.  By
convention, if $\Rfrak $ contains the algebra of symmetric functions
in an infinite set of variables $X = x_{1},x_{2},\ldots $, the symbol
$X$ stands for the
plethystic alphabet
\begin{equation}\label{e:pleth-convention}
X = x_{1}+x_{2}+\cdots
\end{equation}
whenever it appears in the argument of a plethystic evaluation.  Then
\eqref{e:pleth=specialize} implies the identity $f[X] = f(X)$.
Plethystic transformations such as $f(X)\mapsto f[(1-t)X]$ are defined
using this convention.  If $X$ is a formal alphabet, we apply
\eqref{e:pleth-convention} by introducing new variables $x_{1},
x_{2},\ldots $ and expressing the plethystic evaluation $f[Z]$ again
formally in terms of $X$.  Specifically, any formal power-sum
$p_{m}(X)$ occurring in $Z$ becomes $p_{km}(X)$ in $p_{k}[Z]$, since
$p_{m}(x_{1}^{k}, x_{2}^{k} ,\ldots ) = p_{km}(x_{1}, x_{2}, \ldots
)$.

{\it (ii) } In general, plethystic evaluation does not commute with
substituting values for indeterminates.  For example, if $X =
x_{1}+x_{2}+\cdots $ is a plethystic alphabet and $t$ is an
indeterminate, it is true that $f[t X] = f(t x_{1}, t x_{2}, \ldots )$
but it is {\em not} true that $f[-X] = f(-x_{1}, -x_{2}, \ldots )$,
i.e., setting $t = -1$ in $f[t X]$ does not yield $f[-X]$.  The reason
for this is that $p_{k}[t] = t^{k}$ for an indeterminate $t$, but
$p_{k}[-1] = -1 \not =(-1)^{k}$.

It is permissible, however, to substitute a monomial $z^{\aA }$ in
indeterminates for an indeterminate, because $p_{k}[z^{\aA }] =
(z^{\aA })^{k}$ for all $k$.  For example, the aforementioned identity
$f[tX] = f(t x_{1}, t x_{2}, \ldots )$ follows by substituting $t
x_{i}$ for $x_{i}$ in the identity $f[X] = f(X)$.

{\it (iii) } The symmetric series $\Omega (X)$ in \eqref{e:Omega} is
often useful in conjunction with plethystic evaluation.  In \S
\ref{s:ns-plethysm}, for example, we use formulas such as $\Omega [A\,
\sum _{i<j} z_{i}/z_{j}]$ to provide a convenient expression for a
symmetric series in $A$ with Laurent polynomial coefficients in
$z_{1},\ldots,z_{n}$ that would be difficult to work with
algebraically if expressed in some other way.

The formula $\Omega (X) = \exp \bigl( \sum _{k=1}^{\infty } p_{k}(X)/k
\bigr)$ implies the multiplicative property
\begin{equation}\label{e:Omega[A+B]}
\Omega [A+B] = \Omega [A]\, \Omega [B]
\end{equation}
whenever the plethystic evaluations $\Omega [A]$ and $\Omega [B]$ are
defined.  In particular, since $\Omega [0] = 1$, $\Omega [-A] = \Omega
[A]^{-1}$.  Since $\Omega (X)^{-1} = \sum _{k=0}^{\infty }(-1)^{k}
e_{k}(X)$ by \cite[(2.6)]{Macdonald95}, it follows that
\begin{equation}\label{e:Om[t(X-Y)]}
\Omega [t (A-B)] = \Omega [t\, A]\, \Omega [-t\, B] = \sum \nolimits
_{r,s\geq 0} (-1)^{s}\, h_{r}[A]\, e_{s}[B]\, t^{r+s}.
\end{equation}
Taking the coefficient of $t^{k}$ on both sides yields the plethystic
identity
\begin{equation}\label{e:hk[A-B]}
h_{k}[A-B] = \sum _{r+s=k} (-1)^{s}\, h_{r}[A]\, e_{s}[B].
\end{equation}

{\it (iv) } If $Z$ is a formal series with integer coefficients, we
can write it as $Z = a_{1}+a_{2}+\cdots -b_{1}-b_{2}-\cdots $, where
$a_{i}$, $b_{i}$ are monomials in the indeterminates, repeating terms
to allow for arbitrary coefficients.  Then \eqref{e:hk[A-B]} implies
\begin{equation}\label{e:hk[Z]}
h_{k}[Z] = h_{k}[a_{1}+a_{2}+\cdots -b_{1}-b_{2}-\cdots] = \sum _{r+s
= k} (-1)^{s} h_{r}(a_{1}, a_{2}, \ldots ) e_{s}(b_{1}, b_{2}, \ldots).
\end{equation}
whenever the symmetric functions $h_{r}(a_{1}, a_{2}, \ldots )$ and
$e_{s}(b_{1}, b_{2}, \ldots)$ make sense as elements of a $\Kfrak
$-algebra $\Rfrak $.  For $Z$ of this form, we can use \eqref{e:hk[Z]}
and the expansion of $f\in \Lambda _{\Kfrak }\cong \Kfrak [h_{1},
h_{2}, \ldots]$ as a polynomial in the $h_{k}$ to define $f[Z]$
without assuming that $\QQ \subseteq \Kfrak $.  The validity of
\eqref{e:hk[A-B]} implies that $f[Z]$ constructed in this way is
well-defined, independent of canceling any redundant terms $a_{i} =
b_{j}$, and that for $f = p_{k}$, we get back the expected formula
$p_{k}[Z] = a_{1}^{k}+a_{2}^{k}+\cdots -b_{1}^{k}-b_{2}^{k}-\cdots $.
In particular, if $\QQ \subseteq \Kfrak $, this construction of $f[Z]$
agrees with the one using power-sum expansions.

\subsection{Coefficients}
\label{ss:coefficients}

Various functions $f(x_{1},x_{2},\ldots, x_{n}, X_{1}, X_{2},\ldots ,
X_{l} ; q, t,\ldots )$ of the following form will be encountered in
this paper: $f$ is a \multisymmetric function or series in formal
alphabets $X_{i}$, and a (Laurent) polynomial or power or Laurent
series in variables $x_{i}$, with coefficients in the field $\kk =\QQ
(q,t,\ldots)$ of rational functions of parameters $q,t,\ldots $.  We
can freely regard $f$ as a function of some of the variables $x_{i}$
and formal alphabets $X_{i}$, with coefficients that are functions of
all the rest.  We denote the operation of taking a coefficient by
angle brackets, e.g., $\langle x^{\aA } \rangle \, f$ for the
coefficient of a monomial $x^{\aA } = x_{1}^{a_{1}}\cdots
x_{k}^{a_{k}}$, or $\langle m_{\lambda } \rangle \, f$ for the
coefficient of a monomial symmetric function.  Note that when $X =
x_{1}, x_{2}, \ldots$, we have $\langle m_{\lambda }(X) \rangle \,
f(X) = \langle x^{\lambda } \rangle \, f(x_{1},x_{2},\ldots )$.  When
taking coefficients with respect to a basis of $\Lambda (X)$, the
basis will either be clear from the context, or specified.

If $f$ happens to be a (Laurent) polynomial in some parameters---for
instance, if $\kk =\QQ (q,t)$, but $f$ has coefficients in the subring
$\ZZ [q,t]$---then we can consider the ground field to be smaller (in
this case, $\QQ $) and take coefficients with respect to the
parameters just as with other variables.

\subsection{Symmetric limits}
\label{ss:limits}

There are several possible senses in which a sequence of polynomials
or power series $f_{n}(x_{1},\ldots,x_{n})$ may converge to a
symmetric function or series
\begin{equation}\label{e:limit}
f(X) = \lim_{n\rightarrow \infty } f_{n}(x_{1},\ldots,x_{n})
\end{equation}
in $X = x_{1}, x_{2}, \ldots $, where $f$ and all $f_{n}$ have
coefficients in a ring $\Kfrak $.

{\it (i) } We say that the $f_{n}$ {\em converge strongly to $f$} if
\begin{equation}\label{e:strong-convergence}
f_{n}(x_{1},\ldots,x_{n}) = f[x_{1}+\cdots +x_{n}]
\end{equation}
for all $n$.  Strong convergence implies convergence in the other
senses below.  If the $f_{n}$ converge strongly, they are symmetric
and satisfy $f_{m}(x_{1},\ldots,x_{n},0,\ldots,0) =
f_{n}(x_{1},\ldots,x_{n})$ for $m\geq n$.  Conversely, these
conditions imply that the $f_{n}$ converge strongly to a unique
symmetric series $f(X)$.  This is essentially the definition of
symmetric series \cite[I.2, pp.~18-19]{Macdonald95}.

The sequence $f_{n}$ need not start at $n=0$.  For example, the
strongly convergent sequence in \eqref{e:left-stable-symm} is defined
for $n\geq \ell (\lambda )$ for a given partition $\lambda $.

{\it (ii) } Let $t$ be an indeterminate in $\Kfrak $.  We say that the
$f_{n}$ {\em converge $t$-adically to $f$} if for all $e>0$, we have
\begin{equation}\label{e:t-adic-convergence}
f_{n}(x_{1},\ldots,x_{n}) \equiv  f[x_{1}+\cdots +x_{n}] \pmod{(t^{e})}
\end{equation}
for $n$ sufficiently large.  Here, $f\equiv g \pmod{(t^{e})}$ means
that the coefficients of $f-g$ vanish to order $\geq e$ at $t = 0$.
Note that this interpretation makes sense even if $\Kfrak $ contains
$\QQ (t)$ or is a ring of Laurent polynomials or Laurent series in
$t$.  If $\Kfrak $ is a polynomial or power series ring in $t$, then
$f\equiv g \pmod{(t^{e})}$ means that the coefficients of $f-g$ are in
the ideal $(t^{e})\subseteq \Kfrak $.  Strictly speaking, $t$-adic
convergence does not require that the $f_{n}(x_{1},\ldots,x_{n})$ be
symmetric, but we will only use it in that case.

{\it (iii) } Suppose that elements of $\Kfrak $ are (Laurent)
polynomials or formal series in some designated variables $Y = y_{1},
y_{2}, \ldots $, so that taking coefficients with respect to these
variables is well-defined.  In particular, elements of $\Kfrak $ could
be any combination of ordinary or Laurent polynomials, power or
Laurent series, or symmetric functions or series in the variables $Y$.

We say that the $f_{n}$ {\em converge formally to $f$ in $X$ and $Y$}
if $\lim_{n\rightarrow \infty } f_{n}(x_{1},\ldots,x_{n}) = f(x_{1},
x_{2}, \ldots )$ as formal series in $X$ and $Y$---that is, if for
every monomial $x^{\aA } y^{\bb }$, we have $\langle x^{\aA } y^{\bb }
\rangle f_{n} = \langle x^{\aA } y^{\bb } \rangle f$ for $n$
sufficiently large.  If $\Kfrak $ contains symmetric functions or
series in a formal alphabet $A$, formal convergence in $A$ means
formal convergence in variables introduced for $A$.  Formal
convergence in $X$, $Y$, and $A$, for instance, is then equivalent to
the property that for all $\aA $, $\bb $, and $\lambda $, we have
$\langle x^{\aA } y^{\bb } m_{\lambda }(A) \rangle f_{n} = \langle
x^{\aA } y^{\bb } m_{\lambda }(A) \rangle f$ for $n$ sufficiently
large, or the same with any basis of $\Lambda (A)$ in place of
$m_{\lambda }$.

The $f_{n}$ need not be symmetric to have a symmetric formal limit
$f$.  In particular, if $f_{n}$ is symmetric in
$x_{1},\ldots,x_{p(n)}$, where $p(n)\leq n$ grows without bound as
$n\rightarrow \infty $, then the formal limit $f(x_{1}, x_{2}, \ldots
)$ is symmetric if it exists.  This situation arises, e.g., in
Corollary~\ref{cor:other-limits}.

\subsection{Diagrams and tableaux} \label{ss:diagrams}

Given $\alpha,\beta\in\mathbb Z^l$ with $\alpha_{j} \leq \beta_{j}$
for all $j$, the {\it diagram} $\beta/\alpha$ is the array of unit
lattice squares (or {\em boxes}) whose northeast corners have
coordinates $(i,j)$ for $j=1,\ldots,l$ and $\alpha _{j}< i\leq \beta
_{j}$ (if $\alpha _{j} = \beta _{j}$, then the $j$-th row of $\beta
/\alpha $ is empty).  When $\beta_1\geq \cdots\geq\beta_{l}$ and
$\alpha_1\geq \cdots \geq \alpha_{l}$, the diagram $\beta/\alpha$ is a
{\it skew diagram}.  In particular, the Young or Ferrers diagram of a
partition $\lambda $ is the skew diagram $\lambda /(0^{\ell (\lambda
)})$.  Note that we use the French style in which the diagram of a
partition has its southwest corner at the origin.

A {\em filling} of a diagram $\nu$ with letters from a set $\Acal $ is
a map $T\colon \nu \rightarrow \Acal $, pictured by placing the letter
$T(x)$ in each box $x$ of $\nu $.

A {\em signed alphabet} is a totally ordered set $\Acal $ equipped
with a partition $\Acal =\Acal ^{+}\coprod \Acal ^{-}$ into {\em
positive} and {\em negative} letters.  A {\em super filling} of a
general diagram $\nu $ is a map to a signed alphabet.  A super filling
$T$ is {\em row-increasing} if every row of $T$ is weakly increasing
with no repeated negative letters.  If $\nu $ is a skew diagram, a
{\em super tableau} on $\nu$ is a row-increasing super filling
$T\colon \nu \rightarrow \Acal $ such that, in addition, each column
of $T$ is weakly increasing with no repeated positive letters.  If
$\Acal =\Acal ^{+}$ has only positive letters, then a super tableau is
a semistandard tableau in the usual sense. The set of semistandard
tableaux is denoted by $\SSYT(\nu,\Acal)$.

For $a,b$ in a signed alphabet $\Acal $, we use the notation
\begin{equation}\label{e:inv-signed}
\begin{aligned}
a \sgngt b \quad \text{(or $b \sgnlt a$)}& \qquad \text{if $a>b$ or
 $a=b\in \Acal ^{-}$},\\
a \sgnle b \quad \text{(or $b \sgnge a$)}& \qquad \text{if $a<b$ or
 $a=b\in \Acal ^{+}$}.
\end{aligned}
\end{equation}
Note that $\sgnle $ is the negation of $\sgngt $, and that $\sgnlt $,
$\sgnle $ reduce to $<$, $\leq $ if $\Acal = \Acal ^{+}$ has only
positive letters.  In this notation, the increasing condition on a row
of a super filling is $a_1 \sgnle a_2 \sgnle \cdots \sgnle a_k$, and
on a column of a super tableau it is $a_1 \sgnlt a_2 \sgnlt \cdots
\sgnlt a_k$.

\subsection{Symmetric LLT polynomials}
\label{ss:classical-LLT}

Let $\nubold = (\nu ^{(1)},\ldots,\nu ^{(k)})$ be a tuple of diagrams.
We identify $\nubold $ with the disjoint union of its components,
i.e., a box in $\nubold $ means a box in a specific component $\nu
^{(j)}$.  A row-increasing super filling on $\nubold$ is a filling
$T\colon \nubold \rightarrow \Acal $ whose restriction to each
component $\nu ^{(j)}$ is a row-increasing super filling; likewise for
super tableaux if $\nubold$ is a tuple of skew diagrams.  We write
$\SSYT _{\pm }(\nubold ,\Acal )$ for the set of super tableaux on a
tuple of skew diagrams $\nubold $, or $\SSYT(\nubold,\Acal)$ if
$\mathcal A$ has only positive letters.

The {\em content} of a box $x$ in a diagram $\nu$ is $c(x)=i-j$, where
$(i,j)$ is the northeast corner of $x$.

\begin{defn} \label{def:inv_old}
Given a tuple $\nubold$ of diagrams, boxes $x\in \nubold ^{(i)}$ and
$y\in \nubold ^{(j)}$ form an {\em attacking pair} if either $i<j$ and
$c(y) = c(x)$, or $i>j$ and $c(y)=c(x)+1$.  For a row-increasing super
filling $T:\nubold\to \mathcal A$, an {\em attacking inversion} in $T$
is an attacking pair $(x,y)$ such that $T(x) \sgngt T(y)$.  The number
of attacking inversions in $T$ is denoted $\oinv (T)$.
\end{defn}

For a positive alphabet $\Acal = \Acal ^{+}$, the {\em LLT polynomial}
indexed by a tuple $\nubold$ of skew diagrams is defined by
\begin{equation}\label{e:classical-LLT}
\Gcal _{\nubold }(X;t) \defeq \sum _{T\in \SSYT (\nubold ,\Acal )}
t^{\oinv (T)}\, x^{T},
\end{equation}
where $x^{T} = \prod _{u\in \nubold } x_{T(u)}$.  It is known that
$\Gcal _\nubold (X;t)$ is symmetric in the variables $x_{a}$ ($a\in
\Acal $).  Up to factors of the form $t^{e}$, these are the same
polynomials as defined by Lascoux, Leclerc, and Thibon (LLT) in
\cite{LasLecTh97} using ribbon tableaux and spin---see also \cite[\S
5]{HaHaLoRU05} and \cite[\S 10]{HagHaiLo05}.

We can regard formula \eqref{e:classical-LLT} for an infinite positive
alphabet $\Acal $ as defining a symmetric function $\Gcal _{\nubold
}(X;t)\in \Lambda _{\ZZ [t]}(X)$ in a formal alphabet $X$.  Then we
have the following generalization of formula \eqref{e:classical-LLT}
to signed alphabets.

\begin{lemma}[{\cite[(81)--(82) and Proposition 4.2]{HagHaiLo05}}]
\label{lem:signed-classical-LLT}

Given a signed alphabet $\Acal $ and a tuple $\nubold $ of skew
diagrams, evaluating $\Gcal _{\nubold }(X;t)$ in the plethystic
alphabet $X^{\Acal } \defeq \sum _{a\in \Acal ^{+}} x_{a} - \sum
_{b\in \Acal ^{-}} x_b$ yields
\begin{equation}\label{e:signed-classical-LLT}
\Gcal _{\nubold }[X^{\Acal };t] = \sum _{T\in \SSYT _{\pm }(\nubold
,\Acal )} (-1)^{m(T)}\, t^{\oinv (T)}\, x^{T},
\end{equation}
where $m(T)=|T^{-1}(\Acal ^{-})|$ is the number of negative entries in
$T$.
\end{lemma}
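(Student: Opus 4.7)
The plan is to deduce \eqref{e:signed-classical-LLT} from the positive-alphabet formula \eqref{e:classical-LLT} by a plethystic substitution followed by a sign-reversing involution, following the approach of~\cite[Proposition~4.2]{HagHaiLo05}.

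First, I would split $X^{\Acal} = Y - Z$ with $Y = \sum_{a \in \Acal^+} x_a$ and $Z = \sum_{b \in \Acal^-} x_b$, so that $\Gcal_{\nubold}[Y-Z;t]$ is well-defined as a plethystic substitution into $\Gcal_{\nubold}(X;t) \in \Lambda_{\ZZ[t]}(X)$. Using the plethystic identity $h_k[-Z] = (-1)^k e_k(Z)$ from~\eqref{e:hk[Z]}, substituting $-Z$ for a portion of the positive-alphabet variables amounts, at the combinatorial level, to attaching a sign $-1$ to each $b$-entry and replacing the column condition at negative letters by its conjugate (strictly increasing becomes weakly increasing without positive repeats; repeated negatives are allowed while repeated positives are forbidden). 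Applying this substitution to the $Z$-letters in~\eqref{e:classical-LLT} yields an expansion of $\Gcal_{\nubold}[Y-Z;t]$ as a weighted sum of row-increasing super fillings $T \colon \nubold \to \Acal$ with weight $(-1)^{m(T)}\, t^{\oinv(T)}\, x^T$, but ranging over a set of fillings strictly larger than $\SSYT_\pm(\nubold, \Acal)$.

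Next, I would construct a sign-reversing involution on this larger set of fillings whose fixed-point set is $\SSYT_\pm(\nubold, \Acal)$. Concretely, for a row-increasing super filling $T$ that violates the super column condition from Definition~\ref{def:inv_old}, one locates the leftmost (by content, with a fixed tie-break across the components of $\nubold$) column containing a vertically adjacent pair of cells $u$ below $v$ with $T(u) \sgnge T(v)$, i.e., either $T(u) > T(v)$ or a repeated negative, and swaps $T(u)$ and $T(v)$. The swap is manifestly involutive, preserves the monomial $x^T$, and toggles $(-1)^{m(T)}$, so its fixed points are exactly the super tableaux.

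The main obstacle is verifying that this involution preserves the attacking-inversion statistic $\oinv(T)$. Since attacking pairs involve cells of the same or adjacent content across possibly different components of $\nubold$, a local vertical swap within one column could a priori alter attacking pairs between $\{u, v\}$ and cells of matching content in another component. A careful case analysis using the super-order relations~\eqref{e:inv-signed} --- in particular, that $T(u) \sgngt T(v)$ and $T(u) \sgnle T(v)$ switch roles consistently with respect to an outside cell of matching content, and that the equality cases with negative letters interact correctly with the attacking-inversion definition --- shows that the attacking-pair contributions involving $u$ or $v$ are symmetric under the swap. This $\oinv$-preservation is the technical heart of the argument and is what distinguishes the signed statement from the unsigned one.
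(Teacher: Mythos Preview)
The paper does not prove this lemma; it simply cites \cite[(81)--(82) and Proposition~4.2]{HagHaiLo05}. So the comparison is with the argument in that reference, which proceeds via Gessel's fundamental quasisymmetric functions: one writes $\Gcal_{\nubold}(X;t) = \sum_{S} t^{\oinv(S)} F_{\Des(S)}(X)$ as a sum over standard Young tableaux $S$ on $\nubold$, and then uses the known identity expressing $F_{D}[Y-Z]$ as a signed sum over super-compatible sequences. Regrouping by standardization recovers the super tableau formula directly, with $\oinv$ inherited from the standard tableau and no cancellation needed.

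Your proposal, by contrast, has two genuine gaps. First, the opening step is not justified: the identity $h_k[-Z] = (-1)^k e_k(Z)$ applies to individual $h_k$'s, but $\Gcal_{\nubold}(X;t)$ is not given as a product of $h_k$'s, so you have not explained what ``larger set of row-increasing super fillings'' the plethystic substitution $X\mapsto Y-Z$ actually produces, nor why the weight on each such filling is $(-1)^{m(T)} t^{\oinv(T)} x^T$. Without a Jacobi--Trudi type expression or the quasisymmetric expansion, there is no evident intermediate combinatorial model to cancel from.

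Second, and more concretely, your sign-reversing involution does not reverse signs. Swapping the entries $T(u)$ and $T(v)$ in two cells leaves the multiset of entries unchanged, so $m(T)$ is preserved, not toggled. (You also wrote ``repeated negative'' for the column violation when the condition $T(u)\sgnge T(v)$ means $T(u)>T(v)$ or a repeated \emph{positive}.) A swap that genuinely toggles $m(T)$ would have to change the sign of an entry, not its position---but that changes $x^T$. Even setting this aside, a vertical swap of unequal entries need not preserve the row-increasing condition on the rows through $u$ and $v$, so the involution is not well-defined on your stated domain. The $\oinv$-preservation issue you flag as ``the technical heart'' is therefore moot: the involution fails before you get there.
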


\begin{remark}\label{rem:super-skew-schur}
If $\nubold = (\nu )$ has only one component, then there are no
attacking pairs, and $\oinv (T) = 0$ for every tableau $T$.  In this
case, $\Gcal _{\nubold }(X; t) = s_{\nu}(X)$ is a skew Schur function,
and \eqref{e:signed-classical-LLT} is the usual super tableau formula
for the signed alphabet specialization $s_{\nu }[X^{\Acal }]$.
\end{remark}

\subsection{\texorpdfstring{$\GL _{n}$}{GL\_n} characters}
\label{ss:GLn-characters}

We recall some standard notation associated to  $\GL_n$ weights and characters.
The weight lattice is $X = X(\GL_n) =  \ZZ ^{n}$.  Thus, its group
algebra $\Kfrak \, X$ (over any commutative ring $\Kfrak $) is the
Laurent polynomial ring $\Kfrak [\xx ^{\pm 1}] \defeq \Kfrak
[x_{1}^{\pm 1},\ldots,x_{n}^{\pm 1}]$, and the formal exponential of a
weight $\lambda \in X$ is the monomial $x ^{\lambda } = x_{1}^{\lambda
_{1}}\cdots x_{n}^{\lambda _{n}}$.

The dominant weights $X_{+} = X_+(\GL_n)$ are the weakly
decreasing vectors
\begin{equation}\label{e:X+}
X_{+} = \{\lambda \in \ZZ ^{n}\mid \lambda _{1}\geq
\cdots \geq \lambda _{n} \}.
\end{equation}
The antidominant weights $-X_+$ are the weakly increasing vectors in  $\ZZ^n$.
The Weyl group is $\Sfrak _{n}$, permuting coordinates in $X= \ZZ
^{n}$, and variables in $\Kfrak \, X = \Kfrak [x ^{\pm 1}]$.
Note that the stabilizer of a dominant or antidominant weight is always a Young subgroup
$\Sfrak_{\rr}$ for some strict composition $\rr $ of $n$.

A weight $\lambda $ is {\em regular} if its stabilizer in $\Sfrak _{n}$
is trivial; this means that $\lambda = (\lambda _{1},\ldots,\lambda
_{n})\in \ZZ ^{n}$ has distinct entries.  Hence, the set of regular
dominant weights is
\begin{equation}\label{e:X++}
X_{++}  = \{\lambda \in \ZZ ^{n}\mid \lambda _{1}>
\cdots > \lambda _{n} \}.
\end{equation}
Any weight $\rho $ such that  $\rho_{i+1} - \rho_{i} = 1$ for all $i=1,\ldots,n-1$, for example $\rho =
(n-1,n-2,\ldots,0)$, is minimally dominant and regular, in the sense
that
\begin{equation}\label{e:plus-rho}
X_{++} = \rho +X_{+}.
\end{equation}
Such a weight $\rho $ is unique up to adding a constant vector.

The irreducible characters of $\GL _{n}$ are given by the Weyl
character formula
\begin{equation}\label{e:chi-lambda}
\chi _{\lambda }(x) = \Weyl (x^{\lambda }) \quad \text{for
$\lambda \in X_{+}$},
\end{equation}
where $\Weyl$ is the {\em Weyl
symmetrization operator}
\begin{equation}\label{e:Weyl-symmetrizer}
\Weyl f(x) = \sum _{w\in \Sfrak _{n}} w
\left(\frac{f(x)}{\prod _{i<j} (1-x_{j}/x_{i})}
\right).
\end{equation}
Here we have written only $i<j$ for $1\leq i<j\leq n$, and will do so hereafter whenever the range is clear from context.

The {\em polynomial} irreducible characters of $\GL _{n}$ are the
$\chi _{\lambda }$ for $\lambda \in \NN ^{n}\cap X_{+}$, i.e., for
$\lambda _{1}\geq \cdots \geq \lambda _{n}\geq 0$, so $\lambda $ is a
partition of length $\ell (\lambda )\leq n$ with possible trailing
zeroes.  The polynomial characters are Schur functions, i.e.,
$\chi _{\lambda }(x) = s_{\lambda }(x_{1},\ldots,x_{n})$.

\subsection{Levi subgroups}
\label{ss:levi}

Given a strict composition $\rr =(r_{1},\ldots,r_{k})$ of $n$, we define
\begin{equation}\label{e:GLr}
\GL _{\rr } \cong  \GL _{r_{1}}\times \cdots \times \GL _{r_{k}}
\end{equation}
to be the subgroup of block diagonal matrices $g\in \GL _{n}$ with
blocks of size $r_{i}\times r_{i}$.

The Weyl group of $\GL _{\rr }$ is the Young subgroup $\Sfrak _{\rr }$
of $\Sfrak _{n}$.  A weight $\lambda $ is $\GL _{\rr }$
regular---i.e., its stabilizer in $\Sfrak _{\rr }$ is trivial---if
and only if its restriction $(\lambda _{i},\ldots,\lambda _{j})$ to each $\rr
$-block $\{i,\ldots,j \}$ has distinct entries.  Similarly, $\lambda \in \ZZ^n$
belongs to the set $X_{+}(\GL _{\rr })$ of $\GL _{\rr }$ dominant
weights if and only if $\lambda _{i}\geq \cdots \geq \lambda _{j}$ on each $\rr
$-block; the set of antidominant weights  $-X_+(\GL_\rr)$ of  $\GL_\rr$ consists of  $\lambda \in \ZZ^n$
which are weakly increasing on each  $\rr$-block.
As before, the set of regular dominant weights can be
expressed as
\begin{equation}\label{e:X++(GLr)}
X_{++}(\GL _{\rr }) = \rho _{\rr } + X_{+}(\GL _{\rr })
\end{equation}
if $(\rho _{\rr })_{i} - (\rho _{\rr })_{i+1}= 1$ whenever $i$ and $i+1$ are in the same $\rr $-block.
Such a
weight $\rho _{\rr }$ is unique up to adding an $\Sfrak _{\rr
}$-invariant weight, i.e., a vector constant on each $\rr $-block.
For combinatorial reasons, in \S \ref{s:flagged-LLT}, we will need the
specific choice
\begin{equation}\label{e:rho-r-preview}
\rho _{\rr } = (0,\, -1,\, \ldots,\, 1-r_{1},\, 0,\, -1,\, \ldots,\,
1-r_{2},\, \ldots, \, 0,\, -1,\, \ldots,\, 1-r_{k}) \,.
\end{equation}

\subsection{Demazure characters and atoms}
\label{ss:Demazure}

We review some basic facts and definitions for Demazure characters and
atoms for $\GL_n$, recall Lascoux's Cauchy formula
\cite{LascouxCrystal}, and prove some results for our discussion of
atom positivity in \S \ref{ss:atom-positivity}.  References for
Demazure characters and atoms for $\GL_n$ include
\cite{FuLascoux09,Lascoux12,RS}.

The \emph{Demazure operator} or \emph{isobaric divided difference
operator $\Dem_i$} ($1 \le i < n$) acting on a Laurent polynomial
$f(x) = f(x_1,\dots, x_n)$ is given by
\begin{align}
\Dem_i \, f(x) &= (1+s_{i})\frac{f(x)}{1- x_{i+1}/x_i},
\end{align}
which is the same as the Weyl symmetrization operator
in variables  $x_i, x_{i+1}$.
The Demazure operators satisfy the braid relations $\Dem_i \Dem_{i+1}
\Dem_i = \Dem_{i+1} \Dem_{i} \Dem_{i+1}$ and the 0-Hecke relations
$\Dem_i^2 = \Dem_i$.  Also define the operators $\DemAt_i = \Dem_i -
1$.  These satisfy the braid relations and $\DemAt_i^2 = -\DemAt_i$.
For $w\in \Sfrak_n$ with reduced expression $w = s_{j_1}s_{j_2}\cdots
s_{j_d}$, define the operators
\begin{align}
\label{e Demw def}
\Dem_w & = \Dem_{s_{j_1}} \cdots \Dem_{s_{j_d}}, \\
\label{e DemAtw def}
\DemAt_w & = \DemAt_{s_{j_1}} \cdots \DemAt_{s_{j_d}},
\end{align}
which do not depend on the choice of reduced expression.

The Demazure operator indexed by the longest permutation is equal to
the Weyl symmetrization operator, i.e.,
\begin{equation}
\Dem_{w_0} = \Weyl
\end{equation}
(see, e.g., \cite[eq.~(1.5.2)]{Lascoux12}).  It then follows from the
0-Hecke relations that for any $w \in \Sfrak_n$,
\begin{align}
\Weyl \, \Dem_w = \Weyl.
\end{align}

For $\lambda \in \ZZ^n$, the \emph{Demazure character} $\Dcal_\lambda
= \Dcal_\lambda(x) = \Dcal_\lambda(x_1,\dots, x_n)$ and \emph{Demazure
atom} $\Acal_\lambda = \Acal_\lambda(x) = \Acal_\lambda(x_1,\dots,
x_n)$ for $\GL_n$ are given by
\begin{align}\label{e:D-and-A}
\Dcal _{\lambda}(x) &= \Dem_{w}(x^{\lambda_{+}}),\\
\Acal_\lambda(x) &= \DemAt_{v} (x^{\lambda_+}),
\end{align}
where $\lambda_+$ is the rearrangement of $\lambda$ to a dominant
weight, $w \in \Sfrak_n$ is any permutation such that $\lambda
=w(\lambda_{+})$, and $v \in \Sfrak_n$ is the minimal permutation such
that $v(\lambda_+) = \lambda$.

If $\lambda \in \ZZ^n$ is dominant, then $\Dcal_\lambda(x) =
\Acal_\lambda(x) = x^\lambda$, while if $\lambda$ is antidominant,
then $\Dcal_\lambda(x)$ is the irreducible $\GL _{n}$ character $\chi
_{\lambda_+ }(x)$.  The sets $\{\Dcal _{\lambda }(x) \mid \lambda \in
\ZZ ^{n} \}$ and $\{\Acal_{\lambda }(x) \mid \lambda \in \ZZ ^{n} \}$
are bases of $\Kfrak [\xx ^{\pm 1}] = \Kfrak [x_{1}^{\pm 1}, \ldots,
x_{n}^{\pm 1}]$ for any coefficient ring $\Kfrak $, and the subsets
with $\lambda \in \NN^{n}$ are bases of $\Kfrak [\xx ]$ (see, e.g.,
\cite[Corollary 7]{RS}).

Demazure characters and atoms are related by
\begin{equation}\label{e:dem to atom}
\Dcal_{\lambda}(x) = \sum_{\mu \, \in \,
\Sfrak_n \lambda \colon  w_\mu \le w_\lambda} \Acal_\mu(x),
\end{equation}
where $w_\eta$ denotes the minimal permutation such that
$w_\eta(\eta_+)= \eta$ and $\le$ is Bruhat order on $\Sfrak_n$.

From $\Dem_i \, \DemAt_i = 0$, $\Weyl = \Dem_{w_0}$, and the fact that
\eqref{e DemAtw def} does not depend on the choice of reduced
expression, we have
\begin{gather}\label{e:for atom pos 1}
\Weyl \Acal _{\mu }(x) =
\begin{cases}
\chi _{\mu }(x) & \text{if $\mu =\mu _{+}$,} \\
0 & \text{ otherwise},
\end{cases}
\\[1ex]
\label{e:for atom pos 2}
\Dem _{i}\, \Acal _{\mu }(x) = \begin{cases}
\Acal _{\mu }(x)+\Acal _{s_{i}(\mu )}(x) &
	\text{if $\mu _{i}>\mu _{i+1}$},\\
\Acal _{\mu }(x) &	    \text{if $\mu _{i} = \mu _{i+1}$},\\
0& \text{if $\mu _{i}<\mu _{i+1}$}.
\end{cases}
\end{gather}

We record some consequences of these facts for our discussion of atom
positivity in \S \ref{ss:atom-positivity} and \S
\ref{ss:ns-Mac-atom-positivity}.

\begin{remark}\label{rem:for atom pos} 
(i) By \eqref{e:dem to atom}, a symmetric
polynomial is Schur positive if and only if it is atom positive.

(ii) By \eqref{e:for atom pos 1}, in the expansion of any Laurent
polynomial $f(x)$ into Demazure atoms, the coefficient of
$\Acal_{\lambda }(x)$ for dominant $\lambda $ is equal to the
coefficient of $\chi _{\lambda }(x)$ in the expansion of $\Weyl f(x)$
into irreducible $\GL _{n}$ characters.

(iii) It follows from \eqref{e:for atom pos 2} that $\Dem _{w}\, \Acal
_{\mu }(x)$ is a multiplicity-free sum of atoms $\Acal _{\eta }(x)$
with $\eta $ a permutation of $\mu $, for any $w$.

(iv) It also follows from \eqref{e:for atom pos 2} that a Laurent
polynomial $f(x_{1},\ldots,x_{m})$ is symmetric in
$x_{i},\ldots,x_{j}$ if and only if permuting the indices $\mu
_{i},\ldots,\mu _{j}$ does not change the coefficients $\langle \Acal
_{\mu }(x) \rangle\, f(x)$ in its atom expansion.
\end{remark}

Let $\langle -,- \rangle_{0}$ be the symmetric inner product
on the Laurent polynomial ring $\Kfrak [\xx ^{\pm 1}]$ defined by
\begin{align}\label{e:atom-demazure-inner}
\langle f(x),g(x) \rangle _{0} & = \langle x^{0} \rangle \, f(x)\,
g(x)\, \Omega [-\sum _{i<j} x_{i}/x_{j}].
\end{align}

\begin{prop}[{\cite{FuLascoux09, LascouxCrystal}}]\label{prop:Dem-At-orthog}

(a) Demazure characters $\Dcal _{\lambda }(x)$ and atoms $\Acal
_{-\lambda }(x)$ are dual bases for this inner product, i.e.,
\begin{equation}\label{e:atom-demazure-dual}
\langle \Dcal _{\lambda }(x), A_{-\mu }(x) \rangle_{0} = \delta
_{\lambda ,\mu }.
\end{equation}

(b) We have the following `Cauchy identities' for Demazure characters
and atoms:
\begin{equation}\label{e:atom-demazure-Cauchy}
\Omega [\sum _{i\leq j} x_{i}/y_{j}] = \sum _{\lambda \in \NN ^{n}}
\Dcal _{\lambda }(x)\mathcal A_{-\lambda }(y) = \sum _{\lambda \in \NN
^{n}} \Acal _{\lambda }(x)\mathcal \Dcal_{-\lambda }(y).
\end{equation}
\end{prop}

\begin{proof}
Our inner product differs from that of \cite{FuLascoux09} by the
involution $\theta \colon x_i \mapsto x_{w_0(i)}^{-1}$ in the second
factor.  Hence, \eqref{e:atom-demazure-dual} follows from
\cite[eq.~(35)]{FuLascoux09} and (the second of) the following two
identities
\begin{align}\label{e basic dem 2}
\Dcal _{\lambda}(x_{n}^{-1},\ldots,x_{1}^{-1}) = \Dcal
_{-w_{0}(\lambda )}(x) \ \ \, \text{ and } \ \ \, \Acal
_{\lambda}(x_{n}^{-1},\ldots,x_{1}^{-1}) = \Acal _{-w_{0}(\lambda
)}(x),
\end{align}
which in turn follow from $\theta \Dem_{i} \theta = \Dem_{n-i}$.
Likewise, both versions of the Cauchy identity in
\eqref{e:atom-demazure-Cauchy} can be obtained from the one in
\cite[Theorem 6]{LascouxCrystal} using \eqref{e basic dem 2}.
\end{proof}

By virtue of \eqref{e:atom-demazure-dual},
(either) Cauchy identity in \eqref{e:atom-demazure-Cauchy}
is equivalent to each of the two
`reproducing kernel' identities
\begin{align}\label{e:atom-demazure-reproducing-1}
f(x) & = \langle f(z), \Omega [\sum _{i\leq j} x_{i}/z_{j}] \rangle
_{0}\quad \text{for $f(x)\in \Kfrak [x_{1},\ldots,x_{l}]$;}
\\
\label{e:atom-demazure-reproducing-2} g(y) & = \langle g(z), \Omega
[\sum _{i\leq j} z_{i}/y_{j}] \rangle _{0} \quad \text{for $g(y)\in
\Kfrak [y_{1}^{-1},\ldots,y_{l}^{-1}]$,}
\end{align}
where the inner products are in the  $z$ variables.

\begin{lemma}\label{l dom regular atom}
For a dominant regular weight $\lambda \in X_{++}(\GL_n)$, $\Acal_{w_0
(\lambda)}(x) = x^{w_0 (\rho)} \chi_{\lambda-\rho}(x)$ for any
$\rho$ as in \S \ref{ss:GLn-characters}.
\end{lemma}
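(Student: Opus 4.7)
The plan is to first reduce the claim to an operator identity, then to prove that identity.

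\emph{Reduction.} Since $\lambda \in X_{++}(\GL_n)$ is regular, its stabilizer in $\Sfrak_n$ is trivial, so the minimal permutation $v$ with $v(\lambda) = w_0(\lambda)$ is $w_0$ itself; hence $\Acal_{w_0(\lambda)}(x) = \DemAt_{w_0}(x^{\lambda})$ by definition of the Demazure atom. By \eqref{e:plus-rho}, write $\lambda = \rho+\mu$ with $\mu = \lambda-\rho \in X_+(\GL_n)$, and note that $\chi_{\lambda-\rho}(x) = \chi_{\mu}(x) = \Dem_{w_0}(x^{\mu})$. The desired identity thus becomes
\[
\DemAt_{w_0}(x^{\rho+\mu}) \;=\; x^{w_0\rho}\,\Dem_{w_0}(x^{\mu}).
\]

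\emph{Operator identity.} It suffices to establish the operator identity
\[
\DemAt_{w_0}\circ (x^{\rho}\,\cdot\,) \;=\; (x^{w_0\rho}\,\cdot\,)\circ \Dem_{w_0}
\]
as operators on $\Kfrak[x_1^{\pm1},\ldots,x_n^{\pm1}]$, after which applying both sides to $x^{\mu}$ gives the result.

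\emph{The basic step.} The building block is the single-reflection identity
\[
\DemAt_i(x^{\rho} f) = x^{s_i\rho}\,\Dem_i(f) \quad \text{for any $f$,}
\]
which holds because $\rho_i-\rho_{i+1}=1$. This is a one-line calculation using $\DemAt_i = x_{i+1}\partial_i$ with $\partial_i = (1-s_i)/(x_i-x_{i+1})$, the Leibniz rule $\partial_i(gh) = (\partial_i g)h + s_i(g)\partial_i h$, and the elementary computations $\partial_i(x^{\rho}) = x^{\rho}/x_i$ and $s_i(x^{\rho}) = (x_{i+1}/x_i)\,x^{\rho}$, which together give $\DemAt_i(x^\rho f) = x^{s_i\rho}(f + x_{i+1}\partial_i f) = x^{s_i\rho}\Dem_i(f)$.

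\emph{The main obstacle and its resolution.} Iterating the basic step to the full length of $w_0$ is the hard part, since after one application $s_i\rho$ no longer satisfies the hypothesis that consecutive differences equal $1$, so the identity cannot be applied directly to the next factor. To handle this I would induct on $n$, using the reduced decomposition $w_0^{(n)} = (s_1 s_2\cdots s_{n-1})\cdot w_0^{(n-1)}$, where $w_0^{(n-1)}$ is the longest element of $\Sfrak_{n-1}$ acting on $x_1,\ldots,x_{n-1}$. Applying the inductive hypothesis to $\DemAt_{w_0^{(n-1)}}$ acting on $x^{(\lambda_1,\ldots,\lambda_{n-1})}$ (with $x_n^{\lambda_n}$ an $\Sfrak_{n-1}$-invariant spectator) produces an explicit factor of the form $x_1 x_2^2\cdots x_{n-1}^{n-1}\,s_{\mu'}(x_1,\ldots,x_{n-1})\,x_n^{\lambda_n}$; one then applies $\DemAt_{s_1}\cdots\DemAt_{s_{n-1}}$ and simplifies using $\Dem_i$-invariance properties of Schur polynomials and the formula $\Dem_{w_0}(x^{\mu}) = \chi_{\mu}$. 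An attractive alternative is to invoke Mason's combinatorial description of Demazure atoms: the SSYT of shape $\lambda$ with right key $K(w_0\lambda)$ are exactly those whose restriction to the staircase columns of $\rho$ equals the unique key contributing $x^{w_0\rho}$, with the remaining boxes free to form an arbitrary SSYT of shape $\lambda-\rho$, which immediately gives the product $x^{w_0\rho}\chi_{\lambda-\rho}$.
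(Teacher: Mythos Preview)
Your reduction is correct and is exactly what the paper does: identify $\Acal_{w_0(\lambda)} = \DemAt_{w_0}(x^\lambda)$ and invoke the operator identity
\[
\DemAt_{w_0} \;=\; x^{w_0(\rho)}\,\Weyl\, x^{-\rho},
\]
which is equivalent to your $\DemAt_{w_0}\circ(x^\rho\cdot) = (x^{w_0\rho}\cdot)\circ\Dem_{w_0}$ (since $\Dem_{w_0}=\Weyl$). The difference is that the paper simply \emph{cites} this identity from \cite[eq.~(1.5.3)]{Lascoux12} and applies both sides to $x^{\lambda}$; it does not re-derive it.

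Your attempt to prove the identity from scratch is where the gap lies. Your single-step computation $\DemAt_i(x^\rho f) = x^{s_i\rho}\Dem_i(f)$ is correct, and you rightly note that it does not iterate directly (for $n=3$, neither reduced word $s_1s_2s_1$ nor $s_2s_1s_2$ survives a second application, since after one step the relevant consecutive difference of the exponent becomes $2$). But neither of your proposed resolutions is carried out. In the inductive sketch, after applying the hypothesis you are left with $\DemAt_1\cdots\DemAt_{n-1}$ acting on $x^{w_0^{(n-1)}\rho'}\, x_n^{\lambda_n}\, s_{\mu'}(x_1,\ldots,x_{n-1})$, and this is not routine: the Schur polynomial is not symmetric in $x_{n-1},x_n$, so you cannot simply pull it through $\DemAt_{n-1}$, and you have not said how this step actually goes. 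The alternative via Mason's key description likewise requires a bijective argument identifying which tableaux have right key $w_0(\lambda)$ and why they factor as claimed; this is plausible but not supplied. As written, the proof is incomplete; the cleanest fix is to cite the operator identity, as the paper does.
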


\begin{proof}
By \cite[eq.~(1.5.3)]{Lascoux12}, the operator $\DemAt_{w_0}$ can be
expressed in terms of the Weyl symmetrization operator by
\begin{equation}\label{e:antidominant-atom}
\DemAt_{w_0} \, = x^{w_0 (\rho)} \, \Weyl \, x^{-\rho}.
\end{equation}
Applying both sides to  $x^\lambda$ gives the result.
\end{proof}

\begin{lemma}\label{lem:Amu-with-zero}
Assume that $\mu \in \NN ^{n}$, so $\Acal _{\mu }(x)$ is a polynomial.
For any $k$, setting $x_{k}=0$ and re-indexing, we have
\begin{equation}\label{e:Amu-with-zero}
\Acal _{\mu }(x_{1},\ldots,x_{k-1},0,x_{k},\ldots,x_{n-1}) =
\begin{cases} \Acal _{\eta }(x_{1},\ldots,x_{n-1})& \text{if $\mu _{k}=0$,}\\
0& \text{if $\mu _{k}>0$,}
\end{cases}
\end{equation}
where $\eta = (\mu _{1}, \ldots, \mu _{k-1}, \mu _{k+1}, \ldots, \mu
_{n})$.
\end{lemma}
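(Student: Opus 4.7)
The plan is to induct on the length of the minimal permutation $v\in \Sfrak _{n}$ with $v(\mu _{+})=\mu $, using the atom recursion \eqref{e:for atom pos 2}. Throughout, let $\tau _{k}$ denote the linear map $P(x_{1},\ldots,x_{n})\mapsto P(x_{1},\ldots,x_{k-1},0,x_{k},\ldots,x_{n-1})$, so that the claim reads $\tau _{k}\Acal _{\mu }=\Acal _{\eta }$ when $\mu _{k}=0$, and $\tau _{k}\Acal _{\mu }=0$ when $\mu _{k}>0$.

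For the base case $\mu =\mu _{+}$ we have $\Acal _{\mu }=x^{\mu }$. If $\mu _{k}>0$ the monomial carries an $x_{k}$ factor and dies under $\tau _{k}$; if $\mu _{k}=0$ then dominance forces $\mu _{k}=\cdots =\mu _{n}=0$, so $x^{\mu }$ involves only $x_{1},\ldots,x_{k-1}$, and $\eta $ is again dominant with $\Acal _{\eta }=x^{\eta }$.

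For the inductive step, suppose $\mu $ is not dominant and pick $i$ with $\mu _{i}<\mu _{i+1}$ (hence $\mu _{i+1}>0$). Setting $\nu =s_{i}\mu $, the relations \eqref{e:for atom pos 2} give $\Acal _{\mu }=\DemAt _{i}\Acal _{\nu }$, and the minimal permutation for $\nu $ has length one less than that for $\mu $. From the explicit formula $\Dem _{i}f=(x_{i}f-x_{i+1}s_{i}f)/(x_{i}-x_{i+1})$ one reads off directly that $\tau _{i+1}\Dem _{i}f=\tau _{i+1}f$, and using the $s_{i}$-symmetry of $\Dem _{i}f$ that $\tau _{i}\Dem _{i}f=\tau _{i+1}\Dem _{i}f=\tau _{i+1}f$. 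Since $\tau _{k}$ commutes with $\Dem _{i}$ up to index shift for $k\notin \{i,i+1\}$ and $\DemAt _{i}=\Dem _{i}-1$, we obtain
\begin{align*}
\tau _{i+1}\DemAt _{i}f &= 0,\\
\tau _{i}\DemAt _{i}f &= \tau _{i+1}f-\tau _{i}f,\\
\tau _{k}\DemAt _{i}f &= \DemAt _{i'}\tau _{k}f \qquad (k\notin \{i,i+1\}),
\end{align*}
with $i'=i-1$ if $k<i$ and $i'=i$ if $k>i+1$.

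Applying these with $f=\Acal _{\nu }$ and invoking the IH finishes the argument. For $k=i+1$ the first line gives $0$, which matches $\mu _{i+1}>0$. For $k=i$ the IH forces $\tau _{i}\Acal _{\nu }=0$ (since $\nu _{i}=\mu _{i+1}>0$), while $\tau _{i+1}\Acal _{\nu }$ is $0$ or $\Acal _{\eta }$ according as $\mu _{k}=\mu _{i}=\nu _{i+1}$ is positive or zero; in the nonzero case a direct check shows the resulting composition is exactly $\eta $. For $k\notin \{i,i+1\}$ the IH gives $\tau _{k}\Acal _{\nu }=0$ or $\Acal _{\eta '}$, where $\eta '$ is $\nu $ with position $k$ deleted; in the latter case $\eta '_{i'}=\mu _{i+1}>\mu _{i}=\eta '_{i'+1}$ and $s_{i'}\eta '=\eta $, so \eqref{e:for atom pos 2} yields $\DemAt _{i'}\Acal _{\eta '}=\Acal _{\eta }$. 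The only step requiring real care is verifying the identities $\tau _{i}\Dem _{i}f=\tau _{i+1}\Dem _{i}f=\tau _{i+1}f$, but both are immediate from the definition of $\Dem _{i}$ together with its symmetry in $x_{i},x_{i+1}$.
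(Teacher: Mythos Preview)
Your proof is correct and follows the same inductive strategy as the paper (induction on the length of the minimal $v$ with $v(\mu_+)=\mu$, using $\Acal_\mu=\DemAt_i\Acal_{s_i\mu}$). The organization differs slightly: the paper fixes $k$ and first looks for an ascent $i\notin\{k-1,k\}$, then treats the two boundary cases $i=k-1$ and $i=k$ with monomial-level divisibility arguments, whereas you derive the three operator identities $\tau_{i+1}\DemAt_i=0$, $\tau_i\DemAt_i f=\tau_{i+1}f-\tau_i f$, and $\tau_k\DemAt_i=\DemAt_{i'}\tau_k$ up front and apply them uniformly. Your packaging is a bit cleaner---in particular, the identity $\tau_{i+1}\DemAt_i f=0$ handles the case the paper treats by showing $x_k\mid\DemAt_{k-1}(x_{k-1}^a x_k^b)$ for $a>0$---but the substance is the same.
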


\begin{proof}
If $\mu =\mu _{+}$ is dominant, then $\Acal _{\mu } = x^{\mu _{+}}$,
and the result is clear.  Otherwise, we prove it (for all $k$) by
induction on the length of the minimal $w$ such that $\mu =w(\mu
_{+})$.

If $\mu _{i} < \mu _{i+1}$ for some $i\not \in \{k,k-1 \}$, then
$\Acal _{\mu } = \DemAt _{i}\Acal _{s_{i}\mu }$, and the result
holds for $s_{i}\mu $ by induction.  The result for $\mu $ follows
because $\DemAt _{i}$ commutes with setting $x_{k} = 0$.  Thus, we
can assume that we either have $\mu _{k-1} < \mu _{k}$ (with $k>1$),
or $\mu _{k} < \mu _{k+1}$ (with $k<n$).

If $\mu _{k-1} < \mu _{k}$, then we are in the case $\mu _{k} >0$, and
must show that $x_{k}$ divides $\Acal _{\mu } = \DemAt_{k-1} \Acal
_{s_{k-1}\mu }$.  We can assume by induction that $x_{k-1}$ divides $
\Acal _{s_{k-1}\mu }$.  The result now follows because $x_{k}$ divides
$\DemAt _{k-1} x_{k-1}^{a}x_{k}^{b}$ if $a>0$.

If $\mu _{k} <\mu _{k+1}$, then $\Acal _{\mu } = \DemAt_{k}\,
\Acal_{s_{k}\mu }$.  We can assume by induction that if $(s_k
\mu)_{k+1} = \mu_k > 0$, then $\Acal _{s_{k}\mu}(x_{1},\ldots,x_{k},0,
x_{k+1},\ldots,x_{n-1}) =0$, and if $(s_k \mu_k)_{k+1} = \mu_k = 0$,
then $\Acal_{s_{k}\mu}(x_{1},\ldots,x_{k},0, x_{k+1},\ldots,x_{n-1}) =
\Acal_\eta$, where $\eta = (\mu _{1}, \ldots, \mu _{k-1}, \mu _{k+1},
\ldots, \mu_{n})$ as above.  Since $(s_k \mu)_{k} >0$, we can also
assume by induction that $x_{k}$ divides $\Acal _{s_{k}\mu }$.  The
result now follows using that, for $a > 0$, $(\DemAt_{k}
x_{k}^{a}x_{k+1}^{b})|_{x_{k} = 0} = x_{k+1}^{a}0^{b}$ and hence,
$\big((\DemAt_{k} x_{k}^{a}x_{k+1}^{b})|_{x_{k} =
0}\big)_{x_{k+1}\mapsto x_k} = (x_{k}^{a}x_{k+1}^{b})|_{x_{k+1} = 0}$.
\end{proof}

\begin{cor}\label{cor:Amu-with-zeroes}
For $\mu \in \NN ^{n}$, setting any subset of the variables to zero
and re-indexing, we have that $\Acal _{\mu }(0,\ldots,0, x_{1},
0,\ldots,0, x_{2}, 0,\ldots,0, \ldots, 0,\ldots,0, x_{m}, 0,\ldots, 0)$
is equal to zero if we set $x_{i} = 0$ for any $i$ such that $\mu _{i}
>0$; otherwise it is equal to $\Acal _{\eta }(x_{1},\ldots,x_{m})$,
where $\eta $ is obtained from $\mu $ by deleting the entries $\mu
_{i} = 0$ in the positions $i$ corresponding to the variables $x_{i}$
that we set equal to zero.
\end{cor}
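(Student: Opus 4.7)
The plan is to prove the corollary by induction on the number of variables being set to zero, using Lemma \ref{lem:Amu-with-zero} as the engine for each step.

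Let $S \subseteq \{1, \dots, n\}$ be the set of positions at which we set variables to zero, so $|S| = n - m$, and write $\{1, \dots, n\} \setminus S = \{j_1 < j_2 < \cdots < j_m\}$, with $x_a$ placed at position $j_a$. The base case $|S| = 0$ is vacuous. For the inductive step, pick the smallest element $k \in S$ and apply Lemma \ref{lem:Amu-with-zero} to substitute $x_k = 0$ in $\Acal_\mu(x_1, \dots, x_n)$ first. Two cases arise.

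If $\mu_k > 0$, then by the lemma the intermediate result is already $0$, and setting any further variables to zero preserves this, matching the corollary's claim (since $k \in S$ is a position with $\mu_k > 0$). If $\mu_k = 0$, then the lemma returns $\Acal_{\eta'}(y_1, \dots, y_{n-1})$, where $\eta' = (\mu_1, \dots, \mu_{k-1}, \mu_{k+1}, \dots, \mu_n)$ and the $y_i$ are just the original variables with $x_k$ removed and the remainder re-indexed. Now apply the inductive hypothesis to $\Acal_{\eta'}$, setting to zero the $y$-variables corresponding to the original positions in $S \setminus \{k\}$.

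The only thing to verify is the bookkeeping: under the first substitution, the original position $j \in \{1, \dots, n\} \setminus \{k\}$ becomes the new position $j$ if $j < k$ and $j - 1$ if $j > k$; the coordinates of $\eta'$ at the remaining positions of $S \setminus \{k\}$ are still the $\mu_i$ for $i \in S \setminus \{k\}$, and the coordinates at the surviving positions $j_1, \dots, j_m$ are still $\mu_{j_1}, \dots, \mu_{j_m}$. Thus the inductive hypothesis yields $0$ if any $\mu_i > 0$ for $i \in S \setminus \{k\}$, and otherwise yields $\Acal_\eta(x_1, \dots, x_m)$ with $\eta = (\mu_{j_1}, \dots, \mu_{j_m})$, exactly as claimed.

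I do not anticipate a serious obstacle: the essential content is already in Lemma \ref{lem:Amu-with-zero}, and the only thing the corollary adds is an iteration with careful re-indexing. The mildest point to watch is combining the two cases cleanly, since once the intermediate atom vanishes the subsequent substitutions are trivial; this is handled by isolating the smallest index of $S$ at each step of the induction.
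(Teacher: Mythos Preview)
Your proposal is correct and matches the paper's approach: the paper states the corollary immediately after Lemma~\ref{lem:Amu-with-zero} with no separate proof, treating it as the evident iteration of that lemma, which is exactly what your induction spells out.
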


\section{Flagged symmetric functions}
\label{s:flagged-symmetric-functions}

The concept of a flagged symmetric function originated in the study of
flagged Schur functions and Schubert polynomials, with a history going
back at least to the notes by Macdonald \cite{Macdonald91}, who in
turn credits Lascoux for many of the main ideas.  For our work here,
we need to formalize this broadly understood concept in a more precise
way than was done historically.  To that end, we now introduce a
general definition of `flagged symmetric function.'  This concept will
provide a natural framework for the definition of nonsymmetric
plethysm in \S \ref{s:ns-plethysm}.  In addition, flagged LLT
polynomials---whose definition and study are at the heart of this
work---are examples of flagged symmetric functions.  The flagged
symmetric function formalism will thus turn out be useful for deriving
results about nonsymmetric plethystic transformations on flagged LLT
polynomials, such as Proposition~\ref{prop:Pit-signed-to-unsigned}.

\subsection{The space of flagged symmetric functions}
\label{ss:flagged-sym}

Let $\Lambda (X_{1},\ldots,X_{l}) = \Lambda _{\Kfrak
}(X_{1},\ldots,X_{l}) $ be the algebra of multi-symmetric functions
over a ring $\Kfrak $ which will usually be implicit from the context.
Although $\Kfrak $ is not assumed to be a field, we use the term
`(sub)space' as a shorthand for `free $\Kfrak $-(sub)module.'  All
plethystic evaluations in this section will be of the form in \S
\ref{ss:plethysm}, (iv), defined for \multisymmetric functions over
any coefficient ring $\Kfrak $.

We say that a multi-symmetric function $f(X_{1},\ldots,X_{l})$ is a
{\em flagged symmetric function} if it belongs to the distinguished
subspace $\FLambda (X_{1},\ldots,X_{l}) \subset \Lambda
(X_{1},\ldots,X_{l})$ defined as follows.

\begin{defn}\label{def:flagged-sym}
The space $\FLambda(X_{1},\ldots,X_{l})$ of {\em flagged symmetric
functions} is the subspace of $\Lambda (X_{1},\ldots,X_{l})$ spanned
by the functions
\begin{equation}\label{e:flagged-sym-basis}
{\fh}_{\aA}(X_{1},\ldots,X_{l}) \defeq
h_{a_{1}}[X_{1}]h_{a_{2}}[X_{1}+X_{2}]\cdots h_{a_{l}}[X_{1}+\cdots
+X_{l}]
\end{equation}
for $\aA \in \NN ^{l}$.
\end{defn}

We also write $\FLambda _{\Kfrak }(X_{1},\ldots,X_{l})$ if we need to
make $\Kfrak $ explicit.  Note that $\FLambda(X_{1},\ldots,X_{l})$ is
only a subspace, not a subalgebra, of $\Lambda (X_{1},\ldots,X_{l})$.

The expansion of ${\fh}_{\aA }(X_{1},\ldots,X_{l})$ in the basis
$\{h_{\lambda ^{(1)}}(X_{1})\cdots h_{\lambda ^{(l)}}(X_{l})\}$ of
$\Lambda (X_{1},\ldots, X_{l})$ has leading term
$h_{a_{1}}(X_{1})\cdots h_{a_{l}}(X_{l})$, with respect to any
ordering that extends lexicographic order on the reversed degree
sequence $(|\lambda ^{(l)}|,\ldots,|\lambda ^{(1)}|)$.  Since they
have distinct leading terms, the elements $\fh _{\aA
}(X_{1},\ldots,X_{l})$ are linearly independent, hence they form a
free $\Kfrak $-module basis of $\FLambda(X_{1},\ldots,X_{l})$.

Traditional notions such as flagged Schur functions fit into this
framework when we specialize the formal alphabets $X_{i}$ to blocks of
variables in a finite list $x_{1},\ldots,x_{n}$ in the following way.

\begin{defn}\label{def:flag-bounds}
Given integers $0 = b_{0}\leq b_{1}\leq \cdots \leq b_{l} \leq n$, the
{\em specialization with flag bounds $b= b_{1},\ldots,b_{l}$} of a
flagged symmetric function $ f(X_{1},\ldots,X_{l})$ is its evaluation
$f[X^b_{1},\ldots,X^b_{l}]$ in the finite plethystic alphabets
\begin{equation}\label{e:X-for-flag-bounds}
X_{i}^b = x_{b_{i-1}+1} +\cdots +x_{b_{i}} \quad \text{or,
equivalently,}\quad X_{1}^b+\cdots +X_{i}^b = x_{1} +\cdots +x_{b_{i}}.
\end{equation}
\end{defn}

Thus, the specialization $f[X_{1}^b,\ldots,X_{l}^b]$ is a polynomial
in $x_{1},\ldots,x_{b_{l}}$, symmetric in each of $l$ blocks of
consecutive variables of sizes $b_{1}, b_{2}-b_{1}$, \ldots,
$b_{l}-b_{l-1}$.  We allow the number $n$ of ambient variables to
exceed $b_{l}$ for flexibility, although $f[X_{1}^b,\ldots,X_{l}^b]$
is independent of any variables beyond $x_{b_{l}}$.

The following examples may help to motivate the definitions.

\begin{example}\label{ex:flagged-sym-1}
(i) The {\em flagged complete symmetric functions} ${\fh}_{\aA
}(X_{1},\ldots,X_{l})$ in \eqref{e:flagged-sym-basis} are flagged
symmetric functions by definition.  The specialization of ${\fh}_{\aA
}(X_{1},\ldots,X_{l})$ with flag bounds $b_{1},\ldots,b_{l}$ is the
product
\begin{equation}\label{e:flagged-h-special}
h_{a_{1}}(x_{1},\ldots,x_{b_{1}}) h_{a_{2}}(x_{1},\ldots,x_{b_{2}})  \cdots
h_{a_{l}}(x_{1},\ldots,x_{b_{l}}).
\end{equation}
In particular, the specialization of ${\fh}_{\aA
}(X_{1},\ldots,X_{l})$ with flag bounds $b_{1} = \ldots = b_{l} = n$
is the usual complete symmetric function $h_{\lambda
}(x_{1},\ldots,x_{n})$, where $\lambda $ is the partition whose parts
are the nonzero parts of $\aA $.  The polynomials ${\fh}_{\aA
}(X_{1},\ldots,X_{l})$ appear in the study of Schubert polynomials and
related polynomials---see, e.g.,~\cite[\S
11]{FomGelPos97},~\cite{LascouxNaruse14}, and~\cite[Chapter
III]{Macdonald91}.

(ii)
The {\em flagged skew Schur function} indexed by a skew diagram
$\lambda /\mu $ with $l$ rows is defined by the $l\times l$
determinant
\begin{equation}\label{e:flagged-skew-schur}
\sfrak _{\lambda /\mu }(X_{1},\ldots,X_{l}) = \det \biggl(
h_{\lambda _{i}-\mu _{j}+j-i}[X_{1}+\cdots +X_{i}]\\
\biggr),
\end{equation}
generalizing the Jacobi-Trudi formula for an ordinary skew Schur
function.  From the Leibniz formula for the determinant, we see that
${\mathfrak s}_{\lambda /\mu }(X_{1},\ldots,X_{l})$ is a flagged
symmetric function in the sense of Definition~\ref{def:flagged-sym}.
Up to a change of notation for the formal alphabets $X_{i}$, flagged
skew Schur functions are the same as the multi-Schur functions studied
in~\cite{LascouxNaruse14} and~\cite[eq.~(3.1)]{Macdonald91}.

Specializing with flag bounds $b = b_{1},\ldots,b_{l}$ recovers the
flagged skew Schur functions of \cite{LascouxSchutz82}, which by
\cite{GessVien89, Wachs} are also given by the
combinatorial formula
\begin{equation}
\label{eq:flagschuridentity} \sfrak_{\lambda /\mu
}[X_1^b,\ldots,X_l^b] = \sum \nolimits _{T} x^{T}\, ,
\end{equation}
where the sum is over tableaux $T\in \SSYT(\lambda /\mu )$ such that
all entries in the $i$-th row of $T$ are bounded above by $b_{i}$.
\end{example}

The following property is immediate from the definitions.

\begin{lemma}\label{lem:xk=0}
Let $f[X^b_{1},\ldots,X^b_{l}]$ be the specialization of a flagged
symmetric function with flag bounds $b = b_{1},\ldots, b_{l}$.
Setting $x_{k} = 0$ in $f[X^b_{1},\ldots,X^b_{l}]$ and re-indexing the
variables $x_{k+1},\ldots,x_{n}$ to $x_{k},\ldots,x_{n-1}$ yields the
specialization $f[X^{b'}_{1},\ldots,X^{b'}_{l}]$ of $f$ with flag
bounds
\[
b'_{i} = \begin{cases}
b_{i}-1,&	\text{if $b_{i}\geq k$}\\
b_{i},&		\text{if $b_{i}< k$}.
\end{cases}
\]
\end{lemma}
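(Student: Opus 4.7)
The plan is to reduce to the basis $\{\fh_{\aA}\}$ of $\FLambda(X_1,\ldots,X_l)$ and then verify the statement by a direct calculation on each basis element. Since the operations of specializing, setting $x_k=0$, and re-indexing are all $\Kfrak$-linear, and since the claimed equality $f[X^b_1,\ldots,X^b_l]\big|_{x_k=0,\text{ reindex}} = f[X^{b'}_1,\ldots,X^{b'}_l]$ is linear in $f$, it suffices to prove it when $f = \fh_{\aA}$ for an arbitrary $\aA\in\NN^l$.

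Next, using the explicit form of the specialization derived in Example~\ref{ex:flagged-sym-1}(i), namely
\begin{equation*}
\fh_{\aA}[X_1^b,\ldots,X_l^b] \;=\; h_{a_1}(x_1,\ldots,x_{b_1})\, h_{a_2}(x_1,\ldots,x_{b_2})\,\cdots\, h_{a_l}(x_1,\ldots,x_{b_l}),
\end{equation*}
the claim reduces to analyzing the effect of setting $x_k=0$ and re-indexing on each individual factor $h_{a_i}(x_1,\ldots,x_{b_i})$. If $b_i<k$, the variable $x_k$ does not occur in this factor, so it is untouched and the bound stays $b_i' = b_i$. If $b_i\geq k$, then $x_k$ is among the variables of $h_{a_i}$; setting it to zero and re-indexing the remaining $x_{k+1},\ldots,x_{b_i}$ down by one yields $h_{a_i}(x_1,\ldots,x_{b_i-1})$, matching the bound $b_i' = b_i-1$. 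Taking the product over $i$ gives precisely $\fh_{\aA}[X^{b'}_1,\ldots,X^{b'}_l]$, as desired.

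There is really no hard step here: the lemma is essentially a bookkeeping observation that the operation of zeroing a variable commutes with the partial-sum structure $X_1+\cdots+X_i$ built into the definition of $\fh_{\aA}$, with the only effect being to decrement by one those partial sums $x_1+\cdots+x_{b_i}$ that contain $x_k$. The only mild care required is to confirm that the new bounds $b'_i$ remain weakly increasing (which is immediate: if $b_i\leq b_{i+1}$, then $b'_i\leq b'_{i+1}$ in each of the three cases $b_{i+1}<k$, $b_i<k\leq b_{i+1}$, and $k\leq b_i$), so that $b'=b_1',\ldots,b_l'$ is indeed a valid flag bound sequence in the sense of Definition~\ref{def:flag-bounds}.
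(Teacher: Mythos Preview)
Your proof is correct and is exactly the natural elaboration of what the paper intends: the paper states only that the lemma is ``immediate from the definitions'' and gives no further argument, and your reduction to the basis elements $\fh_{\aA}$ followed by the factor-by-factor check is precisely how one makes that immediacy explicit.
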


The specialization with flag bounds $1,\ldots,l$, that is, with
the formal alphabets specialized to individual variables $X_{i} =
x_{i}$, plays a special role.

\begin{lemma}\label{lem:X=x}
The evaluation map
\begin{equation}\label{e:X=x-map}
\xi \colon \FLambda(X_{1},\ldots,X_{l}) \rightarrow \Kfrak
[x_{1},\ldots,x_{l}],\quad f(X_{1},\ldots,X_{l}) \mapsto
f[x_{1},\ldots,x_{l}],
\end{equation}
defined by specializing each $X_{i}$ to a single variable $x_{i}$, is
a $\Kfrak $-linear isomorphism from $\FLambda (X_{1},\ldots,X_{l})$ to
the polynomial ring $\Kfrak [x_{1},\ldots,x_{l}]$ in $l$ variables.
\end{lemma}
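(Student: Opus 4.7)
The plan is to exploit the known basis $\{\fh_\aA(X_1,\ldots,X_l) : \aA \in \NN^l\}$ of $\FLambda(X_1,\ldots,X_l)$ established right after Definition~\ref{def:flagged-sym}, and show that $\xi$ sends it to a free $\Kfrak$-basis of $\Kfrak[x_1,\ldots,x_l]$ via a leading-term triangularity argument. Since $\xi$ is $\Kfrak$-linear by construction, matching up bases will yield the isomorphism.

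First I would compute the images explicitly. Applying \eqref{e:pleth=specialize} which gives $h_k[x_1 + \cdots + x_i] = h_k(x_1,\ldots,x_i)$, we obtain
\[
\xi(\fh_\aA) = h_{a_1}(x_1)\, h_{a_2}(x_1,x_2) \cdots h_{a_l}(x_1,\ldots,x_l).
\]
Next I would fix the lexicographic monomial order on $\Kfrak[x_1,\ldots,x_l]$ in which $x_l > x_{l-1} > \cdots > x_1$. With respect to this order, the leading monomial of $h_k(x_1,\ldots,x_i)$ is $x_i^k$, since among the summands $x_{j_1}\cdots x_{j_k}$ with $1 \le j_1 \le \cdots \le j_k \le i$, the maximum is achieved by $j_1 = \cdots = j_k = i$. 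Because lex order is multiplicative, the leading monomial of $\xi(\fh_\aA)$ is $x_1^{a_1} x_2^{a_2} \cdots x_l^{a_l} = x^\aA$.

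Since the assignment $\aA \mapsto x^\aA$ is a bijection from $\NN^l$ onto the monomial $\Kfrak$-basis of $\Kfrak[x_1,\ldots,x_l]$, the elements $\xi(\fh_\aA)$ have distinct leading monomials that exhaust every monomial of the polynomial ring. A standard triangularity argument (induction on leading monomial) then expresses each monomial $x^\aA$ as a $\Kfrak$-linear combination of the $\xi(\fh_\bb)$ with $\bb$ lex-below $\aA$, so $\{\xi(\fh_\aA)\}_{\aA \in \NN^l}$ is indeed a free $\Kfrak$-basis of $\Kfrak[x_1,\ldots,x_l]$, and therefore $\xi$ is a $\Kfrak$-linear isomorphism. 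There is no substantial obstacle; the only two points requiring a moment's care are that the specialization $X_i \mapsto x_i$ is a legitimate plethystic evaluation (covered by \S\ref{ss:plethysm}\,(i)) and the leading-monomial computation above, both of which are routine.
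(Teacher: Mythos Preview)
Your proposal is correct and follows essentially the same approach as the paper: both compute $\xi(\fh_\aA) = h_{a_1}(x_1)h_{a_2}(x_1,x_2)\cdots h_{a_l}(x_1,\ldots,x_l)$ and observe that these are unitriangularly related to the monomials $x^\aA$, hence form a $\Kfrak$-basis of $\Kfrak[x_1,\ldots,x_l]$. The paper's proof is a one-line version of your argument, simply asserting the unitriangularity without naming a specific monomial order.
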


\begin{proof}
The specialized elements ${\fh}_{\aA }[x_{1},\ldots,x_{l}] =
h_{a_{1}}(x_{1})h_{a_{2}}(x_{1},x_{2})\cdots
h_{a_{l}}(x_{1},\ldots,x_{l})$ are unitriangularly related to
$h_{a_{1}}(x_{1})h_{a_{2}}(x_{2})\cdots h_{a_{l}}(x_{l}) =
x_{1}^{a_{1}}\cdots x_{l}^{a_{l}}$.
\end{proof}

\begin{remark}\label{rem:X=x}
The isomorphism $\xi $ is independent of $l$ in the following sense.
For $l<m$, the space $\FLambda(X_{1},\ldots,X_{l})$ embeds in $\FLambda
(X_{1},\ldots,X_{m})$ as the subspace consisting of flagged symmetric
functions $f(X_{1},\ldots,X_{m})$ which are independent of $X_{i}$ for
$i>l$.  If $f(X_{1},\ldots,X_{m})$ is in this subspace, then
$f[x_{1},\ldots,x_{m}]$ is a polynomial in $x_{1},\ldots,x_{l}$, and
the isomorphism for $l$ is the restriction of the one for $m$.
\end{remark}

\begin{remark}\label{rem:big enough bounds}
An arbitrary multi-symmetric function $f(X_1,\dots, X_l)$ is
determined by its specialization $f[X^b_1,\dots, X^b_l]$ with $X^b_i$
as in \eqref{e:X-for-flag-bounds} when the $b_i-b_{i-1}$ are
sufficiently large.  More precisely, expanding $f$ in the basis of
elements $m_{\lambold } = m_{\lambda^{(1)}}(X_1)
m_{\lambda^{(2)}}(X_2) \cdots m_{\lambda^{(l)}}(X_l)$ of the algebra
of multi-symmetric functions, the coefficient of $m_{\lambold}$ in
$f(X_1, \dots, X_l)$ is equal to the coefficient of $x^{\lambold}$ in
$f[X^{b}_{1}, \ldots, X^{b}_{l}]$ when $b_i - b_{i-1}\ge
\ell(\lambda^{(i)})$.

Lemma~\ref{lem:X=x} implies, by contrast, that if $f$ is a flagged
symmetric function, then the specialization with flag bounds $b =
1,2,\dots, l$ already determines $f$.
\end{remark}

In a sense, Lemma~\ref{lem:X=x} means that flagged symmetric functions
are no different from ordinary (nonsymmetric) polynomials.  The point,
however, is that by passing between the two, we can make use of
plethystic operations on flagged symmetric functions to
give natural constructions of new operations on
ordinary polynomials that would otherwise appear quite
artificial, or be difficult to define at all.

\subsection{Action of Demazure operators and Weyl symmetrization}

Let $f[X^b_{1},\ldots,X^b_{l}]$ be a flagged symmetric function
specialized with flag bounds $b = b_{1},\ldots,b_{l}$.  If $k$ is
equal to a flag bound, then $x_{k}$ and $x_{k+1}$ are not in the same
block of variables, and $f[X^b_{1},\ldots,X^b_{l}]$ is generally not
symmetric in $x_{k}, x_{k+1}$.  The next proposition says that if the
flag bound $b_{j} = k$ is not repeated, then the Demazure operator
$\Dem _{k}$ symmetrizes in $x_{k}, x_{k+1}$ by raising $b_{j}$.  Note,
this observation was made in~\cite[eq.~(3.10)]{Macdonald91} for
flagged Schur functions using a similar argument.

\begin{prop}\label{prop:demazure-on-flagged}
Given flag bounds $b_{1}\leq \cdots \leq b_{l}$, suppose that $b_{j} =
k$, and that $b_{i}\not =k$ for $i\not =j$.  If $j=l$, assume further
that $n>k$, where $n$ is the number of variables.  Define new flag
bounds $b'_{i}$ by setting $b'_{j}=k+1$ and $b'_{i} = b_{i}$ for
$i\not =j$.  Then the specializations with flag bounds
$b = b_{1},\ldots,b_{l}$ and $b' = b'_{1},\ldots,b'_{l}$ of any flagged
symmetric function are related by
\begin{equation}\label{e:demazure-on-flagged}
\Dem _{k} f[X^b_{1},\ldots,X^b_{l}] = f[X^{b'}_{1},\ldots,X^{b'}_{l}].
\end{equation}
\end{prop}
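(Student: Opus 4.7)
The plan is to reduce the claim via linearity to the basis elements $\fh_\aA$ of $\FLambda(X_1,\ldots,X_l)$ and then to a single-variable identity for a complete homogeneous polynomial. Since both sides of \eqref{e:demazure-on-flagged} are $\Kfrak$-linear in $f$, it suffices to verify the identity when $f = \fh_\aA$, in which case by \eqref{e:flagged-h-special} it reads
\begin{equation*}
\Dem_k \prod_{i=1}^{l} h_{a_i}(x_1,\ldots,x_{b_i}) \;=\; \prod_{i=1}^{l} h_{a_i}(x_1,\ldots,x_{b'_i}).
\end{equation*}

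The key observation is that the hypothesis $b_i \ne k$ for $i \ne j$, together with $b_1 \le \cdots \le b_l$, forces $b_i \le k-1$ for $i < j$ and $b_i \ge k+1$ for $i > j$. Consequently, the factors for $i < j$ are independent of both $x_k$ and $x_{k+1}$, the factors for $i > j$ are symmetric in $x_k, x_{k+1}$ (both variables lie among their arguments), and only the single factor for $i = j$, namely $h_{a_j}(x_1,\ldots,x_k)$, involves these two variables asymmetrically. Since $\Dem_k$ commutes with multiplication by any polynomial that is either independent of $x_k, x_{k+1}$ or symmetric in them, the whole claim collapses to the one-factor identity
\begin{equation*}
\Dem_k\, h_a(x_1,\ldots,x_k) \;=\; h_a(x_1,\ldots,x_{k+1}).
\end{equation*}

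This identity is established by the standard expansion $h_a(x_1,\ldots,x_k) = \sum_{i=0}^{a} h_{a-i}(x_1,\ldots,x_{k-1})\, x_k^i$: the coefficients $h_{a-i}(x_1,\ldots,x_{k-1})$ pull through $\Dem_k$ (they are trivially symmetric in $x_k, x_{k+1}$), and a direct computation from the definition of $\Dem_k$ gives $\Dem_k(x_k^i) = (x_k^{i+1} - x_{k+1}^{i+1})/(x_k - x_{k+1}) = h_i(x_k, x_{k+1})$; summing reassembles $h_a(x_1,\ldots,x_{k+1})$. The hypothesis $n > k$ in the boundary case $j = l$ is exactly what is needed for $x_{k+1}$ to exist so that $\Dem_k$ is defined and the product $\prod_{i>j}(\cdots)$ is vacuously symmetric. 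There is no serious obstacle here; the whole proof rests on the clean trichotomy $b_i < k$, $b_i = k$, $b_i > k$, which isolates all the $x_k$–$x_{k+1}$ asymmetry in one factor of the flagged product.
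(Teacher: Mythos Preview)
Your proof is correct and follows essentially the same approach as the paper: reduce by linearity to $\fh_{\aA}$, observe that the factors for $i \ne j$ are symmetric in $x_k, x_{k+1}$ and hence commute with $\Dem_k$, and then invoke the identity $\Dem_k\, h_a(x_1,\ldots,x_k) = h_a(x_1,\ldots,x_{k+1})$. You supply more detail than the paper does (the explicit trichotomy $b_i < k$, $b_i = k$, $b_i > k$ and a proof of the one-factor identity), but the structure is the same.
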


\begin{proof}
By linearity, we can assume that $f(X_{1},\ldots,X_{l}) = \fh_{\aA
}(X_{1},\ldots,X_{l})$.  Our assumptions imply that each factor
$h_{a_{i}}[X^b_{1}+\cdots +X^b_{i}] = h_{a_{i}}(x_{1}, \ldots,x_{b_{i}})$
for $i\not =j$ is symmetric in $x_{k}$ and $x_{k+1}$, hence commutes
with $\Dem _{k}$.  The result now follows from the identity
\begin{equation}\label{e:demazure-on-h}
\Dem _{k}\, h_{a}(x_{1},\ldots,x_{k}) = h_{a}(x_{1},\ldots,x_{k+1}).
\qedhere
\end{equation}
\end{proof}

Starting with flag bounds $1,\ldots,l$, then applying
Proposition~\ref{prop:demazure-on-flagged} repeatedly to raise $l$ to
$b_{l}$, then $l-1$ to $b_{l-1}$, and so on, we obtain the following
corollary.

\begin{cor}\label{cor:demazure-on-flagged}
Assuming that $b_{i}\geq i$ for all $i$, the specialization
$f[X^b_{1},\ldots,X^b_{l}]$ with flag bounds $b = b_{1},\ldots,b_{l}$ of any
flagged symmetric function $f(X_{1},\ldots,X_{l})$ is given in terms
of its evaluation $f[x_{1},\ldots,x_{l}]$ in single variable alphabets
by
\begin{equation}\label{e:demazure-on-single}
f[X^b_{1},\ldots,X^b_{l}] = \Dem _{w} f[x_{1},\ldots,x_{l}],
\end{equation}
where $w =(s_{b_{1}-1} s_{b_{1}-2}\cdots s_{1})\, (s_{b_{2}-1}
s_{b_{2}-2}\cdots s_{2})\cdots (s_{b_{l}-1} s_{b_{l}-2}\cdots s_{l}).$
\end{cor}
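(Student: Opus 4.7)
The plan is to follow the outline sketched just before the statement and iterate Proposition~\ref{prop:demazure-on-flagged}. Starting from the base flag bounds $(1, 2, \ldots, l)$---for which the specialization is literally $f[x_1, \ldots, x_l]$ by definition of $X^b_i$---I would raise the bounds to their target values one unit at a time, processing positions from right to left: raise $b_l$ from $l$ up to its target first, then $b_{l-1}$ from $l-1$ up to its target, and so on, finally raising $b_1$. Each unit increment of $b_j$ from $k$ to $k+1$ is effected by a single application of $\Dem_k$ via Proposition~\ref{prop:demazure-on-flagged}, so raising $b_j$ all the way from $j$ to $b_j$ contributes the factor $\Dem_{b_j-1}\cdots \Dem_{j+1}\Dem_j$. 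Concatenating these groups for $j = l, l-1, \ldots, 1$ (with later-processed groups ending up to the \emph{left} in operator notation) produces exactly the composition $\Dem_{w}$ for the $w$ displayed in the statement.

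The main---and essentially only---content of the proof is to verify the hypotheses of Proposition~\ref{prop:demazure-on-flagged} at every step. Consider the moment we apply $\Dem_k$ to raise $b_j$ through some value $k \in \{j, \ldots, b_j - 1\}$. The bounds at positions $i<j$ still equal their base values $i \leq j-1 < k$, while the bounds at positions $i>j$ have already been raised to their targets $b_i \geq b_j > k$ (using the weak monotonicity $b_j \leq b_{j+1} \leq \cdots$). Hence $k$ is not a repeated flag bound. The auxiliary condition $n > k$ required when $j = l$ reduces to $n > b_l - 1$, i.e., $n \geq b_l$, which is part of the definition of flag bounds. This is the place I expect to be the ``main obstacle,'' but the right-to-left order of raising, combined with monotonicity of flag bounds, makes it automatic.

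Finally, I would observe that the product $(s_{b_1-1}\cdots s_1)(s_{b_2-1}\cdots s_2)\cdots(s_{b_l-1}\cdots s_l)$ has total length $\sum_{j}(b_j - j)$, which matches exactly the total number of $\Dem$-factors produced by the iteration; a short inversion count confirms that this is a reduced expression for $w$, so the composition built above coincides with $\Dem_w$ in the sense of \eqref{e Demw def}. (Alternatively, one may invoke the 0-Hecke relations $\Dem_i^2 = \Dem_i$, together with the braid relations, to see that the composition of Demazure operators along any word depends only on its Demazure product in $\Sfrak_n$, so no reducedness check is strictly needed.)
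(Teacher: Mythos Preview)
Your proposal is correct and follows essentially the same approach as the paper: start from flag bounds $(1,2,\ldots,l)$ and repeatedly apply Proposition~\ref{prop:demazure-on-flagged} to raise $b_l$, then $b_{l-1}$, and so on down to $b_1$. You supply more detail than the paper does---in particular the check that at each step the intermediate bound $k$ is not repeated (using monotonicity of the targets $b_i$) and the observation that the displayed word for $w$ is reduced---but the strategy is identical.
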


\begin{remark}\label{rem:demazure-on-flagged}
By combining Corollary~\ref{cor:demazure-on-flagged} with
Lemma~\ref{lem:xk=0}, one can express the specialization
$f[X^b_{1},\ldots,X^b_{l}]$ with any flag bounds $b$ (dropping the
hypothesis $b_{i}\geq i$) as the result of applying a Demazure
operator to $f[x_{1},\ldots,x_{l}]$, then setting some variables to
zero and re-indexing.
\end{remark}

\begin{cor}\label{cor:Weyl-on-flagged}
Given flag bounds $b = b_{1},\ldots,b_{l}$ and $j\leq k$, assume that
$b_{i}\geq b_{j-1}+i-j+1$ for $i=j,\ldots,k$ (where $b_{0} =0$, by
convention, if $j=1$).  Let $\Weyl $ be the Weyl symmetrization
operator in the variables $x_{b_{j-1}+1},\ldots, x_{b_{k}}$ that occur
in $X^{b}_{j}, \ldots, X^{b}_{k}$.  Then
\begin{equation}\label{e:Weyl-on-flagged-partial}
\Weyl f[X^b_{1},\ldots,X^b_{l}] =
f[X^{b}_{1},\ldots,X^{b}_{j-1},X^{b}_{j}+\cdots +X^{b}_{k},
0,\ldots,0, X^{b}_{k+1},\ldots, X^{b}_{l}],
\end{equation}
i.e., applying $\Weyl $ to the flag bound specialization
$f[X^b_{1},\ldots,X^b_{l}]$ has the effect of raising all the flag
bounds $b_{j},\ldots,b_{k}$ to $b_{k}$.  If $b_{i}\geq i$ for all $i$,
then Weyl symmetrization in all variables $x_{i}$ yields
\begin{equation}\label{e:Weyl-on-flagged}
\Weyl f[X^b_{1},\ldots,X^b_{l}] = f[X,0,\ldots,0],
\end{equation}
where $X = x_{1}+\cdots +x_{n}$.
\end{cor}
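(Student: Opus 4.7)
The plan is to establish the second formula~\eqref{e:Weyl-on-flagged} first, and then derive the first~\eqref{e:Weyl-on-flagged-partial} from it by linearity together with a coproduct reduction.

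\emph{Full Weyl case.} By Corollary~\ref{cor:demazure-on-flagged}, $f[X^b_1,\ldots,X^b_l]=\Dem_w\,f[x_1,\ldots,x_l]$ for the Demazure word $w$ described there, so the identity $\Weyl\,\Dem_w=\Weyl$ recorded in \S\ref{ss:Demazure} reduces the LHS to $\Weyl\,f[x_1,\ldots,x_l]$. Applying the same corollary with the flag bounds $(n,\ldots,n)$ writes the target RHS as $f[X,0,\ldots,0]=\Dem_{w'}f[x_1,\ldots,x_l]$, where $w'=(s_{n-1}\cdots s_1)(s_{n-1}\cdots s_2)\cdots(s_{n-1}\cdots s_l)$. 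A length count gives $\ell(w')+\ell(v_0)=\ell(w_0)$, with $v_0$ the longest element of $\Sfrak_{\{l+1,\ldots,n\}}$; evaluating $w'v_0$ on each $i\in[n]$ directly confirms that $w_0=w'v_0$. Hence $\Dem_{w_0}=\Dem_{w'}\Dem_{v_0}$, and since every $\Dem_i$ with $i\ge l+1$ fixes any polynomial in $x_1,\ldots,x_l$, so does $\Dem_{v_0}$. Therefore $\Weyl\,f[x_1,\ldots,x_l]=\Dem_{w'}f[x_1,\ldots,x_l]=f[X,0,\ldots,0]$, proving \eqref{e:Weyl-on-flagged}.

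\emph{Partial Weyl case.} By linearity it suffices to treat $f=\fh_\aA$, so $f[X^b_1,\ldots,X^b_l]=\prod_{i=1}^l h_{a_i}(x_1,\ldots,x_{b_i})$. Factors with $i<j$ involve only $x_1,\ldots,x_{b_{j-1}}$, while factors with $i>k$ are symmetric in $x_{b_{j-1}+1},\ldots,x_{b_k}$ (since $b_i\ge b_k$ places these inside a larger symmetric function); both commute with $\Weyl$ and can be pulled outside. This reduces the problem to computing $\Weyl\,\prod_{i=j}^k h_{a_i}(x_1,\ldots,x_{b_i})$. Applying the coproduct $h_{a_i}[Z+Y_i]=\sum_{b+c=a_i}h_b[Z]\,h_c[Y_i]$ with $Z=x_1+\cdots+x_{b_{j-1}}$ and $Y_i=x_{b_{j-1}+1}+\cdots+x_{b_i}$ pulls the $h_b[Z]$ factors (which again commute with $\Weyl$) out of the symmetrization, leaving $\Weyl\,\prod_{i=j}^k h_{c_i}[Y_i]$. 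After the change of variables $y_s=x_{b_{j-1}+s}$ and $\tilde b_s=b_{j+s-1}-b_{j-1}$, the hypothesis $b_i\ge b_{j-1}+i-j+1$ becomes $\tilde b_s\ge s$, and this is precisely the full Weyl case \eqref{e:Weyl-on-flagged} in the variables $y_s$ with flag bounds $(\tilde b_1,\ldots,\tilde b_{k-j+1})$. It evaluates to $\prod_{i=j}^k h_{c_i}[Y]$, with $Y=x_{b_{j-1}+1}+\cdots+x_{b_k}$, and recombining via the coproduct in reverse reassembles the factors into $\prod_{i=j}^k h_{a_i}(x_1,\ldots,x_{b_k})$, matching the RHS of \eqref{e:Weyl-on-flagged-partial}.

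The main technical point is the identity $w_0=w'v_0$ in $\Sfrak_n$ together with additivity of lengths, which drives the full Weyl step; the rest of the argument is a routine combination of $\Weyl\,\Dem_w=\Weyl$ with the coproduct formula for $h_a$.
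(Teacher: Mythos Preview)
Your proof is correct, but it proceeds quite differently from the paper's argument.

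The paper treats the partial case \eqref{e:Weyl-on-flagged-partial} directly and uniformly.  It introduces auxiliary flag bounds $b''$ (with $b''_i = b_{j-1}+i-j+1$ for $i\in[j,k]$) and $b'$ (with $b'_j=\cdots=b'_k=b_k$), then uses the raising procedure of Corollary~\ref{cor:demazure-on-flagged} to produce Demazure operators $\Dem_v,\Dem_w$ in the variables $x_{b_{j-1}+1},\ldots,x_{b_k}$ with $f[X^b]=\Dem_v f[X^{b''}]$ and $f[X^{b'}]=\Dem_w f[X^{b''}]$.  The identity $\Weyl\,\Dem_u=\Weyl$ then gives $\Weyl f[X^b]=\Weyl f[X^{b'}]=f[X^{b'}]$, the last equality because $f[X^{b'}]$ is already symmetric.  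The full case \eqref{e:Weyl-on-flagged} drops out as the instance $j=1$, $k=l$ with $n$ in place of $b_k$.

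Your route is essentially the reverse: you prove the full case first via the explicit Coxeter factorization $w_0=w'v_0$, and then reduce the partial case to the full one by linearity in the basis $\fh_\aA$, pulling out the factors for $i<j$ and $i>k$ and using the coproduct on $h_{a_i}$ for $j\le i\le k$.  This is perfectly valid, and the change of variables $y_s=x_{b_{j-1}+s}$, $\tilde b_s=b_{j+s-1}-b_{j-1}$ correctly converts the hypothesis on $b$ into $\tilde b_s\ge s$.  What the paper's approach buys is uniformity: it avoids both the explicit computation of $w'$ as a permutation and the coproduct manipulation, handling the partial and full cases by exactly the same mechanism.  What your approach buys is a more self-contained full case (no auxiliary flag bounds) and a transparent reduction of the partial case to an honest smaller instance of the full one.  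One small point worth making explicit in your write-up: your length count for $w'$ tacitly uses that the word from Corollary~\ref{cor:demazure-on-flagged} is reduced, which can be checked by computing the inversions of $w'$ directly from the description $w'(j)=n-j+1$ for $j\le l$, $w'(j)=j-l$ for $j>l$.
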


\begin{proof}
Define new flag bounds $b'$ and $b''$ by $b'_{j} =\cdots =b'_{k} =
b_{k}$ and $b''_{i} = b_{j-1}+i-j+1$ for $i=j,\ldots,k$, and $b'_{i} =
b''_{i} = b_{i}$ for $i\not \in [j,k]$.  Note that $b''_{i}\leq
b_{i}\leq b'_{i}$ for all $i$, and that, by definition,
$f[X^{b'}_{1},\ldots,X^{b'}_{l}]$ is equal to the right hand side of
\eqref{e:Weyl-on-flagged-partial}.  In particular,
$f[X^{b'}_{1},\ldots,X^{b'}_{l}]$ is symmetric in
$x_{b_{j-1}+1},\ldots, x_{b_{k}}$.

By the same procedure that gave
Corollary~\ref{cor:demazure-on-flagged}, raising the bounds $b''_{i}$
for $i=j,\ldots,k$ one at a time to $b_{i}$, we can find a Demazure
operator $\Dem _{v}$ that involves only the variables
$x_{b_{j-1}+1},\ldots, x_{b_{k}}$, such that
$f[X^b_{1},\ldots,X^b_{l}] = \Dem _{v}
f[X^{b''}_{1},\ldots,X^{b''}_{l}]$.  In the same way, we can find
$\Dem _{w}$ such that $f[X^{b'}_{1},\ldots,X^{b'}_{l}] = \Dem _{w}
f[X^{b''}_{1},\ldots,X^{b''}_{l}]$.  Since $\Weyl\, \Dem _{u} = \Weyl
$ for any Demazure operator $\Dem _{u}$ in the variables
$x_{b_{j-1}+1},\ldots, x_{b_{k}}$, it follows that $\Weyl
f[X^b_{1},\ldots,X^b_{l}] = \Weyl f[X^{b'}_{1},\ldots,X^{b'}_{l}]$,
and we have $\Weyl f[X^{b'}_{1},\ldots,X^{b'}_{l}] =
f[X^{b'}_{1},\ldots,X^{b'}_{l}]$, since
$f[X^{b'}_{1},\ldots,X^{b'}_{l}]$ is symmetric in
$x_{b_{j-1}+1},\ldots, x_{b_{k}}$.

If $k=l$, the same argument with $n\geq b_{l}$ in place of $b_{k}$
shows that Weyl symmetrization in $x_{b_{j-1}+1},\ldots,x_{n}$ yields
\begin{equation}\label{e:Weyl-on-flagged-tail}
\Weyl f[X^b_{1},\ldots,X^b_{l}] = f[X^{b}_{1}, \ldots, X^{b}_{j-1},
x_{b_{j-1}+1} +\cdots + x_{n}, 0,\ldots,0].
\end{equation}
For $j=1$, this is \eqref{e:Weyl-on-flagged}.
\end{proof}

\begin{example}\label{ex:flagged-sym-2}
(i) Given $\mu \in \NN ^{l}$, so the Demazure character $\Dcal _{\mu
}(x_{1},\ldots,x_{l})$ is a polynomial, let $\Dcal _{\mu
}(X_{1},\ldots,X_{l}) = \xi ^{-1}\Dcal _{\mu }(x_{1},\ldots,x_{l})$,
where $\xi $ is the isomorphism in Lemma~\ref{lem:X=x}.  Thus, $\Dcal
_{\mu }(X_{1},\ldots,X_{l})$ is the unique flagged symmetric function
such that $\Dcal _{\mu }[x_{1},\ldots,x_{l}] = \Dcal _{\mu
}(x_1,\ldots ,x_l)$.  Corollary~\ref{cor:demazure-on-flagged} implies
that for flag bounds $b = b_{1},\ldots,b_{l}$ such that $b_{i}\geq i$
for all $i$, and $n\geq b_{l}$, the specialization $\Dcal _{\mu
}[X^b_{1},\ldots,X^b_{l}]$ is a Demazure character $\Dcal _{w (\mu ;\,
0^{n-l})}(x_{1},\ldots,x_{n})$, for $w$ as in the corollary.  For
distinct flag bounds $0<b_{1}< \cdots < b_{l}$, $w (\mu ;\, 0^{n-l})$
is the interleaving of $\mu $ in positions $b_{1},\ldots,b_{l}$ with
zeroes in all other positions, giving
\begin{equation}\label{e:flagged-demazure}
\Dcal _{\mu }[X^b_{1},\ldots,X^b_{l}] = \Dcal _{(0^{b_{1}-1},\, \mu
_{1},\, 0^{b_{2}-b_{1}-1},\, \mu _{2},\, \dots\, ,\,
0^{b_{l}-b_{l-1}-1},\, \mu _{l},\, 0^{n-b_{l}})}(x),
\end{equation}
In particular, this shows that for any given $\mu $, the Demazure
characters on the right hand side of \eqref{e:flagged-demazure} all
fit together as specializations with varying flag bounds of the same
flagged symmetric function $\Dcal _{\mu }(X_{1},\ldots,X_{l})$.
These flagged symmetric functions are special cases of the
`multi-symmetric Schur functions' in \cite{BechtloffWeising25} (in the
general case, however, the latter are not in
the subspace of flagged symmetric functions).

(ii) Let $\lambda $ be a partition, possibly padded with zeroes to
length $l$.  By~\eqref{eq:flagschuridentity}, the flagged Schur
function $\sfrak _{\lambda }(X_{1},\ldots,X_{l})$ with $\mu =
\emptyset$, evaluated with flag bounds $1,2,\dots, l$, is $\sfrak
_{\lambda }[x_{1},\ldots,x_{l}] = x^{\lambda }$, since there is a
unique tableau $T\in \SSYT (\lambda )$ with entries in row $i$ bounded
by $i$. But we also have $x^{\lambda } = \Dcal _{\lambda
}(x_1,\ldots,x_l)$.  Hence, using Lemma~\ref{lem:X=x} and example (i),
above, we recover the theorem of Reiner--Shimozono \cite[Theorem
23]{RS} that any flagged Schur function evaluated with flag bounds
$b_{i}\geq i$ is given by
\begin{equation}
\sfrak _{\lambda }[X^b_{1},\ldots,X^b_{l}] = \Dcal _{w\, (\lambda ;\,
0^{n-\ell (\lambda )})}(x_1,\ldots, x_n)\,,
\end{equation}
with $w$ as in Corollary~\ref{cor:demazure-on-flagged}.  Note that
when $b_{1}<\cdots <b_{l}$, \eqref{e:flagged-demazure} with $\lambda $
in place of $\mu $ gives a simpler formula for this.  Further, since
$\sfrak _{\lambda }[X^b_{1},\ldots,X^b_{l}]$ only depends on the first
$\ell (\lambda )$ plethystic alphabets $X^b_{i}$, we may assume that $l = \ell
(\lambda )$.  Then, if the flag bounds do not satisfy $b_{i}\geq i$
for all $i$, the set of tableaux $T\in \SSYT (\lambda )$ with entries
in row $i$ bounded by $b_{i}$ is empty, implying that $\sfrak
_{\lambda }[X^b_{1},\ldots,X^b_{l}] = 0$.
\end{example}

The next proposition is akin to Example~\ref{ex:flagged-sym-2}(i), but
for atoms instead of Demazure characters.  Here, $\xi $ is again the
isomorphism in Lemma~\ref{lem:X=x}.

\begin{prop}\label{prop:flagged-atom}
Given $\mu \in \NN ^{l}$, let $\Acal _{\mu }(X_{1},\ldots,X_{l}) = \xi
^{-1}\Acal _{\mu }$ be the unique flagged symmetric function such that
$\Acal _{\mu }[x_{1},\ldots,x_{l}]$ is the Demazure atom $\Acal _{\mu
}(x_{1},\ldots,x_{l})$.  Then, for any flag bounds $b =
b_{1},b_{2},\ldots,b_{l}$ and $n\geq b_{l}$, the specialization $\Acal
_{\mu } [X^b_{1},\ldots,X^b_{l}]$ is a multiplicity-free sum of atoms
\begin{equation}\label{e:flagged-atom}
\Acal _{\mu } [X^b_{1},\ldots,X^b_{l}] = \sum _{\eta \in S}\Acal _{\eta
}(x_{1},\ldots,x_{n}),
\end{equation}
where $S$ is a (possibly empty) set of elements $\eta \in \NN ^{n}$
such that the nonzero parts of $\eta $ are a permutation of the
nonzero parts of $\mu $.
\end{prop}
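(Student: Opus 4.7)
The plan is to reduce the claim to an application of Remark~\ref{rem:for atom pos}(iii) together with Corollary~\ref{cor:Amu-with-zeroes}, via the expression provided by Remark~\ref{rem:demazure-on-flagged} for an arbitrary flag-bound specialization as a Demazure operator acting on $\Acal _{\mu }[x_{1},\ldots,x_{l}]$ followed by setting some variables to zero and re-indexing.

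First, I would dispatch the restricted case $b_{i}\geq i$ for all $i$ directly using Corollary~\ref{cor:demazure-on-flagged}, which gives
\[
\Acal _{\mu }[X^{b}_{1},\ldots,X^{b}_{l}] = \Dem _{w}\, \Acal _{\mu }[x_{1},\ldots,x_{l}]
\]
for the explicit $w$ described there. Regarding $\Acal _{\mu }(x_{1},\ldots,x_{l})$ as a polynomial in $x_{1},\ldots,x_{n}$ (independent of $x_{l+1},\ldots,x_{n}$), it coincides with $\Acal _{(\mu ,\, 0^{n-l})}(x_{1},\ldots,x_{n})$: the composition $(\mu ,0^{n-l})$ sorts to $(\mu _{+},0^{n-l})$, so its minimal-length rearranging permutation lies in $\Sfrak _{l}\subset \Sfrak _{n}$ and the defining product of $\DemAt _{i}$'s involves no variable $x_{j}$ with $j>l$. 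By Remark~\ref{rem:for atom pos}(iii), $\Dem _{w}\, \Acal _{(\mu ,\, 0^{n-l})}(x)$ is a multiplicity-free sum of atoms $\Acal _{\eta }(x_{1},\ldots,x_{n})$ indexed by permutations $\eta $ of $(\mu ,0^{n-l})$, whose nonzero parts are therefore a permutation of the nonzero parts of $\mu $, as required.

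For arbitrary flag bounds $b$, I would invoke Remark~\ref{rem:demazure-on-flagged} to express $\Acal _{\mu }[X^{b}_{1},\ldots,X^{b}_{l}]$ as a sum of the form above (computed in a larger ambient variable set $x_{1},\ldots,x_{N}$) followed by a substitution of some variables to zero and re-indexing. By Corollary~\ref{cor:Amu-with-zeroes}, the zero-substitution step kills an atom $\Acal _{\eta }$ whenever some zeroed position $k$ has $\eta _{k}>0$, and otherwise returns the atom $\Acal _{\eta '}$ whose index is obtained from $\eta $ by deleting only zero entries; in this surviving case the nonzero parts of $\eta '$ match those of $\eta $, so they remain a permutation of the nonzero parts of $\mu $.

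The main obstacle is checking that multiplicity-freeness persists under the zero-substitution. Two distinct surviving compositions $\eta ^{(1)}\neq \eta ^{(2)}$ from the Demazure expansion must both vanish at every zeroed position (otherwise at least one of them is killed), so they must differ at some non-zeroed coordinate and hence produce distinct indices $\eta '{}^{(1)}\neq \eta '{}^{(2)}$ after re-indexing. This shows the full expansion remains a multiplicity-free sum of atoms of the advertised form, completing the proof.
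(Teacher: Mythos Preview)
Your proof is correct and follows essentially the same approach as the paper's own proof: reduce first to the case $b_i\geq i$ via Corollary~\ref{cor:demazure-on-flagged} and Remark~\ref{rem:for atom pos}(iii), then handle the general case by setting variables to zero and re-indexing (the paper cites Lemma~\ref{lem:xk=0} directly rather than Remark~\ref{rem:demazure-on-flagged}, but these amount to the same thing) and applying Corollary~\ref{cor:Amu-with-zeroes}. Your explicit check that multiplicity-freeness survives the zero-substitution is a point the paper glosses over with the phrase ``turns any sum of the form in \eqref{e:flagged-atom} into another sum of the same form,'' so your version is slightly more careful there.
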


\begin{proof}
If $b_{i}\geq i$ for all $i$, this follows from
Corollary~\ref{cor:demazure-on-flagged} and Remark \ref{rem:for atom
pos}(iii), which says that for any $w\in \Sfrak _{n}$, $\Dem _{w}\,
\Acal _{\mu }$ is a sum of the form in \eqref{e:flagged-atom}, with
every $\eta $ a permutation of $(\mu ; 0^{n-l})$.  In the general
case, Lemma~\ref{lem:xk=0} implies that we can obtain $\Acal _{\mu }
[X^b_{1},\ldots,X^b_{l}]$ from another specialization $\Acal _{\mu }
[X^{b'}_{1},\ldots,X^{b'}_{l}]$ with flag bounds satisfying
$b'_{i}\geq i$, by setting some of the variables to zero and
re-indexing.  By Corollary~\ref{cor:Amu-with-zeroes}, this procedure
turns any sum of the form in \eqref{e:flagged-atom} into another sum
of the same form.
\end{proof}

\subsection{Flagged Cauchy identity} \label{ss:flagged-Cauchy}
The generating function $\Omega [X\, z^{-1}] = \sum _{k=0}^{\infty }
z^{-k}\, h_{k}(X)$ for complete symmetric functions immediately
implies the following `Cauchy identity' for flagged symmetric
functions:
\begin{equation}\label{e:flagged-Cauchy}
 \Omega [\sum _{i\leq j} \!\! X_{i}\, z_{j}^{-1} ] = \sum _{\aA
\in \NN ^{l}} z^{-\aA }\, \fh_{\aA }(X_{1}, X_{2}, \ldots, X_{l}).
\end{equation}
In particular, the coefficients of $\Omega [\sum _{i\leq j}X_{i}\,
z_{j}^{-1}]$ as a formal power series in
$z_{1}^{-1},\ldots,z_{l}^{-1}$ belong to $\FLambda (X_{1},\ldots,X_{l})$.
Hence,
\begin{equation}\label{e:xi-on-omega}
\Omega [\sum _{i\leq j}X_{i}\, z_{j}^{-1}] = \xi ^{-1} \Omega [\sum
_{i\leq j} x_{i}/z_{j}],
\end{equation}
where $\xi $ is the isomorphism in Lemma~\ref{lem:X=x}, and we apply
$\xi ^{-1}$ coefficient-wise to $\Omega [\sum _{i\leq j} x_{i}/z_{j}]$
considered as a power series in the $z_{i}^{-1}$ with coefficients in
$\Kfrak [x_{1},\ldots,x_{l}]$.

\subsection{Coproduct}
\label{ss:translation}

The standard coproduct $\Delta \colon \Lambda (X)\rightarrow \Lambda
(X)\otimes \Lambda (Y)$ on symmetric functions is given by the
plethystic evaluation $\Delta f(X) = f[X+Y]$.  Taking the coefficient
of $t^{k}$ in $\Omega [t(X+Y)] = \Omega [t X]\, \Omega [t Y]$ gives
the coproduct formula 
\begin{equation}\label{e:coproduct-of-h}
h_{k}[X+Y] = \sum _{p+q=k} h_{p}[X] h_{q}[Y],
\end{equation}
valid as a plethystic identity for any $X$ and $Y$.

The coproduct on symmetric functions extends to a coproduct on
multi-symmetric functions by applying \(X_i \mapsto X_i+Y_i\) in each
formal alphabet.  By the next proposition, this coproduct preserves
flagged symmetric functions.

\begin{prop}
\label{p coprod formula} For \(f(X_1,\ldots,X_l) \in
\FLambda(X_1,\ldots,X_l)\), the multi-symmetric function \(
f[X_1+Y_1,\ldots,X_l+Y_l]\) lies in \(\FLambda(X_1,\ldots,X_l) \otimes
\FLambda(Y_1,\ldots,Y_l)\). Explicitly,
\begin{equation}
\label{e coprod formula} \fh_{\aA}[X_1+Y_1,\ldots,X_l+Y_l] = \sum_{\pp
+ \qq = \aA} \fh_{\pp}(X_1,\ldots,X_l)\,  \fh_{\qq}(Y_1,\ldots,Y_l) \,,
\end{equation}
where all indices $\aA $, $\pp $, and $\qq $ are in $\NN ^{l}$.
\end{prop}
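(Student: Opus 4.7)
The plan is to prove the explicit formula \eqref{e coprod formula} directly, from which the first claim follows immediately since $\FLambda$ is spanned by the $\fh_{\aA}$. The key observation is that the flag structure of $\fh_{\aA}$ is compatible with the symmetric function coproduct because the definition uses \emph{nested partial sums} $X_1+X_2+\cdots+X_i$, which separate cleanly from the corresponding $Y$-partial sums.

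Concretely, I would begin by unpacking the definition: by \eqref{e:flagged-sym-basis},
\begin{equation*}
\fh_{\aA}[X_1+Y_1,\ldots,X_l+Y_l] = \prod_{i=1}^{l} h_{a_i}\bigl[(X_1+\cdots+X_i)+(Y_1+\cdots+Y_i)\bigr].
\end{equation*}
Next, I would invoke the standard coproduct identity for the complete homogeneous symmetric functions,
\begin{equation*}
h_k[A+B] = \sum_{p+q = k} h_p[A]\, h_q[B],
\end{equation*}
which is the coefficient of $z^k$ in $\Omega[z(A+B)] = \Omega[zA]\,\Omega[zB]$ (see \eqref{e:Omega[A+B]}). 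Applying this to each factor $i$ with $A = X_1+\cdots+X_i$ and $B = Y_1+\cdots+Y_i$, and then expanding the product over $i$, gives
\begin{equation*}
\fh_{\aA}[X_1+Y_1,\ldots,X_l+Y_l] = \sum_{\pp+\qq = \aA} \prod_{i=1}^{l} h_{p_i}[X_1+\cdots+X_i]\, h_{q_i}[Y_1+\cdots+Y_i],
\end{equation*}
where the sum is over $\pp,\qq \in \NN^{l}$ with $\pp+\qq = \aA$ componentwise. Grouping the $X$-factors and $Y$-factors recovers $\fh_{\pp}(X_1,\ldots,X_l)\,\fh_{\qq}(Y_1,\ldots,Y_l)$, proving \eqref{e coprod formula}.

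Since each summand on the right lies in $\FLambda(X_1,\ldots,X_l)\otimes \FLambda(Y_1,\ldots,Y_l)$, the first assertion of the proposition follows by $\Kfrak$-linearity from the fact that the $\fh_{\aA}$ span $\FLambda(X_1,\ldots,X_l)$. There is essentially no obstacle: the entire argument reduces to the one-variable coproduct identity for $h_k$ together with the telescoping way in which partial sums appear in the definition of $\fh_{\aA}$. The only point to be careful about is keeping track of the indexing: the $i$-th factor on the right uses the partial sums up to index $i$ on both the $X$ and $Y$ sides, which is exactly what produces the flagged factorization and not some more complicated mixing of alphabets.
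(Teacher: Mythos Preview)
Your proof is correct and follows essentially the same approach as the paper: expand $\fh_{\aA}[X_1+Y_1,\ldots,X_l+Y_l]$ via the definition, apply the coproduct identity $h_k[A+B]=\sum_{p+q=k}h_p[A]h_q[B]$ to each factor with $A=X_1+\cdots+X_i$ and $B=Y_1+\cdots+Y_i$, and regroup. You are slightly more explicit about the regrouping and the linearity step for the first assertion, but there is no substantive difference.
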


\begin{proof}
By definition, \(\fh_{\aA}[X_1+Y_1,\ldots,X_l+Y_l] =
h_{a_1}[X_1+Y_1]h_{a_2}[X_1+X_2+Y_1+Y_2] \cdots \). Now, apply the
coproduct formula $h_{a_k}[X_1+\dots +X_k+Y_1 + \dots + Y_k] =
\sum_{p_k+q_k = a_k} h_{p_k}[X_1+\cdots+X_k] h_{q_k}[Y_1+\cdots+Y_k]$
on each factor.
\end{proof}

\section{Nonsymmetric plethysm}
\label{s:ns-plethysm}

In this section we introduce the operation $\Pi _{A,x}$ of {\em
nonsymmetric plethysm} on polynomials in variables
$x_{1},\ldots,x_{n}$.  We view $\Pi _{A,x}$ and its inverse $\Pi_{A,x}^{-1}$ as natural nonsymmetric analogs of the plethystic
transformations on symmetric functions $f(X)\mapsto {f[X/(1-A)]}$ and
$f(X)\mapsto f[(1-A) X]$, respectively.

\subsection{Initial definition}
\label{ss:ns-pleth-def}

From here through \S \ref{ss:symmetric-limit}, we define $\Pi _{A,x}$,
characterize it in various ways, and show that the transformations
$f(X)\mapsto {f[X/(1-A)]}$ and $f(X)\mapsto f[(1-A) X]$ on symmetric
functions are limiting cases of $\Pi _{A,x}$ and $\Pi _{A,x}^{-1}$.
To develop the theory, we work in the context where $A$ is a formal
alphabet.  Afterwards we explain how to specialize to the
one-parameter case $A=t$, where $t$ is an indeterminate in the
coefficient ring, which is what we use for the applications to flagged
LLT polynomials and nonsymmetric Macdonald polynomials in \S \S
\ref{s:flagged-LLT}--\ref{s:ns-Mac-pols}.  For now, we don't restrict
ourselves to the one-parameter case, because the general case is no
harder, clarifies the reasoning, and may have other potential
applications.

To define $\Pi _{A,x}$ in the generic context, let $\Lambda (A) =
\Lambda _{\Kfrak }(A)$ be the algebra of symmetric functions in a
formal alphabet $A$ over a coefficient ring $\Kfrak $.  As in \S
\ref{s:flagged-symmetric-functions}, all plethystic evaluations of
\multisymmetric functions below will be of the form in \S
\ref{ss:plethysm}, (iv), defined for any $\Kfrak $.

Recall from Definition~\ref{def:flagged-sym} that the space $\FLambda
_{\Lambda (A)}(X_{1},\ldots,X_{n})$ of flagged symmetric functions
over $\Lambda (A)$ has a basis consisting of elements $\fh _{\aA
}(X_{1},\ldots,X_{n})$ for $\aA \in \NN^n$.  Evaluating them with
$X_{1} = x_{1}$ and $X_{i} = x_{i}-A\, x_{i-1}$ for $i>1$ yields
polynomials
\begin{multline}\label{e:PiA-preimage-of-ha}
\fh _{\aA }[x_{1},\, x_{2}-A\, x_{1},\, \ldots,\, x_{n}-A\, x_{n-1}] \\
= h_{a_{1}}[x_{1}]\, h_{a_{2}}[x_{2}+(1-A)\, x_{1}]\cdots
h_{a_{n}}[x_{n}+(1-A)(x_{1}+\cdots +x_{n-1})].
\end{multline}
By the same triangularity argument as in the proof of
Lemma~\ref{lem:X=x}, these polynomials form a basis of $\Lambda
(A)[x_{1},\ldots,x_{n}]$ as a free $\Lambda (A)$ module.

\begin{defn}\label{def:PiA}
The {\em nonsymmetric plethysm } $\Pi _{A,(x_{1},\ldots,x_{n})}$, or
$\Pi _{A,x}$ for short, is the invertible $\Lambda (A)$-linear
operation on $\Lambda (A)[x_{1},\ldots,x_{n}]$ given on the basis in
\eqref{e:PiA-preimage-of-ha} by
\begin{equation}\label{e:PiA-definition}
\Pi _{A,x} \, \fh _{\aA }[x_{1},\, x_{2}-A\, x_{1},\, \ldots,\,
x_{n}-A\, x_{n-1}] = \fh _{\aA }[x_{1},\, \ldots,\, x_{n}].
\end{equation}
\end{defn}
For convenience, we define
\begin{equation}\label{e:htild}
\tfh_{\aA}(X_1,\ldots,X_n) = \fh_{\aA}[X_1,X_2-AX_1, \ldots,
X_n-AX_{n-1}]
\end{equation}
(a multi-symmetric function, but not, in general, a flagged symmetric
function), and write
\begin{equation}\label{e:flag-h-pleth}
\fh_{\aA} = \fh_{\aA}[x_1, \ldots, x_n]\,, \qquad \tfh_{\aA} =
\tfh_{\aA}[x_1, \ldots, x_n] = \fh_{\aA}[x_1,x_2-Ax_1, \ldots,
x_n-Ax_{n-1}]\,,
\end{equation}
in terms of which \eqref{e:PiA-definition} takes the form $\Pi _{A,x}
\, \tfh_{\aA } = \fh_{\aA }.$

It can be helpful to picture the definition in the following way.
Define a deformation $\xi _{A} \colon \FLambda _{\Lambda (A)}
(X_{1},\ldots,X_{n}) \rightarrow \Lambda (A) [x_{1},\ldots,x_{n}]$ of
the isomorphism $\xi $ in Lemma~\ref{lem:X=x} by
\begin{equation}\label{e:xi-A}
\xi _{A}\, f(X_{1},\ldots,X_{n}) = f[x_{1},\, x_{2}-A\, x_{1},\,
\ldots,\, x_{n}-A\, x_{n-1}].
\end{equation}
Then $\Pi  _{A,x}$ and $\Pi  _{A,x}^{-1}$ are determined by the
following commutative diagram.
\begin{equation}\label{e:cd-for-PiA}
\begin{tikzcd}[row sep=small]
&&
\mbox{$\Lambda (A)[x_{1},\ldots,x_{n}]$}
\arrow[dd, shift left=3, "\Pi _{A,x}^{-1}"]
\\
\mbox{$\FLambda _{\Lambda (A)} (X_{1},\ldots,X_{n})$}
\arrow[urr, shift left=2, start anchor=east, end anchor=south west, "\xi "]
\arrow[drr, shift right=2, start anchor=east, end anchor=north west,
   "\xi _{A}"']
\\
&&
\mbox{$\Lambda (A)[x_{1},\ldots,x_{n}]$}
\arrow[uu, shift left=1, "\Pi _{A,x}"]
\end{tikzcd}
\end{equation}

\begin{remark}\label{rem:PiA-clarifications}
(i) In the definition of $\Pi _{A,(x_{1},\ldots,x_{n})}$, the ordering
of the variables $x_{1},\ldots,x_{n}$ matters.  However, the number of
variables does not really matter: if $f$ is a function of the first
$m$ variables $x_{1},\ldots,x_{m}$, then $\Pi
_{A,(x_{1},\ldots,x_{m})} f(x) = \Pi _{A,(x_{1},\ldots,x_{n})} f(x)$.
This follows directly from the definition, using the fact that
${\fh}_{\aA}(X_{1},\ldots,X_{m}) = {\fh}_{(\aA;\,
0^{n-m})}(X_{1},\ldots,X_{n})$ for $\aA \in \NN ^{m}$.

(ii) It also follows directly from the definition that the operations
$\Pi _{A,x}^{\pm 1}$ on $\Lambda _{\Kfrak }(A)[x_{1},\ldots,x_{n}] =
\Kfrak \otimes _{\ZZ } \Lambda _{\ZZ }(A)[x_{1},\ldots,x_{n}]$
coincide with those obtained by extension of scalars $\Kfrak \otimes
_{\ZZ } (-)$ from the operations $\Pi _{A,x}^{\pm 1}$ on $\Lambda
_{\ZZ }(A)[x_{1},\ldots,x_{n}]$.  In fact, all of our results on
flagged symmetric functions and flagged plethysm here and in \S
\ref{s:flagged-symmetric-functions} follow by extension of scalars
from the case $\Kfrak =\ZZ $.
\end{remark}

\begin{example}
\label{ex nspleth n2} We calculate the plethysm $\Pi_{A,x}$ on
monomials $x^{\aA }\in \Lambda(A)[x_1,\dots, x_n]$ in two cases.

(i) For monomials $x_{n}$ of degree $1$, for any $n$, we first note
that Definition \ref{def:PiA} gives
\begin{equation}\label{e:PiA-inv-h1 v0}
\Pi _{A,x}^{-1}\, \fh_{(0^{n-1};1)} = \tfh_{(0^{n-1};1)} =
x_{1}+\cdots +x_{n} - h_{1}(A)(x_{1}+\cdots +x_{n-1}).
\end{equation}
Writing $x_{n} = h_{1}[x_{1}+\cdots +x_{n}] - h_{1}[x_{1}+\cdots
+x_{n-1}] = \fh _{(0^{n-1};1)} - \fh _{(0^{n-2};1)}$,
we obtain
\begin{equation}\label{e:PiA-inv-y1}\Pi _{A,x}^{-1}\, x_n = x_n -
h_{1}(A)\, x_{n-1},\quad \text{or, equivalently,}\quad \Pi_{A,x}\, x_n
=x_n+h_1(A)\, \Pi_{A,x}\, x_{n-1}\,.
\end{equation}
Iterating this gives
\begin{gather}\label{e:PiA-xn}
\Pi _{A,x}\, x_{n} = x_{n} + h_{1}(A)\, x_{n-1} +\cdots
+h_{1}(A)^{n-1}\, x_{1}.
\end{gather}

(ii) For any monomial in $n=2$ variables, we note that the bases
$\fh_\aA$ and $\tfh_\aA$ of $\Lambda(A)[x_1,x_2]$ are given by
$\fh_{a_1, a_2} = h_{a_1}[x_1]h_{a_2}[x_1+x_2]$ and $\tfh_{a_1, a_2} =
h_{a_1}[x_1]h_{a_2}[x_2+(1-A)x_1]$.  Setting $Y = x_{2}+(1-A)x_{1}$,
we have
\begin{multline}\label{e:h-by-ht-n=2}
\fh _{a_{1},a_{2}} = h_{a_1}[x_1]\, h_{a_2}[Y+A\, x_{1}] =
h_{a_1}[x_1]\sum _{i+j=a_{2}} h_{j}[A \, x_{1}]\, h_{i}[Y] \\
= \sum _{i+j=a_{2}} h_{j}(A) \, h_{a_{1}+j}[x_{1}]\, h_{i}[Y] = \sum
_{i+j=a_{2}} h_{j}(A)\, \tfh _{a_{1}+j, i}.
\end{multline}
We can now compute $\Pi_{A,x}\, x_2^2$, for instance, by first
expressing $x_{2}^{2}$ in terms of the $\tfh _{a_{1},a_{2}}$, as
follows:
\begin{gather}\label{e:expand-x22}
\begin{aligned}
x_2^2 & = (x_1^2+x_1x_2 + x_2^2) - x_1 (x_1+x_2)= \fh_{0,2} -\fh_{1,1} \\
& = \big(h_2(A)\, \tfh_{2,0} + h_1(A)\, \tfh_{1,1} + \tfh_{0,2}\big)
-\big(h_1(A)\, \tfh_{2,0} + \tfh_{1,1}\big) \qquad
\text{by \eqref{e:h-by-ht-n=2}}\\
& = (h_2(A)-h_1(A)) \, \tfh_{2,0} + (h_1(A)-1)\, \tfh_{1,1}+
\tfh_{0,2}\, .
\end{aligned}\\
\intertext{Then}
\label{e:compute-PiA-xee}
\begin{aligned}
\Pi_{A,x}\, x_2^2 & = (h_2(A)-h_1(A)) \, \fh_{2,0} + (h_1(A)-1)\,
\fh_{1,1}+ \fh_{0,2}\\
& = x_2^2 + h_1(A)\, x_1 x_2 + h_2(A)\, x_1^2 \,.
\end{aligned}
\end{gather}
A similar computation shows that, more generally,
\begin{equation}
\label{e nspleth n2}
\Pi_{A, x}\, x_1^a x_2^b = x_1^a x_2^b + h_{1}(A) x_1^{a+1}x_2^{b-1} +
h_{2}(A) x_1^{a+2}x_2^{b-2} + \cdots + h_{b}(A) x_1^{a+b}.
\end{equation}
In particular, when $A = t$, this reduces to
\begin{equation}\label{e nspleth n2 t}
\Pi_{A, x}\, x_1^a x_2^b = x_1^a x_2^b + t \, x_1^{a+1}x_2^{b-1} + t^2
x_1^{a+2}x_2^{b-2} + \cdots + t^b x_1^{a+b}.
\end{equation}
\end{example}

We will present a number of alternative characterizations of $\Pi
_{A,x}$ and its inverse.  The first is essentially just a
reformulation of the definition.

\begin{prop}\label{prop:PiA-on-Omega}
We have the identity
\begin{equation}\label{e:PiA-on-Omega}
\Pi _{A,x}\, \Omega [\sum _{i\leq j}x_{i}/z_{j} - A\sum
_{i<j}x_{i}/z_{j}] = \Omega [\sum _{i\leq j}x_{i}/z_{j}],
\end{equation}
where $\Pi _{A,x}$ is applied to each coefficient in the expansion of
$\Omega [\sum _{i\leq j}x_{i}/z_{j} - A\sum _{i<j}x_{i}/z_{j}]$ as a
power series in $z_{1}^{-1},\ldots,z_{n}^{-1}$ over $\Lambda
(A)[x_{1},\ldots,x_{n}]$.
\end{prop}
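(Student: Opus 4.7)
The plan is to recognize \eqref{e:PiA-on-Omega} as the coefficient-wise image under $\Pi_{A,x}$ of a generating-function identity obtained by applying the two specialization maps $\xi$ and $\xi_A$ from diagram \eqref{e:cd-for-PiA} to the flagged Cauchy identity \eqref{e:flagged-Cauchy}.

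First I would compute the partial sum $X_1+\cdots+X_j$ under the substitution $X_1 = x_1$, $X_i = x_i - A\, x_{i-1}$ for $i>1$. A telescoping gives
\begin{equation*}
X_1+\cdots+X_j = (x_1+\cdots+x_j) - A(x_1+\cdots+x_{j-1}),
\end{equation*}
so summing against $z_j^{-1}$ yields
\begin{equation*}
\sum_{i\leq j} X_i\, z_j^{-1} \;\longmapsto\; \sum_{i\leq j} x_i/z_j \;-\; A\sum_{i<j} x_i/z_j
\end{equation*}
under $\xi_A$. Thus, applying $\xi_A$ to the flagged Cauchy identity \eqref{e:flagged-Cauchy} (with $l=n$) gives
\begin{equation*}
\Omega\bigl[\sum_{i\leq j} x_i/z_j - A\sum_{i<j} x_i/z_j\bigr] = \sum_{\aA\in \NN^n} z^{-\aA}\, \tfh_{\aA},
\end{equation*}
while applying $\xi$ (the $A=0$ case) gives $\Omega[\sum_{i\leq j} x_i/z_j] = \sum_{\aA} z^{-\aA}\, \fh_{\aA}$.

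The proposition then follows immediately: by Definition~\ref{def:PiA} we have $\Pi_{A,x}(\tfh_{\aA}) = \fh_{\aA}$ for every $\aA \in \NN^n$, so applying $\Pi_{A,x}$ coefficient-by-coefficient in $z_1^{-1},\ldots,z_n^{-1}$ to the first expansion turns it into the second, which is \eqref{e:PiA-on-Omega}. There is no real obstacle here; the only thing requiring a moment of care is verifying that the telescoping identity for $X_1+\cdots+X_j$ produces precisely the argument of $\Omega$ appearing on the left-hand side of \eqref{e:PiA-on-Omega}, and that the flagged Cauchy identity applies coefficient-wise after specialization (which is immediate since $\xi$ and $\xi_A$ are $\Lambda(A)$-linear and preserve degrees in the $X_i$'s, hence degrees in the $z_j^{-1}$'s).
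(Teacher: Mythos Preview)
Your proof is correct and follows essentially the same approach as the paper: specialize the flagged Cauchy identity \eqref{e:flagged-Cauchy} via $\xi$ and $\xi_A$ to obtain the two expansions, then apply the defining relation $\Pi_{A,x}\,\tfh_{\aA}=\fh_{\aA}$ coefficient-wise. The paper's proof is just a terser version of yours, stating the two specialized identities without spelling out the telescoping computation for $X_1+\cdots+X_j$.
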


\begin{proof}
Specializing the flagged Cauchy identity \eqref{e:flagged-Cauchy} in
two ways, we obtain
\begin{align}\label{e:flagged-Cauchy-with-A}
\Omega [\sum _{i\leq j}x_{i}/z_{j} - A\sum _{i<j}x_{i}/z_{j}] & = \sum
_{\aA \in \NN ^{n}} z^{-\aA }\, \fh _{\aA }[x_{1},\, x_{2}-A\, x_{1},\,
\ldots,\, x_{n}-A\, x_{n-1}];\\
\label{e:flagged-Cauchy-without-A} \Omega [\sum _{i\leq j}x_{i}/z_{j}
] & = \sum _{\aA \in \NN ^{n}} z^{-\aA }\, \fh _{\aA }[x_{1}, x_{2},
\ldots, x_{n}].
\end{align}
This given, \eqref{e:PiA-on-Omega} is immediate from the definition of
$\Pi _{A,x}$.
\end{proof}

Another straightforward consequence of the definition is a formula for
inverse nonsymmetric plethysm on a flagged symmetric function
specialized with strictly increasing flag bounds.

\begin{prop}\label{prop:PiA-inv-flag-bounds}
For any flagged symmetric function $f\in \FLambda _{\Lambda
(A)}(X_{1},\ldots,X_{l})$, the inverse nonsymmetric plethysm of its
specialization with flag bounds $b_{1}<\cdots <b_{l}$ is given by
\begin{equation}\label{e:PiA-inv-flag-bounds}
 \Pi _{A,x}^{-1}\, f[X_1^b,\, X_2^b,\, \ldots,\, X_l^b] = \, f
\bigl[X_1^b-A\,{X'_1}^{b} ,\, X_2^b-A\,{X'_2}^{b}
,\ldots,X_{l}^b-A\,{X'_{l\,}}^{b}\bigr]\,,
\end{equation}
where $X^b_i = x_{b_{i-1}+1} + \cdots + x_{b_i}$, as in
Definition~\ref{def:flag-bounds}, and we set $X' =
(0,x_{1},x_{2},\ldots)$, so ${X'_{i\,}}^b = x_{b_{i-1}}+ \cdots +
x_{b_i-1}$.
\end{prop}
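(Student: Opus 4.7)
The plan is to reduce to the basis case $f = \fh_{\aA}$ by linearity, and then to exploit the fact that the specialization with strictly increasing flag bounds $b_1 < \cdots < b_l$ is itself a specialization with single-variable flag bounds in a larger number of alphabets, where the definition of $\Pi_{A,x}^{-1}$ applies directly.

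Concretely, I would take $n = b_l$ and introduce the zero-padded index $\aA' \in \NN^n$ with $a'_{b_i} = a_i$ for $i=1,\dots,l$ and $a'_j = 0$ otherwise. Writing $f = \fh_\aA$ and using Definition~\ref{def:flagged-sym} together with the observation that a factor $h_{0}[\cdot] = 1$, one checks immediately that
\begin{equation*}
\fh_\aA[X_1^b,\ldots,X_l^b] \;=\; \prod_{i=1}^l h_{a_i}[x_1+\cdots+x_{b_i}] \;=\; \fh_{\aA'}[x_1,\ldots,x_n]\,,
\end{equation*}
i.e.\ the specialization with flag bounds $b_1<\cdots<b_l$ of $\fh_\aA$ is the specialization with flag bounds $1,2,\ldots,n$ of $\fh_{\aA'}$. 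By Definition~\ref{def:PiA} (or \eqref{e:PiA-definition}) applied with $n$ variables, and invoking Remark~\ref{rem:PiA-clarifications}(i) to allow whatever ambient variable count is convenient, this gives
\begin{equation*}
\Pi_{A,x}^{-1}\,\fh_\aA[X_1^b,\ldots,X_l^b] \;=\; \tfh_{\aA'}[x_1,\ldots,x_n] \;=\; \prod_{i=1}^n h_{a'_i}\!\bigl[(x_1+\cdots+x_i) - A(x_1+\cdots+x_{i-1})\bigr]\,.
\end{equation*}

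The final step is pure bookkeeping: the nontrivial factors are those with $i = b_k$ for some $k$, and for such $i$ the argument becomes $(x_1+\cdots+x_{b_k}) - A(x_1+\cdots+x_{b_k-1})$. Using the convention $X' = (0,x_1,x_2,\ldots)$ so that $X'_j = x_{j-1}$, one sees that
\begin{equation*}
X_1^b+\cdots+X_k^b \;=\; x_1+\cdots+x_{b_k}\,, \qquad {X'_1}^{b}+\cdots+{X'_k}^{b} \;=\; x_0+x_1+\cdots+x_{b_k-1} \;=\; x_1+\cdots+x_{b_k-1}\,,
\end{equation*}
so each factor equals $h_{a_k}\!\bigl[\sum_{i\le k}(X_i^b - A\,{X'_i}^{b})\bigr]$, and the product is $\fh_\aA[X_1^b - A\,{X'_1}^{b},\ldots,X_l^b - A\,{X'_l}^{b}]$, as required.

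I do not anticipate a real obstacle here: the whole argument is driven by the identity $\fh_\aA[X_1^b,\ldots,X_l^b] = \fh_{\aA'}[x_1,\ldots,x_n]$ for the zero-padded $\aA'$, which converts the statement into an instance of the definition of $\Pi_{A,x}^{-1}$. The only point requiring care is the strict inequality hypothesis $b_1 < \cdots < b_l$: this is precisely what makes the map $\aA \mapsto \aA'$ well-defined (distinct $b_i$'s give distinct positions for the nonzero entries), and without it the identity ${X'_1}^{b}+\cdots+{X'_k}^{b} = x_1+\cdots+x_{b_k-1}$ would fail because consecutive $X'$ blocks would overlap with $X$ blocks in a way that introduces spurious $-Ax_{b_i}$ terms.
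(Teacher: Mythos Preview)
Your proof is correct and follows essentially the same approach as the paper: reduce by linearity to $f = \fh_{\aA}$, recognize the flag-bound specialization as $\fh_{\aA'}[x_1,\ldots,x_{b_l}]$ for the zero-padded $\aA'$, apply the definition of $\Pi_{A,x}^{-1}$, and unwind. The paper additionally treats the edge case $b_1 = 0$ (where your construction of $\aA'$ would place an entry at position $0$) by observing that it reduces to the case of $l-1$ alphabets with flag bounds $b_2,\ldots,b_l$; you may want to add a sentence covering this.
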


\begin{proof}
If $b_{1} = 0$, the result reduces to the case with $l-1$ formal alphabets
$X_{i}$ and flag bounds $b_{2},\ldots,b_{l}$.  Assume now that
$b_{1}>0$.  Reducing by linearity to the case $f=\fh _{\aA
}(X_{1},\ldots,X_{l})$, the specialization of $f$ with flag bounds
$b_{i}$ becomes $\prod _{i} h_{a_{i}}[x_{1}+\cdots +x_{b_{i}}]$.
Since the $b_{i}$ are distinct and nonzero, this is equal to $\fh
_{\aA '}[x_{1},\ldots,x_{b_{l}}]$, where $\aA '_{b_{i}} = a_{i}$ and
$\aA '_{j} = 0$ for all other $j$.  The left hand side of
\eqref{e:PiA-inv-flag-bounds} is therefore $\Pi _{A,x}^{-1} \, \fh
_{\aA '}[x_{1},\ldots,x_{b_{l}}]$, which is equal to $\fh _{\aA
'}[x_{1},x_{2}-A\, x_{1},\ldots,x_{l}-A\, x_{l-1}]$ by definition.
This is the same as the right hand side of
\eqref{e:PiA-inv-flag-bounds} for $f=\fh _{\aA }$.
\end{proof}

\subsection{Inner product formulas for nonsymmetric plethysm}
\label{ss:PiA-via-inner}

Define an inner product
\begin{align}\label{e:A-inner-product}
\langle f(x),\, g(x) \rangle _{A}\, &	\defeq\, \langle x^{0} \rangle
f(x)\, g(x)\, \Omega [(A-1)\sum _{i<j} x_{i}/x_{j}]\\
\label{e:A-inner-product-2}
& = \langle f(x),\, g(x)\, \Omega [A\, \sum _{i<j} x_{i}/x_{j}]
\rangle _{0}
\end{align}
on $\Lambda (A)[x_{1}^{\pm 1},\ldots,x_{n}^{\pm 1}]$, generalizing the
inner product $\langle -,- \rangle _{0}$ in
\eqref{e:atom-demazure-inner}, as well as the inner product $\langle
-,- \rangle _{t}$ in \eqref{e:t-inner}, which will be discussed when
we get to nonsymmetric Hall-Littlewood polynomials.  We will express
the nonsymmetric plethysm $\Pi _{A,x}$ in terms of the inner product
$\langle -,- \rangle _{A}$ by means of two different formulas.  To
derive these formulas we first need the following `reproducing kernel'
property of $\langle -,- \rangle _{A}$.

\begin{lemma}\label{lem:reproducing-kernel}
For all $f(x)\in \Lambda (A)[x_{1}, ,\ldots,x_{n}]$, we have
\begin{equation}\label{e:reproducing-kernel}
f(x) = \langle f(z), \, \Omega [\sum _{i\leq j}x_{i}/z_{j} - A\sum
_{i<j}x_{i}/z_{j}] \rangle _{A},
\end{equation}
where the inner product is in the $z$ variables, and applies to each
coefficient in the expansion of $\Omega [\sum _{i\leq j}x_{i}/z_{j} -
A\sum _{i<j}x_{i}/z_{j}]$ as a power series in $x$ over the Laurent
polynomial ring $\Lambda (A)[z_{1} ^{\pm 1}, \ldots, z_{n} ^{\pm 1}]$.
\end{lemma}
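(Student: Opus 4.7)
The plan is to prove this reproducing kernel identity by induction on $n$. The base case $n=1$ is immediate: the kernel collapses to $\Omega[x_{1}/z_{1}]$, and since there are no pairs $i<j$, the inner product $\langle-,-\rangle_{A}$ coincides with $\langle-,-\rangle_{0}$, reducing the claim to the reproducing kernel identity \eqref{e:atom-demazure-reproducing-1} in a single variable.

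For the inductive step, I will peel off the variable $z_{n}$. The kernel factors as
\begin{equation*}
\Omega\Big[\sum_{i\le j}x_{i}/z_{j} - A\sum_{i<j}x_{i}/z_{j}\Big] = K_{n-1}(x_{1},\ldots,x_{n-1};\, z_{1},\ldots,z_{n-1})\cdot \Omega\big[(x_{n} + (1-A)(x_{1}+\cdots+x_{n-1}))/z_{n}\big],
\end{equation*}
where $K_{n-1}$ is the same kernel in $n-1$ variables, and the weight in $\langle-,-\rangle_{A}$ factors as $\Omega[(A-1)\sum_{i<j<n}z_{i}/z_{j}]\cdot \Omega[(A-1)(z_{1}+\cdots+z_{n-1})/z_{n}]$. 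Multiplying the two $z_{n}$-dependent factors produces $\Omega[U/z_{n}]$ with
\begin{equation*}
U = x_{n} + (1-A)(x_{1}+\cdots+x_{n-1}) + (A-1)(z_{1}+\cdots+z_{n-1}).
\end{equation*}

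Expanding $f(z) = \sum_{m} z_{n}^{m} f_{m}(z_{1},\ldots,z_{n-1})$ (a finite sum, since $f$ is polynomial) and using $\Omega[U/z_{n}] = \sum_{m} h_{m}[U]z_{n}^{-m}$, one obtains
\begin{equation*}
\langle z_{n}^{0}\rangle\, f(z)\,\Omega[U/z_{n}] = \sum_{m} h_{m}[U(z_{1},\ldots,z_{n-1})]\, f_{m}(z_{1},\ldots,z_{n-1}),
\end{equation*}
which is polynomial in $z_{1},\ldots,z_{n-1}$ with coefficients in $\Lambda(A)[x_{1},\ldots,x_{n}]$. The remaining $\langle z_{1}^{0}\cdots z_{n-1}^{0}\rangle$-extraction against $K_{n-1}$ and $\Omega[(A-1)\sum_{i<j<n}z_{i}/z_{j}]$ is exactly the $A$-inner product in $n-1$ variables, so by induction each term simplifies by substituting $z_{j}\mapsto x_{j}$ for $j<n$. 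The key cancellation then fires: $U|_{z_{j}\mapsto x_{j}} = x_{n} + (1-A)(x_{1}+\cdots+x_{n-1}) + (A-1)(x_{1}+\cdots+x_{n-1}) = x_{n}$, whence $h_{m}[U|_{z\to x}] = h_{m}[x_{n}] = x_{n}^{m}$. Summing over $m$ yields $\sum_{m} f_{m}(x_{1},\ldots,x_{n-1})\, x_{n}^{m} = f(x_{1},\ldots,x_{n})$, completing the induction.

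I expect the main obstacle to be bookkeeping rather than anything deep: one must align the two $\Omega$-factorizations with the splitting of the $z^{0}$-extraction and verify that all formal expansions remain well-defined. Since $f$ is polynomial and each coefficient of the kernel (viewed as a power series in $x$) has finite support in $z^{-1}$ by the flagged Cauchy identity \eqref{e:flagged-Cauchy-with-A}, all the operations involved reduce to finite sums and no convergence issue arises. The whole proof ultimately rests on the neat numerical identity $(1-A) + (A-1) = 0$ that arises when $z_{j}$ is replaced by $x_{j}$ in the combined $z_{n}$-factor.
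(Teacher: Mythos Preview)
Your proof is correct. Both your argument and the paper's are inductions on $n$ that peel off one variable, but the mechanics differ enough to be worth noting.

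The paper peels off $z_{1}$: it writes the full integrand in explicit rational-function form, observes that everything except the factor $(1-x_{1}/z_{1})^{-1}$ is a power series in $z_{1}$, and uses the elementary identity $\langle z_{1}^{0}\rangle\, g(z_{1})/(1-x_{1}/z_{1}) = g(x_{1})$. After substituting $z_{1}=x_{1}$, the numerator factors $(1-z_{1}/z_{j})$ cancel against denominator factors $(1-x_{1}/z_{j})$, and the $A$-dependent factors with $i=1$ cancel in pairs, leaving exactly the $(n-1)$-variable problem.

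You instead peel off $z_{n}$ and stay plethystic throughout: the $z_{n}$-parts of the kernel and of the inner-product weight combine to $\Omega[U/z_{n}]$, the coefficient extraction produces $h_{m}[U]$, and after the inductive substitution $z_{j}\mapsto x_{j}$ the alphabet $U$ collapses to $x_{n}$ via the cancellation $(1-A)+(A-1)=0$. This is the same cancellation the paper sees, just packaged differently; your version makes the connection with the form of the factors $h_{a_{j}}[x_{j}+(1-A)(x_{1}+\cdots+x_{j-1})]$ in \eqref{e:PiA-preimage-of-ha} more transparent. One small point worth stating explicitly: when you invoke the inductive hypothesis on $h_{m}[U(z)]\,f_{m}(z)$, this polynomial has coefficients in $\Lambda(A)[x_{1},\ldots,x_{n}]$ rather than $\Lambda(A)$, but the reproducing property extends by linearity over the parameters $x_{i}$, so this is harmless.
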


\begin{proof}
The right hand side of \eqref{e:reproducing-kernel} is equal to
\begin{equation}\label{e:reproducing-kernel-rhs}
\begin{aligned}
& \langle z^{0} \rangle\, f(z)\, \Omega [\sum _{i\leq j}x_{i}/z_{j} -
\sum _{i<j} z_{i}/z_{j} + A \, (\sum _{i<j}z_{i}/z_{j} -\sum _{i<j}
x_{i}/z_{j})]\\
 = & \langle z^{0} \rangle\, f(z)\, \frac{\prod _{i<j}(1 -
z_{i}/z_{j})}{ \prod _{i\leq j} (1 - x_{i}/z_{j})}\, \Omega [A \,
(\sum _{i<j}z_{i}/z_{j} -\sum _{i<j} x_{i}/z_{j})].
\end{aligned}
\end{equation}
Since $f(z)$ is a polynomial, everything in
\eqref{e:reproducing-kernel-rhs} other than the factor $1 -
x_{1}/z_{1}$ in the denominator is a power series in $z_{1}$.
Expanding $(1-x_{1}/z_{1})^{-1}$ as a geometric series in
$x_{1}/z_{1}$, we have $\langle z_{1}^{0} \rangle \, g(z_{1})\,
(1-x_{1}/z_{1})^{-1} = g(x_{1})$ for any power series $g(z_{1})$.
Thus, after taking the constant term $\langle z_{1}^{0} \rangle$ in
\eqref{e:reproducing-kernel-rhs}, and observing that apart from the
factor $(1-x_{1}/z_{1})^{-1}$, everything with index $i=1$ cancels
when $z_{1} = x_{1}$, we are left with
\begin{equation}\label{e:reproducing-kernel-reduced}
\langle z_{2}^{0}\cdots z_{n}^{0} \rangle f(x_{1},
z_{2},\ldots,z_{n})\, \frac{\prod _{1<i<j}(1 - z_{i}/z_{j})}{ \prod
_{1<i\leq j} (1 - x_{i}/z_{j})}\, \Omega [A\, (\sum
_{1<i<j}z_{i}/z_{j} -\sum _{1<i<j} x_{i}/z_{j})].
\end{equation}
This is equal to $f(x)$ by induction on the number of variables, the
one-variable case being trivial.
\end{proof}

\begin{prop}\label{prop:PiA-via-inner-product}
For all $f(x)\in \Lambda (A)[x_{1},\ldots,x_{n}]$, we have the
identity
\begin{equation}\label{e:PiA-via-inner-product}
\Pi _{A,x}f(x) = \langle f(z),\, \Omega [\sum _{i\leq j}x_{i}/z_{j}]
\rangle _{A},
\end{equation}
where the inner product is in the $z$ variables.

\end{prop}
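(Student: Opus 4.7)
The plan is to deduce the identity \eqref{e:PiA-via-inner-product} directly from the reproducing kernel formula of Lemma~\ref{lem:reproducing-kernel} and the kernel identity of Proposition~\ref{prop:PiA-on-Omega}. Both of the key ingredients are already in place; the proof amounts to applying $\Pi_{A,x}$ to both sides of the reproducing kernel identity and observing that the operator passes through the inner product.

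More precisely, the reproducing kernel identity
\[
f(x) = \langle f(z),\, \Omega[\sum_{i \le j} x_i/z_j - A \sum_{i<j} x_i/z_j] \rangle_A
\]
from Lemma~\ref{lem:reproducing-kernel} expresses $f(x)$ as a pairing in which the right-hand argument is a power series in the $z_j^{-1}$ whose coefficients are polynomials in the $x_i$ (with coefficients in $\Lambda(A)$), by the flagged Cauchy identity \eqref{e:flagged-Cauchy-with-A}. The inner product $\langle -,- \rangle_A$ is taken in the $z$ variables, while $\Pi_{A,x}$ operates on the $x$ variables. First I would apply $\Pi_{A,x}$ to both sides, noting that since the pairing is $\Lambda(A)$-linear in the $z$-integration and the output is a formal sum of terms each of which is a polynomial in $x$ times a coefficient extracted in $z$, one may commute $\Pi_{A,x}$ past the constant-term operator in $z$.

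Next I would invoke Proposition~\ref{prop:PiA-on-Omega}, which says
\[
\Pi_{A,x}\, \Omega[\sum_{i \le j} x_i/z_j - A \sum_{i<j} x_i/z_j] = \Omega[\sum_{i \le j} x_i/z_j],
\]
applied coefficient-wise to the expansion as a power series in $z_1^{-1},\ldots,z_n^{-1}$. Substituting this into the right-hand side yields
\[
\Pi_{A,x} f(x) = \langle f(z),\, \Omega[\sum_{i \le j} x_i/z_j] \rangle_A,
\]
which is the claim.

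The only potentially delicate point is the justification that $\Pi_{A,x}$ commutes with the $z$-pairing; this is purely formal since each $z^{-\aA}$-coefficient of the kernel is a single element of $\Lambda(A)[x_1,\ldots,x_n]$ and the inner product \eqref{e:A-inner-product} is defined by extracting a constant term in $z$, so no analytic issues arise. Thus the main obstacle is essentially bookkeeping: verifying that the reproducing kernel expansion converges term-by-term in the $x$-polynomial grading so that applying $\Pi_{A,x}$ coefficient-wise is unambiguous, which follows from the fact that the $z^{-\aA}$-coefficient of $\Omega[\sum_{i \le j} x_i/z_j - A \sum_{i<j} x_i/z_j]$ is a homogeneous polynomial in $x$ of degree $|\aA|$.
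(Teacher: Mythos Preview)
Your proposal is correct and follows essentially the same approach as the paper: apply $\Pi_{A,x}$ to both sides of the reproducing kernel identity in Lemma~\ref{lem:reproducing-kernel}, move $\Pi_{A,x}$ inside the inner product since it acts only on the $x$ variables, and then invoke Proposition~\ref{prop:PiA-on-Omega}. Your extra remarks on why the commutation is formally justified are more detailed than the paper's one-line justification but amount to the same observation.
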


\begin{proof}
Apply $\Pi _{A,x}$ to each side of \eqref{e:reproducing-kernel},
noting that $\Pi _{A,x}$ moves inside the inner product, since it
operates on the $x$ variables.  Then use
Proposition~\ref{prop:PiA-on-Omega}.
\end{proof}

\begin{cor}\label{cor:PiA-inner-products}
For all $f(x)\in \Lambda (A)[x_{1},\ldots,x_{n}]$ and $g(x)\in \Lambda
(A)[x_{1}^{-1},\ldots,x_{n}^{-1}]$, we have
\begin{equation}
\label{e:PiA-inner-products}
\langle \Pi _{A,x}f(x),g(x) \rangle_{0} = \langle f(x), g(x)
\rangle_{A},
\end{equation}
where $\langle -,- \rangle_{0}$ is as in
\eqref{e:atom-demazure-inner}.
\end{cor}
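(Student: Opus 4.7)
The plan is to combine Proposition~\ref{prop:PiA-via-inner-product} with the reproducing kernel identity \eqref{e:atom-demazure-reproducing-2} by means of a Fubini-type interchange of the two inner products.

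First I would substitute the formula from Proposition~\ref{prop:PiA-via-inner-product} into the left-hand side to obtain
\begin{equation*}
\langle \Pi_{A,x} f(x),\, g(x) \rangle_0
 = \Big\langle \big\langle f(z),\, \Omega[\textstyle\sum_{i\leq j} x_i/z_j] \big\rangle_A,\; g(x) \Big\rangle_0,
\end{equation*}
where the outer inner product is in the $x$-variables and the inner one is in the $z$-variables. Since $f(z)$ does not involve $x$, and both inner products are defined as constant-term operations, I would then interchange the order of the two inner products, writing the expression as
\begin{equation*}
\Big\langle f(z),\; \big\langle \Omega[\textstyle\sum_{i\leq j} x_i/z_j],\, g(x) \big\rangle_0 \Big\rangle_A.
\end{equation*}

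Next, I would apply the symmetry of $\langle-,-\rangle_0$ and the reproducing kernel identity \eqref{e:atom-demazure-reproducing-2} (applied with $z,x$ playing the roles of its $x,z$; the hypothesis $g(x)\in\Lambda(A)[x_1^{-1},\ldots,x_n^{-1}]$ is precisely what is needed) to collapse the inner bracket to $g(z)$. The result is
\begin{equation*}
\langle \Pi_{A,x} f(x),\, g(x) \rangle_0 = \langle f(z),\, g(z) \rangle_A = \langle f(x),\, g(x) \rangle_A,
\end{equation*}
which is the desired identity.

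The main thing to justify carefully is the Fubini step. The kernel $\Omega[\sum_{i\leq j}x_i/z_j]$ expands as a formal series whose monomials are products of a polynomial in $x$ and a Laurent monomial in $z^{-1}$, and the inner products are coefficient-extraction operations after multiplication by the appropriate factor $\Omega[(A-1)\sum_{i<j}\cdots]$; so the swap amounts to interchanging two constant-term operations acting on disjoint sets of variables, which is formally valid. This is the only non-routine point; the rest of the argument is simply the mechanical composition of Proposition~\ref{prop:PiA-via-inner-product} with \eqref{e:atom-demazure-reproducing-2}.
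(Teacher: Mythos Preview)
Your proof is correct and follows essentially the same approach as the paper: apply $\langle -, g(x)\rangle_0$ to both sides of Proposition~\ref{prop:PiA-via-inner-product}, interchange the two constant-term inner products, and use the reproducing kernel identity \eqref{e:atom-demazure-reproducing-2} to collapse the inner bracket to $g(z)$. Your explicit justification of the Fubini step is, if anything, more careful than the paper's.
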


\begin{proof}
Take $\langle -, g(x) \rangle_0$ on both sides of
\eqref{e:PiA-via-inner-product}.  On the right hand side, the inner product
in the $x$ variables moves inside inner product in the $z$ variables
in \eqref{e:PiA-via-inner-product}, replacing $\Omega [\sum _{i\leq
j}x_{i}/z_{j}]$ with $\langle g(x),\, \Omega [\sum _{i\leq
j}x_{i}/z_{j}] \rangle_{0}$, which is equal to $g(z)$, by
\eqref{e:atom-demazure-reproducing-2}.  This gives $\langle f(z), g(z)
\rangle_{A}$, which is the same as $\langle f(x), g(x) \rangle_{A}$.
\end{proof}

\subsection{Polynomial part formula for nonsymmetric plethysm}
\label{ss:PiA-via-pol}

For the next characterization of $\Pi _{A,x}$, we first introduce a
suitable notion of `polynomial part' of a Laurent polynomial.
For the moment, we denote the coefficient ring by $\Rfrak $
instead of $\Kfrak $, to avoid confusion when we take
$\Rfrak =\Lambda (A)\defeq \Lambda _{\Kfrak }(A)$.

Let $N$ be the orthogonal complement of $\Rfrak [\xx ^{-1}] = \Rfrak
[x_{1}^{-1}, \ldots, x_{n}^{-1}]$ in $\Rfrak [\xx ^{\pm 1}] = \Rfrak
[x_{1}^{\pm 1},\ldots,x_{n}^{\pm 1}]$, with respect to the inner
product $\langle -,- \rangle_{0}$ in \eqref{e:atom-demazure-inner}.
Since both $\{ \Dcal _{\lambda } \mid \lambda \in -\NN ^{n}\}$ and $\{
\Acal _{\lambda } \mid \lambda \in -\NN ^{n}\}$ span $\Rfrak [\xx
^{-1}]$, we have
\begin{equation}\label{e:N-explicit}
N = \Rfrak \{\Dcal _{\lambda }\mid \lambda \not \in \NN ^{n} \} =
\Rfrak \{\Acal _{\lambda }\mid \lambda \not \in \NN ^{n} \}.
\end{equation}
In particular, this implies that we have a direct sum decomposition
$\Rfrak [\xx ^{\pm 1}] = \Rfrak [\xx ]\oplus N$.

\begin{defn}\label{def:pol-part}
The {\em polynomial part} $f(x)_{\pol }$ of a Laurent polynomial
$f(x)\in \Rfrak [\xx ^{\pm 1}]$ is the image of $f(x)$ under the
projection of $\Rfrak [\xx ^{\pm 1}]$ on $\Rfrak [\xx ]$ with kernel
$N$.  In other words, $f(x)_{\pol }$ is the truncation of either
expansion $f(x) = \sum _{\lambda \in \ZZ ^{n}} c_{\lambda }\, \Dcal
_{\lambda }(x)$ or $f(x) = \sum _{\lambda \in \ZZ ^{n}} b_{\lambda }
\, \Acal _{\lambda }(x)$ to terms indexed by $\lambda \in \NN ^{n}$.
\end{defn}

\begin{example}
Using $x_1^{-1}x_2 = \Acal_{(-1, 1)} - \Acal_{(0,0)}$, we have
 $(x_1^{-1}x_2)_{\pol} = -\Acal_{(0,0)} = -1$.
This shows that the polynomial part operator is not the same as the na\"ive truncation of a Laurent
polynomial to monomials with nonnegative exponents.
\end{example}

\begin{remark}\label{rem:pol-part}
The restriction of the polynomial part operator to symmetric Laurent
polynomials recovers the following familiar notion of polynomial
truncation: it takes the sum $f(x) = \sum _{\lambda \in X_{+}}
c_{\lambda }\, \chi _{\lambda }$ to its truncation $f(x)_{\pol } =
\sum _{\lambda \in \NN ^{n}\cap X_{+}} c_{\lambda }\, \chi _{\lambda
}$ to polynomial irreducible characters of $\GL _{n}$.
\end{remark}

The Cauchy identity for Demazure characters and atoms
\eqref{e:atom-demazure-Cauchy} immediately implies the following
formula for $f(x)_{\pol }$.

\begin{lemma}\label{lem:pol-part}
For all $f\in \Rfrak [\xx ^{\pm 1}]$, we have
\begin{equation}\label{e:pol-part}
f(x)_{\pol } = \langle f(z), \Omega [\sum _{i\leq j} x_{i}/z_{j}]
\rangle _{0},
\end{equation}
where the inner product is in the $z$ variables.
\end{lemma}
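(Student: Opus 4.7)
The plan is to substitute the Cauchy identity \eqref{e:atom-demazure-Cauchy} into the right hand side of \eqref{e:pol-part} and then apply the duality \eqref{e:atom-demazure-dual} of Demazure characters and atoms with respect to $\langle -,- \rangle_0$. Specifically, I would use the version of Cauchy that expresses the kernel as
\[
\Omega[\sum_{i\leq j} x_i/z_j] = \sum_{\lambda \in \NN^n} \Acal_\lambda(x)\, \Dcal_{-\lambda}(z),
\]
because it naturally produces an $x$-side indexed over $\NN^n$, which matches the characterization of the polynomial part in Definition~\ref{def:pol-part}.

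Substituting this into the right hand side of \eqref{e:pol-part} and pulling the $x$-dependent factors out of the $z$-inner product (which is $\Rfrak$-linear in each argument), I obtain
\[
\langle f(z), \Omega[\sum_{i\leq j} x_i/z_j] \rangle_0 = \sum_{\lambda \in \NN^n} \Acal_\lambda(x)\, \langle f(z), \Dcal_{-\lambda}(z) \rangle_0.
\]
Now I would expand $f(z) = \sum_{\mu \in \ZZ^n} b_\mu\, \Acal_\mu(z)$ in the atom basis of $\Rfrak[\xx^{\pm 1}]$; the duality \eqref{e:atom-demazure-dual} (used with the roles of the two bases swapped, and with the substitution $\mu \mapsto -\mu$) gives $\langle \Acal_\mu(z), \Dcal_{-\lambda}(z) \rangle_0 = \delta_{\lambda,\mu}$, hence $\langle f(z), \Dcal_{-\lambda}(z) \rangle_0 = b_\lambda$. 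Therefore the right hand side equals $\sum_{\lambda \in \NN^n} b_\lambda\, \Acal_\lambda(x)$, which by Definition~\ref{def:pol-part} is exactly $f(x)_{\pol}$.

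There is essentially no obstacle: once one writes the Cauchy kernel in the appropriate form, the lemma reduces to reading off atom expansion coefficients against dual Demazure characters. The only minor point to verify is that the sum over $\lambda \in \NN^n$ indeed converges in the sense that the inner product can be taken term by term—this is justified because $f(z)$ is a Laurent polynomial, so only finitely many coefficients $b_\lambda$ are nonzero, and once $f$ is expanded each inner product $\langle \Acal_\mu(z), \Dcal_{-\lambda}(z) \rangle_0$ is a well-defined element of $\Rfrak$ by \eqref{e:atom-demazure-dual}.
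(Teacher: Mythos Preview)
Your proof is correct and is precisely the argument the paper has in mind: the paper simply states that the Cauchy identity \eqref{e:atom-demazure-Cauchy} ``immediately implies'' the formula, and your substitution of the kernel expansion followed by the duality \eqref{e:atom-demazure-dual} is exactly how that implication is unpacked.
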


Now we can give a formula for $\Pi _{A,x}$ in terms of the polynomial
part defined as above.

\begin{prop}\label{prop:PiA-via-pol}
For all $f\in \Lambda (A)[x_{1},\ldots,x_{n}]$, we have
\begin{equation}\label{e:PiA-via-pol}
\Pi _{A,x} f(x) = \bigl( f(x)\, \Omega [A \sum _{i<j} x_{i}/x_{j}]
\bigr)_{\pol },
\end{equation}
where $(-)_{\pol} $ applies term by term in the expansion of $f(x)\,
\Omega [A \sum _{i<j} x_{i}/x_{j}]$ as a symmetric formal series in
$A$ with coefficients in $\Kfrak [\xx ^{\pm 1}]$.
\end{prop}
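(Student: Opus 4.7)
My plan is to reduce \eqref{e:PiA-via-pol} to the combination of two earlier results: the inner-product description of $\Pi_{A,x}$ in Proposition~\ref{prop:PiA-via-inner-product}, and the inner-product description of the polynomial-part operator in Lemma~\ref{lem:pol-part}. Starting from Proposition~\ref{prop:PiA-via-inner-product}, I would apply the alternative formula \eqref{e:A-inner-product-2} for $\langle-,-\rangle_A$ together with the symmetry of the underlying constant-term pairing $\langle-,-\rangle_0$ (which allows the factor $\Omega[A\sum_{i<j}z_i/z_j]$ to be moved from the second slot into the first), to write
\[
\Pi_{A,x}f(x) \;=\; \langle f(z),\,\Omega[\textstyle\sum_{i\le j}x_i/z_j]\rangle_A
\;=\; \langle f(z)\,\Omega[A\textstyle\sum_{i<j}z_i/z_j],\,\Omega[\textstyle\sum_{i\le j}x_i/z_j]\rangle_0.
\]
Then Lemma~\ref{lem:pol-part}, applied to $f(x)\,\Omega[A\sum_{i<j}x_i/x_j]$ in place of $f(x)$, identifies the right-hand side with $(f(x)\,\Omega[A\sum_{i<j}x_i/x_j])_{\pol}$, which is exactly the claim.

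A parallel (and even shorter) route uses Corollary~\ref{cor:PiA-inner-products} directly: for every $g\in\Lambda(A)[\xx^{-1}]$ one has $\langle\Pi_{A,x}f,g\rangle_0=\langle f,g\rangle_A=\langle f\,\Omega[A\sum_{i<j}x_i/x_j],g\rangle_0$, so $\Pi_{A,x}f$ and $f\,\Omega[A\sum_{i<j}x_i/x_j]$ agree modulo the orthogonal complement $N$ of $\Lambda(A)[\xx^{-1}]$. Since $\Pi_{A,x}f\in\Lambda(A)[\xx]$ and $\Lambda(A)[\xx]\oplus N=\Lambda(A)[\xx^{\pm1}]$, this forces $\Pi_{A,x}f$ to be the polynomial-part projection of $f\,\Omega[A\sum_{i<j}x_i/x_j]$.

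The one nonroutine point, and the only place where real care is needed, is that $f(x)\,\Omega[A\sum_{i<j}x_i/x_j]$ is not a Laurent polynomial but a formal series in $A$, symmetric in $A$, with Laurent polynomial coefficients in $x$. This is precisely why the statement stipulates that $(-)_{\pol}$ is to be interpreted term by term. I would handle this by observing that $(-)_{\pol}$ and the pairing $\langle-,\Omega[\sum x_i/z_j]\rangle_0$ both extend $\Lambda(A)$-linearly in any $\Kfrak$-basis of $\Lambda(A)$, so Lemma~\ref{lem:pol-part} can be applied coefficient by coefficient in $A$; the manipulations above likewise respect the grading by $A$-degree. This is pure bookkeeping rather than a genuine obstacle, and once it is in place the argument reduces to the two displayed lines.
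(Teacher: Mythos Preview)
Your proposal is correct and takes essentially the same approach as the paper: the paper's proof uses exactly the same three ingredients (Lemma~\ref{lem:pol-part}, formula~\eqref{e:A-inner-product-2}, and Proposition~\ref{prop:PiA-via-inner-product}) chained in the reverse order, and likewise remarks that all steps hold term by term as a symmetric series in $A$. Your alternative route via Corollary~\ref{cor:PiA-inner-products} is a minor but pleasant variation.
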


\begin{proof}
Using Lemma~\ref{lem:pol-part}, formula \eqref{e:A-inner-product-2},
and Proposition~\ref{prop:PiA-via-inner-product}, we have
\begin{equation}\label{e:PiA-via-pol-proof}
\begin{aligned}
\bigl( f(x)\, \Omega [A \sum _{i<j} x_{i}/x_{j}] \bigr)_{\pol }
&  = \langle f(z) \, \Omega [A \sum
_{i<j} z_{i}/z_{j}],\, \Omega [\sum _{i\leq j} x_{i}/z_{j}]  \rangle _{0}\\
&  = \langle f(z),\Omega [\sum _{i\leq j} x_{i}/z_{j}] \rangle _{A}\\
& = \Pi _{A,x} f(x).
\end{aligned}
\end{equation}
All steps hold term by term as symmetric series in $A$.
\end{proof}

Note that, although the right hand side of \eqref{e:PiA-via-pol} is
{\it a priori} an infinite series, it is a consequence of the proof
above that the series terminates.  Alternatively, we can see directly
that only finitely many terms of the series contribute to the
polynomial part, using the following proposition.

\begin{prop} \label{p: pol part tail}
If ${\aA} \in \ZZ^n$ with $a_k + a_{k+1} + \dots + a_n < 0$ for some
$k$, then $(x^{\aA })_{\pol} = 0$.
\end{prop}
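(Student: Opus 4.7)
The plan is to induct on $n$. The base case $n = 1$ is immediate: the only hypothesis available is $a_{1} < 0$, and $(x_{1}^{a_{1}})_{\pol} = \langle z_{1}^{0}\rangle\, z_{1}^{a_{1}}(1 - x_{1}/z_{1})^{-1}$ vanishes term by term.

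For $n \ge 2$, I would use the constant-term formula $(x^{\aA})_{\pol} = \langle z^{0}\rangle\, z^{\aA}\, \Omega[\sum_{i\le j} x_{i}/z_{j}]\, \Omega[-\sum_{i<j} z_{i}/z_{j}]$ supplied by Lemma~\ref{lem:pol-part} and peel off $z_{n}$. Grouping the kernel by the upper index $j$ gives a product $\prod_{j=1}^{n}\Omega[(X_{j} - Z_{j-1})/z_{j}]$ with $X_{j} = x_{1} + \cdots + x_{j}$ and $Z_{j-1} = z_{1} + \cdots + z_{j-1}$; only the $j = n$ factor involves $z_{n}$. Since $\Omega[(X_{n} - Z_{n-1})/z_{n}] = \sum_{k\ge 0} h_{k}[X_{n} - Z_{n-1}]\, z_{n}^{-k}$, the $z_{n}$ constant term of $z_{n}^{a_{n}}\,\Omega[(X_{n} - Z_{n-1})/z_{n}]$ equals $h_{a_{n}}[X_{n} - Z_{n-1}]$ when $a_{n} \ge 0$ and $0$ when $a_{n} < 0$; the latter already closes the case $k = n$. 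Assuming $a_{n} \ge 0$, expanding $h_{a_{n}}[X_{n} - Z_{n-1}] = \sum_{r+s = a_{n}}(-1)^{s}\, h_{r}(x_{1},\ldots,x_{n})\, e_{s}(z_{1},\ldots,z_{n-1})$ via \eqref{e:hk[Z]}, writing $e_{s} = \sum_{S\subseteq [n-1],\,|S|=s}\prod_{i\in S} z_{i}$, and recognizing the residual $(z_{1},\ldots,z_{n-1})$ constant term as the $(n-1)$-variable polynomial part applied to $x^{\aA' + \chi_{S}}$ yields the recursion
\[
(x^{\aA})_{\pol} = \sum_{r+s = a_{n}}(-1)^{s}\, h_{r}(x_{1},\ldots,x_{n}) \!\!\!\sum_{\substack{S\subseteq [n-1]\\ |S|=s}}\!\!\! (x^{\aA' + \chi_{S}})_{\pol},
\]
where $\aA' = (a_{1},\ldots,a_{n-1})$, $\chi_{S}\in \{0,1\}^{n-1}$ is the indicator of $S$, and the inner polynomial part is taken with respect to $x_{1},\ldots,x_{n-1}$.

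Given $a_{k} + \cdots + a_{n} < 0$ for some $k < n$, the induction closes once I check that every $\aA' + \chi_{S}$ appearing on the right inherits the hypothesis in position $k$. Since $r \ge 0$ forces $s \le a_{n}$ and $|S \cap [k, n-1]| \le s$,
\[
(\aA' + \chi_{S})_{k} + \cdots + (\aA' + \chi_{S})_{n-1} = a_{k} + \cdots + a_{n-1} + |S \cap [k, n-1]| \le a_{k} + \cdots + a_{n-1} + a_{n} < 0,
\]
so every $(x^{\aA' + \chi_{S}})_{\pol}$ vanishes by induction and hence $(x^{\aA})_{\pol} = 0$. The main technical bit is the $z_{n}$ constant-term extraction and the bookkeeping needed to recognize the residual integral as an $(n-1)$-variable polynomial part; the bound $s \le a_{n}$ matching exactly the required inequality is the fortunate feature that makes the induction go through.
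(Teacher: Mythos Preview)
Your proof is correct. The induction on $n$, the $z_{n}$-extraction, and the key bound $|S\cap[k,n-1]|\le s\le a_{n}$ all check out, and the recursion you derive for $(x^{\aA})_{\pol}$ in terms of $(n-1)$-variable polynomial parts is a valid identity.

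The paper takes a different, more direct route. Starting from the same constant-term expression $\langle z^{0}\rangle\, z^{\aA}\,\prod_{i<j}(1-z_{i}/z_{j})\big/\prod_{i\le j}(1-x_{i}/z_{j})$, it simply observes that every monomial $z^{\bb}$ appearing in the expansion of this fraction has tail sum $\sum_{i=k}^{n} b_{i}\le \sum_{i=k}^{n} a_{i}$: each factor $z_{i}/z_{j}$ with $i<j$ and each factor $x_{i}/z_{j}$ can only lower (or preserve) the tail sum of the $z$-exponents. Since $\sum_{i=k}^{n} a_{i}<0$, no monomial $z^{0}$ occurs, and the constant term vanishes. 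This is a two-line argument with no induction. Your approach is longer but yields a concrete recursion $(x^{\aA})_{\pol}=\sum_{r+s=a_{n}}(-1)^{s}h_{r}(x)\sum_{|S|=s}(x^{\aA'+\chi_{S}})_{\pol}$ as a byproduct, which may be of independent interest; the paper's approach trades this for brevity and a single global degree observation.
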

\begin{proof}
By Lemma \ref{lem:pol-part},
\begin{align}
(x^{\aA})_{\pol } = \langle z^0 \rangle \, z^{\aA} \, \Omega [\sum
_{i\leq j} x_{i}/z_{j} - \sum _{i < j} z_{i}/z_{j}] = \langle z^0
\rangle \frac{z^{\aA} \prod _{i < j} (1- z_{i}/z_{j})}{ \prod_{i\leq
j} (1-x_{i}/z_{j})}.
\end{align}
Every monomial $z^{\bb }$ that occurs in the expansion of the fraction
in the last expression satisfies $\sum_{i = k}^n b_i \le \sum_{i =
k}^n a_i < 0$. Hence the coefficient of $z^0$ is zero.
\end{proof}

\begin{example}\label{ex:ns-pleth-v2}
We revisit our computations in Example \ref{ex nspleth n2}, now using
Proposition~\ref{prop:PiA-via-pol}.

(i) When $n=2$ and $f(x) = x_1^ax_2^b$, as in Example \ref{ex nspleth
n2}(ii), formula \eqref{e:PiA-via-pol} becomes
\begin{equation}
\begin{aligned}
\Pi_{A,x} \, x_1^a x_2^b &= \big(x_1^a x_2^b \sum_{i \ge 0} h_i[A \, x_1/x_2] \big)_{\pol} \\
&= \big( x_1^a x_2^b + h_{1}(A) x_1^{a+1}x_2^{b-1} + \cdots + h_{b}(A) x_1^{a+b} +
h_{b+1}(A) x_1^{a+b+1} x_2^{-1} + \cdots \big)_{\pol} \\
 &= x_1^a x_2^b + h_{1}(A) x_1^{a+1}x_2^{b-1} + h_{2}(A)
x_1^{a+2}x_2^{b-2} + \cdots + h_{b}(A) x_1^{a+b},
\end{aligned}
\end{equation}
using Proposition~\ref{p: pol part tail} and
the fact that $(x^\aA)_{\pol } = x^\aA$ when all exponents are non-negative.

(ii) The case of $\Pi_{A,x} \, x_3$, as in Example \ref{ex nspleth
n2}(i), demonstrates the dependence of formula \eqref{e:PiA-via-pol}
on the fact that the polynomial part operator is not the same as
na\"ive truncation to monomials with nonnegative exponents.  In this
case, formula \eqref{e:PiA-via-pol} gives
\begin{equation}
\begin{aligned}
\Pi_{A,x} \, x_3 &= \big(x_3 \sum_{i,j,k \ge 0} h_i[A \, x_1/x_2]
h_j[A\, x_1/x_3]
h_k[A \, x_2/x_3] \big)_{\pol}  \\
&= \big(x_3+  h_1(A)(x_1x_2^{-1}x_3 + x_1 + x_2) + h_1(A)^2 x_1   \big)_{\pol}
\qquad  \text{by Proposition \ref{p: pol part tail}} \\
&= x_3+  h_1(A)x_2  + h_1(A)^2 x_1,
\end{aligned}
\end{equation}
where the last equality follows from
\begin{align}
(x_1x_2^{-1}x_3 + x_1 )_{\pol } = (\Acal_{(1,-1,1)})_{\pol } = 0.
\end{align}
\end{example}

\subsection{Formulas for the inverse}
\label{ss:PiA-inverse}

While the various formulas for $\Pi _{A,x}$ in \S\S
\ref{ss:ns-pleth-def}--\ref{ss:PiA-via-pol} of course characterize
$\Pi _{A,x}^{-1}$ implicitly, we can also give more explicit formulas
for $\Pi _{A,x}^{-1}$.

\begin{prop}\label{prop:PiA-inverse}
The inverse nonsymmetric plethysm $\Pi _{A,x}^{-1}$ is given
explicitly by either of the two formulas
\begin{align}\label{e:PiA-inverse-1}
\Pi _{A,x}^{-1} f(x) & = \langle f(z),\, \Omega [\sum _{i\leq j}
x_{i}/z_{j} - A\, \sum _{i<j} x_{i}/z_{j}] \rangle _{0}\, ,\\
\label{e:PiA-inverse-2}
\Pi _{A,x}^{-1} f(x) & = \bigl( f(x) \, \Omega [-A\, \sum _{i<j}
w_{i}/x_{j}] \bigr)_{\pol }\, |_{w\mapsto x}\ ,
\end{align}
where the inner product in \eqref{e:PiA-inverse-1} is in the $z$ variables.
\end{prop}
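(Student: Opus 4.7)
The plan is to establish \eqref{e:PiA-inverse-1} first, by running the proof of Proposition~\ref{prop:PiA-via-inner-product} in reverse, and then to derive \eqref{e:PiA-inverse-2} from \eqref{e:PiA-inverse-1} using Lemma~\ref{lem:pol-part}.

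For \eqref{e:PiA-inverse-1}, I would start with the reproducing kernel identity \eqref{e:atom-demazure-reproducing-1},
\[
f(x) = \langle f(z),\, \Omega[\sum_{i \le j} x_i/z_j] \rangle_0,
\]
and apply $\Pi_{A,x}^{-1}$ to both sides. Since $\Pi_{A,x}^{-1}$ operates only on the $x$ variables while the inner product is in $z$, it passes inside the inner product, acting coefficient by coefficient on the expansion of $\Omega[\sum_{i \le j} x_i/z_j]$ as a power series in $z_{1}^{-1}, \ldots, z_{n}^{-1}$ with polynomial coefficients in $x$. By Proposition~\ref{prop:PiA-on-Omega} in inverse form,
\[
\Pi_{A,x}^{-1}\, \Omega[\sum_{i \le j} x_i/z_j] = \Omega\bigl[\sum_{i \le j} x_i/z_j - A \sum_{i<j} x_i/z_j\bigr],
\]
and substituting this into the reproducing kernel yields \eqref{e:PiA-inverse-1}.

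For \eqref{e:PiA-inverse-2}, the plan is to rewrite its right-hand side as an inner product and then match it with the one from \eqref{e:PiA-inverse-1}. Treating $w$ and $A$ as parameters and applying Lemma~\ref{lem:pol-part} in the $x$ variables gives, term by term in the monomials in $w$,
\[
\bigl(f(x)\, \Omega[-A\sum_{i<j} w_i/x_j]\bigr)_{\pol} = \langle f(z)\, \Omega[-A\sum_{i<j} w_i/z_j],\, \Omega[\sum_{i \le j} x_i/z_j] \rangle_0.
\]
After substituting $w \mapsto x$, the factor $\Omega[-A\sum_{i<j} x_i/z_j]$ now sits inside the $z$-inner product and can be combined with $\Omega[\sum_{i \le j} x_i/z_j]$ to produce $\Omega[\sum_{i \le j} x_i/z_j - A\sum_{i<j} x_i/z_j]$. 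This gives exactly the right-hand side of \eqref{e:PiA-inverse-1}, and hence equals $\Pi_{A,x}^{-1} f(x)$.

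The main subtlety is justifying that the polynomial-part expression in \eqref{e:PiA-inverse-2} is well-defined and that the formal manipulations are legitimate. Concretely, $\Omega[-A\sum_{i<j} w_i/x_j]$ must be expanded as a formal power series in the monomials $w_i/x_j$ with $\Lambda(A)$ coefficients; for each $w$-monomial, the resulting $x$-coefficient is a Laurent polynomial to which Definition~\ref{def:pol-part} applies, so $(f(x)\, \Omega[-A\sum_{i<j} w_i/x_j])_{\pol}$ is a well-defined formal power series in $w$ with polynomial coefficients in $x$. Termination of the series after $w \mapsto x$ is not manifest from the formula itself, but is implied a posteriori by the identification with $\Pi_{A,x}^{-1} f(x) \in \Lambda(A)[x_1,\ldots,x_n]$.
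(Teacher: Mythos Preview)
Your proof is correct and follows essentially the same approach as the paper: first derive \eqref{e:PiA-inverse-1} by applying $\Pi_{A,x}^{-1}$ to the reproducing kernel identity \eqref{e:atom-demazure-reproducing-1} and invoking Proposition~\ref{prop:PiA-on-Omega}, then use Lemma~\ref{lem:pol-part} to rewrite the right-hand side of \eqref{e:PiA-inverse-2} as the inner product $\langle f(z),\, \Omega[\sum_{i\leq j} x_i/z_j - A\sum_{i<j} w_i/z_j]\rangle_0$ before substituting $w\mapsto x$. Your additional remarks on well-definedness and termination are reasonable elaborations of what the paper leaves implicit.
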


\begin{proof}
Applying $\Pi _{A,x}^{-1}$ to both sides of
\eqref{e:atom-demazure-reproducing-1} and using \eqref{e:PiA-on-Omega}
yields \eqref{e:PiA-inverse-1}. By Lemma~\ref{lem:pol-part}, {the right hand side of}
\eqref{e:PiA-inverse-2} is equal to $\langle f(z),\, \Omega [\sum
_{i\leq j} x_{i}/z_{j} -A\, \sum _{i<j} w_{i}/z_{j}] \rangle_{0}
|_{w\mapsto x}$.  Having expressed it in a form not involving
operations in the $x$ variables, we are now free to carry out the
substitution $w\mapsto x$, showing that \eqref{e:PiA-inverse-2} is
equivalent to \eqref{e:PiA-inverse-1}.
\end{proof}

\begin{remark}\label{rem:PiA-inverse}
Note that \eqref{e:PiA-inverse-2} is not the same as
\eqref{e:PiA-via-pol} with $-A$ substituted for $A$.  The difference
is that in \eqref{e:PiA-via-pol}, all variables $x_{i}$ and $x_{j}$ in
$\Omega [A \sum _{i<j} x_{i}/x_{j}]$ affect the polynomial part, while
in \eqref{e:PiA-inverse-2}, only the variables $x_{j}$ in the
denominators do so, as the substitution $w\mapsto x$ occurs after
taking $(-)_{\pol }$ in the $x$ variables.

Similarly, the definition of $\langle -,- \rangle _{A}$
implies that \eqref{e:PiA-via-inner-product} is equivalent to
\begin{equation}\label{e:PiA-via-inner-2}
\Pi _{A,x}f(x) = \langle f(z),\, \Omega [\sum _{i\leq j}x_{i}/z_{j} +
A \sum _{i<j} z_{i}/z_{j}] \rangle _{0},
\end{equation}
but \eqref{e:PiA-inverse-1} is not the same as
\eqref{e:PiA-via-inner-2} with $-A$ substituted for $A$.
\end{remark}

\subsection{Recursive formulas}
\label{ss:PiA-recursions}

We now derive recursive formulas for $\Pi _{A,x}$ and $\Pi
_{A,x}^{-1}$ in terms of the same operations in fewer variables.

\begin{prop}\label{prop:PiA-recursion}
For $0<m<n$, we have identities
\begin{multline}\label{e:PiA-recursion}
\Pi _{A,(x_1,\ldots,x_n)} \, f(x_{1},\ldots,x_{n})\\
 = \Bigl( \Pi _{A,(x_{1},\ldots,x_{m})}\bigl((\xi ^{-1} \Pi
_{A,(x_{m+1},\ldots,x_{n})}\, f)[x_{m+1}+W+(A-1)X,\, x_{m+2},\,
\ldots,\, x_{n}]\bigr) \Bigr) \Bigr|_{W\mapsto X},
\end{multline}
\vspace{-2ex}
\begin{multline}\label{e:PiA-inv-recursion}
\Pi _{A,(x_1,\ldots,x_n)}^{-1} \,f(x_{1},\ldots,x_{n}) \\
= \Bigl( \Pi _{A,(x_{1},\ldots,x_{m})}^{-1} \, \Pi
_{A,(x_{m+1},\ldots,x_{n})}^{-1}\, \bigl( (\xi ^{-1}
f)[x_{m+1}+(1-A)\, W - X,\, x_{m+2},\, \ldots,\, x_{n}] \bigr) \Bigr)
\Bigr|_{W\mapsto X},
\end{multline}
where $X = x_{1}+\cdots +x_{m}$, and $\xi $ is the isomorphism from
Lemma~\ref{lem:X=x} for polynomials in $x_{m+1},\ldots,x_{n}$.  More
explicitly, for $p(x)\in \Lambda (A)[x_{1},\ldots,x_{n}]$, $\xi
^{-1}\, p$ is the unique flagged symmetric function in $n-m$ formal alphabets
with coefficients in $\Lambda (A)[x_{1},\ldots,x_{m}]$ such that $(\xi
^{-1}\, p)[x_{m+1},\ldots,x_{n}] = p(x)$.
\end{prop}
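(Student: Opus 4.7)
The plan is to reduce both identities to kernel identities via generating functions, then split the indexing sets by the block structure $\{1,\ldots,m\} \sqcup \{m+1,\ldots,n\}$. By $\Lambda(A)$-linearity and the fact that $\{\tfh_\aA : \aA \in \NN^n\}$ is a basis of $\Lambda(A)[x_1,\ldots,x_n]$, it suffices (via \eqref{e:flagged-Cauchy-with-A}) to check \eqref{e:PiA-recursion} applied coefficient-wise in the $z^{-\aA}$ expansion of the kernel
\[
K(x,z) := \Omega\bigl[\sum\nolimits_{i \leq j} x_i/z_j - A\sum\nolimits_{i<j} x_i/z_j\bigr],
\]
for which Proposition \ref{prop:PiA-on-Omega} tells us the target is $K'(x,z) := \Omega[\sum_{i \leq j} x_i/z_j]$. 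Likewise \eqref{e:PiA-inv-recursion} reduces to the reverse statement that the RHS of \eqref{e:PiA-inv-recursion} carries $K'$ back to $K$.

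The heart of the argument is the block factorization of $K$ and $K'$. Setting $X = x_1+\cdots+x_m$ and $U = \sum_{j > m} z_j^{-1}$, I would write
\[
K = K_m\, K_\times\, K_{n-m}, \qquad K' = K'_m\, K'_\times\, K'_{n-m},
\]
where $K_m, K'_m$ are the analogous kernels for $\Pi_{A,(x_1,\ldots,x_m)}$ (involving only $x_i, z_i$ with $i \leq m$), $K_{n-m}, K'_{n-m}$ are the analogous kernels for $\Pi_{A,(x_{m+1},\ldots,x_n)}$, and the cross terms are $K_\times = \Omega[(1-A)XU]$ and $K'_\times = \Omega[XU]$. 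I would then run the five operations on the RHS of \eqref{e:PiA-recursion} through this factorization: (i) $\Pi_{A,(x_{m+1},\ldots,x_n)}$ sends $K_{n-m} \mapsto K'_{n-m}$ and fixes the other two factors; (ii) $\xi^{-1}$ lifts the $x_{m+1},\ldots,x_n$-dependence of $K'_{n-m}$ into formal alphabets, producing $\Omega[\sum_{m<i\leq j} X_i/z_j]$; (iii) the shift $X_{m+1} = x_{m+1}+W+(A-1)X$ and $X_{m+i} = x_{m+i}$ ($i > 1$) multiplies by the extra factor $\Omega[(W+(A-1)X)U] = \Omega[WU]\,K_\times^{-1}$, which cancels the surviving $K_\times$; (iv) $\Pi_{A,(x_1,\ldots,x_m)}$ converts $K_m$ to $K'_m$ (and fixes $\Omega[WU]$ and $K'_{n-m}$); (v) finally $W \mapsto X$ turns $\Omega[WU]$ into $K'_\times$. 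The result is $K'_m\, K'_\times\, K'_{n-m} = K'$, as needed.

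The inverse identity \eqref{e:PiA-inv-recursion} comes out of the same decomposition run starting from $K'$: the new shift $x_{m+1}+(1-A)W-X$ is chosen precisely so that $K'_\times \cdot \Omega[((1-A)W-X)U] = \Omega[(1-A)WU]$, which becomes $K_\times$ after the final $W \mapsto X$, while the two inverse plethysms send $K'_{n-m} \mapsto K_{n-m}$ and $K'_m \mapsto K_m$, reconstructing $K$. The main (mild) obstacle throughout is bookkeeping: one must carefully track which factors each operator acts on --- in particular, remembering that $\Pi_{A,(x_1,\ldots,x_m)}$ treats $W$, $x_{m+1},\ldots,x_n$, and the $z_j$ as scalars, so that the substitution $W \mapsto X$ must be performed only after its action --- and confirm that all intermediate quantities remain well-defined formal series in $z^{-1}$. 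Once this bookkeeping is fixed, every step is a routine plethystic identity.
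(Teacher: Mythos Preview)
Your proposal is correct and follows essentially the same approach as the paper: both reduce by linearity to verifying the identities on the kernel $\Omega[\sum_{i\leq j} x_i/z_j - A\sum_{i<j} x_i/z_j]$ (and its unsigned counterpart), exploit the block factorization into $(x_1,\ldots,x_m)$-, cross-, and $(x_{m+1},\ldots,x_n)$-pieces, and track how each successive operation transforms the factors. Your introduction of the shorthand $U=\sum_{j>m}z_j^{-1}$ and the explicit cancellation $\Omega[(W+(A-1)X)U]=\Omega[WU]\,K_\times^{-1}$ is exactly what the paper carries out in its equations for $\Pi_{A,(x_{m+1},\ldots,x_n)}f$ through the final substitution $W\mapsto X$.
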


Note that the plethystic substitution $W\mapsto x_{1}+\cdots +x_{m}$
in formulas \eqref{e:PiA-recursion}--\eqref{e:PiA-inv-recursion} takes
place after the recursively applied nonsymmetric plethysm.  Thus, the
formal alphabet $W$ acts as a placeholder for a copy of $x_{1}+\cdots
+x_{m}$ shielded from the action of $\Pi
_{A,(x_{1},\ldots,x_{m})}^{\pm 1}$.

\begin{proof}
It suffices to verify \eqref{e:PiA-recursion} formally for $f(x) =
\Omega [\sum _{i\leq j} x_{i}/z_{j} - A \sum _{i<j}x_{i}/z_{j}]$ and
\eqref{e:PiA-inv-recursion} for $f(x) = \Omega [\sum _{i\leq j}
x_{i}/z_{j}]$, because the coefficients with respect to $z$ of the
formal series in each case form a basis of the polynomial ring
$\Lambda (A)[x_{1},\ldots,x_{n}]$.

By Proposition~\ref{prop:PiA-on-Omega}, in \eqref{e:PiA-recursion} for
$f = \Omega [\sum _{i\leq j} x_{i}/z_{j} - A \sum _{i<j}x_{i}/z_{j}]$,
we have $\Pi _{A,x}\, f = \Omega [\sum _{i\leq j} x_{i}/z_{j}]$ on the
left hand side, and also
\begin{multline}\label{e:PiA-recurrence-1}
\Pi _{A, (x_{m+1},\ldots, x_{n})}\, f\\
 =  \Omega [\sum _{i\leq
j\leq m} x_{i}/z_{j} - A \sum _{i< j\leq m} x_{i}/z_{j}]\, \Omega
[(1-A) X (z_{m+1}^{-1}+\cdots +z_{n}^{-1})]\,\Omega [\sum _{m<i\leq j} x_{i}/z_{j}] \,,
\end{multline}
where $X = x_{1}+\cdots +x_{m}$.  Applying $\xi ^{-1}$ in the
variables $x_{m+1},\ldots,x_{n}$ to the right hand side of
\eqref{e:PiA-recurrence-1} only affects the factor $\Omega [\sum
_{m<i\leq j} x_{i}/z_{j}]$, as the other factors are independent of
these variables.  By \eqref{e:xi-on-omega}, $\xi ^{-1} \Omega [\sum _{m<i\leq j} x_{i}/z_{j}] = \Omega [\sum _{m<i\leq j} X_{i}/z_{j}]$.  Hence,
\begin{multline}\label{e:PiA-recurrence-2}
(\xi ^{-1} \Omega [\sum _{m<i\leq j} x_{i}/z_{j}])[x_{m+1}+W+(A-1)X,\,
x_{m+2},\, \ldots,\, x_{n}] \\
 =  \Omega [(W+(A-1)X)
(z_{m+1}^{-1}+\cdots +z_{n}^{-1})]\,\Omega [\sum _{m<i\leq j} x_{i}/z_{j}]\,,
\end{multline}
and therefore, combining~\eqref{e:PiA-recurrence-1}
and~\eqref{e:PiA-recurrence-2}, we get
\begin{multline}\label{e:PiA-recurrence-3}
(\xi ^{-1} \Pi _{A,(x_{m+1},\ldots,x_{n})}\, f)[x_{m+1}+W+(A-1)X,\,
x_{m+2},\,
\ldots,\, x_{n}]\\
 =  \Omega [\sum _{i\leq
j\leq m} x_{i}/z_{j} - A \sum _{i< j\leq m} x_{i}/z_{j}]\, \Omega [W
(z_{m+1}^{-1}+\cdots +z_{n}^{-1})]\,\Omega [\sum _{m<i\leq j} x_{i}/z_{j}] \,.
\end{multline}
Applying $\Pi _{A,(x_{1},\ldots,x_{m})}$, then setting $W =
x_{1}+\cdots +x_{m}$, this becomes the desired quantity
\begin{equation}\label{e:PiA-recurrence-4}
 \Omega [\sum _{i\leq j\leq
m} x_{i}/z_{j} ]\, \Omega [(x_{1}+\cdots +x_{m}) (z_{m+1}^{-1}+\cdots
+z_{n}^{-1})]\,\Omega [\sum _{m<i\leq j} x_{i}/z_{j}] \, = \Omega [\sum _{i\leq j} x_{i}/z_{j}].
\end{equation}

On the left hand side of \eqref{e:PiA-inv-recursion} for $f = \Omega
[\sum _{i\leq j} x_{i}/z_{j}]$, Proposition~\ref{prop:PiA-on-Omega}
gives $\Pi _{A,x}^{-1}\, f = \Omega [\sum _{i\leq j} x_{i}/z_{j} - A
\sum _{i<j}x_{i}/z_{j}]$.  On the right hand side, factoring out
$\Omega [\sum _{m< i\leq j} x_{i}/z_{j}]$ from $\Omega [\sum _{i\leq
j} x_{i}/z_{j}]$ and proceeding similarly to
\eqref{e:PiA-recurrence-2}--\eqref{e:PiA-recurrence-3}, we obtain
\begin{equation}\label{e:PiA-inv-recurrence-1}
\begin{aligned}
(\xi ^{-1} f)[x_{m+1} + (1& - A)\, W - X,\, x_{m+2},\, \ldots,\, x_{n}] \\
& = \Omega [\sum _{i\leq j} x_{i}/z_{j}]\, \Omega
[((1-A)W-X)(z_{m+1}^{-1}+\cdots +z_{n}^{-1})]\\
& = \Omega [\sum _{i\leq j\leq m} x_{i}/z_{j}]\,\Omega [\sum _{m<i\leq
j} x_{i}/z_{j}]\,\Omega [(1-A) W (z_{m+1}^{-1}+\cdots +z_{n}^{-1})].
\end{aligned}
\end{equation}
Applying $ \Pi _{A,(x_{1},\ldots,x_{m})}^{-1} \, \Pi
_{A,(x_{m+1},\ldots,x_{n})}^{-1}$ and then setting $W = x_{1}+\cdots
+x_{m}$ yields the desired quantity $\Omega [\sum _{i\leq j}
x_{i}/z_{j} - A \sum _{i<j}x_{i}/z_{j}]$.
\end{proof}

In one variable, $\Pi _{A,x_{n}}$ is the identity, and $\xi
^{-1}(x_{n}^{k})= h_{k}(X_{n})$.  Hence,
\eqref{e:PiA-recursion}--\eqref{e:PiA-inv-recursion} take the
following simpler form for $m = n-1$.

\begin{cor}\label{cor:PiA-recursions}
\begin{multline}\label{e:PiA-recursion-m=n-1}
\Pi_{A,(x_1,\ldots,x_n)} \, \big(f(x_{1},\ldots,\,  x_{n-1})\, x_{n}^{k}\big)\\
 = \bigl( \Pi _{A,(x_{1},\ldots,x_{n-1})}\,
f(x_{1},\ldots,x_{n-1})\,h_{k}[x_{n}+W+(A-1)X] \bigr) \bigr|_{W\mapsto
X},
\end{multline}
\begin{multline}
\label{e:PiA-inv-recursion-m=n-1}
\Pi _{A,(x_1,\ldots,x_n)}^{-1} \, \big(f(x_{1},\ldots,\, x_{n-1})\, x_{n}^{k}\big) \\
 = \bigl( \Pi _{A,(x_{1},\ldots,x_{n-1})}^{-1}\,
f(x_{1},\ldots,x_{n-1})\, h_{k}[x_{n}+(1-A)\, W - X] \bigr)
\bigr|_{W\mapsto X},
\end{multline}\
where $X = x_{1}+\cdots +x_{n-1}$.
\end{cor}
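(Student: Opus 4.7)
The plan is to derive Corollary~\ref{cor:PiA-recursions} directly from Proposition~\ref{prop:PiA-recursion} by setting $m = n-1$ and taking $f(x_1, \ldots, x_n) = g(x_1, \ldots, x_{n-1})\, x_n^k$. Since every polynomial in $x_1, \ldots, x_n$ is a $\Lambda(A)[x_1, \ldots, x_{n-1}]$-linear combination of powers $x_n^k$, this is all that needs to be checked.

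First I would record two single-variable observations. Applied to polynomials in just the variable $x_n$, Definition~\ref{def:PiA} gives $\fh_{(a)}[x_n] = h_a[x_n] = x_n^a = \tfh_{(a)}[x_n]$ (there are no $X_{i-1}$ with $i > 1$), so $\Pi_{A,x_n}$ and $\Pi_{A,x_n}^{-1}$ act as the identity on $\Lambda(A)[x_n]$ (this is also Remark~\ref{rem:PiA-clarifications}(i) with $m = 0$). Second, by Lemma~\ref{lem:X=x} with one formal alphabet $X_n$, we have $\xi(h_k[X_n]) = h_k[x_n] = x_n^k$, so $\xi^{-1}(x_n^k) = h_k[X_n]$. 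Since $\xi^{-1}$ is $\Lambda(A)[x_1, \ldots, x_{n-1}]$-linear in the present setting, this extends to
\[
\xi^{-1}\bigl(g(x_1, \ldots, x_{n-1})\, x_n^k\bigr) = g(x_1, \ldots, x_{n-1})\, h_k[X_n].
\]

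With these in hand, formula \eqref{e:PiA-recursion} at $m = n-1$ unwinds step by step: $\Pi_{A, x_n}$ does nothing, $\xi^{-1}$ produces $g(x_1, \ldots, x_{n-1})\, h_k[X_n]$, and the substitution $X_n \mapsto x_n + W + (A-1) X$ then gives $g(x_1, \ldots, x_{n-1})\, h_k[x_n + W + (A-1) X]$, to which $\Pi_{A, (x_1, \ldots, x_{n-1})}$ is applied before finally setting $W \mapsto X$. This is exactly \eqref{e:PiA-recursion-m=n-1}. The inverse recursion \eqref{e:PiA-inv-recursion-m=n-1} is obtained in the same way from \eqref{e:PiA-inv-recursion}: $\xi^{-1}$ again sends $g(x_1, \ldots, x_{n-1})\, x_n^k$ to $g(x_1, \ldots, x_{n-1})\, h_k[X_n]$, the substitution $X_n \mapsto x_n + (1-A) W - X$ gives $h_k[x_n + (1-A) W - X]$, and $\Pi_{A,x_n}^{-1}$ can be dropped.

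There is no real obstacle; the entire content of the corollary is that the ``inner'' single-variable ingredients of Proposition~\ref{prop:PiA-recursion} are trivial, and the work reduces to correctly tracking the plethystic substitution into $h_k$.
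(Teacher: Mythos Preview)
Your proposal is correct and follows exactly the paper's approach: the paper simply observes that in one variable $\Pi_{A,x_n}$ is the identity and $\xi^{-1}(x_n^k) = h_k(X_n)$, and then specializes Proposition~\ref{prop:PiA-recursion} to $m = n-1$. Your write-up fills in these details carefully and accurately.
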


Using \eqref{e:PiA-recursion-m=n-1}--\eqref{e:PiA-inv-recursion-m=n-1}
recursively often seems to be a good way to compute $\Pi _{A,x}^{\pm
1}$ in practice.

\subsection{Symmetric limit}
\label{ss:symmetric-limit}

Although nonsymmetric plethysm does not preserve symmetric
polynomials, there is nevertheless a sense in which, if $f(X)$ is a
symmetric function, the nonsymmetric plethysms $\Pi
_{A,(x_{1},\ldots,x_{n})}\, f[x_{1}+\cdots + x_{n}]$ and $\Pi
_{A,(x_{1},\ldots,x_{n})}^{-1}\, f[x_{1}+\cdots + x_{n}]$ converge for
large $n$ to $f[X/(1-A)]$ and $f[(1-A)X]$, respectively.  More general
`factored' versions of these limits hold for polynomials symmetric in
all but the first $r$ and last $s$ variables.  We will state these
results in a form that gives precise bounds on how closely the
nonsymmetric plethysms approximate the symmetric limits.  The bounds
turn out to depend on the level where $f(X)$ appears in the following
filtration on symmetric functions.

\begin{defn}\label{def:Vl}
The {\em length filtration} of the algebra $\Lambda _{\Kfrak }$ of
symmetric functions is the filtration $\Kfrak = V_{0} \subset V_{1}
\subset V_{2} \subset \cdots $, where
\begin{equation}\label{e:Vl}
V_{l} = \Kfrak \cdot \{s_{\lambda } \mid \ell (\lambda )\leq l \} =
\Kfrak \cdot \{h_{\lambda } \mid \ell (\lambda )\leq l \}
\end{equation}
is the subspace spanned by the Schur functions $s_{\lambda }$, or
equivalently by the complete symmetric functions $h_{\lambda }$,
indexed by partitions $\lambda $ with at most $l$ parts.
\end{defn}

In the case of the forward plethysm $\Pi _{A,x}$, the convergence
bounds also depend on the order of approximation in $A$.  To express
this, we need an additional definition.

\begin{defn}\label{def:Lambda>e}
Let $f$ and $g$ be (Laurent) polynomials or \multisymmetric
functions with coefficients in $\Lambda (A)$.  We say that {\em $f$
and $g$ agree to order $e$ in $A$}, and write

\begin{equation}\label{e:Lambda>e}
f \equiv g \pmod{\Lambda (A)_{>e}},
\end{equation}
if all coefficients of $f-g$ belong to the ideal $\Lambda (A) _{>e} \defeq
\bigoplus _{d>e} \Lambda (A)_{d}$ in $\Lambda (A)$ spanned by
symmetric functions homogeneous of degree greater than $e$ in $A$.
Note that every $f$ has a canonical representative $g\equiv f$ (mod
$\Lambda (A) _{>e}$) such that $g$ has coefficients of degree at most
$e$ in $A$.  We say that $f$ has a given property (mod $\Lambda(A)
_{>e}$)---for instance, that $f(x_{1},\ldots,x_{n})$ is symmetric
(mod $\Lambda(A) _{>e}$)---if its canonical representative has the
property in question.
\end{defn}

Equipped with these definitions, we can now state the theorems.  The
proofs will be given at the end of this section. 

\begin{thm}\label{thm:PiA-symm}
Let $f(X)$ be a symmetric function, and let $g(x) =
g(x_{1},\ldots,x_{r})$ and $h(y) = h(y_{1},\ldots,y_{s})$ be
polynomials, all with coefficients in $\Lambda (A)$.  If $f(X)\in
V_{l}$, $\deg (h)\leq d$, and $n\geq r+l+d+e$, then
\begin{equation}\label{e:PiA-symm-in}
\Pi _{A,(x_{1}, \ldots, x_{n},\, y_{1},\ldots,y_{s})}
\bigl(g(x_{1},\ldots,x_{r})\, f[x_{1}+\cdots +x_{n}] \,
h(y_{1},\ldots,y_{s})\bigr)
\end{equation}
is symmetric {\rm (mod $\Lambda (A)_{>e}$)} in the variables
$x_{r+1},\ldots,x_{n-(l+d+e)+1}$, and upon setting $x_{i} = 0$ for
$n-(l+d+e)+1<i\leq n$, it reduces to
\begin{equation}\label{e:PiA-symm-out}
(\Pi _{A,x}\, g(x))\, f \! \left[\frac{x_{1}+\cdots
+x_{n-(l+d+e)+1}}{1-A} \right]\, (\Pi _{A,y}\, h(y)) \pmod{\Lambda
(A)_{>e}}.
\end{equation}
\end{thm}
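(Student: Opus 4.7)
The plan is to reduce, by $\Lambda(A)$-linearity of $\Pi_{A,x}$, to the case $f = h_\lambda = h_{\lambda_1}\cdots h_{\lambda_l}$ for a partition $\lambda$ of length at most $l$, using that such $h_\lambda$ span $V_l$ and that $f[x_1+\cdots+x_n] = \prod_i h_{\lambda_i}[x_1+\cdots+x_n]$ by multiplicativity of plethystic evaluation.

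Next, I would apply Proposition~\ref{prop:PiA-recursion} twice in succession. First, with $m=n$ in \eqref{e:PiA-recursion}, the $y$-variables separate off: since $\Pi_{A,y}$ acts only on $y$, it leaves $g(x)\,f[x_1+\cdots+x_n]$ untouched and yields a factor $\Pi_{A,y}h(y)$. After computing $\xi^{-1}$ in the $y$-variables and performing the substitution $Y_1\mapsto y_1+W+(A-1)X$ with $X=x_1+\cdots+x_n$, the coproduct formula of Proposition~\ref{p coprod formula} decouples the result into a sum of products of $h_{\mathbf{k}}[W+(A-1)X]$ with $y$-dependent factors that commute with $\Pi_{A,x}$. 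Applying \eqref{e:PiA-recursion} a second time with $m=r$ then peels off the factor $\Pi_{A,(x_1,\ldots,x_r)}g$, reducing the task to computing $\Pi_{A,(x_{r+1},\ldots,x_n)}$ of products of the form $h_\lambda[x_1+\cdots+x_n]\cdot h_{\mathbf{k}}[U+(A-1)V]$, where $U$ depends on $x_1,\ldots,x_r$, $W$, and $y$, while $V=x_{r+1}+\cdots+x_n$.

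The crux is then the following \emph{middle identity}: modulo $\Lambda(A)_{>e}$ and after setting the trailing $l+d+e-1$ variables $x_{n-(l+d+e)+2},\ldots,x_n$ to zero, this middle plethysm equals $h_\lambda\bigl[(x_1+\cdots+x_{n-(l+d+e)+1})/(1-A)\bigr]$ times the boundary evaluation of the correction factor. To prove this, I would invert: starting from the clean identity $\Pi_{A,x}^{-1}f[x_1+\cdots+x_n] = f[x_n+(1-A)(x_1+\cdots+x_{n-1})]$ (Proposition~\ref{prop:PiA-inv-flag-bounds} with $l=1$, $b_1=n$), compute $\Pi_{A,x}^{-1}$ of the symmetric plethystic target $h_\lambda[X/(1-A)]$ via coproduct expansion, and exploit the fact that $h_k[A/(1-A)]$ has $A$-degree at least $k$, so that modulo $\Lambda(A)_{>e}$ only finitely many correction terms survive and can be matched against the middle expression. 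Symmetry in the interior variables is then automatic from symmetry of the target $f[\,\cdot/(1-A)]$.

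The main obstacle is numerical: the hypothesis $n \ge r+l+d+e$ must be exactly what is required to ensure that the trailing $l+d+e-1$ variables provide enough room to absorb $A$-degree $\le e$ corrections coming from three sources---the length $l$ of $f$ (controlling the number of coproduct splittings), the degree $d$ of $h$ (producing the shift introduced by the first recursion), and the $A$-degree cap $e$ itself. Carrying out this bookkeeping carefully through both applications of \eqref{e:PiA-recursion}, the coproduct expansions, and the final substitutions $W\mapsto X'$ and $W\mapsto X$ is the main technical work.
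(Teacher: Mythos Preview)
Your approach differs substantially from the paper's, and there is a genuine gap in the ``peeling off $g$'' step. When you apply the recursion \eqref{e:PiA-recursion} a second time with $m=r$, what you actually obtain is
\[
\Pi_{A,(x_1,\ldots,x_r)}\Bigl(g(x_1,\ldots,x_r)\cdot G(x_1,\ldots,x_n,W_2)\Bigr)\Big|_{W_2\mapsto x_1+\cdots+x_r},
\]
where $G$ is the result of applying $\xi^{-1}$ and the substitution to $\Pi_{A,(x_{r+1},\ldots,x_n)}(\ldots)$. But $G$ depends on $x_1,\ldots,x_r$---both through the substitution term $(A-1)(x_1+\cdots+x_r)$ and through the original $f[x_1+\cdots+x_n]$ and the $h$-correction factors. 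Since $\Pi_{A,x}$ is not a ring homomorphism, $\Pi_{A,(x_1,\ldots,x_r)}(g\cdot G)$ does not factor as $(\Pi_{A,x}g)\cdot G$. (A quick check: for $r=2$ and $g=x_2$, already $\Pi_{A,(x_1,x_2)}\bigl(x_2(x_1+x_2+x_3)\bigr)$ differs from $(\Pi_{A,(x_1,x_2)}x_2)(x_1+x_2+x_3)$ by $h_2(A)x_1^2$.) The same obstruction blocks the claim that symmetry is ``automatic from symmetry of the target'': the theorem asserts symmetry of the full expression \emph{before} setting trailing variables to zero, and this cannot be read off from the specialization. The inversion strategy for the middle identity also faces the problem that $h_\lambda[X/(1-A)]$ has coefficients that are power series in $A$ rather than elements of $\Lambda(A)$, and the interaction of $\Pi_{A,x}^{\pm 1}$ with setting trailing variables to zero (which do not commute) is never addressed.

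The paper's proof takes an entirely different route that avoids the recursion formula. It expands $g$ and $h$ in the basis $\tfh_\aA = \fh_\aA[x_1,x_2-Ax_1,\ldots]$, on which $\Pi_{A,x}$ acts trivially ($\tfh_\aA\mapsto\fh_\aA$). The coproduct formula rewrites $\tfh_{\aA'}[y]$ as a sum of $h_{\pp_+}[(A-1)X]\cdot\tfh_{(0^n;\qq)}[x,y]$. The heart of the argument is a structural lemma (Proposition~\ref{prop:symm-by-ha}(b), built on Lemma~\ref{lem:fha-recursive}) showing that $f[X]\cdot h_{\pp_+}[(A-1)X]\cdot\tfh_{(\aA;0^{n-r};\qq)}[x,y]$ expands, modulo $\Lambda(A)_{>e}$, as $\sum_\bb c_{\pp,\bb}\,\tfh_{(\aA;0^{n-r-(l+d+e)};\bb;\qq)}$ with coefficients $c_{\pp,\bb}$ independent of $n$. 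After applying $\Pi_{A,(x,y)}$ this becomes a sum of $\fh_{(\aA;0^{\cdots};\bb;\qq)}$, whose block of zero indices immediately gives the required symmetry, and the $n$-independence of the $c_{\pp,\bb}$ lets one identify the specialization with the claimed product. The count $l+d+e$ of needed zero positions emerges directly from this lemma rather than from tracking corrections through two recursions.
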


Regarding \eqref{e:PiA-symm-out}, note that if $X = x_{1}+x_{2}+\cdots
$ is a finite or infinite plethystic alphabet, the plethystic
evaluation $f[X/(1-A)]$ is of the form in \S \ref{ss:plethysm}, (iv),
where $(1-A)^{-1}$ is understood as a geometric series
$1+A+A^{2}+\cdots $, with $A = a_{1}+a_{2}+\cdots $ in variables
$a_{i}$ introduced for the formal alphabet $A$.  Note also that
$f[X/(1-A)]$ is a symmetric function in $X$, but only a symmetric
series in $A$.

\begin{thm}\label{thm:PiA-inv-symm}
Let $f(X)$, $g(x_{1},\ldots,x_{r})$, $h(y_{1},\ldots,y_{s})$ be as in
Theorem~\ref{thm:PiA-symm}.  If $f(X)\in V_{l}$, $\deg (h)\leq d$, and
$n\geq r+l+d$, then
\begin{equation}\label{e:PiA-inv-symm-in}
\Pi _{A,(x_{1}, \ldots, x_{n},\, y_{1},\ldots,y_{s})}^{-1}
\bigl(g(x_{1},\ldots,x_{r})\, f[x_{1}+\cdots +x_{n}] \,
h(y_{1},\ldots,y_{s})\bigr)
\end{equation}
is symmetric in the variables $x_{r+1},\ldots,x_{n-(l+d)}$, and upon
setting $x_{i} = 0$ for $n-(l+d)<i\leq n$, it reduces to
\begin{equation}\label{e:PiA-inv-symm-out}
(\Pi _{A,x}^{-1}\, g(x))\, f \left[(1-A)(x_{1}+\cdots +x_{n-(l+d)})
\right]\, (\Pi _{A,y}^{-1}\, h(y)).
\end{equation}
\end{thm}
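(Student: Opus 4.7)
The plan is to prove the theorem in three nested stages, each handled by a recursion identity combined with a coproduct manipulation. The base case $r=s=0$ follows immediately from Proposition~\ref{prop:PiA-inv-flag-bounds} applied to $f \in \FLambda(X_1)$ with the single flag bound $b_1 = n$, giving
$$\Pi_{A,x}^{-1} f[x_1+\cdots+x_n] = f[x_n + (1-A)(x_1+\cdots+x_{n-1})],$$
which is symmetric in $x_1,\ldots,x_{n-1}$ and specializes to $f[(1-A)(x_1+\cdots+x_{n-l})]$ upon setting $x_{n-l+1}=\cdots=x_n=0$.

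For $r=0$ with general $h$, I would apply Proposition~\ref{prop:PiA-recursion} with $m=n$ to split the $x$'s from the $y$'s. With $\xi^{-1}F = f[X] \cdot \bar h$ and $\bar h = \xi^{-1} h \in \FLambda(Y_1,\ldots,Y_s)$, the $\FLambda$-coproduct of Proposition~\ref{p coprod formula}, applied with a shift in the $Y_1$-alphabet only, gives
$$\bar h[y_1 + (1-A)W - X,\, y_2,\ldots, y_s] = \sum_{(\bar h)} \xi(\bar h_{(1)})(y) \cdot \bar h_{(2)}[(1-A)W - X,\, 0,\ldots,0].$$
The second factor is a scalar in $y$, so $\Pi_{A,y}^{-1}$ passes through it and acts only on $\xi(\bar h_{(1)})(y)$. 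A further $\Lambda$-coproduct of this scalar, followed by the base case applied to each resulting term of the form $f[X]\cdot H_{(2)}[-X]$, then recollapse and substitution $W \mapsto X$, invokes the identity $(1-A)X - x_n - (1-A)(x_1+\cdots+x_{n-1}) = -Ax_n$ to yield
$$\Pi_{A,(x,y)}^{-1} F = f\bigl[x_n + (1-A)(x_1+\cdots+x_{n-1})\bigr] \cdot \sum_{(\bar h)} \Pi_{A,y}^{-1}(\xi(\bar h_{(1)})(y)) \cdot \bar h_{(2)}[-Ax_n,\, 0,\ldots,0].$$
This is manifestly symmetric in $x_1,\ldots,x_{n-1}$; setting $x_{n-l-d+1}=\cdots=x_n=0$ kills every term with $\bar h_{(2)}$ of positive degree, and the counit axiom $\sum \bar h_{(1)}\, \epsilon(\bar h_{(2)}) = \bar h$ collapses what remains to $f[(1-A)(x_1+\cdots+x_{n-l-d})] \cdot \Pi_{A,y}^{-1} h(y)$.

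For the full statement, I would apply Proposition~\ref{prop:PiA-recursion} with $m=r$. The substitution $x_{r+1} \mapsto x_{r+1} + (1-A)W - \tilde X$, with $\tilde X = x_1+\cdots+x_r$, turns the $f$-factor into $f[x_{r+1}+\cdots+x_n + (1-A)W]$; the $\Lambda$-coproduct expands this as $\sum f_{(1)}[Z] f_{(2)}[(1-A)W]$, where $Z = x_{r+1}+\cdots+x_n$. Each $f_{(1)} \in V_l$, since the $\Lambda$-coproduct preserves the length filtration (e.g., $s_\lambda[X+Y] = \sum_{\mu \subseteq \lambda} s_\mu(X) s_{\lambda/\mu}(Y)$ involves only $\mu$ with $\ell(\mu) \le \ell(\lambda) \le l$). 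The previous case applies to each summand (the hypothesis $n\geq r+l+d$ provides $n' := n-r \geq l+d$); reassembling via the coproduct identity and specializing $W \mapsto \tilde X$ produces the factor $f[(1-A)(x_1+\cdots+x_{n-l-d})]$. Since $g(x_{\leq r})$ is a scalar for $\Pi_{A,(x_{r+1},\ldots,x_n,y)}^{-1}$ and the inner result is independent of $x_{\leq r}$, $g$ factors out cleanly and is plethystized by $\Pi_{A,x_{\leq r}}^{-1}$, producing the required product form.

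The main obstacle I anticipate is careful bookkeeping of four interacting structures: the $\FLambda$-coproduct on $\bar h$, the $\Lambda$-coproducts on $\bar h_{(2)}[\,\cdot\,, 0, \ldots, 0]$ and on $f$, the dummy alphabet $W$ that must be shielded from the intermediate plethysms before its terminal specialization, and the isomorphism $\xi$ between polynomials and flagged symmetric functions. The pivotal simplification is the cancellation to $-Ax_n$ at the middle stage, which both grants the $x_1,\ldots,x_{n-1}$ symmetry (stronger than the theorem demands) and forces the vanishing at $x_n = 0$ to collapse precisely onto the counit piece, eliminating what would otherwise be a messy accumulation of coproduct tails.
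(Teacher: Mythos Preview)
Your approach via the recursion in Proposition~\ref{prop:PiA-recursion} is genuinely different from the paper's, which instead reduces by linearity to $g=\fh_{\aA}$, $h=\fh_{\aA'}$, rewrites $h(y)$ in terms of $\fh_{(0^n;\qq)}[x,y]$ via the coproduct, and then uses Proposition~\ref{prop:symm-by-ha}(a) to absorb the symmetric factor into the $\fh$-basis with a controlled block of zeroes, so that $\Pi_{A}^{-1}$ acts simply as $\fh\mapsto\tfh$.  Your base case and your middle case ($r=0$, general $h$) are correct; the $-Ax_n$ cancellation is a real simplification and gives the stronger symmetry in $x_1,\ldots,x_{n-1}$.

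The third step, however, has a gap.  When you apply the recursion at $m=r$, the operation $p\mapsto (\xi^{-1}p)[x_{r+1}+(1-A)W-\tilde X,\,x_{r+2},\ldots,y_s]$ does \emph{not} factor over the product $f[\tilde X+x_{r+1}+\cdots+x_n]\cdot h(y)$, because $\xi$ is only a linear isomorphism, not an algebra map.  Concretely, take $r=1$, $n=2$, $s=1$, $f=h_1$, $h(y_1)=y_1$, $g=1$.  Then in $\FLambda(X_2,Y_1)$ one has
\[
\xi^{-1}\bigl((x_1+x_2)\,y_1\bigr)=x_1\bigl(\fh_{(0,1)}-\fh_{(1,0)}\bigr)+\fh_{(1,1)}-\fh_{(2,0)},
\]
and evaluation at $[x_2+Z,\,y_1]$ with $Z=(1-A)W-x_1$ gives
\[
x_1 y_1 + y_1\,h_1[x_2+Z] + e_2[x_2+Z],
\]
whereas your claimed $f[x_2+(1-A)W]\cdot y_1=(x_1+h_1[x_2+Z])\,y_1$ misses the term $e_2[x_2+Z]$.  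This extra term survives the remaining steps: after $\Pi^{-1}_{A,(x_2,y_1)}$, $\Pi^{-1}_{A,x_1}$, and $W\mapsto x_1$, it contributes $-h_1[A]\,x_1x_2+h_2[A]\,x_1^2$, which your computation omits.  So the inner expression to which you would apply ``the previous case'' is not of the form $f_{(1)}[Z]\cdot h(y)$ times scalars.

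A natural repair is to run the recursion at $m=n$ first (as in your middle case) even with $g(x_{\le r})$ present.  The coproduct and $\Pi_{A,y}^{-1}$ steps go through unchanged, but the inner $\Pi_{A,(x_1,\ldots,x_n)}^{-1}$ now lands on products $g(x_{\le r})\cdot P[x_1+\cdots+x_n]$ for various symmetric $P$---and that is exactly the $s=0$ case of the theorem, which must then be established separately.  A further $m=r$ recursion faces the same non-multiplicativity obstruction.  This is precisely what the paper's approach via Proposition~\ref{prop:symm-by-ha} is designed to avoid.
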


Theorems~\ref{thm:PiA-symm} and~\ref{thm:PiA-inv-symm} imply the
following weaker but simpler form of the limiting behavior that was
alluded to at the beginning of this subsection.

\begin{cor}\label{cor:other-limits}
Let $f(X)$ be a symmetric function, and let $g(x) =
g(x_{1},\ldots,x_{r})$ and $h(y) = h(y_{1},\ldots,y_{s})$ be
polynomials, all with coefficients in $\Lambda (A)$.  Then
\begin{multline}\label{e:other-limits-PiA}
\lim_{n\rightarrow \infty } \Pi _{A,(x_{1}, \ldots, x_{n},\,
y_{1},\ldots,y_{s})} \bigl(g(x_{1},\ldots,x_{r})\, f[x_{1}+\cdots
+x_{n}] \,
h(y_{1},\ldots,y_{s})\bigr) \\
= (\Pi _{A,x}\, g(x))\, f[X/(1-A)]\, (\Pi _{A,y}\, h(y)),
\end{multline}
\vspace{-3ex}
\begin{multline}\label{e:other-limits-PiA-inv}
\lim_{n\rightarrow \infty } \Pi _{A,(x_{1}, \ldots, x_{n},\,
y_{1},\ldots,y_{s})}^{-1} \bigl(g(x_{1},\ldots,x_{r})\, f[x_{1}+\cdots
+x_{n}] \,
h(y_{1},\ldots,y_{s})\bigr) \\
= (\Pi _{A,x}\, g(x))\, f[(1-A)X]\, (\Pi _{A,y}\, h(y)),
\end{multline}
where $X = x_{1}, x_{2}, \dots $, and the limits converge formally in
$X$, $y_{1},\ldots,y_{s}$, and $A$ (as defined in \S \ref{ss:limits},
(iii)).
\end{cor}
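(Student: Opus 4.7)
Both limits are to be read off from the sharper Theorems~\ref{thm:PiA-symm} and~\ref{thm:PiA-inv-symm}, which provide, for $n$ sufficiently large, an exact (or nearly exact) identity between the $n$-variable expression, with a tail of $x$-variables zeroed, and the intended limiting formula. Once combined with the definition of formal convergence in \S\ref{ss:limits}(iii), the corollary is obtained by coefficient bookkeeping.

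For \eqref{e:other-limits-PiA}, I would fix an arbitrary test coefficient of the form $\langle x^\aA y^\bb\, p_\lambda(A)\rangle$, with $\aA$ supported in some $[1,k]$ and $\lambda$ a partition of $e_0$. Set $e = e_0$, pick $l$ with $f\in V_l$, and let $d = \deg(h)$. Choose $n$ so that $n \ge r+l+d+e$ and $N \defeq n-(l+d+e)+1 \ge \max(r,k)$. Theorem~\ref{thm:PiA-symm} states that \eqref{e:PiA-symm-in} is symmetric in $x_{r+1},\ldots,x_N$ modulo $\Lambda(A)_{>e}$, and that setting $x_{N+1}=\cdots=x_n=0$ reduces it, mod $\Lambda(A)_{>e}$, to \eqref{e:PiA-symm-out}. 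Since $|\lambda|=e$, the test coefficient is preserved upon passage to the class mod $\Lambda(A)_{>e}$; since $\aA$ is supported in $[1,k]$, it is also preserved by further setting $x_{k+1}=\cdots=x_N=0$. In \eqref{e:PiA-symm-out}, the factor $f[(x_1+\cdots+x_N)/(1-A)]$ is symmetric in $x_1,\ldots,x_N$, and the factor $(\Pi_{A,x}\,g)(x_1,\ldots,x_r)$ is independent of $x_{r+1},\ldots,x_N$, so this further specialization produces
\[
(\Pi_{A,x}\,g(x))\, f[(x_1+\cdots+x_k)/(1-A)]\, (\Pi_{A,y}\,h(y)),
\]
whose $\langle x^\aA y^\bb\, p_\lambda(A)\rangle$-coefficient is exactly the test coefficient extracted from the right-hand side of \eqref{e:other-limits-PiA}. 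Since this identity of coefficients holds for every test coefficient once $n$ is sufficiently large, formal convergence follows.

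For \eqref{e:other-limits-PiA-inv} the same scheme applies via Theorem~\ref{thm:PiA-inv-symm}, and is in fact cleaner because that theorem is an equality on the nose with no $\Lambda(A)_{>e}$ caveat: for $n \ge r+l+d$ and $N' \defeq n-(l+d) \ge \max(r,k)$, setting $x_{N'+1}=\cdots=x_n=0$ in \eqref{e:PiA-inv-symm-in} yields \eqref{e:PiA-inv-symm-out} exactly, and symmetry in $x_{r+1},\ldots,x_{N'}$ lets one further zero out $x_{k+1},\ldots,x_{N'}$ without altering the $x^\aA$-coefficient, producing the limiting formula in the $k$-variable specialization of $X$.

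There is no real obstacle beyond this bookkeeping; the one substantive point is to confirm that the ``symmetric modulo $\Lambda(A)_{>e}$'' conclusion of Theorem~\ref{thm:PiA-symm} is strong enough to permit zeroing the middle block of $x$-variables without disturbing the coefficient of any $p_\lambda(A)$ with $|\lambda|\le e$, which is precisely what the filtration bound $l+d+e$ has been calibrated for.
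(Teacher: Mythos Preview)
Your proposal is correct and follows exactly the approach the paper intends: the paper does not give a separate proof but simply declares that the corollary is implied by Theorems~\ref{thm:PiA-symm} and~\ref{thm:PiA-inv-symm}, and your argument supplies the coefficient-by-coefficient verification. One small remark: the symmetry clause of Theorem~\ref{thm:PiA-symm} is not actually needed for your argument, since setting $x_{k+1}=\cdots=x_n=0$ never disturbs the coefficient of a monomial $x^{\aA}$ supported in $[1,k]$; only the second conclusion (zeroing the tail yields \eqref{e:PiA-symm-out} mod $\Lambda(A)_{>e}$) is doing the work.
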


\begin{remark}\label{rem:other-limits}
Every polynomial $p(x_{1},\ldots,x_{n+s})$ symmetric in all but the
first $r$ and last $s$ variables is a linear combination of products
$g(x_{1},\ldots,x_{r})\, f[x_{1}+\cdots +x_{n}] \,
h(x_{n+1},\ldots,x_{n+s})$.  Thus, Theorems~\ref{thm:PiA-symm} and
\ref{thm:PiA-inv-symm} and Corollary~\ref{cor:other-limits} in effect
describe the limiting behavior of $\Pi _{A,x}^{\pm 1}$ on all such
polynomials $p(x)$.  The use of $y_{1},\ldots,y_{s}$ in place of
$x_{n+1},\ldots,x_{n+s}$ is merely a convenient way to assign indices
independent of $n$ to the last $s$ variables.
\end{remark}

\begin{example}\label{ex:PiA-limits}
(i) By definition, we have
\begin{equation}\label{e:PiA-inv-h1}
\Pi _{A,x}^{-1}\, h_{1}[x_{1}+\cdots +x_{n}] =
x_{n}+h_{1}[(1-A)(x_{1}+\cdots +x_{n-1})].
\end{equation}
As asserted by Theorem~\ref{thm:PiA-inv-symm} for $g=h=1$ and
$f=h_{1}$, with $l=1$ and $d=0$, the right hand side is symmetric in
all but $x_{n}$, and setting $x_{n} = 0$ gives $f[(1-A)(x_{1}+\cdots
+x_{n-1})]$.

(ii) After changing $n$ to $n+1$ and using variables $x_1, \dots, x_n,
y_1$ in place of $x_1,\dots, x_{n+1}$, we can rewrite the first
identity in \eqref{e:PiA-inv-y1} as
\begin{equation}
\Pi _{A,(x_{1},\ldots,x_{n},y_{1})}^{-1}\, y_{1} = y_{1} - h_{1}(A)\,
x_{n}.
\end{equation}
The right hand side is again symmetric in all but $x_{n}$, and setting
$x_{n} = 0$ gives $y_{1}$. Since $y_1=\Pi^{-1}_{A,y_1}(y_1)$, this
confirms the assertion of Theorem~\ref{thm:PiA-inv-symm} for $f = g =
1$ and $h(y_{1})=y_{1}$, with $l=0$ and $d=1$.

(iii) Summing the identity \eqref{e:PiA-xn} gives
\begin{gather}
\label{e:PiA-h1}
\Pi _{A,x}\, h_{1}[x_{1}+\cdots +x_{n}] = \sum _{i=1}^{n}
h_{1}(A)^{n-i}\, h_{1}[x_{1}+\cdots +x_{i}].
\end{gather}
Modulo $\Lambda (A)_{>e}$, only the terms in \eqref{e:PiA-h1} for
$i\geq n-e$ survive.  Hence, as asserted by Theorem~\ref{thm:PiA-symm}
for $g=h=1$ and $f=h_{1}$, with $l=1$ and $d=0$, the right hand side
is symmetric (mod $\Lambda (A)_{>e}$) in $x_{1},\ldots,x_{n-e}$, and
setting $x_{n-e+1} = \ldots =x_{n} = 0$ gives
\begin{equation}\label{e:PiA-h1-reduced}
(1+h_{1}(A)+\cdots +h_{1}(A)^{e}) h_{1}[x_{1}+\cdots +x_{n-e}] \equiv
h_{1}\! \left[\frac{x_{1}+\cdots +x_{n-e}}{1-A}\right] \pmod{\Lambda
(A)_{>e}}.
\end{equation}

(iv) After changing $n$ to $n+1$, we can rewrite \eqref{e:PiA-xn} as
\begin{equation}\label{e:PiA-y1}
\Pi _{A,(x_{1},\ldots,x_{n},y_{1})}\, y_{1} = y_{1} + h_{1}(A)\, x_{n}
+\cdots +h_{1}(A)^{n}\, x_{1}.
\end{equation}
The right hand side is
symmetric (mod $\Lambda (A)_{>e}$) in $x_{1},\ldots,x_{n-e}$, and on
setting $x_{n-e+1} = \ldots =x_{n} = 0$, it reduces (mod $\Lambda
(A)_{>e}$) to $y_{1}$. As $y_1=\Pi_{A,y_1}y_1$, this confirms the assertion of
Theorem~\ref{thm:PiA-symm} for $f = g = 1$ and
$h(y_{1})=y_{1}$, with $l=0$ and $d=1$.
\end{example}

For the proofs of Theorems~\ref{thm:PiA-symm} and \ref{thm:PiA-inv-symm}, we first need
to obtain formulas for the product of $\fh_{\aA }[x_{1},\ldots,x_{n}]$ or $\Pi
_{A,x}^{-1}\, \fh_{\aA }[x_{1},\ldots,x_{n}]$ with a symmetric
polynomial.

\begin{lemma}\label{lem:hra}
For $Y$ any plethystic alphabet and $x$ a single variable, we have the
identity
\begin{equation}\label{e:hra}
h_r[x+Y] h_a[x+Y] = \sum _{j=0}^{a+r} c_{j}\, h_{j}[Y]\, h_{a+r-j}[x+Y],
\end{equation}
where $c_{j} = 1$ for $0\leq j\leq \min (r,a)$, $c_{j} = -1$ for $\max
(r,a)< j \leq a+r$, and $c_{j} = 0$ otherwise.
\end{lemma}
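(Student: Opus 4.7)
The plan is to rewrite the right hand side of \eqref{e:hra} via the substitution
\begin{equation*}
h_j[Y] = h_j[x+Y] - x\,h_{j-1}[x+Y] \qquad (j \ge 0),
\end{equation*}
with the convention $h_{-1} = 0$. This identity follows by comparing coefficients of $z^j$ in the plethystic relation $\Omega[zY] = (1-zx)\,\Omega[z(x+Y)]$, which in turn comes from $\Omega[z(x+Y)] = \Omega[zx]\,\Omega[zY]$ and $\Omega[zx] = 1/(1-zx)$. After substitution, the right hand side of \eqref{e:hra} takes the form $S_0 - x\,S_1$, where
\begin{equation*}
S_0 = \sum_j c_j\, h_j[x+Y]\, h_{r+a-j}[x+Y], \qquad
S_1 = \sum_j c_j\, h_{j-1}[x+Y]\, h_{r+a-j}[x+Y].
\end{equation*}
The lemma thereby reduces to the two claims $S_0 = h_r[x+Y]\,h_a[x+Y]$ and $S_1 = 0$.

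Both $S_0$ and $S_1$ are symmetric in $r$ and $a$, so I would assume without loss of generality that $r \le a$, in which case $c_j = 1$ for $0 \le j \le r$ and $c_j = -1$ for $a+1 \le j \le r+a$. For $S_0$, I would reindex the negative portion (summed over $j \in [a+1, r+a]$) by $j' = r+a-j \in [0, r-1]$; using the commutativity $h_p[x+Y]\,h_q[x+Y] = h_q[x+Y]\,h_p[x+Y]$, the reindexed sum coincides with the $j \in [0, r-1]$ terms of the positive sum and cancels them, leaving exactly the $j=r$ term $h_r[x+Y]\,h_a[x+Y]$. For $S_1$, the $j=0$ term of the positive portion vanishes since $h_{-1} = 0$; reindexing the remainder ($j \in [1, r]$) by $i = j-1$ and the negative portion by $i = r+a-j$ turns both sums into $\sum_{i=0}^{r-1} h_i[x+Y]\, h_{r+a-1-i}[x+Y]$ (after commuting factors), which cancel.

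I anticipate no substantial obstacle: the entire argument is driven by the initial substitution, after which the support of $c_j$ is precisely what is needed to make both cancellations go through via routine reindexing.
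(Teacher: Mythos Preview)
Your proof is correct and rests on the same single-variable identity as the paper's, namely $h_k[x+Y] = h_k[Y] + x\,h_{k-1}[x+Y]$, rearranged. The paper organizes the computation in the opposite direction: it uses this identity to derive the telescoping relation
\[
h_j[x+Y]\,h_b[x+Y] - h_{j-1}[x+Y]\,h_{b+1}[x+Y] = h_j[Y]\,h_b[x+Y] - h_{j-1}[x+Y]\,h_{b+1}[Y],
\]
sets $b=a+r-j$, and sums over $j=0,\ldots,r$; the left side telescopes to $h_r[x+Y]\,h_a[x+Y]$ while the right side regroups into the stated sum. Your route---substitute $h_j[Y]=h_j[x+Y]-x\,h_{j-1}[x+Y]$ into the right hand side and verify $S_0=h_r h_a$, $S_1=0$ by reindexing---is the same mechanism run backward. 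Both are equally short; the paper's telescoping packaging perhaps explains \emph{why} the coefficient pattern $c_j$ arises, while yours verifies it more directly.
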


\begin{proof}
Both sides of \eqref{e:hra} are symmetric in $r$ and $a$, so we may
assume that $r \leq a$.  The coproduct formula $h_{k}[x+Y]= \sum
_{i=0}^{k} x^{i} \, h_{k-i}[Y]$ implies the identity
\begin{equation}\label{e:hk}
h_{k}[x+Y] = h_{k}[Y] + x\, h_{k-1}[x+Y].
\end{equation}
For $k=j$ in $h_{j,b}[x+Y] = h_j[x+Y] h_b[x+Y]$ and for $k=b+1$ in $h_{j-1,
b+1}[x+Y]$, this gives
\begin{equation}\label{e:hjb}
h_{j,b}[x+Y] - h_{j-1, b+1}[x+Y] = h_{j}[Y]\, h_{b}[x+Y] -
h_{j-1}[x+Y]\, h_{b+1}[Y],
\end{equation}
as the terms $x\, h_{j-1,b}[x+Y]$ cancel.  Setting $b=a+r-j$ in \eqref{e:hjb}
and summing from $j=0$ to $j=r$ yields \eqref{e:hra}.
\end{proof}

\begin{lemma}\label{lem:fha-recursive}
Given $\aA \in \NN ^{n}$, $l>0$, and $f(X)\in V_{l}$, we have
\begin{equation}\label{e:fha-recursive}
f[x_{1}+\cdots +x_{i}] \, \fh_{\aA }[x_{1}, \ldots, x_{n}] = \sum _{k}
g_{k}[x_{1}+\cdots +x_{i-1}]\, \fh_{\aA +k\, \varepsilon _{i}}[x_{1},
\ldots, x_{n}],
\end{equation}
where $g_{k}(X)\in V_{l}$, or $g_{k}(X)\in V_{l-1}$ if $a_{i} = 0$,
and $g_{k}$ depends only on $a_{i}$, $k$, and $f$, independent of $n$,
$i$, and $a_{j}$ for $j\not =i$.

With different functions $g_{k}(X)$, whose coefficients are now in
$\Lambda (A)$, we also have
\begin{multline}\label{e:fha-A-recursive}
f[x_{1}+\cdots +x_{i}] \, \fh_{\aA }[x_{1}, x_{2} - A\, x_{1}, \ldots,
x_{n} - A\, x_{n-1}] \\
= \sum _{k} g_{k}[x_{1}+\cdots +x_{i-1}]\, \fh_{\aA +k\, \varepsilon
_{i}}[x_{1}, x_{2} - A\, x_{1}, \ldots,
x_{n} - A\, x_{n-1}],
\end{multline}
where $g_{k}(X)\in \sum _{d}\Lambda (A)_{\geq d}\, V_{l+d}$---in other
words, $g_{k}(X)\in V_{l+e}$ {\rm (mod $\Lambda (A)_{>e}$)} for all
$e$---or $g_{k}(X)\in \sum _{d}\Lambda (A)_{\geq d}\, V_{l-1+d}$ if
$a_{i} = 0$, and $g_{k}$ again depends only on $a_{i}$, $k$, and $f$,
independent of $n$, $i$, and $a_{j}$ for $j\not =i$.
\end{lemma}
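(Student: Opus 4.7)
The plan is to prove both identities by induction on $l$, with the trivial base case $l = 0$ ($f$ is a constant, so take $g_0 = f$ and $g_k = 0$ for $k > 0$). For the inductive step, since $V_l$ is spanned by $h_\lambda$ with $\ell(\lambda) \leq l$, and any such $h_\lambda$ with $\ell(\lambda) \geq 1$ factors as $h_{\lambda_1} \cdot h_{(\lambda_2,\ldots,\lambda_{\ell(\lambda)})}$ with the second factor in $V_{l-1}$, linearity reduces us to the case $f = h_r \cdot f'$ with $f' \in V_{l-1}$ and $r \geq 1$.

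For \eqref{e:fha-recursive}, applying the inductive hypothesis to $f'$ gives
\[
f'[x_1+\cdots+x_i]\, \fh_\aA = \sum_k g'_k[x_1+\cdots+x_{i-1}]\, \fh_{\aA + k\varepsilon_i},
\]
with $g'_k \in V_{l-1}$, or $V_{l-2}$ if $a_i = 0$. The only factor of $\fh_{\aA + k\varepsilon_i}$ involving $x_i$ is the $i$-th, namely $h_{a_i+k}[x_1+\cdots+x_i]$, so multiplying by $h_r[x_1+\cdots+x_i]$ and applying Lemma~\ref{lem:hra} with $x = x_i$ and $Y = x_1+\cdots+x_{i-1}$ to the product $h_r \cdot h_{a_i+k}$ at the alphabet $x_1+\cdots+x_i$ rewrites it as $\sum_j c_j\, h_j[x_1+\cdots+x_{i-1}]\, h_{a_i+k+r-j}[x_1+\cdots+x_i]$. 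Relabeling $k' = k+r-j$, each new coefficient $g_{k'}$ is a $\Kfrak$-linear combination of products $h_j \cdot g'_k$, which lie in $V_1 \cdot V_{l-1} \subseteq V_l$ in general, and in $V_1 \cdot V_{l-2} \subseteq V_{l-1}$ when $a_i = 0$. (The $l=1$ base of the refinement is handled directly: if $a_i = 0$, then $a_i + k = 0$ can only occur when $k = 0$, in which case Lemma~\ref{lem:hra} forces $c_j = \delta_{j,0}$, giving $h_0 = 1$.) Independence of $n$, $i$, and the $a_j$ for $j \neq i$ is immediate from the recursion, which only references $a_i$, $r$, $k$, $j$.

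For \eqref{e:fha-A-recursive}, the same strategy applies with $\tfh$ in place of $\fh$. The twist is that the $i$-th factor of $\tfh_{\aA + k\varepsilon_i}$ is now $h_{a_i+k}[x_i+(1-A)Z]$, writing $Z = x_1+\cdots+x_{i-1}$, while $h_r$ is evaluated at $x_1+\cdots+x_i$. Using $x_1+\cdots+x_i = (x_i+(1-A)Z) + AZ$ and the coproduct,
\[
h_r[x_1+\cdots+x_i] = \sum_{j+m=r} h_j[AZ]\, h_m[x_i+(1-A)Z];
\]
then Lemma~\ref{lem:hra} applied to $h_m \cdot h_{a_i+k}$ at $x = x_i$, $Y = (1-A)Z$ yields $\sum_p c_p\, h_p[(1-A)Z]\, h_{m+a_i+k-p}[x_i+(1-A)Z]$, and the last factor is precisely the $i$-th factor of $\tfh_{\aA+k'\varepsilon_i}$ for $k' = k+r-j-p$. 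Collecting terms, each new $g_{k'}$ is a linear combination of products $h_j[AZ] \cdot h_p[(1-A)Z] \cdot g'_k[Z]$, and independence of $n$, $i$, and the other $a_j$ is preserved as in part (i).

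The main obstacle is the length/$A$-degree bookkeeping in part (ii). To verify $g_{k'} \in \sum_d \Lambda(A)_{\geq d}\, V_{l+d}$, I will use the expansions $h_j[AZ] = \sum_{\lambda \vdash j} s_\lambda(A)\, s_\lambda(Z)$ (homogeneous of $A$-degree $j$, with $Z$-length $\leq j$) and $h_p[(1-A)Z] = \sum_{q=0}^p (-1)^q h_{p-q}[Z]\, e_q[AZ]$ (the $A$-degree $q$ piece has $Z$-length $\leq q+1$ via $e_q[AZ] = \sum_{\mu \vdash q} s_\mu(A) s_{\mu'}(Z)$ and $\ell(\mu') \leq q$). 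Combined with the inductive bound $g'_k \in V_{(l-1)+e'} \pmod{\Lambda(A)_{>e'}}$ and the Littlewood–Richardson subadditivity $V_a \cdot V_b \subseteq V_{a+b}$, a term of total $A$-degree $\leq e$ (so $j + q + (\text{$A$-deg of } g'_k) \leq e$) contributes $Z$-length at most $j + (q+1) + (l-1+e-j-q) = l+e$, giving $g_{k'} \in V_{l+e} \pmod{\Lambda(A)_{>e}}$ as required. The analogous count using the sharper inductive bound $g'_k \in V_{l-2+e'} \pmod{\Lambda(A)_{>e'}}$ yields $V_{l-1+e}$ when $a_i = 0$.
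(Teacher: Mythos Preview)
Your proof is correct and follows essentially the same strategy as the paper's: reduce by linearity and induction to multiplying $\tfh_\aA$ (or $\fh_\aA$) by a single $h_r$, split $h_r[x_1+\cdots+x_i]$ via the coproduct using $x_1+\cdots+x_i = (x_i+(1-A)Z) + AZ$, and invoke Lemma~\ref{lem:hra}. The paper saves a step by noting that \eqref{e:fha-recursive} is the $A=0$ specialization of \eqref{e:fha-A-recursive}, and it treats $l=1$ as an explicit base case; your parenthetical handling of the $l=1$ base for the $a_i=0$ refinement (only the $j=0$ term of Lemma~\ref{lem:hra} survives when $a_i+k=0$) applies verbatim to \eqref{e:fha-A-recursive} as well, giving $g_{k'}$ as a combination of $h_j[AZ]\cdot g'_0$ with $g'_0$ constant, hence in $\sum_d \Lambda(A)_{\geq d} V_d$.
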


\begin{proof}
The second formula implies the first, by specializing $A=0$ and noting
that this specialization collapses $\sum _{d}\Lambda (A)_{\geq d}\,
V_{l+d}$ to $V_{l}$.

To prove the second formula, we first consider the case when $l=1$. In
this case, we can assume that $f=h_m$.  Setting $X_{i-1} =
x_{1}+\cdots +x_{i-1}$ and $Y=(1-A)X_{i-1}$, we have
\begin{equation}\label{e:hk-ha-A-1}
h_{m}[x_{1} + \cdots + x_{i}] = h_{m}[x_{i} +Y+A\, X_{i-1}] = \sum
_{r=0}^{m} h_{r}[x_{i}+Y] \, h_{m-r}[A\, X_{i-1}].
\end{equation}
Using Lemma~\ref{lem:hra} with $a=a_{i}$, $x=x_{i}$ and this $Y$,
\eqref{e:hk-ha-A-1} implies
\begin{multline}\label{e:hk-ha-A-2}
h_{m}[x_{1} + \cdots + x_{i}]\, \fh_{\aA }[x_{1}, x_{2} - A\, x_{1},
\ldots, x_{n} - A \, x_{n-1}] =\\
\sum _{r=0}^{m} \sum _{j=0}^{a_{i}+r} c_{r,j}\, h_{j}[(1-A)X_{i-1}]\,
h_{m-r}[A\, X_{i-1}]\, \fh_{\aA + (r-j)\, \varepsilon _{i}}[x_{1}, x_{2}
- A\, x_{1}, \ldots, x_{n} - A \, x_{n-1}],
\end{multline}
where $c_{r,j}$ is $c_{j}$ in Lemma~\ref{lem:hra} for the given $r$
and $a=a_{i}$.
This has the form of \eqref{e:fha-A-recursive}, with
\begin{equation}\label{e:gk-l=1}
g_{k}(X) = \sum _{r-j = k} c_{r,j}\, h_{j}[(1-A)X]\, h_{m-r}[A\, X] =
\sum _{r-j = k} c_{r,j} \sum _{p+q = j} h_{p}(X)\, h_{q}[-A\, X]\, h_{m-r}[A\,
X].
\end{equation}
Since every symmetric function $h$ satisfies $h[A\, X]\in \sum _{d}
\Lambda(A) _{\geq d} V_{d}(X)$, we see that $g_{k}(X)\in \sum _{d}
\Lambda (A) _{\geq d} V_{1+d}$.
For $a=0$ in Lemma~\ref{lem:hra}, the
only nonzero term in \eqref{e:hra} is the term for $j=0$. Hence if
$a_{i}=0$, we only have the term with $j=0$ in \eqref{e:gk-l=1}, in which
case $g_{k}(X)\in \sum _{d} \Lambda(A) _{\geq d} V_{d}$.  It is clear
that the expression in \eqref{e:gk-l=1} only depends on $a_{i}$, $k$,
and $f$, so the result holds for $l=1$.

For $l>1$, we can assume that $f(X) = h_{m}(X) f_{1}(X)$ with
$f_{1}(X)\in V_{l-1}$, and that the result holds for $f_{1}(X)$ by
induction on $l$, with coefficients $(g_1)_{k}(X)$, say.  Then
\eqref{e:hk-ha-A-2} gives the expansion in \eqref{e:fha-A-recursive}
with
\begin{equation}\label{e:gk-l>1}
g_{k}(X) = \sum _{k'+r-j = k} c_{r,j}\, h_{j}[(1-A)X]\, h_{m-r}[A\,
X]\, (g_1)_{k'}(X).
\end{equation}
Given that $(g_1)_{k'}(X)\in \sum _{d} \Lambda(A) _{\geq d} V_{l-1+d}$
for all $k'$, it follows that $g_{k}(X)\in \sum _{d} \Lambda(A) _{\geq
d} V_{l+d}$, as desired.  If $a_{i}=0$, we have the same implication
with $l-1$ in place of $l$.  Since the coefficients $(g_1)_{k'}$ are
independent of $n$, $i$, and the $a_{j}$ for $j\not =i$, so is
$g_{k}$.
\end{proof}

Note that Lemma~\ref{lem:fha-recursive} does not hold for $l=0$ and
$a_{i} = 0$, since $f(X)$ may be a nonzero constant, but $V_{-1} = 0$.
However, the lemma is unnecessary for $l=0$, since the left hand sides
of \eqref{e:fha-recursive} and \eqref{e:fha-A-recursive} have trivial
expansions if $f(X)$ is constant.

\begin{prop}\label{prop:symm-by-ha}
(a) Given $\aA \in \NN ^{n}$, $1\leq i_{0}\leq i_{1}\leq n$, and
$f(X)\in V_{l}$, suppose that $a_{j} = 0$ for at least $l$ indices
$j\in [i_{0},i_{1}]$.  Then
\begin{equation}\label{e:symm-by-ha}
f[x_{1}+\cdots +x_{i_{1}}] \, \fh_{\aA }[x_{1}, \ldots, x_{n}] = \sum
_{\bb \in \NN ^{i_{1}-i_{0}+1} }c_{\bb }\, \fh_{(\aA';\bb
;\aA''')}[x_{1}, \ldots, x_{n}],
\end{equation}
where $\aA = (\aA '; \aA ''; \aA ''')$ with $\aA' = (a_1,\ldots ,
a_{i_0-1})$, $\aA'' = (a_{i_0},\ldots , a_{i_1})$, and $\aA''' =
(a_{i_1+1}\ldots , a_{n})$, and $c_{\bb }$ depends only on $\aA ''$,
$\bb $, and $f$.

(b) Given $\aA \in \NN ^{n}$, $1\leq i_{0}\leq i_{1}\leq n$, and
$f(X)\in V_{l}$, suppose that $a_{j} = 0$ for at least $l+e$ indices
$j\in [i_{0},i_{1}]$.  Then
\begin{multline}\label{e:symm-by-ha-A}
f[x_{1}+\cdots +x_{i_{1}}] \, \fh_{\aA }[x_{1}, x_{2} - A\, x_{1},
\ldots, x_{n} - A\, x_{n-1}]\\
\equiv \sum _{\bb \in \NN ^{i_{1}-i_{0}+1} } c_{\bb }\, \fh_{(\aA';\bb
;\aA''')}[x_{1}, x_{2} - A\, x_{1}, \ldots, x_{n} - A\, x_{n-1}]
\pmod{\Lambda (A)_{>e}}
\end{multline}
where $\aA = (\aA '; \aA ''; \aA ''')$ with $\aA' = (a_1,\ldots ,
a_{i_0-1})$, $\aA'' = (a_{i_0},\ldots , a_{i_1})$, and $\aA''' =
(a_{i_1+1}\ldots , a_{n})$, and $c_{\bb }$ depends {\rm (mod $\Lambda
(A)_{>e}$)} only on $\aA ''$, $\bb $, and $f$.
\end{prop}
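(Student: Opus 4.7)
The plan is to prove both parts simultaneously by double induction on $l$ (primary) and on the interval size $i_1 - i_0$ (secondary), using Lemma~\ref{lem:fha-recursive} at index $i = i_1$ as the engine. The driving idea is that the lemma drops the level from $V_l$ to $V_{l-1}$ precisely when the corresponding entry $a_i$ is zero, so we consume one of our hypothesized zeros at each step that reduces $l$.

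For the base case $l = 0$, $f(X)$ is constant and the identity is trivial, giving $c_\bb = 0$ unless $\bb = \aA''$. For the inductive step, apply Lemma~\ref{lem:fha-recursive}(a) at $i = i_1$ to write
\begin{equation*}
f[x_1+\cdots+x_{i_1}]\, \fh_\aA = \sum_k g_k[x_1+\cdots+x_{i_1-1}]\, \fh_{\aA + k\varepsilon_{i_1}}\,,
\end{equation*}
where $g_k$ depends only on $a_{i_1}$, $k$, $f$. In \emph{Case A} ($a_{i_1} = 0$), we have $g_k \in V_{l-1}$, and the composition $\aA + k\varepsilon_{i_1}$ restricted to $[i_0, i_1-1]$ still has at least $l-1$ zeros, so the inductive hypothesis with parameters $(l-1,\, i_1-1)$ applies to $g_k[x_1+\cdots+x_{i_1-1}]\, \fh_{\aA+k\varepsilon_{i_1}}$, producing $\sum_{\bb'} c_{\bb',k}\, \fh_{(\aA';\, \bb';\, k;\, \aA''')}$; we then set $\bb = (\bb',k)$. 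In \emph{Case B} ($a_{i_1} > 0$), $g_k \in V_l$, but now the secondary induction on $i_1-i_0$ applies to the interval $[i_0, i_1-1]$ (we still have $\ge l$ zeros there), yielding $\sum_{\bb'} c_{\bb',k}\, \fh_{(\aA';\, \bb';\, a_{i_1}+k;\, \aA''')}$, and we set $\bb = (\bb', a_{i_1}+k)$.

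The key bookkeeping is the dependency statement for $c_\bb$. In either case, the inductive hypothesis gives that $c_{\bb',k}$ depends only on $(a_{i_0},\ldots,a_{i_1-1})$, $\bb'$, and $g_k$; combined with the lemma's guarantee that $g_k$ depends only on $a_{i_1}$, $k$, and $f$, this collapses to dependence of $c_\bb$ on $\aA''$, $\bb$, and $f$ alone. Vacuous/degenerate base cases of the secondary induction (e.g., $i_0 = i_1$) are handled directly: the only nontrivial situation is $l = 1$ with $a_{i_0} = 0$, where one application of the lemma yields constant coefficients $g_k \in V_0$.

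Part (b) follows the same inductive scheme using Lemma~\ref{lem:fha-recursive}(b), with the effective level $L = l + e$ in place of $l$. The lemma then guarantees $g_k \in \sum_d \Lambda(A)_{\ge d}\, V_{l+d}$, and modulo $\Lambda(A)_{>e}$ only contributions with $A$-degree $\le e$ survive, so the effective level behaves like $l+e$ (dropping to $l-1+e$ when $a_{i_1} = 0$), matching the hypothesis of $l+e$ zeros. The main obstacle I anticipate is the bookkeeping in part (b): one must verify that the decomposition by $A$-degree is compatible with the inductive step and that ``depends only on $\aA''$, $\bb$, $f$ (mod $\Lambda(A)_{>e}$)'' is preserved, which requires reading the statement as ``the canonical $A$-degree-$\le e$ representative depends only on those data.''
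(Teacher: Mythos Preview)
Your proposal is correct and follows essentially the same approach as the paper's proof: induct using Lemma~\ref{lem:fha-recursive} at the rightmost index $i=i_1$, splitting into the cases $a_{i_1}=0$ (drop $l$ by one) and $a_{i_1}>0$ (shrink the interval), with the base cases $l=0$ and $i_0=i_1$ handled directly. The paper phrases the induction as ``induction on $i_1$'' rather than your double induction on $(l,\,i_1-i_0)$, but these are equivalent orderings; your dependency bookkeeping for $c_{\bb}$ and your handling of part~(b) via the effective level $l+e$ (with the caveat you already flag about decomposing $g_k$ by $A$-degree) match what the paper does, only spelled out in more detail than the paper's terse treatment.
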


\begin{proof}
If $l=0$, then $f(X)$ is constant, and the sole term on the right hand
side in either formula is for $\bb =\aA ''$, with $c_{\bb }$ equal to
the constant value of $f$.

For $l>0$, if $i_{0} = i_{1}$, then we must have $l=1$ and $a_{i_1} =
0$, and in part (b) we must have $e=0$.  Then
Lemma~\ref{lem:fha-recursive} gives an expansion with coefficients
$g_{k}(X)$ that are constant (mod $\Lambda (A)_{>0}$) in part (b), or
just constant, in part (a); the coefficients $c_{\bb } = c_{(k)}$ are
then given by $c_{(k)} = g_{k}$.

If $l>0$ and $i_{0}<i_{1}$, then the result follows by induction on
$i_{1}$, using Lemma~\ref{lem:fha-recursive} and the induction
hypothesis for the same $l$ if $a_{i}>0$, or for $l-1$ if $a_{i} = 0$.
\end{proof}

Proposition~\ref{prop:symm-by-ha} supplies the essential facts needed
for the proofs of Theorems~\ref{thm:PiA-symm} and
\ref{thm:PiA-inv-symm}, to which we now turn.

\begin{proof}[Proof of Theorem~\ref{thm:PiA-symm}] We set $\tfh _{\aA
}(X_{1},\ldots,X_{k}) = \fh_{\aA }[X_{1}, X_{2} - A\, X_{1}, \ldots,
X_{k} - A\, X_{k-1}]$, as in \eqref{e:htild}.  By linearity, we can
assume that $g(x) = \tfh _{\aA }[x_{1}, \ldots, x_{r}]$ and $h(y) =
\tfh _{\aA '}[y_{1}, \ldots, y_{s}]$, where $\aA \in \NN ^{r}$, $\aA
'\in \NN ^{s}$.  Using the coproduct formula \eqref{e coprod formula},
we can express $\tfh _{\aA '}[y_{1}, \ldots, y_{s}] $ in terms of
functions $\tfh _{\bb }$ of the full list of variables $x, y$, as
\begin{equation}\label{e:hy-A-extended}
\begin{aligned}
\tfh _{\aA '}[y_{1},& \ldots, y_{s}] \\
& = \sum _{\pp +\qq =\aA '} \fh_{\pp }[(A-1)X,0,\ldots,0]\, \fh_{\qq
}[(1-A)X + y_{1}, y_{2} - A\, y_{1}, \ldots, y_{s} - A\, y_{s-1}]\\
& = \sum _{\pp +\qq =\aA '} h_{\pp _{+}}[(A-1)X]\, \tfh
_{(0^{n};\, \qq) }[x_{1}, \ldots, x_{n}, y_{1}, \ldots, y_{s}],
\end{aligned}
\end{equation}
where $X = x_{1}+\cdots +x_{n}$, and $h_{\pp _{+}}$ is the complete
symmetric function indexed by the partition $\pp _{+}$ whose parts are
the nonzero entries of $\pp $.  Then
\begin{equation}\label{e:fgh-by-ha}
g(x)\, f[X]\, h(y) = \sum _{\pp +\qq =\aA '} f[X]\,
h_{\pp _{+}}[(A-1)X]\, \tfh_{(\aA ;\, 0^{n-r};\, \qq) }[x_{1},
\ldots, x_{n}, y_{1}, \ldots, y_{s}].
\end{equation}

Since $V_{d}$ contains all symmetric
functions of degree $\leq d$, we have $f[X]\, h_{\pp _{+}}[(A-1)X] \in
V_{l+d}$.  By Proposition~\ref{prop:symm-by-ha}(b) with
$[i_{0},i_{1}]= [n-(l+d+e)+1,n]$, it follows that
\begin{multline}\label{e:p-term}
f[X]\, h_{\pp _{+}}[(A-1)X]\, \tfh_{(\aA ;\, 0^{n-r};\, \qq)
}[x_{1}, \ldots, x_{n}, y_{1}, \ldots, y_{s}]\\
\equiv \sum _{\bb \in \NN ^{l+d+e}} c_{\pp ,\bb }\, \tfh_{(\aA ;\,
0^{n-r-(l+d+e)};\, \bb ;\, \qq )} [x_{1}, \ldots, x_{n}, y_{1},
\ldots, y_{s}] \pmod{\Lambda (A)_{>e}},
\end{multline}
where the coefficients $c_{\pp ,\bb }\in \Lambda (A)$ are independent
of $n$, $\aA $, and $\qq $.  In particular, by taking $\aA = 0^{r}$
and $\qq = 0^{s}$, we conclude that
\begin{equation}\label{e:cpb-1}
f[X]\, h_{\pp _{+}}[(A-1)X] \equiv \sum _{\bb \in \NN ^{l+d+e}} c_{\pp
,\bb }\, \tfh_{( 0^{n-(l+d+e)};\, \bb )} [x_{1}, \ldots, x_{n}]
\pmod{\Lambda (A)_{>e}},
\end{equation}
for all $n\geq l+d+e$.  Letting $m=n-(l+d+e)$ and setting $x_{m+1} =
\ldots = x_{n} = 0$, this becomes
\begin{equation}\label{e:cpb-2}
f[Z]\, h_{\pp _{+}}[(A-1)Z] \equiv \sum _{\bb \in \NN ^{l+d+e}} c_{\pp
,\bb }\, h _{\bb _{+}} [(1-A)Z] \pmod{\Lambda (A)_{>e}},
\end{equation}
where $Z = x_{1}+\cdots +x_{m}$.  Since \eqref{e:cpb-2} holds for
arbitrarily large $m$, it holds as an identity of symmetric functions
in a formal alphabet $Z$, and hence for plethystic evaluation in any
$Z$.

Now, \eqref{e:fgh-by-ha} and \eqref{e:p-term} imply
\begin{multline}\label{e:PiA-p-term}
\Pi _{A,(x_{1},\ldots,x_{n},y_{1},\ldots,y_{s})}
\bigl( g(x)\, f[x_{1},\ldots,x_{n}]\, h(y) \bigr)\\
\equiv \sum _{\pp +\qq =\aA '}\sum _{\bb \in \NN ^{l+d+e}} c_{\pp ,\bb
}\, \fh_{(\aA ;\, 0^{n-r-(l+d+e)};\, \bb ;\, \qq )} [x_{1}, \ldots,
x_{n}, y_{1}, \ldots, y_{s}] \pmod{\Lambda (A)_{>e}}.
\end{multline}
The block of zeroes in positions $r+1$ through $n-(l+d+e)$ of $(\aA
;\, 0^{n-r-(l+d+e)};\, \bb ;\, \qq )$ implies that each term is
symmetric in $x_{r+1},\ldots,x_{n-(l+d+e)+1}$, as claimed.
Furthermore, on setting $x_{j} = 0$ for $n-(l+d+e)+1 < j \leq n$, the
right hand side of \eqref{e:PiA-p-term} becomes
\begin{equation}\label{e:PiA-p-term-x=0}
\fh_{\aA }[x_{1},\ldots,x_{r}] \sum _{\pp +\qq =\aA '}\sum _{\bb \in \NN
^{l+d+e}} c_{\pp ,\bb }\, h_{\bb_+}[X]\, \fh_{\qq } [X + y_{1}, \ldots,
y_{s}]
\end{equation}
where we now set $X = x_{1}+\cdots +x_{n-(l+d+e)+1}$.  Taking $Z =
X/(1-A)$ in \eqref{e:cpb-2}, we see that \eqref{e:PiA-p-term-x=0} is
congruent (mod $\Lambda (A)_{>e}$) to
\begin{equation}\label{e:PiA-p-term-x=0-bis}
\fh_{\aA }[x_{1},\ldots,x_{r}] f[X/(1-A)] \sum _{\pp +\qq =\aA '} h_{\pp
_{+}}[-X]\, \fh_{\qq } [X + y_{1}, \ldots, y_{s}].
\end{equation}
The summation factor here reduces to
\begin{equation}\label{e:PiA-p-term-x=0-ter}
 \sum _{\pp +\qq =\aA '} \fh_{\pp }[-X,0,\ldots,0]\, \fh _{\qq } [X +
y_{1}, \ldots, y_{s}] = \fh_{\aA '}[y_{1},\ldots,y_{s}],
\end{equation}
so \eqref{e:PiA-p-term-x=0-bis} is equal to
\begin{equation}\label{e:PiA-p-term-x=0-iv}
\fh_{\aA }[x_{1},\ldots,x_{r}] \, f \! \left[\frac{x_{1}+\cdots
+x_{n-(l+d+e)+1}}{1-A} \right] \, \fh_{\aA '}[y_{1},\ldots,y_{s}],
\end{equation}
which is \eqref{e:PiA-symm-out} for $g(x) = \tfh_{\aA
}[x_{1},\ldots,x_{r}]$ and $h(y) = \tfh _{\aA
'}[y_{1},\ldots,y_{s}]$.
\end{proof}

\begin{proof}[Proof of Theorem~\ref{thm:PiA-inv-symm}]

In essence, this is the same as the preceding proof with the roles of
$\tfh _{\aA }$ and $\fh_{\aA }$ interchanged.  In more detail, we
first assume by linearity that $g(x) = \fh_{\aA }[x_{1}, \ldots,
x_{r}]$ and $h(y) = \fh_{\aA '}[y_{1}, \ldots, y_{s}]$.  Specializing
$A=0$ in \eqref{e:hy-A-extended}, we have
\begin{equation}\label{e:hy-extended}
\fh_{\aA '}[y_{1}, \ldots, y_{s}] = \sum _{\pp +\qq =\aA '} h_{\pp
_{+}}[-X]\, \fh_{(0^{n};\, \qq) }[x_{1}, \ldots, x_{n}, y_{1}, \ldots,
y_{s}],
\end{equation}
where $X = x_{1}+\cdots +x_{n}$.  The counterparts of
\eqref{e:fgh-by-ha}--\eqref{e:p-term} are now
\begin{gather}\label{e:fgh-by-h}
g(x)\, f[X]\, h(y) = \sum _{\pp +\qq =\aA '} f[X]\,
h_{\pp _{+}}[-X]\, \fh_{(\aA ;\, 0^{n-r};\, \qq) }[x_{1},
\ldots, x_{n}, y_{1}, \ldots, y_{s}],\\
\label{e:p-term-1}
\begin{aligned}
f[X]\, h_{\pp _{+}}[-X] \, \fh_{(\aA ;\, 0^{n-r};\, \qq)
}[x_{1}, \ldots & \, , x_{n}, y_{1}, \ldots, y_{s}]\\
& =  \sum _{\bb \in \NN ^{l+d+e}} c_{\pp ,\bb }\, \fh_{(\aA ;\,
0^{n-r-(l+d)};\, \bb ;\, \qq )} [x_{1}, \ldots, x_{n}, y_{1}, \ldots,
y_{s}]
\end{aligned}
\end{gather}
with integer coefficients $c_{\pp ,\bb }$ independent of $n$, $\aA $,
and $\qq $.  For \eqref{e:p-term-1}, instead of
Proposition~\ref{prop:symm-by-ha}(b), we have used
Proposition~\ref{prop:symm-by-ha}(a) with $[i_{0},i_{1}] =
[n-(l+d)+1,n]$.  The same argument that gave \eqref{e:cpb-2} now
yields
\begin{equation}\label{e:cpb-3}
f[Z]\, h_{\pp _{+}}[-Z] = \sum _{\bb \in \NN ^{l+d+e}} c_{\pp
,\bb }\, h _{\bb _{+}} [Z]
\end{equation}
as a plethystic identity in $Z$.

From \eqref{e:fgh-by-h} and \eqref{e:p-term-1}, we obtain the analog of
\eqref{e:PiA-p-term}:
\begin{multline}\label{e:PiA-inv-p-term}
\Pi _{A,(x_{1},\ldots,x_{n},y_{1},\ldots,y_{s})}^{-1} \bigl( g(x)\,
f[x_{1},\ldots,x_{n}]\, h(y) \bigr)\\
= \sum _{\pp +\qq =\aA '}\sum _{\bb \in \NN ^{l+d+e}} c_{\pp ,\bb }\,
\tfh _{(\aA ;\, 0^{n-r-(l+d)};\, \bb ;\, \qq )} [x_{1}, \ldots, x_{n},
y_{1}, \ldots, y_{s}].
\end{multline}
The block of zeroes in $(\aA ;\, 0^{n-r-(l+d)};\, \bb ;\, \qq )$
implies that each term is symmetric in $x_{r+1},\ldots,x_{n-(l+d)}$,
as claimed.  Note that this is slightly different from the proof of
Theorem~\ref{thm:PiA-symm}, because $\tfh _{(\aA ;\,
0^{n-r-(l+d)};\, \bb ;\, \qq
)}[x_{1},\ldots,x_{n},y_{1},\ldots,y_{s}]$ is only symmetric in the
variables $x_{i}$ corresponding to the positions of the zeroes, while
$\fh_{(\aA ;\, 0^{n-r-(l+d+e)};\, \bb ;\, \qq
)}[x_{1},\ldots,x_{n},y_{1},\ldots,y_{s}]$ is also symmetric in the
extra variable corresponding to the first position after the zeroes.

On setting $x_{n-(l+d)+1} = \ldots = x_{n} = 0$, the right hand side
of \eqref{e:PiA-inv-p-term} becomes
\begin{equation}\label{e:PiA-inv-p-term-x=0}
\tfh _{\aA }[x_{1},\ldots,x_{r}] \sum _{\pp +\qq =\aA '}\sum _{\bb
\in \NN ^{l+d+e}} c_{\pp ,\bb }\, h_{\bb_+}[(1-A)X]\, \tfh _{\qq }
[X + y_{1}, \ldots, y_{s}]
\end{equation}
where we now set $X = x_{1}+\cdots +x_{n-(l+d)}$.  Taking $Z = (1-A)X$
in \eqref{e:cpb-3} shows that this is equal to
\begin{equation}\label{e:PiA-inv-p-term-x=0-bis}
\tfh _{\aA }[x_{1},\ldots,x_{r}] f[(1-A)X] \sum _{\pp +\qq =\aA '}
h_{\pp _{+}}[(A-1)X]\, \tfh _{\qq } [X + y_{1}, \ldots, y_{s}].
\end{equation}
As in \eqref{e:PiA-p-term-x=0-bis}, the summation factor reduces to
$\tfh _{\aA '}[y_{1},\ldots,y_{s}]$, giving
\eqref{e:PiA-inv-symm-out} for $g(x) = \fh_{\aA }[x_{1},\ldots,x_{r}]$
and $h(x) = \fh_{\aA '}[y_{1},\ldots,y_{s}]$.
\end{proof}

\subsection{One-parameter specialization}
\label{ss:A=t}

For applications to flagged LLT polynomials and Macdonald polynomials,
we need nonsymmetric plethysm operations $\Pi _{t,x}^{\pm 1}$ on
$\Kfrak [x_{1},\ldots,x_{n}]$, where $t\in \Kfrak $, defined by
specializing $A$ to $t$ in the generic operations $\Pi _{A,x}^{\pm
1}$.

Given a homomorphism of $\Kfrak $-algebras $\ev \colon \Lambda
_{\Kfrak }(A)\rightarrow \Rfrak $, making $\Rfrak $ a $\Lambda
_{\Kfrak }(A)$-algebra, extension of scalars yields $\Rfrak $-linear
operations ${\Rfrak \otimes _{\Lambda _{\Kfrak }(A)} \Pi _{A,x}^{\pm
1}}$ on $\Rfrak [x_{1},\ldots,x_{n}]$ intertwined by $\ev (-)$ with
the operations $\Pi _{A,x}^{\pm 1}$ on $\Lambda _{\Kfrak
}(A)[x_{1},\ldots,x_{n}]$.  In general, this allows one to specialize
the nonsymmetric plethysm operations using an arbitrary homomorphism
$f(A)\mapsto \ev (f)$.  The following special case defines $\Pi
_{t,x}^{\pm 1}$.

\begin{defn}\label{def:Pit}
Given $t\in \Kfrak $, the nonsymmetric plethysm operations $\Pi
_{t,x}^{\pm 1}$ on $\Kfrak [x_{1},\ldots,x_{n}]$ are the $\Kfrak
$-linear operations given by extension of scalars
\begin{equation}\label{e:Pit-def}
\Pi _{t,x}^{\pm 1} = \Kfrak \otimes _{\Lambda _{\Kfrak }(A)} \Pi
_{A,x}^{\pm 1}
\end{equation}
from the operations $\Pi _{A,x}^{\pm 1}$ on $\Lambda _{\Kfrak
}(A)[x_{1},\ldots,x_{n}]$, using the $\Kfrak $-algebra homomorphism
\begin{equation}\label{e:ev-t}
\ev _{t} \colon \Lambda _{\Kfrak }(A) \rightarrow \Kfrak,\qquad \ev
_{t}f(A) = f[t] = f(t) \quad \text{(i.e., $\ev _{t}\, h_{k}(A) =
t^{k}$)}.
\end{equation}
\end{defn}

By definition, we have identities of maps from $\Lambda _{\Kfrak
}(A)[x_{1},\ldots,x_{n}]$ to $\Kfrak [x_{1},\ldots,x_{n}]$:
\begin{equation}\label{e:Pit-PiA}
\begin{gathered}
\Pi _{t,x} \circ \ev _{t} = \ev _{t} \circ \, \Pi _{A,x},\\
\Pi _{t,x}^{-1} \circ \ev _{t} = \ev _{t} \circ \, \Pi _{A,x}^{-1}.
\end{gathered}
\end{equation}
Thus, applying $\ev _{t}$ to
essentially any quantity expressed in terms of $A$ and the
nonsymmetric plethysm operations $\Pi _{A,x}^{\pm 1}$ has the effect
of changing $\Pi _{A,x}^{\pm 1}$ to $\Pi _{t,x}^{\pm 1}$, as well as
replacing all other occurrences of $A$ with $t$.  In particular,
from the defining formula \eqref{e:PiA-definition} we get that $\Pi
_{t,x}$ is given on the basis of polynomials $\fh_{\aA }[x_{1},\,
x_{2}-t\, x_{1},\, \ldots,\, x_{n}-t\, x_{n-1}]$ for $\Kfrak
[x_{1},\ldots,x_{n}]$ by
\begin{equation}\label{e:Pit-form-of-definition}
\Pi _{t,x} \, \fh_{\aA }[x_{1},\, x_{2}-t\, x_{1},\, \ldots,\, x_{n}-t\,
x_{n-1}] = \fh_{\aA }[x_{1},\, x_{2},\, \ldots,\, x_{n}].
\end{equation}

For most of the results on $\Pi _{A,x}^{\pm 1}$ in \S \S
\ref{ss:ns-pleth-def}--\ref{ss:symmetric-limit}, including
Theorem~\ref{thm:PiA-inv-symm}, it follows that the same results hold
for $\Pi _{t,x}^{\pm 1}$ after replacing $\Lambda (A)$ with $\Kfrak $
and setting $A=t$ in all formulas.  In particular, the results in \S
\ref{ss:PiA-via-inner} involving $\Pi _{A,x}^{\pm 1}$ and the inner
product $\langle -,- \rangle _{A}$ in \eqref{e:A-inner-product}
specialize to results for $\Pi _{t,x}^{\pm 1}$ and the inner product
$\langle -,- \rangle _{t}$ in \eqref{e:t-inner}, below.

If $\Kfrak $ consists of polynomials or power series in $t$, then
Theorem~\ref{thm:PiA-symm} also follows, with the additional
modification that we replace congruence (mod $\Lambda (A)_{>e}$) with
congruence (mod $(t^{e+1})$).  In more generality, we can allow
elements of $\Kfrak $ to be formal Laurent series in $t$, including
rational functions of $t$ by expanding them as formal Laurent series.
If we assume that the coefficients of $f(X)$, $g(x)$, and $h(y)$ have
no poles at $t=0$, then the Laurent series in question reduce to
ordinary power series, giving the following version of
Theorem~\ref{thm:PiA-symm}.

\begin{thm}\label{thm:Pit-symm}
Assume that the elements of $\Kfrak $ have formal Laurent series
expansions in $t$.  Let $f(X)$ be a symmetric function, and let $g(x)
= g(x_{1},\ldots,x_{r})$ and $h(y) = h(y_{1},\ldots,y_{s})$ be
polynomials, all with coefficients in $\Kfrak $.  Assume that their
coefficients have no poles at $t=0$.  If $f(X)\in V_{l}$, $\deg
(h)\leq d$, and $n\geq r+l+d+e$, then
\begin{equation}\label{e:Pit-symm-in}
\Pi _{t,(x_{1}, \ldots, x_{n},\, y_{1},\ldots,y_{s})}
\bigl(g(x_{1},\ldots,x_{r})\, f[x_{1}+\cdots +x_{n}] \,
h(y_{1},\ldots,y_{s})\bigr)
\end{equation}
is symmetric {\rm (mod $(t^{e+1})$)} in the variables
$x_{r+1},\ldots,x_{n-(l+d+e)+1}$, and upon setting $x_{i} = 0$ for
$n-(l+d+e)+1<i\leq n$, it reduces to
\begin{equation}\label{e:Pit-symm-out}
(\Pi _{t,x}\, g(x))\, f \! \left[\frac{x_{1}+\cdots
+x_{n-(l+d+e)+1}}{1-t} \right]\, (\Pi _{t,y}\, h(y)) \pmod{
(t^{e+1})},
\end{equation}
where $p\equiv q \pmod{(t^{e+1})}$ means that the coefficients of
$p-q$ vanish to order $\geq e+1$ at $t=0$.
\end{thm}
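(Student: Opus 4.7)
The plan is to deduce Theorem~\ref{thm:Pit-symm} from Theorem~\ref{thm:PiA-symm} by specializing $A$ to $t$ via the evaluation homomorphism $\ev_t$, using Lemma~\ref{lem:Pit-PiA} to guarantee that $\Pi_{t,x}\circ \ev_t = \ev_t\circ \Pi_{A,x}$. The key observation is that since $\ev_t(h_k(A)) = \ev_t(p_k(A)) = t^k$, and more generally $\ev_t$ sends any symmetric function homogeneous of degree $d$ in $A$ to an element divisible by $t^d$, we have $\ev_t(\Lambda(A)_{>e})\subseteq (t^{e+1})$. Consequently a congruence modulo $\Lambda(A)_{>e}$ on the $\Pi_{A,x}$-side descends to a congruence modulo $(t^{e+1})$ on the $\Pi_{t,x}$-side.

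The main step is to lift $f$, $g$, $h$ to objects $\tilde f$, $\tilde g$, $\tilde h$ with coefficients in $\Lambda_{\Kfrak}(A)$ (or in a suitable $A$-graded completion thereof) such that $\ev_t$ recovers $f$, $g$, $h$.  The no-pole-at-$t=0$ hypothesis ensures every coefficient in $\Kfrak$ is a formal power series $c = \sum_{k\geq 0} c_k t^k$, which we lift termwise via $t^k\mapsto h_1(A)^k$; the result has $\ev_t$-image $c$, since $h_1(A)^k\mapsto t^k$.  Because Theorem~\ref{thm:PiA-symm} and its supporting results in \S\S\ref{ss:PiA-inverse}--\ref{ss:symmetric-limit} are proved degree-by-degree in $A$, they extend without incident to the $A$-adic completion in which the lifts live.

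Applying Theorem~\ref{thm:PiA-symm} to the lifts yields symmetry of $\Pi_{A,(x,y)}(\tilde g\cdot \tilde f[X]\cdot \tilde h)$ in $x_{r+1},\ldots,x_{n-(l+d+e)+1}$ modulo $\Lambda(A)_{>e}$, with specialization to \eqref{e:PiA-symm-out} upon setting $x_i=0$ for $n-(l+d+e)+1<i\leq n$.  Pushing this identity through $\ev_t$ produces exactly \eqref{e:Pit-symm-in}--\eqref{e:Pit-symm-out}: the left-hand side becomes $\Pi_{t,(x,y)}(g\cdot f[X]\cdot h)$ by Lemma~\ref{lem:Pit-PiA}; the plethystic expression $\tilde f[X/(1-A)]$ becomes $f[X/(1-t)]$ because $A\mapsto t$ is the substitution of a line element, which commutes with plethystic evaluation; the factors $\Pi_{A,x}\,g(x)$ and $\Pi_{A,y}\,h(y)$ become $\Pi_{t,x}\,g(x)$ and $\Pi_{t,y}\,h(y)$ by the same lemma; and the symmetry and vanishing assertions are preserved since $\ev_t$ acts on coefficients and commutes with the $x$- and $y$-variable operations.

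The main obstacle, such as it is, is ensuring the lifting step remains well-defined when $\Kfrak$ contains additional indeterminates or rational functions such as $\QQ(q,t)$.  This is handled by the hypothesis permitting $\Kfrak$ to consist of formal Laurent series in $t$: rational functions are so expanded, and the no-pole condition then restricts us to the power-series subring, where the termwise lift $t^k\mapsto h_1(A)^k$ is well-defined and commutes with the constructions in Theorem~\ref{thm:PiA-symm}.  No genuinely new difficulty arises beyond this bookkeeping; the argument is essentially a routine transfer through $\ev_t$, as foreshadowed in the discussion preceding the theorem.
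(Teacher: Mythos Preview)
Your proposal is correct and follows the paper's approach: specialize Theorem~\ref{thm:PiA-symm} via $\ev_t$ and Lemma~\ref{lem:Pit-PiA}, using that $\ev_t$ carries $\Lambda(A)_{>e}$ into $(t^{e+1})$. One simplification worth noting: the explicit lift $t^k\mapsto h_1(A)^k$ and the attendant passage to an $A$-adic completion are unnecessary. The paper simply regards $f$, $g$, $h$ as already having coefficients in $\Lambda_\Kfrak(A)$ via the inclusion $\Kfrak\hookrightarrow\Lambda_\Kfrak(A)$ (degree zero in $A$), applies Theorem~\ref{thm:PiA-symm} as stated, and then observes that the resulting error terms in $\Lambda_\Kfrak(A)_{>e}$ have $\Kfrak$-coefficients which are polynomial combinations of those of $f$, $g$, $h$---hence pole-free under the hypothesis---so that $\ev_t$ sends them into $(t^{e+1})$.
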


Corollary~\ref{cor:other-limits} specializes a little more
straightforwardly.

\begin{cor}\label{cor:other-limits2}
Assume that the elements of $\Kfrak $ have formal Laurent series
expansions in $t$.  Let $f(X)$ be a symmetric function, and let $g(x)
= g(x_{1},\ldots,x_{r})$ and $h(y) = h(y_{1},\ldots,y_{s})$ be
polynomials, all with coefficients in $\Kfrak $.  Then the identities
\eqref{e:other-limits-PiA} and \eqref{e:other-limits-PiA-inv} hold
with $A = t$, where now the limits converge formally in $X$,
$y_{1},\ldots,y_{s}$, and $t$.
\end{cor}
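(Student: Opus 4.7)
The plan is to deduce both limits as the $A=t$ specializations of Theorems \ref{thm:PiA-symm} and \ref{thm:PiA-inv-symm}, using the intertwining Lemma \ref{lem:Pit-PiA}. The forward specialization is Theorem \ref{thm:Pit-symm}, and the inverse specialization yields an exact identity: for $f(X)\in V_l$, $\deg(h)\leq d$, and $n\geq r+l+d$, setting $x_i = 0$ for $n-(l+d) < i \leq n$ in $\Pi_{t,(x_1,\ldots,x_n,y_1,\ldots,y_s)}^{-1}(g\, f[x_1+\cdots+x_n]\, h)$ yields $(\Pi_{t,x}^{-1}g)\, f[(1-t)(x_1+\cdots+x_{n-(l+d)})]\, (\Pi_{t,y}^{-1}h)$.

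To conclude formal convergence for the inverse direction, I would fix a target monomial $x^\aA y^\bb t^c$ with $\aA$ supported on $\{1,\ldots,k\}$, and choose $n$ with $n-(l+d)\geq k$. Two truncation observations finish the argument: setting $x_{k+1},\ldots,x_n$ to zero does not alter the coefficient of any monomial supported on $\{1,\ldots,k\}$; and for the same reason, $f[(1-t)(x_1+\cdots+x_{n-(l+d)})]$ and $f[(1-t)X]$ contribute identically to the target coefficient. Hence the coefficient is constant in $n$ once $n$ is large enough, giving formal convergence to the right-hand side of \eqref{e:other-limits-PiA-inv}. The forward limit follows the same skeleton using Theorem \ref{thm:Pit-symm}: fix $e\geq c$ and $n\geq\max(r+l+d+e,\,k+l+d+e-1)$, and use that the congruence modulo $(t^{e+1})$ determines the coefficient of $t^c$ exactly, while the truncation principle above handles the passage from $x_1+\cdots+x_{n-(l+d+e)+1}$ to the full formal alphabet $X$.

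The main obstacle is that Theorem \ref{thm:Pit-symm} is stated under the hypothesis that coefficients have no poles at $t=0$, whereas $\Kfrak$ here is a general formal Laurent series ring. The resolution is that only finitely many coefficients of $f$, $g$, $h$ contribute to any single target coefficient (since only homogeneous components of $f$ of degree at most $|\aA|$ are relevant), so the relevant coefficients have some common bounded pole order $N$. By $\Kfrak$-linearity of $\Pi_{t,x}^{\pm 1}$ one may rescale these inputs by $t^N$ to clear poles, apply Theorem \ref{thm:Pit-symm} with the shifted target $t^{c+N}$ and a correspondingly larger $e$, and divide back. The inverse direction has no analogous obstacle since its governing identity is exact and polynomial-valued in the inputs.
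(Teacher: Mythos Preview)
Your approach is correct and essentially the same as the paper's: reduce to Theorem~\ref{thm:Pit-symm} for the forward direction and to the $A=t$ specialization of Theorem~\ref{thm:PiA-inv-symm} (which is exact) for the inverse direction, then clear poles by rescaling by a power of $t$.

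One simplification worth noting: since $f$ is a symmetric \emph{function} (bounded degree) and $g$, $h$ are polynomials, they have only finitely many coefficients in $\Kfrak$ in total. Hence a single uniform power $t^{N}$ clears all poles at once, and by $\Kfrak$-linearity of $\Pi_{t,x}^{\pm 1}$ this just multiplies both sides of the limit by $t^{N}$. The paper's proof simply does this and observes that formal convergence in $t$ is preserved under multiplication by any fixed power of $t$. Your per-target-coefficient bookkeeping, and in particular the justification ``only homogeneous components of $f$ of degree at most $|\aA|$ are relevant,'' is therefore unnecessary.
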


\begin{proof}
Since $f(X)$, $g(x)$, and $h(y)$ have finitely many coefficients, we
can multiply by a power of $t$ to remove any poles at $t=0$.  The
corollary now follows from Theorem~\ref{thm:Pit-symm} and the fact
that $\lim_{n\rightarrow \infty } p_{n} = p$ converges formally in $t$
if and only if the same holds with $p_{n}$ and $p$ multiplied by a
power of $t$.
\end{proof}

\section{Preliminaries, continued}
\label{s:more-prelim}

\subsection{Hecke algebras}
\label{ss:hecke}

In \S \S \ref{ss:hecke}--\ref{ss:ns-HL-pols}, we review the action of
the Hecke algebra $\Hcal (\Sfrak _{l})$ on Laurent polynomials, as in
Lusztig \cite{Lusztig83,Lusztig89} and Macdonald \cite{Macdonald03},
and define nonsymmetric Hall-Littlewood polynomials.  Our treatment
here closely follows that in \cite[\S \S 4.2--4.3]{BHMPS23}.

The \emph{Demazure-Lusztig} operators
\begin{equation}\label{e:Ti}
T_{i} = t\, s_{i} +\frac{1-t}{1-x_{i}/x_{i+1}}\, (s_{i}-1)
\end{equation}
for $i = 1,\dots, l-1$ generate an action of the Hecke algebra $\Hcal
(\Sfrak _{l})$ on the ring of Laurent polynomials $\Kfrak[x_{1}^{\pm
1},\ldots,x_{l}^{\pm 1}]$.  Here, $\Kfrak$ is any coefficient ring and
$t \in \Kfrak$ any invertible element (typically, $\Kfrak$ will be
$\QQ(t)$ or $\QQ(q,t)$).  The generators of $\Hcal (\Sfrak _{l})$
satisfy the braid relations $T_{i}T_{i+1}T_{i} = T_{i}T_{i+1}T_{i}$
and $T_{i}T_{j} = T_{j}T_{i}$ for $|j-i|>1$, and the quadratic
relations
\begin{equation}\label{e:quadratic-Hecke-rels}
(T_{i}- t)(T_{i}+1) = 0.
\end{equation}
The elements $T_w$, defined by $T_w = T_{i_1} T_{i_2}\cdots T_{i_k}$
for any reduced expression $w=s_{i_1}s_{i_2}\cdots s_{i_k}$, form a
$\Kfrak$-basis of $\Hcal (\Sfrak _{l})$, as $w$ ranges over
$\Sfrak_l$. 

Using an overbar $\overline{\, \vphantom{t} \cdot \, }$ to signify
inverting the variables $t$, $x_{i}$, one has $\overline{T_{i}} =
T_{i}^{-1}$ and hence $\overline{T_{w}} = T_{w^{-1}}^{-1}$.

\begin{remark}\label{rem: Ti conventions}
The version of the Demazure-Lusztig operators $T_{i}$ used here agrees
with \cite{Goodberry24,HagHaiLo08, Lapointe22}.  In terms of the
notation here, the Demazure-Lusztig operators in \cite{BHMPS25a,
BHMPS23} are $\overline{T_{i}}|_{t^{-1}\mapsto q}$, and those in
\cite{BechtloffWeising23} are $t\, \overline{T_{i}}$.
\end{remark}

A Laurent polynomial $f(x) = f(x_{1},\ldots,x_{l})$ is $s_{i}$
invariant---i.e., symmetric in $x_{i}$ and $x_{i+1}$---if and only if
$T_{i} \, f(x) = t\, f(x)$.  For any Young subgroup $\Sfrak _{\rr
}\subseteq \Sfrak _{l}$, it follows that if $f(x)$ is in the image of
the symmetrizer
\begin{equation}\label{e:Hecke-symmetrizer}
\Scal _{\rr} = \sum _{w\in \Sfrak _{\rr }} T_{w}\, ,
\end{equation}
then $f(x)$ is $\Sfrak _{\rr }$ invariant, and that every $\Sfrak
_{\rr }$ invariant $f(x)$ satisfies $\Scal _{\rr }\, f(x) = W_{\rr
}(t)\, f(x)$, where $W_{\rr }(t) = \sum _{w\in \Sfrak _{\rr }} t^{\ell
(w)}$.  Equivalently (if $\QQ (t)\subseteq \Kfrak $), the normalized
symmetrizer
\begin{equation}\label{e:normalized-Hecke-symmetrizer}
\hsym  _{\rr} = \frac{1}{W_{\rr }(t)}\, \Scal _{\rr }
\end{equation}
is idempotent, with $\hsym _{\rr}\, f(x) = f(x)$ if and only if $f(x)$
is $\Sfrak _{\rr }$ invariant.  If $\Sfrak _{\sS }\subseteq \Sfrak
_{\rr }$, it also follows that $\hsym _{\rr}\, \hsym _{\sS } = \hsym
_{\rr }$.  For the groups $\Sfrak _{\rr }$, note that
\begin{equation}\label{e:W(t)}
W_{\rr}(t) = \prod _{i} [r_{i}]_{t}!, \quad \text{where}\quad
[r]_{t}!=\prod _{k=1}^{r} [k]_{t}, \text{ with } [k]_{t} =
\frac{1-t^{k}}{1-t} = 1+t+\cdots +t^{k-1}.
\end{equation}

Lusztig's identity \cite[Theorem~6.6]{Lusztig83} implies that the full
symmetrizer $\Scal  _{(l)}$ acts as
\begin{equation}\label{e:full-Sl-formula}
\Scal _{(l)} f(x) = \sum _{w\in \Sfrak _{l}} w \left( f(x) \prod
_{i<j} \frac{(1 - t x_{j}/x_{i})}{(1 - x_{j}/x_{i})} \right).
\end{equation}
Hence, for $\lambda \in \ZZ ^{l}$ a dominant weight,
\begin{equation}\label{e:Sl-to-symmetric-HL}
\Scal _{(l)} x^{\lambda } = W_{\sS }(t)\, P_{\lambda }(x;t),
\end{equation}
where $\Sfrak _{\sS } = \Stab (\lambda )$ and $P_{\lambda }(x;t)$ is a 
symmetric Hall-Littlewood polynomial \cite[III,~(2.1)]{Macdonald95}.

We say that $f(x)$ is {\em Hecke antisymmetric} in $x_{i}$ and
$x_{i+1}$ if $T_{i}f(x) = -f(x)$.  Analogous to the case of ordinary
antisymmetric polynomials, if $f$ is Hecke antisymmetric in $x_{i}$
and $x_{i+1}$, then $t\, x_{i} - x_{i+1}$ divides $f(x)$, and
$f(x)/(t\, x_{i} - x_{i+1})$ is symmetric.  The converse also holds.

\subsection{Nonsymmetric Hall-Littlewood polynomials}
\label{ss:ns-HL-pols}

For $\lambda \in \ZZ ^{l}$, define the {\em nonsymmetric
Hall-Littlewood polynomials} $E_\lambda(x;t)$ and $F_\lambda(x;t)$ by
\begin{gather}\label{e:ns-HL-E}
E_{\lambda }(x;t^{-1}) = t^{\ell(w)}\, \overline{T_{w}}\,
x^{\lambda _{+}},\\
\label{e:ns-HL-F} F_{\lambda }(x;t^{-1}) = \overline{T_{v}}\,
x^{\lambda _{-}},
\end{gather}
where $\lambda =w(\lambda _{+}) = v(\lambda _{-})$ with
$\lambda_{+}\in X_{+}$, $\lambda _{-}\in -X_{+}$, and $v$ chosen to be
minimal if $\lambda _{-}$ has non-trivial stabilizer.  The polynomials
$E_\lambda(x;t)$ and $F_\lambda(x;t)$ here are the same as in \cite[\S
4.3]{BHMPS23} with $q$ changed to $t$.

The $F_{\lambda }(x;t)$ for $\lambda \in \ZZ^{l}$ form a basis of the
Laurent polynomial ring $\Kfrak [x_{1}^{\pm 1},\ldots,x_{l}^{\pm 1}]$,
and the subset with $\lambda\in \NN^l$ (resp.  $\lambda \in -\NN^l$)
is a basis of $\Kfrak [x_{1},\ldots,x_{l}]$ (resp.\ $\Kfrak
[x_{1}^{-1},\ldots,x_{l}^{-1}]$).  The same holds for $E_{\lambda
}(x;t)$.

If $\lambda _{i} = \lambda _{i+1}$, then
$E_{\lambda }(x;t)$ and $F_{\lambda }(x;t)$ are $s_{i}$-invariant.  To
see this in the case of $F_{\lambda }$, observe that, for the minimal
$v$ such that $\lambda =v(\lambda _{-})$, we have $\overline{T_{i}} \,
\overline{T_{v}} = \overline{T_{s_{i}v}} = \overline{T_{v s_{j}}} =
\overline{T_{v}}\, \overline{T_{j}}$ for $s_{j}$ fixing $\lambda
_{-}$, hence $\overline{T_{i}}\, \overline{T_{v}}\, x^{\lambda _{-}} =
t^{-1}\, \overline{T_{v}}\, x^{\lambda _{-}}$.  A similar argument
works for $E_{\lambda }$.

Define an inner product on $\Kfrak [x_{1}^{\pm 1},\ldots,x_{l}^{\pm 1}]$ by
\begin{equation}\label{e:t-inner}
\langle f(x),g(x) \rangle_{t} = \langle x^{0} \rangle\, f(x)\, g(x)\,
\Omega [(t-1)\sum _{i<j} x_{i}/x_{j}],
\end{equation}
so that $\langle - , -\rangle_t$ is the $A = t$ specialization of
$\langle - , - \rangle_A$ from \eqref{e:A-inner-product}.

\begin{lemma}\label{lem:t-dual-bases}
(a) The elements $E_{\lambda }(x;t^{-1})$ and $\overline{F_{\lambda
}(x;t^{-1})}$ are dual bases with respect to $\langle -,- \rangle
_{t}$, and (b) The elements $w_{0}\, \overline{E_{\lambda }(x;t)}$ and
$w_{0}\, F_{\lambda }(x;t)$ are also dual bases.  In symbols:
\begin{equation}\label{e:t-dual-bases}
\langle E_{\lambda }(x;t^{-1}),\, \overline{F_{\lambda }(x;t^{-1})}
\rangle_{t} = \delta _{\lambda ,\mu } = \langle w_{0}\,
\overline{E_{\lambda }(x;t)}, \, w_{0}\, F_{\lambda }(x;t)
\rangle_{t}.
\end{equation}
\end{lemma}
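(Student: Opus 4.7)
The argument hinges on establishing the Hecke-algebra adjunction
\begin{equation*}
\langle T_i f, g\rangle_t = \langle f, T_i g\rangle_t
\end{equation*}
for all Laurent polynomials $f, g$. Writing $T_i = b\, s_i - a$ with $b = (1-t\,x_i/x_{i+1})/(1-x_i/x_{i+1})$, one gets $(T_i f)g - f(T_i g) = b\bigl[(s_i f)g - f(s_i g)\bigr]$. Setting $\Delta = \Omega[(t-1)\sum_{i<j}x_i/x_j]$, a direct computation shows $s_i(b\,\Delta) = b\,\Delta$: only the $(i,i+1)$-factor of $\Delta$ changes under $s_i$ (the other pairs are either invariant or get swapped among themselves), and the ratios $s_ib/b$ and $s_i\Delta/\Delta$ are reciprocals. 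Hence $b\,\Delta\bigl[(s_i f)g - f(s_i g)\bigr]$ is $s_i$-antisymmetric, and since constant-term extraction is $s_i$-invariant, its constant term vanishes. Iterating, and using $\overline{T_w} = T_{w^{-1}}^{-1}$, one obtains $\langle T_w f, g\rangle_t = \langle f, T_{w^{-1}} g\rangle_t$ and $\langle \overline{T_w} f, g\rangle_t = \langle f, \overline{T_{w^{-1}}}\, g\rangle_t$ for all $w \in \Sfrak_l$.

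Next, the bar involution $\sigma(f(x;t)) = f(x^{-1};t^{-1})$ satisfies $\sigma T_w = \overline{T_w}\,\sigma$ (checked on generators $T_i$), so applied to $F_\mu(x;t^{-1}) = \overline{T_{v_\mu}}\, x^{\mu_-}$ it yields $\overline{F_\mu(x;t^{-1})} = T_{v_\mu}\, x^{-\mu_-}$. Combined with the adjunction, the pairing in (a) becomes
\begin{equation*}
\langle E_\lambda(x;t^{-1}),\, \overline{F_\mu(x;t^{-1})}\rangle_t
\,=\, t^{\ell(w_\lambda)}\,\langle x^{\lambda_+},\, T_{w_\lambda}^{-1}\, T_{v_\mu}\, x^{-\mu_-}\rangle_t .
\end{equation*}
To evaluate this I use a triangularity argument. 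Expanding $\Delta$ as a power series in $x_i/x_j$ with $i<j$ shows $\langle x^\alpha, x^\beta\rangle_t$ vanishes unless $\alpha + \beta \in \sum_{i<j}\NN(e_j - e_i)$, while the standard Bruhat-order triangularity of the Hecke action gives $\overline{T_{w_\lambda}}\,x^{\lambda_+} = x^\lambda + (\text{lower})$ and $T_{v_\mu}\, x^{-\mu_-} = x^{-\mu} + (\text{lower})$. These two facts together force the Gram matrix $\bigl(\langle E_\lambda, \overline{F_\mu}\rangle_t\bigr)$ to be upper-unitriangular in an appropriate ordering; the factor $t^{\ell(w_\lambda)}$, together with the stabilizer identity $T_i\, x^{\lambda_+} = t\, x^{\lambda_+}$ whenever $s_i \in \Stab(\lambda_+)$, normalizes the diagonal entries to~$1$. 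A more elegant alternative is to establish the Cauchy identity $\Omega[\sum_{i\leq j} x_i/z_j - t\sum_{i<j} x_i/z_j] = \sum_\lambda E_\lambda(x;t^{-1})\,\overline{F_\lambda(z;t^{-1})}$ and then invoke the reproducing-kernel formula of Lemma \ref{lem:reproducing-kernel} at $A=t$ to extract duality in one stroke.

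Part~(b) follows from~(a) via the involution $f \mapsto w_0\,\overline{f}$ (reversing variables and bar), which preserves $\langle -,-\rangle_t$ and exchanges the $E$- and $F$-families with appropriately transformed indices $\lambda \mapsto -w_0\lambda$; the asserted duality is then the image of the one in~(a) under this involution. The main obstacle is not the adjunction, which is routine once the $s_i$-invariance of $b\,\Delta$ is observed, but rather the normalization bookkeeping in part~(a): one must track the minimal coset representatives $w_\lambda, v_\mu$, the length exponent $t^{\ell(w_\lambda)}$, and the Bruhat-order leading monomials simultaneously, so that the diagonal Gram entries come out to exactly~$1$ rather than some residual power of $t$.
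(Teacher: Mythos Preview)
The paper's proof is much shorter than yours: part~(a) is simply cited from \cite[Prop.~4.3.2]{BHMPS23}, and part~(b) is derived from~(a) via the single observation that the substitution $x_i \mapsto x_{w_0(i)}^{-1}$ preserves $\langle -,-\rangle_t$, which is immediate from the definition \eqref{e:t-inner}.

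Your plan for~(b) is the same in spirit, but your description of the involution is slightly off.  The map $f(x)\mapsto w_0\, f(\overline{x})$ does \emph{not} exchange the $E$- and $F$-families; it sends $E_\lambda(x;t^{-1})$ to $w_0\,\overline{E_\lambda(x;t)}$ and $\overline{F_\lambda(x;t^{-1})}$ to $w_0\,F_\lambda(x;t)$, keeping each family within itself but swapping the parameter $t\leftrightarrow t^{-1}$.  With that correction your argument for~(b) is exactly the paper's.

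For~(a), your self-contained plan has the right ingredients---the self-adjointness of $T_i$ that you prove is in fact the content of the paper's next lemma (Lemma~\ref{lem:Ti-self-adjoint}), also cited from \cite{BHMPS23}.  But there is a genuine gap in the triangularity step: establishing that the Gram matrix $(\langle E_\lambda,\overline{F_\mu}\rangle_t)_{\lambda,\mu}$ is \emph{upper-unitriangular} in some ordering is not the same as showing it equals the identity; you would still need to kill the strictly upper-triangular entries.  The two inputs you list---the cone condition on $\langle x^\alpha,x^\beta\rangle_t$ and the Bruhat triangularity of $E_\lambda$ and $\overline{F_\mu}$---do not obviously combine to give a two-sided triangularity, because ``lower in Bruhat order'' for the $E$ side and for the $\overline{F}$ side need not interact cleanly with the monomial cone.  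The Cauchy-kernel route you mention at the end is the one that actually closes this gap (and is essentially how \cite{BHMPS23} proceeds); committing to that from the outset, rather than treating it as an afterthought, would make the argument complete.
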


\begin{proof}
Part (a) is \cite[Prop.~4.3.2]{BHMPS23} with $q$ there changed to
$t^{-1}$.  Part (b) follows from the identity $\langle w_{0}\,
f(\overline{x}),\, w_{0}\, g(\overline{x}) \rangle _{t} = \langle
f(x),\, g(x) \rangle_{t}$, which is immediate from \eqref{e:t-inner}.
\end{proof}

\begin{lemma}\label{lem:Ti-self-adjoint}
The operators $T_{i}$ are self-adjoint for the inner product in
\eqref{e:t-inner}.
\end{lemma}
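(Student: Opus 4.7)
The plan is to verify self-adjointness directly from the formula \eqref{e:Ti}. The key tool is that the constant term $\langle x^{0} \rangle$ is invariant under any permutation of variables, so for any Laurent polynomial $h(x)$ we have $\langle x^{0} \rangle\, s_{i}\, h(x) = \langle x^{0} \rangle\, h(x)$, which lets us transfer an application of $s_{i}$ from one factor onto the rest of the integrand.

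First I would rewrite $T_{i}$ in the combined form
\[
T_{i}\, f = \frac{(t\, x_{i} - x_{i+1})\, s_{i}\, f + (1-t)\, x_{i+1}\, f}{x_{i} - x_{i+1}},
\]
obtained by clearing the denominator in \eqref{e:Ti}. Using this, $(T_{i}\, f)\, g - f\, (T_{i}\, g)$ collapses to $\frac{t\, x_{i} - x_{i+1}}{x_{i} - x_{i+1}}\bigl[(s_{i}\, f)\, g - f\, (s_{i}\, g)\bigr]$, since the $(1-t)\, x_{i+1}\, f\, g$ contributions cancel. Multiplying by the kernel $\Delta \defeq \Omega[(t-1) \sum_{i<j} x_{i}/x_{j}]$ and applying $s_{i}$ inside the constant term on just one of the two summands — which swaps $s_{i}\, f \leftrightarrow f$, $g \leftrightarrow s_{i}\, g$, and $\Delta \leftrightarrow s_{i}\, \Delta$ without altering $\langle x^{0} \rangle$ — expresses $\langle T_{i}\, f, g \rangle_{t} - \langle f, T_{i}\, g \rangle_{t}$ as the constant term of $\frac{f\, (s_{i}\, g)}{x_{i} - x_{i+1}}\bigl[(x_{i} - t\, x_{i+1})\, s_{i}\, \Delta - (t\, x_{i} - x_{i+1})\, \Delta \bigr]$.

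Self-adjointness will then follow from the kernel identity $(x_{i} - t\, x_{i+1})\, s_{i}\, \Delta = (t\, x_{i} - x_{i+1})\, \Delta$. To establish this, I would factor $\Delta = \Delta_{0} \cdot \Omega[(t-1)\, x_{i}/x_{i+1}]$, where $\Delta_{0}$ collects all pairs $(j,k)$ with $j<k$ and $(j,k) \neq (i,i+1)$. A short check shows $\Delta_{0}$ is $s_{i}$-invariant, since the asymmetric contributions $\sum_{j<i}(x_{j}/x_{i} + x_{j}/x_{i+1})$ and $\sum_{k>i+1}(x_{i}/x_{k} + x_{i+1}/x_{k})$ are each symmetric in $x_{i}, x_{i+1}$, while the remaining terms do not involve those variables. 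Hence $s_{i}\, \Delta / \Delta = \Omega[(t-1)\, x_{i+1}/x_{i}] / \Omega[(t-1)\, x_{i}/x_{i+1}]$, and using $\Omega[(t-1)\, z] = (1-z)/(1-t\, z)$ this ratio simplifies to $(t\, x_{i} - x_{i+1})/(x_{i} - t\, x_{i+1})$, as required.

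The main obstacle is purely computational bookkeeping; the conceptual move is the $s_{i}$-shift inside $\langle x^{0} \rangle$, after which the whole proof reduces to verifying the one-variable identity for $\Omega[(t-1)\, z]$.
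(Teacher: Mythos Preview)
Your proof is correct and self-contained. The paper does not actually give a proof here: it simply cites \cite[Lemma~4.3.3]{BHMPS23} and notes that the conventions match via Remark~\ref{rem: Ti conventions}. Your argument supplies a direct computational verification in its place. The reduction to the kernel identity $(x_{i}-t\,x_{i+1})\,s_{i}\Delta = (t\,x_{i}-x_{i+1})\,\Delta$ via the $s_{i}$-shift inside $\langle x^{0}\rangle$ is the standard and efficient way to prove this type of self-adjointness, and your check of that identity using $\Omega[(t-1)z]=(1-z)/(1-tz)$ and the $s_{i}$-invariance of $\Delta_{0}$ is clean. The gain is that your proof is entirely internal to the paper's notation and does not require chasing an external reference with different conventions; the paper's citation, by contrast, is terse but opaque.
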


\begin{proof}
Equivalent to \cite[Lemma~4.3.3]{BHMPS23}, using Remark \ref{rem: Ti
conventions}.
\end{proof}

\begin{lemma}\label{lem:F-basics}
(a) For all $\mu \in \ZZ ^{l}$ and $d\in \ZZ $,
\begin{equation}\label{e:F-shift}
F_{\mu +(d^{l})}(x; t) = (x_{1}\cdots x_{l})^{d}\, F_{\mu
}(x;t).
\end{equation}

(b) If $\mu =(\mu_1,\ldots,\mu_l)$ satisfies $\mu _{i}\leq
\mu _{l}$ for all $i$, then
\begin{equation}\label{e:F-factors}
F_{\mu }(x;t) = x_{l}^{\mu _{l}}\, F_{(\mu
_{1},\ldots,\mu _{l-1})}(x_{1},\ldots,x_{l-1};t).
\end{equation}

(c) With the same hypothesis as in (b),
\begin{equation}\label{e:Tis-on-F}
\overline{T_{m}}\cdots \overline{T_{l-1}}\, F_{\mu }(x;t^{-1}) =
t^{-e}\, F_{s_{m}\cdots s_{l-1}(\mu )}(x; t^{-1}),
\end{equation}
where $e$ is the number of indices $m\leq i< l$ such that $\mu_{i}
= \mu _{l}$.
\end{lemma}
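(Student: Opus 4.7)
For (a), I would observe that shifting by the constant vector $(d^l)$ preserves sort order, so $(\lambda+(d^l))_- = \lambda_-+(d^l)$, and the stabilizer of $\lambda_-$ is unchanged; hence the minimal $v$ in the definition of $F$ is the same for $\lambda$ and $\lambda+(d^l)$. Since $x^{\lambda_-+(d^l)} = (x_1\cdots x_l)^d\,x^{\lambda_-}$ and the symmetric Laurent polynomial $x_1\cdots x_l$ commutes with every $T_i$ and hence with $\overline{T_v}$, the identity follows.

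For (b), the hypothesis $\lambda_i\le\lambda_l$ makes $\lambda_l=\max(\lambda)$, so $(\lambda_-)_l=\lambda_l$. The minimality rule for $v$---which fills each constant block of $\lambda_-$ with the corresponding indices of $\lambda$ in increasing order---forces $v^{-1}(l)=l$, so $v\in\Sfrak_{l-1}$. Every reduced expression for $v$ then uses only $s_1,\ldots,s_{l-2}$, each of which commutes with multiplication by $x_l$; pulling $x_l^{\lambda_l}$ past $\overline{T_v}$ and identifying the minimal $v$ for $\lambda$ in $\Sfrak_l$ with that for $(\lambda_1,\ldots,\lambda_{l-1})$ in $\Sfrak_{l-1}$ yields the desired factorization.

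Part (c) will follow from an auxiliary action formula that I would prove first: for any $\mu\in\ZZ^l$,
\begin{equation*}
\overline{T_i}\,F_\mu(x;t^{-1}) = \begin{cases}
F_{s_i\mu}(x;t^{-1}) & \text{if } \mu_i<\mu_{i+1},\\[2pt]
t^{-1}\,F_\mu(x;t^{-1}) & \text{if } \mu_i=\mu_{i+1}.
\end{cases}
\end{equation*}
Writing $F_\mu = \overline{T_v}\,x^{\mu_-}$ with $v$ minimal, the block-ordered assignment gives $v^{-1}(i)<v^{-1}(i+1)$ in both cases, so $\ell(v^{-1}s_i)=\ell(v^{-1})+1$ and $T_{v^{-1}}T_i=T_{v^{-1}s_i}$ in $\Hcal(\Sfrak_l)$; passing to inverses yields $\overline{T_i}\,\overline{T_v}=\overline{T_{s_iv}}$. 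For $\mu_i<\mu_{i+1}$, $s_iv$ is itself the minimal representative for $s_i\mu$, giving $F_{s_i\mu}$. For $\mu_i=\mu_{i+1}$, we have $s_iv=vs_j$ with $j=v^{-1}(i)$ and $s_j$ the simple reflection swapping the adjacent positions $j,j+1$ inside the constant block of $\mu_-$; by a parallel length calculation $\overline{T_{s_iv}}=\overline{T_v}\,\overline{T_j}$, and since $x^{\mu_-}$ is symmetric in $x_j,x_{j+1}$ we have $T_j\,x^{\mu_-}=t\,x^{\mu_-}$, producing $\overline{T_{s_iv}}\,x^{\mu_-}=t^{-1}F_\mu$.

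With the action formula in hand, part (c) follows by applying $\overline{T_{l-1}},\overline{T_{l-2}},\ldots,\overline{T_m}$ in order to $F_\lambda$. Because equality steps leave the index fixed, the max value $\lambda_l$ remains the entry immediately to the right of the operator being applied at each step, so the comparison is always between some original $\lambda_j$ (with $j<l$) and $\lambda_l$; the hypothesis $\lambda_i\le\lambda_l$ rules out the omitted third case of the action formula. Each step either migrates $\lambda_l$ one position leftward (strict inequality, no scalar emitted) or leaves the index unchanged and emits $t^{-1}$ (equality). The cumulative permutation on $\lambda$ is exactly $s_ms_{m+1}\cdots s_{l-1}$, since equality moves act trivially on the index, and the accumulated scalar is $t^{-e}$ with $e=|\{m\le i<l:\lambda_i=\lambda_l\}|$, matching the statement. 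The main obstacle is the Hecke-algebra bookkeeping in the action formula---particularly the identification $s_iv=vs_j$ and the length computations for the bar-involution identifications---after which (b) and (c) assemble cleanly.
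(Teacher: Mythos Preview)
Your proof is correct and follows essentially the same route as the paper. Parts (a) and (b) match the paper's arguments exactly; for (c), the paper proves only the equality case of your action formula explicitly (with the same identification $\overline{T_i}\,\overline{T_v}=\overline{T_v}\,\overline{T_j}$ for $j=v^{-1}(i)$) and leaves the strict-inequality case as ``following directly from the definition of $F_\lambda$,'' whereas you spell out both cases and the step-by-step tracking of the index.
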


\begin{proof}
Part (a) follows from the fact that $T_i$ commutes with multiplication
by any Laurent polynomial symmetric in $x_i, x_{i+1}$.  Part (b)
follows directly from the definition of $F_\mu$.  Part (c) follows
from the definition of $F_\mu$ and the fact noted above that if $\mu_i
= \mu_{i+1}$, then $F_{\mu }(x;t^{-1})$ is $s_{i}$-invariant, hence
$\overline{T}_i F_\mu(x;t^{-1}) = t^{-1} F_\mu(x;t^{-1})$.
\end{proof}

\subsection{Leading terms in Bruhat order}
\label{ss:Bruhat}

We need the concept of leading term of a Laurent polynomial with
respect to Bruhat order on the weight lattice $\ZZ ^{m}$ of $\GL
_{m}$.  We also need to adapt this concept to functions $f(x,Y,z)$
that are polynomials in $x = x_{1}, \ldots,x_{r}$ and
$z=z_{1},\ldots,z_{s}$ and symmetric functions in $Y$.

There is a Bruhat order on the weight lattice of any reductive group
$G$.  For $G = \GL _{m}$, the Bruhat order on the weight lattice $\ZZ
^{m}$ has the following combinatorial description, due to Bj\"orner
and Brenti \cite[Theorem~6.2]{BjorBren96}.  For $\mu \in \ZZ ^{m}$,
let $S(\mu ) = \{(k,i)\in \ZZ \times [m] \mid k \leq \mu _{i} \}$,
where $[m] = {1,\ldots,m}$.  For any subset $S\subseteq \ZZ \times
[m]$ such that $k$ is bounded above for $(k,i)\in S$, define $\varphi
_{S}(k,i)$ to be the number of elements $(k',i')\in S$ such that
$(k',i')\geq (k,i)$ in lexicographic order.  Then $\lambda \leq \mu $
in Bruhat order on $\ZZ ^{m}$ if and only if $|\lambda |= |\mu |$ and
$\varphi _{S(\lambda )}(k,i)\leq \varphi _{S(\lambda )}(k,i)$ for all
$(k,i)\in \ZZ \times [m]$.

Similar to the case of minimal coset representatives in \S
\ref{ss:Sn}, every orbit of a Young subgroup $\Sfrak _{\rr }\subseteq
\Sfrak _{m}$ on $\ZZ ^{m}$ has a unique minimal element in Bruhat
order on $\ZZ ^{m}$.  Comparing minimal orbit representatives induces
a Bruhat order on $\Sfrak _{\rr }$ orbits (comparing maximal elements
induces the same ordering on orbits).

We say that a Laurent polynomial $f(x_{1},\ldots,x_{m})$ has {\em
leading term} $x^{\mu }$ in Bruhat order if $x^{\mu }$ is greatest in
Bruhat order among all monomials in the support of $f$.

Given a polynomial $f(x_{1},\ldots,x_{m})$ symmetric in
$x_{r+1},\ldots,x_{r+n}$, let $m = r+n+s$ and set $(x_{1}, \ldots,
x_{r}, y_{1}, \ldots, y_{n}, z_{1}, \ldots, z_{s}) =
(x_{1},\ldots,x_{m})$, so that $f(x,y,z)$ is symmetric in $y$.  Then
the coefficients $\langle x^{\eta } \, y^{\nu }\, z^{\kappa } \rangle
\, f(x,y,z) = \langle x^{(\eta ;\nu ;\kappa )} \rangle\, f(x)$ are
constant on $\Sfrak _{[r+1,r+n]}$ orbits of $\GL _{m}$ weights $(\eta
;\nu ;\kappa )$, and the coefficient $\langle x^{\eta }\, m_{\lambda
}(y)\, z^{\kappa } \rangle\, f$, where $\ell (\lambda )\leq n$, is
equal to the coefficient $\langle x^{\eta } \, y^{\nu }\, z^{\kappa }
\rangle \, f$ for any permutation $\nu $ of $(\lambda ;0^{n-\ell
(\lambda )})$.  We say that $x^{\eta }\, m_{\lambda }(y)\, z^{\kappa
}$ is the {\em leading term} of $f(x,y,z)$ if the corresponding
$\Sfrak _{[r+1,r+n]}$ orbit of $\GL _{m}$ weights is greatest in
Bruhat order among all orbits in the support of $f$.  Given the
symmetry in $y$, this is equivalent to $f$ having leading monomial
term $x^{\eta } \, y^{ \nu }\, z^{\kappa }$, where $\nu =(0^{n-\ell
(\lambda )}; \lambda _{-})$, with $\lambda _{-}$ the rearrangement of
the parts of $\lambda $ in increasing order.

It follows from the Bj\"orner--Brenti description that the Bruhat
order on weights $(\eta ;\, 0^{n-\ell (\lambda )};\, \lambda ;\,
\kappa )$, for $\eta \in \NN ^{r}$, $\kappa \in \NN ^{s}$, and
$\lambda $ a partition, is independent of $n$ as long as $n\geq \ell
(\lambda )$.  This induces a well-defined `stable' Bruhat order on
$\NN ^{r}\times \operatorname{Par}\times \NN ^{s}$, where
$\operatorname{Par}$ is the set of all partitions.  We say that
$x^{\eta } \, m_{\lambda }(Y) \, z^{\kappa }$ is the leading term of
$f(x,Y,z)\in \kk [x_{1},\ldots,x_{r}]\otimes \Lambda _{\kk }(Y)\otimes
\kk [z_{1},\ldots,z_{s}]$ if it is the leading term in the stable
Bruhat order, which is equivalent to $f[x,y_{1}+\cdots +y_{n},z]$
having leading term $x^{\eta } \, m_{\lambda }(y) \, z^{\kappa }$ for
all (or for any) $n\geq \deg _{Y}(f)$.

\subsection{Nonsymmetric Macdonald polynomials}
\label{ss:ns-Macs}

The \emph{nonsymmetric Macdonald polynomials} $E_{\mu }(x_1,\dots,
x_m;q,t)$ for $\GL_m$ form a basis for the ring of Laurent polynomials
$\kk[x_1^{\pm 1}, \dots, x_m^{\pm 1}]$ as $\mu$ ranges over $\ZZ^{m}$,
where $\kk = \QQ(q,t)$.  Our conventions for $E_\mu(x;q,t)$ match
those in \cite{HagHaiLo08}, and we mainly follow this reference for
the following brief review of essentials.

The $E_\mu(x;q,t)$ are characterized uniquely by triangularity and
orthogonality properties (see, e.g., \cite[\S 2.1]{HagHaiLo08}).  The
triangularity property is
\begin{align}
E_{\mu }(x;q,t) = x^{\mu } + \sum _{\lambda
<\mu } c_{\lambda }(q,t) x^{\lambda },
\end{align}
where $<$ is the Bruhat order on $\ZZ ^{m}$, i.e., $E_{\mu }(x;q,t)$
is monic with leading term $x^{\mu }$ in the sense of \S
\ref{ss:Bruhat}.  In particular, this implies that $E_{\mu }(x;q,t)\in
\kk [x_{1},\ldots,x_{m}]$ for $\mu \in \NN ^{m}$ and hence that $\{
E_{\mu }(x;q,t)\mid \mu \in \NN ^{m} \}$ is a basis of $\kk
[x_{1},\ldots,x_{m}]$.

For $\GL _{m}$, the nonsymmetric Macdonald polynomials are also
determined by the Knop-Sahi recurrence \cite{Knop97,
Sahiinterpolation}, consisting of the following two identities and the
initial condition $E_{(0^m)}(x;q,t) = 1$.  The first is the
\emph{intertwiner relation}: for $\mu \in \ZZ ^{m}$ such that $\mu
_{i}>\mu _{i+1}$,
\begin{equation}\label{e:intertwiner}
E_{s_{i}\, \mu }(x;q,t) = \left(T_{i} + \frac{1-t}{1-q^{\mu _{i}-\mu
_{i+1}} t^{w_{\mu }(i) - w_{\mu }(i+1)}} \right) E_{\mu }(x;q,t),
\end{equation}
where $w_{\mu }\in \Sfrak _{m}$ is the minimal permutation such that
$w_{\mu }^{-1}(\mu )$ is antidominant, i.e., the permutation such that
for $i<j$, we have $w_{\mu }(i)>w_{\mu }(j)$ if and only if $\mu
_{i}>\mu _{j}$.

The second is the \emph{rotation identity}
\begin{equation}\label{e:rotation}
E_{(\mu _{m}+1,\mu _{1},\ldots,\mu _{m-1})}(x;q,t) = q^{\mu _{m}}\,
x_{1}\, E_{\mu }(x_{2},\ldots,x_{m},q^{-1} x_{1}; q,t)\, .
\end{equation}

We also note that if $\mu _{i} = \mu _{i+1}$, then $s_{i} E_{
\mu}(x;q,t) = E_{\mu }(x;q,t)$.

For any orbit $\Sfrak _{\rr }\, \mu $ of a Young subgroup $\Sfrak
_{\rr }\subseteq \Sfrak _{m}$ on $\ZZ ^{m}$, there exists an $\Sfrak
_{\rr }$ invariant $\kk $-linear combination of nonsymmetric
Macdonald polynomials
\begin{equation}\label{e:symmetric-combo}
\sum _{\gamma  \in \Sfrak _{\rr }\mu} c_{\gamma  }\, E_{\gamma }(x;q,t),
\end{equation}
unique up to a scalar factor.  More precisely, the intertwiner
relation \eqref{e:intertwiner} implies that such a linear combination
is given by $\Scal _{\rr } \, E_{\mu }(x;q,t)$, where $\Scal _{\rr }$
is the symmetrization operator in \eqref{e:Hecke-symmetrizer}.  Still
more precisely, if we take $\mu $ to be $\GL _{\rr }$ dominant, let
$\Sfrak _{\sS }= \Stab _{\Sfrak _{\rr }}(\mu )$, and let $\Sfrak _{\rr
}/\Sfrak _{\sS }$ denote the set of coset representatives $w\in \Sfrak
_{\rr }$ such that $w$ is minimal in $w\, \Sfrak _{\sS }$, then
\begin{equation}\label{e:monic-symmetric-combo}
\sum _{w\in \Sfrak _{\rr }/\Sfrak _{\sS }} T_{w}\, E_{\mu }(x;q,t)
\end{equation}
is the monically normalized version of \eqref{e:symmetric-combo}, in
which $x^{w(\mu )}$ has coefficient $1$ for all $w\in \Sfrak _{\rr }$,
and the leading term is $x^{w(\mu )}$ for $w = w_{0}^{\rr }$ the
longest element in $\Sfrak _{\rr }$.

The following lemma is equivalent to an identity of Knop.

\begin{lemma}[{\cite[Theorem~9.10]{Knop07}}]\label{lem:x1=0}
For $\mu \in \NN ^{m}$, on setting $x_{1} = 0$ in $E_{\mu }(x;q,t)$,
we get
\begin{equation}\label{e:x1=0}
E_{\mu }(0,x_{2},\ldots,x_{m};q,t) = \begin{cases}
E_{(\mu _{2},\ldots,\mu _{m})}(x_{2},\ldots,x_{m};q,t)&	\text{if $\mu
_{1} = 0$}\\
0&   \text{if $\mu _{1}>0$.}
\end{cases}
\end{equation}
\end{lemma}

\section{Flagged LLT polynomials} \label{s:flagged-LLT}

In this section we define and study {\em flagged LLT polynomials}
$\Gcal _{\nubold ,\sigma }(X_{1},\ldots,X_{l};\, t)$---flagged
symmetric functions (as defined in \S
\ref{s:flagged-symmetric-functions}) which generalize both symmetric
LLT polynomials and flagged skew Schur functions.  We present
algebraic and combinatorial formulas for flagged LLT polynomials,
taking the algebraic formula as the definition, and deriving the
combinatorial formula from it.  We then formulate an atom positivity
conjecture for a natural sub-class of flagged LLT polynomials, which
generalizes the Schur positivity of symmetric LLT polynomials.  In \S
\ref{s:ns-Mac-pols}, we will see that this leads to an atom positivity
conjecture for plethystically transformed nonsymmetric Macdonald
polynomials, which generalizes the positivity theorem \cite{Haiman01}
for symmetric Macdonald polynomials.

\subsection{Indexing data}
\label{ss:indexing-data}

Flagged LLT polynomials $\Gcal _{\nubold ,\sigma
}(X_{1},\ldots,X_{l};\, t)$ are indexed by pairs $\nubold ,\sigma $ in
which $\nubold $ is given either by a pair of dominant weights for a
Levi subgroup $\GL _{\rr }\subseteq \GL _{l}$, or by a tuple of {\em
ragged-right skew diagrams}
(Definition~\ref{def:ragged-right-diagram}), and $\sigma $ is a
permutation satisfying a certain compatibility condition with $\nubold
$.  We start with the definition in terms of weights.

\begin{defn}\label{def:indexing-data}
{\em Flagged LLT indexing data} $\rr $, $\gamma $, $\eta $, $\sigma $
of length $l$ consist of a strict composition $\rr
=(r_{1},\ldots,r_{k})$ of $l$, a pair of $\GL _{l}$ weights $\gamma
,\eta \in \ZZ ^{l}$, and a permutation $\sigma \in \Sfrak _{l}$,
satisfying the following conditions:
\begin{itemize}
\item [(i)] $\gamma _{i}\leq \eta _{i}$ for all $i=1,\ldots,l$;
\item [(ii)] $\gamma \in X_{++}(\GL _{\rr })$ and $\eta \in X_{+}(\GL
_{\rr })$ are $\GL _{\rr }$ dominant, and $\gamma $ is $\GL _{\rr }$
regular;
\item [(iii)] $\eta =\sigma (\eta _{-})$, where $\eta _{-}\in
-X_{+}(\GL _{l})$ is the $\GL _{l}$ antidominant weight in
$\Sfrak _{l} \, \eta $;
\item [(iv)] $\sigma $ is maximal in its coset $\Sfrak _{\rr }\,
\sigma $.
\end{itemize}
We say that a permutation $\sigma \in \Sfrak _{l}$ is {\em compatible}
with the other data $(\rr ,\gamma, \eta )$ when (iii) and (iv) hold
(this does not depend on $\gamma $).
\end{defn}

Spelling things out more, condition (ii) says that for each $\rr
$-block $\{i,\ldots,j \}$, we have $\eta _{i}\geq \cdots \geq \eta
_{j}$ and $\gamma _{i} >\cdots > \gamma _{j}$, condition (iii) is that
$\eta _{-} = \sigma ^{-1}(\eta )$ is weakly increasing, and condition
(iv) is that $\sigma ^{-1}(i) >\cdots > \sigma ^{-1}(j)$ on each $\rr
$-block.

Since $\eta $ is $\GL _{\rr }$ dominant, its stabilizer in $\Sfrak
_{\rr }$ is a Young subgroup $\Sfrak _{\sS } = \Stab _{\Sfrak _{\rr
}}(\eta )\subseteq \Sfrak _{\rr }$.  The condition $\eta = \sigma
(\eta _{-})$ only depends on the coset $\Sfrak _{\sS }\, \sigma $.
Since this condition means that $\sigma ^{-1}$ sorts $\eta $ into
weakly increasing order, we automatically have $\sigma ^{-1}(i)>\sigma
^{-1}(i+1)$ if $\eta _{i}>\eta _{i+1}$.  Condition (iv) then says that
for $i$, $i+1$ in the same $\rr $-block, we also have $\sigma
^{-1}(i)>\sigma ^{-1}(i+1)$ if $\eta _{i} = \eta _{i+1}$.  Thus, given
condition (iii), condition (iv) is equivalent to $\sigma $ being
maximal in $\Sfrak _{\sS }\, \sigma $.  In other words, the compatible
permutations $\sigma $ are the maximal elements of the cosets $\Sfrak
_{\sS }\, \sigma $ whose elements satisfy $\eta =\sigma (\eta _{-})$.

Since $\eta _{-}$ is antidominant, there is a unique minimal $\sigma
$ such that $\eta =\sigma (\eta _{-})$.  From the preceding, we see
that this minimal $\sigma $ is compatible with $(\rr ,\gamma ,\eta )$
if and only if $\Sfrak _{\sS}$ is trivial, i.e., if and only if $\eta
$ (like $\gamma $) is $\GL _{\rr }$ regular.

\begin{defn}\label{def:standard-compat}
Given $(\rr , \gamma , \eta )$ satisfying conditions (i) and (ii) in
Definition~\ref{def:indexing-data}, and assuming further that both
$\gamma $ and $\eta $ are $\GL _{\rr }$ regular, the {\em standard
compatible permutation} $\sigma $ is the minimal $\sigma $ such that
$\eta =\sigma (\eta _{-})$.
\end{defn}

For combinatorial purposes, we need to represent $(\rr ,\gamma ,\eta
)$ by means of a tuple $\nubold $ of diagrams somewhat more general
than ordinary skew diagrams.  For every positive integer $r$, fix the
distinguished minimal dominant regular $\GL _{r}$ weight
\begin{equation}\label{e:rho-r}
\rho _{r} = -(0,1,\ldots,r-1).
\end{equation}

\begin{defn}\label{def:ragged-right-diagram}
For $\alpha ,\, \beta +\rho _{r}\in X_{+}(\GL _{r})$ satisfying
$\alpha _{i}\leq \beta _{i}$ for all $i$, the diagram $\beta /\alpha $
(as defined in \S \ref{ss:diagrams}, i.e., row $j$ consists of boxes
with northeast corner $(i,j)$ for $\alpha _{j} < i \leq \beta _{j}$)
is a {\em ragged-right skew diagram}.
\end{defn}

If $\beta \in X_{+}(\GL _{r})$, the definition reduces to that of an
ordinary skew diagram.  The weaker condition $\beta +\rho _{r}\in
X_{+}(\GL _{r})$ means that $\beta _{j+1}\leq \beta _{j}+1$ for $1\leq
j <r$.  Thus, in a ragged-right skew diagram, the right end of a row
is allowed to overhang the row below by at most one box, as in the
example shown here.
\begin{equation}\label{e:ragged-right-diagram}
\beta /\alpha = (7,8,5,5,6,7)/(3,3,1,0,0,0) \qquad
\begin{tikzpicture}[scale=.3, baseline=.84cm]
\foreach \beta / \alpha / \y in {7/3/0, 8/3/1, 5/1/2, 5/0/3, 6/0/4, 7/0/5}
  \draw (\alpha,\y) grid (\beta,\y+1);
\end{tikzpicture}
\end{equation}
Our convention is that empty rows matter, i.e., length zero rows
$\beta _{j}/\alpha _{j}$ with $\alpha _{j} = \beta _{j}$ count as part
of the diagram $\beta /\alpha $.  The reason for this is that the
boxes with northeast corner $(\alpha _{j},j)$ and $(\beta _{j}+1,\,
j)$ to the left and right of each row $\beta _{j}/\alpha _{j}$, empty
rows included, play a role in the combinatorial formula for flagged
LLT polynomials.  We define the {\em end content} of row $j$ in a
ragged-right skew diagram to be the content $\beta _{j} +1 -j$ of
the box to its right.  By our choice of $\rho _{r}$, the sequence of
end contents is $\beta +\rho _{r}$.  {The example in
\eqref{e:ragged-right-diagram} has end contents
\begin{equation}\label{e:end-contents}
\beta +\rho _{6} = (7,8,5,5,6,7) - (0,1,2,3,4,5) = (7,7,3,2,2,2).
\end{equation}
Note that runs of equal end contents correspond to runs of overhanging
rows in the diagram.}

Given $(\rr ,\gamma ,\eta )$ as in Definition~\ref{def:indexing-data},
set
\begin{gather}\label{e:rho-concat}
\rho _{\rr }  = (\rho _{r_{1}}; \ldots ;\rho _{r_{k}}) = -(0, 1, \ldots,
r_{1}-1,\, 0, 1, \ldots, r_{2}-1, \ldots, 0, 1, \ldots, r_{k}-1)\\
\label{e:alpha-beta}
\alpha = \gamma -\rho _{\rr },\quad \beta = \eta -\rho _{\rr }.
\end{gather}
Decomposing $\alpha ,\beta $ as concatenations
\begin{equation}\label{e:alpha-beta-concat}
\alpha =(\alpha ^{(1)}; \ldots ;\alpha ^{(k)}),\quad \beta =(\beta
^{(1)}; \ldots ;\beta ^{(k)})
\end{equation}
of subsequences indexed by $\rr $-blocks, conditions (i) and (ii) in
Definition~\ref{def:indexing-data} say that $\nu ^{(i)} = \beta
^{(i)}/\alpha ^{(i)}$ is a ragged-right skew diagram with $r_{i}$ rows
for each $i$.  Thus, to specify $(\rr ,\gamma ,\eta )$ of length $l$, it is
equivalent to specify the tuple $\nubold = (\nu ^{(1)},\ldots,\nu
^{(k)})$ of ragged-right skew diagrams.  The only constraints on
$\nubold $ are that each $\nu ^{(i)}$ has at least one row (including
empty rows), and $\nubold $ has $l$ rows altogether.

\begin{defn}\label{def:diagram-and-weights}
The tuple of ragged-right skew diagrams $\nubold $ constructed above
is the {\em diagram} of the corresponding weight data $(\rr, \gamma
,\eta )$ satisfying (i), (ii) in Definition~\ref{def:indexing-data}
\end{defn}

In connection with this definition, we use the following conventions
and terminology.
\begin{itemize}
\item We regard the weight data $(\rr , \gamma ,\eta )$ and the
diagram $\nubold $ as interchangeable.  In particular, we refer to
both $(\nubold ,\sigma )$ and $(\rr , \gamma ,\eta , \sigma )$ as {\em
flagged LLT indexing data}, and say that $\sigma $ is compatible
either with $\nubold $ or with $(\rr , \gamma ,\eta )$.

\item As in \S \ref{ss:classical-LLT}, we identify $\nubold $ with the
disjoint union of its components, i.e., a box in $\nubold $ means a
box in a specific component $\nu ^{(j)}$.  The total number of boxes
in $\nubold $ is denoted
\[
|\nubold | \, \defeq\, |\nu ^{(1)}| +\cdots + |\nu ^{(k)}| = |\beta |
- |\alpha | = |\eta |- |\gamma |.
\]

\item By {\em rows of $\nubold $} we mean all rows in all components
$\nu ^{(i)}$, numbered consecutively from $1$ to $l$.  Thus, row $j$
of $\nubold $ means row $m$ of $\nu ^{(i)}$, where $j$ is the $m$-th
element of the $i$-th $\rr $-block.  In terms of $\alpha $ and $\beta
$ as defined in \eqref{e:alpha-beta}, row $j$ of $\nubold $ is $\beta
_{j}/\alpha _{j}$.
\end{itemize}
We also need the following ordering on boxes in any tuple of diagrams.

\begin{defn}\label{def:reading-order}
Let $\nubold = (\nu ^{(1)},\ldots,\nu ^{(k)})$ be a tuple of arbitrary
diagrams $\nu ^{(i)} = \beta ^{(i)}/\alpha ^{(i)}$ with a total of $l$
rows.  The {\em content/row reading order} on boxes in $\nubold $, or
in any extension of $\nubold $ with additional boxes in each row, is
the lexicographic ordering by (content, row number), where all rows of
$\nubold $ are numbered consecutively from $1$ to $l$.
\end{defn}

Figure~\ref{fig:flagged-LLT} shows an example of the correspondence
between $(\rr , \gamma ,\eta )$ and its diagram $\nubold $.  We
arrange the components $\nu ^{(i)}$ with $i$ increasing from bottom to
top, in keeping with our `French' convention (\S \ref{ss:diagrams}) on
row indices within each diagram.

\begin{figure}
\begin{tikzpicture}[scale=.5]
\node[left] at (-1,1.5) {$\nu ^{(1)} = \beta ^{(1)}/\alpha ^{(1)} =
(4,4,2)/(2,0,0)$};
\foreach \beta / \alpha / \y in {4/2/0, 4/0/1, 2/0/2}
  \draw (\alpha,\y) grid (\beta,\y+1);
\node[left] at (-1,6) {$\nu ^{(2)} = \beta ^{(2)}/\alpha ^{(2)} =
(5,6,5,6)/(2,1,1,0)$};
\foreach \beta / \alpha / \y in {5/2/4, 6/1/5, 5/1/6, 6/0/7}
  \draw (\alpha,\y) grid (\beta,\y+1);
\node at (2.5,2.5) {$\flagwave{7}$};
\node at (6.5,7.5) {$\flagwave{6}$};
\node at (4.5,1.5) {$\flagwave{5}$};
\node at (5.5,6.5) {$\flagwave{4}$};
\node at (4.5,0.5) {$\flagwave{3}$};
\node at (6.5,5.5) {$\flagwave{2}$};
\node at (5.5,4.5) {$\flagwave{1}$};
\end{tikzpicture}
\begin{align*}
\rr &  = (3,4) \\
\gamma & = \alpha + \rho _{\rr }  = (2,0,0,2,1,1,0) - (0,1,2,0,1,2,3) =
(2,-1,-2,2,0,-1,-3) \\
\eta & = \beta + \rho _{\rr } = (4,4,2,5,6,5,6) - (0,1,2,0,1,2,3)
 =(4,3,0,5,5,3,3) \\
\sigma & = (3,7,2,6,1,5,4)
\end{align*}
\[
w_0\,\sigma^{-1} = (3, 5, 7, 1, 2, 4, 6)\ ,\quad \sigma ^{-1}(\eta )
 = (0,3,3,3,4,5,5)
\]
\caption{\label{fig:flagged-LLT}Flagged LLT indexing data $(\rr
,\gamma ,\eta ,\sigma )$ of length $l=7$ and the corresponding
$(\nubold , \sigma )$, where $\nubold = (\nu ^{(1)}, \nu ^{(2)}) $ is
the diagram of $(\rr ,\gamma ,\eta )$, and $\sigma $ is encoded by
writing the flag number $w_0\, \sigma^{-1}(i) = l+1-\sigma ^{-1}(i)$
in the flag box in row $i$.  Since $ \sigma ^{-1}(\eta ) = \eta _{-}$
and each $\rr $-block of $w_0\,\sigma^{-1}$ is increasing, $\sigma $
is compatible.}
\end{figure}

We incorporate the permutation $\sigma $ into the diagram as follows.

\begin{defn}\label{def:flag-numbers}
Given a tuple of ragged-right skew diagrams $\nubold $, the boxes
immediately to the right of each row, with content equal to the end
content of that row, are {\em flag boxes}.  To represent a permutation
$\sigma \in \Sfrak _{l}$ on a tuple of diagrams with a total of $l$
rows, we label the flag box in row $j$ with the {\em flag number}
$w_{0}\, \sigma ^{-1}(j) = l+1-\sigma ^{-1}(j)$.  In other words,
since row $\sigma (i)$ gets flag number $l+1-i$, the sequence $\sigma
(1),\ldots,\sigma (l)$ lists the row indices in decreasing order of
flag number.
\end{defn}

Figure~\ref{fig:flagged-LLT} illustrates this.  Compatibility of
$\sigma $ and $\nubold $ is expressed in terms of flag numbers by the
following lemma.

\begin{lemma}\label{lem:flag-numbers}
(a) A permutation $\sigma \in \Sfrak _{l}$ is compatible with $\nubold
$ if and only if the corresponding flag numbers decrease in flag boxes
of increasing content, and increase from bottom to top in flag boxes
of equal content in the same component $\nu ^{(i)}$.

(b) The weight $\eta $ in the indexing data is $\GL _{\rr }$ regular
if and only if the corresponding tuple consists of ordinary
(non-ragged-right) skew diagrams.  In this case, the standard
compatible permutation corresponds to flag numbers decreasing in the
content/row reading order.
\end{lemma}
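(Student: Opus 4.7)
The plan is to unpack definitions on both sides and observe that they match term-for-term. I will use throughout that the flag box in row $j$ has content equal to the end content $\eta_j = (\beta+\rho_\rr)_j$ of that row, and its flag number is $l+1-\sigma^{-1}(j)$, where $l=r_1+\cdots+r_k$.

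For part (a), I will first recall from the discussion following Definition~\ref{def:indexing-data} that compatibility amounts to the conjunction of condition (iii$'$), that $\sigma^{-1}$ sort $\eta$ into weakly increasing order, and condition (iv$'$), that $\sigma^{-1}(i)>\sigma^{-1}(i+1)$ whenever $i, i+1$ lie in the same $\rr$-block with $\eta_i=\eta_{i+1}$. Next, I will translate each into flag-number language: (iii$'$) says that $\eta_i<\eta_j$ implies $l+1-\sigma^{-1}(i)>l+1-\sigma^{-1}(j)$, which is precisely that flag numbers decrease on flag boxes of strictly increasing content; while (iv$'$) becomes the statement that flag numbers strictly increase from bottom to top on adjacent rows of equal end content in the same component. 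A small auxiliary observation needed here is that within a single ragged-right component $\nu^{(i)}=\beta^{(i)}/\alpha^{(i)}$, rows of equal end content must occur in consecutive runs: the defining inequality $\beta^{(i)}_{p+1}\le \beta^{(i)}_p+1$ forces $\beta^{(i)}_{p+1}=\beta^{(i)}_p+1$ for every intermediate $p$ between any two rows sharing an end content. This promotes the adjacent-pair condition (iv$'$) to the stated condition on runs.

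For part (b), I will first note that $\GL_\rr$-regularity of $\eta$ is equivalent to $\eta_m > \eta_{m+1}$ whenever $m, m+1$ sit in the same $\rr$-block, and substituting $\eta_m = \beta^{(i)}_{m'}-m'+1$ (with $m'$ the position within component $\nu^{(i)}$) turns this into the condition $\beta^{(i)}_{m'}\ge \beta^{(i)}_{m'+1}$, i.e., $\nu^{(i)}$ is an ordinary skew diagram. In the regular case the stabilizer $\Sfrak_\sS$ is trivial and condition (iv$'$) is vacuous; the standard compatible permutation is then the unique minimal-length $\sigma$ with $\sigma(\eta_-)=\eta$, which is the standard sort preserving original order among equal entries. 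I will translate this condition (``$\eta_i=\eta_j$ and $i<j$ imply $\sigma^{-1}(i)<\sigma^{-1}(j)$'') into the flag-number language as strict decrease of flag numbers among boxes of equal content and increasing row index, and combine with part (a) to conclude strict decrease in the full content/row reading order.

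Honestly, I do not anticipate a serious obstacle: the statement is essentially a dictionary between the algebraic conditions of Definition~\ref{def:indexing-data} and combinatorial conditions on the labeled diagram. The only place a small argument is required is the observation that within a single ragged-right component, rows of equal end content must be consecutive; once this is noted, the rest is routine bookkeeping.
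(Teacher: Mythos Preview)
Your proposal is correct and is exactly the exercise the paper leaves to the reader: the paper's proof reads in full ``Exercise, using the fact that $\eta = \beta + \rho_{\rr}$ is the sequence of end contents of the rows in the corresponding tuple of diagrams.'' You have carried out precisely this translation, and your one nontrivial observation (that rows of equal end content within a component are consecutive) is just the $\GL_{\rr}$-dominance of $\eta$ restated, so everything checks out.
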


\begin{proof}
Exercise, using the fact that $\eta = \beta +\rho _{\rr }$ is the
sequence of end contents of the rows in the corresponding tuple of
diagrams.
\end{proof}

\begin{remark}\label{rem:flag-numbers}
(i) Part (a) implies that the flag numbers increase as a function of
the row index within each component $\nu ^{(i)}$.

(ii) The lemma gives a pictorial way to understand the observation
preceding Definition~\ref{def:standard-compat} that the minimal
$\sigma $ such that $\eta =\sigma (\eta _{-})$ is only compatible
with $\nubold $ when $\eta $ is $\GL _{\rr }$ regular.  Namely, if
$\eta $ is not regular, then some $\nu ^{(i)}$ is not an ordinary skew
diagram, hence it has at least two rows with flag boxes of equal
content.  Flag numbers corresponding to a compatible permutation must
increase in these rows, hence they are not decreasing in the
content/row reading order.
\end{remark}

\subsection{Triples}
\label{ss:triples}

Before defining $\Gcal _{\nubold ,\sigma }$, we need the notion of
{\em triples} for tuples of diagrams.  Although this will mainly come
into play when we prove the combinatorial formula for $\Gcal _{\nubold
,\sigma }$ in \S \ref{ss:proof}, it turns out that the algebraic
definition of $\Gcal _{\nubold ,\sigma }$ in \S \ref{ss:G-nu-sigma}
must depend on the number of triples of $\nubold $ in order for the
algebraic and combinatorial formulas to match.

For the purposes of the next definition, we allow $\nubold $ to be any
tuple of diagrams as defined in \S\ref{ss:diagrams}.  Adjoining the
boxes to the left and right of a row $\beta ^{(j)}_{r}/\alpha
^{(j)}_{r}$ gives an {\em extended row} $(\beta ^{(j)}_{r}+1)/(\alpha
^{(j)}_{r} - 1)$.  This includes the case of the two boxes to the left
and right of an empty row with $\alpha ^{(j)}_{r} =\beta ^{(j)}_{r}$.

\begin{defn}\label{def:triple}
A {\em triple of $\nubold $} consists of three boxes $x,y,z$ such that
\begin{itemize}
\item [(i)] $x$ and $z$ are adjacent boxes in an extended row $(\beta
^{(j)}_{r}+1)/(\alpha ^{(j)}_{r} - 1)$ of some component $\nu ^{(j)}$,
with $x$ left of $z$;
\item [(ii)] $y$ is a box in $\nu ^{(i)}$ for some $i<j$;
\item [(iii)] the contents of these boxes satisfy $c(y) = c(z)$
($=c(x)+1)$.
\end{itemize}
The number of triples of $\nubold $ is denoted $h(\nubold )$.
\end{defn}

This definition is similar to \cite[Definition 5.5.1]{BHMPS25a}, but
here we do not need triples twisted by a permutation.

We also need the number of certain special triples for flagged LLT
indexing data $\nubold ,\sigma $.

\begin{defn}\label{def:end-triples}
Let $\nubold $ be a tuple of ragged-right skew diagrams, $\sigma $ a
compatible permutation, encoded by flag numbers in the flag boxes, as
in Definition~\ref{def:flag-numbers}, and $\nubold '$ the diagram
consisting of $\nubold $ together with the flag boxes.  A {\em rising
end triple} is a triple $(x,y,z)$ of $\nubold '$ in which $y$ and $z$
are flag boxes, and the flag numbers $p$ in $y$ and $q$ in $z$ satisfy
$p<q$.  The number of rising end triples is denoted $e(\nubold ,\sigma
)$.
\end{defn}

Alternatively, taking $i,j$ to index the rows containing $y,z$ in a
rising end triple, we see that $e(\nubold ,\sigma )$ is the number of
pairs $i<j$ such that rows $i$ and $j$ are in different components of
$\nubold $, their end contents are equal, and $\sigma ^{-1}(i)>\sigma
^{-1}(j)$.

\subsection{Algebraic definition}
\label{ss:G-nu-sigma}

From here through the end of \S \ref{s:flagged-LLT}, all
\multisymmetric functions and (Laurent) polynomials have
coefficients in the field $\kk =\QQ (t)$.

\begin{defn}\label{def:G-nu-sigma}
The {\em flagged LLT polynomial} corresponding to flagged LLT indexing
data $\nubold ,\sigma $ {of length $l$} is the multi-symmetric
function
\begin{equation}\label{e:G-nu-sigma}
\Gcal _{\nubold , \sigma}(X_{1},\ldots,X_{l};\, t) = t^{h(\nubold ) +
e(\nubold ,\sigma )} \sum _{w\in \Sfrak _{\rr }} (-1)^{\ell (w)}
\langle F_{w(\gamma )}(z;\, t^{-1}) \rangle\, \overline{T_{\sigma }}\,
\Omega [\sum _{i\leq j} X_{i} /z_{w_{0}(j)} ] \, z^{\eta _{-}},
\end{equation}
where $(\rr ,\gamma ,\eta )$ are the weight data with diagram $\nubold
$, $w_{0}$ is the maximal permutation $w_{0}(j) = l+1-j$ in
$\Sfrak_{l}$, $\overline{T_{\sigma }}$ and $F_{\lambda }$ are as in \S
\S \ref{ss:hecke}--\ref{ss:ns-HL-pols}, and $h(\nubold )$ and
$e(\nubold ,\sigma )$ are as above in \S \ref{ss:triples}.
\end{defn}

The expression $\Omega [\sum _{i\leq j} X_{i} /z_{w_{0}(j)} ] \,
z^{\eta _{-}}$ in \eqref{e:G-nu-sigma} is a multi-symmetric series in
$X_{1},\ldots,X_{l}$ with coefficients in $\kk [\zz ^{\pm 1}]$.  The
operator $\overline{T_{\sigma }}$ acts on the variables $\zz =
z_{1},\ldots,z_{l}$ in the coefficients.  Since $\overline{T_{\sigma
}}$ is homogeneous of degree zero, only the homogeneous component of
$\Omega [\sum _{i\leq j} X_{i} /z_{w_{0}(j)} ] $ of degree $-|\nubold
|$ in $z$ and degree $|\nubold |$ in $X$ contributes to the
coefficient of $F_{w(\gamma )}(z;\, t^{-1})$.  By
\eqref{e:flagged-Cauchy}, the coefficients of $\Omega [\sum _{i\leq j}
X_{i} /z_{w_{0}(j)} ] $ with respect to $z$ belong to the space of
flagged symmetric functions $\FLambda  _{\kk }(X_{1},\ldots,X_{l})$,
giving the following result.

\begin{prop}\label{prop:G-nu-flagged}
The flagged LLT polynomial $\Gcal _{\nubold ,\sigma
}(X_{1},\ldots,X_{l};\, t)$ is a flagged symmetric function,
homogeneous of total degree $|\nubold |$ in the formal alphabets $X_{i}$.
\end{prop}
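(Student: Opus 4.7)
The plan is to read off both claims directly from the expansion of the series $\Omega[\sum_{i\leq j} X_i/z_{w_0(j)}]\, z^{\eta_-}$ in the monomial basis of the $z$ variables. Substituting $z'_j = z_{w_0(j)}$ in the flagged Cauchy identity \eqref{e:flagged-Cauchy} gives
\begin{equation*}
\Omega\bigl[\textstyle\sum_{i\leq j} X_i/z_{w_0(j)}\bigr] = \sum_{\aA \in \NN^l} z^{-w_0(\aA)}\, \fh_\aA(X_1,\ldots,X_l),
\end{equation*}
where $w_0(\aA) = (a_l,\ldots,a_1)$. Multiplying by $z^{\eta_-}$ then exhibits the series as a formal power series in $z_1^{-1},\ldots,z_l^{-1}$ whose coefficients lie in $\FLambda_\kk(X_1,\ldots,X_l)$.

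Since the Demazure--Lusztig operators $T_i$ act on the $z$-variables alone, $\overline{T_\sigma}$ commutes termwise with the scalar coefficients $\fh_\aA(X)$, and the definition \eqref{e:G-nu-sigma} expands as
\begin{equation*}
\Gcal_{\nubold,\sigma}(X_1,\ldots,X_l;t) = t^{h(\nubold)+e(\nubold,\sigma)} \sum_{w\in\Sfrak_\rr} (-1)^{\ell(w)} \sum_{\aA\in\NN^l} \bigl\langle F_{w(\gamma)}(z;t^{-1})\bigr\rangle\, \overline{T_\sigma}\bigl(z^{\eta_- - w_0(\aA)}\bigr)\, \fh_\aA(X_1,\ldots,X_l),
\end{equation*}
a $\kk$-linear combination of the basis elements $\fh_\aA$ of $\FLambda_\kk(X_1,\ldots,X_l)$. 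This proves that $\Gcal_{\nubold,\sigma}$ is a flagged symmetric function.

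For homogeneity, I will observe that each $T_i$ is homogeneous of total degree $0$ in $z$, hence so is $\overline{T_\sigma}$; and $F_{w(\gamma)}(z;t^{-1})$ is homogeneous in $z$ of degree $|w(\gamma)|=|\gamma|$. Therefore the extracted coefficient $\langle F_{w(\gamma)}(z;t^{-1})\rangle\, \overline{T_\sigma}(z^{\eta_- - w_0(\aA)})$ vanishes unless $|\eta_- - w_0(\aA)| = |\eta|-|\aA|$ equals $|\gamma|$, i.e., unless $|\aA|=|\eta|-|\gamma| = |\nubold|$. Hence only $\fh_\aA$ with $|\aA|=|\nubold|$ contribute, yielding total $X$-degree $|\nubold|$. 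There is no real obstacle: the only minor subtlety is that $\overline{T_\sigma}$ is applied to an a priori infinite power series in $z^{-1}$, which is handled termwise, with the subsequent coefficient extraction automatically truncating to the finite set of $\aA$ of size $|\nubold|$.
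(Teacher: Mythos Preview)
Your proof is correct and follows essentially the same approach as the paper: both arguments invoke the flagged Cauchy identity \eqref{e:flagged-Cauchy} to see that the $z$-coefficients of $\Omega[\sum_{i\leq j} X_i/z_{w_0(j)}]$ lie in $\FLambda_\kk(X_1,\ldots,X_l)$, note that $\overline{T_\sigma}$ acts only on the $z$ variables and is homogeneous of degree zero, and conclude that only the component of $X$-degree $|\nubold| = |\eta|-|\gamma|$ survives the coefficient extraction. Your version spells out the monomial expansion more explicitly, but the argument is the same.
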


The definition implies that $\Gcal _{\nubold ,\sigma
}(X_{1},\ldots,X_{l};\, t)$ has coefficients in the subring $\ZZ
[t^{\pm 1}]\subseteq \kk $.  The combinatorial formula in the next
subsection further implies that its coefficients with respect to the
basis of monomial multi-symmetric functions are in $\NN [t]$---see
Remark~\ref{rem:monomial-positive}.

Orthogonality of nonsymmetric Hall-Littlewood polynomials yields an
equivalent alternative version of \eqref{e:G-nu-sigma}.

\begin{prop}\label{prop:G-nu-sigma-bis}
In terms of the inner product $\langle -,- \rangle _{t}$ in
\eqref{e:t-inner}, we have
\begin{multline}\label{e:G-nu-sigma-bis}
t^{h(\nubold ) + e (\nubold ,\sigma )} \Gcal _{\nubold ,
\sigma}(X_{1},\ldots,X_{l};\, t^{-1}) = \\
\bigl\langle\, z^{\eta _{+}}\, T_{w_{0}\, \sigma ^{-1} \, w_{0}}\,
w_{0} \sum _{w\in \Sfrak _{\rr }} (-1)^{\ell (w)} \,
\overline{E_{w(\gamma )}(z;\, t)}, \, \Omega [\sum _{i\leq j} X_{i}
/z_{j}]\, \bigr\rangle _{t}.
\end{multline}
\end{prop}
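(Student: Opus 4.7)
The plan is to start from the defining formula \eqref{e:G-nu-sigma}, substitute $t\mapsto t^{-1}$, and transform the resulting coefficient extraction into the inner-product form \eqref{e:G-nu-sigma-bis} by combining the duality of Lemma \ref{lem:t-dual-bases}(b) with the self-adjointness of Lemma \ref{lem:Ti-self-adjoint}.

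First I would identify the operator obtained from $\overline{T_\sigma}$ by formally substituting $t\mapsto t^{-1}$ in its defining expression. Writing out $\overline{T_i} = t^{-1} s_i + (1-t^{-1})/(1-z_{i+1}/z_i)\,(s_i-1)$ and substituting $t\mapsto t^{-1}$ gives $t s_i + (1-t)/(1-z_{i+1}/z_i)\,(s_i-1)$. A short computation, using $w_0 s_i w_0 = s_{l-i}$ and $w_0(z_j/z_{j+1}) = z_{l+1-j}/z_{l-j}$, shows that this operator equals $w_0\, T_{l-i}\, w_0$. Extending multiplicatively along any reduced expression of $\sigma$ gives
$\overline{T_\sigma}\big|_{t\mapsto t^{-1}} = w_0\, T_{w_0\sigma w_0}\, w_0.$
Pushing the outer $w_0$ through the remaining factors sends $\Omega[\sum_{i\le j}X_i/z_{w_0(j)}]\, z^{\eta_-}$ to $\Omega[\sum_{i\le j}X_i/z_j]\, z^{\eta_+}$, since $w_0$ is an involution on the variables and $\eta_+$ is the reversal of the antidominant $\eta_-$. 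At this stage \eqref{e:G-nu-sigma} rewrites as
\[
t^{h(\nubold)+e(\nubold,\sigma)} \Gcal_{\nubold,\sigma}(X;t^{-1}) = \sum_{w\in\Sfrak_\rr} (-1)^{\ell(w)}\, \langle F_{w(\gamma)}(z;t)\rangle \; w_0\, T_{w_0\sigma w_0}\bigl(z^{\eta_+}\, \Omega[\sum_{i\le j}X_i/z_j]\bigr).
\]

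Next I would convert the coefficient extraction into an inner product. Since $w_0$ permutes the basis $\{F_\lambda(z;t)\}$, we have $\langle F_\lambda(z;t)\rangle\, w_0 h = \langle w_0 F_\lambda(z;t)\rangle\, h$, where the right side extracts in the basis $\{w_0 F_\mu(z;t)\}$; and by Lemma \ref{lem:t-dual-bases}(b) this equals $\langle w_0 \overline{E_\lambda(z;t)},\, h\rangle_t$. Applying this with $\lambda = w(\gamma)$ pulls the outer $w_0$ off and places $w_0\overline{E_{w(\gamma)}(z;t)}$ in the left slot of $\langle -,-\rangle_t$.

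Finally I would migrate the remaining operators across the inner product. Lemma \ref{lem:Ti-self-adjoint}, iterated along a reduced expression, yields $\langle f,\, T_{w_0\sigma w_0}\, g\rangle_t = \langle T_{w_0\sigma^{-1}w_0}\, f,\, g\rangle_t$, and multiplication by $z^{\eta_+}$ is manifestly self-adjoint for $\langle -,-\rangle_t$. Moving both factors into the left slot and interchanging the sum with the inner product produces exactly the right-hand side of \eqref{e:G-nu-sigma-bis}. The hard part will be the conjugation identity $\overline{T_\sigma}|_{t\mapsto t^{-1}} = w_0\, T_{w_0\sigma w_0}\, w_0$, which requires careful bookkeeping of how the bar operation interacts with conjugation by $w_0$; everything downstream is routine duality and adjointness manipulation.
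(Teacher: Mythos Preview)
Your proposal is correct and follows essentially the same approach as the paper's proof: substitute $t\mapsto t^{-1}$, use the conjugation identity $\overline{T_i}\big|_{t\mapsto t^{-1}} = w_0\, T_{l-i}\, w_0$ (equivalently, the paper's $w_0\,\overline{T_i}\,w_0\big|_{t\mapsto t^{-1}} = T_{l-i}$), apply the duality of Lemma~\ref{lem:t-dual-bases}(b), and move operators across the inner product via Lemma~\ref{lem:Ti-self-adjoint}. The paper packages the first two steps as a simultaneous change of variables $t\mapsto t^{-1}$, $z_i\mapsto z_{w_0(i)}$, whereas you do $t\mapsto t^{-1}$ first and then push the explicit $w_0$'s around, but the content is identical. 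One phrasing to tighten: $w_0$ does not literally \emph{permute} the $F_\lambda(z;t)$; what you actually use (and correctly justify in the next clause) is that $\{w_0 F_\mu(z;t)\}$ is again a basis, so extracting the coefficient of $F_\lambda$ in $w_0 h$ equals extracting the coefficient of $w_0 F_\lambda$ in $h$.
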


\begin{proof}
After making a change of variables $t\mapsto t^{-1}$,
$z_{i}\rightarrow z_{w_{0}(i)}$ in \eqref{e:G-nu-sigma}, this follows
from $\eta _{+} = w_{0}(\eta _{-})$ and the properties that $w_{0}\,
F_{\lambda }(z; t)$ and $w_{0}\, \overline{E_{\lambda }(z; t)}$ are
dual bases (Lemma~\ref{lem:t-dual-bases}(b)), that $T_{i}$ is
self-adjoint (Lemma~\ref{lem:Ti-self-adjoint}), and that $w_{0}\,
\overline{T_{i}}\, w_{0}\, |_{t\rightarrow t^{-1}} = T_{l-i}$,
which is immediate from the definition \eqref{e:Ti} of $T_{i}$, and
implies that the adjoint of $w_{0}\, \overline{T_{\sigma }}\, w_{0}\,
|_{t\rightarrow t^{-1}}$ is $T_{w_{0}\, \sigma ^{-1} \, w_{0}}$.
\end{proof}

Since $\Gcal _{\nubold ,\sigma}(X_{1},\ldots,X_{l};\, t)$ is a flagged
symmetric function, it is determined by its specialization $\Gcal
_{\nubold ,\sigma}[x_{1},\ldots,x_{l};\, t]$ to single-variable
alphabets $X_{i} = x_{i}$, using Lemma~\ref{lem:X=x}.
Propositions~\ref{prop:PiA-via-inner-product} and
\ref{prop:G-nu-sigma-bis} yield the following formula for this
specialization in terms of nonsymmetric plethysm $\Pi _{t,x}$.

\begin{cor}\label{cor:G-nu-sigma-ter}
The specialization of $\Gcal _{\nubold ,\sigma}(X_{1},\ldots,X_{l};\,
t)$ to single-variable alphabets is given by
\begin{equation}\label{e:G-nu-sigma-ter}
t^{h(\nubold ) + e (\nubold ,\sigma )} \Gcal _{\nubold ,
\sigma}[x_{1},\ldots,x_{l};\, t^{-1}] = \Pi _{t,x}\, x^{\eta _{+}}\,
T_{w_{0}\, \sigma ^{-1} \, w_{0}}\, w_{0} \sum _{w\in \Sfrak _{\rr }}
(-1)^{\ell (w)} \, \overline{E_{w(\gamma )}(x;\, t)},
\end{equation}
where the Hecke algebra operator $T_{w_{0}\, \sigma ^{-1} \, w_{0}} $
acts on $\kk [x_{1}^{\pm 1},\ldots, x_{l}^{\pm 1}]$.  Equivalently, by
Proposition~\ref{prop:PiA-via-pol},
\begin{equation}\label{e:G-nu-sigma-ter-pol}
t^{h(\nubold ) + e (\nubold ,\sigma )} \Gcal _{\nubold ,
\sigma}[x_{1},\ldots,x_{l};\, t^{-1}] = \Bigl( \frac{x^{\eta _{+}}\,
T_{w_{0}\, \sigma ^{-1} \, w_{0}}\, w_{0} \sum _{w\in \Sfrak _{\rr }}
(-1)^{\ell (w)} \, \overline{E_{w(\gamma )}(x;\, t)}}{\prod _{i<j} (1-
t\, x_{i} / x_{j})} \Bigr)_{\pol }.
\end{equation}
\end{cor}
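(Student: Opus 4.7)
The plan is to derive both identities as direct consequences of Proposition~\ref{prop:G-nu-sigma-bis} combined with the inner-product and polynomial-part characterizations of nonsymmetric plethysm from \S\ref{ss:PiA-via-inner}--\S\ref{ss:PiA-via-pol}, after specializing the formal parameter $A$ to $t$ via Lemma~\ref{lem:Pit-PiA}.

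First I will specialize each formal alphabet $X_i$ to the single variable $x_i$ in formula~\eqref{e:G-nu-sigma-bis}. By Lemma~\ref{lem:X=x} the left hand side becomes $t^{h(\nubold)+e(\nubold,\sigma)}\,\Gcal_{\nubold,\sigma}[x_1,\ldots,x_l;\,t^{-1}]$, while on the right the only $X$-dependence is through the factor $\Omega[\sum_{i\leq j} X_i/z_j]$, which becomes $\Omega[\sum_{i\leq j} x_i/z_j]$. Setting
\[
f(z) = z^{\eta_+}\, T_{w_0\sigma^{-1}w_0}\, w_0 \sum_{w\in\Sfrak_\rr}(-1)^{\ell(w)}\, \overline{E_{w(\gamma)}(z;\,t)},
\]
the specialized identity reads $t^{h(\nubold)+e(\nubold,\sigma)}\,\Gcal_{\nubold,\sigma}[x_1,\ldots,x_l;\,t^{-1}] = \langle f(z),\, \Omega[\sum_{i\leq j}x_i/z_j]\rangle_t$. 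Next I will invoke Proposition~\ref{prop:PiA-via-inner-product}, specialized to $A=t$, to recognize the right hand side as $\Pi_{t,x}\, f(x)$, which is precisely formula~\eqref{e:G-nu-sigma-ter}. The equivalent polynomial-part form~\eqref{e:G-nu-sigma-ter-pol} then follows directly from Proposition~\ref{prop:PiA-via-pol} (also at $A=t$), using $\Omega[t\sum_{i<j} x_i/x_j] = \prod_{i<j}(1-tx_i/x_j)^{-1}$.

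The only technical point that requires care is that $f(z)$ is in general a Laurent polynomial rather than an ordinary polynomial, since $\eta$ and $\gamma$ may have negative entries, so Propositions~\ref{prop:PiA-via-inner-product} and~\ref{prop:PiA-via-pol} do not apply verbatim. This is harmless: the proofs of these two propositions are formal consequences of the reproducing-kernel property established in Lemma~\ref{lem:reproducing-kernel} and the Cauchy identity for Demazure characters and atoms, both of which are valid on the entire Laurent polynomial ring $\kk[x_1^{\pm1},\ldots,x_l^{\pm1}]$. One may therefore extend $\Pi_{t,x}$ to Laurent polynomials by either the inner-product formula or the polynomial-part formula, the two extensions agreeing with the original definition on $\kk[x_1,\ldots,x_l]$; the corollary is then exactly the chain of identities sketched above. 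No substantive obstacle arises, making this essentially a bookkeeping exercise translating the content of Proposition~\ref{prop:G-nu-sigma-bis} through the dictionary of \S\ref{s:ns-plethysm}.
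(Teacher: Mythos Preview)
Your main argument—specialize Proposition~\ref{prop:G-nu-sigma-bis} to $X_i=x_i$, then invoke Proposition~\ref{prop:PiA-via-inner-product} and Proposition~\ref{prop:PiA-via-pol} at $A=t$—is exactly the paper's approach.

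Your observation that $f$ may only be a Laurent polynomial is a point the paper itself glosses over, but your resolution misfires: Lemma~\ref{lem:reproducing-kernel} does \emph{not} extend to Laurent polynomials.  Its proof uses the polynomial hypothesis explicitly (to ensure a power series in $z_1$), and indeed for $f(z)=z_1^{-1}$ in one variable the right hand side of \eqref{e:reproducing-kernel} returns $0$, not $z_1^{-1}$.  What \emph{does} extend is Lemma~\ref{lem:pol-part}, which is stated and proved for all of $\Rfrak[\xx^{\pm 1}]$.  Combining it with \eqref{e:A-inner-product-2}, the chain of equalities in \eqref{e:PiA-via-pol-proof} goes through verbatim for any Laurent polynomial $f$, giving
\[
\bigl\langle f(z),\ \Omega[\textstyle\sum_{i\le j}x_i/z_j]\bigr\rangle_t \;=\; \bigl(f(x)\,\Omega[t\textstyle\sum_{i<j}x_i/x_j]\bigr)_{\pol}
\]
in general, and both sides agree with $\Pi_{t,x}f(x)$ when $f$ is a polynomial.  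One may take either side as the meaning of $\Pi_{t,x}f(x)$ for Laurent $f$; with this understood, \eqref{e:G-nu-sigma-ter} and \eqref{e:G-nu-sigma-ter-pol} follow exactly as you outline.
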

\begin{remark}\label{rem:flip-E}
The functions $w_{0}\, \overline{E_{\lambda }(x;t)} $ that appear in
\eqref{e:G-nu-sigma-bis}--\eqref{e:G-nu-sigma-ter-pol} are also given
by $w_{0}\, \overline{E_{\lambda }(x;t)} = E_{-w_{0}(\lambda )}(x;
t^{-1})$
\end{remark}

As an important special case, when \(\rr=(1^l)\) and all the end
contents of \(\nubold\) are equal, we have \(\rho_\rr = (0^l)\), every
permutation $\sigma$ is compatible, the standard compatible
permutation is the identity, and the flagged LLT polynomial $\Gcal
_{\nubold, \sigma}$ becomes essentially the nonsymmetric plethysm
$\Pi _{t,x}$ applied to a nonsymmetric Hall-Littlewood polynomial.

\begin{cor}\label{cor:single-rows-G}
Let \(\nubold =
((d/\alpha_1),\ldots,(d/\alpha_l))\) be a tuple of single-row diagrams
with equal end contents $d$ (for any integer $d$).  Set
\(\lambda = (\lambda_1,\ldots,\lambda_l)\) where \(\lambda_i = d -
\alpha_i\).  Then, for any \(\sigma \in \Sfrak_l\),
\begin{equation}\label{e:general-rows-G}
t^{h(\nubold)+\inv(\sigma)}\, \Gcal_{\nubold,
\sigma}[x_1,\ldots,x_l;t^{-1}] = \Pi_{t,x} \, T_{w_0 \sigma^{-1}
w_0}\, E_{w_0(\lambda)}(x;t^{-1}) \,.
\end{equation}
In particular, if $\sigma $ is the identity, then
\begin{equation}\label{e:rows-G}
t^{h(\nubold)}\, \Gcal_{\nubold, \sigma}[x_1,\ldots,x_l;t^{-1}] =
\Pi_{t,x}\, E_{w_0(\lambda)}(x;t^{-1}) \,.
\end{equation}
\end{cor}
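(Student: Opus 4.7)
The strategy is to specialize Corollary~\ref{cor:G-nu-sigma-ter} to the present situation, where every simplification happens because each component $\nu^{(i)}$ is a single row.

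First I would unwind the indexing data. Since $\nubold = ((d/\alpha_1),\ldots,(d/\alpha_l))$ has one row per component, $\rr = (1^l)$, so $\Sfrak_\rr$ is trivial and $\rho_\rr = 0$. Hence $\gamma = \alpha = (\alpha_1,\ldots,\alpha_l)$ and $\eta = \beta = (d^l)$. Since $\eta$ is $\Sfrak_l$-invariant, $\eta_+ = \eta_- = (d^l)$, condition (iii) of Definition~\ref{def:indexing-data} is automatic for every $\sigma$, and condition (iv) is vacuous because $\Sfrak_\rr = \{e\}$; every $\sigma \in \Sfrak_l$ is compatible. Plugging $\rr = (1^l)$ and $\gamma = \alpha$ into \eqref{e:G-nu-sigma-ter} and noting that the sum over $w \in \Sfrak_\rr$ collapses to its identity term yields
\begin{equation}\label{e:plan-after-collapse}
t^{h(\nubold) + e(\nubold,\sigma)} \Gcal_{\nubold,\sigma}[x_1,\ldots,x_l;t^{-1}] = \Pi_{t,x}\, x^{(d^l)}\, T_{w_0 \sigma^{-1} w_0}\, w_0\, \overline{E_{\alpha}(x;t)}.
\end{equation}

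Next I would convert $w_0\, \overline{E_{\alpha}(x;t)}$ to an $E$-polynomial in $t^{-1}$. By Remark~\ref{rem:flip-E}, $w_0\, \overline{E_{\alpha}(x;t)} = E_{-w_0(\alpha)}(x;t^{-1})$. Because $x^{(d^l)} = (x_1\cdots x_l)^d$ is symmetric it commutes past $T_{w_0\sigma^{-1}w_0}$, and by the analogue for $E_\lambda$ of Lemma~\ref{lem:F-basics}(a) (which follows directly from \eqref{e:ns-HL-E} since $(x_1\cdots x_l)^d$ commutes with every $\overline{T_w}$), multiplication by $x^{(d^l)}$ shifts the index by $(d^l)$. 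Since $\lambda_i = d - \alpha_i$, we have $-w_0(\alpha) + (d^l) = w_0((d^l)-\alpha) = w_0(\lambda)$, so
\[
x^{(d^l)} E_{-w_0(\alpha)}(x;t^{-1}) = E_{w_0(\lambda)}(x;t^{-1}),
\]
and \eqref{e:plan-after-collapse} becomes the right-hand side of \eqref{e:general-rows-G} up to verifying the exponent of $t$.

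The remaining step is the combinatorial identification $e(\nubold,\sigma) = \inv(\sigma)$. In the present setting, every pair of rows $i < j$ lies in distinct components, and all end contents equal $d$, so by Definition~\ref{def:end-triples} every pair $i<j$ contributes to $e(\nubold,\sigma)$ exactly when $\sigma^{-1}(i) > \sigma^{-1}(j)$. Thus $e(\nubold,\sigma) = \inv(\sigma^{-1}) = \inv(\sigma)$, yielding \eqref{e:general-rows-G}. The special case $\sigma = e$ in \eqref{e:rows-G} is immediate since $T_{w_0 e\, w_0} = T_e$ is the identity and $\inv(e) = 0$.

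The main thing to be careful about is the bookkeeping of shifts and the direction of the $w_0$ action, making sure that the identity $-w_0(\alpha) + (d^l) = w_0(\lambda)$ is applied with the correct sign conventions; otherwise the argument is a direct specialization of Corollary~\ref{cor:G-nu-sigma-ter}.
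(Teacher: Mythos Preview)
Your proof is correct and follows essentially the same approach as the paper: specialize Corollary~\ref{cor:G-nu-sigma-ter} with $\rr=(1^l)$, $\eta=(d^l)$, $\gamma=\alpha$, invoke Remark~\ref{rem:flip-E}, and check $e(\nubold,\sigma)=\inv(\sigma)$. You spell out the shift identity $x^{(d^l)}E_{-w_0(\alpha)}(x;t^{-1})=E_{w_0(\lambda)}(x;t^{-1})$ and the commutation of $x^{(d^l)}$ past $T_{w_0\sigma^{-1}w_0}$ more explicitly than the paper does, but the argument is the same.
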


\begin{proof}
Apply Corollary~\ref{cor:G-nu-sigma-ter} with \(\beta = \eta = (d^l)\)
and \(\alpha = \gamma = (d-\lambda_1,\ldots,d-\lambda_l)\) and use
Remark~\ref{rem:flip-E}.  By definition, \(e(\nubold,\sigma) =
\inv(\sigma)\) when each shape has a single flag box and all the flag
boxes have the same content.  Equation \eqref{e:general-rows-G} then
follows.
\end{proof}

\subsection{Combinatorial formula}
\label{ss:combinatorial-formula}

Theorem~\ref{thm:combinatorial-G-nu-sigma}, below, gives a
combinatorial formula for $\Gcal _{\nubold ,\sigma
}(X_{1},\ldots,X_{l};\, t)$ with the formal alphabets $X_{i}$
specialized to (signed) sums of variables indexed by subsets of a
(signed) alphabet $\Acal $.  Like \eqref{e:classical-LLT},
\eqref{e:signed-classical-LLT} for symmetric LLT polynomials, our
formula is a weighted sum over tableaux---in this case, over {\em
flagged tableaux}.  Before stating the theorem, we define these
tableaux.

\begin{defn}\label{def:i-admissible}
Let $\nubold ,\sigma $ be flagged LLT indexing data, interpreted as a
tuple of ragged-right skew diagrams $\nubold $ with $\sigma $ encoded
by flag numbers in the flag boxes (Definition~\ref{def:flag-numbers}).
A box $x$ in a component $\nu ^{(j)}$ of $\nubold $ is {\em
$i$-admissible} if
\begin{itemize}
\item [(i)] $i$ is less than or equal to the flag number in the row
containing $x$, and
\item [(ii)] $i$ is greater than the flag number in any flag box below
$x$ in the column of $\nu^{(j)}$ containing $x$
\end{itemize}
(condition (ii) holds vacuously if $\nu ^{(j)}$ is an ordinary skew
diagram).
\end{defn}

Figure~\ref{fig:flagged-LLT-2} illustrates $i$-admissible regions.
The following lemma may help clarify the picture.

\begin{lemma}\label{lem:i-admissible}
Given a component $\nu ^{(j)}$ of $\nubold $, let $\theta \subseteq
\nu ^{(j)}$ be the subdiagram consisting of rows with flag number
greater than or equal to $i$.  The set of $i$-admissible boxes in $\nu
^{(j)}$ is the largest ordinary skew diagram contained in $\theta $,
i.e., the skew diagram obtained by deleting all boxes of $\theta $
that overhang the end of a lower row in $\theta $.
\end{lemma}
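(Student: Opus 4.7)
The plan is to unpack Definition~\ref{def:i-admissible} in terms of the coordinates $\beta^{(j)}_{r'}$, use compatibility of $\sigma$ with $\nubold$ to pin down the shape of $\theta$ inside $\nu^{(j)}$, and then match the resulting description against the largest ordinary skew diagram contained in $\theta$ via a ragged-right path argument.

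To begin, the flag box of row $r'$ of $\nu^{(j)}$ lies in column $\beta^{(j)}_{r'}+1$, so the flag boxes in the column of $x=(c,r)$ strictly below $x$ correspond exactly to rows $r'<r$ with $\beta^{(j)}_{r'}=c-1$. Condition~(ii) of Definition~\ref{def:i-admissible} therefore reads: \emph{no $r'<r$ in $\theta$ satisfies $\beta^{(j)}_{r'}=c-1$}, while (i) is just $r\in\theta$. Next, within $\nu^{(j)}$ the end contents $\beta^{(j)}_r+1-r$ are weakly decreasing in $r$ by the ragged-right condition $\beta^{(j)}_{r+1}\leq\beta^{(j)}_r+1$. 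Combining this with Lemma~\ref{lem:flag-numbers}(a)—the same-component equal-content clause handles overhanging runs, the decreasing-in-content clause handles strict drops in end content—flag numbers are strictly increasing in the row index throughout $\nu^{(j)}$. Hence $\theta\cap\nu^{(j)}$ is a top-suffix $\{r_0,r_0+1,\ldots,r_{\max}\}$ of rows.

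Given this contiguity, the largest ordinary skew diagram contained in $\theta$ is obtained by truncating each row $r\in\theta$ to $\beta'_r=\min_{r_0\leq r'\leq r}\beta^{(j)}_{r'}$; equivalently, its boxes are those $(c,r)\in\nu^{(j)}$ with $r\in\theta$ and $c\leq\beta^{(j)}_{r'}$ for every $r'\in[r_0,r]$—that is, $(c,r)$ does not overhang the right end of any lower row in $\theta$. (No truncation on the $\alpha$ side is needed, since $\alpha^{(j)}$ is already weakly decreasing.) Equivalence with the rephrased~(ii) is then a short check: the implication $c\leq\beta^{(j)}_{r'}$ rules out $\beta^{(j)}_{r'}=c-1$, and conversely, if some $r'\in[r_0,r-1]$ has $\beta^{(j)}_{r'}<c$, ragged-right forces the sequence $\beta^{(j)}_{r'},\ldots,\beta^{(j)}_r$—which ends at a value $\geq c$ and has jumps of at most $1$—to pass through the value $c-1$ at some intermediate row $r^*\in[r',r-1]\subseteq\theta$, violating (ii).

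The main subtle point is the contiguity of $\theta$ within $\nu^{(j)}$: it is exactly the compatibility of $\sigma$ with $\nubold$—in particular the forced strict increase of flag numbers along overhanging runs of equal end content—that guarantees it. Without contiguity, condition~(ii) (which tests only $\beta^{(j)}_{r'}=c-1$) would be strictly weaker than the ordinary-skew truncation (which tests $\beta^{(j)}_{r'}<c$), and the lemma would fail; the ragged-right path argument also uses contiguity to conclude that the intermediate crossing row $r^*$ lies in $\theta$.
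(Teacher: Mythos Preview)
Your proof is correct and follows essentially the same outline as the paper's: first use compatibility of $\sigma$ (equivalently, Remark~\ref{rem:flag-numbers}(i)) to show that $\theta$ is a contiguous block of top rows of $\nu^{(j)}$, then identify conditions (i) and (ii) with the description of the largest ordinary skew subdiagram of $\theta$. You are in fact more careful than the paper at the last step: the paper concludes with ``the $i$-admissible boxes are the boxes in $\theta$ that do not overhang any flag box in $\theta$,'' leaving the equivalence with ``do not overhang the end of any lower row of $\theta$'' implicit, whereas your ragged-right path argument (the sequence $\beta^{(j)}_{r'},\ldots,\beta^{(j)}_r$ has upward jumps of at most one, so it must hit $c-1$) supplies exactly that missing equivalence.
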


\begin{proof}
If the flag number in every row of $\nu ^{(j)}$ is greater than $i$,
then $\theta $ is empty, and there are no $i$-admissible boxes in $\nu
^{(j)}$.  Otherwise, let row $r$ be the first row of $\nu ^{(j)}$
which contains a flag number greater than or equal to $i$.  Then, by
Remark~\ref{rem:flag-numbers}(i), $\theta $ consists of rows $r$ and
above.  In particular, $\theta $ is a ragged-right skew diagram.  It
is straightforward to see that every ragged-right skew diagram
contains a largest ordinary skew diagram, obtained by deleting all
overhanging boxes.

By construction, a box $x$ in $\nu ^{(j)}$ satisfies condition (i) if
and only if $x\in \theta $, and $x$ satisfies condition (ii) if and
only if no flag box in a row of $\theta $ is below $x$ in the same
column.  Thus, the $i$-admissible boxes in $\nu ^{(j)}$ are the boxes
in $\theta $ that do not overhang any flag box in $\theta $.
\end{proof}

\begin{figure}
\begin{tikzpicture}[scale=.8]
\draw[fill=gray!12] (4,0)--(4,2)--(2,2)--(2,3)--(0,3)--(0,1)--(2,1)--(2,0)
 --cycle;
\draw[fill=gray!12] (5,4)--(5,5)--(6,5)--(6,6)--(5,6)--(5,7)--(6,7)--(6,8)
 --(0,8)--(0,7) --(1,7)--(1,5)--(2,5)--(2,4)--cycle;
\node at (2.5,2.5) {$\flagwave{7}$};
\node at (6.5,7.5) {$\flagwave{6}$};
\node at (4.5,1.5) {$\flagwave{5}$};
\node at (5.5,6.5) {$\flagwave{4}$};
\node at (4.5,0.5) {$\flagwave{3}$};
\node at (6.5,5.5) {$\flagwave{2}$};
\node at (5.5,4.5) {$\flagwave{1}$};

\draw[dashed] (1,7)--(5,7)--(5,8);
\node at (5.5,7.5) {$[5,\!6]$};
\node at (2.5,7.5) {$\leq 6$};
\draw[dashed] (1,6)--(5,6)--(5,7);
\node at (3,6.5) {$\leq 4$};
\draw[dashed] (2,5)--(5,5)--(5,6);
\node at (3,5.5) {$\leq 2$};
\node at (5.5,5.5) {$2$};
\node at (3.5,4.5) {$1$};

\draw[dashed] (0,2)--(2,2);
\node at (1,2.5) {$\leq 7$};
\draw[dashed] (2,1)--(4,1);
\node at (2,1.5) {$\leq 5$};
\node at (3,0.5) {$\leq 3$};
\end{tikzpicture}
\caption{\label{fig:flagged-LLT-2}Admissible regions for the indexing
data $\nubold , \sigma $ in Fig.~\ref{fig:flagged-LLT}.  The
$i$-admissible region is where $i$ belongs to the indicated range of
values.}
\end{figure}

\begin{defn}\label{def:FSST}
Let $\nubold ,\sigma $ be flagged LLT indexing data of length $l$. Let
$\Acal _{1}<\cdots <\Acal _{l}$ be subsets of a signed alphabet $\Acal
$, where $\Acal _{i}<\Acal _{j}$ means $a<b$ for all $a\in \Acal
_{i}$, $b\in \Acal _{j}$.  A {\em flagged tableau} $T$ on $\nubold $
is a filling $T\colon \nubold \rightarrow \bigcup _{i}\Acal _{i}$ such
that
\begin{itemize}
\item [(i)] every row of $T$ is weakly increasing with no repeated
negative letter;
\item [(ii)] within each component $\nu ^{(j)}$, every column of $T$
is weakly increasing with no repeated positive letter;
\item [(iii)] if $T(x)\in \Acal _{i}$, the box $x$ is $i$-admissible.
\end{itemize}
For brevity, we denote the set of flagged tableaux on $\nubold $ by
$\FSST (\nubold, \sigma ,\Acal )$, although it also depends on the
subsets $\Acal _{i}$.  We also use the term {\em flagged semistandard
tableau} for a flagged tableau with entries in an alphabet $\Acal
=\Acal ^{+}$ with only positive letters.
\end{defn}

\begin{remark}\label{rem:FSST}
Suppose that $x$ and $y$ are in the the same column of a component
$\nu ^{(j)}$, and are separated by a flag box $v$ in that column, with
$x$ below $v$ below $y$.  Then the flag numbers $p$ in the row of $x$
and $q$ in $v$ satisfy $p<q$.  By condition (iii), if $T(x)\in \Acal
_{i}$, then $i\leq p<q$, while if $T(y)\in \Acal _{i'}$, then $i'>q$,
hence $T(x)<T(y)$.  Thus, given that $T$ satisfies condition (iii),
condition (ii) reduces to the condition that $T$ is weakly increasing
with no repeated positive letters on each partial column of $\nu
^{(j)}$ consisting of contiguous boxes not separated by a flag box.
\end{remark}

The following alternative point of view on the definition of flagged
tableaux is often useful.  Assume that $\Acal $ contains letters
$b_{1},\ldots,b_{l}$, or adjoin them if needed, ordered so that
\begin{equation}\label{e:A-with-bi}
\Acal _{1} \sgnle b_{1} \sgnlt \Acal _{2} \sgnle b_{2} \sgnlt \cdots
\sgnle b_{l-1} \sgnlt \Acal _{l} \sgnle b_{l}
\end{equation}
in the notation of \eqref{e:inv-signed}.  More precisely---to
be unambiguous in case some of the $\Acal
_{i}$ are empty---we require that
\begin{equation}\label{e:A-with-bi-better}
\Acal _{1} \cup \cdots \cup  \Acal _{j} \sgnle b_{j} \sgnlt \Acal _{j+1}
\cup  \cdots \cup  \Acal _{l}.
\end{equation}
for all $j$.  In particular, these conditions hold if $b_{j}$ is
positive and is the largest letter in $\Acal _{1} \cup \cdots \cup
\Acal _{j}$, of if $b_{j}$ is negative and is the smallest letter in $
\Acal _{j+1}\cup \cdots \cup \Acal _{l}$.

Let $\nubold '$ be the extended tuple of ragged-right skew diagrams
consisting of $\nubold $ together with the flag box in each row, and
for any filling $T\colon \nubold \rightarrow \bigcup _{i}\Acal _{i}$,
define $T'\colon \nubold '\rightarrow \Acal $ by
\begin{equation}\label{e:T-extended}
T'(x)=\begin{cases}
T(x)&	\text{if $x \in \nubold$}\\
b_{j}&	\text{if $x$ is the flag box containing flag number $j$}.
\end{cases}
\end{equation}
By construction, for $a\in \Acal _{i}$, we have $a \sgnle b_{j}$ if
and only if $i\leq j$, and $a \sgngt b_{j}$ if and only if $i>j$.
Using this and Remark~\ref{rem:FSST}, we get the following
characterization of flagged tableaux in terms of the extended tableau
$T'$.

\begin{lemma}\label{lem:T-prime}
A filling $T\colon \nubold \rightarrow \bigcup _{i}\Acal _{i}$ is a
flagged tableaux if and only if $T'$ is a {\em ragged-right super
tableau} on $\nubold '$, meaning that for adjacent boxes $x, y$ in the
same row of $\nubold '$, with $x$ left of $y$, we have $T'(x) \sgnle
T'(y)$, and for adjacent boxes $x, y$ in the same column of any
component $\nu '\, ^{(j)}$, with $x$ below $y$ and $y$ not a flag box,
we have $T'(x) \sgnlt T'(y)$.
\end{lemma}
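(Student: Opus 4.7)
The plan is to verify the equivalence by translating each of the flagged-tableau conditions into a condition on the extended filling $T'$, using the translation dictionary furnished by \eqref{e:A-with-bi-better}: for every $a \in \bigcup_i \Acal_i$ and every $j$, we have $a \sgnle b_j$ iff $a \in \Acal_1 \cup \cdots \cup \Acal_j$, and $a \sgngt b_j$ iff $a \in \Acal_{j+1} \cup \cdots \cup \Acal_l$. I would first establish this dictionary as a preliminary observation, then treat the row and column conditions of $T'$ separately.

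For the row condition, I would note that adjacent boxes in a row of $\nubold'$ either both lie in $\nubold$ or else the second is the flag box at the right end of that row (flag boxes appear only at row ends). The first case gives exactly condition (i) of Definition~\ref{def:FSST}. For the second case, if $p$ is the flag number of the row containing $x$, then $T'(y) = b_p$, and a chain argument (using the row weakly-increasing property already established) shows that the inequality $T'(z) \sgnle b_p$ holds for every box $z$ in the row iff $T(z) \in \Acal_i$ for some $i \leq p$ for every such $z$. By the dictionary, this is precisely condition (i) of Definition~\ref{def:i-admissible} applied to all boxes of that row.

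For the column condition of $T'$ within $\nu'^{\,(j)}$, with $x$ below $y$ and $y$ not a flag box, I would split into cases: (a) both $x,y \in \nubold$, in which case $T'(x) \sgnlt T'(y)$ matches condition (ii) of Definition~\ref{def:FSST} restricted to contiguous partial columns not broken by a flag box; (b) $x$ is a flag box of row $r'$ with flag number $q$, and $y \in \nubold$, in which case the inequality $b_q \sgnlt T(y)$ translates via the dictionary into $T(y) \in \Acal_{i'}$ with $i' > q$, which is precisely condition (ii) of Definition~\ref{def:i-admissible} for the flag box immediately below $y$. Applying Remark~\ref{rem:FSST}, the combination of case (a) with condition (iii) of Definition~\ref{def:FSST} is equivalent to the full column condition (ii) of a flagged tableau, while the case (b) conditions, together with those from the row analysis, recover condition (iii) in full.

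The main bookkeeping obstacle will be ensuring that admissibility (condition (iii) of Definition~\ref{def:FSST}) for a box $x \in \nu^{(j)}$ with respect to a flag box not immediately adjacent is correctly captured by the $T'$ conditions. This reduces to the observation that the $T'$ column condition is imposed between \emph{every} pair of adjacent boxes along a column of $\nu'^{\,(j)}$, and transitivity of $\sgnlt$ combined with case (b) above propagates the strict inequality $b_q \sgnlt T(x)$ to any $\nubold$-box $x$ lying strictly above a flag box $v$ with flag number $q$ in the same column of $\nu^{(j)}$; likewise, the contrapositive direction follows because condition (ii) of $i$-admissibility, Remark~\ref{rem:FSST}, and the column condition of Definition~\ref{def:FSST} together imply the adjacent strict inequalities needed for $T'$.
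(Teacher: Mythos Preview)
Your approach is exactly what the paper does: the paper's proof is a single sentence citing the dictionary $a \sgnle b_j \Leftrightarrow i \le j$ (for $a \in \Acal_i$) together with Remark~\ref{rem:FSST}, and you have filled in the details of that outline.

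One small inaccuracy in your final paragraph: you assert that the $T'$ column condition is imposed between \emph{every} pair of adjacent boxes along a column of $\nu'^{\,(j)}$, but the lemma only imposes it when the upper box $y$ is not a flag box. If there are several flag boxes in the same column below $x$ (which can happen in a ragged-right diagram), the $\sgnlt$-chain breaks at each one, and transitivity alone does not reach a flag box $v$ that is not the nearest one below $x$. The fix is immediate: by Remark~\ref{rem:flag-numbers}(i), flag numbers increase with the row index within each component, so the nearest flag box below $x$ carries the largest flag number $q_{\max}$ among all flag boxes below $x$. Your chain argument (which only passes through regular upper boxes) does give $b_{q_{\max}} \sgnlt T(x)$, hence $T(x) \in \Acal_i$ with $i > q_{\max} \ge q$ for every flag number $q$ appearing below $x$. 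With this adjustment your argument is complete.
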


\begin{remark}\label{rem:T-prime}
(i) In certain contexts (e.g.,
Corollary~\ref{cor:G-nu-sigma-flag-bounds},
Definition~\ref{def:non-attacking}, and
Proposition~\ref{prop:non-attacking}) the letters $b_{j}$ in
\eqref{e:A-with-bi}--\eqref{e:T-extended} will be the flag bounds
$b_{j}$ in the flag bound specialization $\Gcal _{\nubold }[X^{b}_{1},
\ldots, X^{b}_{l}; t]$ of a flagged LLT polynomial.  We call the
letters $b_{j}$ in the general setting {\em flag bounds} as well.

(ii) If $\Acal _{i+1} = \emptyset $, the conditions in
\eqref{e:A-with-bi-better} do not force $b_{i} \sgnlt b_{i+1}$.  For
compatibility with flag bound specializations, we in fact want to
allow positive letters $b_{i} = b_{i+1}$ in this case.

(iii) If $b_{i} \sgnlt b_{i+1}$ for all $i$, then
Remark~\ref{rem:FSST} implies that the tableau $T'$ in
Lemma~\ref{lem:T-prime} has the stronger property that within each
$\nu '\, ^{(j)}$, its full, possibly non-contiguous, columns are
weakly increasing with no repeated positive letters.  However, if
$\Acal _{i+1} = \emptyset $ and $b_{i} \not \sgnlt b_{i+1}$, then $T'$
need not have this property, since the flag box $v$ with flag number
$i+1$ and $T'(v) = b_{i+1}$ might have a box $u$ below it with $T'(u)
= b_{i}$.
\end{remark}

\begin{example}
\label{ex:flagged-tableaux} Consider flagged LLT data with $\nubold =
(\nu ^{(1)})$ consisting of a single ordinary Young diagram $\nu
^{(1)} = (2,1)$, and $\sigma = (2,1)$ the unique compatible
permutation. The $i$-admissible regions are illustrated below, as in
Figure \ref{fig:flagged-LLT-2}, with flag numbers in the flag boxes to
the right of each row.
\begin{equation}\label{e:mini-admissible}
\begin{array}{c}
\begin{tikzpicture}[scale=0.66]
    \foreach \beta / \alpha / \y in {2/0/0,1/0/1}
        \draw (\alpha,\y)  grid (\beta, \y+1);
    \draw[fill=gray!12] (0,0)--(2,0)--(2,1)--(1,1)--(1,2)--(0,2)--(0,0);
    \draw[dashed] (0,1)--(1,1);
    \node at (2.5,0.5) {$\flagwave{1}$};
    \node at (1.5,1.5) {$\flagwave{2}$};
    \node at (1,0.5) {1};
    \node at (0.5,1.5) {$\leq\! 2$};
\end{tikzpicture}
\end{array}
\end{equation}
We list the tableaux in $\FSST
(\nubold ,\sigma ,\Acal )$ for positive alphabets $\Acal _{1} =
\{1,\ldots,b_{1} \}$, $\Acal _{2} = \{b_{1}+1,\ldots,b_{2} \}$, for
the two choices of flag bounds $(b_{1},b_{2})=(1,3)$ and
$(b_{1},b_{2})=(2,3)$.  We illustrate them by displaying the extended
tableaux $T'$ with flag bound $b_i$ written in the flag box with flag
number $i$.
\begin{gather}
\begin{array}{c}
\begin{tikzpicture}[scale=0.5,baseline=0]
    \node at (0,1.5) {\(\Acal_2 = \{2,3\}\)};
    \node at (0,0.5) {\(\Acal_1 = \{1\}\phantom{,2}\)};
\end{tikzpicture}
\quad
\begin{tikzpicture}[scale=0.5,baseline=0]
    \foreach \beta / \alpha / \y in {2/0/0,1/0/1}
        \draw (\alpha,\y)  grid (\beta, \y+1);
    \node at (2.5,0.5) {1};
    \node at (1.5,1.5) {3};
    \node at (0.5,0.5) {1};
    \node at (1.5,0.5) {1};
    \node at (0.5,1.5) {2};
\end{tikzpicture}\;
\begin{tikzpicture}[scale=0.5,baseline=0]
    \foreach \beta / \alpha / \y in {2/0/0,1/0/1}
        \draw (\alpha,\y)  grid (\beta, \y+1);
    \node at (2.5,0.5) {1};
    \node at (1.5,1.5) {3};
    \node at (0.5,0.5) {1};
    \node at (1.5,0.5) {1};
    \node at (0.5,1.5) {3};
\end{tikzpicture}
\end{array}
\\
\begin{array}{c}
\begin{tikzpicture}[scale=0.5,baseline=0]
    \node at (0,1.5) {\(\Acal_2 = \{3\}\phantom{,2}\)};
    \node at (0,0.5) {\(\Acal_1 = \{1,2\}\)};
\end{tikzpicture}
\quad
\begin{tikzpicture}[scale=0.5,baseline=0]
    \foreach \beta / \alpha / \y in {2/0/0,1/0/1}
        \draw (\alpha,\y)  grid (\beta, \y+1);
    \node at (2.5,0.5) {2};
    \node at (1.5,1.5) {3};
    \node at (0.5,0.5) {1};
    \node at (1.5,0.5) {1};
    \node at (0.5,1.5) {2};
\end{tikzpicture}\;
\begin{tikzpicture}[scale=0.5,baseline=0]
    \foreach \beta / \alpha / \y in {2/0/0,1/0/1}
        \draw (\alpha,\y)  grid (\beta, \y+1);
    \node at (2.5,0.5) {2};
    \node at (1.5,1.5) {3};
    \node at (0.5,0.5) {1};
    \node at (1.5,0.5) {1};
    \node at (0.5,1.5) {3};
\end{tikzpicture}\;
\begin{tikzpicture}[scale=0.5,baseline=0]
    \foreach \beta / \alpha / \y in {2/0/0,1/0/1}
        \draw (\alpha,\y)  grid (\beta, \y+1);
    \node at (2.5,0.5) {2};
    \node at (1.5,1.5) {3};
    \node at (0.5,0.5) {1};
    \node at (1.5,0.5) {2};
    \node at (0.5,1.5) {2};
\end{tikzpicture}\;
\begin{tikzpicture}[scale=0.5,baseline=0]
    \foreach \beta / \alpha / \y in {2/0/0,1/0/1}
        \draw (\alpha,\y)  grid (\beta, \y+1);
    \node at (2.5,0.5) {2};
    \node at (1.5,1.5) {3};
    \node at (0.5,0.5) {1};
    \node at (1.5,0.5) {2};
    \node at (0.5,1.5) {3};
\end{tikzpicture}\;
\begin{tikzpicture}[scale=0.5,baseline=0]
    \foreach \beta / \alpha / \y in {2/0/0,1/0/1}
        \draw (\alpha,\y)  grid (\beta, \y+1);
    \node at (2.5,0.5) {2};
    \node at (1.5,1.5) {3};
    \node at (0.5,0.5) {2};
    \node at (1.5,0.5) {2};
    \node at (0.5,1.5) {3};
\end{tikzpicture}
\end{array}
\end{gather}
\end{example}

We now adapt the notions of attacking pairs and attacking inversions
(\S \ref{ss:classical-LLT}) to the flagged setting.  A new feature is
that the second box in an attacking pair can be a flag box.

\begin{defn}\label{def:inversions}
Let $\nubold $ be a tuple of ragged-right skew diagrams, and let
$\nubold '$ be the extended tuple consisting of $\nubold $ together
with the flag boxes.  An {\em attacking pair} for $\nubold $ consists
of boxes $x\in \nu ^{(i)}$ and $y\in \nu '\, ^{(j)}$ such that either
$i<j$ and $c(y) = c(x)$, or $i>j$ and $c(y) = c(x)+1$.

An {\em attacking inversion} in a flagged tableau $T\in \FSST (\nubold
, \sigma ,\Acal )$ is an attacking pair $(x,y)$ such that $T'(x)
\sgngt T'(y)$ in the notation of \eqref{e:inv-signed}, where $T'$ is
the extended tableau in \eqref{e:T-extended}.  More explicitly,
$(x,y)$ is an attacking inversion if $x, y\in \nubold $ and $T(x)
\sgngt T(y)$, or if $y$ is the flag box with flag number $p$, and
$T(x)\in \Acal _{s}$ for $s>p$.  The number of attacking inversions in
$T$ is denoted $\inv (T)$.
\end{defn}

\begin{remark}\label{rem:no-funny-inv}
(i) If $\nubold $ is non-ragged-right and $\sigma $ is the standard
compatible permutation, then $T$ has no attacking inversions involving
flag boxes.  Hence, in this case, $\inv (T) = \oinv (T)$, where $\oinv
(T)$ is as in Definition~\ref{def:inv_old}.  To see this, let $(x,y)$
be an attacking pair with $y$ a flag box, and let $z$ be the flag box
in the row of $x$.  Then $T'(x)\sgnle T'(z)$, and $y$ precedes $z$ in
the content/row reading order, so $\sigma $ being standard implies
that $T'(z)\sgnle T'(y)$.

(ii) For similar reasons, an attacking pair $(x,y)$ with contents
$c(x)=c(y)$ and $y$ a flag box cannot be an attacking inversion in a
flagged tableau $T$.
\end{remark}

\begin{thm}\label{thm:combinatorial-G-nu-sigma}
Given a signed alphabet $\Acal $ and subsets $\Acal _{1}<\cdots <\Acal
_{l}$, define $X_{i}^\Acal = \sum _{a\in \Acal _{i}^{+}}\, x_{a} -
\sum _{b\in \Acal _{i}^{-}}\, x_{b}$.  Then, on specializing the
formal alphabets $X_i$ to $X^\Acal _i$, we have
\begin{equation}\label{e:combinatorial-G-nu-sigma}
\Gcal _{\nubold ,\sigma }[X^\Acal_{1}, \ldots, X^\Acal_{l};\, t] = \sum _{T\in
\FSST (\nubold , \sigma , \Acal )} (-1)^{m(T)} t^{\inv (T)} x^{T},
\end{equation}
where $x^{T} = \prod _{u\in \nubold } x_{T(u)}$, $\inv (T)$ is as in
Definition~\ref{def:inversions}, and $m(T) = |T^{-1}(\Acal ^{-})|$ is
the number of negative entries in $T$.
\end{thm}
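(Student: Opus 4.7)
The plan is to establish the combinatorial formula by reducing to the case of tuples of single-row diagrams via the Jacobi--Trudi identity (Corollary~\ref{cor:Jacobi-Trudi}), and then applying a sign-reversing involution of Lindstr\"om--Gessel--Viennot type to convert the alternating sum into a sum over flagged tableaux on $\nubold$. Since $\Gcal_{\nubold,\sigma}$ is a flagged symmetric function, Lemma~\ref{lem:X=x} (and Remark \ref{rem:big enough bounds}) together with Corollary~\ref{cor:demazure-on-flagged} imply that it is enough to prove \eqref{e:combinatorial-G-nu-sigma} when each $\Acal_i$ is a positive interval $\{b_{i-1}+1,\dots,b_i\}$ for any sufficiently separated flag bounds $b_i$; the signed-alphabet version then follows from the positive case by the same super-alphabet argument used to pass from \eqref{e:classical-LLT} to \eqref{e:signed-classical-LLT} in \cite{HagHaiLo05}, applied row by row using the coproduct in Proposition~\ref{p coprod formula}.

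In more detail, I would first write $G_{\nubold,\sigma}=t^{-h(\nubold)-e(\nubold,\sigma)}\Gcal_{\nubold,\sigma}$ as in Corollary~\ref{cor:Jacobi-Trudi}, giving
\[
G_{\nubold,\sigma}[X^{\Acal}_1,\dots,X^{\Acal}_l;t]=\sum_{w\in\Sfrak_\rr}(-1)^{\ell(w)}\, G_{\eta/w(\gamma),\sigma}[X^{\Acal}_1,\dots,X^{\Acal}_l;t].
\]
For a tuple of single-row diagrams $\eta/w(\gamma)=\bigl((\eta_j/w(\gamma)_j)\bigr)_j$, a direct computation with the flagged Cauchy identity \eqref{e:flagged-Cauchy-with-A} (evaluated at $A=0$) identifies $G_{\eta/w(\gamma),\sigma}[X^{\Acal}_1,\dots,X^{\Acal}_l;t]$ as the generating function for \emph{row fillings} $U_w$ of $\eta/w(\gamma)$ by letters from $\bigcup_i\Acal_i$, subject only to weak increase in each row and the admissibility condition imposed by the flag numbers; the $t$-statistic is the number of attacking pairs of $\eta/w(\gamma)$ (including flag boxes) that become inversions under $U_w$, with the appropriate sign $(-1)^{m(U_w)}$ for negative entries. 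The key point here is to trace through $\overline{T_\sigma}$ acting on the extracted coefficient of $F_{w(\gamma)}(z;t^{-1})$, using the explicit formula \eqref{e:Ti} and the triangular expansion of $F_{w(\gamma)}$, so that the extraction of $F_{w(\gamma)}$ accounts precisely for the row-filling statistics and the flag-box inversions tracked by $e(\nubold,\sigma)$.

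Next, I would sum over $w\in\Sfrak_\rr$ using a sign-reversing involution $\iota$ on pairs $(w,U_w)$ analogous to the one that proves the Jacobi--Trudi formula for (flagged) skew Schur functions and ordinary LLT polynomials. Given $(w,U_w)$, one looks for the lowest-content attacking pair $(x,y)$ of boxes in the same $\rr$-block that witnesses a violation of the column condition in the corresponding component of $\nubold$, and swaps the tails of the two rows at that position, simultaneously changing $w$ by an adjacent transposition in $\Sfrak_\rr$; the involution cancels pairs with opposite signs. The fixed points of $\iota$ are exactly the pairs $(w,U_w)$ where $U_w$ satisfies all column conditions within each $\nu^{(i)}$, and these correspond bijectively, via the extended tableau construction of Lemma~\ref{lem:T-prime}, to flagged tableaux $T\in\FSST(\nubold,\sigma,\Acal)$. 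A final bookkeeping step confirms that the total contribution of triples $h(\nubold)$ absorbed by the normalization $t^{h(\nubold)+e(\nubold,\sigma)}$ combines with the attacking-pair statistics surviving after cancellation to yield exactly $t^{\inv(T)}$ in the sense of Definition~\ref{def:inversions}.

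The main obstacle is carrying out the statistic-matching cleanly: one must verify that under the swap performed by $\iota$, the change in the number of attacking pairs on $\eta/w(\gamma)$ (counted with flag boxes, and with the conventions of Definition~\ref{def:triple}) equals $\pm 1$, so that the $t$-weights of the canceling terms agree, and must check that the $h(\nubold)+e(\nubold,\sigma)$ prefactor correctly rebalances triples passing between different components. In the ragged-right case this requires careful handling of the flag boxes that overhang adjacent rows, and in the signed-alphabet case one must additionally verify that the super-tableau conventions of Lemma~\ref{lem:T-prime} interact correctly with the sign $(-1)^{m(T)}$ produced row by row; these are essentially combinatorial bookkeeping arguments, but they are where the proof's technical weight lies.
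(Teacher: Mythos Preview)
Your outline has a genuine gap at its first substantive step: the combinatorial formula for the single-row tuples $G_{\eta/w(\gamma),\sigma}$ is not established. For $\rr=(1^l)$, Definition~\ref{def:G-nu-sigma} still reads
\[
G_{\eta/w(\gamma),\sigma}[X^{\Acal}_1,\dots,X^{\Acal}_l;t]=\bigl\langle F_{w(\gamma)}(z;t^{-1})\bigr\rangle\,\overline{T_\sigma}\,\Omega\bigl[\textstyle\sum_{i\le j}X^{\Acal}_i/z_{w_0(j)}\bigr]\,z^{\eta_-},
\]
and passing from this Hecke-algebraic expression to a weighted sum over row fillings with the inversion statistic of Definition~\ref{def:inversions} is precisely the content of the theorem in that case. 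Your ``direct computation with the flagged Cauchy identity'' only handles the $\overline{T_\sigma}=1$ part; the action of $\overline{T_\sigma}$ on monomials, followed by extraction of an $F_{w(\gamma)}$ coefficient, does not unwind into tableau statistics by inspection. The paper does not argue this case directly either: its proof is an induction on $l$ that peels off one alphabet $\Acal_1$ at a time, and the crucial input at each step is the algebraic formula for \emph{symmetric} LLT polynomials (Proposition~\ref{prop:symm-LLT-alg} and Corollary~\ref{cor:symm-LLT-alg}, drawn from \cite{BHMPS21d}). Without invoking that result or something equivalent, you have no mechanism to convert the $F$-coefficient into combinatorics.

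Even granting the single-row case, your proposed Gessel--Viennot involution is underspecified in this setting. The rows of $\eta/w(\gamma)$ interact through attacking inversions (including with flag boxes), and the swap you describe must preserve the $t$-weight exactly, not just the unweighted sign; this is not the classical path-intersection cancellation. In the ragged-right case the flag boxes of adjacent rows can coincide in content, and Remark~\ref{rem:FSST} shows that the column condition on a flagged tableau is subtler than for ordinary skew shapes, so the ``lowest-content violating pair'' rule does not obviously interact well with the flag structure or with $e(\nubold,\sigma)$. By contrast, the paper avoids any such cancellation: it proves the stronger assertion \eqref{e:G=C} by showing that the algebraic quantity $G_{\rr,\gamma,\eta,\sigma}$ and the combinatorial quantity $C_{\nubold,\sigma}$ satisfy the same recurrence in $l$, with Lemmas~\ref{lem:incr-triples}--\ref{lem:rr-incr-triples} doing the statistic bookkeeping via increasing triples rather than a sign-reversing involution.
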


We prove Theorem~\ref{thm:combinatorial-G-nu-sigma} in \S
\ref{ss:proof}.  For now, we note some of its consequences. To start,
we can deduce a combinatorial formula for every flag bound
specialization of $\Gcal _{\nubold ,\sigma }(X_{1},\ldots,X_{l};\,
t)$.  Given flag bounds $b_{1}\leq \cdots \leq b_{l}\leq n$, we write
$\Acal _{+}^{b}$ for the positive alphabet $\Acal = \{1<2<\cdots
<n\}$, with distinguished subsets
\begin{equation}\label{e:A-plus-b}
\Acal_1 = \{1, 2,\dots, b_1\}, \,\Acal_2 = \{b_1+1, \dots,
b_2\},\, \ldots,\, \Acal_l = \{b_{l-1}+1, \dots, b_l\}\, ,
\end{equation}
so that the $X^\Acal _i$ in Theorem~\ref{thm:combinatorial-G-nu-sigma}
for $\Acal = \Acal_+^b$ become the $X_i^b$ from Definition
\ref{def:flag-bounds}.  The set of flagged tableaux on these alphabets
is then denoted $\FSST (\nubold ,\sigma,\Acal_+^b)$.  Explicitly, a
tableau $T \in \FSST (\nubold ,\sigma,\Acal_+^b)$ is a filling
$T\colon \nubold \rightarrow \{1, \ldots, n\}$ such that the extended
filling $T'$ with fixed entry $b_{i}$ in the flag box with flag number
$i$ is a ragged-right semistandard tableau, as in
Example~\ref{ex:flagged-tableaux}.

\begin{cor}\label{cor:G-nu-sigma-flag-bounds}
The specialization of $\Gcal _{\nubold ,\sigma }(X_{1},\ldots,X_{l};\,
t)$ with flag bounds $b_{1}\leq \cdots \leq b_{l}$, as in
Definition~\ref{def:flag-bounds}, is given by
\begin{equation}\label{e:flag-bound-G-nu-sigma}
\Gcal _{\nubold ,\sigma }[X_{1}^b, \ldots, X_{l}^b;\, t] = \sum _{T\in
\FSST(\nubold , \sigma,\mathcal A_+^b) } t^{\inv (T)} x^{T}\,.
\end{equation}
\end{cor}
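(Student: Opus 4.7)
The plan is to derive Corollary~\ref{cor:G-nu-sigma-flag-bounds} as a direct specialization of Theorem~\ref{thm:combinatorial-G-nu-sigma}. Concretely, I would take the signed alphabet $\Acal$ in the theorem to be the \emph{purely positive} alphabet $\Acal_+^b = \{1 < 2 < \cdots < b_l\}$ equipped with the distinguished subsets $\Acal_i = \{b_{i-1}+1, \dots, b_i\}$ (with $b_0 = 0$) as displayed in \eqref{e:A-plus-b}. Under this choice, the definition $X_i^{\Acal} = \sum_{a \in \Acal_i^+} x_a - \sum_{b \in \Acal_i^-} x_b$ collapses to $X_i^{\Acal} = x_{b_{i-1}+1} + \cdots + x_{b_i}$, which is exactly the flag-bound specialization $X_i^b$ of Definition~\ref{def:flag-bounds}.

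Then I would substitute into the formula \eqref{e:combinatorial-G-nu-sigma}. Because the alphabet has no negative letters, $\Acal^- = \emptyset$ and hence $m(T) = |T^{-1}(\Acal^-)| = 0$ for every flagged tableau $T$. The sign factor $(-1)^{m(T)}$ in the theorem therefore equals $+1$ throughout, and the set $\FSST(\nubold, \sigma, \Acal)$ becomes exactly the set $\FSST(\nubold, \sigma, \Acal_+^b)$ as described in the paragraph preceding the corollary (a filling $T \colon \nubold \to \{1, \ldots, b_l\}$ whose extension $T'$, with $b_i$ placed in the flag box of flag number $i$, is a ragged-right semistandard tableau in the sense of Lemma~\ref{lem:T-prime}).

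This immediately gives
\[
\Gcal_{\nubold,\sigma}[X_1^b, \ldots, X_l^b;\, t] = \sum_{T \in \FSST(\nubold, \sigma, \Acal_+^b)} t^{\inv(T)}\, x^T,
\]
which is the claimed identity \eqref{e:flag-bound-G-nu-sigma}. There is essentially no obstacle here beyond matching notations: one just has to verify that (i) the $X_i^b$ from Definition~\ref{def:flag-bounds} coincide with the $X_i^{\Acal}$ from Theorem~\ref{thm:combinatorial-G-nu-sigma} for this choice of $\Acal$, and (ii) the flag bounds $b_j$ of \eqref{e:A-with-bi}--\eqref{e:T-extended} can indeed be taken to be the integers $b_j$ themselves (they are positive letters, and each $b_j$ is the largest element of $\Acal_1 \cup \cdots \cup \Acal_j$, so condition \eqref{e:A-with-bi-better} holds, allowing for repeats $b_i = b_{i+1}$ when $\Acal_{i+1}$ is empty, as warned in Remark~\ref{rem:T-prime}(ii)). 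Both points are immediate from the setup, so the corollary follows with no additional work once Theorem~\ref{thm:combinatorial-G-nu-sigma} is in hand.
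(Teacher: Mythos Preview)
Your proposal is correct and matches the paper's approach exactly: the corollary is stated as an immediate consequence of Theorem~\ref{thm:combinatorial-G-nu-sigma} with the purely positive alphabet $\Acal_+^b$, and the paper gives no separate proof beyond setting up that alphabet in the paragraph preceding the corollary. The only cosmetic difference is that the paper takes the ambient alphabet to be $\{1<\cdots<n\}$ for some $n\geq b_l$ rather than $\{1<\cdots<b_l\}$, but since only letters in $\bigcup_i \Acal_i$ enter the tableaux this is immaterial.
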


\begin{remark}\label{rem:monomial-positive}
Corollary \ref{cor:G-nu-sigma-flag-bounds} shows that the polynomial
$\Gcal _{\nubold ,\sigma }[X^{b}_{1}, \ldots, X^{b}_{l};\, t]$ has
coefficients in $\NN [t]$.  By Remark \ref{rem:big enough bounds}, it
follows that, in fact, the expansion of $\Gcal _{\nubold ,\sigma
}(X_{1}, \ldots, X_{l};\, t)$ in terms of the basis of multi-symmetric
functions $m_{\lambold } = m_{\lambda^{(1)}}(X_1)
m_{\lambda^{(2)}}(X_2) \cdots m_{\lambda^{(l)}}(X_l)$ has coefficients
in $\NN [t]$.
\end{remark}

\begin{example}\label{ex:flag-LLT-ex-2}
We illustrate Corollary~\ref{cor:G-nu-sigma-flag-bounds} and
Theorem~\ref{thm:combinatorial-G-nu-sigma} by computing flagged LLT
polynomials combinatorially and algebraically for two specializations:
one with a positive alphabet $\Acal $, and one with a signed alphabet.
Fix \(\nubold = ((2)/(1), (2))\).

(i) For each $\sigma\in\Sfrak_2$, we compute
$\Gcal_{\nubold,\sigma}[x_1, x_2; t]$ using
Corollary~\ref{cor:G-nu-sigma-flag-bounds} for the alphabet $\Acal
^{b}_{+}$ with flag bounds $(b_1,b_2)=(1,2)$. For each
$T\in\FSST(\nubold, \sigma, \Acal ^{b}_{+})$, we display the extended
tableaux $T'$ with $b_{i}$ in the flag box with flag number $i$. We
indicate inversions with arrows, and write each term $t^{\inv (T)}\,
x^{T}$ in \eqref{e:flag-bound-G-nu-sigma} below the corresponding
tableau $T'$.
\begin{equation}\label{e:Gnu-12-x1x2}
\arraycolsep=.33ex
\begin{array}{r@{\quad }cccc}
\FSST(\nubold,(1,2), \Acal^{b}_{+})
&&
\begin{tikzpicture}[scale=.5,baseline=.5cm]
  \foreach \beta / \alpha / \y in {2/1/0,2/0/2}
    \draw (\alpha,\y)  grid (\beta, \y+1);
  \node at (1.5,0.5) {1};
  \node at (2.5,0.5) {2};
  \node at (0.5,2.5) {1};
  \node at (1.5,2.5) {1};
  \node at (2.5,2.5) {1};
\end{tikzpicture}
&&
\begin{tikzpicture}[scale=.5,baseline=.5cm]
  \tikzstyle{vertex}=[outer sep=1.5pt]
  \foreach \beta / \alpha / \y in {2/1/0,2/0/2}
     \draw (\alpha,\y)  grid (\beta, \y+1);
   \node[vertex] (a) at (1.5,0.5) {2};
   \node at (2.5,0.5) {2};
   \node at (0.5,2.5) {1};
   \node[vertex] (b) at (1.5,2.5) {1};
   \node at (2.5,2.5) {1};
   \draw[-Latex] (a) -- (b);
\end{tikzpicture}
\\[3ex]
\Gcal _{\nubold ,(1,2)}[x_{1},x_{2};t] & = & x_{1}^{3} & + & t\,
 x_{1}^{2}x_{2}
\end{array}
\end{equation}
\begin{equation}\label{e:Gnu-21-x1x2}
\arraycolsep=.33ex
\begin{array}{r@{\quad }cccccc}
\FSST(\nubold,(2,1), \Acal^{b}_{+})
&&
\begin{tikzpicture}[scale=.5,baseline=.5cm]
  \foreach \beta / \alpha / \y in {2/1/0,2/0/2}
    \draw (\alpha,\y)  grid (\beta, \y+1);
  \node at (1.5,0.5) {1};
  \node at (2.5,0.5) {1};
  \node at (0.5,2.5) {1};
  \node at (1.5,2.5) {1};
  \node at (2.5,2.5) {2};
\end{tikzpicture}
&&
\begin{tikzpicture}[scale=.5,baseline=.5cm]
  \tikzstyle{vertex}=[outer sep=1.5pt]
  \foreach \beta / \alpha / \y in {2/1/0,2/0/2}
     \draw (\alpha,\y)  grid (\beta, \y+1);
  \node[vertex] (b) at (1.5,0.5) {1};
  \node[vertex] (d) at (2.5,0.5) {1};
  \node[vertex] (a) at (0.5,2.5) {1};
  \node[vertex] (c) at (1.5,2.5) {2};
  \node at (2.5,2.5) {2};
  \draw[-Latex] (c) -- (d.north);
\end{tikzpicture}
&&
\begin{tikzpicture}[scale=.5,baseline=.5cm]
  \tikzstyle{vertex}=[outer sep=1.5pt]
  \foreach \beta / \alpha / \y in {2/1/0,2/0/2}
     \draw (\alpha,\y)  grid (\beta, \y+1);
  \node[vertex] (b) at (1.5,0.5) {1};
  \node[vertex] (d) at (2.5,0.5) {1};
  \node[vertex] (a) at (0.5,2.5) {2};
  \node[vertex] (c) at (1.5,2.5) {2};
  \node at (2.5,2.5) {2};
  \draw[-Latex] (a) -- (b.north);
  \draw[-Latex] (c) -- (d.north);
\end{tikzpicture}
\\[3ex]
\Gcal _{\nubold ,(2,1)}[x_{1},x_{2};t] & = & x_{1}^{3} & + & t\,
x_{1}^{2}x_{2} & + & t^{2}\, x_{1}x_{2}^{2}
\end{array}
\end{equation}
For comparison, we now compute $\Gcal_{\nubold,\sigma}[x_1, x_2;
t^{-1}]$ algebraically using Corollary~\ref{cor:single-rows-G} with
\(d=2\) and \(\alpha = (1,0) \).  Note that \(h(\nubold) = 1\).  For
$\sigma=(1,2)$, we get
\begin{equation}\label{eq:flag-LLT-2-row-compat-perm}
t\, \Gcal_{\nubold,(1,2)}[x_1, x_2; t^{-1}] = \Pi_{t,x}(E_{21}(x;\,
t^{-1})) = \Pi_{t,x}(x_1^2 x_2) = t \, x_1^3 + x_1^2 x_2 \,,
\end{equation}
where we used \(E_{21}(x;\, t) = x_1^2 x_2\) and \eqref{e nspleth n2
t}.  For $\sigma=(2,1)$, noting that $\inv(\sigma)=1$, we get
\begin{equation}\label{eq:flag-LLT-ex-1}
t^2 \, \Gcal_{\nubold, (2,1)}[x_1,x_2;t^{-1}] = \Pi_{t,x} ( T_1 \,
E_{21}(x;t^{-1})) = \Pi_{t,x}(x_1 x_2^2) = t^2 \, x_1^3 + t \, x_1^2
x_2+ x_1 x_2^2 \,.
\end{equation}

(ii) For $\sigma = (2,1)$, we compute $\Gcal _{\nubold
,(2,1)}[x_{1},x_{2}- x_{\overline{1}}\, ; t]$ using
Theorem~\ref{thm:combinatorial-G-nu-sigma} with the signed alphabet
$\Acal = \{1 < \overline{1} < 2\}$, $\Acal ^{+} = \{1,2 \}$, $\Acal
^{-} = \{\overline{1} \}$ and subsets $\Acal_1 = \{1\}$, $\Acal_2 =
\{\overline{1},2\}$.
\begin{equation}\label{e:Gnu-21-x1x1barx2}
\arraycolsep=.25ex
\begin{array}{r@{\quad }cccccccccc}
\FSST(\nubold,(2,1), \Acal)
&&
\begin{tikzpicture}[scale=.5,baseline=.5cm]
  \foreach \beta / \alpha / \y in {2/1/0,2/0/2}
    \draw (\alpha,\y)  grid (\beta, \y+1);
  \node at (1.5,0.47) {1};
  \node at (2.5,0.47) {1};
  \node at (0.5,2.47) {1};
  \node at (1.5,2.47) {1};
  \node at (2.5,2.47) {2};
\end{tikzpicture}
&&
\begin{tikzpicture}[scale=.5,baseline=.5cm]
  \tikzstyle{vertex}=[outer sep=1.5pt]
  \foreach \beta / \alpha / \y in {2/1/0,2/0/2}
     \draw (\alpha,\y)  grid (\beta, \y+1);
   \node[vertex] (b) at (1.5,0.47) {1};
   \node[vertex] (d) at (2.5,0.47) {1};
   \node[vertex] (a) at (0.5,2.47) {1};
   \node[vertex] (c) at (1.5,2.47) {2};
   \node at (2.5,2.47) {2};
   \draw[-Latex] (c) -- (d.north);
\end{tikzpicture}
&&
\begin{tikzpicture}[scale=.5,baseline=.5cm]
  \tikzstyle{vertex}=[outer sep=1.5pt]
  \foreach \beta / \alpha / \y in {2/1/0,2/0/2}
     \draw (\alpha,\y)  grid (\beta, \y+1);
   \node[vertex] (b) at (1.5,0.47) {1};
   \node[vertex] (d) at (2.5,0.47) {1};
   \node[vertex] (a) at (0.5,2.47) {2};
   \node[vertex] (c) at (1.5,2.47) {2};
   \node at (2.5,2.47) {2};
   \draw[-Latex] (c) -- (d.north);
   \draw[-Latex] (a) -- (b.north);
\end{tikzpicture}
&&
\begin{tikzpicture}[scale=.5,baseline=.5cm]
  \tikzstyle{vertex}=[outer sep=1.5pt]
  \foreach \beta / \alpha / \y in {2/1/0,2/0/2}
     \draw (\alpha,\y)  grid (\beta, \y+1);
   \node[vertex] (b) at (1.5,0.47) {1};
   \node[vertex] (d) at (2.5,0.47) {1};
   \node[vertex] (a) at (0.5,2.47) {1};
   \node[vertex] (c) at (1.5,2.47) {\mybar{1}};
   \node at (2.5,2.47) {2};
   \draw[-Latex] (c) -- (d.north);
\end{tikzpicture}
&&
\begin{tikzpicture}[scale=.5,baseline=.5cm]
  \tikzstyle{vertex}=[outer sep=1.5pt]
  \foreach \beta / \alpha / \y in {2/1/0,2/0/2}
     \draw (\alpha,\y)  grid (\beta, \y+1);
   \node[vertex] (b) at (1.5,0.47) {1};
   \node[vertex] (d) at (2.5,0.47) {1};
   \node[vertex] (a) at (0.5,2.47) {\mybar{1}};
   \node[vertex] (c) at (1.5,2.47) {2};
   \node at (2.5,2.47) {2};
   \draw[-Latex] (c) -- (d.north);
   \draw[-Latex] (a) -- (b.north);
\end{tikzpicture}
\\[4ex]
\Gcal _{\nubold ,(2,1)}[x_{1},x_{2}-x_{\overline{1}}\,; t] & = &
x_{1}^{3} & + & t\, x_{1}^{2}x_{2} & + & t^{2}\, x_{1}x_{2}^{2} & - &
t\, x_{1}^{2}x_{\overline{1}} & - &\, t^{2} x_{1}x_{\overline{1}}x_{2}\,
.
\end{array}
\end{equation}
For comparison, we compute the flagged symmetric function $\Gcal
_{\nubold ,(2,1)}(X_{1},X_{2};t)$ by applying $\xi ^{-1}$ to $\Gcal
_{\nubold ,(2,1)}[x_{1},x_{2};t]$, and then compute $\Gcal _{\nubold
,(2,1)}[x_{1},x_{2}-x_{\overline{1}}\,; t]$ by specializing.  Using
the value $\Gcal_{\nubold,(2,1)}[x_1,x_2;t] = x_1 x_2^2 + t \, x_1^2
x_2 + t^2 \, x_1^3$ from part (i), this gives
\begin{gather}
\begin{aligned}\label{e:Gnu-21-X1X2}
\Gcal_{\nubold,(2,1)}(X_1, X_2;t)
& = \xi^{-1}(x_1 x_2^2 + t \, x_1^2  + t^2 \, x_1^3)
= \xi^{-1}(t^2\, \fh_{1,2} + (t-t^2)\,\fh_{2,1}
 +(1-t)\,\fh_{3,0} )\\
& = t^2\, \fh_{1,2}(X_1,X_2) +
(t-t^2)\,\fh_{2,1}(X_1,X_2)+(1-t)\,\fh_{3,0}(X_1,X_2).
\end{aligned}
\\
\begin{aligned}\label{e ex signG ii}
\Gcal_{\nubold,\sigma}[x_1, \, & x_2 -x_{\overline{1}};t]\\
 & = t^2\, h_1[x_1]h_2[x_1+ x_2-x_{\overline{1}}]+
(t-t^2)\,h_{2}[x_1]h_1[x_1+ x_2-x_{\overline{1}}]
 +(1-t)\,h_3[x_1] \\
 & = x_1^3 + t \, x_1^2 x_2 + t^2 \, x_1 x_2^2 - t \, x_1^2
x_{\overline{1}} - t^2 \, x_1 x_{\overline{1}} x_2 \,.
\end{aligned}
\end{gather}
\end{example}

\begin{example}\label{ex:demazure-on-flag-bounds}
Since $\Gcal_{\nubold,\sigma}[X_1, \dots, X_l]$ is a flagged symmetric
function, Proposition~\ref{prop:demazure-on-flagged} applies to its
flag bound specializations.  This implies that applying $\Dem _k$ to
the combinatorial formula in
Corollary~\ref{cor:G-nu-sigma-flag-bounds} has the effect of changing
the flag bound $k$ to $k+1$ in the extended tableaux $T'$, if the flag
bound $k$ is not repeated.  For instance, for $\nubold = ((2,1))$ and
$\sigma = (2,1)$ with flag bounds $(b_1,b_2) = (1,3)$ or $(2,3)$, as
in Example~\ref{ex:flagged-tableaux}, we have
\begin{align}
\Gcal _{\nubold, \sigma}[x_1, x_2+x_3; t] & \ = \sum_{T\in
\FSST(\nubold, \sigma,\Acal_{+}^{(1,3)}) \hspace{-2ex}} t^{\inv(T)}
x^T \  = \  x_1^2x_2+ x_1^2x_3 \\ 
\Gcal _{\nubold, \sigma}[x_1 +x_2,x_3; t] & \ = \sum_{T\in
\FSST(\nubold, \sigma,\Acal_{+}^{(2,3)}) \hspace{-2ex}} t^{\inv(T)}
x^T \  = \ x_1^2x_2+ x_1^2x_3 + x_1 x_2^2 + x_1 x_2 x_3 + x_2^2 x_3,
\end{align}
which agrees with   $\Dem_1 (x_1^2x_2+ x_1^2x_3) =
x_1^2x_2+ x_1^2x_3 + x_1 x_2^2 + x_1 x_2 x_3 + x_2^2 x_3 $.
\end{example}

\begin{example}\label{ex:nu=single-diagram}
We consider cases of Corollary~\ref{cor:G-nu-sigma-flag-bounds} for
which $\nubold$ consists of a single shape. Note that $\sigma=w_0$ is
then the only compatible permutation.

(i) When $\nubold = (\lambda/\mu)$ is an ordinary skew shape,
$\FSST(\nubold,w_0,\Acal ^{b}_{+})$ is the set of semistandard
tableaux of shape $\lambda/\mu$ such that all entries in the $i$-th
row are bounded above by $b_i$ and each such tableau has $\inv(T) =
0$.  Hence $\Gcal_{(\lambda/\mu),w_0}[X_1^b,\ldots,X_l^b] = \sum_{T
\in \FSST(\nubold,w_0,\Acal_+^b)} x^T =
\sfrak_{\lambda/\mu}[X_1^b,\ldots,X_l^b]$ by
Corollary~\ref{cor:G-nu-sigma-flag-bounds}
and~\eqref{eq:flagschuridentity}.  By Remark~\ref{rem:big enough
bounds}, choosing $b$ so that $X_i^b$ are large enough gives
\begin{equation}
\Gcal_{(\lambda/\mu),w_0}(X_1,\dots, X_l)=\sfrak_{\lambda/\mu}(X_1,
\dots, X_l)\,.
\end{equation}

(ii) When $\nubold=((m,m+1,\ldots,m+l-1)/\mu)$ is a single `totally
ragged-right' skew shape, where $\mu$ is a partition with $\mu_1 \le
m$, we can show that
\begin{equation}\label{e:total-ragged}
\Gcal_{\nubold, w_0}[x_1, \dots, x_l; t]  = \Acal_{\lambda}(x_1,\dots, x_l),
\end{equation}
where $\lambda = (m-\mu_1, m+1-\mu_2, \dots, m+l-1-\mu_l)$, as
follows.  Apply Corollary~\ref{cor:G-nu-sigma-flag-bounds} with flag
bounds $b = (1,2,\dots, l)$.  Consider $T\in\FSST(\nubold,w_0,\mathcal
A_+^b)$ and let $T_1$ (resp. $T_2$) be its restriction to columns $\le
m$ (resp. \(> m\)).  Then $T_2$ is the tableau whose entire $i$-th row
is filled with $i$'s.  This forces $T_1$ to have entries $\le i$ in
row $i$, but every semistandard tableau of shape $(m^l)/\mu$ filled
with $1,\dots, l$ already has this property.  Since
$s_{(m^{l})/\mu}(x_1,\dots, x_l)= s_{(m-\mu_l, \dots,
m-\mu_1)}(x_1,\dots, x_l)$, we can use \eqref{e:flag-bound-G-nu-sigma}
and Lemma \ref{l dom regular atom} to find
\begin{equation}\label{e:total-ragged-2}
\sum_{T\in\FSST(\nubold,w_0,\Acal_+^b)} t^{\inv(T)} x^T = x_2 x_3^2
\cdots x_l^{l-1} s_{(m^{l})/\mu}(x_1,\dots, x_l) =
\Acal_{\lambda}(x_1, \dots, x_l)\,.
\end{equation}
\end{example}

Theorem~\ref{thm:combinatorial-G-nu-sigma} also implies that symmetric
LLT polynomials can be expressed in terms of flagged LLT polynomials.

\begin{cor}\label{cor:Weyl-on-G-nu-sigma}
(a) If $\nubold $ is a tuple of ordinary skew shapes, then
\begin{equation}\label{e:flagged-to-symmetric}
\Gcal _{\nubold ,\sigma }(X,0,\ldots,0;\, t) = \Gcal _{\nubold }(X;\,
t),
\end{equation}
where $\Gcal _{\nubold }(X;\,
t)$ is a symmetric LLT polynomial independent of $\sigma $.

(b) If $\nubold $ is a tuple of ordinary skew shapes, then for any
flag bounds $b$ such that $b_{i}\geq i$ for all $i$, and any $n\geq
b_{l}$, Weyl symmetrization in variables $x_{1},\ldots,x_{n}$ applied
to the specialization $\Gcal _{\nubold ,\sigma
}[X^{b}_{1},\ldots,X^{b}_{l}; t]$ gives the symmetric LLT polynomial
\begin{equation}\label{e:Weyl-on-G-nu-sigma}
\Weyl \, \Gcal _{\nubold ,\sigma }[X^{b}_{1},\ldots,X^{b}_{l}; t] = \Gcal
_{\nubold }[x_{1}+\cdots +x_{n}; t].
\end{equation}

(c) If any component of $\nubold $ is strictly ragged-right, then for
$b_{i}$ and $n$ as in part (b), we have $\Weyl \, \Gcal _{\nubold ,\sigma
}[X^{b}_{1},\ldots,X^{b}_{l}; t] =0$.
\end{cor}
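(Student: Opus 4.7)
The plan is to reduce all three parts to the combinatorial formula of Theorem~\ref{thm:combinatorial-G-nu-sigma}, together with the Weyl symmetrization identity for flagged symmetric functions in Corollary~\ref{cor:Weyl-on-flagged}. Part (a) is the core step; (b) and (c) then follow by direct substitution in the regime $b_i \geq i$ where Corollary~\ref{cor:Weyl-on-flagged} applies.

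For (a), I would apply Theorem~\ref{thm:combinatorial-G-nu-sigma} to a large positive alphabet with $\Acal_1 = \Acal$ and $\Acal_i = \emptyset$ for $i > 1$, so that $X_1^{\Acal}$ stands for $X$ and $X_i^{\Acal} = 0$ for $i > 1$. Flagged tableaux $T$ then have all entries in $\Acal_1$. Two observations collapse the formula to the classical attacking-pair formula. First, since each $\nu^{(j)}$ is an ordinary skew diagram, its row ends are weakly decreasing going upward, so no flag box of $\nu^{(j)}$ can lie in the column of any higher box of $\nu^{(j)}$; condition (ii) of Definition~\ref{def:i-admissible} is therefore vacuous and every box is $1$-admissible. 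Second, for any attacking pair $(x, y)$ with $y$ a flag box carrying flag number $p \geq 1$, the extended tableau satisfies $T'(x) \in \Acal_1$ and $T'(y) = b_p$, and \eqref{e:A-with-bi-better} yields $T'(x) \sgnle b_p$, so $(x, y)$ is never an inversion. Hence only attacking inversions internal to $\nubold$ contribute, the formula reduces to \eqref{e:classical-LLT}, and $\Gcal_{\nubold, \sigma}(X, 0, \ldots, 0; t) = \Gcal_\nubold(X; t)$, manifestly independent of $\sigma$.

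Part (b) is now immediate from (a) and Corollary~\ref{cor:Weyl-on-flagged}, which gives $\Weyl f[X_1^b, \ldots, X_l^b] = f[X, 0, \ldots, 0]$ when $b_i \geq i$; substituting $f = \Gcal_{\nubold, \sigma}$ and invoking (a) produces the right-hand side $\Gcal_\nubold[x_1 + \cdots + x_n; t]$. For part (c), suppose some $\nu^{(j)}$ is strictly ragged-right: a pair of consecutive rows $r, r+1$ in $\nu^{(j)}$ satisfies $\beta_{r+1} = \beta_r + 1$, and the overhanging box at position $(\beta_r + 1, r+1)$ sits directly above the flag box of row $r$ in the same column. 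For this overhanging box to be $i$-admissible, condition (ii) of Definition~\ref{def:i-admissible} forces $i$ to strictly exceed the flag number $p = l + 1 - \sigma^{-1}(r) \geq 1$ in that flag box, ruling out $i = 1$. Hence no flagged tableau has all entries in $\Acal_1$, giving $\Gcal_{\nubold, \sigma}(X, 0, \ldots, 0; t) = 0$, and Corollary~\ref{cor:Weyl-on-flagged} delivers the vanishing of the Weyl symmetrization.

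The one item requiring care is the column alignment of the overhanging box with the flag box directly below it, which follows from Definition~\ref{def:flag-numbers} placing each flag box at content equal to the end content of its row, so the flag box of row $r$ sits at column $\beta_r + 1$ exactly matching the overhanging position in row $r+1$. Once this is verified, the argument is purely bookkeeping and I do not anticipate any serious obstacle.
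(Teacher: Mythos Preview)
Your proof is correct and follows essentially the same approach as the paper: both arguments use Theorem~\ref{thm:combinatorial-G-nu-sigma} with all entries confined to $\Acal_1$, verify that every box is $1$-admissible precisely when $\nubold$ is a tuple of ordinary skew shapes, observe that attacking pairs involving flag boxes contribute no inversions in this regime, and then invoke Corollary~\ref{cor:Weyl-on-flagged} for (b) and (c). You have simply spelled out the combinatorial verifications (why condition~(ii) of Definition~\ref{def:i-admissible} is vacuous for ordinary skew shapes, and why the overhanging box fails $1$-admissibility in the ragged-right case) in more detail than the paper does.
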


\begin{proof}
If $\nubold $ is a tuple of ordinary skew diagrams, then every box in
$\nubold $ is $1$-admissible.  A flagged tableaux $T$ with all entries
in $\Acal _{1}$ is just a usual super tableau, and $\inv (T)$ reduces
to $\oinv (T)$, as defined in \S \ref{ss:classical-LLT} for symmetric
LLT polynomials, since pairs $(x,y)$ in which $y$ is a flag box do not
contribute.  This given, Theorem~\ref{thm:combinatorial-G-nu-sigma}
proves (a).  Part (b) then follows from
Corollary~\ref{cor:Weyl-on-flagged}.

If any component of $\nubold $ is strictly ragged-right,
then at least one box is not $1$-admissible, and
Theorem~\ref{thm:combinatorial-G-nu-sigma} implies that $\Gcal
_{\nubold ,\sigma }(X,0,\ldots,0;\, t) = 0$.
Hence (c)  follows using Corollary~\ref{cor:Weyl-on-flagged} again.
\end{proof}

The following combinatorial device is sometimes useful for reducing a
flag bound specialization of a flagged LLT polynomial to the case of
flag bounds $b_{i}=i$.

\begin{lemma}\label{lem:empty-rows-trick}
For any flagged LLT indexing data $\nubold ,\sigma $, flag bounds
$0<b_{1}<\cdots <b_{l}$ and $n\geq b_{l}$, one can construct new
indexing data $\nuboldhat ,\sigmahat $ from $\nubold ,\sigma $ by
adjoining trivial components, each consisting of a single empty row,
in such a way that
\begin{equation}\label{e:empty-rows-trick}
\Gcal _{\nubold ,\sigma }[X^{b}_{1},\ldots,X^{b}_{l}; t] = \Gcal
_{\nuboldhat ,\sigmahat }[x_{1},\ldots,x_{n}; t].
\end{equation}
\end{lemma}

\begin{proof}
A trivial component is determined by the end content of its one empty
row.  Let $b_{0} = 0$ by convention.  To construct $\nuboldhat $,
adjoin $n-l$ trivial components after the last component of $\nubold
$, as follows: for $i=1,\ldots,l$, adjoin $b_{i}- b_{i-1} -1$
components with end content equal to the end content of the row of
$\nubold $ assigned flag number $i$ by $\sigma $; take the remaining
$n-b_{l}$ trivial components to have end content less than the end
content of every row of $\nubold $.  Then there is a compatible
$\sigmahat $ such that for $i=1,\ldots,l$, the flag box of $\nubold $
assigned flag number $i$ by $\sigma $ is assigned flag number $b_{i}$
by $\sigmahat $, while the $b_{i}-b_{i-1}-1$ added flag boxes with the
same content are assigned flag numbers $b_{i-1}+1, \ldots, b_{i}-1$,
and the $n- b_{l}$ added flag boxes with content less than that of
every flag box of $\nubold $ are assigned flag numbers
$b_{l}+1,\ldots, n$.

Since $\nuboldhat $ and $\nubold $ have the same non-flag boxes,
fillings $\That :\nuboldhat \rightarrow [n]$ are naturally identified
with fillings $T: \nubold\rightarrow [n]$.  Let $\bhat _{i} = i$ for
$i=1,\ldots,n$.  The flag box $x$ of $\nubold $ assigned flag number
$i$ by $\sigma $, and flag number $p = b_{i}$ by $\sigmahat $,
receives the same entry $T'(x) = b_{i} = \bhat _{p} = \That '(x)$ in
the extended filling associated to either $T$ or $\That $.  Hence
$T\in \FSST (\nubold ,\sigma ,\Acal ^{b}_{+})$ if and only if $\That \in
\FSST (\nuboldhat ,\sigmahat ,\Acal ^{\bhat }_{+})$.  By
Remark~\ref{rem:no-funny-inv}(ii), $\That $ has no attacking
inversions involving flag boxes in the added trivial components.
Hence $\inv (T) = \inv (\That )$.  Now \eqref{e:empty-rows-trick}
follows from Corollary~\ref{cor:G-nu-sigma-flag-bounds}.
\end{proof}

\subsection{Proof of the combinatorial formula}
\label{ss:proof}

We prove Theorem~\ref{thm:combinatorial-G-nu-sigma} at the end of this
section after some preliminary results, starting with lemmas that
relate attacking inversions (Definition~\ref{def:inversions}) to
triples (\S\ref{ss:triples}).  The first two lemmas apply to fillings
more general than tableaux.

\begin{defn}\label{def:incr-triple}
For a signed alphabet $\Acal $ and a tuple of general diagrams
$\nubold $, let $T\colon \nubold \rightarrow \Acal $ be a
row-increasing super filling.  A triple $(x,y,z)$ of $\nubold$ is {\em
increasing} if $T(x) \sgnle T(y) \sgnle T(z)$, in the notation of
\eqref{e:inv-signed}, where if $x$ and/or $z$ is one of the extra
boxes in an extended row, we set $T(x) = -\infty $ if $x\not \in
\nubold $ and $T(z)=\infty $ if $z\not \in \nubold $.  The number of
increasing triples in $T$ is denoted $h(T)$.
\end{defn}

\begin{example}\label{ex:triplesinvetc}
For flagged LLT indexing data $\nubold ,\sigma $ with $\rr=(2,1,2)$,
$\beta=(3,2,1,3,4)$, $\alpha=(1,0,1,1,1)$, and $\sigma = (2,3,5,4,1)$,
the picture on the left below shows the tuple of diagrams $\nubold $,
with flag numbers written in the flag boxes. The picture on the right
shows the extended tableau \(T'\) of a flagged tableau
$T\in\FSST(\nubold,\sigma,\mathcal A)$, where $\Acal = \{1 <
\overline{1} < 2 < \overline{2} < \cdots\}$, with $\Acal ^{+} =
\{1,2,\ldots \}$, $\Acal ^{-} = \{\overline{1}, \overline{2},\ldots,
\}$, and distinguished subsets $\Acal_1 = \{1,\overline{1},2\},\,
\Acal_2 = \{\overline{2}, 3, \overline{3}, 4\},\, \Acal_3 =
\{\overline{4}, 5\},\, \Acal_4 = \{6, 7\},\, \Acal_5 =
\{8,\overline{8},9\}$. Flag bound \(b_i\) is written in the flag box
with flag number \(i\), where we have chosen \(b = ( \overline{2}, 4,
\overline{5},7,9)\).
\begin{gather*}
\begin{tikzpicture}[scale=.5,baseline=2cm]
    \node at (-1,3.5) {$\nubold$};
    \foreach \beta / \alpha / \y / \fb in
      {3/1/0/1,2/0/1/5,1/1/3/4,3/1/5/2,4/1/6/3} {
        \draw (\alpha,\y)  grid (\beta, \y+1);
        \node at (\beta+0.5,\y+0.5) {$\flagwave{\fb}$};
      }
\end{tikzpicture}
\qquad\qquad  \qquad\qquad
\begin{tikzpicture}[scale=.5,baseline=2cm]
    \node at (-1,3.5) {$T$};
    \foreach \beta / \alpha / \y / \fb in
      {3/1/0/{$\mybar{2}$},2/0/1/9,1/1/3/7,3/1/5/4,4/1/6/{$\mybar{5}$}} {
        \draw (\alpha,\y)  grid (\beta, \y+1);
        \node at (\beta+0.5,\y+0.5) {\fb};
      }
    \foreach \r / \c / \entr in {1/2/1, 2/1/8, 2/2/8, 6/3/2, 7/3/4} {
       \node at (\c-0.5, \r-0.5) {\(\entr\)};
    }
    \foreach \r / \c / \entr in {1/3/1, 6/2/1, 7/2/1, 7/4/4} {
       \node at (\c-0.5, \r-0.5) {\(\mybar{\entr}\)};
    }
\end{tikzpicture}
\\[3ex]
\begin{aligned}
h(\nubold) & = 6  & \qquad h(T) & = 2 & \qquad  \oinv (T) & = 4
 & \qquad m(T) & = 4 \\
e(\nubold,\sigma) & = 2 & e(T) & = 1 & \inv(T) & = 5
\end{aligned}
\end{gather*}
For this choice of $\nubold $, $\sigma $, and $T$, the values of the
following statistics are shown: $\inv(T)$ counts attacking inversions
of $T$, as in Definition~\ref{def:inversions}, $\oinv (T)$ counts
attacking inversions in $T$ considered as a filling of $\nubold $
without the flag boxes, as in Definition~\ref{def:inv_old}, $m(T)$
counts negative letters in $T$, $h(\nubold)$ counts triples of
$\nubold$, as in Definition~\ref{def:triple}, $h(T)$ counts increasing
triples in $T$, $e(\nubold ,\sigma )$ counts rising end triples, as in
Definition~\ref{def:end-triples}, and $e(T)$ counts rising end triples
$(x,y,z)$ for $(\nubold ,\sigma )$ which are increasing for the
extended tableau $T'$---that is, in which either $x\not \in \nubold $,
or $T(x) \sgnle T'(y)$.
\end{example}

We will continue to use the notations $\inv (T)$, $\oinv (T)$, $m(T)$,
$h(\nubold )$, $h(T)$, $e(\nubold ,\sigma )$, and $e(T)$ for flagged
tableaux $T$ throughout this subsection.

\begin{lemma}\label{lem:incr-triples}
Let $T\colon \nubold \rightarrow \Acal $ be a row-increasing super
filling of a tuple of general diagrams, as in
Definition~\ref{def:incr-triple}.  For $a\in \Acal$, let $\nubold
|_{a}$ be the unique subdiagram of $\nubold $ such that $T(u) = a$ for
$u \in \nubold |_{a}$, $T(u)<a$ for $u\in \nubold $ to the left of a
row of $\nubold _{a}$, and $T(u)>a$ for $u\in \nubold $ to the right
of a row of $\nubold _{a}$ (note that this specifies the position of
an empty row in $\nubold |_{a}$ when $T$ has no entries equal to $a$
in a row of $\nubold $).  Let $T|_{a}$ be the restriction of $T$ to
$\nubold _{a}$, i.e., the constant filling $T|_{a}\colon \nubold
_{a}\rightarrow \{a \}$.  Then the number of increasing triples $h(T)$
is given by
\begin{equation}\label{e:incr-triples1}
h(T) = \sum _{a\in \Acal } h(T|_{a}),
\end{equation}
where $h(T|_{a})$ is the number of increasing triples of $T|_{a}$ as a
filling of $\nubold |_{a}$.
\end{lemma}

\begin{proof}
We claim that the increasing triples $(x,y,z)$ of $T$ with $T(y) = a$
are exactly the increasing triples of $T|_{a}$.  Then
\eqref{e:incr-triples1} follows.

For any $a\in \Acal$, every triple $(x,y,z)$ of $\nubold |_{a}$ is
clearly also a triple of $\nubold $.  Consider a triple $(x,y,z)$ of
$\nubold $ with $T(y) = a$, and use the convention in
Definition~\ref{def:incr-triple} that $T(x) = -\infty $ (resp.\ $T(z)
= \infty $) if $x$ (resp.\ $z$) is to the left (resp.\ right) of a row
of $\nubold $.

With this convention, if $a\in \Acal ^{+}$, the criterion for
$(x,y,z)$ to be increasing in $T$ is $T(x)\leq a\leq T(z)$.
Equivalently, $x$ and $z$ satisfy (i) $T(x) = a$, or $x$ is the
rightmost box in its row with $T(x)<a$, and (ii) $T(z) = z$, or $z$ is
the leftmost box in its row with $T(z)>a$.  These are exactly the
criteria for $x$ and $z$ to be in an extended row of $\nubold |_{a}$,
and thus for $(x,y,z)$ to be a triple of $\nubold |_{a}$.  For $a\in
\Acal ^{+}$, every triple of $\nubold |_{a}$ is increasing in
$T|_{a}$, so this proves the claim for positive letters $a$.

If $a\in \Acal ^{-}$, with the same convention as before, the
criterion for $(x,y,z)$ to be increasing in $T$ is $T(x)< a< T(z)$,
i.e., $x$ is the rightmost box in its row with $T(x)<a$, and $z$ is
the leftmost box with $T(z)>a$.  This is equivalent to $\{x,z \}$
being the extension of an empty row of $\nubold |_{a}$.  In
particular, it implies that $(x,y,z)$ is a triple of $\nubold |_{a}$.
For $a\in \Acal ^{-}$, a triple $(x,y,z)$ of $\nubold |_{a}$ is
increasing if and only if $\{x,z \}$ is the extension of an empty row
of $\nubold |_{a}$.  This proves the claim for negative letters $a$.
\end{proof}

\begin{cor}\label{cor:incr-triples}
Let $T\colon \nubold \rightarrow \Acal $ be a row-increasing super
filling of a tuple of general diagrams, as in
Definition~\ref{def:incr-triple}.  Given $\Acal = \Acal ' \cup \Acal
''$ with $\Acal '< \Acal ''$, we have $h(T) = h(T')+h(T'')$, where
$T'$, $T''$ are the restrictions of $T$ to the subdiagrams
$T^{-1}(\Acal ')$, $T^{-1}(\Acal '')$.
\end{cor}

The next lemma is a variant of \cite[eq.~(132)]{BHMPS25a}.

\begin{lemma}\label{lem:inv-vs-triples}
For $\Acal $, $\nubold $, and $T$ as in
Definition~\ref{def:incr-triple}, we have the identity
\begin{equation}\label{e:inv-vs-triples}
\oinv (T) = h(\nubold ) - h(T).
\end{equation}
\end{lemma}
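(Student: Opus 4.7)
The plan is to establish a bijection between attacking inversions in $T$ and non-increasing triples of $\nubold $, which will directly yield $\oinv (T) = h(\nubold ) - h(T)$.

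First I would match attacking pairs of $\nubold $ (Definition~\ref{def:inv_old}) with the two ``sides'' of triples. Given a triple $(x,y,z)$ with $y \in \nu ^{(i)}$, $i<j$, $x$ and $z$ adjacent in the extended row of $\nu ^{(j)}$, and $c(y) = c(z) = c(x)+1$: when $z \in \nubold $, the pair $(y,z)$ is an attacking pair of the first type ($i<j$, $c(z)=c(y)$); when $x \in \nubold $, the pair $(x,y)$ is an attacking pair of the second type ($j>i$, $c(y)=c(x)+1$). Conversely, every attacking pair of $\nubold $ arises this way from a unique triple, since from an attacking pair $(y,z)$ of the first type one recovers $x$ as the box immediately left of $z$ in its row, and from an attacking pair $(x,y)$ of the second type one recovers $z$ as the box immediately right of $x$.

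Next I would verify that the row-increasing property of $T$, combined with the transitivity of $\sgngt $, forces at most one of the two inequalities $T(x) \sgngt T(y)$ and $T(y) \sgngt T(z)$ to hold, with the convention $T(x) = -\infty $, $T(z) = \infty $ for extra boxes outside $\nubold $. Transitivity of $\sgngt $ is immediate by case analysis on whether each of $a \sgngt b$ and $b \sgngt c$ is strict inequality or equality on a negative letter. Thus, if both $x, z \in \nubold $ and $T(x) \sgngt T(y) \sgngt T(z)$ held, we would get $T(x) \sgngt T(z)$, contradicting the row-increasing condition $T(x) \sgnle T(z)$. When one of $x,z$ lies outside $\nubold $, the corresponding inequality is automatic.

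Finally I would tally: a triple fails to be increasing if and only if exactly one of the two comparisons fails, and each such failure corresponds (via the first step) to an attacking inversion involving two boxes of $\nubold $. Conversely, each attacking inversion lies in a unique triple and witnesses a failed comparison, hence a non-increasing triple. This yields the bijection between attacking inversions and non-increasing triples, giving $\oinv (T) = h(\nubold ) - h(T)$. The main point requiring care is handling the four cases of whether $x$ and $z$ lie in $\nubold $ uniformly, but the $\pm \infty $ convention in Definition~\ref{def:incr-triple} absorbs this cleanly, so I do not expect any serious obstacle.
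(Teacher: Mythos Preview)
Your proposal is correct and follows essentially the same approach as the paper: both arguments identify each attacking pair with one of the two ``sides'' $(x,y)$ or $(y,z)$ of a unique triple, use the row-increasing condition (via transitivity of $\sgngt$, which the paper leaves implicit) to show that at most one side of any triple can be an attacking inversion, and conclude that attacking inversions are in bijection with non-increasing triples. Your treatment is slightly more explicit about the transitivity step and the case analysis on whether $x,z\in\nubold$, but the structure is the same.
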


\begin{proof}
Let $(x,y,z)$ be a triple of $\nubold $.  If $x\in \nubold $, then
$(x,y)$ is an attacking pair, and if $z\in \nubold $, then $(y,z)$ is
an attacking pair.  Since we set $T(x) = -\infty $ if $x\not \in
\nubold $, we have $T(x) \sgngt T(y)$ if and only if $x\in \nubold $
and $(x,y)$ is an attacking inversion in $T$.  Similarly, $T(y) \sgngt
T(z)$ if and only if $z\in \nubold $ and $(y,z)$ is an attacking
inversion.  We cannot have $T(x) \sgngt T(y) \sgngt T(z)$ because
$T(x) \sgnle T(z)$ by assumption.  Hence if $(x,y,z)$ is increasing,
neither $(x,y)$ nor $(y,z)$ is an attacking inversion, and if
$(x,y,z)$ is not increasing then exactly one of them is an attacking
inversion.

Since every attacking pair is either $(x,y)$ or $(y,z)$ in a unique
triple $(x,y,z)$, it follows that the number of attacking inversions
is equal to the number of triples that are not increasing, giving
\eqref{e:inv-vs-triples}.
\end{proof}

\begin{lemma}\label{lem:rr-incr-triples}
Given flagged LLT indexing data $\nubold ,\sigma $, and subsets $\Acal
_{1}<\cdots <\Acal _{l}$ of a signed alphabet $\Acal $, as in
Theorem~\ref{thm:combinatorial-G-nu-sigma}, let $\nubold '$ be the
tuple of extended diagrams consisting of $\nubold $ together with the
flag box in each row.  For every flagged tableau $T\in \FSST (\nubold
, \sigma , \Acal )$, we have
\begin{equation}\label{e:rr-incr-triples}
\inv (T) = h(\nubold ) + e(\nubold ,\sigma ) - h(T) - e(T),
\end{equation}
with notation as in Example~\ref{ex:triplesinvetc}.
\end{lemma}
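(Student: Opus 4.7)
The plan is to apply Lemma~\ref{lem:inv-vs-triples}, which gives $\oinv(T)=h(\nubold)-h(T)$, and reduce the identity to the equivalent statement
\begin{equation*}
\inv(T)-\oinv(T) \;=\; e(\nubold,\sigma)-e(T).
\end{equation*}
The left side counts attacking inversions $(x,y)$ of $T$ in which $y$ is a flag box, and the right side counts rising end triples that are not increasing in $T'$. I will construct a bijection between these two sets.

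First I would show that every non-increasing rising end triple $(x,y,z)$ produces an attacking inversion. Writing $y$ for the flag box of some row in $\nu^{(I)}$ of flag number $p$, $z$ for the flag box of a row in some $\nu^{(J)}$ with $I<J$ and flag number $q>p$, and $x$ for the box immediately left of $z$, non-increasingness forces $x\in\nubold$ (otherwise $T'(x)=-\infty$ and the triple is increasing) as well as $T(x)\sgngt b_p=T'(y)$. Since $x\in\nu^{(J)}$ with $J>I$ and $c(x)+1=c(z)=c(y)$, the pair $(x,y)$ matches the second alternative of Definition~\ref{def:inversions}, and the preceding inequality makes it an attacking inversion.

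Then I would establish surjectivity using compatibility of $\sigma$ via Lemma~\ref{lem:flag-numbers}(a). Given an attacking inversion $(x,y)$ with $y$ a flag box of flag number $p$, write $T(x)\in\Acal_s$ and let $r$ be the row of $x$, with flag number $q_x$. The key point is that unless $(x,y)$ arises as in the previous paragraph---i.e., unless $y$ lies in a component strictly below that of $x$ with $c(y)=c(x)+1$ and $x$ is the last box of row $r$---a short content computation shows that the end content of row $r$ strictly exceeds $c(y)$. Compatibility then gives $q_x<p$, while admissibility from Definition~\ref{def:FSST}(iii) gives $s\le q_x$, and the inversion hypothesis gives $s>p$; these combine to the contradiction $s\le q_x<p<s$. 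For each surviving $(x,y)$ the flag box $z$ at the end of $x$'s row, together with $y$, yields a rising end triple (the rising inequality $q_z>p$ follows from $p<s\le q_z$) that is non-increasing in $T'$ by construction.

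The main obstacle I anticipate will be keeping the various inequalities straight: the content versus flag-number comparison from Lemma~\ref{lem:flag-numbers}(a), the admissibility bound $s\le q_x$ from Definition~\ref{def:FSST}(iii), and the comparison between $T(x)\in\Acal_s$ and $b_p$ dictated by the ordering \eqref{e:A-with-bi}. Once these are correctly organized, both directions of the bijection drop out.
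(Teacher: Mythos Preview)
Your proposal is correct and follows essentially the same approach as the paper's proof: both reduce via Lemma~\ref{lem:inv-vs-triples} to showing that attacking inversions $(x,y)$ with $y$ a flag box are in bijection with non-increasing rising end triples, and both establish this bijection by ruling out the unwanted configurations through the combination of compatibility (Lemma~\ref{lem:flag-numbers}(a)) and the flag bound on $T(x)$. The only cosmetic difference is that you phrase the contradiction via the alphabet index $s$ and admissibility ($s\le q_x$), while the paper phrases it via row-increasingness of $T'$ ($T(x)\sgnle T'(p)$ for $p$ the flag box in the row of $x$); these are equivalent.
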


\begin{proof}
Let $\inv _{1}(T)$ be the number of attacking inversions $(x,y)$ of
$T$ in which $y$ is a flag box.  By definition, $\inv (T) = \oinv (T)
+ \inv _{1}(T)$.  Using Lemma~\ref{lem:inv-vs-triples}, this implies
that \eqref{e:rr-incr-triples} is equivalent to $\inv _{1}(T) =
e(\nubold ,\sigma ) - e(T)$.

Let $(x,y)$ be an attacking inversion with $x\in \nu ^{(i)}$, $y\in
\nu '\, ^{(j)}$, and $y$ a flag box.  By
Remark~\ref{rem:no-funny-inv}(ii), we cannot have $c(y) = c(x)$, hence
we must have $c(y)= c(x)+1$ and $i>j$.  Let $T'$ be the extended
tableau in \eqref{e:T-extended} and let $z$ be the flag box in the row
of $x$.  Then $T'(y) \sgnlt T'(x) \sgnle T'(z)$, since $(x,y)$ is an
inversion.  If there are any boxes between $x$ and $z$, then
$c(z)>c(x)+1 = c(y)$, contradicting $T'(y) \sgnlt T'(z)$.  Hence $z$
is adjacent to $x$, and $(x,y,z)$ is a rising end triple, but not an
increasing triple in $T'$.  Conversely, if $(x,y,z)$ is a rising end
triple which is not increasing in $T'$, then $x\in \nubold $ and $T(x)
\sgngt T'(y)$ is an attacking inversion with $y$ a flag box.  This
shows that $\inv _{1}(T) = e(\nubold ,\sigma ) - e(T)$, as desired.
\end{proof}

The last ingredient needed for the proof of
Theorem~\ref{thm:combinatorial-G-nu-sigma} is the analogous result for
symmetric LLT polynomials, which equates the combinatorial formula in
Lemma~\ref{lem:signed-classical-LLT} with an algebraic formula given
by \cite[Theorem 5.5.4]{BHMPS25a}.  We briefly recall the latter
formula---see \cite[\S 5]{BHMPS25a} for more details.

Let $\rr $ be a strict composition of $l=|\rr |$.  It follows from the
definition of nonsymmetric Hall-Littlewood polynomials (\S
\ref{ss:ns-HL-pols}) that
\begin{equation}\label{e:E-anti}
E^{-}_{\rr ,\lambda }(z;t^{-1}) \defeq \sum _{w\in \Sfrak _{\rr }}
(-1)^{\ell (w)} E_{w(\lambda )}(z;t^{-1})
\end{equation}
is antisymmetric for the Hecke algebra $\Hcal (\Sfrak _{\rr })$ acting
on $\kk [z_{1}^{\pm 1},\ldots,z_{l}^{\pm 1}]$, i.e., $T_{w}\,
E^{-}_{\rr ,\lambda }(z;\, t^{-1}) = (-1)^{\ell (w)}\, E^{-}_{\rr
,\lambda }(z;\, t^{-1})$ for $w \in \Sfrak _{\rr }$.  Up to sign,
$E^{-}_{\rr ,\lambda }(z;\, t^{-1})$ clearly depends only on the
$\Sfrak _{\rr }$ orbit of $\lambda $, and vanishes if $\lambda $ is
not $\GL _{\rr }$ regular.  The set $\{ E^{-}_{\rr ,\lambda }(z;\,
t^{-1}) \mid \lambda \in X_{++}(\GL _{\rr }) \}$ is a basis of the
space of $\Hcal (\Sfrak _{\rr })$ antisymmetric elements in $\kk [\zz
^{\pm 1}]$.  These elements are $E^{\operatorname{id},-}_{\rr ,\lambda
}(x;t^{-1})$ in the notation of \cite[eq.~(116)]{BHMPS25a}.  Using
\cite[Remark 5.4.2(i)]{BHMPS25a}, we can formulate \cite[Theorem
5.5.4]{BHMPS25a} as follows.

\begin{prop}\label{prop:symm-LLT-alg}
Let $\rr $ be a strict composition of $l = |\rr |$.  If $\gamma ,\eta
\in X_{++}(\GL _{\rr})$ satisfy $\gamma _{i}\leq \eta _{i}$ for all
$i$, then
\begin{equation}\label{e:symm-LLT-alg}
t^{h(\nubold )} \Gcal _{\nubold }(X; t^{-1}) = \langle E^{-}_{\rr,
\eta }(z; t^{-1}) \rangle\, \Omega [X (z_{1}+\cdots +z_{l})]\,
E^{-}_{\rr, \gamma }(z; t^{-1}),
\end{equation}
where $\nubold $ is the diagram of $(\rr ,\gamma ,\eta )$, as in
Definition~\ref{def:diagram-and-weights} (since both $\gamma $ and
$\eta $ are $\GL _{\rr}$ regular, $\nubold $ is a tuple of ordinary
skew diagrams).

If $\gamma ,\eta \in X_{++}(\GL _{\rr })$ do not satisfy $\gamma
_{i}\leq \eta _{i}$ for all $i$, then the right hand side of
\eqref{e:symm-LLT-alg} is equal to zero.
\end{prop}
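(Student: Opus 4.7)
The plan is to deduce Proposition \ref{prop:symm-LLT-alg} as a reformulation of \cite[Theorem 5.5.4]{BHMPS21d}; the principal work is the translation of notation. First, recall from Remark \ref{rem: Ti conventions} that the Hecke algebra generators used in \cite{BHMPS21d} are $\overline{T_i}|_{t^{-1}\mapsto q}$, so the parameter $q$ there corresponds to $t^{-1}$ here. Under this correspondence, the antisymmetric elements $E^{\operatorname{id},-}_{\rr,\lambda}(x;t^{-1})$ of \cite[eq.\ (116)]{BHMPS21d} are identified with the elements $E^{-}_{\rr,\lambda}(z;t^{-1})$ defined in \eqref{e:E-anti} of the present paper; this identification is precisely the content of \cite[Remark 5.4.2(i)]{BHMPS21d}. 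Applying this dictionary to \cite[Theorem 5.5.4]{BHMPS21d} yields formula \eqref{e:symm-LLT-alg} directly, once one confirms that the prefactor $t^{h(\nubold)}$ matches the corresponding normalization in \cite{BHMPS21d} after performing the $q \leftrightarrow t^{-1}$ substitution.

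The second assertion, that the right-hand side of \eqref{e:symm-LLT-alg} vanishes whenever $\gamma_i \leq \eta_i$ fails for some $i$, I would prove independently and directly. Expanding $\Omega[X(z_1+\cdots+z_l)] = \prod_i \sum_{k \ge 0} h_k[X]\, z_i^k$ and using Lemma \ref{lem:t-dual-bases}(a) to realize the extraction $\langle E^{-}_{\rr,\eta}(z;t^{-1})\rangle$ via pairing against the dual antisymmetric element, the right-hand side can be rewritten as a signed sum over $w \in \Sfrak_\rr$ of products $\prod_i h_{\eta_i - w(\gamma)_i}[X]$. This sum factors across the $\rr$-blocks into ordinary Jacobi--Trudi determinants $\det(h_{\eta_i - \gamma_j}[X])$ indexed within each block. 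Because $\gamma$ and $\eta$ are both $\GL_\rr$-regular, the condition $\gamma_i > \eta_i$ forces the corresponding block determinant to vanish by the standard Jacobi--Trudi argument (i.e., the determinant of a matrix indexed by an invalid skew shape is zero).

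The main obstacle I anticipate is purely bookkeeping: carefully tracking the conventions between \cite{BHMPS21d} and the present paper, in particular verifying that the prefactor $t^{h(\nubold)}$ here matches the analogous normalization there after translating from their $q$-statistic to the triples statistic $h(\nubold)$ defined in Definition \ref{def:triple}. This is elementary but must be done with care to ensure no stray powers of $t$ are dropped in the conversion.
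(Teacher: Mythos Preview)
Your first paragraph is exactly the paper's approach: the proposition is stated as a direct reformulation of \cite[Theorem~5.5.4]{BHMPS21d} via \cite[Remark~5.4.2(i)]{BHMPS21d} and the $q\leftrightarrow t^{-1}$ dictionary from Remark~\ref{rem: Ti conventions}. The paper also takes the vanishing assertion from the same cited theorem rather than proving it separately.

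Your independent argument for the vanishing, however, has a genuine gap. You claim the right-hand side can be rewritten as $\sum_{w\in\Sfrak_\rr}(-1)^{\ell(w)}\prod_i h_{\eta_i-w(\gamma)_i}[X]$, which would factor over $\rr$-blocks into Jacobi--Trudi determinants. But this formula is false for generic $t$: if it held, the same computation would apply when $\gamma_i\leq\eta_i$ for all $i$, yielding $t^{h(\nubold)}\Gcal_\nubold(X;t^{-1})=\prod_{\text{blocks}} s_{\beta^{(j)}/\alpha^{(j)}}(X)$, a $t$-independent product of skew Schur functions. That is only the $t=1$ specialization of the LLT polynomial. The error is in treating the coefficient extraction $\langle E^{-}_{\rr,\eta}(z;t^{-1})\rangle$ as if it picked out monomial degrees: the nonsymmetric Hall--Littlewood polynomials $E_\lambda(z;t^{-1})$ and their duals $\overline{F_\lambda(z;t^{-1})}$ are not monomials, and the $\langle-,-\rangle_t$ pairing in Lemma~\ref{lem:t-dual-bases}(a) does not collapse to a constant-term computation in $z$. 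You should instead take the vanishing from \cite[Theorem~5.5.4]{BHMPS21d} along with the main identity.
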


Regarding formula \eqref{e:symm-LLT-alg}, note that since $\Omega [X
(z_{1}+\cdots +z_{l})] $ is symmetric in $z$, multiplication by it
commutes with the Hecke algebra action and therefore preserves the
space of $\Hcal (\Sfrak _{\rr })$ antisymmetric elements.

\begin{cor}\label{cor:symm-LLT-alg}
For $\gamma \in X_{++}(\GL _{\rr })$, $\eta \in X_{+}(\GL _{\rr })$,
and $v\in \Sfrak _{\rr }$, we have
\begin{equation}\label{e:symm-LLT-alg-F}
\sum _{w\in \Sfrak _{\rr }} (-1)^{\ell (w)} \langle F_{w(\gamma
)}(z;\, t^{-1}) \rangle\, \Omega [X\, \overline{z_{1}+\cdots
+z_{l}}]\, F_{v(\eta )}(z;\, t^{-1}) = (-1)^{\ell (v)} \,
t^{-h(\nubold )}\, \Gcal _{\nubold }(X;\, t)
\end{equation}
if $\eta$ is $\GL _{\rr }$ regular and $\gamma _{i}\leq \eta _{i}$ for
all $i$, where $\nubold $ is diagram of $(\rr ,\gamma , \eta )$.
Otherwise, the left hand side of \eqref{e:symm-LLT-alg-F} is equal to
zero.
\end{cor}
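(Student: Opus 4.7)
The plan is to derive \eqref{e:symm-LLT-alg-F} from the algebraic definition \eqref{e:G-nu-sigma} of the flagged LLT polynomial, combined with Corollary~\ref{cor:Weyl-on-G-nu-sigma}(a), rather than by directly manipulating the $E$-basis formula in Proposition~\ref{prop:symm-LLT-alg}.  Assume first that $\eta $ is $\GL _{\rr }$-regular, $\gamma _{i}\leq \eta _{i}$ for all $i$, and $v=e$.  Let $\sigma $ be the standard compatible permutation of Definition~\ref{def:standard-compat}, namely the minimal element of $\Sfrak _{l}$ with $\sigma (\eta _{-}) = \eta $.  Specialize \eqref{e:G-nu-sigma} by setting $X_{1} = X$ and $X_{i} = 0$ for $i>1$; the plethystic alphabet collapses to $X\cdot \overline{z_{1}+\cdots +z_{l}}$.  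Since $\Omega [X\, \overline{z_{1}+\cdots +z_{l}}]$ is symmetric in $z$, the operator $\overline{T_{\sigma }}$ commutes past it, and $\overline{T_{\sigma }}\, z^{\eta _{-}} = F_{\eta }(z;t^{-1})$ by definition \eqref{e:ns-HL-F}.  By Lemma~\ref{lem:flag-numbers}(b) the flag numbers of the standard compatible $\sigma $ decrease in content/row reading order, so $e(\nubold ,\sigma ) = 0$.  Corollary~\ref{cor:Weyl-on-G-nu-sigma}(a) identifies $\Gcal _{\nubold ,\sigma }(X,0,\ldots,0;\, t) = \Gcal _{\nubold }(X;t)$, which yields \eqref{e:symm-LLT-alg-F} for $v = e$.

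To extend to arbitrary $v\in \Sfrak _{\rr }$, I would first prove $T_{v}\, F_{\eta }(z;t^{-1}) = F_{v(\eta )}(z;t^{-1})$ for $\eta \in X_{++}(\GL _{\rr })$ by induction on $\ell (v)$.  For a reduced extension $v = s_{i}v'$, the condition $v'^{-1}(i)<v'^{-1}(i+1)$ combined with strict decrease of $\eta $ on $\rr $-blocks forces $v'(\eta )_{i}>v'(\eta )_{i+1}$, so the intertwiner rule $T_{i}\, F_{\lambda } = F_{s_{i}\lambda }$ (for $\lambda _{i}>\lambda _{i+1}$) applies at each step.  Next, define $\Psi (g) \defeq \sum _{w\in \Sfrak _{\rr }} (-1)^{\ell (w)}\, \langle F_{w(\gamma )}(z;t^{-1}) \rangle\, g$, and verify the Hecke-antisymmetry identity $\Psi (T_{i}g) = -\Psi (g)$ for $s_{i}\in \Sfrak _{\rr }$ by expanding $T_{i}F_{\mu }$ in the $F$-basis and pairing the contributions of $w$ with those of $s_{i}w$ in the signed sum; the key point is that $\GL _{\rr }$-regularity of $\gamma $ rules out the tie case $(w\gamma )_{i} = (w\gamma )_{i+1}$.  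Since $T_{v}$ commutes past the symmetric factor $\Omega [X\, \overline{z_{1}+\cdots +z_{l}}]$,
\[
\text{LHS of \eqref{e:symm-LLT-alg-F}} \;=\; \Psi \bigl(T_{v}(\Omega [X\, \overline{z_{1}+\cdots +z_{l}}]\, F_{\eta })\bigr) \;=\; (-1)^{\ell (v)}\, \Psi (\Omega [X\, \overline{z_{1}+\cdots +z_{l}}]\, F_{\eta }),
\]
which combines with the $v=e$ case.

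For the vanishing statement, when $\eta $ is not $\GL _{\rr }$-regular, choose $s_{j}\in \Sfrak _{\rr }$ with $\eta _{j} = \eta _{j+1}$; then $T_{j}\, F_{\eta } = t\, F_{\eta }$, while $\Psi (T_{j}(\cdot )) = -\Psi (\cdot )$, forcing $(t+1)\, \Psi (\Omega [X\, \overline{z_{1}+\cdots +z_{l}}]\, F_{\eta }) = 0$ and hence $\Psi = 0$.  A parallel argument, first applying suitable Hecke operators to bring a repeated value of $v(\eta )$ into adjacent position (each application changing $\Psi $ only by a sign), handles arbitrary $v$.  The case $\gamma _{i}>\eta _{i}$ for some $i$ (with $\eta $ still $\GL _{\rr }$-regular) is reduced to the vanishing half of Proposition~\ref{prop:symm-LLT-alg} by converting both sides to the $E$-basis via the duality of Lemma~\ref{lem:t-dual-bases}(a).

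The main obstacle will be the verification of $\Psi (T_{i}g) = -\Psi (g)$.  One must track the three cases of the intertwiner action of $T_{i}$ on $F_{\mu }$ (ascent, descent, or tie at position $i$), and carefully pair the contributions of $w$ with those of $s_{i}w$; the $(t-1)$ and $t$ factors coming from the ascent case must cancel exactly, and this is possible precisely because $\GL _{\rr }$-regularity of $\gamma $ eliminates the tie case $(w\gamma )_{i} = (w\gamma )_{i+1}$.  The same tie case, occurring instead for $\eta $, is what drives the vanishing argument, so the delicate interplay of which weight is required to be regular must be tracked throughout.
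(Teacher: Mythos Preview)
There is a circularity problem. You invoke Corollary~\ref{cor:Weyl-on-G-nu-sigma}(a) to identify $\Gcal_{\nubold,\sigma}(X,0,\ldots,0;t)$ with the symmetric LLT polynomial $\Gcal_{\nubold}(X;t)$, but in the paper that corollary is deduced from Theorem~\ref{thm:combinatorial-G-nu-sigma}, and the proof of Theorem~\ref{thm:combinatorial-G-nu-sigma} (in \S\ref{ss:proof}) relies on Corollary~\ref{cor:symm-LLT-alg} as its essential recursion step (see the passage around \eqref{e:G-triples-recur-1}). So your $v=e$ argument is circular: after specializing \eqref{e:G-nu-sigma} at $X_1=X$, $X_{i>1}=0$, you have correctly shown that the left side of \eqref{e:symm-LLT-alg-F} equals $t^{-h(\nubold)}\Gcal_{\nubold,\sigma}(X,0,\ldots,0;t)$, but identifying this with $t^{-h(\nubold)}\Gcal_{\nubold}(X;t)$ is precisely what Corollary~\ref{cor:Weyl-on-G-nu-sigma}(a) asserts, and you offer no independent route to it. (Note also that you invoke Proposition~\ref{prop:symm-LLT-alg} anyway for the $\gamma_i>\eta_i$ vanishing case, so the proof does not actually avoid it.)

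The paper proves Corollary~\ref{cor:symm-LLT-alg} \emph{before} the combinatorial formula, by reducing directly to Proposition~\ref{prop:symm-LLT-alg}: invert $z_i$ and $t$, use the duality of Lemma~\ref{lem:t-dual-bases}(a) to rewrite the $F$-coefficient sum as $\langle E_{v(\eta)}(z;t^{-1})\rangle\,\Omega[X(z_1+\cdots+z_l)]\,E^{-}_{\rr,\gamma}(z;t^{-1})$, and then observe that since $\Omega\cdot E^{-}_{\rr,\gamma}$ is $\Hcal(\Sfrak_{\rr})$-antisymmetric, its $E_{v(\eta)}$-coefficient is $(-1)^{\ell(v)}$ times its $E^{-}_{\rr,\eta}$-coefficient when $\eta$ is regular, and zero otherwise. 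Your intertwiner identity $T_vF_\eta=F_{v(\eta)}$ and the antisymmetry $\Psi(T_ig)=-\Psi(g)$ are correct and amount to the $F$-basis version of this same $\Hcal(\Sfrak_{\rr})$-antisymmetry; they would finish the job once the $v=e$ case is established by non-circular means.
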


\begin{proof}
Inverting the variables $z_{i}$ and $t$ turns the left hand side of
\eqref{e:symm-LLT-alg-F} into
\begin{equation}\label{e:symm-LLT-alg-2}
\sum _{w\in \Sfrak _{\rr }} (-1)^{\ell (w)} \langle
\overline{F_{w(\gamma )}(z;\, t^{-1})} \rangle\, \Omega [X\,
(z_{1}+\cdots +z_{l})]\, \overline{F_{v(\eta )}(z;\, t^{-1})}.
\end{equation}
By Lemma~\ref{lem:t-dual-bases}(a) and the definition of $E^{-}_{\rr
,\lambda }(z; t^{-1})$, this is equal to
\begin{equation}\label{e:symm-LLT-alg-3}
\begin{aligned}
\langle E^{-}_{\rr ,\gamma }(z;t^{-1}),\, \Omega [X\, (z_{1}& +\cdots
+z_{l})]\, \overline{F_{v(\eta )}(z;\, t^{-1})} \rangle _{t}\\
& = \langle \Omega [X\, (z_{1}+ \cdots +z_{l})]\, E^{-}_{\rr ,\gamma
}(z;t^{-1}),\,
\overline{F_{v(\eta )}(z;\, t^{-1})} \rangle _{t}\\
& = \langle E_{v(\eta )}(z;t^{-1}) \rangle \, \Omega [X\, (z_{1}+
\cdots +z_{l})]\, E^{-}_{\rr ,\gamma }(z;t^{-1}).
\end{aligned}
\end{equation}
Now, for any $\Hcal (\Sfrak _{\rr })$ antisymmetric element
$f(z)\in \kk [\zz ^{\pm 1}]$, the coefficients of $f(z)$ in terms of
$E_{\mu}(z;t^{-1}) $ can be computed by first expanding in the
$E^{-}_{\rr ,\lambda }(z; t^{-1})$ basis and then expanding each
$E^{-}_{\rr ,\lambda }$ into $E_\mu$'s using \eqref{e:E-anti}.  This
gives
\begin{equation}\label{e:anti-symm-expansion}
\langle E_{v(\eta )}(z;t^{-1}) \rangle \,f(z) =
\begin{cases}
(-1)^{\ell (v)}\, \langle E^{-}_{\rr ,\eta }(z;t^{-1}) \rangle\, f(z)
&	\text{if $\eta $ is $\GL _{\rr }$ regular}\\
0&	\text{otherwise},
\end{cases}
\end{equation}
where $\eta \in X_{+}(\GL _{\rr })$.  Applying this with $f(z) =
\Omega [X\, (z_{1}+ \cdots +z_{l})]\, E^{-}_{\rr ,\gamma }(z;t^{-1})$
from the last line of \eqref{e:symm-LLT-alg-3}, we see that the
corollary is equivalent to Proposition~\ref{prop:symm-LLT-alg}.
\end{proof}

We now have the tools to prove the combinatorial formula in
Theorem~\ref{thm:combinatorial-G-nu-sigma}.  The basic strategy is to
show that both the algebraic and the combinatorial side satisfy a
recurrence governed by the decomposition of any flagged tableau into
the part with entries in $\Acal _{1}$ and the part with entries in
$\Acal _{2}\cup \cdots \cup \Acal _{l}$, as sketched in
Figure~\ref{fig:flagged-T-recursion}.

\begin{figure}
\begin{tikzpicture}[scale=.45]
\draw (2,0) -- (3,0) -- (3,1) -- (4,1) -- (4,2) -- (5,2) -- (5,3) --
(-1,3) -- (-1,2) -- (0,2) -- (0,1) -- (2,1) -- (2,0);
\draw [fill=gray!15] (4,1) -- (4,2) -- (5,2) -- (5,3) -- (2,3) --
(2,2) -- (3,2) -- (3,1) -- (4,1);
\node at (1,2) {$\lambold $};
\node at (3.5,2.3) {$\muboldhat $};
\draw (2,4) -- (5,4) -- (5,5) -- (4,5) -- (4,6) -- (5,6) -- (5,7) --
(0,7) -- (0,5) -- (2,5) -- (2,4);
\draw [fill=gray!15] (4,5) -- (4,6) -- (5,6) -- (5,7) --
(0,7) -- (0,6) -- (3,6) -- (3,5) -- (4,5);
\node at (5.5,4.5) {$\flagwave{1}$};
\node at (2.5,5) {$\lambold $};
\node at (3.5,6.3) {$\muboldhat $};
\draw (1,8) -- (5,8) -- (5,9) -- (3,9) -- (3,10) -- (-2,10) -- (-2,9)
--(1,9) -- (1,8);
\draw [fill=gray!15] (5,8) -- (5,9) -- (3,9) -- (3,10) -- (2,10) --
(2,8) -- (5,8);
\node at (0,9.5) {$\lambold $};
\node at (2.5,8.8) {$\muboldhat $};
\end{tikzpicture}
\caption{Recursive description of a flagged tableau $T\in \FSST
(\nubold ,\sigma ,\Acal )$.  Entries of $T$ in $\Acal _{1}$ form a
super tableau $T_{1}$ on a tuple of ordinary skew diagrams $\lambold
$, left-justified in $\nubold $ and containing all of row $m = \sigma
(l)$ with flag number $1$.  After deleting row $m$, entries of $T$ in
$\Acal _{2}\cup \cdots \cup \Acal _{l-1}$ form a flagged tableau
$T_{2}\in \FSST (\muboldhat ,\sigmahat ,\Acalhat ) $ on $\muboldhat
=\nubold /\lambold $, where $\Acalhat$ has distinguished subsets
$\Acal _{2}<\cdots <\Acal _{l}$, and $\sigmahat $ changes flag numbers
$2,\ldots,l$ to $1,\ldots,l-1$.
\label{fig:flagged-T-recursion}}
\end{figure}

\begin{proof}[Proof of Theorem~\ref{thm:combinatorial-G-nu-sigma}]
Given a strict composition $\rr $ of $l$, weights $\gamma \in
X_{++}(\GL _{\rr})$ and $\eta \in X_{+}(\GL _{\rr })$, and a
permutation $\sigma \in \Sfrak _{l}$ satisfying (iii), (iv) in
Definition~\ref{def:indexing-data}, define
\begin{equation}\label{e:G-triples-1}
G_{\rr ,\gamma ,\eta ,\sigma }(x;t)
 = \sum _{w\in \Sfrak _{\rr }} (-1)^{\ell (w)} \langle F_{w(\gamma
)}(z;\, t^{-1}) \rangle\, \overline{T_{\sigma }}\, \Omega [\sum
_{i\leq j} X^\Acal_{i} /z_{l+1-j}] \, z^{\eta _{-}},
\end{equation}
where $X^\Acal_{i} = \sum _{a\in \Acal _{i}^{+}}\, x_{a} - \sum _{b\in \Acal
_{i}^{-}}\, x_{b}$.  If we also have $\gamma _{i}\leq \eta _{i}$ for
all $i$, then $\rr ,\gamma,\eta ,\sigma $ are flagged LLT indexing
data, and the left hand side of \eqref{e:combinatorial-G-nu-sigma} is
given by $\Gcal _{\nubold ,\sigma }[X^\Acal_{1},\ldots, X^\Acal_{l};\, t] =
t^{h(\nubold )+e(\nubold ,\sigma )}\, G_{\rr ,\gamma ,\eta ,\sigma
}(x;t)$, where $\nubold $ is the diagram of $(\rr ,\gamma ,\eta )$.
By Lemma~\ref{lem:rr-incr-triples}, the right hand side of
\eqref{e:combinatorial-G-nu-sigma} is equal to $t^{h(\nubold
)+e(\nubold ,\sigma )}\, C_{\nubold ,\sigma }(x; t)$, where
\begin{equation}\label{e:C-triples}
C_{\nubold ,\sigma }(x; t) = \sum _{T\in \FSST (\nubold ,\sigma
,\Acal )} (-1)^{m(T)}\, t^{-h(T)-e(T)}\, x^{T}.
\end{equation}
Thus, the theorem follows from the stronger assertion
\begin{equation}\label{e:G=C}
G_{\rr ,\gamma ,\eta ,\sigma }(x;t) = \begin{cases} C_{\nubold ,\sigma
}(x; t), & \text{if $\gamma _{i}\leq \eta  _{i}$ for
all $i$, where $\nubold $ is the diagram of $(\rr ,\gamma ,\eta )$,}\\
0&  \text{otherwise},
\end{cases}
\end{equation}
which we will prove by induction on $l$.  In the basis case $l=0$, the
condition $\gamma _{i}\leq \eta _{i}$ holds vacuously, $\nubold $ is
the empty tuple, $\sigma $ is the trivial permutation of $\emptyset $,
the unique element of $\FSST (\nubold ,\sigma ,\Acal )$ is the empty
tableau, and \eqref{e:G=C} holds with $G_{\rr ,\gamma ,\eta ,\sigma
}(x;t) = 1 = C_{\nubold ,\sigma }(x; t)$.

We now derive a recurrence expressing $G_{\rr ,\gamma ,\eta ,\sigma
}(x; t)$ for $|\rr | = l > 0$ in terms of instances for $|\rr | =
l-1$.  We will then prove \eqref{e:G=C} by verifying that the right
hand side satisfies the same recurrence.

Factoring out $\Omega [X^\Acal_{1}\, \overline{z_{1}+\cdots +z_{l}}]$ from
$\Omega [\sum _{i\leq j} X^\Acal_{i} /z_{l+1-j}]$ and using the fact that
functions symmetric in $z$ commute with the Hecke algebra action
yields
\begin{equation}\label{e:G-triples-factored}
G_{\rr ,\gamma ,\eta ,\sigma }(x;t) =
\sum _{w\in \Sfrak _{\rr }} (-1)^{\ell (w)} \langle F_{w(\gamma
)}(z;\, t^{-1}) \rangle\, \Omega [X^\Acal_{1}\, \overline{z_{1}+\cdots
+z_{l}}]\, \overline{T_{\sigma }} \, \Omega [\sum _{2\leq i\leq j}
X^\Acal_{i} /z_{l+1-j}] \, z^{\eta _{-}}.
\end{equation}
By Corollary~\ref{cor:symm-LLT-alg}, this is equal to
\begin{equation}\label{e:G-triples-recur-1}
\sum _{\zeta } t^{-h(\lambold )}\, \Gcal _{\lambold }(X^\Acal_{1};\, t) \sum
_{w\in \Sfrak _{\rr }} (-1)^{\ell (w)} \langle F_{w(\zeta )}(z;
t^{-1}) \rangle\, \overline{T_{\sigma }}\, \Omega [\sum _{2\leq i\leq
j} X^\Acal_{i} /z_{l+1-j}] \, z^{\eta _{-}},
\end{equation}
where the outer sum is over $\zeta \in X_{++}(\GL _{\rr})$ satisfying
$\gamma _{i}\leq \zeta _{i}$ for all $i$, and $\lambold $ is the
diagram of $(\rr ,\gamma ,\zeta )$.

Let $m = \sigma (l)$, and define $\sigmahat $ by
\begin{equation}\label{e:sigmahat}
\sigma = s_{m}\cdots s_{l-1}\, \sigmahat.
\end{equation}
More explicitly, $\sigmahat $ fixes $l$, and for $i<l$ is given by
$\sigmahat (i) = \sigma (i)$ if $\sigma (i)<m$, or $\sigmahat (i) =
\sigma (i)-1$ if $\sigma (i)>m$.  The factorization in
\eqref{e:sigmahat} is reduced, i.e., $\ell (\sigma ) = l-m+\ell
(\sigmahat )$, since $\sigma$ has $l-m$ inversions involving $l$, plus
inversions coming from $\sigmahat $.  Therefore we also have
$\overline{T_{\sigma }} = \overline{T_{m}} \cdots \overline{T_{l-1}}
\,\overline{T_{\sigmahat }},$ allowing the inner sum in
\eqref{e:G-triples-recur-1} to be rewritten as
\begin{equation}\label{e:G-triples-recur-2}
\sum _{w\in \Sfrak _{\rr }} (-1)^{\ell (w)} \langle F_{w(\zeta )}(z;
t^{-1}) \rangle\, \overline{T_{m}} \cdots \overline{T_{l-1}}
\,\overline{T_{\sigmahat }}\, \Omega [\sum _{2\leq i\leq j} X^\Acal_{i}
/z_{l+1-j}] \, z^{\eta _{-}}.
\end{equation}

Since $\eta = \sigma (\eta _{-})$, we have $\eta _{m} = (\eta
_{-})_{l} = \max _{i} (\eta _{i})$ and $z^{\eta _{-}}\in (z_{1}\cdots
z_{l})^{\eta _{m}}\, \kk [z_{1}^{-1},\ldots,z_{l-1}^{-1}]$.  Since
$\Omega [\sum _{2\leq i\leq j} X^\Acal_{i} /z_{l+1-j}]$ is a power
series in the $x$ variables with coefficients in $\kk
[z_{1}^{-1},\ldots,z_{l-1}^{-1}]$, and $\overline{T_{\sigmahat }} \in
\Hcal (\Sfrak _{l-1})$ preserves $\kk
[z_{1}^{-1},\ldots,z_{l-1}^{-1}]$ and commutes with multiplication by
$z_{1}\cdots z_{l}$, it follows that $\overline{T_{\sigmahat }}\,
\Omega [\sum _{2\leq i\leq j} X^\Acal_{i} /z_{l+1-j}]\, z^{\eta _{-}}$
has coefficients in $(z_{1}\cdots z_{l})^{\eta _{m}}\, \kk
[z_{1}^{-1},\ldots,z_{l-1}^{-1}]$.  Using Lemma~\ref{lem:F-basics}(a,
b) and the fact that the $F_\lambda (z;t^{-1})$ indexed by $\lambda
\in -\NN^{l-1}$ form a basis of $\kk
[z_{1}^{-1},\ldots,z_{l-1}^{-1}]$, this implies that
$\overline{T_{\sigmahat }}\, \Omega [\sum _{2\leq i\leq j} X^\Acal_{i}
/z_{l+1-j}]\, z^{\eta _{-}}$ can be expanded in terms of $F_{\mu
}(z;t^{-1})$ for $\mu $ of the form $(\mu _{1},\ldots,\mu _{l-1},\eta
_{m})$ with $\mu _{i}\leq \eta _{m}$ for all $i$, and that the
coefficients in this expansion satisfy
\begin{multline}\label{e:F-coef}
\langle F_{\mu }(z;t^{-1}) \rangle\, \overline{T_{\sigmahat }}\,
\Omega [\sum _{2\leq i\leq j} X^\Acal_{i} /z_{l+1-j}]\, z^{\eta _{-}}\\
 = \langle F_{(\mu _{1},\ldots,\mu _{l-1})}(z;t^{-1}) \rangle\,
\overline{T_{\sigmahat }}\, \Omega [\sum _{2\leq i\leq j} X^\Acal_{i}
/z_{l+1-j}]\, z^{\etahat _{-}},
\end{multline}
where $\etahat =(\eta _{1},\ldots,\eta _{m-1},\eta _{m+1},\ldots, \eta
_{l})$, so $\eta _{-} = (\etahat _{-}; \eta _{m})$.  Note that the
right hand side of \eqref{e:F-coef} does not involve $z_{l}$, and that
if we regard $\sigmahat $ as an element of $\Sfrak _{l-1}$ by
identifying $\Sfrak _{l-1}$ with the subgroup $\Sfrak _{l-1}\cong
\Stab _{\Sfrak _{l}}(l) \subset \Sfrak _{l}$, the operator
$\overline{T_{\sigmahat }}\in \Hcal (\Sfrak _{l-1})$ acting on $\kk
[z_{1}^{\pm 1},\ldots,z_{l-1}^{\pm 1}]$ is just the restriction of the
original $\overline{T_{\sigmahat }}\in \Hcal (\Sfrak _{l})$.

Compatibility condition (iv) in Definition~\ref{def:indexing-data}
implies that $m$ is the first element in its $\rr $-block.
Hence, for $\zeta \in X_{++}(\GL _{\rr})$ and $w\in \Sfrak _{\rr }$,
we have $w(\zeta )_{m} = \eta _{m} = \max _{i} (w(\zeta )_{i})$ if and
only if $w(m) = m$, $\zeta _{m} = \eta _{m}$, and $\zeta _{i}\leq \eta
_{m}$ for all $i$.  These constraints are equivalent to $w(\zeta ) =
s_{m}\cdots s_{l-1}\, \mu $ for $\mu $ of the form $(\mu
_{1},\ldots,\mu _{l-1},\eta _{m})$ with $\mu _{i}\leq \eta _{m}$ for
all $i$, as in \eqref{e:F-coef}.  For this $\mu $, we have $(\mu
_{1},\ldots,\mu _{l-1}) = v(\zetahat )$, where $\zetahat =(\zeta
_{1},\ldots,\zeta _{m-1},\zeta _{m+1},\ldots,\zeta _{l})$ and $v =
s_{l-1}\cdots s_{m}\, w\, s_{m}\cdots s_{l-1}$, and we regard $v$ as
an element of $\Sfrak _{l-1}$ using $v(l) = l$.  It follows from
Lemma~\ref{lem:F-basics}(c) that for such $w$ and $\zeta $, we have
\begin{multline}\label{e:G-triples-recur-3}
\langle F_{w(\zeta )}(z; t^{-1}) \rangle\, \overline{T_{m}} \cdots
\overline{T_{l-1}} \,\overline{T_{\sigmahat }}\, \Omega [\sum _{2\leq
i\leq j} X^\Acal_{i} /z_{l+1-j}] \, z^{\eta _{-}} =\\
t^{-e_{m}(\zeta )}\, \langle F_{v(\zetahat )}(z; t^{-1}) \rangle
\,\overline{T_{\sigmahat }}\, \Omega [\sum _{2\leq i\leq j} X^\Acal_{i}
/z_{l+1-j}] \, z^{\etahat _{-}},
\end{multline}
where $e_{m}(\zeta )$ is the number of indices $i>m$ such that $\zeta
_{i} = \zeta _{m}$.  For this last, note that the exponent $e$ in the
instance of \eqref{e:Tis-on-F} used here is the number of entries
equal to $\mu _{l} = \zeta _{m}$ in $(\mu _{m},\ldots,\mu _{l-1}) =
(w(\zeta ) _{m+1},\ldots,w(\zeta ) _{l})$, and that $w\in \Sfrak _{\rr
}$ and $w(m) = m$ imply that $(w(\zeta ) _{m+1},\ldots,w(\zeta )
_{l})$ is a permutation of $(\zeta _{m+1},\ldots, \zeta _{l})$.  If
$w$ and $\zeta $ do not satisfy the above constraints, then the left
hand side of \eqref{e:G-triples-recur-3} is equal to zero.

Let $k$ be the index of the $\rr $-block with first element $m$, and
let $\rrhat $ be the strict composition of $l-1$ obtained from $\rr $
by changing $r_{k}$ to $r_{k}-1$, or deleting it if $r_{k}=1$.  Then
we have $\zetahat \in X_{++}(\GL _{\rrhat })$, $\etahat \in X_{+}(\GL
_{\rrhat })$, and one can check that $\rrhat ,\, \zetahat ,\, \etahat
\, ,\sigmahat $ satisfy conditions (iii), (iv) in
Definition~\ref{def:indexing-data}.  One can also check that if $w\in
\Sfrak _{\rr }$ and $w(m) = m$, then $v\in \Sfrak _{\rrhat }$, and
$\ell (v) = \ell (w)$.  We take the $l-1$ alphabets $\Acal_{i}$ in the
definition of $G_{\rrhat ,\zetahat ,\etahat ,\sigmahat }(x;t)$ to be
$\Acal_{2},\ldots,\Acal_{l}$, so that
\begin{equation}\label{e:G-triples-recur-4}
G_{\rrhat ,\zetahat ,\etahat ,\sigmahat }(x;t) = \sum _{v\in \Sfrak
_{\rrhat }} (-1)^{\ell (v)} \langle F_{v(\zetahat )}(z; t^{-1})
\rangle \,\overline{T_{\sigmahat }}\, \Omega [\sum _{2\leq i\leq j}
X^\Acal_{i} /z_{l+1-j}] \, z^{\etahat _{-}}.
\end{equation}
Rewriting the sum over $w$ in \eqref{e:G-triples-recur-1} as
\eqref{e:G-triples-recur-2}, then substituting
\eqref{e:G-triples-recur-3} into this, and comparing the resulting
expression for $G_{\rr ,\gamma ,\eta ,\sigma }(x;t)$ with
\eqref{e:G-triples-recur-4}, we obtain
\begin{equation}\label{e:G-triples-recur-5}
G_{\rr ,\gamma ,\eta ,\sigma }(x;t) = \sum _{\zeta } t^{-h(\lambold )
- e_{m}(\zeta )}\, \Gcal _{\lambold }(X^\Acal_{1};\, t) \, G_{\rrhat
,\zetahat ,\etahat ,\sigmahat }(x;t),
\end{equation}
where the sum is over $\zeta \in
X_{++}(\GL _{\rr })$ such that $\gamma _{i}\leq \zeta _{i}\leq \eta
_{m}$ for all $i$ and $\zeta _{m} = \eta _{m}$, and $\lambold $ is the
diagram of $(\rr ,\gamma ,\zeta )$, as before.  Using the induction
hypothesis \eqref{e:G=C} for $G_{\rrhat ,\zetahat ,\etahat ,\sigmahat
}(x;t)$,
this becomes
\begin{equation}\label{e:G-triples-recur-6}
G_{\rr ,\gamma ,\eta ,\sigma }(x;t) = \sum _{\zeta } t^{-h(\lambold )
- e_{m}(\zeta )}\, \Gcal _{\lambold }(X^\Acal_{1};\, t) \, C_{\muboldhat
,\sigmahat }(x;t),
\end{equation}
where the sum is now over $\zeta \in X_{++}(\GL _{\rr })$ satisfying
the stronger condition that $\gamma _{i}\leq \zeta _{i}\leq \eta _{i}$
for all $i$ and $\zeta _{m} = \eta _{m}$, $\muboldhat $ is the diagram
of $(\rrhat ,\zetahat ,\etahat )$, and we define $C_{\muboldhat
,\sigmahat }(x;t)$, using the alphabet $\Acalhat =\Acal $, with
distinguished subsets $\Acal _{2}<\cdots <\Acal _{l}$ in the
definition of $\FSST (\muboldhat ,\sigmahat ,\Acalhat )$.

In particular, if we do not have $\gamma _{i}\leq \eta _{i}$ for all
$i$, then the sum in \eqref{e:G-triples-recur-6} is empty, and $G_{\rr
,\gamma ,\eta ,\sigma }(x;t) = 0$, as claimed.
If $\gamma _{i}\leq \eta _{i}$, we are to show that
\begin{equation}\label{e:C-recurrence}
C_{\nubold ,\sigma }(x;t) = \sum _{\zeta } t^{-h(\lambold ) -
e_{m}(\zeta )}\, \Gcal _{\lambold }(X^\Acal_{1};\, t) \, C_{\muboldhat
,\sigmahat }(x;t)\, ,
\end{equation}
where $\nubold $ is the diagram of $(\rr ,\gamma ,\eta )$, and the
right hand side is the same as in \eqref{e:G-triples-recur-6}.  In
particular, the conditions on $\zeta $ are that the diagram $\lambold
$ of $(\rr ,\gamma ,\zeta )$ is a tuple of ordinary skew diagrams
contained in $\nubold $, left-justified (row $i$ has left endpoint
$\gamma _{i}$ in both $\lambold $ and $\nubold $), and containing all
of row $m$.  The diagram $\mubold = \nubold \setminus \lambold $ of
$(\rr ,\zeta ,\eta )$ is a tuple of ragged-right skew diagrams in
which row $m$ is empty and is the first row in its component $\mu
^{(k)}$.  The diagram $\muboldhat $ of $(\rrhat ,\zetahat ,\etahat )$
is obtained by deleting the empty $m$-th row and shifting the
surviving component $\muhat ^{(k)}$, if any, one unit to the left, to
compensate for the fact that deleting the first row shifts it one unit
down, so that corresponding boxes have the same content in $\muboldhat
$ and in $\mubold $.  If row $m$ is the only row of $\mu ^{(k)}$, then
we just delete this component.

Consider a flagged tableau $T\in \FSST (\nubold ,\sigma ,\Acal )$, and
set $\lambold =T^{-1}(\Acal _{1})$, as in
Figure~\ref{fig:flagged-T-recursion}.  By
Lemma~\ref{lem:i-admissible}, the $1$-admissible region $\nubold _{1}
$ is the largest tuple of ordinary skew diagrams contained in $\nubold
$.  Since entries in $\Acal _{1}$ are smaller than other entries of
$T$, $\lambold $ is a tuple of ordinary skew diagrams left-justified
in $\nubold _{1}$, hence also in $\nubold $.  Since the flag number in
row $m$ is $1$, boxes in this row are not $i$-admissible for $i>1$,
hence $\lambold $ contains all of row $m$.  Letting $\mubold =\nubold
\setminus \lambold $, we have $T|_{\lambold } \in \SSYT _{\pm
}(\lambold ,\Acal _{1})$ and $T|_{\mubold} \in \FSST (\mubold , \sigma
,\Acal )$, with $T|_{\mubold }$ having no entries in $\Acal _{1}$.

Constructing $\muboldhat $ by deleting the empty $m$-th row of
$\mubold $, as above, the flag numbers for $\sigmahat $ in the rows of
$\muboldhat $ are one less than the flag numbers for $\sigma $ in the
corresponding rows of $\mubold $.  Let $T|_{\muboldhat }$ be the
filling of $\muboldhat $ defined by $(T|_{\muboldhat })(\uhat ) =
T(u)$ for $\uhat \in \muboldhat $ corresponding to $u\in \mubold$.
Then $T|_{\muboldhat } \in \FSST (\muboldhat , \sigmahat ,\Acalhat )$,
where the $l-1$ distinguished subsets of $\Acalhat $ are $\Acal
_{2}<\cdots <\Acal _{l}$, as in the definition of $C_{\muboldhat
,\sigmahat }(x;t)$.

Conversely, if $\lambold $, $\mubold $, and $\muboldhat $ are the
diagrams of $(\rr ,\gamma ,\zeta )$, $(\rr ,\zeta ,\eta )$, and
$(\rrhat ,\zetahat, \etahat )$, respectively, as above, every
$T_{1}\in \SSYT _{\pm }(\lambold ,\Acal _{1})$ and $T_{2} \in \FSST
(\muboldhat , \sigmahat ,\Acalhat )$ fit together to form a unique
flagged tableau $T\in \FSST (\nubold ,\sigma ,\Acal )$ with
$T|_{\lambold } = T_{1}$ and $T|_{\muboldhat } = T_{2}$.  By
Lemma~\ref{lem:inv-vs-triples} and the definitions of $\Gcal
_{\lambold }(X;t)$ and $C_{\muboldhat ,\sigmahat }(x;t)$, we have
\begin{gather}\label{e:LLT-triples-T1}
t^{-h(\lambold )}\, \Gcal _{\lambold }(X^\Acal_{1}; t) = \sum _{T_{1}\in
\SSYT _{\pm }(\lambold ,\Acal _{1})} (-1)^{m(T_{1})}\, t^{-h(T_{1})}\,
x^{T_{1}}\\
\label{e:C-T2} C_{\muboldhat ,\sigmahat }(x;t) = \sum _{T_{2} \in
\FSST (\muboldhat , \sigmahat ,\Acalhat )} (-1)^{m(T_{2})}\,
t^{-h(T_{2})-e(T_{2})}\, x^{T_{2}}.
\end{gather}
It is clear that $m(T) = m(T_{1})+m(T_{2})$ and $x^{T} = x^{T_{1}}\,
x^{T_{2}}$, and Corollary~\ref{cor:incr-triples} gives
$h(T)=h(T_{1})+h(T_{2})$.  Triples $(x,y,z)$ of $\muboldhat $ in which
$y$ and $z$ are flag boxes correspond bijectively to triples
$(x',y',z')$ of $\nubold $ in which $y'$ and $z'$ are flag boxes not
in row $m$; specifically, the rows of $\nubold $ with flag boxes $y'$
and $z'$ correspond to the rows of $\muboldhat $ with flag boxes $y$
and $z$.  Among these, the triples $(x,y,z)$ counted by $e(T_{2})$
correspond to triples $(x',y',z')$ counted by $e(T)$ which do not have
boxes in row $m$.  The remaining triples $(x',y',z')$ counted by
$e(T)$ are those in which $y'$ is the flag box with flag number $1$ in
row $m$, $z'$ is a flag box in a higher component with content $c(z')
= c(y')$, and either $x'\not \in \nubold $, or $T(x')\in \Acal _{1}$,
i.e., row $i$ of $\mubold $ is an empty row, where $z'$ is the flag
box in row $i$.  These conditions are equivalent to $i>m$ and $\zeta
_{i} = \zeta _{m}$.  Hence, $e(T) = e(T_{2})+e_{m}(\zeta )$.

It follows that the term in the product $t^{-h(\lambold )-e_{m}(\zeta
)}\, \Gcal _{\lambold }(X^\Acal_{1}; t)\, C_{\muboldhat ,\sigmahat
}(x;t)$ coming from the summands for $T_{1}$ in
\eqref{e:LLT-triples-T1} and $T_{2}$ in \eqref{e:C-T2} is $(-1)^{m(T)}
t^{-h(T)-e(T)} x^{T}$.  Summing over all choices of $\zeta $, $T_{1}$,
and $T_{2}$ gives \eqref{e:C-recurrence}.
\end{proof}

\subsection{Atom positivity conjectures}
\label{ss:atom-positivity}

In Remark~\ref{rem:monomial-positive}, we saw that flagged LLT
polynomials are monomial positive; by
Corollary~\ref{cor:Weyl-on-G-nu-sigma}(a), this property generalizes
the monomial positivity of symmetric LLT polynomials.  We now
conjecture stronger {\em atom positivity} properties for flagged LLT
polynomials, generalizing Schur positivity of symmetric LLT
polynomials.

Given a Laurent polynomial $f(x) = f(x_{1},\ldots,x_{n})$ with
coefficients in $\ZZ [t]$, we denote the coefficients in the expansion
of $f(x)$ in terms of Demazure atoms by
\begin{equation}\label{e:atom-expansion}
\langle \Acal _{\lambda }(x) \rangle\, f(x) = a_{\lambda }(t),\quad
\text{where}\quad f(x)= \sum_{\lambda \in \ZZ^{n}} a_{\lambda}(t)
\Acal_\lambda(x).
\end{equation}
We say that $f(x)$ is {\em atom positive} if $\langle \Acal _{\lambda
}(x) \rangle\, f(x)\in \NN [t]$ for all $\lambda $.

We conjecture that $\Gcal _{\nubold ,\sigma }[x_{1},\ldots,x_{l};t]$
is atom positive in certain cases, and also that the coefficients of
certain atoms $\Acal _{\lambda }$ are positive in all cases.

\begin{conj}\label{conj:atom-pos-standard}
If $\nubold $ is a tuple of ordinary (non-ragged-right) skew diagrams
and $\sigma $ is the standard compatible permutation, then
$\Gcal_{\nubold, \sigma}[x_1, \dots, x_l;t]$ is atom positive.
\end{conj}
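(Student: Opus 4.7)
The plan is to establish atom positivity by promoting the known Schur positivity of symmetric LLT polynomials to a Demazure-crystal refinement keyed by the standard compatible permutation. Two endpoints of the family already force the answer: by Corollary~\ref{cor:Weyl-on-G-nu-sigma}(a), the specialization $(X,0,\ldots,0)$ degenerates to the symmetric LLT polynomial $\Gcal_{\nubold}(X;t)$, which is Schur-positive by Grojnowski--Haiman and hence atom-positive via Remark~\ref{rem:for atom pos}(i); and when $\nubold=(\lambda/\mu)$ is a single ordinary skew diagram, Example~\ref{ex:nu=single-diagram}(i) identifies the flag-bound specialization with the flagged skew Schur polynomial $\sfrak_{\lambda/\mu}[X_1^b,\ldots,X_l^b]$, whose atom positivity (for $b$ with $b_i \ge i$) is accessible via key-tableaux arguments in the spirit of Reiner--Shimozono (see Example~\ref{ex:flagged-sym-2}(ii)). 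These two limits pin down what any proof must produce.

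The main technical step I would attempt is to lift the affine-Hecke and crystal-theoretic machinery underpinning Grojnowski--Haiman to the flagged setting. The role of the standard compatible permutation is crucial: by Lemma~\ref{lem:flag-numbers}(b), $\sigma$ is characterized by flag numbers decreasing in content/row reading order, i.e.\ the same order along which LLT crystal operators $\tilde e_i,\tilde f_i$ of Lascoux--Leclerc--Thibon and Lam operate on tuples of tableaux in the unflagged case. The target is a crystal structure on $\FSST(\nubold,\sigma,\Acal^{b}_{+})$ whose operators (a) preserve the attacking-inversion statistic $\inv(T)$ of Definition~\ref{def:inversions} and (b) realize the $i$-admissibility regions of Definition~\ref{def:i-admissible} as Demazure subcrystals. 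If such a structure exists, each connected component would contribute a Demazure atom $\Acal_{\mu}$ weighted by $t^{\inv(T_0)}$ for $T_0$ its highest-weight element, and summing over components would give the desired expansion; as a consistency check, applying $\Weyl$ to the resulting expansion must recover the Schur expansion of $\Gcal_{\nubold}(X;t)$, by Corollary~\ref{cor:Weyl-on-G-nu-sigma}(b) and Remark~\ref{rem:for atom pos}(ii). An intermediate target, which would already imply the conjecture by \eqref{e:dem to atom}, is to show that $\Gcal_{\nubold,\sigma}[x_1,\ldots,x_l;t]$ is a nonnegative $\NN[t]$-combination of Demazure characters $\Dcal_{\lambda}$; this would avoid constructing a full atom-level crystal but retain the flavor of the symmetric LLT argument.

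The main obstacle will be producing a Demazure-compatible crystal (or Demazure-character decomposition) that preserves $\inv$ in the flagged setting. The existing LLT crystals are built on unbounded positive alphabets, and it is not automatic that restriction to the $\Acal_i$-admissible regions yields an honest Demazure subcrystal, particularly because $\inv(T)$ now includes attacking inversions involving flag boxes (Definition~\ref{def:inversions}), a phenomenon without counterpart in the symmetric case. A fallback strategy would be to induct on $|\nubold|$ using the combinatorial recurrence \eqref{e:C-recurrence} extracted in the proof of Theorem~\ref{thm:combinatorial-G-nu-sigma}: peel off the smallest alphabet $\Acal_1$, apply the inductive hypothesis to the residual flagged LLT factor $C_{\muboldhat,\sigmahat}$, and leverage the fact that $\Dem_i$ preserves atom positivity (Remark~\ref{rem:for atom pos}(iii)) together with Corollary~\ref{cor:demazure-on-flagged}. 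The cost is that one must then verify that the super-skew-Schur factor $\Gcal_{\lambold}(X_1^\Acal;t)$ interacts correctly with the Demazure action in content coordinates, and that the inductive construction is compatible with the standard compatible permutation on $\rrhat,\zetahat,\etahat$---a nontrivial combinatorial check that mirrors, but strictly refines, the Schur-positivity proof in the symmetric case.
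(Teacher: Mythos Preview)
The statement you are attempting to prove is Conjecture~\ref{conj:atom-pos-standard}, and the paper does not prove it. What the paper provides is: a reformulation in terms of Hall--Littlewood coefficients (Proposition~\ref{prop:atom-pos-via-D}), a reduction to a minimal antidominant weight $\eta_0(\sigma)$ (Proposition~\ref{prop:atom-pos-via-D-reduced}), proofs of the dominant and antidominant special cases of the companion Conjecture~\ref{conj:atom-pos-unimodal} (Propositions~\ref{prop:atom-pos-dominant} and~\ref{prop:atom-pos-antidominant}), and computational evidence. There is therefore no proof in the paper to compare your proposal against; your plan is a research program toward an open problem.

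One branch of your program is, however, ruled out by the paper itself. Your ``intermediate target'' of showing that $\Gcal_{\nubold,\sigma}[x_1,\ldots,x_l;t]$ expands positively in full Demazure characters $\Dcal_\lambda$ is explicitly refuted in Example~\ref{ex:atom-pos-standard}(iv): for $\nubold=((1),(1),(1))$ with the standard compatible permutation, the coefficient of $\Dcal_{210}$ is $t-t^2\notin\NN[t]$. The same example also rules out multi-headed Demazure positivity. So any crystal construction you pursue cannot be one whose connected components yield full Demazure characters; it would have to produce atoms directly, which is a more delicate structure than the usual Demazure crystal decomposition. Your fallback inductive strategy via the recurrence \eqref{e:C-recurrence} faces the same constraint: whatever inductive invariant you carry must be strictly weaker than Demazure-character positivity, and the paper's counterexamples show that the gap between atom positivity and Demazure-character positivity is already present in the simplest multi-component tuples of rows.
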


\begin{conj}\label{conj:atom-pos-unimodal}
For any flagged LLT indexing data $\nubold ,\sigma $, we have
\begin{equation}\label{e:atom-pos-unimodal}
\langle \Acal _{\lambda }(x) \rangle\, \Gcal_{\nubold, \sigma}[x_1,
\dots, x_l;t] \in \NN [t]
\end{equation}
if the index $\lambda $ is either unimodal (weakly increasing, then
weakly decreasing) or anti-unimodal (weakly decreasing, then weakly
increasing).
\end{conj}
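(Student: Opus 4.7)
The plan is to reduce Conjecture \ref{conj:atom-pos-unimodal} to the Schur positivity of symmetric LLT polynomials (Grojnowski--Haiman \cite{GrojHaim07}) and the monomial positivity of flag-bound specializations (Corollary \ref{cor:G-nu-sigma-flag-bounds}), using partial Weyl symmetrization as the bridge. The dominant subcase $\lambda = \lambda_+$ is the natural starting point and lies in both the unimodal and anti-unimodal classes: Remark \ref{rem:for atom pos}(ii) equates $\langle \Acal_\lambda \rangle \Gcal_{\nubold,\sigma}[x_1,\ldots,x_l;t]$ with $\langle \chi_\lambda \rangle \Weyl \Gcal_{\nubold,\sigma}[x_1,\ldots,x_l;t]$, which by Corollaries \ref{cor:Weyl-on-flagged} and \ref{cor:Weyl-on-G-nu-sigma} equals either a Schur-positive coefficient in the symmetric LLT polynomial $\Gcal_\nubold(X;t)$ or zero, closing this case.

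For general unimodal $\lambda = (\lambda_1 \leq \cdots \leq \lambda_k \geq \cdots \geq \lambda_l)$, I would apply \emph{partial} Weyl symmetrization on the decreasing tail block $[k,l]$, in which $\lambda$ is dominant. By Corollary \ref{cor:demazure-on-flagged} this is realized as the flag-bound specialization with $b_i=i$ for $i<k$ and $b_i=l$ for $i \geq k$, and is monomial positive by Corollary \ref{cor:G-nu-sigma-flag-bounds}. Expanding $\Gcal_{\nubold,\sigma}[x_1,\ldots,x_l;t] = \sum_\mu a_\mu(t) \Acal_\mu$ and applying $\Weyl_{[k,l]} = \Dem_{w_{0,[k,l]}}$ term-by-term, formulas \eqref{e:for atom pos 2} and Remark \ref{rem:for atom pos}(iii) express the coefficient of $\Acal_\lambda$ in $\Weyl_{[k,l]} \Gcal_{\nubold,\sigma}[x_1,\ldots,x_l;t]$ as a Bruhat-indexed sum of $a_\mu(t)$'s for $\mu$ in the $\Sfrak_{[k,l]}$-orbit of $\lambda_+$; by Remark \ref{rem:for atom pos}(iv) and monomial positivity, this sum lies in $\NN[t]$. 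Inverting this Bruhat-structured system to isolate the single coefficient $a_\lambda(t)$ is the core combinatorial step. The anti-unimodal case is dual, symmetrizing the head block $[1,k]$ instead.

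The principal obstacle is precisely this final inversion: because $\Dem_{w_{0,[k,l]}}$ annihilates every atom $\Acal_\mu$ with $\mu$ non-dominant on $[k,l]$, the map from single-variable atom coefficients to flag-bound atom coefficients is not injective, and the natural M\"obius inversion on the $\Sfrak_{[k,l]}$-Bruhat order of the $\lambda_+$-orbit introduces signs. The hope is that the unimodal (resp.\ anti-unimodal) structure of $\lambda$ forces the inversion to telescope into a single manifestly positive term. A further subtlety arises at the anti-dominant endpoint of unimodal ($k=l$, $\lambda$ weakly increasing), where the tail block degenerates and partial symmetrization yields no information; for this case I would pivot to the dual pairing $a_\lambda(t) = \langle \Gcal_{\nubold,\sigma}[z_1,\ldots,z_l;t],\, \Dcal_{-\lambda}(z) \rangle_0$ from \eqref{e:atom-demazure-dual}, where $\Dcal_{-\lambda}$ is a monomial-positive key polynomial in $z^{-1}$, and extract positivity via the combinatorial formula \eqref{e:flag-bound-G-nu-sigma}. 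If these direct arguments prove too delicate, the natural fallback---and probably the most promising route---is a crystal-theoretic proof: construct a Demazure subcrystal structure on the flagged tableau set $\FSST(\nubold,\sigma,\Acal_+^b)$ in the spirit of Grojnowski--Haiman's crystal proof of symmetric LLT Schur positivity \cite{GrojHaim07}, which would simultaneously handle both the unimodal and anti-unimodal indices with a uniform combinatorial argument.
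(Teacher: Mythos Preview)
The statement you are attempting to prove is stated in the paper as a \emph{conjecture}, not a theorem; the paper gives no proof of the general case.  What the paper does establish are the two extreme special cases where $\lambda$ is dominant (Proposition~\ref{prop:atom-pos-dominant}) and where $\lambda$ is antidominant (Proposition~\ref{prop:atom-pos-antidominant}), together with the reduction showing the unimodal and anti-unimodal cases are equivalent (Proposition~\ref{prop:unimodal-vs-antiunimodal}).  The general unimodal case remains open, supported only by the computational evidence described at the end of \S\ref{ss:atom-positivity}.

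Your treatment of the dominant subcase is correct and coincides with the paper's argument in Proposition~\ref{prop:atom-pos-dominant}.  Your partial-symmetrization strategy for general unimodal $\lambda$, however, has a genuine gap that you yourself flag but do not resolve.  The flag-bound specialization $\Weyl_{[k,l]}\,\Gcal_{\nubold,\sigma}[x_1,\ldots,x_l;t]$ is monomial positive by Corollary~\ref{cor:G-nu-sigma-flag-bounds}, but monomial positivity is strictly weaker than atom positivity (for instance $x_2^2=\Acal_{02}-\Acal_{11}$), so there is no reason for $\langle\Acal_\lambda\rangle\,\Weyl_{[k,l]}\,\Gcal_{\nubold,\sigma}[x;t]$ to lie in $\NN[t]$; Remark~\ref{rem:for atom pos}(iv) only tells you the atom coefficients are constant on $\Sfrak_{[k,l]}$-orbits, not that they are positive.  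Even granting positivity of those sums, the Bruhat inversion back to $a_\lambda(t)$ is signed, and nothing in the outline forces cancellation.  Your antidominant fallback via the pairing $a_\lambda=\langle\Gcal_{\nubold,\sigma}[z;t],\Dcal_{-\lambda}(z)\rangle_0$ has the same defect: the kernel $\Omega[-\sum_{i<j}z_i/z_j]=\prod_{i<j}(1-z_i/z_j)$ is signed, so monomial positivity of both arguments does not give positivity of the pairing.  The paper's proof of the antidominant case (Proposition~\ref{prop:atom-pos-antidominant}) takes a different and sharper route: it passes through the algebraic reformulation \eqref{e:atom-coefs-via-D}, observes that for $\lambda$ antidominant the factor $w_0\Dcal_{-\lambda}=x^{-w_0(\lambda)}$ is an antidominant monomial that can be absorbed into $x^{\eta_-}$, and thereby reduces to the case $\lambda=(0^l)$, which is dominant.
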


Proposition~\ref{prop:flagged-atom} implies that
Conjecture~\ref{conj:atom-pos-standard} extends to all flag bound
specializations, and Lemma~\ref{lem:empty-rows-trick} implies that
Conjecture~\ref{conj:atom-pos-unimodal} extends to specializations
with strictly increasing flag bounds.

\begin{cor}\label{cor:atom-pos}
(a) If Conjecture~\ref{conj:atom-pos-standard} holds, then for $\nubold $
non-ragged-right and $\sigma $ standard, the flag bound
specialization $\Gcal _{\nubold ,\sigma }[X^{b}_{1},\ldots,X^{b}_{l};
t]$ is atom positive, for all choices of the flag bounds $b_{1}\leq
\cdots \leq b_{l}$.

(b) If Conjecture~\ref{conj:atom-pos-unimodal} holds, then for any
$\nubold ,\sigma $ and strictly increasing flag bounds $0<b_{1}<\cdots <b_{l}$,
we have
\begin{equation}\label{e:atom-pos-unimodal-cor}
\langle \Acal _{\lambda }(x) \rangle\, \Gcal _{\nubold ,\sigma
}[X^{b}_{1},\ldots,X^{b}_{l}; t] \in \NN [t]
\end{equation}
if the index $\lambda $ is unimodal or anti-unimodal.
\end{cor}

\begin{example}\label{ex:atom-pos-standard}
(i) For the tuple of skew diagrams $\nubold = ((1),(2,2),(2)/(1))$ and
standard compatible permutation $\sigma = (1,3,2,4)$, we have the
positive atom expansion
\begin{multline}\label{ex:atom-positive-G}
\Gcal_{\nubold,\sigma}[x_1,x_2,x_3,x_4;t] = t\, \mathcal{A}_{4200} +
t\, \mathcal{A}_{4020} + t\, \mathcal{A}_{3300} + 2\, t^2
\mathcal{A}_{3210}\\
 + t^2 \mathcal{A}_{3201} + 2\, t^2 \mathcal{A}_{3120} + t\,
\mathcal{A}_{3030} + t^2 \mathcal{A}_{3021} + t^2
\mathcal{A}_{2310} + t^3 \mathcal{A}_{2220} + t^3 \mathcal{A}_{2211}\\
 + t^2 \mathcal{A}_{2130} + t^3 \mathcal{A}_{2121} + t^2
\mathcal{A}_{1320} + t^2 \mathcal{A}_{1230} + t^3 \mathcal{A}_{1221}.
\end{multline}

(ii) Atom positivity can fail if $\sigma $ is non-standard, even if
$\nubold $ is a tuple of ordinary skew diagrams.  For instance, for
$\nubold = ((1)/(1),(1)/(1),(1),(1))$ and $\sigma = (3,4,1,2)$, we
have
\begin{multline}\label{e:atoms-nonstandard}
\Gcal_{\nubold,\sigma}[x_1,x_2,x_3,x_4;t] = \Acal_{2000}+t \,
\Acal_{1100}+(t^2 + t^3 -
t^4)\Acal_{1010} \\
+t^3\Acal_{1001}+t^2\Acal_{0200}+t^3\Acal_{0110}+t^4\Acal_{0101}+
t^4\Acal_{0020}+t^5 \Acal_{0011}\, .
\end{multline}
The only terms $\Acal _{\lambda }$ in this example for which $\lambda
$ is neither unimodal nor anti-unimodal are $\Acal _{1010}$ and $\Acal
_{0101}$.  All the others have positive coefficient, illustrating
Conjecture~\ref{conj:atom-pos-unimodal}.
Substituting $x\mapsto
(x_{1},0,x_{2},x_{3})$ in \eqref{e:atoms-nonstandard} gives
\begin{equation}\label{e:atoms-flag-bounds}
\Gcal_{\nubold,\sigma}[X^{b}_1,\ldots, X^{b}_{4};t] = \Acal_{200}+(t^2 + t^3 -
t^4)\Acal_{110} 
+t^3\Acal_{101}+
t^4\Acal_{020}+t^5 \Acal_{011}
\end{equation}
for flag bounds $b = (1,1,2,3)$.  This shows that the extension of
Conjecture~\ref{conj:atom-pos-unimodal} in
Corollary~\ref{cor:atom-pos}(b) does not hold for arbitrary choices of
the flag bounds.

(iii) If $\nubold $ is ragged-right, atom positivity can fail even if
$\sigma $ is the compatible permutation `closest' to standard,
corresponding to flag numbers decreasing in the ordering of flag boxes
by (content, component number), but increasing by rows within each
component, as required.  For instance, for $\nubold =((1,1)/(0,0),\,
(0,1,2)/(0,0,0))$, $\sigma =(2,5,4,3,1)$, with end contents
$(1,0,0,0,0)$ and flag numbers $(1,5,2,3,4)$, we have
\begin{equation}
\Gcal_{\nubold,\sigma}[x_1,x_2,x_3,x_4,x_5;t] = t^4 \Acal _{10121}+t^3
\Acal _{10130}-t^3 \Acal _{20120}.
\end{equation}

(iv) For $\nubold =((1),(1),(1))$, with standard
compatible permutation $\sigma =(1,2,3)$, we have
\begin{equation}
\Gcal _{\nubold, \sigma }[x_{1},x_{2},x_{3}; t] = \Dcal _{300} + t^3\,
\Dcal _{111} + t^2\, \Dcal _{120} + t^2\, \Dcal _{201} +
\left(t-t^2\right) \Dcal _{210},
\end{equation}
showing that the conjectures cannot be strengthened to positive
expansion in terms of full Demazure characters.

Conjecture~\ref{conj:atom-pos-standard} also cannot be strengthened to
{\em multi-headed Demazure positivity}, which means that the atom
coefficients are nonnegative and monotone decreasing in Bruhat order
on each $\Sfrak _{l}$ orbit of weights.  A counterexample to this is
$\nubold =((1),(1),(1),(1),(1),(1))$ with standard compatible
permutation $\sigma =(1,2,3,4,5,6)$, for which we have
\begin{equation}
\langle \Acal _{212100}(x) \rangle \, \Gcal _{\nu ,\sigma }[x; t] -
\langle \, \Acal _{212010}(x) \rangle \, \Gcal _{\nu ,\sigma }[x; t] =
t^{8} + t^{9} - t^{10}\not \in \NN [t].
\end{equation}
\end{example}

By Corollary \ref{cor:Weyl-on-G-nu-sigma}(a), the symmetric LLT
polynomial $\Gcal _{\nubold }(X;t)$ evaluated in $n$ variables is
equal to $\Gcal _{\nubold ,\sigma }[X^{b}_{1},\ldots,X^{b}_{l}; t]$
for flag bounds $b_{1}=\cdots =b_{l} = n$.  Since a symmetric
polynomial is Schur positive if and only if it is atom positive
(Remark \ref{rem:for atom pos}(i)),
Conjecture~\ref{conj:atom-pos-standard} and
Corollary~\ref{cor:atom-pos}(a) generalize the known Schur
positivity of symmetric LLT polynomials.
Conjecture~\ref{conj:atom-pos-unimodal} generalizes Schur positivity
of symmetric LLT polynomials in another way, as seen in
Proposition~\ref{prop:atom-pos-dominant}, below.

\subsubsection*{A reformulation of the conjectures}

Given flagged LLT indexing data $\nubold ,\sigma $, let $(\rr ,\gamma
,\eta )$ be the weight data with diagram $\nubold $, as in
Definition~\ref{def:G-nu-sigma}.  Specializing $X_{i} = x_{i}$ in
\eqref{e:G-nu-sigma} and using \eqref{e:atom-demazure-Cauchy}
yields an identity
\begin{multline}\label{e:atom-coefs-via-D}
t^{-h(\nubold )-e(\nubold ,\sigma )}\, \langle \Acal _{\lambda }(x)
\rangle\, \Gcal _{\nubold ,\sigma } [x_{1},\ldots,x_{l};t] \\
= \sum _{w\in \Sfrak _{\rr }} (-1)^{\ell (w)} \langle F_{w(\gamma
)}(x;\, t^{-1}) \rangle\, \overline{T_{\sigma }}\, \, x^{\eta _{-}}\,
w_{0} \Dcal _{-\lambda }(x)\, .
\end{multline}

In particular, in the case that $\nubold $ is a tuple of single-row
shapes with equal end contents, and $\sigma $ is the standard
compatible permutation, we have $\rr =(1^{l})$, $\eta =(d^{l})$, and
$\sigma =\operatorname{id}$.  Hence, by \eqref{e:atom-coefs-via-D},
Conjecture~\ref{conj:atom-pos-standard} reduces in this case to
$\langle F_{\kappa }(x;\, t^{-1}) \rangle\, w_{0} \Dcal _{\mu }(x)\in
\NN [t]$ for all $\mu $, that is, to the conjecture that opposite
Demazure characters are $F_{\kappa }$ positive.  In general,
\eqref{e:atom-coefs-via-D} expresses the atom coefficients in
Conjectures~\ref{conj:atom-pos-standard} and
\ref{conj:atom-pos-unimodal} as signed sums of the coefficients
\begin{equation}\label{e:D-to-F-with-twist}
\langle F_{\kappa }(x;\, t^{-1}) \rangle\, \overline{T_{\sigma }}\, \,
x^{\eta _{-}}\, w_{0} \Dcal _{\mu }(x)
\end{equation}
in the expansion of an opposite Demazure character $w_{0}\Dcal _{\mu
}$, acted upon with an antidominant weight monomial $x^{\eta _{-}}$
and a Hecke algebra operator $\overline{T_{\sigma }}$, in terms of
nonsymmetric Hall-Littlewood polynomials $F_{\kappa }$,
and yields the following reformulation of the conjectures.

\begin{prop}\label{prop:atom-pos-via-D}
(a) Conjecture~\ref{conj:atom-pos-standard} is equivalent to the
positivity condition
\begin{equation}\label{e:atom-pos-via-D}
\sum _{w\in \Sfrak _{\rr }} (-1)^{\ell (w)} \langle F_{w(\gamma
)}(x;\, t^{-1}) \rangle\, \overline{T_{\sigma }}\, \, x^{\eta _{-}}\,
w_{0} \Dcal _{\mu }(x)\in \NN [t]
\end{equation}
holding for all $\mu $, and all $\gamma, \eta_-, \sigma$ satisfying (i) $\gamma \in X_{++}(\GL _{\rr})$ is
$\GL _{\rr}$-dominant and regular, (ii) $\eta _{-}\in -X_{+}(\GL _{l})$ is
antidominant, (iii) $\sigma $ is maximal in $\Sfrak _{\rr }\, \sigma $,
and (iv) $\sigma $ is minimal in $\sigma \Stab (\eta _{-})$.

(b) Conjecture~\ref{conj:atom-pos-unimodal} is equivalent to
\eqref{e:atom-pos-via-D} holding for all $\mu $ unimodal or
anti-unimodal, and all $\gamma, \eta_-, \sigma$ satisfying (i)--(iii).
\end{prop}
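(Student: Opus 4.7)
The plan is to translate atom positivity of $\Gcal_{\nubold,\sigma}$ directly into the positivity condition \eqref{e:atom-pos-via-D} using identity~\eqref{e:atom-coefs-via-D}. Setting $\mu=-\lambda$, that identity reads
\begin{equation*}
t^{-h(\nubold)-e(\nubold,\sigma)}\,\langle\Acal_{-\mu}(x)\rangle\,\Gcal_{\nubold,\sigma}[x_1,\ldots,x_l;t] \; = \; \sum_{w\in\Sfrak_\rr}(-1)^{\ell(w)}\langle F_{w(\gamma)}(x;t^{-1})\rangle\,\overline{T_\sigma}\,x^{\eta_-}\,w_0\Dcal_\mu(x),
\end{equation*}
so that membership of the right-hand side in $\NN[t]$ is equivalent to the atom coefficient $\langle\Acal_{-\mu}\rangle\Gcal_{\nubold,\sigma}$ lying in $t^{h(\nubold)+e(\nubold,\sigma)}\NN[t]$, and in particular in $\NN[t]$.

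The first main step is to show that conditions (i)--(iv) characterize exactly the flagged LLT indexing data $(\rr,\gamma,\eta,\sigma)$, where $\eta:=\sigma(\eta_-)$, in which $\nubold$ is a tuple of ordinary (non-ragged-right) skew diagrams and $\sigma$ is the standard compatible permutation of Definition~\ref{def:standard-compat}. The key observation is that (iii) requires $\sigma^{-1}$ to be strictly decreasing on each $\rr$-block, while (iv) requires $\sigma^{-1}$ to be strictly increasing on each run of positions in $\eta$ sharing a common value. These are jointly satisfiable precisely when no two entries of $\eta$ within the same $\rr$-block coincide, i.e., $\eta$ is $\GL_\rr$-regular, which by Lemma~\ref{lem:flag-numbers}(b) is equivalent to $\nubold$ being non-ragged. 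Under $\GL_\rr$-regularity, conditions (iii)--(iv) force $\sigma$ to be the unique minimal element with $\sigma(\eta_-)=\eta$, namely the standard compatible permutation.

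A secondary step is to handle the case where the compatibility $\gamma_i \leq \eta_i$ (required by Definition~\ref{def:indexing-data}(i)) may fail, since conditions (i)--(iv) do not explicitly impose it. In this situation I would show that the signed sum on the right side of~\eqref{e:atom-pos-via-D} vanishes, using the observation from the proof of Corollary~\ref{cor:Jacobi-Trudi} that each individual term vanishes when $w(\gamma)_i > \eta_i$ for some $i$, together with a signed-sum cancellation over the remaining $w$'s. With these preparations in hand, part~(a) follows by quantifying \eqref{e:atom-coefs-via-D} over all $\lambda\in\ZZ^l$ (equivalently $\mu=-\lambda$) and all valid data; for part~(b), the involution $\lambda\mapsto-\lambda$ interchanges unimodal and anti-unimodal sequences, so the restriction in Conjecture~\ref{conj:atom-pos-unimodal} matches the restriction in the proposition's (b). The main obstacle will be the second-paragraph argument that (iii) and (iv) together enforce $\GL_\rr$-regularity of $\eta$; once that is settled, the rest is essentially bookkeeping with identity~\eqref{e:atom-coefs-via-D}.
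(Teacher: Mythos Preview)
Your approach matches the paper's and is essentially correct. The paper's proof is equally brief: conditions (i)--(iii) together with $\gamma_i\le\eta_i$ characterize general flagged LLT indexing data, condition (iv) pins down the standard compatible permutation (hence, as you correctly argue, forces $\eta=\sigma(\eta_-)$ to be $\GL_\rr$-regular and $\nubold$ non-ragged), and then \eqref{e:atom-coefs-via-D} gives the equivalence. Your argument that (iii) and (iv) are jointly satisfiable only when $\eta$ is $\GL_\rr$-regular is in fact more explicit than the paper's one-line remark that ``(iv) is the definition of $\sigma$ being the standard compatible permutation.''

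The one weak spot is your handling of the case where $\gamma_i\le\eta_i$ fails. You propose to combine the individual-term vanishing from the proof of Corollary~\ref{cor:Jacobi-Trudi} with an unspecified ``signed-sum cancellation over the remaining $w$'s.'' But when the dominant representative $\gamma$ satisfies $\gamma_i>\eta_i$ for some $i$, there can still be $w\in\Sfrak_\rr$ with $w(\gamma)_j\le\eta_j$ for all $j$, so surviving terms exist and their cancellation needs a genuine argument you have not supplied. The paper avoids this by citing the second clause of \eqref{e:G=C}, established in the proof of Theorem~\ref{thm:combinatorial-G-nu-sigma}, which gives vanishing of the entire signed sum directly; you should invoke that instead.
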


\begin{proof}
By Definition~\ref{def:indexing-data}, if we set $\eta = \sigma (\eta
_{-})$, then $(\rr ,\gamma ,\eta ,\sigma )$ are flagged LLT indexing
data if and only $\rr$, $\eta _{-}$, $\sigma $ satisfy assumptions (i)--(iii)
and the extra condition $\gamma _{i}\leq \eta _{i}$ for all $i$.
However, the second clause of \eqref{e:G=C}, established in the proof
of Theorem~\ref{thm:combinatorial-G-nu-sigma}, shows that the
expression in \eqref{e:atom-pos-via-D} vanishes if we do not have
$\gamma _{i}\leq \eta _{i}$ for all $i$.  Hence
\eqref{e:atom-pos-via-D} holds for $\rr $, $\eta _{-}$, $\sigma $
satisfying (i)--(iii) if and only if it holds when $(\rr ,\gamma ,\eta
,\sigma )$ are flagged LLT indexing data.  The additional assumption
(iv) is the definition of $\sigma $ being the standard compatible
permutation.

This given, the equivalence of the conjectures with the specified
instances of \eqref{e:atom-pos-via-D} is immediate from
\eqref{e:atom-coefs-via-D}.
\end{proof}

Conjecture~\ref{conj:atom-pos-standard} can be reduced further, using
the following property of Demazure characters, which follows from a
theorem of Polo \cite{Polo} (see also \cite{Mathieu},
\cite[\S2]{MathieuPositivity}, \cite[Ch. 6]{vanderKallenBModule}).

\begin{lemma}\label{lem:dom-times-Dem}
If $\eta _{+}$ is a dominant weight, then the coefficients in the
expansion $x^{\eta _{+}}\, \Dcal_{\mu }(x) = \sum _{\lambda
}c_{\lambda } \Dcal _{\lambda }(x)$ are nonnegative.
\end{lemma}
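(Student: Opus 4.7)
The plan is to deduce the lemma from Polo's theorem via the representation-theoretic interpretation of Demazure characters, in essentially the same manner in which the paper invokes the cited references.

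First I would realize each $\Dcal_\mu(x)$ as the character of a Demazure $B$-module. Writing $\mu = w(\mu_+)$ with $w\in\Sfrak_n$ the minimal such element (so $\Dcal_\mu(x) = \Dem_w(x^{\mu_+})$ in the sense of \eqref{e Demw def}), let $V(\mu_+)$ denote the irreducible $\GL_n$-module of highest weight $\mu_+$ with fixed highest weight vector $v_{\mu_+}$, and let $V_w(\mu_+)\defeq U(\mathfrak{b})\cdot v_{w(\mu_+)}\subseteq V(\mu_+)$ be the Demazure submodule generated under the Borel subalgebra by the extremal weight vector $v_{w(\mu_+)}$. The classical Demazure character formula identifies $\Dcal_\mu(x) = \mathrm{char}\, V_w(\mu_+)$.

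Second, since $\eta_+$ is dominant, the one-dimensional weight module $\Kfrak_{\eta_+}$ of $B$ (on which $T$ acts by $x^{\eta_+}$ and the unipotent radical $U$ acts trivially) extends to a character of $\GL_n$, so tensoring with $\Kfrak_{\eta_+}$ is an exact functor on $B$-modules and simply shifts weights by $\eta_+$. Consequently, the $B$-module $V_w(\mu_+)\otimes \Kfrak_{\eta_+}$ has character $x^{\eta_+}\Dcal_\mu(x)$.

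Third, I would invoke Polo's theorem (with the refinements of Mathieu and van der Kallen) in the form that guarantees an \emph{excellent filtration}: for a Demazure module $V$ and a dominant weight $\lambda$, the $B$-module $V\otimes \Kfrak_{\lambda}$ admits a finite filtration by $B$-submodules whose successive subquotients are Demazure modules. Applying this to $V = V_w(\mu_+)$ and $\lambda=\eta_+$ and taking characters of the filtration produces an expansion $x^{\eta_+}\Dcal_\mu(x) = \sum_\lambda c_\lambda \Dcal_\lambda(x)$ in which $c_\lambda\in\NN$ counts the number of subquotients with character $\Dcal_\lambda$. Uniqueness of this expansion (because Demazure characters form a basis of $\Kfrak[\xx^{\pm 1}]$, cf.\ \S\ref{ss:Demazure}) identifies these with the coefficients $c_\lambda$ in the statement.

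The main obstacle is clearly Polo's theorem itself, which is a substantive result in geometric/combinatorial representation theory, proved originally by analyzing global sections of line bundles on Schubert varieties. For this paper it is enough to quote it, but if one wanted a self-contained argument restricted to $\GL_n$ a viable alternative would be to use Kashiwara's theory of Demazure crystals: the tensor product rule states that $B_w(\mu_+)\otimes T_{\eta_+}$ (where $T_{\eta_+}$ is the one-element crystal of weight $\eta_+$) decomposes as a disjoint union of Demazure subcrystals of $B(\mu_++\eta_+)$, and summing $x^{\mathrm{wt}(b)}$ over elements then yields the required positive expansion directly, bypassing the sheaf-theoretic machinery.
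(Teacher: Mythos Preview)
Your approach matches the paper's, which gives no proof beyond stating that the lemma follows from a theorem of Polo (with references also to Mathieu and van der Kallen); you have supplied the representation-theoretic unpacking the paper leaves implicit.

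One correction: the claim in your second step that $\Kfrak_{\eta_+}$ extends to a character of $\GL_n$ is false unless $\eta_+$ is a constant vector (the only one-dimensional $\GL_n$-representations are powers of the determinant). Fortunately this extension is not needed---tensoring by any one-dimensional $B$-module is already exact and shifts weights. The actual role of the dominance hypothesis is that it makes $\Kfrak_{\eta_+}$ itself a Demazure module (the highest-weight line $V_e(\eta_+)\subset V(\eta_+)$), which is what allows Polo's theorem on excellent filtrations to apply in your third step. With this adjustment your argument is correct.
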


Let $\Lambda _{i} = \varepsilon _{1}+\cdots +\varepsilon _{i}$
($i=1,\ldots,l-1$) be fundamental weights for $\GL _{l}$.  Given
$\sigma \in \Sfrak _{l}$, set $\eta _{0}(\sigma ) = - \sum _{i\colon
\sigma _{i}>\sigma _{i+1}} \Lambda _{i}\in -X_{+}(\GL _{l})$.

\begin{prop}\label{prop:atom-pos-via-D-reduced}
In Proposition~\ref{prop:atom-pos-via-D}(a), it suffices to take $\eta
_{-} = \eta _{0}(\sigma )$.
\end{prop}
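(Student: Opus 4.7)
The plan is to reduce the general case to $\eta_- = \eta_0(\sigma)$ by absorbing the ``extra'' part of $\eta_-$ into the Demazure character $\Dcal_\mu$ using Polo's positivity theorem (Lemma~\ref{lem:dom-times-Dem}). First I would unpack condition (iv) that $\sigma$ is minimal in $\sigma\Stab(\eta_-)$: it says precisely that whenever $i$ is a descent of $\sigma$, i.e., $i \in D(\sigma) \defeq \{i : \sigma(i) > \sigma(i+1)\}$, we must have $(\eta_-)_i < (\eta_-)_{i+1}$ strictly, not merely $(\eta_-)_i \leq (\eta_-)_{i+1}$. In particular, $\eta_0(\sigma) = -\sum_{i\in D(\sigma)}\Lambda_i$ itself satisfies (iv) since $(\eta_0(\sigma))_{i+1} - (\eta_0(\sigma))_i$ equals $1$ exactly on $D(\sigma)$ and is $0$ elsewhere.

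Next I would verify that $\tau \defeq \eta_- - \eta_0(\sigma)$ is antidominant: for $i \in D(\sigma)$ we have $\tau_{i+1} - \tau_i = (\eta_-)_{i+1} - (\eta_-)_i - 1 \geq 0$ by the strict inequality, and for $i \notin D(\sigma)$ we have $\tau_{i+1} - \tau_i = (\eta_-)_{i+1} - (\eta_-)_i \geq 0$. Hence $w_0\tau$ is dominant, and Lemma~\ref{lem:dom-times-Dem} yields an expansion $x^{w_0\tau}\Dcal_\mu(x) = \sum_\lambda c_\lambda \Dcal_\lambda(x)$ with $c_\lambda \in \NN$. Applying $w_0$ (which permutes variables, so $w_0(x^\kappa f(x)) = x^{w_0\kappa}(w_0 f)(x)$) gives $x^\tau w_0\Dcal_\mu(x) = \sum_\lambda c_\lambda\, w_0\Dcal_\lambda(x)$.

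Finally, multiplying by $x^{\eta_0(\sigma)}$ and applying $\overline{T_\sigma}$, then extracting the alternating $\Sfrak_\rr$-sum of $F_{w(\gamma)}$-coefficients, yields the identity
\begin{equation*}
\sum_{w\in\Sfrak_\rr} (-1)^{\ell(w)} \langle F_{w(\gamma)}(x;t^{-1}) \rangle\, \overline{T_\sigma}\, x^{\eta_-} w_0\Dcal_\mu(x)
= \sum_\lambda c_\lambda \sum_{w\in\Sfrak_\rr} (-1)^{\ell(w)} \langle F_{w(\gamma)}(x;t^{-1}) \rangle\, \overline{T_\sigma}\, x^{\eta_0(\sigma)} w_0\Dcal_\lambda(x).
\end{equation*}
Since each $c_\lambda \in \NN$, assuming \eqref{e:atom-pos-via-D} for $\eta_- = \eta_0(\sigma)$ and all $\lambda$ implies it for general valid $\eta_-$ (and the given $\mu$), completing the reduction. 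The only substantive input is Polo's theorem; the rest is the bookkeeping that identifies $\eta_0(\sigma)$ as the minimal antidominant weight compatible with (iv) for the given $\sigma$, which I expect to be the only point requiring care.
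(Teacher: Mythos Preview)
Your proof is correct and follows essentially the same approach as the paper's: both decompose $\eta_- = \eta_0(\sigma) + \tau$ with $\tau$ antidominant (the paper writes $\eta_- = \eta_0(\sigma) - \zeta$ with $\zeta$ dominant, so your $\tau = -\zeta$ and your $w_0\tau = -w_0(\zeta)$), then apply Lemma~\ref{lem:dom-times-Dem} to $x^{w_0\tau}\Dcal_\mu$ and pull the resulting positive Demazure expansion through $w_0$ and $x^{\eta_0(\sigma)}$.
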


\begin{proof}
For any $\eta _{-}\in -X_{+}(\GL _{l})$, we have $\sigma $ minimal in
$\sigma \Stab (\eta _{-})$ if and only if $(\eta _{-})_{i}<(\eta
_{-})_{i+1}$ for all $i$ such that $\sigma _{i}>\sigma _{i+1}$.  Hence
the set of such weights $\eta _{-}$ is equal to $\eta _{0}(\sigma ) -
X_{+}(\GL _{l})$.  For any such $\eta _{-}$, writing $\eta _{-} = \eta
_{0}(\sigma ) - \zeta $ with $\zeta \in X_{+}(\GL _{l})$, we have
$x^{\eta _{-}}\, w_{0}\Dcal _{\mu }(x) = x^{\eta _{0}(\sigma )}
w_{0}(x^{-w_{0}(\zeta )} \Dcal _{\mu })$.  By
Lemma~\ref{lem:dom-times-Dem}, $x^{-w_{0}(\zeta )} \Dcal _{\mu }$ is a
nonnegative combination of Demazure characters $\Dcal _{\lambda }$.
Hence the positivity in \eqref{e:atom-pos-via-D} holds for a given
$\eta _{-}$ and $\Dcal _{\mu }$ if it holds with $\eta _{0}(\sigma )$
and $\Dcal _{\lambda }$ in place of $\eta _{-}$ and $\Dcal _{\mu }$,
for each $\Dcal _{\lambda }$ that occurs in the expansion of
$x^{-w_{0}(\zeta )} \Dcal _{\mu }$.
\end{proof}

Note that we cannot use this argument to reduce part (b) of
Proposition~\ref{prop:atom-pos-via-D} to $\eta _{-}$ of a special
form, because the space spanned by Demazure characters $\Dcal _{\mu }$
with unimodal or anti-unimodal index is not closed under
multiplication by dominant weight monomials.
Conjecture~\ref{conj:atom-pos-unimodal} does admit the following
simplification.

\begin{prop}\label{prop:unimodal-vs-antiunimodal}
The validity of Conjecture~\ref{conj:atom-pos-unimodal} in the
unimodal case is equivalent to its validity in the anti-unimodal case.
\end{prop}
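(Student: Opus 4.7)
The plan is to deduce the equivalence from an invariance of the positivity expression
\begin{equation}\label{e:N-functional}
N(\rr, \gamma, \eta_-, \sigma, \mu) \defeq \sum_{w \in \Sfrak_\rr} (-1)^{\ell(w)}\, \langle F_{w(\gamma)}(z;t^{-1})\rangle\, \overline{T_\sigma}\, z^{\eta_-}\, w_0 \Dcal_\mu(z)
\end{equation}
from Proposition~\ref{prop:atom-pos-via-D}(b), under the simultaneous substitution
\begin{equation}\label{e:involution-on-data}
(\rr, \gamma, \eta_-, \sigma, \mu) \longmapsto (\rr^\vee,\, -w_0 \gamma,\, -w_0 \eta_-,\, w_0 \sigma w_0,\, -w_0 \mu),
\end{equation}
where $\rr^\vee = (r_k, \ldots, r_1)$. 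Since $\mu \mapsto -w_0 \mu$ is an involution of $\ZZ^l$ exchanging unimodal with anti-unimodal sequences (reversal preserves each class while negation swaps them), and since \eqref{e:involution-on-data} defines a bijection on quadruples $(\rr, \gamma, \eta_-, \sigma)$ satisfying admissibility conditions (i)--(iii) of Proposition~\ref{prop:atom-pos-via-D} (a routine verification using that conjugation by $w_0$ is a length-preserving automorphism of $\Sfrak_l$ reversing the $\rr$-block structure), the invariance $N(\ldots) = N(\ldots^\vee)$ would immediately yield the desired equivalence.

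To establish the invariance, I would substitute $z \mapsto \theta z = (z_l^{-1}, \ldots, z_1^{-1})$ inside \eqref{e:N-functional} and track each factor. Direct calculation gives
\begin{equation*}
\theta \overline{T_i}\theta^{-1} = \overline{T_{l-i}}, \quad \theta(z^{\eta_-}) = z^{-w_0 \eta_-}, \quad \theta(w_0 \Dcal_\mu(z)) = w_0 \Dcal_{-w_0\mu}(z), \quad \theta F_\lambda(z;t^{-1}) = F_{-w_0\lambda}(z;t^{-1}),
\end{equation*}
where the first follows from \eqref{e:Ti}, the second from the monomial law, the third from applying \eqref{e basic dem 2} after unpacking $\theta w_0 \Dcal_\mu(z) = \Dcal_\mu(z_1^{-1},\ldots,z_l^{-1})$, and the fourth from combining $\theta \overline{T_v}\theta^{-1} = \overline{T_{w_0 v w_0}}$ with the identification of $w_0 v w_0$ as the minimal permutation sending the antidominant weight $-w_0\lambda_-$ to $-w_0\lambda$. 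The coefficient functional transforms by the substitution rule $\langle F_{w\gamma}(z;t^{-1})\rangle h(\theta z) = \langle F_{-w_0 w\gamma}(z;t^{-1})\rangle h(z)$, since $h(z) = \sum_\kappa c_\kappa F_\kappa(z;t^{-1})$ implies $h(\theta z) = \sum_\kappa c_\kappa F_{-w_0\kappa}(z;t^{-1})$. Re-indexing the sum by $w' = w_0 w w_0 \in \Sfrak_{\rr^\vee}$ (which preserves length and sign) and using $w(-w_0\gamma) = -w_0(w'\gamma)$ then produces the identity $N(\rr, \gamma, \eta_-, \sigma, \mu) = N(\rr^\vee, -w_0\gamma, -w_0\eta_-, w_0\sigma w_0, -w_0\mu)$.

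The main obstacle is organizing the bookkeeping so that all these pieces fit together: one must show that the image of $h(z) = \overline{T_\sigma} z^{\eta_-} w_0 \Dcal_\mu(z)$ under $z \mapsto \theta z$, namely $h(\theta z)$, coincides with $\overline{T_{w_0\sigma w_0}} z^{-w_0\eta_-} w_0 \Dcal_{-w_0\mu}(z)$, which requires commuting $\theta$ past $\overline{T_\sigma}$ via the relation $\theta \overline{T_\sigma} = \overline{T_{w_0\sigma w_0}} \theta$ and then applying the monomial and Demazure-character identities above. Once this is verified, together with the admissibility checks for the involution on data, the proposition follows from Proposition~\ref{prop:atom-pos-via-D}(b).
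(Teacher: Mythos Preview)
Your proof is correct and takes essentially the same approach as the paper's: both apply the change of variables $z_i \mapsto z_{w_0(i)}^{-1}$ (your $\theta$) to the expression in Proposition~\ref{prop:atom-pos-via-D}(b), verifying that it sends $\Dcal_\mu \to \Dcal_{-w_0\mu}$, $F_\kappa \to F_{-w_0\kappa}$, $\overline{T_\sigma} \to \overline{T_{w_0\sigma w_0}}$, and $z^{\eta_-} \to z^{-w_0\eta_-}$, while conjugating $\Sfrak_\rr$ to $\Sfrak_{\rr^{\op}}$ and exchanging unimodal with anti-unimodal $\mu$. Your version is simply more explicit, packaging the argument via the functional $N$ and spelling out the bookkeeping that the paper leaves to the reader.
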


\begin{proof}
The change of variables $x_{i}\mapsto x_{w_{0}(i)}^{-1}$ sends $\Dcal
_{\mu }(x)$ to $\Dcal _{-w_{0}(\mu )}(x)$ and $F_{\kappa }(x;t^{-1})$
to $F_{-w_{0}(\kappa )}(x;t^{-1})$, commutes with the action of
$w_{0}$ on the variables, preserves antidominant weight monomials, and
changes $\overline{T_{\sigma }}$ to $\overline{T_{w_{0}\sigma
w_{0}}}$.  Changing $\Sfrak _{\rr }$ to $w_{0}\, \Sfrak _{\rr}\, w_{0}
= \Sfrak _{\rr ^{\op }}$, we see that the change of variables
$x_{i}\mapsto x_{w_{0}(i)}^{-1}$ preserves the assumptions on
\eqref{e:atom-pos-via-D} in
Proposition~\ref{prop:atom-pos-via-D-reduced}(b), while exchanging
unimodal weights $\mu $ with anti-unimodal weights $-w_{0}(\mu )$.
\end{proof}

\begin{remark}\label{rem:other-special-mu}
Computations suggest that unimodal and anti-unimodal weights $\mu $
are not the only weights for which the positivity in
Proposition~\ref{prop:atom-pos-via-D-reduced}(b) holds---or,
equivalently, for which the atom $\Acal _{\lambda } = \Acal _{-\mu }$
has $\langle \Acal_{\lambda }(x) \rangle\, \Gcal _{\nubold ,\sigma
}[x_{1},\ldots,x_{l}; t]\in \NN [t]$ for every flagged LLT polynomial
$\Gcal _{\nubold ,\sigma }$.  However, we do not have a specific
conjecture describing a larger class of weights with this property.
\end{remark}

\subsubsection*{Special cases}

To help motivate Conjecture~\ref{conj:atom-pos-unimodal}, we prove it
in the two extreme cases when the weight $\lambda $ is dominant or
antidominant (thus, both unimodal and anti-unimodal).

\begin{prop}\label{prop:atom-pos-dominant}
If $\lambda \in X_{+}(\GL _{l})$ is dominant, then $\langle
\Acal_{\lambda }(x) \rangle\, \Gcal _{\nubold ,\sigma
}[x_{1},\ldots,x_{l}; t]\in \NN [t]$ for all flagged LLT data $\nubold
,\sigma $ of length $l$.
\end{prop}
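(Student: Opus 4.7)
The plan is to reduce the statement to known Schur positivity of symmetric LLT polynomials by passing through Weyl symmetrization. The key observation is Remark \ref{rem:for atom pos} (ii): for a dominant weight $\lambda$, the coefficient $\langle \Acal_\lambda(x)\rangle\, f(x)$ in any Laurent polynomial $f(x)$ equals the coefficient $\langle \chi_\lambda(x)\rangle\, \Weyl f(x)$ in the expansion of its Weyl symmetrization into irreducible $\GL_l$ characters. So the dominant atom coefficients of $\Gcal_{\nubold,\sigma}[x_1,\dots,x_l;t]$ are determined entirely by $\Weyl \Gcal_{\nubold,\sigma}[x_1,\dots,x_l;t]$.

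Next, I would specialize Corollary~\ref{cor:Weyl-on-G-nu-sigma} to the flag bounds $b = (1,2,\ldots,l)$, which satisfy $b_i \geq i$ and for which $X^b_i = x_i$. Two cases arise. If $\nubold$ is a tuple of ordinary (non-ragged-right) skew diagrams, then by part (b) of that corollary,
\begin{equation*}
\Weyl\, \Gcal_{\nubold,\sigma}[x_1,\dots,x_l;t] = \Gcal_\nubold[x_1+\cdots+x_l;t],
\end{equation*}
the symmetric LLT polynomial. If instead some component of $\nubold$ is strictly ragged-right, then by part (c) the Weyl symmetrization vanishes.

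In the vanishing case, the dominant atom coefficient is zero, which lies in $\NN[t]$. In the ordinary-skew case, the required positivity $\langle \chi_\lambda(x)\rangle\, \Gcal_\nubold[x_1+\cdots+x_l;t]\in \NN[t]$ is precisely Schur positivity of symmetric LLT polynomials, established by Grojnowski--Haiman \cite{GrojHaim07}. Combining the two cases gives the proposition.

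There is no serious obstacle here: the proof is a direct assembly of (i)~the duality between dominant atoms and characters (Remark~\ref{rem:for atom pos}(ii)), (ii)~the Weyl symmetrization formula for flagged LLT polynomials (Corollary~\ref{cor:Weyl-on-G-nu-sigma}), and (iii)~the known Schur positivity of symmetric LLT polynomials. The only thing to be careful about is checking that the flag bounds $b=(1,2,\dots,l)$ satisfy the hypothesis $b_i \geq i$ for Corollary~\ref{cor:Weyl-on-G-nu-sigma}(b), which is immediate.
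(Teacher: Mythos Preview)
Your proof is correct and follows essentially the same approach as the paper's own proof: both invoke Remark~\ref{rem:for atom pos}(ii) to identify the dominant atom coefficient with a Schur coefficient of the Weyl symmetrization, then apply Corollary~\ref{cor:Weyl-on-G-nu-sigma}(b),(c) and appeal to the known Schur positivity of symmetric LLT polynomials. Your version is slightly more explicit in separating the ordinary-skew and strictly ragged-right cases, but the argument is the same.
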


\begin{proof}
As we saw in Remark \ref{rem:for atom pos}(ii), for any Laurent
polynomial $f(x)$, the coefficient $\langle \Acal _{\lambda }(x)
\rangle\, f(x)$ for $\lambda $ dominant is equal to the coefficient
$\langle \chi _{\lambda }(x) \rangle\, \Weyl f(x)$ of the irreducible
$\GL _{l}$ character $\chi _{\lambda }$ in the Weyl symmetrization of
$f(x)$.  By Corollary~\ref{cor:Weyl-on-G-nu-sigma}(b, c), the dominant
weight case of Conjecture~\ref{conj:atom-pos-unimodal} is therefore
equivalent to the known Schur positivity of symmetric LLT polynomials.
\end{proof}

\begin{prop}\label{prop:atom-pos-antidominant}
If $\lambda \in -X_{+}(\GL _{l})$ is antidominant, then $\langle
\Acal_{\lambda }(x) \rangle\, \Gcal _{\nubold ,\sigma
}[x_{1},\ldots,x_{l}; t]\in \NN [t]$ for all flagged LLT data $\nubold
,\sigma $ of length $l$.
\end{prop}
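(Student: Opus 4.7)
The plan is to mirror the proof of Proposition~\ref{prop:atom-pos-dominant} but with an appropriate analog of Remark~\ref{rem:for atom pos}(ii) for antidominant weights.  By the Cauchy duality \eqref{e:atom-demazure-dual}, for any Laurent polynomial $f$ and any $\lambda$,
\[
\langle \Acal_\lambda(x) \rangle\, f(x) = \langle f(z),\, \Dcal_{-\lambda}(z) \rangle_{0}\, .
\]
Since $\lambda \in \NN^{l}$ is antidominant means $-\lambda \in -\NN^{l}$ is dominant, the Demazure character $\Dcal_{-\lambda}$ reduces to the monomial $z^{-\lambda}$, so this becomes
\[
\langle \Acal_\lambda \rangle\, \Gcal_{\nubold,\sigma}[x;t]
\;=\; \langle z^{\lambda} \rangle\, \Gcal_{\nubold,\sigma}[z;t]\, \prod_{i<j}(1 - z_{i}/z_{j})\, ,
\]
expressing $c_{\lambda}$ as a specific signed combination of monomial coefficients of $\Gcal_{\nubold,\sigma}$, each of which lies in $\NN[t]$ by Corollary~\ref{cor:G-nu-sigma-flag-bounds}.

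The first step is to reduce to the case $\lambda_{k}>0$ for all $k$.  If $\lambda_{k}=0$, then by Lemma~\ref{lem:Amu-with-zero} setting $x_{k}=0$ in $\Gcal_{\nubold,\sigma}[x;t]$ annihilates every atom $\Acal_\mu$ with $\mu_{k}>0$ and sends $\Acal_{\lambda}$ to the atom indexed by $\lambda$ with the $k$-th entry deleted.  By Lemma~\ref{lem:xk=0}, the result is itself a flag-bound specialization of $\Gcal_{\nubold,\sigma}$, hence again a flagged LLT polynomial.  Thus, by induction on $l$, it suffices to prove the proposition when $\lambda$ has all positive entries, with the base case $\lambda = \emptyset$ being immediate since $c_{\emptyset}$ is just the constant term of $\Gcal_{\nubold,\sigma}$.

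For $\lambda$ all positive antidominant, the strategy I would attempt is to identify $c_{\lambda}$ with a Schur coefficient of the Weyl symmetrization of $\Gcal_{\nubold,\sigma}$, or of a closely related flagged LLT polynomial, so that the known Schur positivity of symmetric LLT polynomials (invoked via Corollary~\ref{cor:Weyl-on-G-nu-sigma}) completes the argument, just as in the dominant case.  A natural candidate is to use a variable-reversal $x_{i} \mapsto x_{l+1-i}$, which conjecturally converts $\Gcal_{\nubold,\sigma}$ into a related flagged LLT polynomial $\Gcal_{\nubold',\sigma'}$ and swaps the roles of dominant and antidominant atoms in the sense of \eqref{e basic dem 2}, reducing the claim to Proposition~\ref{prop:atom-pos-dominant} applied to $\Gcal_{\nubold',\sigma'}$.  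The main obstacle I expect is to make this reversal argument precise: identifying the transformed indexing data $(\nubold',\sigma')$ explicitly, verifying the admissibility and inversion statistics transform correctly under the substitution, and matching the resulting atom indices—this is where the combinatorics of ragged-right diagrams and flag numbers in Definitions~\ref{def:ragged-right-diagram}--\ref{def:flag-numbers} must be handled carefully.
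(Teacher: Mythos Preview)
Your proposal has a genuine gap.  The variable-reversal step you sketch in the last paragraph is not just incomplete bookkeeping: there is no reason to expect that $\Gcal_{\nubold,\sigma}[x_{l},\ldots,x_{1};t]$ is again a single-variable specialization of some flagged LLT polynomial $\Gcal_{\nubold',\sigma'}$.  The flagged structure (Definition~\ref{def:flagged-sym}, the algebraic formula \eqref{e:G-nu-sigma}, the tableau rule in Corollary~\ref{cor:G-nu-sigma-flag-bounds}) is intrinsically asymmetric in the ordering of the variables, and the paper provides no reversal symmetry of this kind.  Your reduction to $\lambda_k>0$ is also shaky: setting $x_k=0$ in $\Gcal_{\nubold,\sigma}[x_1,\ldots,x_l;t]$ yields, by Lemma~\ref{lem:xk=0}, a flag-bound specialization of the \emph{same} $\Gcal_{\nubold,\sigma}$ with a repeated flag bound, not the single-variable specialization of some length-$(l-1)$ flagged LLT polynomial, so the induction does not close up.

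The paper's proof bypasses both issues by working with the algebraic reformulation \eqref{e:atom-coefs-via-D} (equivalently, Proposition~\ref{prop:atom-pos-via-D}(b)).  For antidominant $\lambda$, the weight $\mu=-\lambda$ is dominant, so $\Dcal_{\mu}=x^{\mu}$ and $w_0\,\Dcal_{\mu}=x^{-w_0(\lambda)}$ is an antidominant monomial.  In the expression $\overline{T_\sigma}\,x^{\eta_-}\,w_0\Dcal_{\mu}(x)$ this monomial simply merges with $x^{\eta_-}$: one gets the identical expression with $\eta_-$ replaced by the still-antidominant weight $\eta_- - w_0(\lambda)$ and with $\mu=(0^l)$.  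Thus the antidominant case reduces to the case $\lambda=(0^l)$, which is dominant, and Proposition~\ref{prop:atom-pos-dominant} finishes the argument.  The key idea you are missing is that the antidominant monomial can be absorbed into $\eta_-$, which changes the indexing data $(\nubold,\sigma)$ but keeps you inside the class of flagged LLT polynomials---no variable reversal is needed.
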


\begin{proof}
The antidominant case of Conjecture~\ref{conj:atom-pos-unimodal} is
equivalent to the positivity in
Proposition~\ref{prop:atom-pos-via-D}(b) holding for $\mu = -\lambda $
dominant.  But then $w_{0}\Dcal _{\mu } = x^{-w_{0}(\lambda )}$ is an
antidominant weight monomial, so \eqref{e:atom-pos-via-D} in this case
is the same as in the case with $\eta _{-}$ replaced by $\eta
_{-}-w_{0}(\lambda )$, and $\mu =(0^{l})$.

This reduces the result to the case $\lambda =(0^{l})$, which holds by
Proposition~\ref{prop:atom-pos-dominant}.
\end{proof}

\subsubsection*{Computational evidence for the conjectures}

By Proposition~\ref{prop:atom-pos-via-D-reduced},
Conjecture~\ref{conj:atom-pos-standard} follows from positivity in
\eqref{e:atom-pos-via-D} for all $\mu $ when $\eta _{-} = \eta
_{0}(\sigma )$, $\sigma $ is maximal in $\Sfrak _{\rr }\, \sigma $,
and $\gamma \in X_{++}(\GL _{\rr})$.  This can be checked for a given
$\mu $ by means of a finite computation: namely, for each $\sigma \in
\Sfrak _{l}$, expand $\overline{T_{\sigma }}\, \, x^{\eta _{0}(\sigma
)}\, w_{0} \Dcal _{\mu }(x)$ in terms of the $F_{\kappa }(x;t^{-1})$,
and then evaluate the signed coefficient sums for all Young subgroups
$\Sfrak _{\rr} \subseteq \Sfrak _{l}$ such that $\sigma $ is maximal
in $\Sfrak _{\rr }\, \sigma $.

Using this method, we verified the reduction of
Conjecture~\ref{conj:atom-pos-standard} in
Proposition~\ref{prop:atom-pos-via-D-reduced} by computer in the
following cases: (a) for all $\mu \in \NN ^{l}$ satisfying $|\mu |\leq
8$ for $l=4,5$; and (b) for randomly chosen $\mu \in [0,m]^{l}$ in
several hundred examples with $m=8$ and $l = 5,6$, several dozen with
$m=6$ and $l=7,8$, and several with $m=8$ and $l=8$.

Conjecture~\ref{conj:atom-pos-unimodal} takes more work to check,
since we must allow both $\mu $ and $\eta _{-}$ to vary.  By
Proposition~\ref{prop:unimodal-vs-antiunimodal}, the anti-unimodal
case suffices.  We verified the positivity in \eqref{e:atom-pos-via-D}
for anti-unimodal $\mu $ and varying $\eta _{-}$ in hundreds of cases
for $l=6,7$, and a few with $l=8$.

\subsection{Signed flagged LLT polynomials}
\label{ss:signed}

For signed alphabets of a specific form, the specializations of
flagged LLT polynomials in Theorem~\ref{thm:combinatorial-G-nu-sigma}
interact well with the nonsymmetric plethysm $\Pi _{t,x}$ and the
Hecke algebra action, and will be needed for the study of nonsymmetric
Macdonald polynomials in \S \ref{s:ns-Mac-pols}.  We now define these
specializations and develop their properties.

\begin{defn}\label{def:signed-G}
Given flagged LLT indexing data $\nubold , \sigma $ of length $l$, the
{\em signed flagged LLT polynomial with flag bounds
$b=(b_1\leq \cdots
\leq b_{l})$} is the specialization
\begin{equation}\label{e:signed-G}
\Gcal _{\nubold ,\sigma }[X_{1}^b - t\, \Xpb{1},\, \ldots,
X_{l}^b - t\, \Xpb{l};\, t^{-1}],
\end{equation}
where $X^b_i = x_{b_{i-1}+1} + \cdots + x_{b_i}$, as in
Definition~\ref{def:flag-bounds}, and we set $X' =
(0,x_{1},x_{2},\ldots)$, so ${X'_{i\,}}^b = x_{b_{i-1}}+ \cdots +
x_{b_i-1}$.  In the special case $b_{i} = i$, we use the notation
\begin{equation}\label{e:G-minus}
\Gcal _{\nubold ,\sigma }^{-}(x_{1},\ldots,x_{l};\, t^{-1}) \defeq
\Gcal _{\nubold ,\sigma }[x_{1},\, x_{2} -t\, x_{1}, \ldots, x_{l} -
t\, x_{l-1};\, t^{-1}].
\end{equation}
\end{defn}

Note that we define signed flagged LLT polynomials to be
specializations of flagged LLT polynomials with parameter $t^{-1}$ in
place of $t$.  This turns out to be the correct definition to use in
\S \ref{s:ns-Mac-pols}, and is also needed for results below.

Our next result is a signed analog of
Corollary~\ref{cor:G-nu-sigma-flag-bounds}.

Given flag bounds $b_{1}\leq \cdots \leq b_{l}\leq n$, we write $\Acal
_{\pm}^{b}$ for the signed alphabet $\Acal = \{1<\overline{1}< \cdots
< n < \overline{n}\} $
with $\Acal^+ = \{1,\ldots, n\}$, $\Acal^- = \{\overline{1},\ldots,
\overline{n}\}$, and the distinguished subsets
\begin{equation}\label{eq:A plus minus alphabets}
\Acal_1 = \{1, \overline 1, 2,\dots, b_1\}, \,\Acal_2 =
\{\overline{b_1}, b_1+1, \dots, b_2\},\, \ldots,\, \Acal_l =
\{\overline{b_{l-1}},b_{l-1}+1, \dots, b_l\}\, .
\end{equation}
The set of flagged tableaux on these alphabets is then denoted $\FSST
(\nubold ,\sigma,\Acal_{\pm}^b)$.
Explicitly, by Lemma~\ref{lem:T-prime}, a tableau $T \in \FSST
(\nubold ,\sigma,\Acal_{\pm }^b)$ is a filling $T\colon \nubold
\rightarrow \{1, \overline{1}, \ldots, n, \overline{n}\}$ such that
the extended filling $T'$ with fixed entry $b_{i}$ in the flag box
with flag number $i$ is a ragged-right super tableau.

\begin{prop}\label{prop:signed-G-tableaux}
Given $b_{1}\leq \cdots \leq b_{l}\leq n$, the signed flagged LLT
polynomial with flag bounds $b_{i}$ is given by
\begin{equation}\label{e:signed-G-tableaux}
\Gcal _{\nubold ,\sigma }[X_{1}^b - t\,  \Xpb{1},\, \ldots, X_{l}^b - t\,
 \Xpb{l\,};\, t^{-1}] = \sum _{T\in \FSST (\nubold , \sigma , \Acal^b_{\pm} )}
(-t)^{m(T)} t^{-\inv (T)} x^{|T|},
\end{equation}
where $x^{|T|} = \prod _{u\in \nubold } x_{|T(u)|}$, with
$|\overline{a}| = |a| \defeq a$, and $m(T)$ is the number of negative
entries.
\end{prop}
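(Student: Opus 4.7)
The plan is to derive Proposition~\ref{prop:signed-G-tableaux} as a direct specialization of Theorem~\ref{thm:combinatorial-G-nu-sigma}, identifying the ``signed'' specialization $X_i^b - t\,{X'_i}^b$ as the signed alphabet specialization $X_i^\Acal$ for the signed alphabet $\Acal^b_\pm$ defined in \eqref{eq:A plus minus alphabets}.

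First I would pin down the partition of $\Acal^b_\pm = \{1<\overline{1}<\cdots <n<\overline{n}\}$ into the blocks $\Acal_i$ dictated by \eqref{eq:A plus minus alphabets}. Reading the intervals in the given total order, one checks that $\Acal_1^+ = \{1,\ldots,b_1\}$ with $\Acal_1^- = \{\overline{1},\ldots,\overline{b_1-1}\}$, and for $i\geq 2$, $\Acal_i^+ = \{b_{i-1}+1,\ldots,b_i\}$ with $\Acal_i^- = \{\overline{b_{i-1}},\ldots,\overline{b_i-1}\}$. The key identification is to specialize the ambient variables by $x_a \mapsto x_a$ for positive $a$ and $x_{\overline a}\mapsto t\, x_a$ for negative $\overline a$. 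Under this specialization,
\[
X_i^\Acal = \sum_{a\in\Acal_i^+}x_a - \sum_{\overline a\in\Acal_i^-}x_{\overline a} \longmapsto \sum_{a\in\Acal_i^+}x_a - t\sum_{\overline a\in\Acal_i^-}x_a = X_i^b - t\,{X'_i}^b,
\]
using the fact that $\Acal_i^-$ corresponds precisely to the indices appearing in ${X'_i}^b = x_{b_{i-1}}+\cdots + x_{b_i-1}$ (with the $i=1$ case absorbing the convention $x_0 = 0$).

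Next I would apply Theorem~\ref{thm:combinatorial-G-nu-sigma} with the parameter $t$ replaced by $t^{-1}$ and the signed alphabet $\Acal = \Acal^b_\pm$, yielding
\[
\Gcal_{\nubold,\sigma}[X_1^\Acal,\ldots,X_l^\Acal;\,t^{-1}] = \sum_{T\in\FSST(\nubold,\sigma,\Acal^b_\pm)} (-1)^{m(T)}\, t^{-\inv(T)}\, x^T,
\]
where $x^T = \prod_{u\in\nubold} x_{T(u)}$. Then I would apply the substitution $x_{\overline a}\mapsto t\, x_a$ to both sides. On the left, this produces the signed flagged LLT polynomial of \eqref{e:signed-G}. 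On the right, each monomial transforms as
\[
x^T = \prod_{u:\,T(u)\in\Acal^+}x_{T(u)} \cdot \prod_{u:\,T(u)\in\Acal^-}t\,x_{|T(u)|} \;=\; t^{m(T)}\, x^{|T|},
\]
and combining $(-1)^{m(T)}\cdot t^{m(T)} = (-t)^{m(T)}$ gives the claimed formula \eqref{e:signed-G-tableaux}.

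There is no substantial obstacle here; the proof is essentially combinatorial bookkeeping. The one point requiring care is matching the explicit enumeration of $\Acal^b_\pm$ in \eqref{eq:A plus minus alphabets} with the ordered alphabet $\{1<\overline 1<2<\overline 2<\cdots\}$ so that the negative letters in $\Acal_i$ correspond bijectively to the variables appearing in ${X'_i}^b$; once that alignment is made, Theorem~\ref{thm:combinatorial-G-nu-sigma} applies verbatim and yields the result.
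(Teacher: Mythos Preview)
Your proof is correct and follows essentially the same approach as the paper: apply Theorem~\ref{thm:combinatorial-G-nu-sigma} with $t$ replaced by $t^{-1}$ for the alphabet $\Acal^b_\pm$, then substitute $x_{\overline a}\mapsto t\,x_a$ and check that this sends $X_i^\Acal$ to $X_i^b - t\,{X'_i}^b$ on the left and produces the factor $(-t)^{m(T)}$ on the right. You have spelled out the alphabet bookkeeping more explicitly than the paper does, but the argument is identical.
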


\begin{proof}
For $\Acal = \Acal ^{b}_{\pm }$, the plethystic alphabet $X^{\Acal
}_{i}$ in Theorem~\ref{thm:combinatorial-G-nu-sigma} becomes
$X^{b}_{i} - t\, \Xpb{i}$ upon substituting $t\, x_{a}$ for
$x_{\overline{a}}$.  Hence, \eqref{e:signed-G-tableaux} follows by
substituting $t^{-1}$ for $t$ in \eqref{e:combinatorial-G-nu-sigma}
and then setting $x_{\overline{a}} = t\, x_{a}$.
\end{proof}

The next result is immediate from
Propositions~\ref{prop:PiA-inv-flag-bounds} and
\ref{prop:G-nu-flagged}, and indeed holds with any flagged symmetric
function in place of $\Gcal _{\nubold ,\sigma }(x;t^{-1})$.

\begin{prop}\label{prop:Pit-signed-to-unsigned}
For strictly increasing flag bounds $b_{1}<\cdots <b_{l}$, the
signed and unsigned specializations are related by nonsymmetric plethysm:
\begin{equation}\label{e:Pit-signed-to-unsigned}
\Pi _{t,x}\, \Gcal _{\nubold ,\sigma }[X_{1}^b - t\, \Xpb{1},\, \ldots,
X_{l}^b - t\, \Xpb{l\,};\, t^{-1}] = \Gcal _{\nubold ,\sigma
}[X_{1}^b,\ldots,X_{l}^b;\, t^{-1}].
\end{equation}
In particular, when $b_{i} = i$ for all $i$, we have
\begin{equation}\label{e:Pit-signed-to-unsigned-X=x}
\Pi _{t,x}\, \Gcal _{\nubold ,\sigma }^{-}(x_{1},\ldots,x_{l};\,
t^{-1}) = \Gcal _{\nubold ,\sigma }[x_{1},\ldots,x_{l};\, t^{-1}].
\end{equation}
\end{prop}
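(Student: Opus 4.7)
The plan is to derive this as an immediate consequence of the two cited propositions, with one additional bookkeeping step to pass from the generic nonsymmetric plethysm $\Pi_{A,x}$ to its one-parameter specialization $\Pi_{t,x}$. By Proposition~\ref{prop:G-nu-flagged}, the flagged LLT polynomial $\Gcal_{\nubold,\sigma}(X_1,\ldots,X_l;\, t^{-1})$ lies in the space $\FLambda_{\kk}(X_1,\ldots,X_l)$ of flagged symmetric functions over $\kk = \QQ(t)$. This is the key structural input: it places $\Gcal_{\nubold,\sigma}$ in the setting where Proposition~\ref{prop:PiA-inv-flag-bounds} applies.

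Next, I would apply Proposition~\ref{prop:PiA-inv-flag-bounds}, in its $A=t$ specialization via Lemma~\ref{lem:Pit-PiA}, to the flagged symmetric function $f = \Gcal_{\nubold,\sigma}(X_1,\ldots,X_l;\, t^{-1})$ with the given strictly increasing flag bounds $b_1 < \cdots < b_l$. This yields
\[
\Pi_{t,x}^{-1}\, \Gcal_{\nubold,\sigma}[X_1^b,\ldots,X_l^b;\, t^{-1}]
= \Gcal_{\nubold,\sigma}\bigl[X_1^b - t\,\Xpb{1},\,\ldots,\, X_l^b - t\,\Xpb{l\,};\, t^{-1}\bigr].
\]
Applying $\Pi_{t,x}$ to both sides gives \eqref{e:Pit-signed-to-unsigned}.

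For the special case $b_i = i$, we have $X_i^b = x_i$ and $\Xpb{i} = x_{i-1}$ (with the convention $x_0 = 0$), so $X_i^b - t\,\Xpb{i} = x_i - t\, x_{i-1}$. Comparing with the definition \eqref{e:G-minus} of $\Gcal_{\nubold,\sigma}^{-}$, equation \eqref{e:Pit-signed-to-unsigned-X=x} is the $b = (1,2,\ldots,l)$ instance of \eqref{e:Pit-signed-to-unsigned}. No substantive obstacle arises: the proof is purely a matter of recognizing that the hypotheses of Proposition~\ref{prop:PiA-inv-flag-bounds} are met and tracking the role of $t^{-1}$ versus $t$ inside $\Gcal_{\nubold,\sigma}$, together with the routine invocation of Lemma~\ref{lem:Pit-PiA} to justify specializing $A \mapsto t$.
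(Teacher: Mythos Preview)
Your proof is correct and follows exactly the paper's approach: the paper states that the result is immediate from Propositions~\ref{prop:PiA-inv-flag-bounds} and~\ref{prop:G-nu-flagged}, which is precisely what you do, with the passage from $\Pi_{A,x}$ to $\Pi_{t,x}$ handled (as in \S\ref{ss:A=t}) via Lemma~\ref{lem:Pit-PiA}.
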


\begin{example}\label{ex:ns-HHL=signed-LLT}
Propositions~\ref{prop:signed-G-tableaux} and
\ref{prop:Pit-signed-to-unsigned} give a combinatorial formula for the
nonsymmetric Hall-Littlewood polynomials $E_\mu$ in terms of signed
flagged tableaux on a tuple of rows. Specifically, combining
\eqref{e:rows-G} and \eqref{e:Pit-signed-to-unsigned-X=x}, we have
\begin{equation}\label{e:ns-HHL=signed-LLT}
E_{\mu}(x;t^{-1}) = t^{h(\nubold)} \Gcal_{ \nubold,
\operatorname{id} }^-(x_1,\dots, x_l ; t^{-1})
= t^{h(\nubold)} \sum
_{T\in \FSST (\nubold , \operatorname{id} , \Acal^b_{\pm} )}
(-t)^{m(T)} t^{-\inv (T)} x^{|T|},
\end{equation}
where $\nubold$ is the tuple of rows $((0)/(-\mu_l),\dots,
(0)/(-\mu_1) )$ and the flag bounds $b$ for $\Acal^b_{\pm}$ are
$(1,2,\dots, l)$.  For example, for $\mu = (2,1)$, illustrating $\FSST
(\nubold , \operatorname{id} , \Acal^b_{\pm} )$ with the same
conventions as in Example \ref{ex:flag-LLT-ex-2}, we have
\begin{equation}
\label{e:signed tabs sigma12}
\begin{array}{ccccccc}
&
\begin{tikzpicture}[scale=.5,baseline=0]
  \foreach \beta / \alpha / \y in {2/1/0,2/0/2}
    \draw (\alpha,\y)  grid (\beta, \y+1);
  \node at (1.5,0.5) {1};
  \node at (2.5,0.5) {2};
  \node at (0.5,2.5) {1};
  \node at (1.5,2.5) {1};
  \node at (2.5,2.5) {1};
\end{tikzpicture}
&
\begin{tikzpicture}[scale=.5,baseline=0]
  \tikzstyle{vertex}=[outer sep=1.5pt]
  \foreach \beta / \alpha / \y in {2/1/0,2/0/2}
     \draw (\alpha,\y)  grid (\beta, \y+1);
   \node[vertex] (a) at (1.5,0.5) {2};
   \node at (2.5,0.5) {2};
   \node at (0.5,2.5) {1};
   \node[vertex] (b) at (1.5,2.5) {1};
   \node at (2.5,2.5) {1};
   \draw[-Latex] (a) -- (b);
\end{tikzpicture}
&
\begin{tikzpicture}[scale=.5,baseline=0]
  \tikzstyle{vertex}=[outer sep=1.5pt]
  \foreach \beta / \alpha / \y in {2/1/0,2/0/2}
     \draw (\alpha,\y)  grid (\beta, \y+1);
   \node[vertex] (a) at (1.5,0.5) {\mybar{1}};
   \node at (2.5,0.5) {2};
   \node at (0.5,2.5) {1};
   \node[vertex] (b) at (1.5,2.5) {1};
   \node at (2.5,2.5) {1};
   \draw[-Latex] (a) -- (b);
\end{tikzpicture} \\[.5ex]
E_{21}(x;t^{-1})\; = & \hspace{-3ex}
t\;(\,x_1^3  & + \; t^{-1} x_1^2x_2 & +\; (-t) t^{-1} x_1^3\;) & = x_1^2x_2
\end{array}
\end{equation}
\end{example}

\begin{example}\label{ex:signed-llts}
Continuing Example \ref{ex:flag-LLT-ex-2}, we confirm
\eqref{e:Pit-signed-to-unsigned-X=x} for $\nubold = ((2)/(1), (2))$
and $\sigma = (2,1)$.  Specializing $x_{\overline{1}} = t \, x_1$ in
\eqref{e ex signG ii} gives
\begin{equation}
\Gcal_{\nubold,\sigma}^-(x_1,x_2;t^{-1}) =x_1^3 + t^{-1} x_1^2 x_2 +
t^{-2} x_1 x_2^2 - t^{-1} x_1^2 (t\, x_1) - t^{-2} x_1 (t \, x_1) x_2
= t^{-2} x_1 x_2^2.
\end{equation}
Hence, by \eqref{e nspleth n2 t}, \(\Pi_{t,x} \,
\Gcal_{\nubold,\sigma}^-(x_1,x_2;t^{-1}) = x_1^3 + t^{-1} x_1^2 x_2 +
t^{-2}x_1 x_2^2\), which agrees with our computation of
$\Gcal_{\nubold,\sigma}[x_1, x_2; t^{-1}]$ in
\eqref{eq:flag-LLT-ex-1}.
\end{example}

Another property of the polynomials $\Gcal _{\nubold ,\sigma
}^{-}(x_{1},\ldots,x_{l};\, t^{-1})$ is that Hecke algebra operators
have the effect of exchanging flag bounds whenever it makes sense to
do so.

\begin{prop}\label{prop:Ti-action}
(a) Given flagged LLT indexing data $(\nubold ,\sigma )$, assume that
the flag boxes with flag numbers $i$ and $i+1$ have the same content
and are in different components $\nu ^{(j)}$ and $\nu ^{(k)}$ of
$\nubold $, so that $\sigma ' = \sigma s_{l-i}$ is also compatible
with $\nubold $, and $(\nubold , \sigma ')$ differs from $(\nubold
,\sigma )$ by exchanging flag numbers $i$ and $i+1$.  Then
\begin{equation}\label{e:Ti-action-a}
\Gcal _{\nubold ,\sigma ' }^{-}(x_{1},\ldots,x_{l};\, t^{-1}) =
\begin{cases}
t\, \overline{T_{i}}\, \Gcal _{\nubold ,\sigma}^{-}(x_{1}, \ldots,
x_{l};\, t^{-1})& \text{if $j<k$},\\[1ex]
t^{-1} \, T_{i} \, \Gcal _{\nubold ,\sigma}^{-}(x_{1},\ldots,x_{l};\,
t^{-1})& \text{if $j>k$}.
\end{cases}
\end{equation}

(b) If the flag boxes with flag numbers $i$ and $i+1$ have the same
content and are in the same (strictly ragged-right) component of
$\nubold $, then
\begin{equation}\label{e:Ti-action-b}
T_{i}\, \Gcal _{\nubold ,\sigma }^{-}(x_{1},\ldots,x_{l};\, t^{-1}) =
- \Gcal _{\nubold ,\sigma }^{-}(x_{1},\ldots,x_{l};\, t^{-1}).
\end{equation}
\end{prop}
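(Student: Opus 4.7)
The plan is to start from the algebraic master formula obtained by combining Corollary~\ref{cor:G-nu-sigma-ter} with Proposition~\ref{prop:Pit-signed-to-unsigned} and invertibility of $\Pi_{t,x}$:
\begin{equation}\label{e:Ti-action-proof-master}
t^{h(\nubold)+e(\nubold,\sigma)}\, \Gcal^{-}_{\nubold,\sigma}(x_{1},\ldots,x_{l};t^{-1}) = x^{\eta_{+}}\, T_{u}\, \mathcal{F},
\end{equation}
where $u = w_{0}\sigma^{-1}w_{0}$ and $\mathcal{F} = w_{0}\sum_{w \in \Sfrak_{\rr}}(-1)^{\ell(w)}\overline{E_{w(\gamma)}(x;t)}$ is independent of $\sigma$. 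In both (a) and (b), the equal-content hypothesis on the two flag boxes gives $\eta_{\sigma(l+1-i)} = \eta_{\sigma(l-i)}$ (both are end contents of their rows), hence $(\eta_{-})_{l+1-i} = (\eta_{-})_{l-i}$ and so $(\eta_{+})_{i} = (\eta_{+})_{i+1}$. Consequently $x^{\eta_{+}}$ is symmetric in $x_{i},x_{i+1}$ and commutes with $T_{i}^{\pm 1}$, so each claim reduces to an identity for $T_{i}^{\pm 1}T_{u}\mathcal{F}$.

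For part (a), since $w_{0}s_{l-i}w_{0} = s_{i}$, the permutation $\sigma' = \sigma s_{l-i}$ satisfies $w_{0}(\sigma')^{-1}w_{0} = s_{i}u$. Comparing the row indices $\sigma(l-i)$ and $\sigma(l+1-i)$ shows that these positions form an inversion of $\sigma$ exactly when $j<k$, so $\ell(s_{i}u) = \ell(u) \mp 1$ in the cases $j \lessgtr k$, giving $T_{s_{i}u} = \overline{T_{i}}\,T_{u}$ and $T_{i}T_{u}$ respectively. The two flag boxes in question are the unique pair whose rising-end-triple status changes under the swap, so $e(\nubold,\sigma') = e(\nubold,\sigma) \mp 1$ in the same cases. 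Substituting into \eqref{e:Ti-action-proof-master} for $\sigma'$ and using that $T_{i}^{\pm 1}$ commutes with $x^{\eta_{+}}$ yields the identities \eqref{e:Ti-action-a}.

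For part (b), the central step is to rewrite $\mathcal{F}$ so that its antisymmetry becomes manifest. Applying Remark~\ref{rem:flip-E} term-by-term and the length-preserving bijection $w \mapsto w_{0}ww_{0}$ from $\Sfrak_{\rr}$ to $\Sfrak_{\rr^{\op}}$, one verifies
\begin{equation*}
\mathcal{F} = \sum_{w' \in \Sfrak_{\rr^{\op}}}(-1)^{\ell(w')} E_{w'(-w_{0}(\gamma))}(x;t^{-1}) = E^{-}_{\rr^{\op},\,-w_{0}(\gamma)}(x;t^{-1}),
\end{equation*}
with $-w_{0}(\gamma)$ being $\GL_{\rr^{\op}}$-regular since $\gamma$ is $\GL_{\rr}$-regular. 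By \eqref{e:E-anti}, $\mathcal{F}$ is Hecke-antisymmetric under $\Sfrak_{\rr^{\op}}$. Since rows $r,r+1$ lie in the same $\rr$-block, $l-r,l-r+1$ lie in the same $\rr^{\op}$-block, so $T_{l-r}\mathcal{F} = -\mathcal{F}$. The identity $\sigma s_{l-i} = s_{r}\sigma$ (implementing the flag-number swap by left multiplication by $s_{r}$) translates into $s_{i}u = us_{l-r}$. Maximality of $\sigma$ in $\Sfrak_{\rr}\sigma$ forces $\ell(s_{r}\sigma) = \ell(\sigma)-1$, so $s_{l-r}$ is a right descent of $u$ and $T_{u} = T_{us_{l-r}}T_{l-r} = T_{i}T_{us_{l-r}}$. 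Hence $T_{u}\mathcal{F} = -T_{us_{l-r}}\mathcal{F}$ and $T_{i}T_{u}\mathcal{F} = -T_{i}T_{us_{l-r}}\mathcal{F} = -T_{u}\mathcal{F}$, which combined with the commutativity of $T_{i}$ and $x^{\eta_{+}}$ yields \eqref{e:Ti-action-b}.

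The main obstacle is the reformulation of $\mathcal{F}$ in (b). The inner sum $\sum_{w\in \Sfrak_{\rr}}(-1)^{\ell(w)}\overline{E_{w(\gamma)}(x;t)}$ is manifestly Hecke-antisymmetric under $\Sfrak_{\rr}$, but the outer $w_{0}$-twist does not conjugate $T_{i}$ to $T_{l-i}$ cleanly (the Demazure-Lusztig operators involve the non-symmetric rational factor $\tfrac{1-t}{1-x_i/x_{i+1}}$, which is not preserved under $x\mapsto w_0 x$), so the antisymmetry does not transfer directly. The device for circumventing this is the identity $w_{0}\overline{E_{\lambda}(x;t)} = E_{-w_{0}(\lambda)}(x;t^{-1})$ of Remark~\ref{rem:flip-E}, which re-expresses $\mathcal{F}$ as a standard $E^{-}$ attached to the opposite Levi datum $(\rr^{\op},-w_{0}(\gamma))$ and makes its antisymmetry under $\Sfrak_{\rr^{\op}}$ immediate.
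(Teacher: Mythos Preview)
Your proof is correct and follows essentially the same approach as the paper's: both establish the master formula \eqref{e:Ti-action-proof-master} from Corollary~\ref{cor:G-nu-sigma-ter} and Proposition~\ref{prop:Pit-signed-to-unsigned}, use $(\eta_+)_i=(\eta_+)_{i+1}$ to commute $T_i^{\pm1}$ past $x^{\eta_+}$, track the change in $e(\nubold,\sigma)$ and the reduced factorization of $T_{w_0\sigma^{-1}w_0}$ in part~(a), and in part~(b) rewrite $\mathcal{F}$ via Remark~\ref{rem:flip-E} as $E^{-}_{\rr^{\op},-w_0(\gamma)}(x;t^{-1})$ to expose its $\Sfrak_{\rr^{\op}}$-antisymmetry. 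The only cosmetic differences are that the paper handles part~(a) by reducing to the case $j<k$ (noting $t^{-1}T_i=(t\,\overline{T_i})^{-1}$) rather than treating both cases in parallel, and uses $j$ rather than $r$ for the row index in part~(b); your closing commentary on why the $w_0$-twist obstructs a naive transfer of antisymmetry is accurate but not needed for the argument itself.
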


\begin{proof}
By Corollary~\ref{cor:G-nu-sigma-ter} and
Proposition~\ref{prop:Pit-signed-to-unsigned}, we have
\begin{equation}\label{e:G-signed-ter}
t^{h(\nubold )+e(\nubold ,\sigma )} \Gcal _{\nubold ,\sigma
}^{-}(x_{1},\ldots,x_{l};\, t^{-1}) = x^{\eta _{+}}\, T_{w_{0}\,
\sigma ^{-1} \, w_{0}}\, w_{0} \sum _{w\in \Sfrak _{\rr }} (-1)^{\ell
(w)} \, \overline{E_{w(\gamma )}(x;\, t)},
\end{equation}
where $\nubold $ is the diagram of $(\rr ,\gamma, \eta )$.  Flag boxes
numbered $i$ and $i+1$ having the same content means that $(\eta
_{+})_{i} = (\eta _{+})_{i+1}$, so $T_{i}$ commutes with $x^{\eta
_{+}}$.

For part (a), either case in \eqref{e:Ti-action-a} implies the other,
since $t^{-1}\, T_{i} = (t\, \overline{T_{i}})^{-1}$, so we can assume
that $j<k$.  Then $\sigma '<\sigma $, so $w_{0}\, \sigma ^{-1}\,
w_{0}$ has a reduced factorization $s_{i}\cdot (w_{0}\, (\sigma
')^{-1}\, w_{0})$, which implies $T_{w_{0}\, (\sigma' )^{-1}\, w_{0}}
= \overline{T_{i}}\, T_{w_{0}\, \sigma ^{-1}\, w_{0}}$.  We also have
$e(\nubold ,\sigma ) = e(\nubold ,\sigma ') + 1$, with flag numbers
$i$, $i+1$ contributing one more rising end triple for $(\nubold
,\sigma )$ than for $(\nubold ,\sigma ')$.  The $j<k$ case of
\eqref{e:Ti-action-a} now follows from \eqref{e:G-signed-ter}.

For part (b), flag numbers $i$ and $i+1$ must be in consecutive rows,
$j$ and $j+1$, say.  Then $w_{0}\, \sigma\, w_{0}(i)=l - j+1$ and
$w_{0}\, \sigma\, w_{0}(i+1)=l-j$, which implies that $v\cdot s_{l-j}$
and $s_{i}\cdot v$ are reduced factorizations of $w_{0}\, \sigma
^{-1}\, w_{0}$, where $v = (w_{0}\, \sigma ^{-1}\, w_{0})\, s_{l-j} =
s_{i}\, (w_{0}\, \sigma ^{-1}\, w_{0})$.  Hence $T_{w_{0}\, \sigma
^{-1}\, w_{0}} = T_{v} \, T_{l-j} = T_{i}\, T_{v}$, and therefore
$T_{i}\, T_{w_{0}\, \sigma ^{-1}\, w_{0}} = T_{w_{0}\, \sigma ^{-1}\,
w_{0}}\, T_{l-j}$.  As in \eqref{e:E-anti}, the sum
\begin{equation}\label{e:sign-sum}
w_{0} \sum _{w\in \Sfrak _{\rr }} (-1)^{\ell (w)} \,
\overline{E_{w(\gamma )}(x;\, t)} = \sum _{w\in w_{0}\, \Sfrak _{\rr }\,
w_{0}} (-1)^{\ell (w)} \, E_{-w \, w_{0}(\gamma )}(x;\, t^{-1})
\end{equation}
is antisymmetric for the Hecke algebra of $w_{0}\, \Sfrak _{\rr }\,
w_{0} = \Sfrak _{\rr ^{\op }}$, where $\rr ^{\op}$ is the reverse of
$\rr $.  Since $j$ and $j+1$ are in the same $\rr $ block, $l-j$ and
$l-j+1$ are in the same $\rr ^{\op }$ block.  Hence the sum in
\eqref{e:sign-sum} is antisymmetric for $T_{l-j}$.  This given,
\eqref{e:G-signed-ter} implies \eqref{e:Ti-action-b}.
\end{proof}

\begin{example}
Consider \(\nubold = ((2)/(1),(2)/(0))\) and set \(\sigma' = (2,1)\),
$\sigma = \operatorname{id}$, so that \(\sigma' = \sigma s_1\).  We
have $\Gcal^{-}_{\nubold,\sigma'}(x_1,x_2;t^{-1}) = t^{-2}x_1x_2^2$
from Example~\ref{ex:signed-llts}, and a similar computation gives
$\Gcal^{-}_{\nubold,\sigma'} =t^{-1} x_1^2 x_2$.  One checks that $
t^{-1} x_1^2 x_2 = t \, \overline{T_1}(t^{-2} x_1 x_2^2)$, confirming
the assertion of Proposition~\ref{prop:Ti-action} that
\(\Gcal^{-}_{\nubold,\sigma'}(x_1,x_2;t^{-1}) = t \,
\overline{T_1}\,\Gcal^{-}_{\nubold,\sigma}(x_1,x_2;t^{-1})\) in this
case.
\end{example}

A feature of the combinatorial formula in
Proposition~\ref{prop:signed-G-tableaux} is the existence of a
sign-reversing involution that permits the sum in
\eqref{e:signed-G-tableaux} to be restricted to tableaux $T$
satisfying a non-attacking condition.

\begin{defn}\label{def:non-attacking}
With notation as in Proposition~\ref{prop:signed-G-tableaux}, a
flagged tableau $T\in \FSST (\nubold ,\sigma ,\Acal ^{b}_{\pm })$ is
{\em non-attacking} if there is no attacking pair $(x,y)$ for $\nubold
$ such that $|T'(x)| = |T'(y)|$, where $T'$ is the extended tableau
with fixed entry $b_{i}$ in the flag box with flag number $i$, and $
|\overline{a}| = |a| \defeq a$.
\end{defn}

\begin{prop}\label{prop:non-attacking}
With notation as in Proposition~\ref{prop:signed-G-tableaux}, if the
flag bounds $b_{1}<\cdots <b_{l}$ are strictly increasing, then formula
\eqref{e:signed-G-tableaux} also holds with the sum restricted to
non-attacking tableaux:
\begin{equation}\label{e:non-attacking}
\Gcal _{\nubold ,\sigma }[X^{b}_{1} - t\, \Xpb{1},\, \ldots, X^{b}_{l}
- t\, \Xpb{l\,};\, t^{-1}] = \sum _{\substack{T\in \FSST (\nubold ,
\sigma ,
\Acal ^{b}_{\pm })\\
\text{{\rm non-attacking}}}} (-t)^{m(T)} t^{-\inv (T)} x^{|T|}.
\end{equation}
In particular, this holds for $\Gcal _{\nubold ,\sigma
}^{-}(x_{1},\ldots,x_{l};\, t^{-1})$.
\end{prop}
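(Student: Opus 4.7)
The plan is to establish Proposition~\ref{prop:non-attacking} by constructing a sign-reversing, weight-preserving involution on the set
\[
\FSST^{\rm att}(\nubold,\sigma,\Acal^b_{\pm}) \defeq \{T\in \FSST(\nubold,\sigma,\Acal^b_{\pm}) : T \text{ is attacking}\}\,,
\]
so that all such $T$ cancel pairwise in \eqref{e:signed-G-tableaux}, leaving only non-attacking tableaux. This parallels the classical super-tableau involution used to prove Lemma~\ref{lem:signed-classical-LLT} and is in the same spirit as Haglund--Haiman--Loehr's non-attacking reduction for nonsymmetric Macdonald polynomials, but we must adapt it to cope with flag boxes and the admissibility conditions on flagged tableaux.

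For $T\in \FSST^{\rm att}(\nubold,\sigma,\Acal^b_{\pm})$, I would order the attacking pairs $(x,y)$ of $\nubold$ for which $|T'(x)|=|T'(y)|$ by lexicographic order on $(c(y),\text{row}(y))$ (the content/row reading order from Definition~\ref{def:reading-order}), breaking ties by the relative position of $x$ with respect to $y$. Let $(x_0,y_0)$ be the first such pair. Since $x_0$ always belongs to $\nubold$ itself (never to the flag boxes, by Definition~\ref{def:inversions}), the sign of $T(x_0)$ can be toggled; define $\iota(T)$ to be the tableau obtained from $T$ by replacing $T(x_0)\in\{a,\bar a\}$ with its negative counterpart, where $a = |T(x_0)|$.

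The verification proceeds in three steps. First, involutivity: because $|T(x_0)|$ and $|\iota(T)(x_0)|$ are equal, the set of attacking pairs with equal absolute values depends only on $|T|$, so it is unchanged by $\iota$; hence $(x_0,y_0)$ remains the first such pair, and $\iota^2=\operatorname{id}$. Second, weight cancellation: clearly $x^{|T|}=x^{|\iota(T)|}$. A direct case check using the convention $1<\bar 1<2<\bar 2<\cdots$ in Definition~\ref{def:inversions} shows that toggling $T(x_0)$ from positive to negative changes $\inv(T)$ by exactly $+1$ at the pair $(x_0,y_0)$ (and leaves the inversion status of every other attacking pair unchanged, since only $T(x_0)$ is altered and the absolute value is preserved); correspondingly $m(T)$ changes by $+1$, so the signed weight acquires the factor $(-t)\cdot t^{-1}=-1$, as required. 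Third, $\iota(T)$ must still satisfy the conditions of Definition~\ref{def:FSST}: row-weak increase with no repeated negatives, the column condition within each component, and admissibility.

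The main obstacle is the admissibility and semistandardness check, which is where the strict increase $b_1<\cdots <b_l$ enters crucially. The delicate case is when $a=|T(x_0)|$ coincides with a flag bound $b_j$, since then $a\in\Acal_j$ but $\bar a\in\Acal_{j+1}$, so the flip changes the alphabet-membership of $x_0$. I would argue that when $y_0$ is a flag box, compatibility of $\sigma$ with $\nubold$ (Lemma~\ref{lem:flag-numbers}(a)) combined with the strict inequality $b_j<b_{j+1}$ forces $x_0$ to be simultaneously $j$- and $(j+1)$-admissible; when $y_0\in\nubold$, the attacking-pair geometry (Definition~\ref{def:inversions}) together with Remark~\ref{rem:FSST} and the row/column weak-increase conditions on $T'$ (Lemma~\ref{lem:T-prime}) constrain the local entries so tightly that toggling one sign cannot create a forbidden repetition or violate admissibility. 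Once these local checks are dispatched, the involution is well-defined and sign-reversing, and summing the weights $(-t)^{m(T)}t^{-\inv(T)}x^{|T|}$ over $\FSST^{\rm att}(\nubold,\sigma,\Acal^b_{\pm})$ gives zero, completing the proof of \eqref{e:non-attacking}.
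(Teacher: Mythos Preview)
Your overall strategy---a sign-reversing involution that toggles the sign of one entry---is exactly right, and it is also what the paper does (adapting \cite[Lemma~5.1]{HagHaiLo05}).  However, the specific ordering you choose breaks the argument at two places.

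The paper selects $x$ to be the \emph{last} box in the content/row reading order such that some attacking pair $(x,y)$ has $|T'(x)|=|T'(y)|$, and then proves that for this $x$ the partner $y$ is \emph{unique}.  Your ordering (first by the reading-order position of $y$, then by $x$) does not guarantee uniqueness.  Concretely, take $\nubold =((0/-1),(0/-1),(0/-1))$, $\sigma=\operatorname{id}$, flag bounds $(1,2,3)$ (so the flag boxes in rows $1,2,3$ carry values $3,2,1$), and $T(u_1)=1$, $T(u_2)=\bar 1$, $T(u_3)=1$.  The violating pairs are $(u_1,u_2)$, $(u_1,u_3)$, $(u_2,u_3)$.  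Your rule picks $(x_0,y_0)=(u_1,u_2)$ and toggles $T(u_1)$ to $\bar 1$.  But then \emph{both} $(u_1,u_2)$ and $(u_1,u_3)$ change inversion status, so $\inv$ jumps by~$2$, and the weights $(-t)^{m(T)}t^{-\inv(T)}x^{|T|}=-x_1^3$ and $(-t)^{m(\iota(T))}t^{-\inv(\iota(T))}x^{|\iota(T)|}=t^{-1}x_1^3$ do not cancel.  Your assertion ``leaves the inversion status of every other attacking pair unchanged'' fails precisely because another pair $(x_0,y')$ with $|T'(y')|=|T(x_0)|$ may exist.

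The same ordering also fails the validity check.  Take $\nubold =((0/-1),(0/-2),(0/-1))$, $\sigma=\operatorname{id}$, flag bounds $(1,2,3)$, and $T(u_1)=2$, $T(v_2)=T(u_2)=2$, $T(u_3)=1$ (where $v_2,u_2$ are the content $-2,-1$ boxes of $\nu^{(2)}$).  The violating pairs are $(v_2,u_1)$ and $(u_1,u_2)$; your rule picks $(x_0,y_0)=(v_2,u_1)$ and toggles $T(v_2)$ from $2$ to $\bar 2$.  But $\bar 2\in\Acal_3$ while row~$2$ carries flag number~$2$, so $v_2$ is not $3$-admissible; and even the row condition $\bar 2\preccurlyeq T(u_2)=2$ fails.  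Thus $\iota(T)\notin\FSST(\nubold,\sigma,\Acal^b_\pm)$.

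Both issues are resolved by the paper's ``last~$x$'' choice: it forces uniqueness of $y$ (if two partners $y<y'$ existed, then $(y,y')$ would be a violating pair with first coordinate $y>x$, contradicting maximality of $x$), and the same maximality is invoked to rule out a bad neighbor to the right of or below $x$ after toggling.  So the missing idea is precisely this choice of extremal box and the accompanying uniqueness argument.
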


\begin{proof}
Adapting the proof of \cite[Lemma~5.1]{HagHaiLo05} to flagged
tableaux, we will construct a sign-reversing, weight-preserving
involution $\Psi $ on tableaux $T\in \FSST (\nubold ,\sigma ,\Acal
^{b}_{\pm })$ that violate the non-attacking condition; then the terms
in \eqref{e:signed-G-tableaux} for $T$ and $\Psi (T)$ cancel, leaving
only the terms in \eqref{e:non-attacking}.

The definition of attacking pair $(x,y)$ is equivalent to: $x$ and $y$
belong to different components of $\nubold '$, $x$ is not a flag box,
and $x<y<z$ in the content/row reading order, where $z$ is the box
immediately to the right of $x$.

Given $T\in \FSST (\nubold ,\sigma ,\Acal ^{b}_{\pm })$ which fails to
be non-attacking, let $x$ be the last box in reading order that occurs
in an attacking pair $(x,y)$ with $|T'(x)| = |T'(y)|$, and let $a =
|T'(x)|$.  We claim that for this $x$, the other member $y$ of the
pair is unique.

Suppose to the contrary that $(x,y)$ and $(x,y')$ are attacking pairs
with $|T'(x)| = |T'(y)| = |T'(y')| = a$ and (without loss of
generality) that $y<y'$ in reading order.  If $y$ is a flag box with
flag number $j$, then the flag number $i$ in the row of $y'$ is less
than $j$, because the flag box in the row of $y'$ follows $y$ in
reading order.  But then $|T'(y')|\leq b_{i}<b_{j}= a$.  Hence $y$ is
not a flag box.  For use below, note that this argument shows that if
$|T'(u)| = a$ for any $u$ following $y$ in reading order, then $y$ is
not a flag box.

If $y$ and $y'$ are in the same component of $\nubold '$, then they
have the same content, either $c(y) = c(y') = c(x)$ or $c(y) = c(y') =
c(x)+1$, depending on whether the component containing $y$ and $y'$ is
above or below the component containing $x$.  But then we cannot have
$|T'(y)| = |T'(y')| = a$, because $\nubold $ contains a box $v$ above
$y$ and to the left of $y'$, the condition on rows in a flagged
tableau implies that $T(v)\leq a$, and the condition on columns
implies that $T(v)\geq \overline{a}$.  Hence $y$ and $y'$ are in
different components.

Since $x<y<y'<z$ in reading order, where $z$ is the box
immediately to the right of $x$, and $y$ is not a flag box, and $y$
and $y'$ are in different components, it follows that $(y,y')$ is an
attacking pair.  But $y$ follows $x$ in reading order, so this
contradicts the choice of $x$.  This shows that $y$ is unique, as
claimed.

We construct $S = \Psi (T)$ by changing the sign of $T(x)$, that is,
$S(x) = a$ if $T(x)=\overline{a}$, or $S(x) = \overline{a}$ if $T(x) =
a$, and $S(u) = T(u)$ for $u\not =x$.  We claim that this defines a
valid flagged tableau $S$.  If not, then $S(u)$ and $S(x)$ would have
to violate the row or column conditions for a box $u$ adjacent to $x$
in its row or column.  We now check and rule this out for each of the
four possible boxes $u$.

If $u$ is the box to the left of $x$ or above $x$, let $S(u) = T(u) =
b$.  If $u$ is the box to the left of $x$, then $S(u) \sgnle S(x)$
because $b\sgnle a \Leftrightarrow b \sgnle \overline{a}$ for all
$b\in \Acal $.  Similarly, if $u$ is the box above $x$, then $S(u)
\sgngt S(x)$ because $b\sgngt a \Leftrightarrow b \sgngt \overline{a}$
for all $b\in \Acal $.

If $u$ is the box to the right of $x$ ($u$ might be a flag box), the
only way to have $S(x) \not \sgnle S'(u)$ is if $S'(u) = T'(u) = a$
and $T(x) = a$, so $S(x) = \overline{a}$.  Similarly, if $u$ is the
box below $x$, the only way to have $S(x) \not \sgngt S'(u)$ is if
$S'(u) = T'(u) = \overline{a}$ and $T(x) = \overline{a}$, so $S(x) =
a$.  In either case, $c(u) = c(x)+1$, so $x<y<u$ in reading order.
Hence, by the argument used previously as part of the proof that $y$
is unique, it follows that $y$ is not a flag box.  But then $(y,u)$ is
an attacking pair with $|T'(y)| = |T'(u)|$, contrary to the choice of
$x$.

Since $|S| = |T|$, the tableau $S$ again violates the non-attacking
condition, and the distinguished attacking pair in the construction of
$\Psi (S)$ is the same pair $(x,y)$ as for $T$.  Hence $\Psi (S) = T$,
i.e., $\Psi $ is an involution.

Clearly, $x^{|S|} = x^{|T|}$.  Exchanging $S$ and $T$ if needed, we
can assume that $T(x) = a$ and $S(x) = \overline{a}$.  Then $(x,y)$ is
an inversion in $S$ but not in $T$.  We have already shown that if
$(x,y')$ is another attacking pair with $y\not =y'$, then $|T'(y')|
\not = a$, hence $(x,y')$ is an inversion in $S$ if and only if it is
an inversion in $T$.  Since $b \sgngt a \Leftrightarrow b \sgngt
\overline{a}$ for all $b\in \Acal $, we also have that any attacking
pair $(w,x)$ is an inversion in $S$ if and only if it is an inversion
in $T$.  The same obviously holds for any attacking pair not
containing $x$.  Thus, $S$ and $T$ have the same inversions except for
$(x,y)$, giving $\inv (S) = \inv (T)+1$.  Finally, since we changed
one entry from positive to negative, $m(S) = m(T)+1$.  It follows that
$(-t)^{m(S)} t^{-\inv (S)} x^{|S|} = - (-t)^{m(T)} t^{-\inv (T)}
x^{|T|}$, so the terms for $S$ and $T$ in \eqref{e:signed-G-tableaux}
cancel.
\end{proof}

\begin{example}
In Example \ref{ex:ns-HHL=signed-LLT}, the only non-attacking tableau
in \eqref{e:signed tabs sigma12} is the one in the middle.  The terms
for the other two tableaux cancel, illustrating
Proposition~\ref{prop:non-attacking}.
\end{example}

Lastly, we have the following $t$-deformation of
Corollary~\ref{cor:Weyl-on-flagged} for the action of a normalized
Hecke symmetrizer on the signed flag bound evaluation of any flagged
symmetric function.

\begin{lemma}\label{lem:symm-on-signed-flagged}
Let $X_{i}^{b}$ and $\Xpb{i}$ be as in Definition~\ref{def:signed-G},
for flag bounds $b_{1}\leq \cdots \leq b_{l}\leq N$ such that $b_{j-1}
< b_{j} =r$ and $r+n <b_{j+1}$, for some $j<l$, $r$, and $n$.  Let
$\hsym _{[r,r+n]}$ be the normalized Hecke symmetrizer in
\eqref{e:normalized-Hecke-symmetrizer} for the Young subgroup $\Sfrak
_{[r,r+n]}\subseteq \Sfrak _{N}$ of permutations of $[r,r+n]$.  Then
for any flagged symmetric function $f(X_{1},\ldots,X_{l})$ with
coefficients in a ring containing $\QQ (t)$, we have
\begin{multline}\label{e:signed-flag-r-n}
\hsym _{[r,r+n]}\, f[X_{1}^b - t\, \Xpb{1},\, \ldots, 
X_{l}^b - t\, \Xpb{l}] \\
\equiv f[X_{1}^b - t\, \Xpb{1},\, \ldots, \,
-t x_{b_{j-1}} + (1-t)(x_{b_{j-1}+1}+\cdots +x_{r+n}),\\
(1-t)(x_{r+n+1}+\cdots +x_{b_{j+1} -1}) + x_{b_{j+1}},\, \ldots,
X_{l}^b - t\, \Xpb{l}]\, ,
\end{multline}
where the two plethystic alphabets written out in full on the right
hand side replace $X_{i}^{b} - t \Xpb{i}$ for $i = j,\, j+1$, modulo
polynomials whose coefficients vanish to order $>n$ at $t=0$.
\end{lemma}

\begin{proof}
We can assume by linearity that $f = \fh _{\aA }$.  Then
\begin{equation}\label{e:signed-spec-lhs}
f[X_{1}^b - t\, \Xpb{1},\, \ldots, X_{l}^b - t\, \Xpb{l}] = \prod
_{i=1}^{l} h_{a_{i}}[x_{b_{i}}+ (1-t) (x_{1}+\cdots + x_{b_{i}-1})]\, ,
\end{equation}
and the evaluation of $f$ on the right hand side of
\eqref{e:signed-flag-r-n} is
\begin{equation}\label{e:signed-spec-rhs}
h_{a_{j}}[(1-t)(x_{1}+\cdots +x_{r+n})]\, 
\prod
_{i\not =j} h_{a_{i}}[x_{b_{i}}+ (1-t) (x_{1}+\cdots + x_{b_{i}-1})]\, .
\end{equation}
The factors for $i\not =j$ in \eqref{e:signed-spec-lhs} and
\eqref{e:signed-spec-rhs} are $\Sfrak _{[r,r+n]}$ invariant, and the
factor for $i=j$ in \eqref{e:signed-spec-lhs} is
$h_{a_{j}}[(1-t)(x_{1}+\cdots +x_{r-1}) +x_{r}]$.  Hence the result
follows if we show that
\begin{equation}\label{e:Srn-identity}
\hsym _{[r,r+n]} \, h_{a}[Y+x_{r}] \equiv h_{a}[Y+(1-t)(x_{r}+\cdots
+x_{r+n})]\, ,
\end{equation}
where $Y$ does not involve the variables $x_{r},\ldots,x_{r+n}$.
Using $h_{a}[Y+Z] = \sum _{p} h_{a-p}[Y]h_{p}[Z]$,
\eqref{e:Srn-identity} follows from $\hsym _{[r,r+n]}\, x_{r}^{0} = 1$
and the identity
\begin{equation}\label{e:Sl-x1-a}
\hsym _{[r,r+n]}\, x_{r}^{p} = \frac{h_{p}[(1-t)(x_{r}+\cdots
+x_{r+n})]}{1 - t^{n+1}},
\end{equation}
for $p>0$, which is equivalent to \eqref{e:Sl-to-symmetric-HL} for
$\lambda =(p)$, using the formula \cite[III,~(2.10)]{Macdonald95}
\begin{equation}\label{e:Pa}
P_{(p)}(x_{1},\ldots,x_{n+1}; t) = h_{p}[(1-t)(x_{1}+\cdots
+x_{n+1})]/(1-t). \qedhere
\end{equation}
\end{proof}

\section{Nonsymmetric Macdonald polynomials}
\label{s:ns-Mac-pols}
In this section, we show that Conjecture~\ref{conj:atom-pos-standard}
implies an atom positivity conjecture for nonsymmetric plethystic
transformations applied to Knop's integral form nonsymmetric Macdonald
polynomials $\Ecal _{\mu }(x;q,t)$, generalizing positivity of the
Kostka-Macdonald coefficients $K_{\lambda ,\mu }(q,t)$ for symmetric
Macdonald polynomials.  Our positivity conjecture requires that we
first stabilize $\Ecal _{\mu }(x;q,t)$ to be symmetric in extra
variables, as do other nonsymmetric generalizations of $K_{\lambda
,\mu }(q,t)$ positivity conjectured by Knop~\cite{Knop07} and
Lapointe~\cite{Lapointe22}.  In the present context, we can explain
the need for stabilization as emerging naturally from the way
nonsymmetric Macdonald polynomials connect with flagged LLT
polynomials and nonsymmetric plethysm.  In subsequent papers, we will
apply the methods of this paper to obtain new results on the
conjectures in \cite{Knop07} and \cite{Lapointe22}.

Throughout this section we set $\kk= \QQ(q,t)$.

\subsection{Integral forms}
\label{ss:integral-forms}

The {\em row diagram} $\nubold = \dg (\mu )$ of $\mu \in \NN ^{m}$ is
the tuple of single rows $\nu ^{(i)} = (0/-\mu _{m+1-i})$ with end
content $0$ and length $\mu _{m+1-i}$.  Since our convention is to
number rows from bottom to top, $\dg (\mu )$ has row lengths $\mu
_{1},\ldots,\mu _{m}$ from top to bottom.  The {\em arm } of a box
$u\in \dg (\mu )$ of content $-c$ in a row of length $\mu _{i}$ is the
number $a(u) = \mu _{i}-c$ of boxes to the left of $u$ in the same
row.  The {\em leg} $l(u)$ is the number of indices $j<i$ such that $c
\leq \mu _{j}\leq \mu _{i}$ plus the number of indices $j>i$ such that
$c \leq \mu _{j}+1\leq \mu _{i}$.

A more visual way to describe the leg of a box is as follows.  Define
the {\em hand} of a row of length $\mu _{i}$ to be the box of content
$-(\mu _{i}+1)$ immediately to its left.  If $v$ is the hand in the
row of $u$, then $l(u)$ is the number of hands strictly between $v$
and $u$ in the content/row reading order.

\begin{example}\label{ex:arms-and-legs}
The row diagram of $\mu =(2,1,4,4,0,1,4)$ is shown below, with hands
indicated by circles.  For the box labeled $u$, the arm $a(u)=2$
counts the boxes marked $*$, and the leg $l(u)=3$ counts the shaded
hands.
\begin{equation}\label{e:arms-and-legs}
\begin{tikzpicture}[scale=.33,baseline=5*.33cm]
\foreach  \mu / \y in {2/9, 1/7.5, 4/6, 4/4.5, 0/3, 1/1.5, 4/0}
  \draw[xshift=-.5cm, yshift=\y cm] (-\mu,0) grid (0,1);
\node at (-2,5) {$u$};
\node at (-3,5) {$*$};
\node at (-4,5) {$*$};
\filldraw[color=black, fill=gray!50] (-3,9.5) circle (.25) (-5,6.5)
circle (.25) (-2,2) circle (.25);
\draw (-2,8) circle (.25) (-5,5) circle (.25) (-1,3.5) circle (.25)
(-5,.5) circle (.25);
\end{tikzpicture}
\end{equation}
\end{example}

Our definitions of diagram, arms, and legs are equivalent to those in
Knop \cite{Knop97}, taking into account that Knop reverses the index
$\mu $.

\begin{defn}[\cite{Knop97}]\label{def:integral-form}
The {\em integral form nonsymmetric Macdonald polynomials} are
\begin{equation}\label{e:integral-form}
\Ecal _{\mu }(x;q,t) = \bigl(\prod \nolimits_{\, u\in \dg (\mu )} (1-
q^{a(u)+1}\, t^{l(u)+1}) \bigr)\, E_{\mu }(x;q,t).
\end{equation}
\end{defn}

By \cite[Corollary~5.2]{Knop97}, $\Ecal _{\mu }(x;q,t)$ has
coefficients in $\ZZ [q,t]$, whence the term {\em integral form}.  A
combinatorial formula for $\Ecal _{\mu }(x;q,t)$ was given in
\cite{HagHaiLo08}.  We will state it here in terms of fillings of the
row diagram $\dg (\mu )$, adjusting for the fact that
\cite{HagHaiLo08} uses column diagrams, and swaps the terms `arm' and
`leg' versus the usage here and in \cite{Knop97}.

Fix $\mu \in \NN ^{m}$, and regard $\dg (\mu )$ as a tuple of
ragged-right diagrams, with a flag box of content $0$ in each row, as
in Definition~\ref{def:flag-numbers}.  Let $\dg (\mu )'$ be the
extended tuple including the flag boxes.  Given a filling $U\colon \dg
(\mu )\rightarrow [m]$ with entries in $[m]=\{1,2,\ldots,m \}$, we
define the {\em extended filling} $U':\dg (\mu )'\rightarrow [m]$ as
in \eqref{e:T-extended}, taking the fixed entries in the flag boxes to
be $m,\ldots,1$, so that the row with $i$ in its flag box has length
$\mu _{i}$.

We define attacking pairs for $\dg (\mu )$ as in
Definition~\ref{def:inversions}---thus, the first box in an attacking
pair must be in $\dg (\mu )$, but the second may be a flag box.  An
{\em attacking inversion} in $U$ is an attacking pair $(x,y)$ such
that $U(x)>U'(y)$; the number of attacking inversions is denoted $\inv
(U)$.  A filling $U\colon \dg (\mu )\rightarrow [m]$ is {\em
non-attacking} if the extended filling $U'$ satisfies $U(x)\not =
U'(y)$ for all attacking pairs $(x,y)$.  These definitions are
essentially the same as in Definition~\ref{def:inversions} and
Definition~\ref{def:non-attacking}, except that the filling $U$ is
unsigned and is not required to be a flagged tableau.

For any box $u\in \dg (\mu )$, let $e(u)\in \dg (\mu )'$ denote the
box immediately to its right in the same row; thus, if $u$ has content
$-c$, then $e(u)$ has content $-c+1$, and is a flag box if $c=1$.
Given a filling $U\colon \dg (\mu )\rightarrow [m]$, define
\begin{align}\label{e:Des}
\Des (U) & = \{ u\in \dg (\mu ) \mid U(u)>U'(e(u))\} \\
\label{e:maj}
\maj (U) & = \sum \nolimits_{u\in \Des (U)} (a(u)+1)\\
\label{e:coinv}
\coinv (U) & = - \inv (U) + n(\mu _{+}) + \sum \nolimits_{u\in \Des (U)} l(u),
\end{align}
where $\mu _{+}$ is the rearrangement of $\mu $ to a partition, and
$n(\mu _{+}) = \sum _{j}(j-1)(\mu _{+})_{j}$, as in
\eqref{e:n(lambda)}.

We note that $\inv $ and $\coinv $ are defined a little differently in
\cite{HagHaiLo08}, in such a way that direct translation into the
notation here would give
\begin{equation}\label{e:coinv-original}
\coinv (U)  = - \inv (U) + N + \sum \nolimits_{u\in \Des (U)} l(u),
\end{equation}
where $N = \bigl(\sum _{u\in \dg (\mu )} l(\mu ) \bigr) - |\{(i<j)
\mid \mu _{i} >\mu _{j} \}|$.  In fact, $N = n(\mu _{+})$, since
switching adjacent parts $\mu _{i}$, $\mu _{i+1}$ does not change $N$,
and it is immediate that $N = n(\mu _{+})$ if $\mu $ is weakly
increasing.

\begin{prop}[{\cite[Corollary~3.5.2]{HagHaiLo08}}]\label{prop:original-HHL}
With notation as above, for any $\mu \in \NN ^{m}$, we have
\begin{equation}\label{e:original-HHL}
\Ecal _{\mu }(x;q,t)\, =
\sum _{\substack{U \colon \dg (\mu )\rightarrow [m]\\
\text{non-attacking}}} \hspace{-1ex}  q^{\maj (U)}\, t^{\coinv (U)}
\hspace{-2ex} \prod _{\substack{u\in \dg (\mu )\\ U(u) = U'(e(u))}}
\hspace{-3ex} (1 - q^{a(u)+1} t^{l(u)+1}) \hspace{-2ex}
\prod _{\substack{u\in \dg (\mu )\\
U(u) \not = U'(e(u))}} \hspace{-3ex} (1-t) \;\; x^{U}\, .
\end{equation}
\end{prop}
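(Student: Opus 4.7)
The plan is to obtain this statement as a direct translation of \cite[Corollary~3.5.2]{HagHaiLo08} into the notation fixed in this paper. That reference gives a combinatorial formula for $\Ecal_\mu(x;q,t)$ as a sum over non-attacking fillings of the column diagram of $\mu$ indexed by a slightly different arm/leg convention, so the work consists entirely in matching dictionaries rather than in producing a new combinatorial identity.

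First I would fix the dictionary. Our row diagram $\dg(\mu)$ is the $180^\circ$ rotation of the column diagram used in \cite{HagHaiLo08}, and under this rotation the two references' notions of arm and leg are interchanged. Consequently our $a(u)$ and $l(u)$ match the statistics called `leg' and `arm' there. The augmentation of $\dg(\mu)$ by one flag box per row with the fixed sequence of flag entries $m,\ldots,1$ corresponds to the `basement' row in \cite{HagHaiLo08}, and the box $e(u)$ immediately east of $u$ corresponds to the box immediately south in the column convention. Thus the sets $\Des(U)$, the attacking relation, the non-attacking condition, and the monomial $x^U$ all pass directly between the two setups.

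Next I would check that the statistics $\maj(U)$ and the factor $q^{\maj(U)}\prod(1-q^{a(u)+1}t^{l(u)+1})\prod(1-t)$ agree with \cite[(31),(32)]{HagHaiLo08} under this dictionary, which is immediate from the definitions. The only subtle point is $\coinv(U)$: in \cite{HagHaiLo08} the statistic reads as in \eqref{e:coinv-original} with the constant $N = |\{(i<j)\mid \mu_i>\mu_j\}| + \sum_{u\in \dg(\mu)} l(u)$, while we assert the simplified form \eqref{e:coinv} using $n(\mu_+)$. So the identity $N = n(\mu_+)$ is the one nontrivial bookkeeping check.

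I would verify $N=n(\mu_+)$ by a direct double count. Summing $l(u)$ over $u$ in the row of length $\mu_i$ (contents $-c$ for $c=1,\ldots,\mu_i$) and using the definition of $l(u)$, one finds
\[
\sum_{u\in \text{row }i} l(u) \;=\; \sum_{\substack{j<i\\ \mu_j\le \mu_i}}\mu_j \;+\; \sum_{\substack{j>i\\ \mu_j<\mu_i}}(\mu_j+1).
\]
Adding $|\{(i<j)\mid \mu_i>\mu_j\}|$ and rearranging groups each unordered pair $\{i,j\}$ into a contribution depending only on $\min(\mu_i,\mu_j)$; collecting by value one obtains $\sum_{i}\binom{(\mu_+)_i^\ast}{2} = n(\mu_+)$, as in \eqref{e:n(lambda)}. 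The identification of $\coinv(U)$ in the two papers follows, and the stated formula \eqref{e:original-HHL} is then precisely \cite[Corollary~3.5.2]{HagHaiLo08} rewritten in the current notation. The only `real' step, the equality $N=n(\mu_+)$, is the book-keeping indicated in the text as a straightforward exercise.
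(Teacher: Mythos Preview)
Your proposal is correct and matches the paper's treatment: this proposition is cited from \cite{HagHaiLo08} rather than proved, and the paper only supplies exactly the dictionary you describe (row diagram versus column diagram, swapped arm/leg, basement versus flag boxes), together with the remark that the constant $N$ in \eqref{e:coinv-original} equals $n(\mu_+)$, which it leaves as a ``fairly straightforward exercise.'' Your plan to verify $N=n(\mu_+)$ by a pair-by-pair double count is the natural one; just be careful that the per-row leg sum and the translated $\inv$ statistic interact correctly (the paper's $\inv$ includes attacking pairs with a flag box in the second slot, which differs from the convention in \cite{HagHaiLo08} and affects the constant), so the bookkeeping is slightly more delicate than your one-line summary suggests.
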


\begin{example}\label{ex:original-HHL}
For $\mu = (2,0,1)$, we display the arm and leg of each box in the row
diagram $\dg (\mu )$, the attacking pairs in the extended diagram $\dg
(\mu )'$, and all non-attacking fillings $U\colon \dg (\mu
)\rightarrow [3]$, with their extensions $U'$.
\begin{equation}\label{e:original-HHL-fillings}
\begin{array}{c@{\quad }c@{\quad }c@{\qquad \quad }c@{\quad }c@{\quad }
c@{\quad }c}
\begin{tikzpicture}[scale=.5]
  \foreach \beta / \alpha / \y / \z in {0/-1/1/0, 0/0/1/1.5, 0/-2/1/3}
     \draw[yshift=\z cm] (\alpha,0) grid (\beta, \y);
   \node at (-.5,.5) {0};
   \node at (-1.5,3.5) {0};
   \node at (-.5,3.5) {1};
\end{tikzpicture}
&
\begin{tikzpicture}[scale=.5]
  \foreach \beta / \alpha / \y / \z in {0/-1/1/0, 0/0/1/1.5, 0/-2/1/3}
     \draw[yshift=\z cm] (\alpha,0)  grid (\beta, \y);
   \node at (-.5,.5) {0};
   \node at (-1.5,3.5) {1};
   \node at (-.5,3.5) {2};
\end{tikzpicture}
&
\begin{tikzpicture}[scale=.5]
  \foreach \beta / \alpha / \y / \z in {1/-1/1/0, 1/0/1/1.5, 1/-2/1/3}
     \draw[yshift=\z cm] (\alpha,0)  grid (\beta, \y);
  \node (a) at (-1.5,3.5) {};
  \node (b) at (-.5,.5) {};
  \node (c) at (-.5,3.5) {};
  \node (d) at (.5,.5) {};
  \node (e) at (.5,2) {};
  \draw[-Latex] (a) -- (b); \draw[-Latex] (b) -- (c); \draw[-Latex] (c) -- (d);
  \draw[-Latex] (c) -- (e);
\end{tikzpicture}
&
\begin{tikzpicture}[scale=.5]
  \foreach \beta / \alpha / \y / \z in {0/-1/1/0, 0/0/1/1.5, 0/-2/1/3}
     \draw[yshift=\z cm] (\alpha,0)  grid (\beta, \y);
   \node at (.5,.5) {3}; \node at (.5,2) {2}; \node at (.5,3.5) {1};
   \node at (-1.5,3.5) {1}; \node at (-.5,3.5) {1}; \node at (-.5,.5) {2};
\end{tikzpicture}
&
\begin{tikzpicture}[scale=.5]
  \foreach \beta / \alpha / \y / \z in {0/-1/1/0, 0/0/1/1.5, 0/-2/1/3}
     \draw[yshift=\z cm] (\alpha,0)  grid (\beta, \y);
   \node at (.5,.5) {3}; \node at (.5,2) {2}; \node at (.5,3.5) {1};
   \node at (-1.5,3.5) {1}; \node at (-.5,3.5) {1}; \node at (-.5,.5) {3};
\end{tikzpicture}
&
\begin{tikzpicture}[scale=.5]
  \foreach \beta / \alpha / \y / \z in {0/-1/1/0, 0/0/1/1.5, 0/-2/1/3}
     \draw[yshift=\z cm] (\alpha,0)  grid (\beta, \y);
   \node at (.5,.5) {3}; \node at (.5,2) {2}; \node at (.5,3.5) {1};
   \node at (-1.5,3.5) {3}; \node at (-.5,3.5) {1}; \node at (-.5,.5) {2};
\end{tikzpicture}
&
\begin{tikzpicture}[scale=.5]
  \foreach \beta / \alpha / \y / \z in {0/-1/1/0, 0/0/1/1.5, 0/-2/1/3}
     \draw[yshift=\z cm] (\alpha,0)  grid (\beta, \y);
   \node at (.5,.5) {3}; \node at (.5,2) {2}; \node at (.5,3.5) {1};
   \node at (-1.5,3.5) {2}; \node at (-.5,3.5) {1}; \node at (-.5,.5) {3};
\end{tikzpicture}\\
\text{arms}& \text{legs}& \text{attacking pairs}
 & \multicolumn{4}{c}{\text{non-attacking fillings}}
\end{array}
\end{equation}
In this example, $\mu _{+} = (2,1)$ and $n(\mu _{+}) = 1$.  For the
four non-attacking fillings $U$ in the order displayed, we have $\maj
(U) = 0,0,1,1$; $\inv (U)=1,1,2,1$; $\sum _{u\in \Des (U)}l(u) =
0,0,1,1$; $\coinv (U)=0,0,0,1$; hence, $q^{\maj (U)} t^{\coinv (U)} =
1,1,q,qt$.  Formula \eqref{e:original-HHL} now reads
\begin{multline}\label{e:original-HHL-term-by-term}
\Ecal _{(2,0,1)}(x;q,t) =
(1-t)(1-q\, t^2)(1-q^2 t^3)x_1^2  x_2
+  (1- q\,  t) (1-q\,  t^2) (1-q^2 t^3) x_1^2 x_3\\
+ q\, (1-t)^{2}(1-q^2 t^3) x_1 x_2 x_3 + q\, t\, (1-t) (1-q\, t)
(1-q^2 t^3) x_1 x_2 x_3.
\end{multline}
The last two terms could be combined and simplified, but we have left
them as they are so as to keep the correspondence with non-attacking
fillings visible.
\end{example}

Expanding the products in \eqref{e:original-HHL} leads to an
equivalent formula expressed as a sum over signed fillings $S\colon
\dg (\mu )\rightarrow [m]_{\pm }$, where $[m]_{\pm } =
\{1<\overline{1}<\cdots <m<\overline{m} \}$ with $([m]_{\pm })^{+} =
\{1, \ldots, m\}$ and $([m]_{\pm })^{-} = \{\overline{1},\ldots,
\overline{m}\}$.  We define a signed filling $S$ to be non-attacking
if the unsigned filling $U = |S|$ is non-attacking.  Then, taking
$\inv (S)$ to be the number of attacking pairs $(x,y)$ for $\dg (\mu
)$ such that $S(x)\sgngt S'(y)$, as in
Definition~\ref{def:inversions}, we have $\inv (S) = \inv (U)$, since
the non-attacking condition $|S(x)|\not = | S'(y)|$ implies
$S(x)\sgngt S'(y) \Leftrightarrow U(x)>U'(y)$.

\begin{cor}\label{cor:signed-HHL}
With the above definitions for signed fillings, for any $\mu \in \NN
^{m}$, we have
\begin{equation}\label{e:signed-HHL}
\Ecal _{\mu }(x;q,t) = t^{n(\mu _{+})} \sum _{\substack{S \colon \dg
(\mu ) \rightarrow [m]_{\pm }\\
\text{non-attacking}}}  (-t)^{m(S)}\,
t^{-\inv (S)}\, x^{|S|}\,
\!\!\prod _{\substack{u\in \dg (\mu )\\
S(u) \sgngt S'(e(u))}} \!\!\!\! q^{a(u)+1}\, t^{l(u)},
\end{equation}
where $m(S)$ is the number of negative entries in $S$.
\end{cor}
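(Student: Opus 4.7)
The plan is to obtain \eqref{e:signed-HHL} from Proposition~\ref{prop:original-HHL} by expanding the products $(1-q^{a(u)+1}t^{l(u)+1})$ and $(1-t)$ into binomial sums and reinterpreting each choice as a sign-change of an entry. First, I would rewrite the coefficient in \eqref{e:original-HHL} as $q^{\maj(U)}t^{\coinv(U)} = t^{n(\mu_+)-\inv(U)}\prod_{u\in\Des(U)}q^{a(u)+1}t^{l(u)}$, so that the right-hand side takes the shape
\begin{equation*}
t^{n(\mu_+)}\sum_{U}t^{-\inv(U)}x^U\prod_{u\in\Des(U)}q^{a(u)+1}t^{l(u)}\prod_{U(u)=U'(e(u))}(1-q^{a(u)+1}t^{l(u)+1})\prod_{U(u)\ne U'(e(u))}(1-t)\,.
\end{equation*}
Expanding the two products indexes the expansion by subsets $N\subseteq\dg(\mu)$: for each $u\in N$ we pick the `minus' term, and otherwise the `$1$' term. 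Given a non-attacking $U$ and such an $N$, define a signed filling $S\colon \dg(\mu)\to[m]_\pm$ by $S(u)=\overline{U(u)}$ if $u\in N$ and $S(u)=U(u)$ otherwise, with $S'$ unchanged on flag boxes. Then $|S|=U$, so $S$ is non-attacking, and the correspondence $(U,N)\leftrightarrow S$ is a bijection between the terms in the expansion and non-attacking signed fillings.

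Next I would verify the two combinatorial identities that make the terms match. The crucial observation is that $\inv(S)=\inv(U)$: for any attacking pair $(x,y)$, non-attacking forces $|S(x)|\ne|S'(y)|$, and in the alphabet $\{1<\bar1<2<\bar2<\cdots\}$ any pair of symbols with distinct underlying positive integers compares the same way as those integers, regardless of bars. Thus signing has no effect on inversion counts. Second, I would compute $\{u:S(u)\sgngt S'(e(u))\}$ case by case: if $U(u)>U'(e(u))$ the box is a descent of $S$ whether or not $u\in N$; if $U(u)<U'(e(u))$ it is never a descent; and if $U(u)=U'(e(u))$ it is a descent of $S$ precisely when $u\in N$ (since $\bar k \sgngt k$ but $k \not\sgngt k$). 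Hence the descent set of $S$ is $\Des(U)\cup\{u\in N:U(u)=U'(e(u))\}$.

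Finally I would match factors box by box. For $u\in\Des(U)$, both sides carry $q^{a(u)+1}t^{l(u)}$, and the binomial $(1-t)$ contributes $(-t)^{|N\cap\{u\}|}$ which is absorbed into $(-t)^{m(S)}$. For $u$ with $U(u)=U'(e(u))$, the binomial term $-q^{a(u)+1}t^{l(u)+1}=(-t)\cdot q^{a(u)+1}t^{l(u)}$ matches the combined contribution of the new sign (into $(-t)^{m(S)}$) and the new descent factor in $S$. For $u$ with $U(u)<U'(e(u))$, the $(1-t)$ factor contributes only $(-t)^{|N\cap\{u\}|}$, absorbed into $(-t)^{m(S)}$, and no descent appears on either side. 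Summing over all $(U,N)$ yields \eqref{e:signed-HHL}.

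The main obstacle is the bookkeeping of descents in step two, especially reconciling the three distinct cases $U(u)$ vs.~$U'(e(u))$ with the two-state choice $u\in N$ or $u\notin N$; once this case analysis is in place, the rest of the argument is a direct comparison of factors, driven by the convenient identity $-q^{a(u)+1}t^{l(u)+1}=(-t)\cdot q^{a(u)+1}t^{l(u)}$ that converts each `$-$' term into the combination of a sign factor and a descent factor in the signed formula.
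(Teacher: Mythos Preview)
Your proof is correct and takes essentially the same approach as the paper's: the paper starts from the signed sum and groups terms by $U=|S|$ to recover \eqref{e:original-HHL}, while you go in the reverse direction by expanding the binomials and reinterpreting the choice of minus terms as a choice of $N=\{u:S(u)\text{ negative}\}$; the key observations ($\inv(S)=\inv(U)$, the three-case descent analysis, and the factorization $-q^{a(u)+1}t^{l(u)+1}=(-t)\cdot q^{a(u)+1}t^{l(u)}$) are identical. One small point worth tightening: in the case $U(u)=U'(e(u))=k$, your justification ``$\bar k\sgngt k$ but $k\not\sgngt k$'' tacitly assumes $S'(e(u))=k$, whereas $e(u)$ may itself lie in $N$; the conclusion still holds because $k\not\sgngt\bar k$ and $\bar k\sgngt\bar k$, so $S(u)\sgngt S'(e(u))$ iff $S(u)$ is negative regardless of the sign of $S'(e(u))$.
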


\begin{proof}
Letting $U = |S|$, we have $\inv (S) = \inv (U)$, and $S(u) \sgngt
S'(e(u))$ for all $u\in \Des (U)$.  We also have $S(u) \sgnle
S'(e(u))$ if $U(u) < U'(e(u))$---that is, if $u\not \in \Des (U)$ and
$U(u)\not =U'(e(u))$.  Hence, the term for $S$ in
\eqref{e:signed-HHL} factors as
\begin{equation}\label{e:signed-HHL-factored}
\biggl(
t^{n(\mu _{+})}\, t^{-\inv (U)}\,  x^{U}\, \! \prod _{u\in \Des (U)}
q^{a(u)+1}\, t^{l(u)} \biggr)
\times \biggl((-t)^{m(S)} \!\!\! \prod _{\substack{u\in \dg (\mu )\\
U(u) = U'(e(u))\\
S(u) \sgngt S'(e(u))}} \!\!\!\! \,q^{a(u)+1}\,
t^{l(u)} \biggr)\,.
\end{equation}
The first factor is equal to $q^{\maj (U)}\, t^{\coinv (U)}\, x^{U}$.
When $U(u) = U'(e(u))$, we have $S(u)\sgngt S'(e(u))$ if and only if
$S(u)$ is negative.  Hence, if $S(u)$ is positive, the contribution
from $u$ in the second factor is $1$, and if is $S(u)$ is negative, it
is either $-t$, if $U(u) \not = U'(e(u))$, or $-q^{a(u)+1}t^{a(u)+1}$,
if $U(u) = U'(e(u))$.  Summing this factor over all choices of signs
for a fixed $|S| = U$ therefore gives the product
\begin{equation}\label{e:U-factors}
 \prod _{\substack{u\in \dg (\mu )\\ U(u) = U'(e(u))}}
\hspace{-3ex} (1 - q^{a(u)+1} t^{l(u)+1}) \hspace{-3ex}
\prod _{\substack{u\in \dg (\mu )\\
U(u) \not = U'(e(u))}} \hspace{-3ex} (1-t)
\end{equation}
in the term for $U$ on the right hand side of \eqref{e:original-HHL}.
\end{proof}

Later we will need the following properties of the integral forms.

\begin{prop}[\cite{Knop07}]\label{prop:Knop-antisymmetry}
For any $\mu \in \NN ^{m}$ and $1\leq i<m$, the difference $\Ecal
_{s_{i} \mu }(x;q,t) - \Ecal _{\mu }(x;q,t)$ is $T_{i}$ antisymmetric,
i.e.,
\begin{equation}\label{e:Knop-antisymmetry}
(T_{i}+1) (\Ecal _{s_{i} \mu }(x;q,t) - \Ecal _{\mu }(x;q,t)) = 0.
\end{equation}
\end{prop}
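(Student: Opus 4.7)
\textbf{Proof plan for Proposition~\ref{prop:Knop-antisymmetry}.}

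If $\mu_i=\mu_{i+1}$ then $s_i\mu=\mu$ and the statement is trivial. Since the assertion is invariant under swapping the roles of $\mu$ and $s_i\mu$, I may assume $\mu_i>\mu_{i+1}$. Set
\[
a=\mu_i-\mu_{i+1}>0,\qquad b=w_\mu(i)-w_\mu(i+1),
\]
so that the intertwiner relation \eqref{e:intertwiner} reads $E_{s_i\mu}=(T_i+c)E_\mu$ with $c=(1-t)/(1-q^a t^b)$. The Hecke quadratic relation \eqref{e:quadratic-Hecke-rels} gives $T_i^2=(t-1)T_i+t$, whence a direct computation yields the factorization
\[
(T_i+1)(T_i+c)=T_i^2+(c+1)T_i+c=(t+c)(T_i+1),\qquad t+c=\frac{1-q^a t^{b+1}}{1-q^a t^b}.
\]

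Writing $\Ecal_\nu=c_\nu E_\nu$ with $c_\nu=\prod_{u\in\dg(\nu)}(1-q^{a(u)+1}t^{l(u)+1})$ and using the factorization above, I obtain
\[
(T_i+1)\bigl(\Ecal_{s_i\mu}-\Ecal_\mu\bigr)=\bigl(c_{s_i\mu}(t+c)-c_\mu\bigr)\,(T_i+1)E_\mu.
\]
Consequently, the proposition is reduced to establishing the purely combinatorial identity
\begin{equation}\label{e:cmu-identity}
(1-q^a t^b)\,c_\mu \;=\;(1-q^a t^{b+1})\,c_{s_i\mu}.
\end{equation}

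To prove \eqref{e:cmu-identity}, I would compare the arm/leg multisets of $\dg(\mu)$ and $\dg(s_i\mu)$ box by box. Only rows $i$ and $i+1$ change as rows, and only leg statistics of boxes in other rows whose counts reference row $i$ or $i+1$ can be affected; in every such case the change in $l(u)$ is $0$ or $\pm 1$, and the displacements pair up into a bijection on arm/leg data. The crucial unmatched contribution comes from the box of content $-\mu_{i+1}$ in the longer row (which has arm $a=\mu_i-\mu_{i+1}$): a direct count using the definition of $l$ and the combinatorial description of $w_\mu$ shows this box has leg $b-1$ in $\dg(\mu)$ and leg $b$ in $\dg(s_i\mu)$, producing exactly the leftover factors $(1-q^a t^b)$ in $c_\mu$ and $(1-q^a t^{b+1})$ in $c_{s_i\mu}$ on the two sides of \eqref{e:cmu-identity}.

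The main obstacle is making this last bookkeeping airtight; the cleanest route is to partition the boxes of the two diagrams into four classes---(a) boxes in rows $j\notin\{i,i+1\}$ of length outside $[\mu_{i+1},\mu_i]$, (b) boxes in such rows with length in $[\mu_{i+1},\mu_i]$, (c) boxes in the two affected rows at contents $-c$ with $c\leq\mu_{i+1}$, and (d) the ``extra'' boxes at contents $-c$ with $\mu_{i+1}<c\leq\mu_i$---and verify the matching of arm/leg data in each class separately, isolating the one unmatched box in class (d). This is essentially the identity underlying Knop's normalization of the integral form \cite{Knop97}.
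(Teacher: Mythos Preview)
Your approach coincides with the paper's: reduce via the intertwiner relation and the Hecke quadratic to the scalar identity $c_{s_i\mu}(t+c)=c_\mu$, equivalently your \eqref{e:cmu-identity}, and then verify this by comparing arm/leg factors. The algebraic reduction, the factorization $(T_i+1)(T_i+c)=(t+c)(T_i+1)$, and the identity \eqref{e:cmu-identity} itself are all correct.

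However, your verification of \eqref{e:cmu-identity} has the key box and its legs misidentified, and as stated would produce the reciprocal of the needed identity. The unmatched box has content $-(\mu_{i+1}+1)$, not $-\mu_{i+1}$; its arm is therefore $a-1$ (so that $a(u)+1=a$, matching the power $q^a$). Its leg is $b=w_\mu(i)-w_\mu(i+1)$ in $\dg(\mu)$ and $b-1$ in $\dg(s_i\mu)$, not the other way around: the hand of the short row sits at content $-(\mu_{i+1}+1)$ and is counted in the leg of $u$ precisely when it precedes $u$ in content/row reading order, i.e., only when the short row lies \emph{below} the long row, which is the configuration in $\dg(\mu)$. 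Thus the leftover factor is $(1-q^a t^{b+1})$ in $c_\mu$ and $(1-q^a t^{b})$ in $c_{s_i\mu}$, and dividing out the common part yields exactly \eqref{e:cmu-identity}. (With your stated leftover factors you would instead get $(1-q^a t^{b+1})c_\mu=(1-q^a t^{b})c_{s_i\mu}$.) Incidentally, your four-class decomposition is more elaborate than needed: under the bijection that swaps rows $m{-}i{+}1$ and $m{-}i$, every box other than the one above has identical arm and leg in $\dg(\mu)$ and $\dg(s_i\mu)$, since for a box in any other row the leg contributions from indices $i$ and $i{+}1$ simply trade places.
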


\begin{proof}
By \cite[Lemma~11.5]{Knop07}, $\Ecal _{s_{i}\mu } - \Ecal _{\mu }$
is a scalar multiple of $(T_{i}-t)\Ecal _{\mu }$.
\end{proof}

\begin{prop}\label{prop:rotation-int}
The integral forms satisfy the  rotation
identity
\begin{equation}\label{e:rotation-int}
\Ecal _{(\mu _{m}+1,\mu _{1},\ldots,\mu _{m-1})}(x;q,t) = (1-q^{\mu
_{m}+1} t^{l+1})\, q^{\mu _{m}}\, x_{1}\, \Ecal _{\mu
}(x_{2},\ldots,x_{m},q^{-1} x_{1}; q,t)\, ,
\end{equation}
where $l = |\{i<m \mid \mu _{i}\leq \mu _{m} \}|$.
\end{prop}

\begin{proof}
Let $\muhat = (\mu _{m}+1,\mu _{1},\ldots,\mu _{m-1})$.  Then $\dg
(\muhat )$ is constructed by rotating the bottom row of $\dg (\mu )$
to the top and shifting it left one box, inserting a new box $v$ of
content $-1$ to extend it to length $\mu _{m}+1$.  If $u$ is a box of
$\dg (\mu )$, the box $\uhat \in \dg (\muhat )$ corresponding to $u$
via this construction has $a(\uhat ) = a(u)$ and $l(\uhat ) = l(u)$.
The new box $v$ has $a(v) = \mu _{m}$ and $l(v) = l$.  The product
$\prod _{u}(1-q^{a(u)+1} t^{l(u)+1})$ in \eqref{e:integral-form} for
$\muhat $ is therefore $(1 - q^{\mu _{m}+1}t^{l+1})$ times the product
for $\mu $.  This given, \eqref{e:rotation-int} is equivalent to
\eqref{e:rotation}.
\end{proof}

\subsection{Flagged LLT formula for nonsymmetric Macdonald
polynomials}
\label{ss:ns-HHL}

We now show that the formulas in Proposition~\ref{prop:original-HHL}
and Corollary~\ref{cor:signed-HHL} can be reinterpreted as expressing
the nonsymmetric integral forms $\Ecal _{\mu }(x;q,t)$ in terms of
signed flagged LLT polynomials for tuples of ribbon-shaped diagrams,
parallel to results of \cite{HagHaiLo05} in the symmetric case.

A {\em ribbon} is a connected skew diagram containing no $2\times 2$
square---equivalently, containing no two boxes of the same content.
An {\em augmented ribbon} is a ragged-right diagram $\nu $ such that
the flag box in the first row has content $0$, and this box and $\nu$
form a ribbon.  We call the flag box in the first row the {\em
augmentation box}, and let $\nu _{\aug }$ denote the ribbon with the
augmentation box included.  If $|\nu |=s$, the boxes in $\nu _{\aug }$
have contents $0,-1,\ldots,-s$ from southeast to northwest, starting
with the augmentation box.  Each pair of boxes with consecutive
contents forms a horizontal or vertical domino.

The only possible overhanging box in an augmented ribbon $\nu $ is the
box of content $-1$, which may form a vertical domino with the
augmentation box.  If the box of content $-1$ and the augmentation box
form a horizontal domino, or if $\nu $ is empty, then $\nu $ is an
ordinary skew diagram.  Two examples of size $|\nu | = 6$ are shown
here, with augmentation box indicated by $\bullet $.  Each one has
four rows.  The first is an ordinary skew diagram; the second is
strictly ragged-right, with first row $0/0$.
\begin{equation}\label{e:augmented-ribbons}
\begin{tikzpicture}[scale=.33,baseline=.66cm]
  \foreach \beta / \alpha / \z in {-3/-4/3,-2/-4/2, -2/-3/1, -1/-3/0}
     \draw[yshift=\z cm] (\alpha,0)  grid (\beta, 1);
  \filldraw (-.5,.5) circle (.25);
\end{tikzpicture}
\qquad
\begin{tikzpicture}[scale=.33,baseline=.66cm]
  \foreach \beta / \alpha / \z in {-1/-4/3, 0/-2/2, 0/-1/1}
     \draw[yshift=\z cm] (\alpha,0)  grid (\beta, 1);
  \filldraw (-.5,.5) circle (.25);
\end{tikzpicture}
\end{equation}

If $\nubold = (\nu ^{(1)},\ldots,\nu ^{(m)})$ is a tuple of augmented
ribbons, we have $\nubold \subset \nubold _{\baug } \subseteq \nubold
'$, where $\nubold '$ is the usual extended diagram containing all
flag boxes, and $\nubold _{\baug }$ is the tuple of ribbons $\nu
^{(i)}_{\, \aug }$ containing the augmentation boxes.  Let $ \mu
^{\op} = (\mu _{m},\ldots,\mu _{1}) = (|\nu ^{(1)}|,\ldots,|\nu
^{(m)}|)$.  If all the ribbons $\nu ^{(i)} _{\, \aug }$ are
horizontal, then $\nubold = \dg (\mu )$ is the row diagram of $\mu $.
In general, to every box $u\in \nubold $, there corresponds a unique
box $\uhat $ of the same content as $u$ in the corresponding row of
$\dg (\mu )$.  We generalize the definition of arms and legs from row
diagrams to augmented ribbon tuples by setting $a(u) = a(\uhat )$,
$l(u) = l(\uhat )$.  Thus, $a(u)$ is the number of boxes northwest of
$u$ in its ribbon, and if we adjoin a `hand' to each ribbon, of
content $-(\mu _{i}+1)$ for a ribbon of length $\mu _{i}$, then $l(u)$
is the number of hands strictly between $v$ and $u$ in the content/row
reading order, where $v$ is the hand attached to the ribbon containing
$u$.

By Lemma~\ref{lem:flag-numbers}(a), if $\nubold = (\nu ^{(1)},\ldots,
\nu ^{(m)})$ is a tuple of ragged-right skew shapes such that the
first row of each $\nu ^{(i)}$ has end content zero, then there exists
a compatible permutation $\sigma $ that assigns any permutation of the
flag numbers $1, \ldots, m$ to the flag boxes in these rows.  For a
tuple of augmented ribbons, these flag boxes are the augmentation
boxes.

\begin{thm}\label{thm:ns-HHL}
The integral form nonsymmetric Macdonald polynomials are given in
terms of flagged LLT polynomials by
\begin{equation}\label{e:ns-HHL}
\Ecal _{(\mu _{1},\ldots,\mu _{m})}(x;q,t) = t^{n(\mu _{+})} \sum
_{(\nubold ,\sigma )\in R(\mu )} \bigl( \prod \nolimits_{\domino }
q^{a(u)+1}\, t^{l(u)} \bigr)\, \Gcal _{\nubold ,\sigma
}^{-}(x_{1},\ldots,x_{m},0,\ldots,0;\, t^{-1})\, ,
\end{equation}
where $R(\mu )$ contains one pair $(\nubold ,\sigma )$ for each tuple
$\nubold $ of augmented ribbons of lengths $|\nu ^{(i)}| = (\mu
^{\op})_{i} = \mu _{m+1-i}$, and $\sigma $ assigns flag numbers
$m,\ldots,2,1$ to the augmentation boxes.  The product of arm and leg
factors $q^{a(u)+1}\, t^{l(u)}$ in each term is over the north boxes
$u$ in all vertical dominoes in $\nubold _{\baug }$.
\end{thm}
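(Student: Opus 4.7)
The plan is to establish the theorem via a weight-preserving bijection between two combinatorial models. By Corollary~\ref{cor:signed-HHL}, the left side $\Ecal_\mu(x;q,t)$ expands as $t^{n(\mu_+)}$ times a sum over non-attacking signed fillings $S\colon \dg(\mu)\to [m]_\pm$ (with flag-box entries fixed at $m,m-1,\ldots,1$ from bottom to top), weighted by $(-t)^{m(S)} t^{-\inv(S)} x^{|S|} \prod_{u\in\Des(S)} q^{a(u)+1} t^{l(u)}$. On the right side, Proposition~\ref{prop:non-attacking} applied with flag bounds $b=(1,2,\ldots,l)$, followed by the specialization $x_{m+1}=\cdots=x_l=0$, expresses each $\Gcal^-_{\nubold,\sigma}(x_1,\ldots,x_m,0,\ldots,0;\,t^{-1})$ as a sum over non-attacking flagged tableaux $T$ on $\nubold$ with $|T(u)|\le m$, weighted by $(-t)^{m(T)} t^{-\inv(T)} x^{|T|}$. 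It thus suffices to exhibit a bijection $S\leftrightarrow((\nubold,\sigma),T)$ matching all statistics.

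The bijection is defined by \emph{folding} at descents. Given $S$, for each $u\in\dg(\mu)$ declare the domino between $u$ and $e(u)$ to be vertical (with $u$ placed above $e(u)$) when $u\in\Des(S)$, and horizontal otherwise. The $i$-th row of $\dg(\mu)$ thereby becomes an augmented ribbon $\nu^{(i)}_\aug$ with $|\nu^{(i)}|=\mu_{m+1-i}$, producing a tuple $\nubold\in R(\mu)$. The permutation $\sigma$ is the unique compatible permutation that assigns flag number $m+1-i$ to the augmentation box of $\nu^{(i)}$ (the remaining flag numbers $m+1,\ldots,l$ being placed on non-augmentation flag boxes in the unique way consistent with Lemma~\ref{lem:flag-numbers}). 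The filling $T$ is then obtained by transferring entries via the content-preserving box correspondence, i.e., $T(x')=S(x)$ where $x$ and $x'$ are the unique boxes of matching content in the same component of $\dg(\mu)$ and $\nubold$ respectively.

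The key verifications are as follows. First, the row condition for $T$ (weakly $\sgnle$-increasing with no repeated negative letter) follows directly because consecutive boxes in a horizontal run of $\nubold^{(i)}$ correspond to non-descent pairs $S(u)\sgnle S'(e(u))$. Second, the column condition within each component (strictly $\sgnlt$-increasing) is exactly the descent condition translated across vertical dominoes. Third, attacking pairs depend only on component indices and contents, both preserved by the bijection, so the non-attacking property is transferred and $\inv(S)=\inv(T)$. Fourth, $x^{|S|}=x^{|T|}$, $m(S)=m(T)$, and since $\dg(\mu)$ and $\nubold$ share the same contents and content/row reading order, the arm-leg statistics agree, giving $\prod_{u\in\Des(S)} q^{a(u)+1}t^{l(u)}=\prod_\domino q^{a(u)+1}t^{l(u)}$. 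The main obstacle is verifying admissibility (condition (iii) of Definition~\ref{def:FSST}): one must show that the alphabet index of each transferred entry $T(x)$ is at most the flag number of its row and strictly exceeds the flag numbers of any flag boxes below in its column. For condition (i), this follows from the cascading inequality $S(u_{\text{first}})\sgnle\cdots\sgnle S(u_{\text{last}})\sgnle p$ along a horizontal run terminated by a flag box of flag number $p$, which forces the alphabet indices within the run to be $\le p$. For condition (ii), the strict descent $S(u)\sgngt S'(e(u))$ at the vertical dominoes ensures that the alphabet indices strictly exceed the flag numbers of any lower flag boxes placed in the same column. The delicate step is to reconcile the non-augmentation flag numbers (determined by $\sigma$) with these bounds on all tableaux in the image of the bijection; I expect this to reduce to the observation that within each component the flag numbers increase up the rows in lockstep with the descent pattern producing the ribbon structure. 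Once established, the bijection yields the identity~\eqref{e:ns-HHL} term by term.
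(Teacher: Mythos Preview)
Your approach via the folding bijection is exactly the one the paper uses, and your identifications of the weight, sign, and arm--leg product are correct. The gap is in your handling of the non-augmentation flag boxes, which you correctly flag as ``the delicate step'' but do not resolve.

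The resolution is simpler than you suggest. Since the flag numbers $1,\ldots,m$ are assigned to the $m$ augmentation boxes, every non-augmentation flag box receives a flag number in $\{m+1,\ldots,l\}$. After setting $x_{m+1}=\cdots=x_l=0$, all surviving entries lie in $[m]_\pm$ and hence have alphabet index at most $m+1$. This makes the non-augmentation flag boxes \emph{irrelevant} in three ways: (a) condition~(i) of admissibility is automatic for any row whose flag number exceeds $m$; (b) in an augmented ribbon, no box of $\nubold$ sits above a non-augmentation flag box in the same column (the only possible overhang is the content~$-1$ box over the augmentation box), so condition~(ii) never involves these flag boxes; (c) no attacking pair $(x,z)$ with $z$ a non-augmentation flag box can be an attacking inversion or violate the non-attacking condition, since $|T'(x)|\le m$ while the flag bound in $z$ exceeds $m$.

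This single observation patches all the loose ends at once. Your cascading-inequality argument for condition~(i) is valid only for row~1 of each component (the row terminated by an augmentation box, which \emph{does} correspond to a flag box of $\dg(\mu)$); for higher rows the cascading chain terminates at a flag box with no counterpart in $\dg(\mu)$, and there condition~(i) holds trivially by~(a). Point~(b) reduces condition~(ii) to exactly the single vertical-domino check you give. Point~(c) is what makes $\inv(T)=\inv(S)$ and the non-attacking conditions match: your claim that ``attacking pairs depend only on component indices and contents'' is not quite right as stated, since $\nubold'$ has extra flag boxes with no counterpart in $\dg(\mu)'$, but~(c) shows these extras contribute nothing. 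In particular, the specific placement of flag numbers $m+1,\ldots,l$ on the non-augmentation flag boxes is immaterial to $\Gcal^-_{\nubold,\sigma}(x_1,\ldots,x_m,0,\ldots,0;t^{-1})$, so no ``lockstep'' argument is needed.
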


\begin{proof}
Let $\nubold $ be a tuple of augmented ribbons of sizes $\mu ^{\op }$.
By Proposition~\ref{prop:non-attacking}, $\Gcal _{\nubold ,\sigma
}^{-}(x_{1},\ldots,x_{l};t^{-1})$ is the sum of $(-t)^{m(T)}\,
t^{-\inv (T)}\, x^{|T|}$ over non-attacking flagged tableaux $T \in
\FSST (\nubold ,\sigma ,\Acal ^{b}_{\pm })$, where $\Acal ^{b}_{\pm }$
is defined by \eqref{eq:A plus minus alphabets} with flag bounds
$b_{i} = i$.  Setting $x_{i} = 0$ for $i>m$ gives the sum of the terms
for tableaux $T$ with entries in $[m]_{\pm } =
\{1,\overline{1},\ldots,m,\overline{m} \}$.

If $z\in \nubold ' \setminus \nubold _{\baug }$ is a flag box which
is not an augmentation box, then no box overhangs $z$, and the flag
bound in $z$ is greater than $m$.  These `irrelevant' flag boxes
impose no constraint on tableaux $T$ with entries in $[m]_{\pm }$.
They are also irrelevant in that no attacking pair $(u,z)$ with $z$ as
the second box can be an attacking inversion, or violate the
non-attacking condition.  In particular, $\Gcal _{\nubold ,\sigma
}^{-} (x_{1}, \ldots, x_{m}, 0,\ldots, 0;t^{-1})$ is independent of
the specific choice of $\sigma $ assigning flag numbers $m,\ldots,1$
to the augmentation boxes.

Given $T\colon \nubold \rightarrow [m]_{\pm }$, let $\Phi (T)$ be the
signed filling $S\colon \dg (\mu )\rightarrow [m]_{\pm }$ such that
$S(\uhat ) = T(u)$ for each box $u\in \nubold $, where $\uhat $ is the
box of the same content as $u$ in the row of $\dg (\mu )$
corresponding to the component of $\nubold $ containing $u$.  We also
have $S'(\uhat ) = T'(u)$ for the fixed flag bound entries
$m,\ldots,1$ in the augmentation boxes $u\in \nubold _{\baug }$ and
corresponding flag boxes $\uhat \in \dg (\mu )'$.  Setting aside the
non-attacking condition for the moment, the only constraints for a
filling $T\colon \nubold \rightarrow [m]_{\pm }$ to be a flagged
tableau are that for every domino $\{u, v\}$ in $\nubold _{\baug }$,
with $v$ the box of larger content, we have $T'(u) \sgnle T'(v)$ if
$\{u, v\}$ is a horizontal domino, and $T'(u) \sgngt T'(v)$ if it is
vertical.  It follows that every signed filling $S\colon \dg (\mu
)\rightarrow [m]_{\pm }$ is $\Phi (T)$ for a unique flagged tableau
$T\colon \nubold \rightarrow [m]_{\pm }$ on a unique augmented ribbon
tuple $\nubold $: namely, the ribbon tuple such that the domino $\{u,v
\}$ corresponding to each pair $\{\uhat , e(\uhat )\}$ in $\dg (\mu
)'$ is horizontal if $S(\uhat )\sgnle S'(e(\uhat ))$, and vertical if
$S(\uhat )\sgngt S'(e(\uhat ))$.

From the definitions, attacking pairs $(u,v)$ for $\nubold $ such that
$v \in \nubold _{\baug }$ correspond to attacking pairs $(\uhat ,\vhat
)$ for $\dg (\mu )$.  It follows that $T$ is non-attacking if and only
if $S = \Phi (T)$ is non-attacking, and that $\inv (T) = \inv (S)$.
Clearly, $x^{|T|} = x^{|S|}$.  By definition, $a(u) = a(\uhat )$ and
$l(u) = l(\uhat )$ for all $u$ in $\nubold $.  Since $u\in \nubold $
is the north box in a vertical domino if and only if $S(\uhat ) \sgngt
S'(e(\uhat ))$, the product $\prod _{u\in \dg (\mu ): S(u) \sgngt
S'(e(u))} q^{a(u)+1}\, t^{l(u)}$ in \eqref{e:signed-HHL} is equal to
the product $\prod _{\smalldomino } q^{a(u)+1}\,
t^{l(u)}$ in \eqref{e:ns-HHL} for $\nubold $.  Hence, after expanding
each $\Gcal _{\nubold ,\sigma }^{-} (x_{1}, \ldots, x_{m}, 0,\ldots,
0;t^{-1})$ in \eqref{e:ns-HHL} as a sum over non-attacking flagged
tableaux, the right hand sides of \eqref{e:signed-HHL} and
\eqref{e:ns-HHL} agree term by term.
\end{proof}

\begin{example}\label{ex:ns-HHL}
For $\mu =(2,0,1)$, there are eight augmented ribbon tuples, shown
here with flag bounds $3,2,1$ written in the augmentation boxes, and
the monomial $t^{n(\mu _{+})} \prod _{\smalldomino }
q^{a(u)+1}\, t^{l(u)}$ to the left of each tuple.
\begin{equation}\label{e:ns-HHL-ribbon-tuples}
\setlength{\arraycolsep}{0cm}
\begin{array}{rcrcrcrcrcrcrcrc}
t \! &
\begin{tikzpicture}[scale=.35,baseline=.85cm]
  \draw[yshift=4.25cm] (-2,0) grid (0,1); \node at (.5,4.75) {1};
  \draw[yshift=2.5cm] (0,0) grid (0,1); \node at (.5,3) {2};
  \draw[yshift=.5cm] (-1,0) grid (0,1); \node at (.5,1) {3};
\end{tikzpicture}
& \quad \;  q\, t \!\!\!\! &
\begin{tikzpicture}[scale=.35,baseline=.85cm]
  \draw[yshift=4.25cm] (-2,0) grid (0,1); \node at (.5,4.75) {1};
  \draw[yshift=2.5cm] (0,0) grid (0,1); \node at (.5,3) {2};
  \draw[yshift=.75cm] (0,0) grid (1,1); \node at (.5,.25) {3};
\end{tikzpicture}
& \quad \; q\, t^{2} &
\begin{tikzpicture}[scale=.35,baseline=.85cm]
  \draw[yshift=4.25cm] (-1,0) grid (0,2); \node at (.5,4.75) {1};
  \draw[yshift=2.5cm] (0,0) grid (0,1); \node at (.5,3) {2};
  \draw[yshift=.5cm] (-1,0) grid (0,1); \node at (.5,1) {3};
\end{tikzpicture}
& \quad \;  q^{2} t^{2} \! &
\begin{tikzpicture}[scale=.35,baseline=.85cm]
  \draw[yshift=4.25cm] (-1,0) grid (0,2); \node at (.5,4.75) {1};
  \draw[yshift=2.5cm] (0,0) grid (0,1); \node at (.5,3) {2};
  \draw[yshift=.75cm] (0,0) grid (1,1); \node at (.5,.25) {3};
\end{tikzpicture}
& \quad \; q^{2} t^{3} &
\begin{tikzpicture}[scale=.35,baseline=.85cm]
  \draw[yshift=4.25cm] (-1,1) grid (1,2); \node at (.5,4.75) {1};
  \draw[yshift=2.5cm] (0,0) grid (0,1); \node at (.5,3) {2};
  \draw[yshift=.5cm] (-1,0) grid (0,1); \node at (.5,1) {3};
\end{tikzpicture}
& \quad \; q^{3} t^{3} \! &
\begin{tikzpicture}[scale=.35,baseline=.85cm]
  \draw[yshift=4.25cm] (-1,1) grid (1,2); \node at (.5,4.75) {1};
  \draw[yshift=2.5cm] (0,0) grid (0,1); \node at (.5,3) {2};
  \draw[yshift=.75cm] (0,0) grid (1,1); \node at (.5,.25) {3};
\end{tikzpicture}
& \quad \;  q^{3} t^{4} \!&
\begin{tikzpicture}[scale=.35,baseline=.85cm]
  \draw[yshift=4.25cm] (0,1) grid (1,3); \node at (.5,4.75) {1};
  \draw[yshift=2.5cm] (0,0) grid (0,1); \node at (.5,3) {2};
  \draw[yshift=.5cm] (-1,0) grid (0,1); \node at (.5,1) {3};
\end{tikzpicture}
& \quad \; q^{4} t^{4} \;\; &
\begin{tikzpicture}[scale=.35,baseline=.85cm]
  \draw[yshift=4.25cm] (0,1) grid (1,3); \node at (.5,4.75) {1};
  \draw[yshift=2.5cm] (0,0) grid (0,1); \node at (.5,3) {2};
  \draw[yshift=.75cm] (0,0) grid (1,1); \node at (.5,.25) {3};
\end{tikzpicture}
\\
&\nubold _{1} && \nubold _{2} && \nubold _{3} && \nubold _{4} &&
\nubold _{5} && \nubold _{6} && \nubold _{7} && \nubold _{8}
\end{array}
\end{equation}
Formula \eqref{e:ns-HHL} now reads
\begin{multline}\label{e:ns-HHL-written-out}
\Ecal _{(2,0,1)}(x;q,t) = t\, \Gcal _{\nubold _{1}, \sigma
_{1}}^{-}(x_{1},x_{2},x_{3}; t^{-1}) +
q\, t\, \Gcal _{\nubold _{2}, \sigma _{2}}^{-}(x_{1},x_{2},x_{3},0; t^{-1})\\
+ \cdots + q^{4}t^{4}\, \Gcal _{\nubold _{8}, \sigma
_{8}}^{-}(x_{1},x_{2},x_{3},0,0,0; t^{-1})\, .
\end{multline}
Picking out the term for $\nubold _{5}$, the non-attacking flagged
tableaux $T\colon \nubold _{5}\rightarrow [3]_{\pm }$ give
\begin{equation}\label{e:GR5}
\begin{array}{rcccccc}
&&
\begin{tikzpicture}[scale=.45,baseline=0cm]
  \foreach \beta / \alpha / \y / \z in {0/-1/1/0, 0/0/1/1.5, 1/-1/1/4}
     \draw[yshift=\z cm] (\alpha,0)  grid (\beta, \y);
   \node at (.5,.5) {3}; \node at (.5,2) {2}; \node at (.5,3.5) {1};
   \node at (.5,4.5) {\mybar{1}}; \node at (-.5,4.5) {1};
   \node at (-.5,.5) {2};
\end{tikzpicture}
&&
\begin{tikzpicture}[scale=.45,baseline=0cm]
  \foreach \beta / \alpha / \y / \z in {0/-1/1/0, 0/0/1/1.5, 1/-1/1/4}
     \draw[yshift=\z cm] (\alpha,0)  grid (\beta, \y);
   \node at (.5,.5) {3}; \node at (.5,2) {2}; \node at (.5,3.5) {1};
   \node at (.5,4.5) {\mybar{1}}; \node at (-.5,4.5) {1};
   \node at (-.5,.5) {\mybar{2}};
\end{tikzpicture}
&&
\begin{tikzpicture}[scale=.45,baseline=0cm]
  \foreach \beta / \alpha / \y / \z in {0/-1/1/0, 0/0/1/1.5, 1/-1/1/4}
     \draw[yshift=\z cm] (\alpha,0)  grid (\beta, \y);
   \node at (.5,.5) {3}; \node at (.5,2) {2}; \node at (.5,3.5) {1};
   \node at (.5,4.5) {\mybar{1}}; \node at (-.5,4.5) {1};
   \node at (-.5,.5) {3};
\end{tikzpicture}
\\[1ex]
\Gcal _{\nubold_{5}, \sigma _{5}}^{-}(x_{1},x_{2},x_{3},0;t^{-1}) & =
& (-t)\, t^{-1} x_{1}^{2}x_{2} & + & (-t)^{2}\, t^{-1}\,
x_{1}^{2}x_{2} & + & (-t)\, t^{-1} x_{1}^{2}x_{3}\, .
\end{array}
\end{equation}
The map $\Phi $ in the proof of Theorem~\ref{thm:ns-HHL} sends these
flagged tableaux to the non-attacking signed fillings $S$ of $\dg (\mu
)$ shown here.
\begin{equation}\label{e:Phi-map}
\begin{array}{c@{\qquad }c@{\qquad }c}
\begin{tikzpicture}[scale=.45,baseline=0cm]
  \foreach \beta / \alpha / \y / \z in {0/-1/1/0, 0/0/1/1.5, 0/-2/1/3}
     \draw[yshift=\z cm] (\alpha,0)  grid (\beta, \y);
   \node at (.5,.5) {3}; \node at (.5,2) {2}; \node at (.5,3.5) {1};
   \node at (-.5,3.5) {\mybar{1}}; \node at (-1.5,3.5) {1};
   \node at (-.5,.5) {2};
\end{tikzpicture}
&
\begin{tikzpicture}[scale=.45,baseline=0cm]
  \foreach \beta / \alpha / \y / \z in {0/-1/1/0, 0/0/1/1.5, 0/-2/1/3}
     \draw[yshift=\z cm] (\alpha,0)  grid (\beta, \y);
   \node at (.5,.5) {3}; \node at (.5,2) {2}; \node at (.5,3.5) {1};
   \node at (-.5,3.5) {\mybar{1}}; \node at (-1.5,3.5) {1};
   \node at (-.5,.5) {\mybar{2}};
\end{tikzpicture}
&
\begin{tikzpicture}[scale=.45,baseline=0cm]
  \foreach \beta / \alpha / \y / \z in {0/-1/1/0, 0/0/1/1.5, 0/-2/1/3}
     \draw[yshift=\z cm] (\alpha,0)  grid (\beta, \y);
   \node at (.5,.5) {3}; \node at (.5,2) {2}; \node at (.5,3.5) {1};
   \node at (-.5,3.5) {\mybar{1}}; \node at (-1.5,3.5) {1};
   \node at (-.5,.5) {3};
\end{tikzpicture}
\end{array}
\end{equation}
In Example~\ref{ex:original-HHL}, the four terms in
\eqref{e:original-HHL-term-by-term} correspond to unsigned fillings
$U$ of $\dg (\mu )$.  Multiplying out the products in
\eqref{e:original-HHL-term-by-term} gives eight terms for each $U$,
corresponding as in Corollary~\ref{cor:signed-HHL} to signed fillings
$S$ such that $|S| = U$.  The first two signed fillings in
\eqref{e:Phi-map} correspond in this way to $(-q^{2}t^{3})(1- t)\,
x_{1}^{2}x_{2}$ from the first term in
\eqref{e:original-HHL-term-by-term}, and the third to $-q^{2}t^{3}\,
x_{1}^{2}x_{3}$ from the second term in
\eqref{e:original-HHL-term-by-term}.  As the reader can verify, we get
the same terms by multiplying the expression for $\Gcal _{\nubold
_{5},\sigma _{5}}^{-}(x_{1},x_{2},x_{3},0;t^{-1})$ in \eqref{e:GR5} by
the corresponding factor $q^{2}t^{3}$ from
\eqref{e:ns-HHL-ribbon-tuples}.
\end{example}

\begin{example}\label{ex:E-F}
Knop \cite[Corollary~5.7]{Knop97} and Ion \cite[Theorem~4.8]{Ion08},
expressed in terms of integral form nonsymmetric Macdonald polynomials
$\Ecal _{\mu }(x;q,t)$ and the nonsymmetric Hall-Littlewood
polynomials in \S \ref{ss:ns-HL-pols}, give the specializations
\begin{gather}\label{e:ns-Mac-to-ns-E}
\Ecal _{\mu }(x;\, 0,t) = E_{\mu }(x;t^{-1})\\
\label{e:ns-Mac-to-ns-F}
\bigl(q^{n((\mu _{+})^{*})+|\mu |}\, \Ecal _{\mu }(x;q^{-1},t)
\bigr)\bigr|_{q=0} = (-t)^{|\mu |} \, t^{n(\mu _{+}) + \inv (\mu )}
F_{\mu }(x;t^{-1})\, ,
\end{gather}
where in \eqref{e:ns-Mac-to-ns-F} we used $\sum _{u\in \dg (\mu )}
a(u) = n((\mu _{+})^{*})$ and $\sum _{u\in \dg (u)} l(u) = n(\mu _{+})
+ \inv (\mu )$.

Combining Theorem~\ref{thm:ns-HHL} with \eqref{e:ns-Mac-to-ns-E}
yields
\begin{equation}\label{e:ns-E=signed-LLT}
E_{\mu}(x;t^{-1}) = t^{n(\mu _{+})} \Gcal_{ \nubold,
\operatorname{id} }^-(x_1,\dots, x_m; t^{-1}),
\end{equation}
for $\mu \in \NN ^{m}$, where $\nubold = \dg (\mu )$ is a tuple of
single-row augmented ribbons of sizes $\mu ^{\op }$.  One can check
that $h(\nubold ) = n(\mu _{+})$, so this is equivalent to the formula
in Example~\ref{ex:ns-HHL=signed-LLT}.

Combining Theorem~\ref{thm:ns-HHL} with \eqref{e:ns-Mac-to-ns-F}
yields
\begin{equation}\label{e:ns-F=signed-LLT}
 F_{\mu}(x;t^{-1}) =(-t)^{-|\mu |} \, t^{n(\mu _{+})} \Gcal_{ \nubold,
\operatorname{id} }^-(x_1,\dots, x_m,0,\ldots,0 ; t^{-1}),
\end{equation}
where $\nubold $ is a tuple of single-column augmented ribbons of
sizes $\mu ^{\op }$.

\end{example}

\subsection{Hecke twists of integral forms}
\label{ss:more-about-Ecal}

Theorem~\ref{thm:ns-HHL} can be combined with results on flagged LLT
polynomials to deduce additional properties of the integral forms
$\Ecal _{\mu }(x;q,t)$.  In particular, we can generalize formulas
\eqref{e:original-HHL} and \eqref{e:ns-HHL} to $T_{w} \, \Ecal _{\mu
}(x;q,t)$.

\begin{cor}\label{cor:ns-HHL-twisted}
Let $\mu \in \NN ^{m}$ and $w\in \Sfrak _{m}$ be given.

(a) Modify notation in Proposition~\ref{prop:original-HHL} by taking
the extended filling $U'$ for $U\colon \dg (\mu )\rightarrow [m]$ to
have $w(m),\ldots,w(1)$ instead of $m,\ldots,1$ in the flag boxes, and
defining $\maj (U)$ and $\coinv (U)$ as in
\eqref{e:maj}--\eqref{e:coinv} using $\inv (U)$ and $\Des (U)$ for the
modified $U'$.  Then
\begin{equation}\label{e:original-HHL-twisted}
T_{w}\, \Ecal _{\mu }(x;q,t) = t^{\ell (w)}\hspace{-2ex}
\sum _{\substack{U \colon \dg (\mu )\rightarrow [m]\\
\text{non-attacking}}} \hspace{-1.5ex}  q^{\maj (U)}\, t^{\coinv (U)}
\hspace{-3ex} \prod _{\substack{u\in \dg (\mu )\\ U'(u) = U'(e(u))}}
\hspace{-3ex} (1 - q^{a(u)+1} t^{l(u)+1}) \hspace{-3ex}
\prod _{\substack{u\in \dg (\mu )\\
U'(u) \not = U'(e(u))}} \hspace{-3ex} (1-t)  \; x^{U}\, .
\end{equation}

(b) Define $R_{w}(\mu )$ similarly to $R(\mu )$ in Theorem~\ref{thm:ns-HHL}, but
choose each pair $(\nubold, \sigma )$ so that the flag numbers in
the augmentation boxes of $\nubold $ are $w(m),\ldots,w(1)$ instead of
$m,\ldots,1$.  Then
\begin{equation}\label{e:ns-HHL-twisted}
T_{w}\, \Ecal _{\mu }(x;q,t)\\
 = t^{\ell (w)+n(\mu _{+})} \hspace{-1ex} \sum _{(\nubold, \sigma )\in
R_{w}(\mu )} \bigl( \prod \nolimits_{\domino } q^{a(u)+1}\, t^{l(u)}
\bigr)\, \Gcal _{\nubold, \sigma
}^{-}(x_{1},\ldots,x_{m},0,\ldots,0;\, t^{-1})\, .
\end{equation}
\end{cor}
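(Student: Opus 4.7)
The plan is to prove part (b) by induction on $\ell(w)$, with base case $w=\operatorname{id}$ given by Theorem~\ref{thm:ns-HHL}, and then to deduce part (a) from part (b) by repeating the signed-filling-to-flagged-tableau translation used in the proof of Theorem~\ref{thm:ns-HHL}.

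For the inductive step of part (b), suppose \eqref{e:ns-HHL-twisted} holds for some $w$, and let $i$ satisfy $\ell(s_iw)=\ell(w)+1$, equivalently $w^{-1}(i)<w^{-1}(i+1)$. Apply $T_i$ to both sides; the left side becomes $T_{s_iw}\Ecal_\mu$ since $T_{s_iw}=T_iT_w$. On the right side, for each $(\nubold,\sigma)\in R_w(\mu)$, the Hecke operator $T_i$ (with $i<m$) involves only the variables $x_i,x_{i+1}$, so it commutes with the substitution $x_{m+1}=\cdots=x_l=0$ and may be applied to the full polynomial $\Gcal_{\nubold,\sigma}^{-}(x_1,\ldots,x_l;\,t^{-1})$ before that substitution. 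The augmentation boxes of $\nubold$ all have content $0$ and lie in distinct components, so Proposition~\ref{prop:Ti-action}(a) governs the action of $T_i$ on flag numbers $i$ and $i+1$. Under the convention that the augmentation box of $\nu^{(c)}$ (numbering components from bottom to top, so that $\nu^{(1)}$ is lowest and $\nu^{(m)}$ is highest) carries flag number $w(m+1-c)$, the component $\nu^{(j)}$ carrying flag $i$ satisfies $j=m+1-w^{-1}(i)$, and similarly $k=m+1-w^{-1}(i+1)$; the hypothesis $w^{-1}(i)<w^{-1}(i+1)$ therefore translates to $j>k$. This is precisely the case of Proposition~\ref{prop:Ti-action}(a) in which $T_i\,\Gcal_{\nubold,\sigma}^{-}(x;t^{-1})=t\,\Gcal_{\nubold,\sigma'}^{-}(x;t^{-1})$, where $\sigma'$ differs from $\sigma$ by exchanging the flag numbers $i$ and $i+1$. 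Since the arm--leg product $\prod_{\smalldomino}q^{a(u)+1}t^{l(u)}$ depends only on $\nubold$ (and so is preserved under $\sigma\mapsto\sigma'$), and since $(\nubold,\sigma)\mapsto(\nubold,\sigma')$ is a bijection $R_w(\mu)\to R_{s_iw}(\mu)$, the net effect of applying $T_i$ is to reindex the sum on the right of \eqref{e:ns-HHL-twisted} by $R_{s_iw}(\mu)$ and to multiply the prefactor $t^{\ell(w)+n(\mu_+)}$ by $t$, giving exactly the formula for $s_iw$ and closing the induction.

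For part (a), I would start from the now-proven formula \eqref{e:ns-HHL-twisted} and expand each $\Gcal_{\nubold,\sigma}^{-}(x_1,\ldots,x_m,0,\ldots,0;t^{-1})$ via the non-attacking combinatorial formula (Propositions~\ref{prop:signed-G-tableaux} and~\ref{prop:non-attacking}). The bijection $\Phi$ from the proof of Theorem~\ref{thm:ns-HHL} then translates non-attacking flagged tableaux on augmented ribbon tuples into non-attacking signed fillings $S\colon\dg(\mu)\to[m]_\pm$ whose extended filling now carries $w(m),\ldots,w(1)$ rather than $m,\ldots,1$ in the flag boxes. Finally, grouping signed fillings sharing a common underlying $|S|=U$ and executing the same factorization as in Corollary~\ref{cor:signed-HHL} collapses these to the unsigned non-attacking sum in \eqref{e:original-HHL-twisted}, with the extra factor $t^{\ell(w)}$ already present from (b). The main obstacle I anticipate is the careful reconciliation of conventions relating $w$, the compatible permutation $\sigma$, the bottom-to-top component indexing of $\nubold$, and the $j<k$ vs.\ $j>k$ case split of Proposition~\ref{prop:Ti-action}; once this alignment is pinned down, the remainder of both parts reduces to bookkeeping of powers of $t$ and of the arm--leg weights, which are insensitive to $\sigma$.
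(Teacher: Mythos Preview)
Your proposal is correct and follows essentially the same approach as the paper: part (b) is derived from Theorem~\ref{thm:ns-HHL} via Proposition~\ref{prop:Ti-action}(a), using that $T_i$ (for $i<m$) commutes with setting $x_{m+1}=\cdots=x_l=0$, and part (a) is then obtained from (b) by rerunning the signed-filling/flagged-tableau translation of Corollary~\ref{cor:signed-HHL} and Theorem~\ref{thm:ns-HHL} with flag bounds $w(m),\ldots,w(1)$ in place of $m,\ldots,1$. Your careful check that $w^{-1}(i)<w^{-1}(i+1)$ forces the $j>k$ case of Proposition~\ref{prop:Ti-action}(a), yielding exactly the extra factor of $t$, is the only detail the paper leaves implicit.
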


\begin{proof}
Since $T_{w}$ commutes with setting $x_{i} = 0$ for $i>m$, part (b)
follows from Theorem~\ref{thm:ns-HHL} and
Proposition~\ref{prop:Ti-action}.  For part (a), we need to see that
the right hand sides of \eqref{e:original-HHL-twisted} and
\eqref{e:ns-HHL-twisted} become equal term by term after multiplying
out the products in \eqref{e:original-HHL-twisted}, and expressing
each $\Gcal _{\nubold ,\sigma }^{-}(x_{1},\ldots,x_{m},0,\ldots,0;\,
t^{-1})$ in \eqref{e:ns-HHL-twisted} as a sum over non-attacking
flagged tableaux.  When $w$ is the identity, this was shown in the
proofs of Corollary~\ref{cor:signed-HHL} and Theorem~\ref{thm:ns-HHL}.
The argument goes through essentially unaltered with flag bounds
$w(m),\ldots,w(1)$ instead of $m,\ldots,1$ in the flag boxes of $\dg
(\mu )$ and the augmentation boxes of $\nubold $.
\end{proof}

\begin{remark}\label{rem:coinv}
In formulas \eqref{e:original-HHL}, \eqref{e:signed-HHL},
\eqref{e:ns-HHL}, and
\eqref{e:original-HHL-twisted}--\eqref{e:ns-HHL-twisted}, each term in
the sum on the right hand side clearly has coefficients in $\ZZ
[q,t^{\pm 1}]$, but it is not obvious {\it a priori} that the
individual terms have coefficients $\ZZ [q,t]$, even though the sum as
a whole does.

The next lemma implies that every term on the right hand side of
\eqref{e:ns-HHL} and \eqref{e:ns-HHL-twisted} has coefficients in $\ZZ
[q,t]$, and more generally that in these formulas, the full flagged
LLT term
\begin{equation}\label{e:ns-HHL-twisted-full-term}
 t^{\ell (w)+n(\mu _{+})}  \bigl( \prod \nolimits_{\domino }
q^{a(u)+1}\, t^{l(u)} \bigr)\, \Gcal _{\nubold, \sigma
}(X_{1},\ldots,X_{l};\, t^{-1})
\end{equation}
has coefficients in $\ZZ
[q,t]$ even before specializing to $\Gcal ^{-}_{\nubold, \sigma
}(x_{1}, \ldots, x_{m}, 0, \ldots, 0;\, t^{-1})$

Since the proofs of Theorem~\ref{thm:ns-HHL} and
Corollaries~\ref{cor:signed-HHL} and \ref{cor:ns-HHL-twisted} equate
individual terms, it follows that every term on the right hand sides
of \eqref{e:original-HHL}, \eqref{e:signed-HHL}, and
\eqref{e:original-HHL-twisted} also has coefficients in $\ZZ [q,t]$.
In particular, this implies that the modified definition of $\coinv
(U)$ involving $w$ in \eqref{e:original-HHL-twisted} satisfies $\coinv
(U) + l(w)\geq 0$.  When $w$ is the identity, this was also shown in
\cite[Proposition~3.6.2]{HagHaiLo08}.
\end{remark}

\begin{lemma}\label{lem:inv-bound}
Let $\nubold = (\nu ^{(1)}, \ldots, \nu ^{(m)}) $ be a tuple of
augmented ribbons of lengths $|\nu ^{(i)}| = (\mu ^{\op })_{i}$, and
let $\sigma $ be a compatible permutation such that the flag numbers
in the augmentation boxes are $w(m),\ldots,w(1)$, where $w\in \Sfrak
_{m}$.  Then, for any signed alphabet $\Acal $, every tableau $T\in
\FSST (\nubold ,\sigma ,\Acal )$ satisfies $\inv (T)\leq n(\mu _{+}) +
\ell (w) + \sum _{\smalldomino } l(u)$, where the sum
is over the north boxes $u$ in all vertical dominoes in $\nubold
_{\baug }$.
\end{lemma}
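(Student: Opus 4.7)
By Lemma~\ref{lem:rr-incr-triples}, every flagged tableau $T \in \FSST(\nubold, \sigma, \Acal)$ satisfies
\[
\inv(T) \;=\; h(\nubold) + e(\nubold, \sigma) - h(T) - e(T) \;\le\; h(\nubold) + e(\nubold, \sigma),
\]
since $h(T)$ and $e(T)$, counting the increasing triples and increasing rising end triples of $T$, are nonnegative. The lemma therefore reduces to the combinatorial identity
\[
h(\nubold) + e(\nubold, \sigma) \;=\; n(\mu_+) + \ell(w) + \sum\nolimits_{\smalldomino} l(u).
\]

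I would verify this identity first in the horizontal base case $\nubold = \dg(\mu)$, then in general. For $\dg(\mu)$, each component $\nu^{(j)}$ is a single row of length $r_j = \mu_{m+1-j}$ with its augmentation box at content $0$, so $\sum l(u) = 0$. A triple $(x,y,z)$ has $y\in\nu^{(i)}$ and $z$ in the extended row of $\nu^{(j)}$ with $i<j$ and $c(y)=c(z)$; the number of valid common contents is $\min(r_i, r_j)$, so
\[
h(\dg(\mu)) \;=\; \sum_{i<j}\min(r_i,r_j) \;=\; \sum_{k<k'}\min(\mu_k,\mu_{k'}) \;=\; \sum_{c\ge 1}\binom{\mu^*_c}{2} \;=\; n(\mu_+)
\]
by \eqref{e:n(lambda)}. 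All $m$ augmentation boxes share content $0$, and the ascent condition on rows $i<j$ becomes $w(m+1-i) < w(m+1-j)$, which, after reindexing $a=m+1-i$, $b=m+1-j$, counts the inversions of $w$; hence $e(\dg(\mu),\sigma) = \ell(w)$.

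For a general augmented ribbon tuple $\nubold$, I would extend the identity by a direct double-counting argument, partitioning all triples and rising end triples into three classes matched term-by-term to the three summands on the right-hand side. Pairs $(y,z)$ with both $y$ and $z$ actual (non-flag) boxes recover the $n(\mu_+)$ count because the content-coincidence analysis across components depends only on the ribbon sizes, not on the horizontal/vertical domino structure. Rising end triples with both $y$ and $z$ augmentation boxes still contribute $\ell(w)$ via the same ascent-to-inversion bijection as in the base case. The remaining triples and rising end triples -- those involving a non-augmentation flag box, or a $z$ whose extended-row position has been shifted by a vertical domino -- are matched bijectively to the ``hand strictly between $v$ and $u$'' configurations counted by $l(u)$, summed over north boxes $u$ of vertical dominoes in $\nubold_{\baug}$, using the hand interpretation of $l(u)$ given immediately after Example~\ref{ex:arms-and-legs}.

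The hardest step will be the bookkeeping in this third class, namely verifying that every ``extra'' triple of $\nubold$ beyond the horizontal baseline corresponds to exactly one hand position contributing to some $l(u)$. This requires a careful local analysis of ribbon shapes in neighborhoods of vertical dominoes, especially when row end-contents coincide between different components and thereby create additional flag-box pairs with the same content that enter the count $e(\nubold,\sigma)$.
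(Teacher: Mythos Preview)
Your opening move via Lemma~\ref{lem:rr-incr-triples} is fine, and your base-case computations
$h(\dg(\mu))=n(\mu_+)$ and $e(\dg(\mu),\sigma)=\ell(w)$ are correct.  The proof breaks at the next step: the claimed identity
\[
h(\nubold)+e(\nubold,\sigma)\;=\;n(\mu_+)+\ell(w)+\sum\nolimits_{\smalldomino} l(u)
\]
is \emph{false} for general augmented ribbon tuples.  Take $\mu=(1,1)$, $w=\mathrm{id}$, with $\nu^{(1)}$ the horizontal ribbon of size $1$ and $\nu^{(2)}$ the vertical one (empty first row, single box of content $-1$ in the second row).  There are three rows, with flag numbers $2,1,3$ from bottom to top.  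One finds $h(\nubold)=1$ and $e(\nubold,\sigma)=1$ (the pair of flag boxes in rows $1$ and $3$ is rising), so the left side is $2$.  On the right, $n(\mu_+)=1$, $\ell(w)=0$, and the lone vertical-domino north box corresponds to the content~$-1$ box in the $\mu_1$-row of $\dg(\mu)$, which has leg $0$; hence the right side is $1$.  Since $2>1$, not only does the identity fail, but the intermediate bound $\inv(T)\le h(\nubold)+e(\nubold,\sigma)$ is too weak to reach the lemma's target---and indeed one checks directly that $\inv(T)\le 1$ for every flagged tableau on this $(\nubold,\sigma)$.  (Swapping which component is vertical gives $1<2$, so the two sides are not even comparable.)

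The paper's argument avoids this by never passing through the $T$-independent ceiling $h(\nubold)+e(\nubold,\sigma)$.  Instead it splits $\inv(T)=\inv_1(T)+\inv_2(T)$ by whether $c(y)=c(x)$ or $c(y)=c(x)+1$, introduces a $T$-dependent count $g=|H|$ of the \emph{non}-inversion equal-content pairs in $\nubold_{\baug}$, proves the exact relation $\inv_1(T)+g=n(\mu_+)+\ell(w)$, and then builds an explicit injection to get $\inv_2(T)\le g+\sum_{\smalldomino} l(u)$.  The two $g$'s cancel upon adding.  The crucial point your approach misses is that the slack in one half of the inversion count is exactly what is needed to control the other half; bounding both halves separately by $T$-independent quantities loses this.
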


\begin{proof}
Let $I$ be the set of all pairs $(x,y)$ in $\nubold _{\baug }$ with
$c(y) = c(x)$ and $i<j$, where $x\in \nu _{\, \aug } ^{(i)}$, $y\in
\nu _{\, \aug } ^{(j)}$.  The number of such pairs with content $c(y)
= c(x) <0$, i.e., with $x,y\in \nubold $, is $n(\mu _{+})$.  In the
pairs with content $c(y) = c(x) =0$, $x$ and $y$ are augmentation
boxes.  Let $T'$ be the extended tableau in \eqref{e:T-extended},
where (as is always possible) we take the letters $b_{j}$ to be
strictly increasing.  Then the number of pairs $(x,y)\in I$ with
content $c(x) = c(y) = 0$ and $T'(x)\sgnle T'(y)$ is $l(w)$.

By Remark~\ref{rem:no-funny-inv}(ii), if $(x,y)$ is an attacking
inversion with $c(y) = c(x)$, then $y$ is not a flag box.  Hence the
attacking inversions with $c(y) = c(x)$ for $T$ are precisely the
pairs $(x,y)\in I$ such that $x,y\in \nubold $ and $T(x)\sgngt T(y)$.
Let $\inv _{1}(T)$ be the number of these pairs, and let $g = |H|$,
where $H = \{(x,y) \in I\mid T'(x) \sgnle T'(y) \}$.  By the preceding
considerations, $\inv _{1}(T) + g = n(\mu _{+}) + l(w)$.  It therefore
remains to show that $\inv _{2}(T) \leq g + \sum _{\smalldomino }
l(u)$, where $\inv _{2}(T)$ is the number of attacking inversions
$(x,y)$ for $T$ with $c(y) = c(x)+1$ and $i>j$, where $x\in \nu
^{(i)}$ and $y\in \nu '\, ^{(j)}$.  Let $J$ be the set of attacking
inversions $(x,y)$ of this latter type.

Another way to describe the leg of a box $u \in \nubold $, as in
\cite[\S 5]{Knop97} or \cite[\S 4]{KnopSahi97}, is that $l(u)$ is the
number of boxes $v \in \nubold _{\baug }$ such that we either have
$u\in \nu ^{(j)}$, $v\in \nu _{\, \aug }^{(i)}$, $|\nu ^{(j)}| \geq
|\nu ^{(i)}|$, and $c(v) = c(u)$, or we have $u\in \nu ^{(i)}$, $v\in
\nu _{\, \aug }^{(j)}$, $|\nu ^{(j)}| < |\nu ^{(i)}|$, and $c(v) =
c(u)+1$, where $i>j$ in either case.  Let $K$ be the set of such pairs
$(u,v)$, where $u$ is the north box in a vertical domino in $\nubold
_{\baug }$, so that $|K| = \sum _{\smalldomino } l(u)$.  We will show
that $|J|\leq |H|+|K|$, as desired, by constructing an injective map
$(x,y)\mapsto (x',y')\colon J\hookrightarrow H\coprod K $.

Given $(x,y)\in J$ with $x\in \nu ^{(i)}$ and $y\in \nu '\, ^{(j)}$,
where $i>j$, we consider four cases.  Suppose first that $|\nu ^{(j)}|
\geq |\nu ^{(i)}|$.  Then there is a unique box $u\in \nu ^{(j)}$ with
content $c(u) = c(x)$, which forms a domino with $y$.  Case (1a) is
that $\{u ,y \}$ is a horizontal domino.  Then $(u, x)\in H$, since
$T(u) \sgnle T'(y) \sgnlt T(x)$.  Case (1b) is that $\{u, y \}$ is a
vertical domino.  Then $(u, x)\in K$, since $|\nu ^{(j)}| \geq |\nu
^{(i)}|$.  In either case, we set $(x',y') = (u, x)$, considered as an
element of $H$ or $K$, respectively.

Otherwise, $|\nu ^{(j)}| < |\nu ^{(i)}|$.  Since $c(x)<0$, there is a
unique box $z\in \nu _{\, \aug }^{(i)}$ with content $c(z) = c(y)$,
which forms a domino with $x$.  Case (2a) is that $\{x ,z \}$ is a
horizontal domino.  Then $(y,z)\in H$, since $T'(y) \sgnlt T(x) \sgnle
T(z)$.  In this case we set $(x',y') = (y,z)$.  Case (2b) is that
$\{x,z \}$ is a vertical domino.  Then $(x, y)\in K$, since $|\nu
^{(j)}| < |\nu ^{(i)}|$, and we set $(x',y') =(x,y)$, considered as an
element of $K$.

Since we always map $(x,y)$ to a pair $(x',y')$ in the same two
components of $\nubold _{\baug }$ as $(x,y)$ itself, $(x',y')$
determines $i$ and $j$.  Then $(x,y)$ belongs to Case (1a) or (1b) if
$|\nu ^{(j)}| \geq |\nu ^{(i)}|$, and Case (2a) or (2b) if $|\nu
^{(j)}| < |\nu ^{(i)}|$.  By definition, we record whether $(x',y')$
is in $H$ or $K$, so we can further distinguish Case (1a) from (1b)
and Case (2a) from (2b).  Once we know which case $(x,y)$ is in, it is
clear that we can recover $(x,y)$ from $(x',y')$.  Hence the
map we have constructed is injective.
\end{proof}

\subsection{Stabilized nonsymmetric Macdonald polynomials}
\label{ss:stabilization}

There are two natural ways to construct a polynomial symmetric in some
block of variables from a nonsymmetric Macdonald polynomial.  The
first---{\em specialization}---uses the fact that if $\mu _{i} = \mu
_{i+1}$, then $\Ecal _{\mu }(x;q,t)$ is symmetric in $x_{i}, x_{i+1}$
(\S \ref{ss:ns-Macs}). Hence, given $\eta \in \NN ^{r}$, $\lambda \in
\NN ^{k}$, and $\kappa \in \NN ^{s}$, the polynomial
\begin{equation}\label{e:specialized-ns-mac}
\Ecal _{(\eta ;\, 0^{n};\, \lambda ;\, \kappa )}(x_{1}, \ldots, x_{r},
y_{1}, \ldots, y_{n}, 0^{k}, z_{1}, \ldots, z_{s};\, q, t)
\end{equation}
is symmetric in new variables $y_{1},\ldots,y_{n}$ which replace the
variables $x_{i}$ indexed by the positions of $\lambda
_{1},\ldots,\lambda _{k}$.  By specializing the integral form $\Ecal
_{\mu }(x;q,t)$ rather than $E_{\mu }(x;q,t)$, we get a polynomial
with coefficients in $\ZZ [q,t]$.  We insert $0^{n}$ before $\lambda $
in $(\eta ;\, 0^{n};\, \lambda ;\, \kappa )$, and not after, because
$\Ecal _{(\eta ;\, \lambda ;\, 0^{n} ;\, \kappa
)}(x_{1},\ldots,x_{r},0^{k},y_{1},\ldots,y_{n},z_{1},\ldots,z_{s};\,
q,t) = 0$ if $\eta _{1},\ldots,\eta _{r},\lambda _{1}>0$.

The second construction---{\em symmetrization}---uses the symmetric
linear combination of nonsymmetric Macdonald polynomials in
\eqref{e:symmetric-combo}--\eqref{e:monic-symmetric-combo} for the
Young subgroup $\Sfrak _{\rr } = \Sfrak _{[i,j]}$ which permutes a
block of indices.  In particular, given $\eta $, $\lambda $, $\kappa $
as above, for any $n\geq k$ there exists a polynomial of the form
\begin{equation}\label{e:symmetrized-ns-mac}
\sum _{v} c_{v}\, E_{(\eta ;\, v(0^{n-k}; \, \lambda ); \, \kappa
)}(x_{1}, \ldots, x_{r}, y_{1}, \ldots, y_{n}, z_{1}, \ldots, z_{s};
q, t)\, ,
\end{equation}
symmetric in $y_{1},\ldots,y_{n}$, where the sum is over permutations
$v(0^{n-k};\, \lambda )$ of $(0^{n-k};\, \lambda )$.  Such a
polynomial is unique up to a scalar factor; its monic normalization,
in which the leading term $x^{\eta }\, m_{\lambda _{+} }(y) z^{\kappa
}$ has coefficient $1$, is given more explicitly by
\begin{equation}\label{e:monic-symmetrized-ns-mac}
\bigl(\sum \nolimits _{w\in \Sfrak _{[r+1,r+n]}/ \Sfrak _{\sS }}
T_{w}\, E_{(\eta ;\, \lambda _{+}; \, 0^{n-k}; \, \kappa )}(x;q,t)
\bigr) \big|_{x\mapsto(x_{1}, \ldots, x_{r}, y_{1}, \ldots, y_{n},
z_{1}, \ldots, z_{s})}\, ,
\end{equation}
where $\Sfrak _{[r+1,r+n]}/ \Sfrak _{\sS }$ is the set of minimal
coset representatives for the stabilizer $\Sfrak _{\sS } = \Stab
_{\Sfrak _{[r+1,r+n]}}((\eta ;\, \lambda _{+}; \, 0^{n-k}; \, \kappa
))$.  We will show that the specialization
\eqref{e:specialized-ns-mac} and the symmetrization
\eqref{e:monic-symmetrized-ns-mac} each converge in a suitable sense
to a symmetric function in $Y = y_{1},y_{2},\cdots $ as $n\rightarrow
\infty $, and that, up to a scalar factor, the limit is the same for
either construction.

Fixing nonsymmetric variables $x_{1},\ldots,x_{r}$ and taking the
symmetric limit in the rest---i.e., taking $\kappa $ empty in
\eqref{e:specialized-ns-mac}--\eqref{e:monic-symmetrized-ns-mac}---yields
a basis of {\em right stabilized} Macdonald polynomials, indexed by
pairs $\eta ,\lambda $, where $\eta \in \NN ^{r}$ and $\lambda $ is a
partition, for the algebra $\kk [x_{1},\ldots,x_{r}]\otimes \Lambda
(Y)$ of {\em $r$-nonsymmetric polynomials}.

Alternatively, taking $\eta $ empty yields a basis for $\kk
[z_{1},\ldots,z_{s}]\otimes \Lambda (Y)$ consisting of {\em left
stabilized} Macdonald polynomials.  Although left stabilization is
simpler in some ways, right stabilization is more general, because the
rotation identity \eqref{e:rotation} can be used to transform the left
stabilized basis into a subset of the right stabilized basis.  Our
results and conjectures in \S \S \ref{ss:right stable
unmod}--\ref{ss:Weyl-and-Hecke-symmetrization} cover the more general
right stabilized case.

Both flavors of stabilized Macdonald polynomials have appeared before
in the literature, and parts of the unified treatment we will present
here are already known.  Remark~\ref{rem:previous-stable-Macs}, below,
summarizes connections with previous results.

We now develop properties of the above constructions in detail.  We
begin by using a variant of the formula in Theorem~\ref{thm:ns-HHL} to
define the functions that will turn out to be the limit of the
specialized integral forms in \eqref{e:specialized-ns-mac} as
$n\rightarrow \infty$. To this end, generalizing the notation in
\eqref{e:G-minus}, given flagged LLT data $\nubold , \sigma $ of
length $l$, define
\begin{multline}\label{e:G-minus-Y}
\Gcal _{\nubold ,\sigma }^{-}(x_{1}, \ldots, x_{r}, Y, z_{1}, \ldots,
z_{l-r};\, t^{-1})\\
 = \Gcal _{\nubold ,\sigma }[x_{1},\, x_{2} - t\, x_{1}, \ldots, x_{r}
- t\, x_{r-1},\\
z_{1} + (1-t)Y -t\, x_{r},\, z_{2} -t\, z_{1}, \ldots, z_{l-r} - t\,
z_{l-r-1};\, t^{-1}],
\end{multline}
a polynomial in $x_{1},\ldots,x_{r},z_{1},\ldots,z_{l-r}$ and
symmetric function in $Y$.  Evaluated in a finite plethystic alphabet
$Y = y_{1}+\cdots +y_{n}$, this becomes the signed flagged LLT
polynomial in \eqref{e:signed-G}, in variables
$x_{1},\ldots,x_{r},y_{1},\ldots,y_{n},z_{1},\ldots,z_{l-r}$, for flag
bounds $b_{i} = i$ for $i\leq r$, $b_{i} = n+i$ for $i>r$.

\begin{defn}\label{def:symm-int-form}
Given $\eta \in \NN ^{r}$, $\lambda \in \NN ^{k}$ and $\kappa \in \NN
^{s}$, the {\em partially symmetric integral form Macdonald
polynomial} $\Jns _{\eta |\lambda |\kappa
}(x_{1},\ldots,x_{r},Y,z_{1},\ldots,z_{s};\, q,t)$ is given by
\begin{multline}\label{e:J-eta-lambda-kappa}
\Jns _{\eta |\lambda |\kappa }(x,Y,z;\, q,t) = \\
t^{n(\mu _{+})}\hspace{-1ex} \sum _{(\nubold ,\sigma )\in R'(\mu )}
\bigl( \prod \nolimits_{\domino } q^{a(u)+1}\, t^{l(u)} \bigr) \Gcal
_{\nubold ,\sigma }^{-}(x_{1}, \ldots, x_{r}, Y, 0^{k}, z_{1}, \ldots,
z_{s},0,\ldots,0;\, t^{-1}),
\end{multline}
where $\mu =(\eta ;\lambda ;\kappa )\in \NN ^{m}$, with $m=r+k+s$, and
$R'(\mu )$ is the same as $R(\mu )$ in \eqref{e:ns-HHL}, except that
$R'(\mu )$ only contains pairs $(\nubold ,\sigma )$ in which the last
$k+r$ augmented ribbons $\nu ^{(m-r-k+1)},\ldots,\nu ^{(m)}$, of sizes
$(\lambda ^{\op}; \eta ^{\op})$, are non-ragged-right, i.e., no
augmentation box in these ribbons is in a vertical domino.
\end{defn}

By Remark~\ref{rem:coinv}, $\Jns _{\eta |\lambda |\kappa }(x,Y,z;\,
q,t)$ has coefficients in $\ZZ [q,t]$.

\begin{prop}\label{prop:stable-insertion}
The specialized integral form Macdonald polynomials in
\eqref{e:specialized-ns-mac}, symmetric in $y_{1},\ldots,y_{n}$,
converge $t$-adically (as defined in \S \ref{ss:limits}, (ii)) to
\begin{equation}\label{e:stable-insertion}
\lim _{n\rightarrow \infty } \Ecal _{(\eta ;\, 0^{n};\, \lambda; \,
\kappa )}(x_{1},\ldots,x_{r},y_{1},\ldots,y_{n}, 0^{k}, z_{1}, \ldots,
z_{s}; \, q,t) = \Jns _{\eta |\lambda |\kappa }(x,Y,z;\, q,t).
\end{equation}
For $r=0$ and $\eta =\emptyset $, the limit converges strongly: i.e., for
all $n$, we have
\begin{equation}\label{e:strong-stable-insertion}
\Ecal _{(0^{n};\, \lambda ;\, \kappa )}(y_{1},\ldots,y_{n}, 0^{k},
z_{1}, \ldots, z_{s}; \, q,t) = \Jns _{\emptyset |\lambda |\kappa
}[y_{1}+\cdots +y_{n},z;\, q,t].
\end{equation}
\end{prop}

\begin{proof}
By Theorem~\ref{thm:ns-HHL}, the quantity inside the limit in
\eqref{e:stable-insertion} is a sum of terms 
\begin{equation}\label{e:lhs-term}
t^{n((\eta ;\lambda ;\kappa )_{+})} \, \bigl(\prod \nolimits _{\domino
} q^{a(u)+1} t^{l(u)}\bigr) \, \Gcal ^{-}_{\nubold ,\sigma
}(x_{1},\ldots,x_{r}, y_{1},\ldots,y_{n}, 0^{k}, z_{1},\ldots,z_{s},
0,\ldots,0;\, t^{-1})
\end{equation}
for tuples $\nubold $ of augmented ribbons of sizes $(\kappa ^{\op
};\, \lambda ^{\op };\, 0^{n};\, \eta ^{\op })$ with flag numbers
$m,\ldots,1$ in the augmentation boxes, where $m=r+n+k+s$.  For any
such tuple, let $\nuboldhat = (\nu ^{(1)}, \ldots, \nu ^{(k+s)}, \nu
^{(m-r+1)}, \ldots, \nu ^{(m)})$ be the tuple of length $\mhat =r+k+s$
obtained by deleting the ribbons corresponding to the $n$ inserted
zeroes in $(\eta ;0^{n};\lambda ;\kappa )$, and let $\sigmahat $ be a
compatible permutation that assigns flag numbers $\mhat ,\ldots,1$ to
the augmentation boxes of $\nuboldhat $.

To prove \eqref{e:stable-insertion}, we will show that the term in
\eqref{e:lhs-term} vanishes $t$-adically as $n\rightarrow \infty $ if
any the last $k+r$ ribbons in $\nuboldhat $ have their augmentation
box in a vertical domino, and that if these ribbons are
non-ragged-right, then \eqref{e:lhs-term} is equal to the
term
\begin{equation}\label{e:rhs-term}
t^{n((\eta ;\lambda ;\kappa )_{+})} \, \bigl(\prod \nolimits _{\domino
} q^{a(u)+1} t^{l(u)}\bigr) \, \Gcal ^{-}_{\nuboldhat ,\sigmahat
}(x_{1},\ldots,x_{r}, Y, 0^{k}, z_{1},\ldots,z_{s}, 0,\ldots,0;\,
t^{-1})
\end{equation}
for $\nuboldhat $ in \eqref{e:J-eta-lambda-kappa}, evaluated with $Y =
y_{1}+\cdots +y_{n}$.

Setting $x_{r+i} = y_{i}$ for $1 \leq i \leq n$, $x_{r+n+i} = 0$ for
$1 \leq i \leq k$, and $x_{r+n+k+i} = z_{i}$ for $1 \leq i \leq s$,
the flagged LLT polynomial in \eqref {e:lhs-term} is given by
\begin{equation}\label{e:lhs-llt}
\Gcal ^{-}_{\nubold ,\sigma }(x, y, 0^{k}, z,
0,\ldots,0;\, t^{-1}) = \sum _{T} (-t)^{m(T)} t^{-\inv (T)} x^{|T|},
\end{equation}
where the sum is over non-attacking flagged tableaux $T\in \FSST
(\nubold , \sigma ,\Acal _{\pm }^{b})$ for flag bounds $b_{i} = i$,
with entries $a\in [m]_{\pm }$ such that $|a|\not \in [r+n+1, r+n+k]$.
The extension $T'$ of $T$ to $\nubold _{\baug }$ is then a
non-attacking super tableau with flag bounds $m,\ldots,1$ in the
augmentation boxes.  Here we sketch $\nubold _{\baug }$ with its flag
bounds, to aid in following the argument below.
\begin{equation}\label{e:nu-sketch-old}
\begin{array}{c}
\begin{tikzpicture}[scale=.5]
\tikzset{decoration={snake,segment length=5mm, amplitude=1mm}}
\node[circle, fill=black, inner sep=.5mm, outer sep=1mm,
      label=right:{$r+n+k+1$}] (a) at (-1,-1) {};
\node[circle, fill=black, inner sep=.5mm, outer sep=1mm,
      label=right:{$r+n+k+s=m$}] (b) at (-3,-3) {};
\draw[decorate] (-3,1) -- (a);
\draw[decorate] (-5,-1) -- (b);
\draw[loosely dotted, very thick, shorten <=3mm, shorten >=3mm] (b) -- (a);
\draw[-Latex, dashed] (-3.5,1.5) -- (-5.5,-.5);
\node  at (-5,1) {$\kappa $};
\node[circle, fill=black, inner sep=.5mm, outer sep=1mm,
      label=right:{$r+n+1$}] (a) at (2,2) {};
\node[circle, fill=black, inner sep=.5mm, outer sep=1mm,
      label=right:{$r+n+k$}] (b) at (0,0) {};
\draw[decorate] (0,4) -- (a);
\draw[decorate] (-2,2) -- (b);
\draw[loosely dotted, very thick, shorten <=3mm, shorten >=3mm] (b) -- (a);
\draw[-Latex, dashed] (-.5,4.5) -- (-2.5,2.5);
\node  at (-2,4) {$\lambda $};
\node[circle, fill=black, inner sep=.5mm, outer sep=1mm,
      label=right:{$r+1$}] (c) at (5,5) {};
\node[circle, fill=black, inner sep=.5mm, outer sep=1mm,
      label=right:{$r+n$}] (d) at (3,3) {};
\draw[loosely dotted, very thick, shorten <=3mm, shorten >=3mm] (d) -- (c);
\node[circle, fill=black, inner sep=.5mm, outer sep=1mm,
      label=right:{$1$}] (e) at (8,8) {};
\node[circle, fill=black, inner sep=.5mm, outer sep=1mm,
      label=right:{$r$}] (f) at (6,6) {};
\draw[decorate] (6,10) -- (e);
\draw[decorate] (4,8) -- (f);
\draw[loosely dotted, very thick, shorten <=3mm, shorten >=3mm] (f) -- (e);
\draw[-Latex, dashed]  (5.5,10.5) -- (3.5,8.5);
\node  at (4,10) {$\eta $};
\end{tikzpicture}
\end{array}
\end{equation}
If $u$ is a box of content $-1$ in a ribbon $\nubold ^{(m+1-i)}$ for
$i\in [r+n+1, r+n+k]$, corresponding to a part of $\lambda $, then $u$
attacks the augmentation boxes with flag bounds $r+n+k+1,\ldots,m$,
hence $|T(u)|\leq r+n+k$.  Since $T$ has no entries with $|a|\in
[r+n+1, r+n+k]$, this implies $|T(u)|\leq r+n$.  Hence $u$ and the
adjacent augmentation box, which has flag bound $i>r+n$, must form a
horizontal domino.  This shows that \eqref{e:lhs-term} vanishes if any
ribbon corresponding to a part of $\lambda $ has its augmentation box
in a vertical domino.

Let $\uhat $ denote the box of $\nuboldhat _{\baug }$ corresponding to
a box $u\in \nubold _{\baug }$, and define $\That '$ on $\nuboldhat
_{\baug }$ by $\That '(\uhat ) = T'(u)$.  Then $\That '$ is the
extension of a flagged tableau $\That $ on $\nuboldhat $ with flag
bounds $r+n+k+s,\ldots, r+n+1, r,\ldots,1$ in the augmentation boxes,
matching those on the components of $\nubold $ not deleted in
$\nuboldhat $.  Equivalently, $\That \in \FSST (\nuboldhat ,\sigmahat
,\Acal _{\pm }^{\bhat })$, where $\bhat _{i} = i$ for $i\leq r$
and $\bhat _{i} = n+i$ for $i>r$.  For boxes
$u$ of content $-1$ in the last $r$ ribbons of $\nubold $,
corresponding to parts of $\eta $, the non-attacking condition implies
$|T(u)|\leq r$.  Hence $T$ has no attacking inversions involving
augmentation boxes deleted in $\nubold$, so $\inv (T) = \inv (\That
)$.

We have $a(u) = a(\uhat )$ for all $u\in \nubold $, and $l(u) =
l(\uhat )$ unless $u$ is a box of content $-1$ in one of the last $r$
ribbons.  In that case, we have $l(u) = l(\uhat )+n$, due to the
contribution from ribbons of length zero in $\nubold $ that are
deleted in $\nuboldhat $.  Lemma~\ref{lem:inv-bound} implies that for
$\That $ we have $n((\eta ; \lambda; \kappa ) _{+}) + \sum
_{\smalldomino } l(\uhat ) - \inv (\That )\geq 0$.  Hence, if the
augmentation box in any of the last $r$ ribbons in $\nubold $ is in a
vertical domino, then for all $T$ we have $n((\eta ; \lambda; \kappa )
_{+}) + \sum _{\smalldomino } l(u) - \inv (T)\geq n$, and
\eqref{e:lhs-term} vanishes $t$-adically as $n\rightarrow \infty $.

Assume now that all of the ribbons in $\nubold $ corresponding to
parts of $\eta $ and $\lambda $, which are the last $k+r$ ribbons in
$\nuboldhat $, are non-ragged-right.  Then we have $|T(u)|\leq r$ for
boxes $u$ of content $-1$ in the last $r$ ribbons, which implies $\inv
(T) = \inv (\That )$ even without the non-attacking condition, hence
$T\mapsto \That $ is a weight-preserving bijection from tableaux $T\in
\FSST (\nubold ,\sigma ,\Acal _{\pm }^{b})$ to tableaux $\That \in
\FSST (\nuboldhat ,\sigmahat ,\Acal _{\pm }^{\bhat })$, both with
entries $a\in [m]_{\pm }$ such that $|a|\not \in [r+n+1, r+n+k]$.
This shows that
\begin{equation}\label{e:horizontal-term}
\Gcal _{\nubold ,\sigma }^{-}(x, y, 0^{k}, z, 0,\ldots,0;\, t^{-1}) =
\Gcal _{\nuboldhat ,\sigmahat }^{-}[x, y_{1}+\cdots +y_{n}, 0^{k}, z,
0,\ldots,0; t^{-1}]\, ,
\end{equation}
as claimed, and proves \eqref{e:stable-insertion}.  For $r=0$, the
same calculation gives \eqref{e:strong-stable-insertion}, since
passage to the $t$-adic limit as $n\rightarrow \infty $ was only used
for the vanishing of \eqref{e:lhs-term} when one of the last $r$
ribbons in $\nubold $ has its augmentation box in a vertical domino.
\end{proof}

\begin{lemma}\label{lem:lambda-independence}
The specialized integral form Macdonald polynomial $\Ecal _{(\eta ;\,
0^{n};\, \lambda ;\, \kappa )}(x,y,0^{k},z;\, q,t)$ in
\eqref{e:specialized-ns-mac} is independent of permuting the parts of
$\lambda $.
\end{lemma}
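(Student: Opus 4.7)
The plan is to use Proposition~\ref{prop:Knop-antisymmetry} together with the divisibility consequence of Hecke antisymmetry recorded at the end of \S\ref{ss:hecke}. Writing $\mu = (\eta;\, 0^n;\, \lambda;\, \kappa)$, the parts of $\lambda$ occupy positions $i = r+n+1, \ldots, r+n+k$ in $\mu$, and these are exactly the positions where the variables $x_i$ are being specialized to $0$ in \eqref{e:specialized-ns-mac}. It therefore suffices to show that swapping two adjacent parts $\lambda_j, \lambda_{j+1}$ leaves the specialization unchanged, since iterating this gives invariance under the full symmetric group $\Sfrak_k$ acting on $\lambda$.

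Fix such an index $i$ with $r+n < i < r+n+k$, so that $s_i \mu$ is obtained from $\mu$ by swapping $\lambda_j$ and $\lambda_{j+1}$ (for $j = i - r - n$). By Proposition~\ref{prop:Knop-antisymmetry}, the difference $\Ecal_{s_i \mu}(x;q,t) - \Ecal_\mu(x;q,t)$ is $T_i$-antisymmetric, i.e.\ $(T_i+1)(\Ecal_{s_i \mu} - \Ecal_\mu) = 0$. As noted in \S\ref{ss:hecke}, Hecke antisymmetry in $x_i, x_{i+1}$ implies divisibility by $t x_i - x_{i+1}$. Hence we can write
\[
\Ecal_{s_i \mu}(x;q,t) - \Ecal_\mu(x;q,t) = (t x_i - x_{i+1})\, g(x;q,t)
\]
for some polynomial $g$.

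Now specialize by setting all variables in positions $r+n+1,\ldots,r+n+k$ equal to $0$. In particular $x_i = x_{i+1} = 0$, so the factor $t x_i - x_{i+1}$ vanishes, and therefore the difference $\Ecal_{s_i \mu} - \Ecal_\mu$ vanishes after the specialization in \eqref{e:specialized-ns-mac}. This proves invariance under any adjacent transposition of the parts of $\lambda$, and hence under all of $\Sfrak_k$, giving the lemma. The argument is short and essentially formal once Proposition~\ref{prop:Knop-antisymmetry} is in hand; the only thing to verify is the elementary observation that the positions carrying the parts of $\lambda$ are precisely those at which we substitute $0$, which is built into the definition of $\mu = (\eta;\, 0^n;\, \lambda;\, \kappa)$ and the variable list $(x_1,\ldots,x_r, y_1,\ldots,y_n, 0^k, z_1,\ldots,z_s)$.
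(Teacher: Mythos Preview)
Your proof is correct and follows essentially the same approach as the paper's: reduce to adjacent transpositions, apply Proposition~\ref{prop:Knop-antisymmetry} to get $T_i$-antisymmetry of the difference, invoke the divisibility by $t x_i - x_{i+1}$ from \S\ref{ss:hecke}, and conclude by observing that both $x_i$ and $x_{i+1}$ are among the variables set to zero.
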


\begin{proof}
Let $\mu = (\eta ;\, 0^{n};\, \lambda ;\, \kappa )$ and let $i\in
[r+n+1,r+n+k-1]$, so $s_{i}$ acts on $\mu $ by switching two parts of
$\lambda $.  By Proposition~\ref{prop:Knop-antisymmetry}, $\Ecal
_{s_{i} \mu }(x;q,t) - \Ecal _{\mu }(x;q,t)$ is Hecke antisymmetric,
hence divisible by $t\, x_{i} - x_{i+1}$ (\S \ref{ss:hecke}).
Therefore $\Ecal _{\mu }(x;q,t)$ and $\Ecal _{s_{i} \mu }(x;q,t)$
become equal upon setting $x_{i}= x_{i+1} = 0$.
\end{proof}

\begin{cor}\label{cor:lambda-independence}
The function $\Jns _{\eta |\lambda |\kappa }(x;Y;z;\, q,t)$ is
independent of permuting the parts of $\lambda $, and also of deleting
any zeroes in $\lambda $.
\end{cor}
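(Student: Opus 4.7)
The strategy is to push both statements down to the pre-limit specialized integral forms via Proposition~\ref{prop:stable-insertion}, where the corresponding statements either hold by Lemma~\ref{lem:lambda-independence} (for permutation) or follow from it combined with a bookkeeping manipulation of the zero blocks (for deletion of zeroes).

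First I would establish permutation invariance. By Proposition~\ref{prop:stable-insertion}, $\Jns_{\eta|\lambda|\kappa}(x,Y,z;q,t)$ specialized at $Y = y_1+\cdots+y_n$ is the $t$-adic limit of the polynomials $\Ecal_{(\eta;\,0^n;\,\lambda;\,\kappa)}(x_1,\ldots,x_r,y_1,\ldots,y_n,0^k,z_1,\ldots,z_s;q,t)$.  Lemma~\ref{lem:lambda-independence} says that each of these pre-limit polynomials is invariant under permuting the parts of $\lambda$.  The $t$-adic limit of a sequence of such identities is again an identity (at each coefficient, the congruences stabilize modulo $(t^e)$ for every $e$), so $\Jns_{\eta|\lambda|\kappa}[x,y_1+\cdots+y_n,z;q,t]$ is invariant under permuting the parts of $\lambda$ for every $n$.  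Since a symmetric function in $Y$ is determined by its evaluations in arbitrarily many variables, this gives the permutation invariance of $\Jns_{\eta|\lambda|\kappa}(x,Y,z;q,t)$ as an $r$-nonsymmetric polynomial.

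Next I would handle deletion of zeroes.  Using the permutation invariance just proven, it suffices to show $\Jns_{\eta|(0;\lambda')|\kappa} = \Jns_{\eta|\lambda'|\kappa}$ whenever $\lambda = (0;\lambda')$ starts with a zero.  Applying Proposition~\ref{prop:stable-insertion} to the left-hand side, $\Jns_{\eta|(0;\lambda')|\kappa}[x,y_1+\cdots+y_n,z;q,t]$ is the $t$-adic limit of
\begin{equation*}
\Ecal_{(\eta;\,0^n;\,0;\,\lambda';\,\kappa)}(x_1,\ldots,x_r,y_1,\ldots,y_n,0,0^{k-1},z_1,\ldots,z_s;q,t),
\end{equation*}
which, since the composition has $n+1$ consecutive zero entries matched by $n+1$ variable slots (the $y_i$'s and one extra $0$), is the same polynomial as $\Ecal_{(\eta;\,0^{n+1};\,\lambda';\,\kappa)}$ evaluated at $(x,y_1,\ldots,y_n,0,0^{k-1},z)$.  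By the symmetry of this integral form in its first $n+1$ middle slots, the latter equals the value at $(x,y_1,\ldots,y_{n+1},0^{k-1},z)$ with $y_{n+1}$ set to zero.  Applying Proposition~\ref{prop:stable-insertion} in the other direction to the $(n+1)$-variable version, taking $n\to\infty$, and using that setting $y_{n+1}=0$ commutes with the $t$-adic limit, we obtain $\Jns_{\eta|\lambda'|\kappa}[x,y_1+\cdots+y_n,z;q,t]$.  Hence the two symmetric functions agree on every finite specialization of $Y$, so they are equal.

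The only delicate point is verifying that the $t$-adic convergence in Proposition~\ref{prop:stable-insertion} is strong enough to transfer these identities to the limit; but since the lemma-based identity is an exact equality of polynomials holding uniformly in $n$, and $t$-adic limits preserve equalities that hold in every term of a stabilizing sequence, no real obstacle arises.  The deletion argument likewise only uses that substituting $y_{n+1}=0$ into a symmetric function commutes with any limit that is taken in variables disjoint from $y_{n+1}$.  Thus the proof reduces to the unambiguous reinterpretation of the composition $(\eta;\,0^n;\,0;\,\lambda';\,\kappa)$ as $(\eta;\,0^{n+1};\,\lambda';\,\kappa)$.
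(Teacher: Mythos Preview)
Your proposal is correct and follows essentially the same approach as the paper: invoke Lemma~\ref{lem:lambda-independence} together with Proposition~\ref{prop:stable-insertion} for the permutation part, and for deletion of zeroes use the bookkeeping observation that $(\eta;\,0^{n};\,0^{p};\,\lambda_{+};\,\kappa) = (\eta;\,0^{n+p};\,\lambda_{+};\,\kappa)$ so that the pre-limit specializations for $\lambda$ are the $y_{n+1},\ldots,y_{n+p}\mapsto 0$ specializations of those for $\lambda_{+}$. The only cosmetic differences are that the paper handles all $p$ zeroes at once rather than one at a time, and is terser about the $t$-adic limit step; your invocation of symmetry in the middle $n+1$ slots is in fact unnecessary, since the equality $Q_{n+1}|_{y_{n+1}=0} = \Ecal_{(\eta;\,0^{n+1};\,\lambda';\,\kappa)}(x,y_{1},\ldots,y_{n},0,0^{k-1},z)$ is literal substitution.
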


\begin{proof}
Lemma~\ref{lem:lambda-independence} and
Proposition~\ref{prop:stable-insertion} give independence of permuting
the parts of $\lambda $.  If there are $p$ zeroes in $\lambda $, we
can assume that $\lambda = (0^{p};\lambda _{+})$.  Then $\Ecal _{(\eta
;\, 0^{n};\, \lambda ;\, \kappa )}(x,y,0^{k},z;\, q,t) = \Ecal _{(\eta
;\, 0^{n+p};\, \lambda _{+};\, \kappa )}(x,y,0^{k-p},z;\,
q,t)|_{y_{n+1},\ldots,y_{n+p} = 0}$, so
Proposition~\ref{prop:stable-insertion} implies $\Jns _{\eta |\lambda
|\kappa }(x,Y,z;\, q,t) = \Jns _{\eta |\lambda _{+} |\kappa
}(x,Y,z;\, q,t)$.
\end{proof}

In view of this corollary, we usually assume from now on that the
index $\lambda $ for $\Jns _{\eta |\lambda |\kappa }(x;Y;z;\, q,t)$ is
a partition, without trailing zeroes.

We now compare the above construction using specialization with the
one using symmetrization.  One reason for doing so is that
symmetrization is more commonly used in other literature.  A second is
that the comparison will allow us to see that $\Jns _{\eta |\lambda
|\kappa }(x,Y,z; q,t)$ has leading term $x^{\eta }\, m_{\lambda }(Y)\,
z^{\kappa }$, and to determine the leading coefficient.  In
particular, this will show that the functions $\Jns _{\eta |\lambda
|\kappa }(x,Y,z; q,t)$ form a basis of $\kk
[x_{1},\ldots,x_{r}]\otimes \Lambda _{\kk } (Y) \otimes \kk
[z_{1},\ldots, z_{s}]$ (recall that $\kk = \QQ(q,t)$ in this section).

\begin{prop}\label{prop:limits-of-symmetrizers}
Let $\hsym _{[i,j]}$ denote the normalized Hecke symmetrizer in
\eqref{e:normalized-Hecke-symmetrizer} for the group $\Sfrak _{[i,j]}$
of permutations of $[i,j]$.  Define plethystic alphabets $Y_{r,r+n} =
x_{r+1}+\cdots +x_{r+n}$ and set $Y_{r} = x_{r+1}, x_{r+2}, \ldots $.
Then the following hold.

(a) For $\eta \in \NN ^{r-1}$, we have
\begin{equation}\label{e:one-S}
\hsym _{[r,r+n]}\, \Jns _{(\eta ,b)|\lambda |\kappa }[x_{1}, \ldots,
x_{r}, Y_{r,r+n} ,z;\, q,t] \equiv \Jns _{\eta\, |(b,\lambda) |\,
\kappa }[x_{1},\ldots,x_{r-1},Y_{r-1,r+n},z;\, q,t] \,
\end{equation}
modulo polynomials whose coefficients in $\QQ (q,t)$ vanish to order
$>n$ at $t=0$.

(b) For $\eta \in \NN ^{r}$, $\lambda \in \NN ^{k}$, and $\kappa \in
\NN ^{s}$, we have the $t$-adically convergent limits (with $n\geq k$)
\begin{equation}\label{e:many-S-limit}
\begin{aligned}
\Jns _{\eta |\lambda |\kappa }(x_{1},\ldots,x_{r},\, & Y_{r},z;\, q,t)\\
& =\lim_{n\rightarrow \infty } \hsym _{[r+1,r+n]} \, \Jns _{(\eta
;\lambda ) |\emptyset |\kappa
}[x_{1},\ldots,x_{r+k},Y_{r+k,r+n},z;\,
q,t]\\
& = \lim_{n\rightarrow \infty } \hsym _{[r+1,r+n]}\, \Ecal _{(\eta
;\, \lambda ;\, 0^{n-k};\, \kappa
)}(x_{1},\ldots,x_{r+n},z_{1},\ldots,z_{s};\, q,t)\, .
\end{aligned}
\end{equation}
\end{prop}

\begin{proof}
For part (a), Lemma~\ref{lem:symm-on-signed-flagged} implies that the
functions $\Gcal ^{-}_{\nubold ,\sigma }(x,Y,z; t^{-1})$ in
\eqref{e:G-minus-Y} satisfy
\begin{equation}\label{e:symm-G-minus-Y}
\begin{aligned}
\hsym _{[r,r+n]}\, &\Gcal ^{-}_{\nubold ,\sigma
}[x_{1},\ldots,x_{r},Y_{r,r+n},z_{1},z_{2},\ldots ; t^{-1}]\\
& \equiv \Gcal _{\nubold ,\sigma }[x_{1}, \ldots, x_{r-1}-t x_{r-2},\,
(1-t)Y_{r-1,r+n} - t x_{r-1},\, z_{1}, z_{2} - t z_{1},\ldots ;
t^{-1}]\\
& = \Gcal ^{-}_{\nubold ,\sigma
}[x_{1},\ldots,x_{r-1},Y_{r-1,r+n},0,z_{1}, z_{2},\ldots ; t^{-1}]\, .
\end{aligned}
\end{equation}
Substituting $(z_{1},z_{2},\ldots )\mapsto
(0^{k},z_{1},\ldots,z_{s},0,\ldots )$ and then applying this to each
term in formula \eqref{e:J-eta-lambda-kappa} for $\Jns _{(\eta
,b)|\lambda |\kappa }[x_{1},\ldots,x_{r}, Y_{r,r+n}, z; q,t]$ gives
\eqref{e:one-S}.

For part (b), using $\hsym _{[r+1,r+n]} = \hsym _{[r+1,r+n]}\,
\cdots \, \hsym _{[r+k,r+n]}$, we get 
\begin{equation}\label{e:repeat-one-S}
\hsym _{[r+1,r+n]}\, \Jns _{(\eta ;\lambda )|\emptyset |\kappa
}[x_{1},\ldots,x_{r+k}, Y_{r+k,r+n},z; q,t] \equiv \Jns _{\eta
|\lambda|\kappa }[x_{1},\ldots, x_{r}, Y_{r,r+n},z; q,t]
\end{equation}
by repeated applications of \eqref{e:one-S}.  This gives the first
limit in \eqref{e:many-S-limit}.

Proposition~\ref{prop:stable-insertion} with $k\mapsto 0$, $\lambda
\mapsto \emptyset $, $\eta \mapsto (\eta ;\lambda )$, $n \mapsto n-k$,
and $y_{i} = x_{r+k+i}$ shows that $\Ecal _{(\eta ;\, \lambda ;\,
0^{n-k};\, \kappa )}(x_{1},\ldots, x_{r+n},z_{1},\ldots,z_{s}; q,t) $
and $\Jns _{(\eta ;\lambda ) |\emptyset
|\kappa}[x_{1},\ldots,x_{r+k},Y_{r+k,r+n},z; q,t]$ are congruent
modulo polynomials whose coefficients vanish to arbitrarily high order
at $t=0$ as $n\rightarrow \infty $.  The same then holds for their
normalized symmetrizations, giving the second limit in
\eqref{e:many-S-limit}.
\end{proof}

Given $\eta \in \NN ^{r}$, $\kappa \in \NN ^{s}$, and a partition
$\lambda $, the term for each $n\geq \ell (\lambda )$ in the second
limit in \eqref{e:many-S-limit} is a scalar multiple of the monic
symmetrization
\begin{equation}\label{e:monic-eta-lambda-kappa}
\sum \nolimits _{w \in \Sfrak _{[r+1,r+n]}/ \Sfrak _{\sS }} T_{w}\, E
_{(\eta ;\, \lambda ;\, 0^{n - \ell (\lambda )};\, \kappa
)}(x;q,t)\bigr|_{x\mapsto (x_{1},\ldots,x_{r+n},
z_{1},\ldots,z_{s})}\, ,
\end{equation}
where $\Sfrak _{\sS } = \Stab _{\Sfrak _{[r+1,r+n]}} ((\eta ;\,
\lambda ;\, 0^{n - \ell (\lambda )};\, \kappa ))$.  We now calculate
the $t$-adic limit of the scalar factor, as $n\rightarrow \infty $.
By \eqref{e:many-S-limit}, this gives the coefficient $\langle x^{\eta
}\, m_{\lambda }(Y)\, z^{\kappa } \rangle \, \Jns _{\eta |\lambda
|\kappa }(x,Y,z;q,t)$.

The scalar factor in question is the product of two pieces.  One
piece is the factor $W_{\sS }(t)/[n]_{t}!$ arising from the ratio
between the normalized symmetrization using $\hsym _{[r+1,r+n]}$ and
the reduced symmetrization in \eqref{e:monic-eta-lambda-kappa}.  Let
\begin{equation}\label{e:W-Stab(lam)}
W_{\Stab (\lambda )}(t)\defeq \sum \nolimits
_{w\in \Stab _{\Sfrak _{\ell (\lambda )}}(\lambda )} t^{l(w)}\, .
\end{equation}
Then $W_{\sS }(t)/[n]_{t}! = W_{\Stab (\lambda )}(t)\, [n-\ell
(\lambda )]_{t}!/[n]_{t}!$.  Since $\lim_{n\rightarrow \infty }\, [n -
\ell (\lambda )]_{t}!/[n]_{t}! = (1-t)^{\ell (\lambda )}$, this
contributes a factor $(1-t)^{\ell (\lambda )} W_{\Stab (\lambda )}(t)$
in the limit.  The other piece is the normalizing factor $\Pi _{u\in
\dg ((\eta ;\, \lambda ;\, 0^{n-\ell (\lambda )};\, \kappa ))}
(1-q^{a(u)+1} t^{l(u)+1})$ in the definition of $\Ecal _{(\eta ;\,
\lambda ;\, 0^{n-\ell (\lambda )};\, \kappa )}(x; q,t)$.  With the
obvious identifications between boxes of $\dg ((\eta ;\, \lambda ;\,
0^{n-\ell (\lambda )};\, \kappa ))$ for different values of $n$, the
factor $(1-q^{a(u)+1} t^{l(u)+1})$ is independent of $n$, except for
boxes $u$ of content $c(u)=-1$ in the top $r+\ell (\lambda )$ rows,
corresponding to the parts of $(\eta ;\lambda )$.  For these boxes,
$a(u)$ is independent of $n$, and $l(u) = l_{0}(u)+n-\ell (\lambda )$,
where $l_{0}(u)$ is the leg of $u$ when $n=\ell (\lambda )$.  The
factor in this case is therefore $(1-q^{a(u)+1} t^{l_{0}(u)+n -\ell
(\lambda )+1})$, which has $t$-adic limit $1$.  Combining the two
pieces, we see that $\langle x^{\eta }\, m_{\lambda }(Y)\, z^{\kappa }
\rangle \, \Jns _{\eta |\lambda |\kappa }(x,Y,z;q,t)$ is given by
\begin{equation}\label{e:proto-Goodberry}
\Pi _{\eta ,\lambda ,\kappa }(q,t) \defeq (1-t)^{\ell (\lambda )}\,
W_{\Stab (\lambda )}(t)\, \prod _{u\in S} (1 - q^{a(u)+1}
t^{l(u)+1})\, ,
\end{equation}
where $S\subseteq \dg ((\eta ;\lambda ;\kappa ))$ excludes boxes of
content $c(u)=-1$ in the top $r+\ell (\lambda )$ rows.

Since this coefficient is non-zero, \eqref{e:many-S-limit} further
implies that $x^{\eta }\, m_{\lambda }(Y)\, z^{\kappa }$ is the
leading term of $\Jns _{\eta |\lambda |\kappa }(x,Y,z;q,t)$, giving the
following corollary.

\begin{cor}\label{cor:J-alt}
For $\eta \in \NN ^{r}$, $\kappa \in \NN ^{s}$, and partition $\lambda
$, we have
\begin{equation}\label{e:J-alt}
\Jns _{\eta |\lambda |\kappa }(x,Y,z;q,t) = \Pi _{\eta ,\lambda
,\kappa }(q,t)\, E_{\eta |\lambda |\kappa }(x,Y,z;q,t)\, ,
\end{equation}
where the monically normalized partially symmetric Macdonald
polynomial $E_{\eta |\lambda |\kappa }(x,Y,z;q,t)$ is the scalar
multiple of $\Jns _{\eta |\lambda |\kappa }(x,Y,z;q,t)$ in which the
leading term $x^{\eta }\, m_{\lambda }(Y)\, z^{\kappa }$ has
coefficient $1$, and is given by the $t$-adically convergent limit
\begin{equation}\label{e:E-stable}
E_{\eta |\lambda |\kappa }(x,Y,z;q,t)
= \lim_{n\rightarrow \infty } \bigl( \sum _{w \in \Sfrak _{[r+1,r+n]}/
\Sfrak _{\sS } \hspace{-6ex}} T_{w}\, E _{(\eta ;\, \lambda ;\,
0^{n-\ell(\lambda )};\, \kappa )}(x;q,t)\, \bigr) \bigr|_{x\mapsto
(x_{1},\ldots,x_{r}, y_{1},\ldots,y_{n}, z_{1},\ldots,z_{s})}
\end{equation}
of the monically normalized symmetrizations in
\eqref{e:monic-symmetrized-ns-mac}.
\end{cor}

\begin{proof}
The preceding calculation gives \eqref{e:J-alt}.  The limit in
\eqref{e:E-stable} follows from the second limit in
\eqref{e:many-S-limit} by dividing each term by the scalar factor that
makes it monic, and using the fact that these scalar factors converge
$t$-adically to $\Pi _{\eta ,\lambda ,\kappa }(q,t)$.
\end{proof}

\begin{cor}\label{cor:basis}
The functions $\Jns _{\eta |\lambda |\kappa }(x,Y,z;q,t)$ for $\eta \in \NN
^{r}$, $\kappa \in \NN ^{s}$, and all partitions $\lambda $ are a
basis of $\kk [x_{1},\ldots,x_{r}] \otimes \Lambda _{\kk }(Y)\otimes
\kk [z_{1},\ldots,z_{s}]$.
\end{cor}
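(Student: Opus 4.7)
The plan is to reduce the basis statement to the corresponding statement for the symmetrization-based functions $E_{\eta|\lambda|\kappa}$ and then apply a standard triangularity argument in the stable Bruhat order.

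First, I would verify that the scalar $\Pi_{\eta,\lambda,\kappa}(q,t)$ appearing in Corollary~\ref{cor:J-alt} is a nonzero element of $\kk = \QQ(q,t)$. From \eqref{e:proto-Goodberry}, it is a product of three types of factors: powers of $(1-t)$, the Poincar\'e-type polynomial $W_{\Stab(\lambda)}(t)$ (which specializes at $t=1$ to $|\Stab(\lambda)| \neq 0$), and factors $(1-q^{a(u)+1}t^{l(u)+1})$ with $a(u)+1 \geq 1$. All three types are nonzero in $\kk$. Hence Corollary~\ref{cor:J-alt} gives $\Jns_{\eta|\lambda|\kappa} = c_{\eta,\lambda,\kappa}(q,t) \cdot E_{\eta|\lambda|\kappa}$ with $c_{\eta,\lambda,\kappa}(q,t) \in \kk^{\times}$, reducing the problem to showing that the $E_{\eta|\lambda|\kappa}(x,Y,z;q,t)$ form a basis of $V \defeq \kk[x_1,\ldots,x_r] \otimes \Lambda_\kk(Y) \otimes \kk[z_1,\ldots,z_s]$.

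Next, I would exploit the grading of $V$ by total degree, where $x_i, y_j, z_k$ all have degree $1$, so that $|m_\lambda(Y)| = |\lambda|$. Each homogeneous piece $V_d$ is finite-dimensional with monomial basis $\{x^\eta m_\lambda(Y) z^\kappa : |\eta| + |\lambda| + |\kappa| = d\}$. Since $E_{\eta|\lambda|\kappa}$ is defined as a $t$-adic limit of homogeneous nonsymmetric Macdonald polynomials of degree $|\eta| + |\lambda| + |\kappa|$, it lies in $V_d$ for $d = |\eta| + |\lambda| + |\kappa|$. Corollary~\ref{cor:J-alt} moreover asserts that $E_{\eta|\lambda|\kappa}$ has leading term $x^\eta m_\lambda(Y) z^\kappa$ with coefficient $1$, in the stable Bruhat order reviewed in \S\ref{ss:Bruhat}.

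Finally, I would conclude by a triangularity argument within each $V_d$. Choose any total refinement of the stable Bruhat order on the finite set of triples $(\eta, \lambda, \kappa)$ with $|\eta| + |\lambda| + |\kappa| = d$. Then the transition matrix expressing the $E_{\eta|\lambda|\kappa}$ in terms of the monomial basis is unitriangular, so it is invertible and the $E_{\eta|\lambda|\kappa}$ form a basis of $V_d$. Taking the direct sum over $d$ produces a basis of $V$, and multiplying each $E_{\eta|\lambda|\kappa}$ by the nonzero scalar $c_{\eta,\lambda,\kappa}(q,t)$ then gives the basis $\{\Jns_{\eta|\lambda|\kappa}\}$.

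The main (mild) obstacle is to be precise about the stable Bruhat order and leading-term statement for the limit object $E_{\eta|\lambda|\kappa}$. Concretely, one should note that by Corollary~\ref{cor:J-alt}, the symmetrization \eqref{e:monic-symmetrized-ns-mac} has monic leading term $x^\eta m_\lambda(y) z^\kappa$ for every $n \geq \ell(\lambda)$, and by the discussion of stability of Bruhat order at the end of \S\ref{ss:Bruhat}, this property is preserved under the $t$-adic limit (and under setting $y_{n+1}, \ldots, y_{n+\ell(\lambda)} = 0$, since that only affects strictly smaller terms). Once this is recorded, the triangularity argument is routine.
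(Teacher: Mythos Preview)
Your proposal is correct and follows essentially the same approach as the paper: the paper's proof is the one-liner ``Immediate from $\Jns_{\eta|\lambda|\kappa}(x,Y,z;q,t)$ having leading term $x^\eta m_\lambda(Y) z^\kappa$ with non-vanishing coefficient,'' and your argument simply unpacks this by verifying that $\Pi_{\eta,\lambda,\kappa}(q,t)\neq 0$, restricting to fixed total degree to get finite-dimensional pieces, and then invoking unitriangularity in (a refinement of) the stable Bruhat order. The only difference is the level of detail; the underlying idea is identical.
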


\begin{proof}
Immediate from $\Jns _{\eta |\lambda |\kappa }(x,Y,z;q,t)$
having leading term $x^{\eta }\, m_{\lambda }(Y)\, z^{\kappa }$ with
non-vanishing coefficient.
\end{proof}

Note, however, that although the functions $\Jns _{\eta |\lambda
|\kappa }(x,Y,z;q,t)$ have coefficients in $\ZZ [q,t]$, they are not a
basis of $\ZZ [q,t] [x_{1},\ldots,x_{r}] \otimes \Lambda _{\ZZ
[q,t]}(Y)\otimes \ZZ [q,t] [z_{1},\ldots,z_{s}]$.

\begin{remark}\label{rem:Goodberry}
For $r=0$ and $\eta =\emptyset $, the quantity inside the limit in
\eqref{e:E-stable} reduces to
\begin{equation}\label{e:E-strong-stable}
f_{n}(y,z;q,t) \defeq \bigl( \sum _{w \in \Sfrak _{[1,n]}/ \Sfrak
_{\sS } \hspace{-3ex}} T_{w}\, E _{(\lambda ;\, 0^{n-\ell (\lambda
)};\, \kappa )}(x;q,t)\, \bigr) \bigr|_{x\rightarrow
(y_{1},\ldots,y_{n}, z_{1},\ldots,z_{s})}\, .
\end{equation}
Now, $f_{n}(y,z;q,t)$ is the unique linear combination of the
$E_{(v(\lambda ;\, 0^{n-\ell (\lambda )});\, \kappa )}(y,z;q,t)$ for
$v\in \Sfrak _{n}$ that is symmetric in $y$ and has coefficient
$\langle m_{\lambda }(y)z^{\kappa } \rangle\, f_{n}(y,z;q,t) = 1$.
Lemma~\ref{lem:x1=0} implies that $f_{n+1}(0,y,z;q,t)$ is also a
linear combination of the $E_{(v(\lambda ;\, 0^{n-\ell (\lambda )});\,
\kappa )}(y,z;q,t)$.  Since this is again symmetric in $y$ with
leading coefficient $1$, we have $f_{n+1}(0,y,z ;q,t) = f_{n}(y,z
;q,t)$, i.e., the limit in \eqref{e:E-stable} converges strongly in
this case.

Since \eqref{e:strong-stable-insertion} also converges strongly, we
have
\begin{multline}\label{e:Goodberry-identity}
\Ecal _{(0^{n};\, \lambda ;\, \kappa )}(y_{1},\ldots,y_{n},0^{\ell
(\lambda )},z_{1},\ldots,z_{s}; q,t)\\
 = \Pi _{\emptyset |\lambda |\kappa }(q,t) \bigl( \sum _{w \in \Sfrak
_{[1,n]}/ \Sfrak _{\sS } \hspace{-3ex}} T_{w}\, E _{(\lambda ;\,
0^{n-\ell (\lambda )};\, \kappa )}(x;q,t)\, \bigr)
\bigr|_{x\rightarrow (y_{1},\ldots,y_{n}, z_{1},\ldots,z_{s})}\, ,
\end{multline}
for all $n\geq \ell (\lambda )$, with $\Jns _{\emptyset |\lambda
|\kappa }[y_{1}+\cdots +y_{n},z;q,t]$ given by the expression on
either side.  The results in this case were obtained previously by
Goodberry \cite{Goodberry22, Goodberry24}.
\end{remark}

The normalizing factor $\Pi _{\eta ,\lambda ,\kappa }(q,t)$ in
\eqref{e:proto-Goodberry} can be recast in a form more closely
resembling the factor in Definition~\ref{def:integral-form}.  If
$r_{i}$ are the part multiplicities of $\lambda = (1^{r_{1}},
2^{r_{2}},\ldots )$, then $\Stab (\lambda ) = \Sfrak _{\rr }$, and the
first two factors in \eqref{e:proto-Goodberry} are given by
\begin{equation}\label{e:modified-Knop-a=0}
(1-t)^{\ell (\lambda )} \, W_{\Stab (\lambda )}(t) \, = \, \prod _{i}
\prod _{j=1}^{r_{i}} (1-t^{j})\; = \!  \prod _{a_{\lambda }(u) = 0}
(1-t^{l'(u)+1})\, ,
\end{equation}
where the last product is over boxes $u\in \dg (\lambda )$ with arm
$a_{\lambda }(u) = 0$, and $l'(u)$ is the number of indices $i<j$ such
that $\lambda _{i} = \lambda _{j}$, where $u$ is in the row
corresponding to $\lambda _{j}$.

Let $S_{\eta }, S_{\lambda }, S_{\kappa }\subseteq \dg ((\eta ;\lambda
;\kappa ))$ be the sets of boxes in rows corresponding to $\eta $,
$\lambda $, $\kappa $, respectively, and let $S'_{\eta }\subseteq
S_{\eta }$, $S'_{\lambda }\subseteq S_{\lambda }$ consist of boxes $u$
with $c(u)\not =-1$, so we have $S = S'_{\eta }\cup S'_{\lambda }\cup
S_{\kappa }$ in \eqref{e:proto-Goodberry}.  For $u\in S_{\lambda }$,
define the {\em reduced leg} $l'(u)$ to be the leg of $u$ in $\dg
((\eta ;\lambda ;\kappa ))$ with hands of content $c(u)-1$ in rows of
$S_{\eta } $ excluded, along with all hands of content $c(u)$.

For $v\in S'_{\lambda }$, we have $e(v)\in S_{\lambda }$.  Setting
$u=e(v)$, we have $a(u) = a(v)+1$.  Since $\lambda $ is a partition,
there are no hands of content $c(u)-1=c(v)$ in rows of $S_{\lambda }$
above the row containing $u$ and $v$, so the hands excluded in the
definition of $l'(u)$ are exactly those not included in $l(v)$. Hence
$l'(u) = l(v)$.  For $u\in S_{\lambda }$ with arm $a(u) = 0$, i.e.,
$u$ not of the form $e(v)$, $l'(u)$ is the same as in
\eqref{e:modified-Knop-a=0}. Hence we have
\begin{equation}\label{e:Goodberry}
\Pi _{\eta ,\lambda ,\kappa }(q,t) = \prod _{u\in S'_{\eta }\cup
S_{\kappa }} (1-q^{a(u)+1} t^{l(u)+1})\times \prod _{u\in S_{\lambda
}} (1-q^{a(u)} t^{l'(u)+1})\, ,
\end{equation}
where $l'(u)$ denotes the reduced leg, as defined in the preceding
paragraph.

\begin{remark}\label{rem:previous-stable-Macs}
(i) In the case $\eta =\emptyset $, the results on left stabilized
integral form Macdonald polynomials $E_{\emptyset |\lambda |\kappa
}(Y,z; q,t)$ and $\Jns _{\emptyset |\lambda |\kappa }(Y,z; q,t)$
summarized in Remark~\ref{rem:Goodberry} are due to Goodberry
\cite{Goodberry22, Goodberry24}, as already noted.  Lapointe
\cite{Lapointe22} studies the same left stabilized polynomials as
introduced by Goodberry.  Knop \cite{Knop07} had earlier studied the
special case of left stabilized polynomials in which
$\lambda = \emptyset $,
and shown that these converge strongly (the
polynomials in \cite{Knop07} look right stabilized, as the variables
are reversed).

(ii) In the next subsection, we examine the case $\kappa =\emptyset $
of {\em right stabilized} partially symmetric Macdonald polynomials.
This case was previously studied by Bechtloff Weising
\cite{BechtloffWeising23}, who constructed the stable limits for a
variant of $t$-adic convergence, related them to operators arising
in the work of Carlsson and Mellit \cite{CarlMell18}, and
characterized them in terms of
Hecke symmetrization as the
 limit $\lim_{n\rightarrow \infty } \hsym
_{[r+1,r+n]}\, E_{(\eta;\, \lambda ;\, 0^{n-\ell (\lambda)})}(x;\,
q^{-1},t)$.
He gives a
formula in the spirit of Proposition~\ref{prop:original-HHL} for his
stable limits in the case $\lambda =\emptyset $ and poses the problem
of giving a similar formula for general $\eta |\lambda$, a problem
which is solved by Proposition~\ref{prop:right-stable-HHL}.
Bechtloff Weising does not construct integral forms.  In terms of the
monically normalized partially symmetrized Macdonald polynomials
$E_{\eta|\lambda |\kappa }(x,Y,z; q,t)$ in Corollary~\ref{cor:J-alt}, his
stable limits are $(1-t)^{\ell (\lambda )}W_{\Stab (\lambda )}(t)
E_{\eta|\lambda |\emptyset }(x,Y;q^{-1},t)$.
\end{remark}

\subsection{Right stable \texorpdfstring{$r$}{r}-nonsymmetric
Macdonald polynomials} \label{ss:right stable unmod}

\begin{defn}\label{def:right-stable}
The algebra of {\em $r$-nonsymmetric polynomials} with coefficients in
$\kk =\QQ (q,t)$ is $\kk [x_{1},\ldots,x_{r}]\otimes \Lambda _{\kk
}(Y)$.  For $\eta \in \NN ^{r}$ and $\lambda $ a partition, we define
the {\em right stabilized integral form $r$-nonsymmetric Macdonald
polynomial}
\begin{equation}\label{e:right-stable}
\Jns _{\eta |\lambda }(x,Y;q,t) \defeq \Jns _{\eta |\lambda |\emptyset
}(x,Y,z;q,t) \in \kk [x_{1},\ldots,x_{r}]\otimes \Lambda _{\kk }(Y).
\end{equation}
Note that $\Jns _{\eta |\lambda }(x,Y;q,t)$ has coefficients in $\ZZ
[q,t]$, and that there are no variables $z_{i}$ in the formula, since
$\kappa = \emptyset$ and $s=0$.
\end{defn}

Restating the definition in this case gives the following formula.

\begin{prop}\label{prop:right-stable-HHL}
For $\eta \in \NN ^{r}$ and $\lambda $ a partition, we have
\begin{equation}\label{e:right-stable-HHL}
\Jns _{\eta |\lambda }(x,Y;\, q,t) = t^{n((\eta ;\lambda) _{+})}
\hspace{-2ex} \sum _{(\nubold ,\sigma )\in R'(\mu )} \bigl( \prod
\nolimits_{\domino } q^{a(u)+1}\, t^{l(u)} \bigr) \Gcal _{\nubold
,\sigma }^{-}(x_{1}, \ldots, x_{r}, Y, 0,\ldots,0;\, t^{-1}),
\end{equation}
where $R'(\mu )$ is the same as $R(\mu )$ in \eqref{e:ns-HHL}, with
$\mu =(\eta ;\lambda)\in \NN ^{m}$ and $m=r+\ell(\lambda)$, except
that $R'(\mu )$ only contains pairs $(\nubold ,\sigma )$ in which all
augmented ribbons are non-ragged-right, i.e., no augmentation box is
in a vertical domino.
\end{prop}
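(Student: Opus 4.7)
The plan is to start from the formula in Definition~\ref{def:symm-int-form} with $\kappa=\emptyset$, which already expresses $\Jns_{\eta|\lambda}(x,Y;q,t)$ as a sum over $R'(\mu)$ with $\mu=(\eta;\lambda)$, and then show that for every $(\nubold,\sigma)\in R'(\mu)\setminus R''(\mu)$ the corresponding signed flagged LLT specialization is identically zero. Thus the formula will collapse to the stated sum over $R''(\mu)$.

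Fix such $(\nubold,\sigma)$, so that the top $r$ ($\eta$-)ribbons are non-ragged-right but some $\lambda$-ribbon $\nu^{(j)}$ (with $1\le j\le\ell(\lambda)$) is strictly ragged-right. By the description of augmented ribbons in \S\ref{ss:ns-HHL}, the first row of $\nu^{(j)}$ is empty and the box $v\in\nu^{(j)}$ of content $-1$ lies in the second row, forming a vertical domino with the augmentation box. Let $p$ denote the flag number in the first (empty) row of $\nu^{(j)}$, i.e., the augmentation flag number, and $q$ the flag number in the second row. Since $\nubold\in R'(\mu)$ is built with Theorem~\ref{thm:ns-HHL}'s convention that augmentation boxes receive the flag numbers $m,\ldots,1$ from bottom to top, the $\lambda$-ribbons get augmentation flag numbers in $\{r+1,\ldots,r+\ell(\lambda)=m\}$, so $p\geq r+1$. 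The remaining non-augmentation flag numbers are $m+1,\ldots,l$, hence $q\geq m+1\geq r+2$.

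Next, I would apply Lemma~\ref{lem:i-admissible}: because $v$ overhangs the empty first row of $\nu^{(j)}$, it fails to be $p$-admissible but is $i$-admissible precisely for $p<i\leq q$. In particular, any valid tableau must fill $v$ with an entry from $\Acal_i$ with $i\geq p+1\geq r+2$. In the specialization $\Gcal^-_{\nubold,\sigma}(x_1,\ldots,x_r,Y,0,\ldots,0;t^{-1})$ defined in \eqref{e:G-minus-Y}, the plethystic alphabets are $X_i=x_i-tx_{i-1}$ for $i\leq r$, $X_{r+1}=(1-t)Y-tx_r$, and $X_i=z_{i-r}-tz_{i-r-1}=0$ for $i\geq r+2$, so the subalphabet $\Acal_i$ has no letters contributing non-zero monomials for $i\geq r+2$. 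Applying the combinatorial formula (Proposition~\ref{prop:signed-G-tableaux}), the set of flagged tableaux contributing a non-zero monomial is empty, hence $\Gcal^-_{\nubold,\sigma}(x_1,\ldots,x_r,Y,0,\ldots,0;t^{-1})=0$.

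The main conceptual obstacle is not deep: it is essentially a careful tracking of how flag numbers are distributed. The precise fact to verify is that the strictly ragged-right condition forces the overhanging box of content $-1$ into an admissibility range $i\geq r+2$ that corresponds exactly to the specialized-to-zero formal alphabets. This hinges on two separate pieces of bookkeeping: the $\lambda$-augmentation flag numbers occupy $\{r+1,\ldots,m\}$, and non-augmentation flag boxes receive flag numbers $>m$. Once these are in hand the vanishing of each offending term is immediate from Proposition~\ref{prop:signed-G-tableaux}, and \eqref{e:right-stable-HHL} follows. As a sanity check, the smallest case $\eta=\emptyset$, $\lambda=(1)$ can be verified directly: the vertical augmented ribbon of size one forces the single box in row $2$ to be $i$-admissible only at $i=2$, while $X_2=0$, confirming the vanishing.
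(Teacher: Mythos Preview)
Your proof is correct and takes essentially the same approach as the paper. The paper specializes $Y = y_{1}+\cdots +y_{n}$, notes that the contributing flagged tableaux then have all entries in $[r+n]_{\pm }$ while the augmentation flag bound in each $\lambda $-ribbon exceeds $r+n$, so no entry can sit above such an augmentation box, and then lets $n\to \infty $; your admissibility argument is the same observation phrased via Definition~\ref{def:i-admissible}. The one point to tighten is the citation: Proposition~\ref{prop:signed-G-tableaux} is stated for finite flag-bound specializations, so to argue directly with the formal alphabet $Y$ you should either invoke Theorem~\ref{thm:combinatorial-G-nu-sigma} with $\Acal _{i}=\emptyset $ for $i\geq r+2$ (and Remark~\ref{rem:big enough bounds}), or pass through finite $n$ first as the paper does.
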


\begin{remark}\label{rem:rearranging-lambda}
By Corollary~\ref{cor:lambda-independence}, $\Jns _{\eta |\lambda }(x,Y;\,
q,t)$ is also equal to $\Jns _{\eta |\zeta | \emptyset}(x,Y,z;q,t)$ for
any weak composition $\zeta $ such that $\lambda =\zeta _{+}$, i.e.,
$\zeta $ is a permutation of the parts of $\lambda $ and possible
added zeroes.  Hence \eqref{e:right-stable-HHL} also holds with $\mu
=(\eta ;\zeta )$ in place of $(\eta ;\lambda )$.
\end{remark}

By Proposition~\ref{prop:stable-insertion} the definition is
equivalent to the specialization formula
\begin{equation}\label{e:right-stable-spec}
\Jns _{\eta |\lambda }(x,Y;q,t)  =
\lim_{n\rightarrow \infty } \Ecal _{(\eta ;\, 0^{n};\, \lambda
)}(x_{1},\ldots,x_{r}, y_{1},\ldots,y_{n},0^{\ell (\lambda )}; q,t)\, ,
\end{equation}
and by Proposition~\ref{prop:limits-of-symmetrizers} and
Corollary~\ref{cor:J-alt}, also to the symmetrization formulas
\begin{equation}\label{e:right-stable-symm}
\begin{aligned}
\Jns _{\eta |\lambda }(x,Y;\, & q,t) \\
& = \lim_{n\rightarrow \infty } \bigl(\hsym_{[r+1,r+\ell (\lambda
)+n]} \Ecal _{(\eta ;\, \lambda ;\, 0^{n})}(x;q,t)\, \bigr)
|_{x\mapsto
(x_{1},\ldots,x_{r}, y_{1},\ldots,y_{n+\ell (\lambda )})}\\
& = \Pi _{\eta ,\lambda ,\emptyset }(q,t) \lim_{n\rightarrow \infty }
\bigl( \sum _{w \in \Sfrak _{[r+1, r+\ell(\lambda )+n ]}/ \Sfrak _{\sS
} \hspace{-6ex}} T_{w}\, E _{(\eta ;\, \lambda ;\, 0^{n})}(x;q,t)\,
\bigr) |_{x\mapsto (x_{1},\ldots,x_{r}, y_{1},\ldots,y_{n+\ell
(\lambda )})}\, ,
\end{aligned}
\end{equation}
where $\Pi _{\eta ,\lambda ,\emptyset }(q,t)$ is the product in
\eqref{e:proto-Goodberry} and \eqref{e:Goodberry}, $\Sfrak _{\sS }$ is
the stabilizer in $\Sfrak _{[r+1, r+n+\ell (\lambda ) ]}$ of $(\eta
;\, \lambda ;\, 0^{n})$, and all of the above limits converge
$t$-adically.  In particular, $\Jns _{\eta |\lambda }(x,Y;q,t)$ has
leading term $x^{\eta } \, m_{\lambda }(Y)$ with coefficient $\Pi
_{\eta ,\lambda ,\emptyset }(q,t)$.  For $\eta \in \NN ^{r}$ and all
partitions $\lambda $, these functions form a $\kk $-basis of $\kk
[x_{1},\ldots,x_{r}]\otimes \Lambda _{\kk }(Y)$.

\subsection{The modified \texorpdfstring{$r$}{r}-nonsymmetric
Macdonald polynomials \texorpdfstring{$\mE_{\eta|\lambda}$}{}}
\label{ss:r mod nsmac}

We come now to the crucial innovation of this section: an operation
$\Pisf _{r}$ on $r$-nonsymmetric polynomials, mixing symmetric and
nonsymmetric plethysm, which we view as the right way to generalize
the symmetric plethysm $f(X)\mapsto f[X/(1 - t)]$.  We use $\Pisf
_{r}$ to define the plethystically modified stable $r$-nonsymmetric
Macdonald polynomials that are the subject of our atom positivity
conjectures in \S \ref{ss:ns-Mac-atom-positivity}.

In the formulas for $\Jns _{\eta |\lambda }(x,Y;q,t)$, it is natural
to interpret $Y$ as a set of additional $x$ variables $Y = Y_{r} =
x_{r+1}, x_{r+2}$, so that $r$-nonsymmetric polynomials $f(x,Y)$ are
identified with functions of $X = x_{1}, x_{2}, \ldots $ symmetric in
all but the first $r$ variables.
We can express $f(x,Y) = f(x_1,\dots, x_r, Y)$ uniquely as a function
of $x_1, \dots, x_r$ and symmetric functions in $X$, by the formula
\begin{equation}\label{e:change-Y-to-X}
f(x_1, \dots, x_r,Y)=f[x_1, \dots, x_r,X-(x_{1}+\cdots +x_{r})].
\end{equation}
This defines an isomorphism $\kk [x_{1},\ldots,x_{r}]\otimes \Lambda
_{\kk }(Y) \cong \kk [x_{1},\ldots,x_{r}]\otimes \Lambda _{\kk }(X)$,
whose inverse, expressing $g(x_1, \dots, x_r,X)\in \kk
[x_{1},\ldots,x_{r}]\otimes \Lambda _{\kk }(X)$ as a function of $x_1,
\dots, x_r$ and $Y$, is given by
\begin{equation}\label{e:change-X-to-Y}
g(x_1, \dots, x_r,X)=g[x_1, \dots, x_r,Y+x_{1}+\cdots x_{r}].
\end{equation}

\begin{defn}\label{def:Pir}
Let $\Pi _{t,X} f(X) = f[X/(1-t)]$ for symmetric functions $f(X)\in
\Lambda _{\kk }(X)$ in the formal alphabet $X$.  The operator $\Pisf
_{r}$ of {\em $r$-nonsymmetric plethysm} is the composite $\Pisf _{r}
= \Pi _{t,x}\otimes \Pi _{t,X}$ of the (commuting) operations $\Pi
_{t,x}$ and $\Pi _{t,X}$ on $r$-nonsymmetric polynomials expressed as
functions $g(x,X)\in \kk [x_{1},\ldots,x_{r}]\otimes \Lambda _{\kk
}(X)$ of $x_{1},\ldots,x_{r}$ and the formal alphabet $X$.  More
explicitly,
\begin{equation}\label{e:Pir}
\Pisf _{r} \bigl(h(x_{1},\ldots,x_{r})\, f(X)\bigr) = (\Pi
_{t,x_{1},\ldots,x_{r}}\, h(x_{1},\ldots,x_{r}))\, f[X/(1-t)].
\end{equation}
\end{defn}

\begin{example}\label{ex:Pir}
Taking $r=2$, we calculate $\Pisf _{2}\, (x_{2}\, e_{1}(Y))$.  Using
\eqref{e:change-Y-to-X}, we write
\begin{equation}\label{e:Pir-step-1}
x_{2}\, e_{1}(Y) = x_{2}\, e_{1}[X-(x_{1}+x_{2})] = x_{2}\, e_{1}(X) -
x_{2}\, (x_{1}+x_{2})\, .
\end{equation}
Now we apply $\Pi _{t,x}\otimes \Pi _{t,X}$, obtaining
\begin{equation}\label{e:Pir-step-2}
\begin{aligned}
\Pisf _{2}\, (x_{2}\, e_{1}(Y)) & = (\Pi _{t,x}\, x_{2})\,
e_{1}[X/(1-t)] - \Pi _{t,x}\, (x_{1}\, x_{2} +x_{2}^{2})\\
& = (t\, x_{1}+x_{2})\, e_{1}(X) /(1-t) - ((t^{2} + t)\,
x_{1}^{2}+(t+1)\, x_{1}x_{2}+x_{2}^{2})\, .
\end{aligned}
\end{equation}
To express this in terms of $x$ and $Y$ again, we use
\eqref{e:change-X-to-Y}, obtaining
\begin{equation}\label{e:Pir-step-3}
\begin{aligned}
\Pisf _{2}\, (x_{2}\, e_{1}(Y)) & = (t\, x_{1}+x_{2})\, e_{1}[Y +
x_{1}+x_{2}]/(1-t) - ((t^{2} + t)\, x_{1}^{2}+(t+1)\, x_{1}x_{2}+x_{2}^{2})\\
& = \bigl((t\, x_{1}+x_{2})\, e_{1}(Y) + t^{3} x_{1}^{2}+(t^{2} +t)\,
x_{1}x_{2}+ t\, x_{2}^{2} \bigr)/(1-t)\, .
\end{aligned}
\end{equation}
\end{example}

Theorem~\ref{thm:Pit-symm} implies that $\Pisf _{r}$ is a stable limit
of nonsymmetric plethysms $\Pi
_{t,(x_{1},\ldots,x_{r},y_{1},\ldots,y_{n})}$ acting on polynomials
symmetric in the $y$ variables.  The following lemma makes this
precise.

\begin{lemma}\label{lem:Pit-vs-Pir}
Suppose that $\phi _{n}(x,y,z)\in \kk[x_{1}, \ldots, x_{r}, y_{1},
\ldots, y_{n}, z_{1}, \ldots, z_{s}]$ are symmetric in $y_{1},
\ldots, y_{n}$, have coefficients
with no pole at $t=0$,
and converge $t$-adically to
\begin{equation}\label{e:lim-assumption}
\lim_{n\rightarrow \infty } \phi _{n}(x,y,z) = \phi (x,Y,z) \in
\kk[x_{1}, \ldots, x_{r}, z_{1}, \ldots, z_{s}]\otimes \Lambda
_{\kk }(Y)\, .
\end{equation}
Then the nonsymmetric plethysms $\Pi _{t,(x,y,z)}\, \phi _{n}(x,y,z)$,
with coefficients expanded as formal power series in $t$, converge
formally in $x$, $y$, $z$, and $t$ to
\begin{equation}\label{e:lim-conclusion-z}
\lim_{n\rightarrow \infty } \Pi _{t,(x,y,z)}\, \phi _{n}(x,y,z) =
\Pisf _{r} \, \Pi _{t,z}\, \phi (x,Y,z)
\end{equation}
(note that $\Pisf _{r}$, acting on $x,Y$, commutes with $\Pi _{t,z}$).
As a consequence, the following limits also converge formally in $x$,
$y$, and $t$:
\begin{equation}\label{e:lim-conclusions-0}
\lim_{n\rightarrow \infty } \bigl((\Pi _{t,(x,y,z)}\, \phi
_{n}(x,y,z))|_{z = 0}\bigr) = \lim_{n\rightarrow \infty } \Pi
_{t,(x,y)}\, \phi _{n}(x,y,0) = \Pisf _{r}\, \phi (x,Y,0).
\end{equation}
\end{lemma}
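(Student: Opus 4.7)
The plan is to reduce $\phi$ to a sum of terms of the form $g(x)\,h[X]\,k(z)$ with $X = x_1+\cdots+x_r+Y$, apply Theorem~\ref{thm:Pit-symm} termwise, and extract the stable coefficient of each monomial $x^\aA y^\bb z^\kappa t^e$ from the resulting approximations; the consequences \eqref{e:lim-conclusions-0} will then follow by specialization.

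For the reduction, using the two presentations \eqref{e:identify-Y-to-X}--\eqref{e:change-Y-to-X} of $r$-nonsymmetric polynomials, expand $\phi(x,Y,z) = \sum_i g_i(x)\,h_i[X]\,k_i(z)$ with $g_i \in \kk[x_1,\ldots,x_r]$, $k_i \in \kk[z_1,\ldots,z_s]$, and $h_i \in \Lambda_\kk$. The $t$-adic convergence $\phi_n \to \phi$ yields $\phi_n(x,y,z) \equiv \sum_i g_i(x)\,h_i[x_1+\cdots+x_r+y_1+\cdots+y_n]\,k_i(z) \pmod{(t^{e+1})}$ for $n$ large, and since $\Pi_{t,(x,y,z)}$ is homogeneous in $(x,y,z)$, only finitely many $h_i$ contribute to a given target monomial $x^\aA y^\bb z^\kappa$.

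Applying Theorem~\ref{thm:Pit-symm} to each summand $g_i(x)\,h_i[X]\,k_i(z)$ (with the theorem's $f,g,h,n$ matched to $h_i, g_i, k_i$, and $r+n$), the nonsymmetric plethysm is symmetric modulo $(t^{e+1})$ in an initial block of $y$-variables (whose length grows with $n$ but whose complement depends only on $h_i$, $k_i$, and $e$), and reduces upon setting the later $y_j$ to zero to $(\Pi_{t,x}g_i(x))\,h_i[(x_1+\cdots+x_r+y_1+\cdots+y_{m_i})/(1-t)]\,(\Pi_{t,z}k_i(z))$ for an explicit $m_i$. Taking $n$ large enough that this initial block contains $\supp\bb$, the coefficient of $x^\aA y^\bb z^\kappa t^e$ is unchanged by further zeroing variables outside $\supp\bb$; summing over $i$, it matches the coefficient of the same monomial in $\Pisf_r\,\Pi_{t,z}\phi(x,Y,z)|_{Y = y_1+y_2+\cdots}$ by the definition of $\Pisf_r$ in \eqref{e:Pir} applied to the decomposition of $\phi$. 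This proves \eqref{e:lim-conclusion-z}.

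The consequences \eqref{e:lim-conclusions-0} then follow quickly: the second equality is \eqref{e:lim-conclusion-z} applied to the sequence $\phi_n(x,y,0) \to \phi(x,Y,0)$ with no $z$-variables. For the first equality, $\Pi_{t,z}$ is $\kk[x]\otimes\Lambda_\kk(Y)$-linear and preserves positive $z$-degree, so $(\Pi_{t,z}\psi)|_{z=0} = \psi(x,Y,0)$ for any $\psi \in \kk[x,z]\otimes\Lambda_\kk(Y)$; combined with the fact that $\Pisf_r$ commutes with $z\mapsto 0$, this yields $(\Pisf_r\,\Pi_{t,z}\phi)|_{z=0} = \Pisf_r\phi(x,Y,0)$. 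The main obstacle is the careful bookkeeping of mod $(t^{e+1})$ approximations through the three stages (decomposition, Theorem~\ref{thm:Pit-symm}, and monomial extraction after zeroing $y$-variables); the hypothesis that the coefficients of $\phi_n$ have no pole at $t=0$ is essential for these $t$-adic reductions to be rigorous.
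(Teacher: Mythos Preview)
Your proof is correct and follows essentially the same approach as the paper: decompose $\phi$ into factored terms $g_i(x)\,h_i[X]\,k_i(z)$, apply Theorem~\ref{thm:Pit-symm} termwise to obtain \eqref{e:lim-conclusion-z}, and then deduce \eqref{e:lim-conclusions-0} by observing that $\Pi_{t,z}$ commutes with setting $z=0$ (being homogeneous of degree zero) and that the case with no $z$-variables gives the second equality directly. Your version is somewhat more explicit about the mod $(t^{e+1})$ bookkeeping and the coefficient-extraction step, but the underlying argument is identical.
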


\begin{proof}
The $t$-adic convergence in \eqref{e:lim-assumption} means that for
any $e$, we have $\phi _{n}(x,y,z) \equiv \phi [x,y_{1}+\cdots
+y_{n},z]$ (mod $(t^{e})$), and therefore also $\Pi _{t,(x,y,z)}\phi
_{n}(x,y,z) \equiv \Pi _{t,(x,y,z)}\phi [x,y_{1}+\cdots +y_{n},z]$
(mod $(t^{e})$), for $n$ sufficiently large.  Hence, if we show that
\begin{equation}\label{e:lim-with-z}
\lim_{n\rightarrow \infty } \Pi _{t,(x,y,z)}\phi [x,y_{1}+\cdots
+y_{n},z] = \Pisf _{r} \, \Pi _{t,z}\, \phi (x,Y,z)\, ,
\end{equation}
with the limit converging formally in $x$, $y$, $z$ and $t$, then
\eqref{e:lim-conclusion-z} follows.  For \eqref{e:lim-with-z}, we can
assume by linearity that $\phi (x,Y,z)$ has the factored form $\phi
(x,Y,z) = g(x)f[X]h(z)$ in Theorem~\ref{thm:Pit-symm}, where $X=
x_{1}+\cdots +x_{r}+Y$.
Then Corollary~\ref{cor:other-limits2} implies
\begin{equation}\label{e:lim-with-z-2}
\begin{aligned}
\lim_{n\rightarrow \infty } \Pi _{t,(x,y,z)}\phi [x,y_{1}+\cdots
+y_{n},z] & = (\Pi _{t,x} g(x))f[X/(1-t)] (\Pi _{t,z} h(z)) \\
& = \Pisf _{r} \, \Pi _{t,z}\, \phi (x,Y,z)\, .
\end{aligned}
\end{equation}

We always have $(\Pi _{t,z}\, h(z))|_{z=0} = h(0)$, i.e., the operator
$\Pi _{t,z}$ commutes with setting $z=0$, because $\Pi _{t,z}$ is
homogeneous of degree zero and $\Pi _{t,z}\, 1 = 1$. We also have
$(\lim_{n\to\infty} f_n(x,y,z,t))|_{z=0} = \lim_{n\to\infty}
f_n(x,y,0,t)$ for any sequence $f_n$ which converges formally in $x$,
$y$, $z$, and $t$.  Hence setting $z=0$ in \eqref{e:lim-conclusion-z}
gives $\lim_{n\rightarrow \infty } \bigl((\Pi _{t,(x,y,z)}\, \phi
_{n}(x,y,z))|_{z = 0}\bigr) = \Pisf _{r}\, \phi (x,Y,0).$ The second
equality in \eqref{e:lim-conclusions-0} follows from the case of
\eqref{e:lim-conclusion-z} with no $z$ variables, applied to
$\phi_{n}(x,y,0)$ and $\phi (x,Y,0)$.
\end{proof}

We now get a stable version of
Proposition~\ref{prop:Pit-signed-to-unsigned}.

\begin{lemma}\label{lem:Pir-signed-to-unsigned}
For any flagged LLT data $\nubold , \sigma $ of length at least $r$,
we have
\begin{equation}\label{e:Pir-signed-to-unsigned}
\Pisf _{r}\, \Gcal ^{-} _{\nubold ,\sigma
}(x_{1},\ldots,x_{r},Y,0,\ldots,0;\, t^{-1}) = \Gcal _{\nubold ,\sigma
}[x_{1},\ldots,x_{r},Y,0,\ldots,0;\, t^{-1}]\, ,
\end{equation}
where $ \Gcal ^{-} _{\nubold ,\sigma}$ is as in \eqref{e:G-minus-Y}.
\end{lemma}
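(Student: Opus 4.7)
The plan is to deduce the stable identity by taking a $Y$-limit of the finite-variable identity in Proposition~\ref{prop:Pit-signed-to-unsigned}, using Lemma~\ref{lem:Pit-vs-Pir} to pass $\Pi_{t,x}$ to $\Pisf_{r}$. The case $l=r$ reduces immediately to Proposition~\ref{prop:Pit-signed-to-unsigned} applied with the strictly increasing flag bounds $b_{i}=i$ (and with $Y$ absent), so I assume $l>r$.

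For each $n\ge 1$, I introduce auxiliary dummy variables $z_{1},\ldots,z_{l-r}$ and set
\[
\phi_{n}(x,y,z) \defeq \Gcal^{-}_{\nubold ,\sigma }(x_{1},\ldots,x_{r},\, y_{1}+\cdots +y_{n},\, z_{1},\ldots,z_{l-r};\, t^{-1}).
\]
Unpacking \eqref{e:G-minus-Y} and comparing with the definition \eqref{e:signed-G}, I verify that $\phi_{n}$ is exactly the signed flag-bound specialization of $\Gcal_{\nubold ,\sigma }$ in the $n+l$ variables $(x,y,z)$ with flag bounds $b_{i}=i$ for $i\le r$, $b_{r+1}=n+r+1$, and $b_{r+j}=n+r+j$ for $j\ge 2$; these bounds are strictly increasing for every $n\ge 1$. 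Proposition~\ref{prop:Pit-signed-to-unsigned} therefore gives
\[
\Pi _{t,(x,y,z)}\,\phi_{n}(x,y,z)\;=\;\Gcal_{\nubold ,\sigma }[X_{1}^{b},\ldots,X_{l}^{b};\,t^{-1}],
\]
with $X_{i}^{b}=x_{i}$ for $i\le r$, $X_{r+1}^{b}=y_{1}+\cdots +y_{n}+z_{1}$, and $X_{r+j}^{b}=z_{j}$ for $j\ge 2$. Setting $z=0$ collapses the right-hand side to $\Gcal_{\nubold ,\sigma }[x_{1},\ldots,x_{r},\, y_{1}+\cdots +y_{n},\, 0,\ldots,0;\,t^{-1}]$; since this is a flagged symmetric function (hence symmetric in $X_{r+1}$), strong convergence of $y_{1}+\cdots +y_{n}$ to $Y$ yields the limit $\Gcal_{\nubold ,\sigma }[x_{1},\ldots,x_{r},Y,0,\ldots,0;\,t^{-1}]$ as $n\to\infty$.

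To control the left side under the same limit, I apply Lemma~\ref{lem:Pit-vs-Pir} to the family $\phi_{n}$. It is symmetric in $y_{1},\ldots,y_{n}$ by construction, and since $\phi(x,Y,z) \defeq \Gcal^{-}_{\nubold ,\sigma }(x_{1},\ldots,x_{r},Y,z_{1},\ldots,z_{l-r};\,t^{-1})$ is symmetric in $Y$, we have $\phi_{n}=\phi[x,\, y_{1}+\cdots +y_{n},\, z]$ strongly, which implies formal (in particular $t$-adic) convergence. The ``no pole at $t=0$'' hypothesis is handled by first multiplying through by $t^{h(\nubold )+e(\nubold ,\sigma )}$, a scalar that commutes with both $\Pi_{t,\cdot}$ and $\Pisf_{r}$ by $\kk$-linearity; this clears the $t^{-h(\nubold )-e(\nubold ,\sigma )}$ prefactor introduced by Definition~\ref{def:G-nu-sigma} when the parameter is taken to be $t^{-1}$. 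Equation~\eqref{e:lim-conclusions-0} then yields $\lim_{n\to\infty }\bigl(\Pi _{t,(x,y,z)}\phi_{n}\bigr)\big|_{z=0}=\Pisf_{r}\,\phi(x,Y,0)$, and $\Pisf_{r}\,\phi(x,Y,0)$ is exactly the left side of \eqref{e:Pir-signed-to-unsigned}. Equating the two limits finishes the argument.

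The main obstacle is recognizing why a direct attack in the $(x,y)$-alphabet fails: without the dummy $z$-variables, the natural flag bound pattern forces $b_{r}=b_{r+1}$, violating the strict-increase hypothesis of Proposition~\ref{prop:Pit-signed-to-unsigned}. The $z_{j}$'s are introduced purely as placeholders to shift $b_{r+1}$ up to $n+r+1$, restoring strict increase and making the finite-$n$ identity applicable; Lemma~\ref{lem:Pit-vs-Pir} is then perfectly adapted to justify setting $z=0$ after applying the plethysm and passing to the $Y$-limit in that order, which is the only delicate commutativity issue in the argument.
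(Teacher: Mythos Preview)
Your proof is correct and follows essentially the same route as the paper's: introduce dummy $z$-variables to obtain strictly increasing flag bounds so that Proposition~\ref{prop:Pit-signed-to-unsigned} applies, then invoke Lemma~\ref{lem:Pit-vs-Pir} to set $z=0$ and pass to the $Y$-limit. One small correction: your claim that multiplying by $t^{h(\nubold)+e(\nubold,\sigma)}$ clears all poles at $t=0$ is true, but the reason is the bound $\inv(T)\le h(\nubold)+e(\nubold,\sigma)$ from Lemma~\ref{lem:rr-incr-triples} (applied to the tableau formula in Proposition~\ref{prop:signed-G-tableaux}), not merely cancellation of the explicit prefactor in Definition~\ref{def:G-nu-sigma}---the remaining sum in that definition can itself contribute negative powers of $t$ when the parameter is $t^{-1}$.
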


\begin{proof}
By Proposition~\ref{prop:Pit-signed-to-unsigned}, for any $n$, we have
\begin{multline}\label{e:xyz-signed-to-unsigned}
\Pi _{t,(x,y,z)}\, \Gcal ^{-} _{\nubold ,\sigma
}[x_{1},\ldots,x_{r},y_{1}+\cdots +y_{n},z_{1},\ldots,z_{s};\, t^{-1}]
\\
= \Gcal _{\nubold ,\sigma }[x_{1},\ldots,x_{r},y_{1}+\cdots
+y_{n},z_{1}, \ldots, z_{s};\, t^{-1}]\, .
\end{multline}
By \eqref{e:lim-conclusions-0} in Lemma~\ref{lem:Pit-vs-Pir}, if we
set $z = 0$ in the expression on the left hand side, the result
converges formally as $n\rightarrow \infty $ to the left hand side of
\eqref{e:Pir-signed-to-unsigned}.  After setting $z=0$, the right hand
side plainly converges strongly to the right hand side of
\eqref{e:Pir-signed-to-unsigned}.
\end{proof}

\begin{defn}\label{def:mod-mac}
The {\em modified $r$-nonsymmetric Macdonald polynomials} are the
$r$-nonsymmetric plethystic transformations of the right stabilized
integral forms
\begin{equation}\label{e:H-eta-lambda}
\mE _{\eta |\lambda }(x,Y;\, q,t) \defeq \Pisf _{r}\, \Jns _{\eta
|\lambda }(x,Y;\, q,t)\, .
\end{equation}
\end{defn}

\begin{thm}\label{thm:mod-mac}
For all $\eta \in \NN ^{r}$ and $\lambda $ a partition, the
modified $r$-nonsymmetric Macdonald polynomials
are given by either of the following identities.

(a) $\mE _{\eta |\lambda }(x,Y;\, q,t)$ is the formally convergent
limit
\begin{equation}\label{e:mod-mac-lim}
 \mE _{\eta |\lambda }(x,Y;\, q,t) = \lim_{n\rightarrow \infty } \Pi
_{t,(x;y)}\, \Ecal _{(\eta ;0^{n};\lambda
)}(x_{1},\ldots,x_{r},y_{1},\ldots,y_{n}, 0^{\ell (\lambda )};\, q,t)
\end{equation}
of nonsymmetric plethysms of specialized integral form Macdonald
polynomials.

(b) We have the following `unsigned' version of
\eqref{e:right-stable-HHL}:
\begin{equation}\label{e:mod-mac-HHL}
\mE _{\eta |\lambda }(x,Y;\, q,t) = t^{n((\eta ;\lambda ) _{+})}
\hspace{-3ex} \sum _{(\nubold ,\sigma )\in R'((\eta ;\lambda
))}\hspace{-1ex} \bigl( \prod \nolimits_{\domino } q^{a(u)+1}\,
t^{l(u)} \bigr)\, \Gcal _{\nubold ,\sigma }[x_{1}, \ldots, x_{r}, Y,
0,\ldots,0;\, t^{-1}],
\end{equation}
where $R'((\eta ;\lambda ))$ only contains pairs $(\nubold ,\sigma )$
in which all augmented ribbons $\nu ^{(i)}$ are non-ragged right, as
in Proposition~\ref{prop:right-stable-HHL}.
\end{thm}

\begin{proof}
Part (a) follows from formula \eqref{e:right-stable-spec} and
Lemma~\ref{lem:Pit-vs-Pir}.  Part (b) follows from
Proposition~\ref{prop:right-stable-HHL} and
Lemma~\ref{lem:Pir-signed-to-unsigned}.
\end{proof}

\begin{remark}\label{rem:rearranging-lambda-2}
(i) By Remark~\ref{rem:rearranging-lambda}, formula \eqref{e:mod-mac-HHL}
also holds with any weak composition $\zeta $ such that $\zeta _{+} =
\lambda $ in place of $\lambda $ on the right hand side.

(ii) Since $\nubold $ is a tuple of ordinary skew diagrams for every
$(\nubold ,\sigma )$ indexing a term in \eqref{e:mod-mac-HHL}, we can
further assume that $\sigma $ is the standard compatible permutation,
which does assign the required flag numbers $m,\ldots,1$ to the
augmentation boxes.
\end{remark}

\begin{remark}\label{rem:mod-mac-HHL explicit}
We can make \eqref{e:mod-mac-HHL} more explicit by using Theorem
\ref{thm:combinatorial-G-nu-sigma} to express the flagged LLT
polynomials in terms of tableaux.  We set $y_{i}=x_{r+i}$, so that $Y
= x_{r+1}+x_{r+2}+\cdots $ as a plethystic alphabet.  Fix the positive
alphabet $\Acal = \Acal^+ = \{1 < 2 < \cdots \}$, with $\Acal_i =
\{i\}$ for $i \le r$, $\Acal_{r+1} = \{r+1, r+2, \dots\}$, and
$\Acal_i=\emptyset$ for $i > r+1$.  This corresponds to flag bounds
$b_{i}=i$ for $i=1,\ldots,r$ and effectively infinite flag bounds
$b_{i}>\Acal $ for $i>r$.  For this $\Acal $, the plethystic alphabets
in Theorem \ref{thm:combinatorial-G-nu-sigma} are $X_{i}^{\Acal } =
x_{i}$ for $i\leq r$, $X_{r+1}^{\Acal } = Y$, and $X_{i}^{\Acal } = 0$
for $i>r+1$, giving
\begin{equation}\label{e:}
\Gcal_{\nubold ,\sigma }[x_{1}, \ldots, x_{r}, Y, 0,\ldots,0;\,
t^{-1}] =  \sum _{T\in \FSST (\nubold , \sigma , \Acal
)} t^{-\inv (T)} x^{T}\, .
\end{equation}
Since all ribbons in $\nubold$ are non-ragged-right for $(\nubold,
\sigma) \in R'((\eta ;\lambda ))$, $\FSST(\nubold , \sigma , \Acal )$
consists of ordinary semistandard tableaux $T$ on $\nubold$, subject
only to the flag condition that for $i = 1,\dots, r$, if $\eta
_{i}\not =0$, the southeastern-most box $u$ in the ribbon
$\nu^{(m+1-i)}$ of length $|\nu^{(m+1-i)}| = \eta_i$, with flag bound
$b_{i} = i$ in its augmentation box, has entry $T(u)\leq i$.
Moreover, $\inv (T)$ has no contributions from flag boxes, i.e.,
$\inv(T)= \oinv (T)$.  This follows from
Remark~\ref{rem:no-funny-inv}(i) since we can assume that $\sigma$ is
the standard compatible permutation.
\end{remark}

\begin{remark}\label{rem:why-stabilize}
The main ingredients in the proof of Theorem~\ref{thm:mod-mac} are the
flagged LLT expansion \eqref{e:ns-HHL} for the integral forms $\Ecal
_{\mu }(x;q,t)$, and the property in
Proposition~\ref{prop:Pit-signed-to-unsigned} that nonsymmetric
plethysm converts signed flagged LLT polynomials into unsigned ones.
Neither of these results requires stabilizing with additional
variables $Y$.  The need to stabilize arises from the fact that the
ribbon tuples in \eqref{e:ns-HHL} have more total rows than the number
of variables $m$ (the number of ribbons in the tuple), resulting in
some variables being set equal to zero in the flagged LLT polynomials
in \eqref{e:ns-HHL}.  Specifically,
Lemma~\ref{lem:Pir-signed-to-unsigned} depends on interchanging the
plethysm with setting variables to zero, which only works when the
variables set equal to zero immediately follow a stabilized formal
alphabet $Y$.
\end{remark}

\subsection{Atom positivity conjecture for
\texorpdfstring{$\mE_{\eta|\lambda}$}{H}}
\label{ss:ns-Mac-atom-positivity}

We now formulate our atom positivity
conjecture for the modified $r$-nonsymmetric Macdonald polynomials
$\mE_{\eta|\lambda}(x,Y;q,t)$, and show that it would follow from our
atom positivity conjecture for flagged LLT polynomials,
Conjecture~\ref{conj:atom-pos-standard}.

By Remark~\ref{rem:for atom pos}(iv), if $f(x,Y)$ is an
$r$-nonsymmetric polynomial, then for any $\gamma \in \NN ^{r}$ and
$\lambda \in \NN ^{k}$, the coefficient $\langle \Acal_{(\gamma ;\,
\lambda ;\, 0^{n-k})}(x;y) \rangle\, f[x,y_{1}+\cdots +y_{n}]$ is
independent of permuting or adjoining zeroes to $\lambda $; hence it
is also independent of $n$ for $n\geq k$.  Taking $\lambda $ to be a
partition, the Weyl symmetrizations in the $y$ variables converge
strongly to a {\em stable atom}
\begin{equation}\label{e:stable-atom}
\Acal _{\gamma |\lambda }(x,Y) = \lim_{n\rightarrow \infty } \Weyl
_{y} \, \Acal_{(\gamma ;\, \lambda ;\, 0^{n-\ell (\lambda )})}(x;y),
\end{equation}
such that $\Acal _{\gamma |\lambda }[x,y_{1}+\cdots +y_{n}]$ is the
sum of all atoms $\Acal _{(\gamma ;\mu )}(x,y)$ with $\mu $ a
permutation of $(\lambda ;0^{n-\ell (\lambda )})$.  The stable atoms
form a basis of the algebra of $r$-nonsymmetric polynomials such that
the coefficients of any $f(x,Y)$ satisfy
\begin{equation}\label{e:stable-atom-coefs}
\langle \Acal _{\gamma |\lambda }(x,Y) \rangle\, f(x,Y) =
\langle \Acal_{(\gamma ;\,
\lambda ;\, 0^{n-\ell (\lambda )})}(x;y) \rangle\, f[x,y_{1}+\cdots +y_{n}]
\end{equation}
for all $n\geq \ell (\lambda )$.  For any $k$, $\Acal _{\gamma
|\lambda }(x,Y)$ is also given by the specialization
\begin{equation}\label{e:stable-atom=flagged-atom}
\Acal _{\gamma |\lambda }(x,Y) = \Acal _{(\gamma ;\, \lambda;\, 0^{k}
)}[x_{1},\ldots,x_{r},Y,0^{k+\ell (\lambda )-1}]
\end{equation}
of the flagged symmetric function $\Acal _{(\gamma ;\lambda ;0^{k}
)}(X_{1},\ldots,X_{r+\ell (\lambda )+k}) = \xi ^{-1} \Acal _{(\gamma
;\lambda ;0^{k} )}(x)$ from Proposition~\ref{prop:flagged-atom}.  To
see this, observe first that since $\Acal _{(\gamma ;\lambda ;0^{k}
)}(x)$ is a function of only the first $r+\ell (\lambda )$ variables,
$\xi ^{-1} \Acal _{(\gamma ;\lambda ;0^{k} )}(x)$ is a function of
only the first $r+\ell (\lambda )$ formal alphabets $X_{i}$, so the
right hand side of \eqref{e:stable-atom=flagged-atom} does not depend
on $k$.  Now the identity follows because
Corollary~\ref{cor:Weyl-on-flagged} implies that for any $n\geq \ell
(\lambda )$, we have
\begin{equation}\label{e:stable-atom-flagged-1}
\Weyl _{y}\, \Acal _{(\gamma ;\lambda ;0^{n-\ell (\lambda
)})}(x_{1},\ldots,x_{r}, y_{1},\ldots, y_{n}) = \Acal _{(\gamma
;\lambda ;0^{n-\ell (\lambda )})}[x_{1},\ldots,x_{r}, y_{1}+\cdots
+y_{n},0^{n-1}]\, .
\end{equation}

\begin{cor}\label{cor:stable-mac-atom-pos}
For $\gamma ,\eta \in \NN ^{r}$ and partitions $\lambda $, $\mu $, let
\begin{equation}\label{e:cor stable-atom-coefs}
K_{(\gamma |\lambda ),(\eta |\mu )}(q,t) \defeq \langle \Acal _{\gamma
|\lambda }(x,Y) \rangle\, \mE _{\eta |\mu }(x,Y;\, q,t)
\end{equation}
denote the stable atom coefficients of the modified
$r$-nonsymmetric Macdonald polynomials.  Then, as $n\rightarrow
\infty $, the atom coefficients
\begin{equation}\label{e:modMac-atom}
\langle \Acal _{(\gamma ;\, \lambda ;\, 0^{n-\ell (\lambda )} )}(x,y)
\rangle \, \Pi _{t,(x;y)}\, \Ecal _{(\eta ;0^{n};\mu )}(x_{1},\ldots,
x_{r}, y_{1},\ldots,y_{n}, 0^{\ell (\mu )};\, q,t)
\end{equation}
of the nonsymmetric plethysm of a specialized integral form
nonsymmetric Macdonald polynomial converge formally in $t$ to
$K_{(\gamma |\lambda ),(\eta |\mu )}(q,t)$ (more generally, the
coefficient of $\Acal _{(\gamma ;\, \nu ;\,  0^{n-\ell (\nu )})}(x,y) $
converges to $K_{(\gamma |\lambda ),(\eta |\mu )}(q,t)$ for any $\nu $
whose nonzero parts are a permutation of $\lambda $).

If Conjecture~\ref{conj:atom-pos-standard} is true, then we have {\em
stable atom positivity:} $K_{(\gamma |\lambda ),(\eta |\mu )}(q,t)\in
\NN [q,t]$.
\end{cor}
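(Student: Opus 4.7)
The proof naturally splits into two parts: the convergence of atom coefficients, then the stable atom positivity under Conjecture~\ref{conj:atom-pos-standard}. Both parts rest on Theorem~\ref{thm:mod-mac}: I plan to use part~(a) --- the formal limit $\Pi_{t,(x;y)}\Ecal_{(\eta;0^n;\mu)}(x,y,0^{\ell(\mu)};q,t) \to \mE_{\eta|\mu}(x,Y;q,t)$ --- for convergence, and the flagged LLT expansion of part~(b) for positivity.

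For convergence, fix $e \geq 0$. The polynomial $\Ecal_{(\eta;0^n;\mu)}(x,y,0^{\ell(\mu)};q,t)$ is symmetric in $y_1,\ldots,y_n$ and has $y$-degree bounded, independently of $n$, by $|\eta|+|\mu|$. Theorem~\ref{thm:Pit-symm} therefore furnishes a constant $c$ depending only on $e$, $|\eta|$, and $|\mu|$ such that for $n \geq c$, the polynomial $\Pi_{t,(x;y)}\Ecal_{(\eta;0^n;\mu)}(x,y,0^{\ell(\mu)};q,t)$ is symmetric modulo $t^{e+1}$ in $y_1,\ldots,y_{n-c+1}$, and its specialization at $y_{n-c+2}=\cdots=y_n=0$ reduces, modulo $t^{e+1}$, to $\mE_{\eta|\mu}[x,y_1+\cdots+y_{n-c+1};q,t]$; the identification of the reduced polynomial uses Lemma~\ref{lem:Pit-vs-Pir} together with $\Pisf_r\Jns_{\eta|\mu}=\mE_{\eta|\mu}$ from Theorem~\ref{thm:mod-mac}(a). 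For any $\nu$ whose nonzero parts are a rearrangement of $\lambda$, Remark~\ref{rem:for atom pos}(iv) then ensures that the coefficient of $\Acal_{(\gamma;\nu;0^{n-\ell(\nu)})}(x,y)$ in $\Pi_{t,(x;y)}\Ecal_{(\eta;0^n;\mu)}$ is independent, modulo $t^{e+1}$, of the distribution of the nonzero parts of $\nu$ among the symmetric $y$-positions. Taking the representative with $\lambda$ placed at positions $r+1,\ldots,r+\ell(\lambda)$ followed by zeros, Corollary~\ref{cor:Amu-with-zeroes} identifies this coefficient with the atom coefficient of $\Acal_{(\gamma;\lambda;0^{n-c+1-\ell(\lambda)})}$ in the reduced polynomial $\mE_{\eta|\mu}[x,y_1+\cdots+y_{n-c+1};q,t]$, which by~\eqref{e:stable-atom-coefs} equals $K_{(\gamma|\lambda),(\eta|\mu)}(q,t)$. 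Letting $e$ grow yields both the basic convergence statement and the ``more generally'' version.

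For positivity, Theorem~\ref{thm:mod-mac}(b) realizes $\mE_{\eta|\mu}(x,Y;q,t)$ as a sum over $R''((\eta;\mu))$ of $\NN[q,t]$-multiples of $\Gcal_{\nubold,\sigma}[x_1,\ldots,x_r,Y,0,\ldots,0;t^{-1}]$, prefactored by $t^{n((\eta;\mu)_+)}$. Three facts must be checked. First, each $\nubold\in R''$ is non-ragged-right: the defining non-vertical-domino condition on augmentation boxes, combined with the ribbon structure, forces the end contents within each component to be strictly decreasing starting from $0$. Second, the accompanying $\sigma$ is the standard compatible permutation: since all augmentation boxes have content $0$, the content/row reading order on them is simply bottom-to-top, and the convention of assigning flag numbers $m,m-1,\ldots,1$ bottom-to-top is decreasing in that order, which by Lemma~\ref{lem:flag-numbers}(b) is precisely the standard condition. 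Third, the $t$-exponents must balance after the substitution $t\mapsto t^{-1}$: granting Conjecture~\ref{conj:atom-pos-standard}, Corollary~\ref{cor:atom-pos-standard} gives atom positivity of $\Gcal_{\nubold,\sigma}[x_1,\ldots,x_r,Y,0,\ldots,0;t]$ with coefficients in $\NN[t]$ and, by~\eqref{e:flag-bound-G-nu-sigma} and Lemma~\ref{lem:inv-bound}, of $t$-degree at most $n((\eta;\mu)_+) + \sum_{\smalldomino} l(u)$, so the prefactor $t^{n((\eta;\mu)_+)}\prod_{\smalldomino} t^{l(u)}$ exactly absorbs the negative powers produced by the substitution, leaving each term in $\NN[q,t]$. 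The main obstacle is this last $t$-degree bookkeeping: it requires the bound of Lemma~\ref{lem:inv-bound} to hold at the atom level, which is valid because under the conjectured atom positivity the atom coefficients of $\Gcal$ are themselves $\NN[t]$-combinations with $t$-degree controlled by $\max_T \inv(T)$.
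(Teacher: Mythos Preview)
Your proof is correct and follows the paper's approach: convergence via Theorem~\ref{thm:Pit-symm} and Theorem~\ref{thm:mod-mac}(a), positivity via Theorem~\ref{thm:mod-mac}(b) and Corollary~\ref{cor:atom-pos-standard}. You supply considerably more detail than the paper's two-line proof, in particular the $t$-degree bookkeeping (which the paper handles implicitly via Remark~\ref{rem:coinv} and Lemma~\ref{lem:inv-bound}) and the extraction of atom coefficients via Remark~\ref{rem:for atom pos}(iv), Corollary~\ref{cor:Amu-with-zeroes}, and~\eqref{e:stable-atom-coefs}.

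One small point: in verifying that $\sigma$ is the standard compatible permutation, you only check the augmentation boxes. When a ribbon in $\nubold$ has more than one row, the higher rows carry non-augmentation flag boxes with flag numbers $> m$, and the definition of $R''$ (via $R$) does not pin these down. However, the specialization $\Gcal_{\nubold,\sigma}[x_1,\ldots,x_r,Y,0,\ldots,0;\,t^{-1}]$ is independent of this choice---only flag numbers $\leq r+1$ constrain tableaux or contribute to $\inv(T)$, as explained in Remark~\ref{rem:mod-mac-HHL explicit}---so one may simply take the standard assignment on those boxes as well. The paper's proof leaves this implicit too.
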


\begin{proof}
By Theorem~\ref{thm:Pit-symm}, for any $e$, the result of the
nonsymmetric plethysm in \eqref{e:modMac-atom} is symmetric (mod
$(t^{e})$) in all but the last $p$ of the variables $y_{i}$, for $p$
not depending on $n$.  This given, the first part of the corollary is
Theorem~\ref{thm:mod-mac}(a), stated in terms of atom coefficients.

Using Remark~\ref{rem:rearranging-lambda-2}(ii), the positivity
implication follows from Corollary~\ref{cor:atom-pos}(a) and
Theorem~\ref{thm:mod-mac}(b).
\end{proof}

For $\eta =(0^{r})$, if we set $y_{i} = x_{r+i}$, then $\mE
_{0^{r}|\mu }(x,Y;q,t) = \mE _{\emptyset |\mu }(X; q,t)$, independent
of $r$, and \cite[Theorem~5.2.1]{HagHaiLo08} implies that $\mE
_{\emptyset |\mu }(X; q,t)$ is the classical symmetric modified
Macdonald polynomial $H_{\mu }(X; q,t) = J_{\mu }[X /(1-t);q,t]$.
Hence, in this case, we have
\begin{equation}\label{e:K-lam-mu-classical}
K_{(\gamma |\lambda ),(0^{r} |\mu )}(q,t) = K_{(\gamma ;\lambda
)_{+}, \mu }(q,t),
\end{equation}
where $K_{\lambda ,\mu }(q,t)$ are the classical Kostka-Macdonald
coefficients.  Thus, Conjecture~\ref{conj:atom-pos-standard}, via its
consequence in Corollary~\ref{cor:stable-mac-atom-pos}, generalizes
the known positivity $K_{\lambda ,\mu }(q,t)\in \NN [q,t]$ from
\cite{Haiman01}.

While stable atom positivity of $\mE _{\eta |\mu }(x,Y;\, q,t)$
depends on Conjecture~\ref{conj:atom-pos-standard}, the following
weaker result, which follows immediately from
Theorem~\ref{thm:mod-mac}(b) and Remark~\ref{rem:monomial-positive},
already provides some evidence to suggest that our way of defining
modified $r$-nonsymmetric Macdonald polynomials is the right one.

\begin{cor}\label{cor}
The modified $r$-nonsymmetric Macdonald polynomials are monomial-positive,
i.e., the coefficients $\langle x^{\gamma }\, m_{\lambda }(Y) \rangle\,
\mE _{\eta |\mu }(x,Y;\, q,t)$ are in $\NN [q,t]$.
\end{cor}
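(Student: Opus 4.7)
The plan is to combine Theorem~\ref{thm:mod-mac}(b), which expresses $\mE_{\eta|\mu}(x,Y;q,t)$ as a sum over $(\nubold,\sigma)\in R''((\eta;\mu))$ of scaled specialized flagged LLT polynomials $t^{n((\eta;\mu)_+)}\bigl(\prod q^{a(u)+1}t^{l(u)}\bigr)\,\Gcal_{\nubold,\sigma}[x_1,\ldots,x_r,Y,0,\ldots,0;t^{-1}]$, with the monomial-positivity statement of Remark~\ref{rem:monomial-positive}, which says that each $\Gcal_{\nubold,\sigma}(X_1,\ldots,X_l;t)$ expanded in the multi-symmetric monomial basis $m_{\lambold}=\prod_i m_{\lambda^{(i)}}(X_i)$ has coefficients in $\NN[t]$.

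First, I would substitute $t\mapsto t^{-1}$ in that expansion to obtain coefficients in $\NN[t^{-1}]$, and then specialize the formal alphabets by setting $X_i=x_i$ for $i\leq r$, $X_{r+1}=Y$, and $X_i=0$ for $i>r+1$.  Under this specialization each basis element $m_{\lambold}$ becomes either a basis element $x^\gamma m_\lambda(Y)$ of the algebra of $r$-nonsymmetric polynomials (when $\lambda^{(i)}$ is a single part for $i\leq r$, $\lambda^{(r+1)}=\lambda$, and $\lambda^{(j)}=\emptyset$ for $j>r+1$) or zero.  Hence $\Gcal_{\nubold,\sigma}[x_1,\ldots,x_r,Y,0,\ldots,0;t^{-1}]$ expands in the $x^\gamma m_\lambda(Y)$ basis with coefficients in $\NN[t^{-1}]$.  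Multiplying by the scalar monomial $t^{n((\eta;\mu)_+)}\prod q^{a(u)+1}t^{l(u)}\in\NN[q,t]$, each term on the right-hand side of \eqref{e:mod-mac-HHL} then has $x^\gamma m_\lambda(Y)$-coefficients in $\NN[q,t,t^{-1}]$.

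The final step, which is the only subtle point, is to appeal to Remark~\ref{rem:coinv}:  the full flagged LLT term \eqref{e:ns-HHL-twisted-full-term} (with $w=\operatorname{id}$) has coefficients in $\ZZ[q,t]$ even before specializing, hence so does the specialization appearing in \eqref{e:mod-mac-HHL}.  An element of $\NN[q,t,t^{-1}]$ that also lies in $\ZZ[q,t]$ must in fact lie in $\NN[q,t]$, since distinct monomials in a $\NN$-linear combination of monomials cannot cancel to eliminate a negative power of $t$.  Summing the resulting $\NN[q,t]$-positive monomial expansions over $(\nubold,\sigma)\in R''((\eta;\mu))$ yields the monomial positivity of $\mE_{\eta|\mu}(x,Y;q,t)$, as required.
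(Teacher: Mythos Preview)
Your proof is correct and follows the same approach as the paper, which simply says the corollary ``follows immediately from Theorem~\ref{thm:mod-mac}, part (b), and Remark~\ref{rem:monomial-positive}.''  You are right to flag the $t^{-1}$ issue as the subtle point and to invoke Remark~\ref{rem:coinv} (equivalently Lemma~\ref{lem:inv-bound}) to resolve it; the paper's one-line proof leaves this implicit, but it is indeed needed, and your argument that $\NN[q,t,t^{-1}]\cap\ZZ[q,t]=\NN[q,t]$ is the clean way to finish.
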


\begin{example}\label{ex:K-stable}
One can calculate
\begin{equation}\label{e:K-stable-ex}
\mE _{(1)|(1)}(x,Y;q,t) = \Acal _{(1)|(1)}(x,Y) + t\, \Acal
_{(2)|\emptyset }(x,Y)\, ,
\end{equation}
giving $K_{((1)|(1)),\, ((1)|(1))}(q,t) = 1$ and $K_{((2)|(\emptyset
)),\, ((1)|(1))}(q,t) = t$.  Approximating these by the non-stabilized
atom coefficients in \eqref{e:modMac-atom}, one finds
\begin{multline}\label{e:K-approx-n=4}
\Pi _{t,(x;y)}\, \Ecal _{((1) ;0^{4};(1) )}(x,y, 0;\, q,t) \\
= (1-t^{3})(1-q\, t^{5}) \Acal _{1,1,0^{3}}(x,y) + t (1-t^{3})(1-q\,
t^{5})\, \Acal _{2,0^{4}}(x,y)+\cdots \, ,
\end{multline}
\begin{multline}\label{e:K-approx-n=5}
\Pi _{t,(x;y)}\, \Ecal _{((1) ;0^{5};(1) )}(x,y, 0;\, q,t)\\
 = (1-t^{4})(1-q\, t^{6}) \Acal _{1,1,0^{4}}(x,y) + t (1-t^{4})(1-q\,
t^{6})\, \Acal _{2,0^{5}}(x,y)+\cdots \, ,
\end{multline}
for $n=4$ and $n=5$ (we have omitted many other terms).  The pattern
continues similarly for larger $n$.  The behavior seen here is
typical: the coefficients in \eqref{e:modMac-atom} almost always
have non-positive terms involving powers of $t$ that grow with $n$.
\end{example}

\begin{example}
We compute $\mE_{(2)|(1)}(x,Y;q,t)$ using \eqref{e:mod-mac-HHL} and
Remark \ref{rem:mod-mac-HHL explicit}.  The set $R'((2,1))$ of
non-ragged-right augmented ribbon tuples is displayed below with flag
bound ($=$ flag number) $1$ written in the augmentation box of
$\nu^{(2)}$ (in general, the relevant flag boxes are the $r$
augmentation boxes in the ribbons $\nu^{(m)}, \dots, \nu^{(m+1-r)}$ of
lengths $\eta$), and the monomial $t^{n((\eta;\lambda)_{+})} \prod
_{\smalldomino } q^{a(u)+1}\, t^{l(u)}$ shown to the left of each
tuple.
\begin{equation}\label{e:H-HHL-ribbon-tuples}
\setlength{\arraycolsep}{0cm}
\begin{array}{cccc}
t \ \ &
\begin{tikzpicture}[scale=.45,baseline=.5cm]
  \foreach \beta / \alpha /  \z in {0/-1/0, 0/-2/2.2}
     \draw[yshift=\z cm] (\alpha,0)  grid (\beta, 1);
     \node at (.5,.5+2.2) {$1$};
      \end{tikzpicture}
& \qquad q\, t^{2} \ \ &
\begin{tikzpicture}[scale=.45,baseline=.5cm]
  \foreach \beta / \alpha /  \z in {0/-1/0, 0/-1/2.2, 0/-1/3.2}
     \draw[yshift=\z cm] (\alpha,0)  grid (\beta, 1);
     \node at (.5,.5+2.2)  {$1$};
      \end{tikzpicture}
\end{array}
\end{equation}
Infinitely many flagged tableaux with entries in $\Acal =\{1<2<\cdots
\}$ contribute to $\mE_{(2)|(1)}$, so for convenience in
displaying them below, we divide them into five groups determined by
the indicated inequalities, two on the first ribbon tuple above, and
three on the second.
\begin{equation*}
\begin{array}{rcccccccccc}
\raisebox{5mm}{$T$} &&
\hspace{-2ex}
\begin{tikzpicture}[scale=.45,baseline=0cm]
  \foreach \beta / \alpha /  \z in {0/-1/0, 0/-2/2.2}
     \draw[yshift=\z cm] (\alpha,0)  grid (\beta, 1);
     \node at (-1.5,.5+2.2) {1};  \node at (-.5,.5+2.2) {1};  \node at (.5,.5+2.2) {1};
   \node at (-.5,.5) {1};
      \end{tikzpicture}
&&
\begin{tikzpicture}[scale=.45,baseline=0cm]
  \foreach \beta / \alpha /  \z in {0/-1/0, 0/-2/2.2}
     \draw[yshift=\z cm] (\alpha,0)  grid (\beta, 1);
     \node at (-1.5,.5+2.2) {1};  \node at (-.5,.5+2.2) {1};   \node at (.5,.5+2.2) {1};
   \node at (-.5,.5) {$a$};
      \end{tikzpicture}
&&
\begin{tikzpicture}[scale=.45,baseline=0cm]
  \foreach \beta / \alpha /  \z in {0/-1/0, 0/-1/2.2, 0/-1/3.2}
     \draw[yshift=\z cm] (\alpha,0)  grid (\beta, 1);
     \node at (-.5,.5+1+2.2) {$a$};  \node at (-.5,.5+2.2) {1};   \node at (.5,.5+2.2) {1};
   \node at (-.5,.5) {1};
      \end{tikzpicture}
&&
\begin{tikzpicture}[scale=.45,baseline=0cm]
  \foreach \beta / \alpha /  \z in {0/-1/0, 0/-1/2.2, 0/-1/3.2}
     \draw[yshift=\z cm] (\alpha,0)  grid (\beta, 1);
     \node at (-.5,.5+1+2.2) {$a$};  \node at (-.5,.5+2.2) {1};   \node at (.5,.5+2.2) {1};
   \node at (-.5,.5) {$b$};
      \end{tikzpicture}
&&
\begin{tikzpicture}[scale=.45,baseline=0cm]
  \foreach \beta / \alpha /  \z in {0/-1/0, 0/-1/2.2, 0/-1/3.2}
     \draw[yshift=\z cm] (\alpha,0)  grid (\beta, 1);
     \node at (-.5,.5+1+2.2) {$b$};  \node at (-.5,.5+2.2) {1};   \node at (.5,.5+2.2) {1};
   \node at (-.5,.5) {$a$};
      \end{tikzpicture}
  \\[1mm]
& &&& a > 1 && a > 1 && b \ge a > 1 && b > a > 1\\[3mm]
t^{-\inv(T)} && 1  && t^{-1} && t^{-1} && t^{-1} && t^{-2} \\[2mm]
\mE_{(2)|(1)}(x,Y;q,t) &=& t \, x_1^3  & \hspace{-1ex}+& x_1^2
s_1(Y) &+& qt \, x_1^2 s_1(Y) &+& qt \, x_1 h_2(Y) &+&
q \, x_1 e_2(Y)
\end{array}
\end{equation*}
It so happens that each term above is a stable atom (although
organizing the tableaux by inequalities would not usually have this
effect), yielding the stable atom expansion
\begin{equation}\label{e:stable-atom-expandsion}
\mE_{(2)|(1)}(x,Y;\, q,t) = t \, \Acal_{(3)|\emptyset} +
\Acal_{(2)|(1)} + qt \, \Acal_{(2)| (1)} + qt \,\Acal_{(1)| (2)} + q
\, \Acal_{(1)|(1,1)}.
\end{equation}
\end{example}

\subsection{Weyl and Hecke Symmetrization}
\label{ss:Weyl-and-Hecke-symmetrization}

We define a partial Weyl symmetrization operator $\Weyl_{x_r, Y}$ on
$r$-nonsymmetric polynomials by
\begin{align}
\label{e Weylr strong converge} \Weyl_{x_r,Y} f(x,Y) = \lim_{n \to
\infty} \Weyl_{x_r, y_1,\dots,y_n} f[x, y_1+ \dots + y_n],
\end{align}
where $x = x_1,\dots, x_r$ and $\Weyl_{x_r, y_1,\dots, y_n}$ is the
Weyl symmetrization operator in the variables $x_r, y_1,\dots, y_n$.
The sequence on the right hand side converges strongly to a
well-defined limit, as can be verified by reducing to the case that
$r=1$ and $f(x_{1},Y) = x_{1}^{k}\, g[x_{1}+Y]$, where $g$ is a
symmetric function.  Similarly, we define the full Weyl symmetrization
operator $\Weyl$ on $r$-nonsymmetric polynomials by
\begin{align}
\label{e Weylr strong converge 2} \Weyl f(x,Y) = \lim_{n \to \infty}
\Weyl_{x_1,\dots, x_r, y_1,\dots,y_n} f[x, y_1+ \dots + y_n],
\end{align}
where again we have strong convergence to the limit.

\begin{lemma}\label{l Weyl on flag sym}
For any flagged symmetric function $f(X_1, \dots, X_{l})$,
\begin{align}\label{e Weyl on flag sym}
\Weyl_{x_r, Y} f[x_1,\dots, x_r, Y, 0^k]& = f[x_1,\ldots, x_{r-1},
x_r+ Y, 0^{k+1}],\\
\label{e Weyl on flag sym2} \Weyl f[x_1,\dots, x_r, Y, 0^k] &= f[x_1+
\cdots + x_r+ Y, 0^{r+k}],
\end{align}
where  $k = l-r-1$.
\end{lemma}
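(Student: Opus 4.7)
The plan is to realize both sides of each identity as strong limits of flag bound specializations of $f$ and then invoke Corollary~\ref{cor:Weyl-on-flagged}. Specialize the formal alphabet $Y$ to $y_1+\cdots+y_n$ for $n$ large. Since $l = r+k+1$, the evaluation $f[x_1,\ldots,x_r,\, y_1+\cdots+y_n,\, 0^k]$ is a flag bound specialization of $f$ in the $r+n$ variables $x_1,\ldots,x_r,y_1,\ldots,y_n$, with flag bounds
\[
b = (1,\, 2,\, \ldots,\, r,\, r+n,\, r+n,\, \ldots,\, r+n),
\]
where the last $k+1$ bounds all equal $r+n$. These satisfy $b_i \ge i$ for $i=1,\ldots,l$ whenever $n \ge k+1$.

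For the first identity, I apply formula \eqref{e:Weyl-on-flagged-partial} of Corollary~\ref{cor:Weyl-on-flagged} with $j = r$ and the outer index (there called $k$) equal to $l = r+k+1$, noting that the hypothesis $b_i \ge b_{j-1}+i-j+1$ reduces to $b_i \ge i$, which holds. This hypothesis gives that Weyl symmetrization in the variables $x_{b_{r-1}+1},\ldots,x_{b_l} = x_r, y_1,\ldots,y_n$ has the effect of raising all of the flag bounds $b_r,\ldots,b_l$ to the common value $b_l = r+n$, producing
\[
\Weyl_{x_r,y_1,\ldots,y_n}\, f[x_1,\ldots,x_r,\, y_1+\cdots+y_n,\, 0^k] = f[x_1,\ldots,x_{r-1},\, x_r + y_1+\cdots+y_n,\, 0^{k+1}].
\]
The right hand side converges strongly as $n\to\infty$ to $f[x_1,\ldots,x_{r-1},\, x_r + Y,\, 0^{k+1}]$, and by definition \eqref{e Weylr strong converge} the left hand side converges to $\Weyl_{x_r,Y}\, f[x_1,\ldots,x_r,Y,0^k]$, giving~\eqref{e Weyl on flag sym}.

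For the second identity, I instead apply \eqref{e:Weyl-on-flagged} (the total case of Corollary~\ref{cor:Weyl-on-flagged}), which under the same hypothesis $b_i \ge i$ gives
\[
\Weyl_{x_1,\ldots,x_r,y_1,\ldots,y_n}\, f[x_1,\ldots,x_r,\, y_1+\cdots+y_n,\, 0^k] = f[x_1+\cdots+x_r+y_1+\cdots+y_n,\, 0^{r+k}].
\]
Passing to the strong limit as $n\to\infty$ on both sides, using \eqref{e Weylr strong converge 2} on the left, produces~\eqref{e Weyl on flag sym2}. The argument is essentially a matter of correctly matching the flag bound data to the hypotheses of Corollary~\ref{cor:Weyl-on-flagged}; the only minor bookkeeping point is verifying that the increasing flag bounds $(1,\ldots,r,r+n,\ldots,r+n)$ for the specialization of $f$ fit the framework, which is automatic once $n \ge k+1$.
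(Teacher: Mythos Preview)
Your proof is correct and follows essentially the same approach as the paper: specialize $Y$ to finitely many variables, apply Corollary~\ref{cor:Weyl-on-flagged} to the resulting flag bound specialization, and pass to the strong limit using the definitions \eqref{e Weylr strong converge} and \eqref{e Weylr strong converge 2}. The paper's proof is terser, but your explicit verification of the flag bounds $(1,\ldots,r,r+n,\ldots,r+n)$ and the hypothesis $b_i\ge i$ (for $n\ge k+1$) is exactly the bookkeeping that underlies the paper's appeal to Corollary~\ref{cor:Weyl-on-flagged}.
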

\begin{proof}
The first identity follows from combining \eqref{e Weylr strong
converge}, Corollary \ref{cor:Weyl-on-flagged}, and the fact that
$\lim_{n \to \infty} f[x_1,\dots, x_r+ y_1+\cdots + y_n, 0^{k+1}]$
converges strongly to $f[x_1,\dots, x_r+ Y, 0^{k+1}]$.  The second
identity is proved similarly.
\end{proof}

For stable atoms, this implies the following analog of
\eqref{e:for atom pos 1} and Remark \ref{rem:for atom pos}(ii).
\begin{lemma}
\label{l stable atoms}
For  $\gamma \in \NN ^{r}$ and partition $\lambda $,
\begin{align}
\label{e:Weyl stable atom}
\Weyl \Acal_{\gamma | \lambda}(x,Y)
=
\begin{cases}
s_{(\gamma;\lambda)}[x_{1}+\cdots +x_{r}+Y] & \text{if $\gamma_1 \ge
\cdots \ge \gamma_r \ge
\lambda_1$,} \\
0 & \text{ otherwise}.
\end{cases}
\end{align}
Hence, for any $r$-nonsymmetric polynomial $f(x,Y)$, the coefficient
of $s _{\mu}[x+Y]$ in the Schur function expansion of $\Weyl f(x,Y)$
is equal to the coefficient of the stable atom $\Acal_{(\mu_1,\dots,
\mu_r)| (\mu_{r+1}, \dots, \mu_k)}(x, Y)$ in the stable atom expansion
of $f(x,Y)$ (if $k<r$, this means $\Acal_{(\mu
;0^{r-k}) | \emptyset }(x, Y)$).
\end{lemma}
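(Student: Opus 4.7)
The plan is to derive the first identity from the fundamental formula \eqref{e:for atom pos 1} for Weyl symmetrization of a Demazure atom, by representing the stable atom $\Acal_{\gamma|\lambda}(x,Y)$ as a flag-alphabet specialization of a flagged symmetric function. Set $\mu = (\gamma;\lambda;0^k)$ and $l = r+\ell(\lambda)+k$, with $k\geq 0$ a parameter to be chosen. By \eqref{e:stable-atom=flagged-atom},
\[
\Acal_{\gamma|\lambda}(x,Y) \;=\; \Acal_{\mu}[x_1,\ldots,x_r,\, Y,\, 0^{k+\ell(\lambda)-1}],
\]
where $\Acal_\mu(X_1,\ldots,X_l) = \xi^{-1}\Acal_\mu(x_1,\ldots,x_l)$ is the flagged symmetric function from Proposition~\ref{prop:flagged-atom}. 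Since the number of trailing zeros equals $l-r-1$, Lemma~\ref{l Weyl on flag sym}~\eqref{e Weyl on flag sym2} yields at once
\[
\Weyl\, \Acal_{\gamma|\lambda}(x,Y) \;=\; \Acal_\mu[X,\,0^{l-1}]\qquad\text{where}\qquad X=x_1+\cdots+x_r+Y.
\]

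The task now is to evaluate $\Acal_\mu[X, 0^{l-1}]$ for general $\mu\in\NN^l$. Specializing $X=w_1+\cdots+w_l$ to $l$ fresh variables, the flag bound specialization of $\Acal_\mu(X_1,\ldots,X_l)$ with bounds $b=(1,2,\ldots,l)$ produces the honest Demazure atom polynomial $\Acal_\mu(w_1,\ldots,w_l)$, and Corollary~\ref{cor:Weyl-on-flagged} gives
\[
\Weyl_w\,\Acal_\mu(w_1,\ldots,w_l)\;=\;\Acal_\mu[w_1+\cdots+w_l,\,0^{l-1}].
\]
The left side equals $\chi_\mu(w)=s_\mu[w_1+\cdots+w_l]$ when $\mu=\mu_+$ is dominant, and $0$ otherwise, by \eqref{e:for atom pos 1}. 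Choosing $k$ large enough that $l\geq|\mu|$ (permitted, since $\Acal_{\gamma|\lambda}$ is independent of $k$), specialization to $l$ variables is injective on the finite-dimensional space of symmetric functions of degree $|\mu|$, so the identity
\[
\Acal_\mu[X,\,0^{l-1}]\;=\;\begin{cases} s_\mu[X] & \text{if }\mu=\mu_+,\\ 0 & \text{otherwise},\end{cases}
\]
lifts to the formal symmetric function level in $X$. Combining with the preceding display and noting that dominance of $(\gamma;\lambda;0^k)$ is equivalent to $\gamma_1\geq\cdots\geq\gamma_r\geq\lambda_1$ once $\lambda$ is a partition, and that $s_{(\gamma;\lambda;0^k)}[X]=s_{(\gamma;\lambda)}[X]$, proves \eqref{e:Weyl stable atom}.

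For the second assertion, expand $f(x,Y)=\sum_\nu c_\nu\,\Acal_{\gamma_\nu|\lambda_\nu}(x,Y)$ in the stable atom basis and apply $\Weyl$ termwise using \eqref{e:Weyl stable atom}. The surviving terms are those for which $(\gamma_\nu;\lambda_\nu)$ is a partition, and by linear independence of $\{s_\mu[X]\}_{\mu\text{ a partition}}$, the coefficient of $s_\mu[X]$ in $\Weyl f(x,Y)$ singles out the unique $\nu$ with $(\gamma_\nu;\lambda_\nu)=\mu$, namely the pair $\gamma_\nu=(\mu_1,\ldots,\mu_r)$, $\lambda_\nu=(\mu_{r+1},\mu_{r+2},\ldots)$ (interpreted, when $\ell(\mu)=k<r$, as $\Acal_{(\mu;0^{r-k})|\emptyset}$). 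The main obstacle is the formal identity for $\Acal_\mu[X, 0^{l-1}]$, but the $k$-freedom in \eqref{e:stable-atom=flagged-atom} lets us take $l$ as large as needed to reduce it to the finite-variable statement supplied by \eqref{e:for atom pos 1} and Corollary~\ref{cor:Weyl-on-flagged}.
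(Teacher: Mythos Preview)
Your proof is correct and follows essentially the same approach as the paper: both use \eqref{e:stable-atom=flagged-atom} and Lemma~\ref{l Weyl on flag sym} to reduce to evaluating $\Acal_\mu[X,0,\ldots,0]$, then reduce that to the finite-variable identity \eqref{e:for atom pos 1} via Corollary~\ref{cor:Weyl-on-flagged}. The paper carries out the last step by specializing $Y=y_1+\cdots+y_n$ for all $n$ rather than by padding $\mu$ with extra zeros to enlarge $l$, but the two devices serve the same purpose.
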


\begin{proof}
By \eqref{e:stable-atom=flagged-atom} and \eqref{e Weyl on flag sym2},
$\Weyl\, \Acal_{\gamma | \lambda}(x,Y) = \Acal _{(\gamma ;\lambda
)}[x_{1}+\cdots +x_{r}+Y,0,\ldots,0]$.  For $Y = y_{1}+\cdots +y_{n}$,
this and the right hand side of \eqref{e:Weyl stable atom} are both
equal to $\Weyl _{x;y}\, \Acal _{(\gamma ;\lambda)}(x;y)$.  Given
\eqref{e:Weyl stable atom}, the rest is clear.
\end{proof}

To distinguish between $r$-nonsymmetric and $(r-1)$-nonsymmetric
polynomials in the next proposition, we use the notation $Y_{r}$,
$Y_{r-1}$ as in Proposition~\ref{prop:limits-of-symmetrizers} in place
of $Y$.

\begin{prop}\label{p:Weyl modnsmac}
Partial Weyl symmetrization acts in the following simple way on
modified $r$-nonsymmetric Macdonald polynomials: for any $\eta \in
\NN^r$ and partition $\lambda$,
\begin{equation}\label{e Weyl modnsmac}
\Weyl_{x_{r},Y_{r}}\,  \mE_{\eta | \lambda}(x_{1},\ldots,x_{r},
Y_{r};q,t) = \mE_{\etahat\,  |\,  (\eta_r;\lambda)_+}(x_1,\dots, x_{r-1},
Y_{r-1};q,t)\, ,
\end{equation}
where $\etahat  = (\eta _{1},\ldots,\eta _{r-1})$,
$Y_{r} = x_{r+1},x_{r+2},\dots $, and $Y_{r-1} = x_{r}, x_{r+1}, \ldots$.
Iterating this, we also have
\begin{align}
\label{e Weyl modnsmac2} \Weyl \, \mE_{\eta | \lambda}(x_1,\dots, x_r,
Y_{r}; q,t) = \mE_{\emptyset | (\eta ;\lambda)_+}(X;q,t) =
H_{(\eta;\lambda)_+}(X;q,t).
\end{align}
where $X = x_{1},x_{2},\ldots $.
\end{prop}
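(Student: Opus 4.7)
The plan is to establish \eqref{e Weyl modnsmac} by applying $\Weyl_{x_r,Y_r}$ term-by-term to the combinatorial expansion of $\mE_{\eta|\lambda}$ from Theorem~\ref{thm:mod-mac}(b), and then to derive \eqref{e Weyl modnsmac2} by iterating $r$ times and invoking the identification $\mE_{\emptyset|\mu}(X;q,t)=H_\mu(X;q,t)$ recorded just after Corollary~\ref{cor:stable-mac-atom-pos}.

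First I would expand $\mE_{\eta|\lambda}(x_1,\ldots,x_r,Y_r;q,t)$ via \eqref{e:mod-mac-HHL} as a finite sum over $(\nubold,\sigma)\in R''((\eta;\lambda))$, with summands of the form $\Gcal_{\nubold,\sigma}[x_1,\ldots,x_r,Y_r,0,\ldots,0;t^{-1}]$. Since each $\Gcal_{\nubold,\sigma}(X_1,\ldots,X_l;t^{-1})$ is a flagged symmetric function (Proposition~\ref{prop:G-nu-flagged}), Lemma~\ref{l Weyl on flag sym} \eqref{e Weyl on flag sym} turns each summand under $\Weyl_{x_r,Y_r}$ into $\Gcal_{\nubold,\sigma}[x_1,\ldots,x_{r-1},Y_{r-1},0,\ldots,0;t^{-1}]$, where $Y_{r-1}=x_r+Y_r$. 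Setting $\zeta=(\eta_r;\lambda)$, the resulting weighted sum is literally the right-hand side of \eqref{e:mod-mac-HHL} for $\mE_{\etahat|\zeta}$ (the HHL formula with the weak composition $\zeta$ in place of a partition): the concatenation $(\etahat;\zeta)$ is exactly the weak composition $(\eta;\lambda)$, so $R''((\etahat;\zeta))=R''((\eta;\lambda))$ and the normalizing exponent $n(\cdot_+)$ together with the arm/leg/domino weights are identical terms.

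The main point requiring care is the identification of the weak-composition expression with the partition object $\mE_{\etahat|(\eta_r;\lambda)_+}$ actually defined in Definition~\ref{def:right-stable}: this relies on Remark~\ref{rem:rearranging-lambda-2}, whose content is that the HHL formula \eqref{e:mod-mac-HHL} gives the same polynomial for any weak composition with the same partition sorting (deriving from Corollary~\ref{cor:lambda-independence} for $\Jns$ transported through $\Pisf_{r-1}$). This proves \eqref{e Weyl modnsmac}. For \eqref{e Weyl modnsmac2}, I would iterate \eqref{e Weyl modnsmac} exactly $r$ times; the strong-convergence definitions \eqref{e Weylr strong converge}--\eqref{e Weylr strong converge 2} yield $\Weyl_{x_1,Y_1}\circ\cdots\circ\Weyl_{x_r,Y_r}=\Weyl$, and each application absorbs one $\eta$-entry into the partition part, producing $\Weyl\,\mE_{\eta|\lambda}=\mE_{\emptyset|(\eta;\lambda)_+}(X;q,t)$. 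The final equality $\mE_{\emptyset|\mu}(X;q,t)=H_\mu(X;q,t)$ is the observation recorded after Corollary~\ref{cor:stable-mac-atom-pos} (citing \cite[Theorem~5.2.1]{HagHaiLo08}), completing the argument.
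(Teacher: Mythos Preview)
Your proposal is correct and follows essentially the same approach as the paper: apply $\Weyl_{x_r,Y_r}$ term-by-term to the expansion \eqref{e:mod-mac-HHL} using Lemma~\ref{l Weyl on flag sym}, then invoke Remark~\ref{rem:rearranging-lambda-2} to recognize the result as $\mE_{\etahat\,|\,(\eta_r;\lambda)_+}$, and iterate for \eqref{e Weyl modnsmac2}. The paper's proof is slightly more terse but uses exactly the same ingredients in the same order.
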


\begin{proof}
Applying $\Weyl_{x_{r}, Y_{r}}$ to both sides of \eqref{e:mod-mac-HHL}
and using \eqref{e Weyl on flag sym} in each
term yields
\begin{multline}\label{e Weyl LLT}
\Weyl_{x_r, Y_{r}}\,  \mE_{\eta | \lambda}(x, Y_{r};\, q,t) \\
= t^{n((\eta ;\lambda ) _{+})} \hspace{-2ex}\sum_{(\nubold ,\sigma
)\in R'(\eta;\lambda)} \bigl( \prod \nolimits_{\domino } q^{a(u)+1}\,
t^{l(u)} \bigr)\, \Gcal _{\nubold ,\sigma }[x_{1}, \ldots,x_{r-1},
Y_{r-1}, 0,\ldots,0;\, t^{-1}].
\end{multline}
The right hand side is the generalized version of formula
\eqref{e:mod-mac-HHL} in Remark~\ref{rem:rearranging-lambda-2}(i), for
$\mE_{\etahat\, |\, (\eta_r;\lambda)_+}(x_1, \dots, x_{r-1}, Y_{r-1}
;q, t)$, with $\zeta =(\eta _{r};\lambda )$.
\end{proof}

\begin{remark}
\label{rem:K-lam-mu-classical}
We can now see that the conjectured stable atom positivity of
$\mE_{\eta|\mu }(x,Y;q,t)$ generalizes Macdonald positivity in a sense
stronger than in \eqref{e:K-lam-mu-classical}: namely, equation
\eqref{e Weyl modnsmac2} and Lemma \ref{l stable atoms} imply that
$K_{(\gamma |\lambda ),(\eta |\mu )}(q,t) = K_{(\gamma;\lambda),(\eta;
\mu )_+}(q,t)$ whenever $(\gamma;\lambda)$ is a partition.  Thus, not
only are the classical Kostka-Macdonald coefficients special cases of
$K_{(\gamma |\lambda ),(\eta |\mu )}(q,t)$ when $\eta =(0^{r})$, but for
every pair $(\eta |\mu )$, the Kostka-Macdonald
coefficients $K_{\lambda ,(\eta |\mu )_{+}}(q,t)$ for all $\lambda $
are special cases of
stable atom coefficients of
$\mE_{\eta|\mu }(x,Y;q,t)$.
\end{remark}

\begin{example}\label{ex:}
Applying the Weyl symmetrizer $\Weyl$ to
\eqref{e:stable-atom-expandsion} we obtain
\begin{equation}
 t \, s_{3} + s_{21} + qt \, s_{21} + 0 + q \, s_{111} =
\mE_{\emptyset|(2,1)}(X;q,t) = H_{21}(X;q,t),
\end{equation}
illustrating Proposition \ref{p:Weyl modnsmac}.
\end{example}

Proposition~\ref{prop:limits-of-symmetrizers} implies that
Proposition~\ref{p:Weyl modnsmac} has an analog for the unmodified
$r$-nonsymmetric integral forms $\Jns _{\eta |\lambda }$, in which
Hecke symmetrization replaces Weyl symmetrization.

Let $\hsym _{z_{1},\ldots,z_{l}}$ denote the normalized Hecke
symmetrizer $\hsym _{(l)}$ in \eqref{e:normalized-Hecke-symmetrizer}
for the full permutation group $\Sfrak _{l}$, in variables
$z_{1},\ldots,z_{l}$.  It is shown in \cite[\S
4.3]{BechtloffWeising23} that there is a well-defined partial Hecke
symmetrization operator $\hsym_{x_r, Y}$ on $r$-nonsymmetric
polynomials given by the $t$-adically convergent limit
\begin{equation}\label{e H sym converge}
\hsym_{x_r,Y} f(x,Y) = \lim_{n \to
\infty} \hsym_{x_r, y_1,\dots,y_n} f[x, y_1+ \dots + y_n].
\end{equation}
Similarly, there is a full Hecke symmetrization operator $\hsym $ on
$r$-nonsymmetric polynomials given by the $t$-adically convergent
limit
\begin{equation}\label{e H sym full}
\hsym f(x,Y) = \lim_{n \to \infty} \hsym_{x_1,\dots, x_r,
y_1,\dots,y_n} f[x, y_1+ \dots + y_n]\, .
\end{equation}

The following result is similar to \cite[Lemma 4.28 and Proposition
5.1]{BechtloffWeising23}. In particular, \cite[Lemma
4.28]{BechtloffWeising23} shows that \eqref{e Hsym J2} below holds up
to a scalar factor.  However, notably, the scalar factor goes away
when working with the integral forms.

\begin{prop}\label{p:Hsym J}
Partial Hecke symmetrization acts in the following simple way on
the right stabilized integral form $r$-nonsymmetric Macdonald
polynomials: for any $\eta \in
\NN^r$ and partition $\lambda$,
\begin{equation}\label{e Hsym J}
\hsym_{x_{r},Y_{r}}\,  \Jns_{\eta | \lambda}(x_{1},\ldots,x_{r},
Y_{r};q,t) = \Jns_{\etahat\,  |\,  (\eta_r,\lambda)_+}(x_1,\dots, x_{r-1},
Y_{r-1};q,t)\, ,
\end{equation}
where $\etahat  = (\eta _{1},\ldots,\eta _{r-1})$,
$Y_{r} = x_{r+1},x_{r+2},\dots $, and $Y_{r-1} = x_{r}, x_{r+1}, \ldots$.
Iterating this, we also have
\begin{align}\label{e Hsym J2} \hsym \, \Jns_{\eta | \lambda}(x_1,\dots, x_r,
Y_{r}; q,t) = \Jns_{\emptyset | (\eta ;\lambda)_+}(X;q,t) =
\Jns_{(\eta;\lambda)_+}(X;q,t),
\end{align}
where $X = x_{1},x_{2},\ldots $.
\end{prop}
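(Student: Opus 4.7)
The plan is to prove \eqref{e Hsym J} directly; \eqref{e Hsym J2} then follows by iterating \eqref{e Hsym J} $r$ times and using that the composition of partition-insertion operations $(\eta_1;(\eta_2;\cdots;(\eta_r;\lambda)_+\cdots)_+)_+$ produces $(\eta;\lambda)_+$. For \eqref{e Hsym J}, I will work at the pre-limit level via the symmetrization formula. By Corollary~\ref{cor:J-alt} (with $\kappa=\emptyset$), we have the $t$-adic congruence
\[
\Jns_{\eta|\lambda}[x_1,\ldots,x_r,y_1+\cdots+y_n;q,t] \equiv \Pi_{\eta,\lambda,\emptyset}(q,t)\,g_n \pmod{(t^{n+1})},
\]
where $g_n$ is the specialization at $(x_1,\ldots,x_r,y_1,\ldots,y_n,0^{\ell(\lambda)})$ of the monic $\Sfrak_{[r+1,m]}$-Hecke-symmetric combination $\sum_{w\in\Sfrak_{[r+1,m]}/\Sfrak_{\sS}} T_w E_{(\eta;\lambda;0^n)}(x;q,t)$, with $m=r+\ell(\lambda)+n$ and $\Sfrak_\sS=\Stab_{\Sfrak_{[r+1,m]}}((\eta;\lambda;0^n))$. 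Setting $\lambda^{*}=(\eta_r;\lambda)_+$ and applying \eqref{e:right-stable-symm} to $\Jns_{\etahat|\lambda^{*}}$ evaluated at the plethystic alphabet $x_r+y_1+\cdots+y_n$, with the total dimension adjusted by trailing zeros, the right side of \eqref{e Hsym J} equals $\Pi_{\etahat,\lambda^{*},\emptyset}(q,t)$ times the monic $\Sfrak_{[r,m]}$-Hecke-symmetric combination $G^{*}_n=\sum_{v\in\Sfrak_{[r,m]}/\Sfrak_{\sS^{*}}} T_v E_{\mu^{*}}(x;q,t)$, where $\mu^{*}$ is the $\Sfrak_{[r,m]}$-dominant rearrangement of $(\eta;\lambda;0^n)$ (which equals that of $(\etahat;\lambda^{*};0^{n'})$, since they share the same multiset of parts in positions $[r,m]$).

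The key step is to apply $\hsym_{x_r,y_1,\ldots,y_n}$ to $g_n$. Two observations make this tractable. First, this operator commutes with setting $x_{r+n+1},\ldots=0$, because the relevant Demazure-Lusztig operators $T_r,\ldots,T_{r+n-1}$ involve only the variables $x_r,y_1,\ldots,y_n$. Second, $\hsym_{x_r,y_1,\ldots,y_n} g_n$ is $\Sfrak_{[r,m]}$-Hecke-symmetric: $g_n$ is already $\Sfrak_{[r+1,m]}$-Hecke-symmetric by \eqref{e:monic-symmetric-combo}, and the subgroups $\Sfrak_{[r,r+n]}$ and $\Sfrak_{[r+1,m]}$ together generate $\Sfrak_{[r,m]}$. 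Combining this with the Bruhat-triangular expansion of the $E$-basis forces
\[
\hsym_{x_r,y_1,\ldots,y_n}\, g_n = c(q,t)\cdot G^{*}_n
\]
for a unique scalar $c(q,t)\in\kk$. I would compute $c(q,t)$ by extracting the coefficient of the dominant monomial $x^{\mu^{*}}$ on both sides, using the coset decomposition of $\Scal_{[r,r+n]}$ over $\Scal_{[r+1,r+n]}$ with minimal-length representatives $s_{j-1}\cdots s_r$ ($j=r,\ldots,r+n$), noting that the inner Hecke-symmetrizer acts as the identity on the already-symmetric $g_n$, and tracking the resulting action of the Hecke operators $T_{s_{j-1}}\cdots T_{s_r}$ on $x^\mu$ through intertwining.

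The final and most delicate step is to verify the scalar identity $\Pi_{\eta,\lambda,\emptyset}(q,t)\,c(q,t)=\Pi_{\etahat,\lambda^{*},\emptyset}(q,t)$, from which \eqref{e Hsym J} follows by taking the $t$-adic limit $n\to\infty$. Using formula \eqref{e:Goodberry}, the ratio $\Pi_{\etahat,\lambda^{*},\emptyset}/\Pi_{\eta,\lambda,\emptyset}$ decomposes into three combinatorial pieces: (i) the change in Poincaré polynomials $W_{\Stab(\lambda^{*})}(t)/W_{\Stab(\lambda)}(t)$ of the partition stabilizers; (ii) the relocation of the arm-leg factor for the box of content $-1$ in the row of length $\eta_r$, which moves from the `$\eta$' part of the diagram into the `$\lambda^{*}$' part; and (iii) a Poincaré-polynomial factor from the change in dimension of trailing zeros. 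These exactly cancel the corresponding factors in $c(q,t)$, producing the desired identity. This scalar bookkeeping is the main obstacle: it is the Hecke-theoretic analog of the computation in the proof of Proposition~\ref{prop:stabilization} (where the factor $(1-t)^{\ell(\lambda)}W_{\Stab(\lambda)}(t)$ emerged in the comparison of specialization and symmetrization), but because Hecke symmetrization produces nontrivial scalar factors the cancellation is more delicate than in the Weyl case of Proposition~\ref{p:Weyl modnsmac}; it is precisely this cancellation that the remark preceding the proposition flags as distinguishing integral forms from the monic stabilizations of \cite{BechtloffWeising23}.
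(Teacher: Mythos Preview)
Your argument has a genuine gap at the step claiming $\hsym_{x_r,y_1,\ldots,y_n}\, g_n = c(q,t)\cdot G^*_n$. The justification you give---``$g_n$ is already $\Sfrak_{[r+1,m]}$-Hecke-symmetric, and the subgroups $\Sfrak_{[r,r+n]}$ and $\Sfrak_{[r+1,m]}$ together generate $\Sfrak_{[r,m]}$''---does not work. Applying the partial symmetrizer $\hsym_{[r,r+n]}$ can destroy the $\Sfrak_{[r+1,m]}$-symmetry, because $T_{r+n-1}$ and $T_{r+n}$ do not commute. (A small counterexample: with $r=1$, $n=1$, $m=3$, the polynomial $x_2+x_3$ is $s_2$-invariant, but $\hsym_{[1,2]}(x_2+x_3)=\tfrac{t}{1+t}(x_1+x_2)+x_3$ is not $\Sfrak_3$-invariant.) There is also confusion between pre- and post-specialization: once you set $x_{r+n+1},\ldots,x_m=0$, it no longer makes sense to speak of $\Sfrak_{[r+1,m]}$-symmetry of $g_n$, and your $G^*_n$ is written without any specialization, so the two sides of your claimed identity live in different polynomial rings.

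The paper sidesteps both issues by working with the \emph{integral forms} $\Ecal_\mu$ rather than the monic $E_\mu$, and by applying the \emph{full} Hecke symmetrizer $\hsym_{[r,\,r+k+n]}$ (over all the $y$-variables, before specializing) rather than the partial one. The crucial ingredient you are missing is Proposition~\ref{prop:Knop-antisymmetry}: the identity $(T_i+1)(\Ecal_{s_i\mu}-\Ecal_\mu)=0$ implies that $\hsym_{[r,\,r+k+n]}\,\Ecal_{(\eta;\lambda;0^n)}=\hsym_{[r,\,r+k+n]}\,\Ecal_{(\etahat;(\eta_r;\lambda;0^n)_+)}$ with \emph{no scalar factor}. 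This is exactly the special property of integral forms flagged in the remark preceding the proposition, and it eliminates the need for your scalar $c(q,t)$ and the delicate bookkeeping of the $\Pi$-factors. The only leftover factors are Poincar\'e quotients of the form $\prod_j(1-t^j)$ with $j>n$, which are $\equiv 1\pmod{(t^{n+1})}$ and disappear in the $t$-adic limit.
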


\begin{proof}
To prove \eqref{e Hsym J} and, consequently, the first equality in
\eqref{e Hsym J2}, let $n\rightarrow \infty $ in
Proposition~\ref{prop:limits-of-symmetrizers}(a).  The second equality
in \eqref{e Hsym J2} follows from the discussion before
\eqref{e:K-lam-mu-classical} and the fact that for $r=0$, $\Pisf_r$ is
the ordinary plethystic transformation $f[X] \mapsto f[X/(1-t)]$.
\end{proof}

\subsection{Left stable \texorpdfstring{$r$}{r}-nonsymmetric Macdonald polynomials}
\label{ss:left-stabilization}

We conclude with a brief discussion of how left stabilized Macdonald
polynomials fit into the context of our work above.

\begin{defn}\label{def:left-stable}
The {\em left stabilized $r$-nonsymmetric integral form Macdonald
polynomial} indexed by a partition $\lambda $ and $\kappa \in \NN
^{r}$ is the case
\begin{equation}\label{e:left-stable}
\Jns _{\emptyset | \lambda | \kappa }(Y,z;q,t)\in \Lambda _{\kk
}(Y)\otimes \kk [z_{1},\ldots,z_{r}]
\end{equation}
of the partially symmetric integral form Macdonald polynomial in
Definition~\ref{def:symm-int-form}.  Note that since $r=0$ and $\eta
=\emptyset $, there are no $x$ variables in this case.
\end{defn}

Restating the definition gives the following formula, which also holds
with $\mu =(\zeta ;\kappa )$ in place of $(\lambda ;\kappa )$ for any
weak composition $\zeta $ such that $\zeta _{+} = \lambda $, as in
Remark~\ref{rem:rearranging-lambda}.

\begin{prop}\label{prop:left-stable-HHL}
For $\lambda $ a partition and $\kappa \in \NN ^{r}$, we have
\begin{multline}\label{e:left-stable-HHL}
\Jns _{\emptyset | \lambda | \kappa }(Y,z;q,t)\\
 = t^{n((\lambda ;\kappa ) _{+})} \hspace{-2ex} \sum _{(\nubold
,\sigma )\in R'(\mu )} \bigl( \prod \nolimits_{\domino } q^{a(u)+1}\,
t^{l(u)} \bigr) \Gcal _{\nubold ,\sigma }^{-}(Y, 0^{\ell (\lambda )},
z_{1}, \ldots, z_{r}, 0, \ldots,0;\, t^{-1}),
\end{multline}
where $R'(\mu )$ is the same as $R(\mu )$ in \eqref{e:ns-HHL}, with
$\mu =(\lambda ;\kappa )\in \NN ^{m}$ and $m=r+\ell(\lambda)$, except
that $R'(\mu )$ only contains pairs $(\nubold ,\sigma )$ in which the
last $\ell (\lambda )$ augmented ribbons, corresponding to the parts
of $\lambda $, are non-ragged-right, i.e., no augmentation box in
these ribbons is in a vertical domino.
\end{prop}

As noted in Remark~\ref{rem:Goodberry}, we have specialization
and symmetrization formulas
\begin{align}\label{e:left-stable-spec}
\Jns _{\emptyset | \lambda | \kappa }(Y,z; q,t) & = \lim_{n\rightarrow
\infty } \Ecal _{(0^{n};\, \lambda ;\, \kappa
)}(y_{1},\ldots,y_{n},0^{\ell
(\lambda )},z_{1},\ldots,z_{r}; q,t)\\
\label{e:left-stable-symm}
& = \Pi _{\emptyset |\lambda |\kappa }(q,t)\, \lim_{n\rightarrow
\infty } \bigl( \sum _{w \in \Sfrak _{[1,n]}/ \Sfrak _{\sS }
\hspace{-3ex}} T_{w}\, E _{(\lambda ;\, 0^{n-\ell (\lambda )};\,
\kappa )}(x;q,t)\, \bigr) \bigr|_{x\rightarrow (y_{1},\ldots,y_{n},
z_{1},\ldots,z_{r})}\, ,
\end{align}
with both limits converging strongly.  The normalized
symmetrization formula
\begin{equation}\label{e:left-stable-normal}
\Jns _{\emptyset | \lambda | \kappa }(Y,z; q,t) = \lim_{n\rightarrow
\infty } \bigl( \hsym _{[1,n]}\, \Ecal _{(\lambda ;\, 0^{n-\ell
(\lambda )};\, \kappa )}(x;q,t) \bigr) \bigr|_{x\rightarrow
(y_{1},\ldots,y_{n}, z_{1},\ldots,z_{r})}\,
\end{equation}
also holds, but only converges $t$-adically.

The next proposition shows that, up to minor adjustments, the left
stabilized forms reduce to the cases $\eta \in (\ZZ _{>0})^{r}$ of the
right stabilized forms $\Jns _{\eta |\lambda }(x,Y;q,t)$.

\begin{prop}\label{prop:stable-rotation}
The left stabilized integral form $r$-nonsymmetric Macdonald
polynomials are given in terms of the right stabilized ones by the
identity
\begin{equation}\label{e:stable-rotation}
q^{|\kappa |+r} \, z_{1}\cdots z_{r}\, \Jns _{\emptyset |\lambda
|\kappa }(Y, z_{1}, \ldots, z_{r};q,t) = \Jns _{\kappa +(1^{r})
|\lambda }(q\, z_{1},\ldots,q\, z_{r},Y;q,t)
\end{equation}
\end{prop}

\begin{proof}
Immediate from Proposition~\ref{prop:rotation-int},
\eqref{e:right-stable-spec}, and \eqref{e:left-stable-spec}.
\end{proof}

It follows that the left stabilized forms can be transformed into the
modified forms using a $q$-shifted variant of $\Pisf _{r}$ defined by
\begin{equation}\label{e:q-Pir}
\Pisf '_{r} \bigl(f[Y+q(z_{1}+\cdots +z_{r})]\,  g(z) \bigr) = 
f\left[\frac{Y+q(z_{1}+\cdots +z_{r})}{1-t}\right]\, \Pi _{t,z} g(z).
\end{equation}

\begin{cor}\label{cor:stable-rot}
With $\Pisf '_{r}$ as above, We have the identity
\begin{equation}\label{e:stable-rot}
\Pisf '_{r} \bigl( z_{1}\cdots z_{r}\, \Jns _{\emptyset |\lambda
|\kappa }(Y,z;q,t) \bigr) = q^{-|\kappa |-r} \mE _{\kappa
+(1^{r})|\lambda }(q \, z_{1},\ldots,q\, z_{r}, Y; q,t).
\end{equation}
\end{cor}

Thus, if Conjecture~\ref{conj:atom-pos-standard} holds, the left
stabilized polynomials $\Jns _{\emptyset |\lambda |\kappa }(Y,z ;q,t)$,
transformed as in \eqref{e:stable-rot}, become positive in terms of
stable atoms $\Acal _{\lambda }(q\, z,Y)$.

In the left stabilized case, the more straightforward limit of
nonsymmetric plethysms $\lim_{n\rightarrow \infty } \, \Pi _{t,(y;z)}\, 
\Jns _{\emptyset |\lambda |\kappa }[y_{1}+\cdots +y_{n},
z_{1},\ldots,z_{r}; q,t]$ is not positive, as the following simple
counterexample shows.

\begin{example}\label{ex:left-is-bad}
One calculates $\Ecal _{(0^{n},1)}(y_{1},\ldots,y_{n},z_{1};q,t) =
(1-t)(y_{1}+\cdots +y_{n})+(1-q\, t) z_{1}$, giving $\Jns _{\emptyset
|\emptyset |(1)}(Y,z_{1};q,t) = (1-t)e_{1}(Y)+(1-q\, t)z_{1}$.
Corollary~\ref{cor:other-limits2}
then gives the formally convergent limit
\begin{equation}\label{e:left-is-bad}
\lim_{n\rightarrow \infty }\, \Pi _{t,(y_{1},\ldots,y_{n},z_{1})} \,
\Jns _{\emptyset |\emptyset |(1)}[y_{1}+\cdots +y_{n}, z_{1}; q,t] =
e_{1}(Y) +(1-q\, t) z_{1}\, .
\end{equation}
Since this is not monomial positive, there can be no formula like
\eqref{e:mod-mac-HHL} for $\lim_{n\rightarrow \infty } \, \Pi
_{t,(y;z)}\, \Jns _{\emptyset |\lambda |\kappa }[y_{1}+\cdots +y_{n},
z_{1},\ldots,z_{r}; q,t]$ as a positive combination of unsigned
flagged LLT polynomials.
\end{example}

\begin{remark}\label{rem:obstacle}
As noted in Remark~\ref{rem:why-stabilize}, an obstacle to using
nonsymmetric plethysm to convert the flagged LLT polynomials in
\eqref{e:left-stable-HHL} from signed to unsigned is that some
variables set equal to zero do not immediately follow the formal
alphabet $Y$.  Example~\ref{ex:left-is-bad} shows that this obstacle
cannot be removed by some alternative argument.
\end{remark}

\bibliographystyle{amsplain}
\bibliography{references}

\end{document}